\pdfoutput=1
\documentclass[11pt]{article}
\usepackage{vmargin}
\usepackage{times}

\usepackage[utf8]{inputenc}
\usepackage[T1]{fontenc}

\usepackage[whole]{bxcjkjatype}
\usepackage{CJK}

\usepackage{comment}

\usepackage{epsfig}
\usepackage{graphicx}
\usepackage{amssymb}
\usepackage{amsmath}
\usepackage[utf8]{inputenc}
\usepackage{amsfonts}
\usepackage{amsthm}
\usepackage{paralist}
\usepackage{stmaryrd}
\usepackage[toc,page]{appendix}
\usepackage{bbm}
\usepackage{bm}

\usepackage{mathrsfs}

\numberwithin{equation}{section}

\usepackage{longtable}
\usepackage[labelfont=bf,font=it]{caption}

\usepackage[colorlinks=true,linkcolor=blue,citecolor=blue,urlcolor=blue,pdfborder={0 0 0}]{hyperref}
\usepackage{bookmark}

\usepackage{color}

\usepackage[dvipsnames]{xcolor}

\usepackage{centernot}

\usepackage{cite}
\usepackage{cleveref}
 \usepackage{crossreftools}

  \usepackage{microtype}

\usepackage{mathtools} %Uggly sums

\theoremstyle{plain}
\newtheorem{corollary}{Corollary}[section]
\newtheorem{theorem}[corollary]{Theorem}
\newtheorem{lemma}[corollary]{Lemma}

\newtheorem*{conjecture*}{Conjecture}
\newtheorem{proposition}[corollary]{Proposition}
\newtheorem*{theorem*}{Theorem}
\newtheorem*{lemma*}{Lemma}
\newtheorem*{definition*}{Definition}
\newtheorem*{corollary*}{Corollary}
\theoremstyle{definition}

\theoremstyle{remark}

\newtheorem{remark}[corollary]{Remark}

\newcommand{\R}{\mathbb{R}}

\newcommand{\Z}{\mathbb{Z}}
\newcommand{\proba}{\mathbb{P}}
\newcommand{\N}{\mathbb{N}}

\newcommand{\E}{\mathbb{E}}
\newcommand{\C}{\mathcal{C}}

\renewcommand{\P}{\mathbf{P}}

\newcommand{\Mc}{\mathcal{M}}
\newcommand{\Comp}{\mathfrak{C}}

\newcommand{\Gauss}{\mathcal{G}}

\newcommand{\Hc}{\mathcal{H}}

\newcommand{\B}{\mathscr{B}}

\newcommand{\X}{\chi}
\newcommand{\1}{\mathbf{1}}

\newcommand{\vpg}{\mathbf{v}}

\renewcommand{\i}{\mathbf{i}}

\newcommand{\offx}[1]{\, {\hbox{\rm off}}_{#1} \,}

\newcommand{\be}{\begin{eqnarray}}
\newcommand{\ee}{\end{eqnarray}}

\newcommand{\lmix}{\ell_{\text{mix}}}
\newcommand{\lblock}{\ell_{\text{block}}}

\newcommand{\eps}{\varepsilon}

\newcommand{\A}{\mathcal{A}}
\newcommand{\abf}{\mathbf{a}}
\newcommand{\vA}{\vec{\mathcal{A}}}
\newcommand{\Df}{\mathfrak{D}}
\newcommand{\Dcr}{\mathscr{D}}
\newcommand{\Dbf}{\Df}

\newcommand{\Abf}{\mathbf{A}}

\newcommand{\Cgood}{\vec{\mathfrak C}_{\text{good}}}

\newcommand{\Ccut}{\mathfrak C_{\text{cut}}}
\newcommand{\Ckill}{\vec{\mathfrak C}_{\partial}}
\newcommand{\Clive}{\vec{\mathfrak C}^*}
\newcommand{\Plive}{\varrho^*}
\newcommand{\Tcut}{T_{\text{cut}}}

\newcommand{\SA}{\mathbf{S}}
\newcommand{\T}{\mathbf{T}}
\newcommand{\ball}{\mathcal{B}}
\newcommand{\Zb}{\mathbf{Z}}
\newcommand{\Fc}{\mathcal{V}}
\newcommand{\Four}{\mathfrak{F}}
\newcommand{\Lap}{\mathcal{L}}
\newcommand{\Ft}[1]{\Four[#1]}%{\hat{#1}}

\newcommand{\twosquare}{\boxminus}

\newcommand{\Amp}{\Upsilon}
\newcommand{\Rot}{\mathcal{R}}

\DeclareMathOperator*{\lr}{\longleftrightarrow}
\DeclareMathOperator*{\nlr}{\centernot\longleftrightarrow}

\newcommand{\edges}{\mathcal{E}}
\newcommand{\vedges}{\vec{\mathcal{E}}}

\DeclareMathOperator*{\slr}{\leftrightarrow}
\DeclareMathOperator*{\snlr}{\nleftrightarrow}

\begin{document}
\title{Markovian renormalisation for percolation in high-dimension: Semi-decidability of mean field behavior}
\author{Arthur Blanc-Renaudie\footnote{CNRS and University of Rouen Normandy, arthur.r.blanc-renaudie@cnrs.fr}}
\maketitle
\begin{abstract}
We develop a new approach to study Bernoulli percolation in dimensions $d>6$.
The key idea is to approximate open paths at probability $p'>p$ by a Markov chain of pointed $p$-open clusters.
This allows us to transfer sharp information on the two point function from $p$ to $p'$.
This inductively gives good asymptotic estimates on the two point function as $p\uparrow p_c$.
As a main application, we show that having critical meanfield behavior is a semi-decidable problem.
Along the way, we also prove sharp asymptotics for the susceptibility and the average radius of gyration, and prove a local central limit theorem for the slightly subcritical two-point function.

\end{abstract}
\section{Introduction} \label{sec:intro}
\subsection{Motivation and main results} \label{subsec:intro-main-results}
Percolation is a central model for large-scale connectivity in random media, with deep connections to computer science, probability theory, and statistical physics.
In Bernoulli bond percolation on $\Z^d$, each edge (or bond) is independently open with probability $p$ and closed otherwise.
We denote the associated probability measure by $\proba_p$ and expectation by $\E_p$.
We consider the edge set $\edges=\{\{x,y\}:0<\|x-y\|_1\leq L\}$, where $L=1$ corresponds to the nearest-neighbor model, and $L>1$ to the spread-out model.
We write $\edges_p$ for the set of open edges.
We work under the usual coupling such that for every $p<p'$, almost surely $\edges_p\subset \edges_{p'}$.
Two vertices $x,y\in \Z^d$ are said to be connected, denoted $x\slr y$, if there exists a path of open edges joining them.
The cluster of a vertex $x\in \Z^d$, denoted by $\C(x)=\C_p(x)$, is the largest connected subgraph of $(\Z^d,\edges_p)$ containing $x$.

The model undergoes a phase transition at a critical probability $0<p_c<1$, separating the subcritical phase $p<p_c$, where all open clusters are finite, from the supercritical phase $p>p_c$, where an infinite cluster emerges.
The main question is to understand the critical regime $p\approx p_c$.
A central observation is that above the critical dimension $6$, the influence of the lattice structure fades near $p_c$, and the critical exponents match those of percolation on Cayley trees.
This mean-field behavior is a fundamental phenomenon in statistical physics.
The main rigorous tool to establish it in percolation is the lace expansion.
Originally introduced by Brydges and Spencer \cite{MR782962} for weakly self-avoiding walk,
this method was adapted to percolation by Hara and Slade \cite{MR1043524},
who proved mean-field behavior for the nearest-neighbor model in sufficiently high dimensions,
and for the spread-out model when $d>6$ and $L$ is large enough.
Since then the lace expansion has been refined and adapted for many other models (see e.g. \cite{MR1171762,HSG_Animals,MR1851386,HSA_Contact,sakai2007lace}).
A major advance was obtained by Fitzner and van der Hofstad \cite{fitzner2015nearest}, who proved that $d\geq 11$ suffices for nearest-neighbor percolation, leaving the dimensions $7\leq d\leq 10$ open.
For more details on the subject, we refer to the books by Slade \cite{MR2239599}, and by Heydenreich and van der Hofstad \cite{heydenreich2015progress}.
More recently, Duminil-Copin and Panis \cite{DCPwalk,DCPperco} developed a new approach to recover the mean-field exponents in the spread-out case.

At the heart of the proofs lies a precise estimate for the two-point function $\tau_p:x\mapsto \proba_p(0\leftrightarrow x).$
Usually one starts from some a priori bounds on $\tau_p$, obtained either under assumptions or with numerical input, and proves better ones to deduce the critical exponents.
This often involves differential inequalities and bootstrap bounds for $\tau_p$.
A central source of error in those estimates comes from the correlation between adjacent clusters, which is controlled by the triangle condition
$\nabla(p)=\tau_p\ast \tau_p\ast \tau_p(0)-1,$
where $\ast$ denotes the usual convolution operator.
The condition $\nabla(p_c)<\infty$ was first introduced by Aizenman and Newman \cite{MR762034}, and is known to be equivalent to mean-field behavior \cite{MR1127713}.
When $\nabla(p_c)\ll 1$, the method succeeds; however, when $\nabla(p_c)\gg 1$, the estimates degrade significantly, causing the approach to fail.
This explains why the lace expansion is not expected to resolve the nearest-neighbor case in dimensions $7\leq d \leq 10$.

In this paper, we introduce a new approach to estimate $\tau_p$ and prove mean-field behavior.
Our method is based on an inductive computation of $\tau_{p}$ as $p\uparrow p_c$, and becomes more accurate near $p_c$.
For this reason, the main requirement is no longer to have $\nabla(p_c)\ll 1$,
but rather the \emph{initialization}:
estimating $\tau_{p_0}$ for some fixed $p_0<p_c$ with large enough susceptibility $\X(p_0):=\sum_{x\in \Z^d} \tau_{p_0}(x)$.
For every $x\in \Z^d$, let $\langle x\rangle:=\max(1,\|x\|_2)$.

\begin{theorem} \label{thm:Main}
Let $d>6$, $L\geq 1$. There exists a computable function $f:\R_+ \mapsto \R_+$, depending on $d,L$, such that the following two properties are equivalent:
\begin{itemize}
\item[(a)] There exists $C>0$ and some $p_0<p_c$ with $\chi(p_0)\geq f(C)$ such that
\begin{equation}
\forall x\in \Z^d, \quad \tau_{p_0}(x) \leq C \langle x\rangle^{2-d}. \label{eq:initialization}
\tag{C}
\end{equation}
\item[(b)] There exists $A>0$ such that as $x\to \infty$,
\begin{equation}
\tau_{p_c}(x)\sim A \langle x\rangle^{2-d}.\label{eq:eta}
\tag{A}
\end{equation}
\end{itemize}
\end{theorem}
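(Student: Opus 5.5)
The two implications are of very different difficulty, and I would treat them separately.

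\emph{(b)$\Rightarrow$(a).} This direction is soft. Assume \eqref{eq:eta}. Then there is $C_1$ with $\tau_{p_c}(x)\le C_1\langle x\rangle^{2-d}$ for all $x$, since the asymptotic holds outside a finite set and $\tau\le1$ inside it. By the monotone coupling, $\tau_{p_0}\le\tau_{p_c}$ for every $p_0<p_c$, so \eqref{eq:initialization} holds with $C=C_1$ uniformly in $p_0<p_c$. It remains to check that $\chi(p_0)\to\infty$ as $p_0\uparrow p_c$. This follows from sharpness of the phase transition (Aizenman--Barsky / Menshikov), or more directly from $\chi(p_c)=\sum_x\tau_{p_c}(x)\ge\sum_x cA\langle x\rangle^{2-d}=\infty$ together with monotone convergence $\chi(p)\uparrow\chi(p_c)$, which holds by left-continuity of $\tau_p(x)$ at $p_c$. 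So for any computable $f$, we may choose $p_0$ close enough to $p_c$ that $\chi(p_0)\ge f(C_1)$.

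\emph{(a)$\Rightarrow$(b).} This is the heart of the matter, and I would follow the Markovian renormalisation sketched in the abstract. Starting from $p_0$, define an increasing sequence $p_0<p_1<\dots\uparrow p_c$, for instance chosen so that $\chi(p_{k+1})\approx 2\chi(p_k)$.

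The induction hypothesis at step $k$ would be a sharp description of $\tau_{p_k}$:
\[
\tau_{p_k}(x)\le C_k\langle x\rangle^{2-d}e^{-c\|x\|/\xi_k},
\]
together with a local CLT-type comparison of $\tau_{p_k}$ with $A_k\,G_{m_k}(x)$, a massive Green's function whose mass satisfies $m_k^{-2}\asymp\chi(p_k)$. The constants $C_k$ and $A_k$ should be allowed to drift only by multiplicative errors $1+O(\chi(p_k)^{-\delta})$, which are summable along the geometric sequence.

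For the step from $p_k$ to $p_{k+1}$, write a $p_{k+1}$-open path as an alternation of $p_k$-clusters joined by sprinkled edges, i.e.\ edges in $\edges_{p_{k+1}}\setminus\edges_{p_k}$. Approximate this by the Markov chain whose one-step kernel is ``explore the pointed $p_k$-cluster, then jump across a sprinkled edge''. The resulting Green's function reproduces the same $\langle x\rangle^{2-d}$ form at $p_{k+1}$. The errors come from the chain revisiting or touching earlier clusters; these are bounded by triangle-type diagrams built from $\tau_{p_k}$. Using the induction bound with the cutoff at scale $\xi_k$, these diagrams should be at most $C_k^3\,\chi(p_k)^{-\delta}$ for some $\delta=\delta(d)>0$ when $d>6$.

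The function $f(C)$ is chosen as an explicit threshold making $\sum_k C_k^3\chi(p_k)^{-\delta}$ small enough that $C_k\le 2C$ for all $k$. This is why a large initial susceptibility replaces the usual requirement $\nabla\ll1$. Since all the constants involved are explicit, $f$ is computable.

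Passing $k\to\infty$ gives $A_k\to A\in(0,\infty)$ and $\tau_{p_k}\to\tau_{p_c}$ pointwise. The local CLT control, which is uniform on scales $\|x\|\ll\xi_k$, then gives $\tau_{p_c}(x)\sim A\langle x\rangle^{2-d}$.

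The main obstacle is the single renormalisation step. The step is not merely an upper bound: it must propagate the \emph{precise} asymptotic profile with error relatively small in $\chi(p_k)$, uniformly over all $x$. This requires controlling the non-Markovian corrections (interactions between successive clusters) using only the inductive upper bound. I would attack it first in Fourier space, comparing $\hat\tau_{p_{k+1}}$ with $\hat\tau_{p_k}/(1-\lambda\hat D\,\hat\tau_{p_k})$ plus error terms, and then transfer the resulting estimate back to $x$-space.
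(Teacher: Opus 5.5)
Your (b)$\Rightarrow$(a) argument is correct and is the paper's. For (a)$\Rightarrow$(b), the step that fails is your claim that interactions between \emph{consecutive} clusters are errors ``bounded by triangle-type diagrams'' of size $C_k^3\chi(p_k)^{-\delta}$. Junction interactions do not decay with $\chi$. Two independent $p_k$-clusters hanging off the two endpoints of one sprinkled edge intersect with probability bounded below independently of $\chi(p_k)$, and $\nabla(p_k)\uparrow\nabla(p_c)>0$. Under \eqref{eq:initialization} with arbitrary $C$ this is not small, which is exactly the regime the theorem targets.

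Concretely, your Fourier comparison $\hat\tau_{p_k}/(1-\lambda\hat D\hat\tau_{p_k})$ is the independent-increment series. Its $k$-th term has mass $\chi(p)(\Omega\chi(p))^k$, so it blows up at $p+(1-p)/(\Omega\chi(p))$. The correct threshold is $\tilde p_c(p)=p+(1-p)/(\Omega\chi(p)\lambda_p)$, which is within $O(\psi_d(\chi(p))/\chi(p)^2)$ of $p_c$ (Theorem~\ref{thm:p_c-p_c}), and $\lambda_p\le 1$ is only bounded below, not close to $1$. So the independent approximation is off by a constant factor in $p_c-p_k$; with doubling steps its series need not even converge at $p_{k+1}$. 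A large $\chi(p_0)$ does nothing about this.

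The missing idea is to keep adjacent correlations exactly. By the off method (Lemma~\ref{lem:GeneralOff}), the next cluster is an independent cluster with the edges touching the previous one closed. This gives $\tau_p^{\square,k}$ with the constraint $e_{i+1}^-\in A_i\offx{e_i^+}A_{i-1}$, i.e.\ a chain of i.i.d.\ pointed clusters killed on $\Ccut$ and Doob-transformed by its Perron eigenfunction. You then have to prove two things:
\begin{itemize}
\item $\lambda_p\ge c(d,L,C)$, via local modification of clusters and a Frobenius-norm eigenvalue bound, which needs $\chi(p)$ large in terms of $C$;
\item uniform exponential mixing of the chain.
\end{itemize}
Only after that are the residual errors square diagrams (clusters at least two steps apart, and non-uniqueness of the sprinkled edges). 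These have relative size $\chi(p')\psi_d(\chi(p))/\chi(p)^2$ and genuinely decay.

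The shape of your induction also needs changing. Your hypothesis includes an LCLT comparison with relative error $\chi(p_k)^{-\delta}$, which (a) does not give you at $k=0$. Moreover, with $\chi(p_{k+1})\approx 2\chi(p_k)$ a typical $p_{k+1}$-path uses $O(1)$ sprinkled edges. So a step creates no Gaussian profile and could only transport one you do not have.

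The paper instead propagates only $\mathcal{M}(\tau_p)=\max_x\tau_p(x)\langle x\rangle^{d-2}$. It uses polynomial steps $\chi(p_1)=\chi(p)^{1.1}$, $\chi(p_2)=\chi(p_1)^{1.1}$ from a common base $p$. This is legitimate because the square-diagram errors stay small up to $\chi(p')\approx c\chi(p)^2/\psi_d(\chi(p))$. The $p$-chain then runs for about $\chi(p)^{0.1}$ or more steps. Its own LCLT, obtained from mixing and Fourier estimates needing only \eqref{eq:initialization} at $p$, lets one compare $\tau^\square_{p,p_2'}$ with $\tau^\square_{p,p_1}$:
\begin{itemize}
\item at small scales, by Lemma~\ref{lem:Future_diff_trick};
\item at large scales, by reading the amplitude $E_{p,\infty}C^{\Gauss}_{V(p)}$ off $\tau^\square_{p,p_1}(y)$ with $\|y\|_2\approx\chi(p)^{0.53}$.
\end{itemize}
This gives $\mathcal{M}(\tau_{p_2})\le\mathcal{M}(\tau_{p_1})+\chi(p_1)^{-\eta}$, which is summable along $\chi\mapsto\chi^{1.1}$ and yields only \eqref{eq:etaUpper}.

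The asymptotic \eqref{eq:eta} is a second, separate stage. Under \eqref{eq:etaUpper} one proves sharp asymptotics for $\chi(p)$ and $\|\tau_p\|_2^2$ via the $P_\lambda$ deconvolution, and deduces those of $E_{p,\infty}$ and $V(p)$. From these one proves Theorem~\ref{thm:tau_local_CLT}, and then lets $p\to p_c$ before $x\to\infty$.
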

It is easy to check that (b) implies that (a) holds whenever $p_0<p_c$ is close enough to $p_c$, since $\chi(p)$ diverges as $p\to p_c$, and since $\tau_p(x)$ is increasing in $p$.
Moreover, whenever (a) holds, it may be checked numerically in finite time, since one can, for instance, approximate $\tau_p$ by the two-point function of percolation restricted to a large but finite box. Indeed, outside a large box, one can use sharpness bounds (see e.g. \cite{duminil2015new}), to bound $\tau_{p_0}(x)$ when $\|x\|_2$ is large.
Thus, by Theorem \ref{thm:Main}, it is possible to prove Theorem \ref{thm:Main} (b) in finite time whenever it holds.
In other words, we have:
\begin{corollary} \label{cor:SemiDecidable}
For every $d>6$ and $L\geq 1$, Theorem \ref{thm:Main} (b) is a semi-decidable problem.
\end{corollary}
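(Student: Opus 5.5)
To show that (b) is semi-decidable, I will exhibit a procedure that halts exactly on the inputs $(d,L)$ for which (b) holds. By Theorem \ref{thm:Main}, (b) is equivalent to (a). So it suffices to show that (a) is semi-decidable, i.e.\ that there is a countable family of finitely checkable certificates whose existence is equivalent to (a). The procedure will enumerate triples $(C,p_0,N)$, where $C$ and $p_0$ are rationals and $N$ is an integer box size, and for each triple try to certify, in finite time, three things: $p_0<p_c$, $\chi(p_0)\geq f(C)$, and \eqref{eq:initialization}. Since $f$ is computable, the number $f(C)$ (or a rational upper bound for it) can be produced in finite time.

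\emph{Certifying the inequalities.} I take the certificate to be a rigorous finite-volume computation, organised in three steps.
\begin{itemize}
\item[(i)] \emph{Lower bound on $\chi(p_0)$.} Let $\tau^{N}_{p_0}$ denote the two-point function of percolation restricted to the box $\Lambda_N=[-N,N]^d$. By monotonicity, $\tau^N_{p_0}\le\tau_{p_0}$. Hence a certified rational lower bound on $\sum_{x\in\Lambda_N}\tau^N_{p_0}(x)$ that exceeds $f(C)$ certifies $\chi(p_0)\geq f(C)$. This quantity is a finite polynomial in $p_0$, so it is computable exactly.
\item[(ii)] \emph{Certifying $p_0<p_c$ and the tail.} I use sharpness, via the $\varphi_p(S)$ criterion of \cite{duminil2015new}: if for some finite set $S\ni 0$ one has $\varphi_{p_0}(S)=p_0\sum_{\{x,y\}\in\partial S}\proba_{p_0}(0\leftrightarrow x \text{ in } S)<1$, then $p_0<p_c$. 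Moreover, $\tau_{p_0}(x)\le \varphi_{p_0}(S)^{\lfloor \|x\|/R\rfloor}$, where $R$ is the range of $S$ plus $L$. This inequality is again a finite computation. It also yields an explicit $M$ such that $\varphi^{\lfloor \|x\|/R\rfloor}\le C\langle x\rangle^{2-d}$ for all $\|x\|\ge M$.
\item[(iii)] \emph{Finitely many remaining points.} For the $x$ with $\|x\|<M$, I need upper bounds on $\tau_{p_0}(x)$. For these I bound $\tau_{p_0}(x)\le \proba_{p_0}(0\leftrightarrow x\text{ in }\Lambda_N)+\proba_{p_0}(0\leftrightarrow\partial\Lambda_N)$. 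The second term is bounded by the same exponential decay, so it is small for $N$ large. The first term is computable.
\end{itemize}
The procedure halts once it finds a triple $(C,p_0,N)$, together with some $S$, passing all these checks.

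\emph{Correctness.} Halting is sound, because every check above is a rigorous implication toward (a), and hence toward (b) by Theorem \ref{thm:Main}. For completeness, suppose (a) holds with some $C$ and $p_0$.
\begin{itemize}
\item \emph{Rational parameters.} I replace $C$ by a slightly larger rational $C'$; I will also need the slack $C'>C$ below. I replace $p_0$ by a rational $p_0'$ slightly below or at $p_0$, chosen so that $\chi(p_0')\ge f(C')$ still holds. This uses the observation after Theorem \ref{thm:Main} that (b) implies (a) for every $p_0$ close to $p_c$. So I may simply pick $p_0$ rational, close to $p_c$, with an arbitrary fixed large $C$, which gives strict inequalities and slack.
\item \emph{Exploiting the slack.} Sharpness gives some finite $S$ with $\varphi_{p_0}(S)<1$. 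Monotone convergence of the box quantities to the infinite-volume ones then shows that, for $N$ large, all strict inequalities are certified.
\end{itemize}

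The main obstacle is this completeness step. One must ensure strict slack both in $\chi(p_0)>f(C)$ and in \eqref{eq:initialization}, uniformly over the infinitely many $x$. I will handle it by tying everything to the exponential tail, since $\langle x\rangle^{2-d}$ decays only polynomially. If $f$ is not monotone, I will instead enumerate over $C$ directly rather than perturbing it.
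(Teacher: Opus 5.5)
Your proposal is correct and follows the same route as the paper's justification (the paragraph after Theorem~\ref{thm:Main}): you reduce (b) to (a) via Theorem~\ref{thm:Main}, certify (a) by exact finite-box computations combined with Duminil-Copin--Tassion sharpness bounds for the far tail, and obtain completeness because (b) gives (a) with strict slack for any rational $C>\sup_x \tau_{p_c}(x)\langle x\rangle^{d-2}$ and any rational $p_0<p_c$ close enough to $p_c$. You simply make the paper's sketch explicit (the $\varphi_{p_0}(S)<1$ certificate and the order of choices $C,p_0,S,M,N$), and your second completeness bullet correctly supersedes the first, where lowering $p_0$ while raising $C$ could break $\chi(p_0)\geq f(C)$.
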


Theorem \ref{thm:Main} (b) is already proved whenever $d\geq 11$ \cite{fitzner2015nearest}, and in sufficiently spread-out models \cite{MR1043524}.
It is also conjectured to always hold in high dimensions.
It is noteworthy that even assuming \eqref{eq:eta}, the lace expansion still requires the triangle condition to be small enough, so it could not be used to prove Corollary \ref{cor:SemiDecidable}.
Moreover, the method developed in this paper yields better estimates on the two-point function, with errors that often depend on the square diagram,
$
\square(p):= \max_{x\in \Z^d} \tau_p\ast \tau_p \ast \tau_p\ast \tau_p(x),
$
which captures correlations at the macroscopic scale.
Let
\begin{equation}
\psi_d:x\in \R_+\mapsto  \begin{cases}
x^{(8-d)/2} & \text{if $d<8$,} \\
\log(x) & \text{if $d=8$,} \\
1 & \text{if $d>8$.} \\
\end{cases}
\label{eq:def-psi-d}
\end{equation}
\begin{proposition} \label{pro:Square}
Let $d>6$, $L\geq 1$. If \eqref{eq:eta} holds, then as $p\uparrow p_c$, we have $\square(p)=\Theta(\psi_d(\X(p)))$.%, where
\end{proposition}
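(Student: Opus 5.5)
We will prove matching upper and lower bounds on $\square(p)$. The heuristic is that slightly below $p_c$ the two-point function behaves like $\langle x\rangle^{2-d}$ up to the correlation length $\xi(p)$ and decays exponentially beyond it. Since $\X(p)\asymp \xi(p)^2$, a fourfold convolution of $\langle x\rangle^{2-d}$ truncated at scale $\xi$ gives $\sum_{\|x\|\le \xi}\langle x\rangle^{8-2d}\cdot\langle x\rangle^{d}\cdot\langle x\rangle^{-d}$. Equivalently, $\tau\ast\tau$ behaves like $\langle x\rangle^{4-d}$, and $\square(p)\approx\sum_{\|y\|\le\xi}\langle y\rangle^{8-2d}$. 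This sum is $\Theta(\xi^{8-d})=\Theta(\X^{(8-d)/2})$ for $d<8$, $\Theta(\log\X)$ for $d=8$, and $\Theta(1)$ for $d>8$, which is $\psi_d(\X(p))$. The plan is to make each half of this heuristic rigorous using only \eqref{eq:eta} and standard monotonicity.

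\textbf{Upper bound.} By monotonicity in $p$ and \eqref{eq:eta}, $\tau_p(x)\le\tau_{p_c}(x)\le C\langle x\rangle^{2-d}$ for every $p<p_c$. Write $\tau_p\ast\tau_p\ast\tau_p\ast\tau_p=(\tau_p\ast\tau_p)\ast(\tau_p\ast\tau_p)$ and set $g=\tau_p\ast\tau_p$. By Cauchy--Schwarz, $\square(p)\le\sum_y g(y)^2$, and $g(y)\le C\langle y\rangle^{4-d}$ by the standard convolution bound. We also have $\sum_y g(y)=\X(p)^2$. Split the sum at radius $R$:
\[
\sum_{\|y\|\le R} g(y)^2\lesssim\sum_{\|y\|\le R}\langle y\rangle^{8-2d}\asymp\psi_d(R^2),
\qquad
\sum_{\|y\|>R}g(y)^2\le \max_{\|y\|>R}g(y)\cdot \X(p)^2\lesssim R^{4-d}\X(p)^2.
\]
Choosing $R$ with $R^{d-4}\approx\X^2\cdot\X^{-(8-d)/2}$ is too crude: it gives $R^2\gg\X$. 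The cure is to use the finer bound $\sum_{\|y\|>R} g(y)\lesssim R^{4}\cdot\ldots$. Instead I would use a tail estimate on the two-point function: $\sum_{\|x\|>R}\tau_p(x)\le \X(p)\exp(-cR/\xi)$, together with $\xi(p)^2\lesssim\X(p)$. Under \eqref{eq:eta} this follows from a Simon--Lieb/Hammersley-type argument. With $R=K\xi\log\X$ the tail contribution is negligible, and the bulk contribution is $\psi_d(\xi^2\log^2\X)$. This is sharp for $d\ge8$, but for $d<8$ it loses a log factor, so for $d<8$ I need a genuine bound of the form $\tau_p(x)\lesssim\langle x\rangle^{2-d}e^{-c\|x\|/\xi}$.

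The main obstacle is this off-critical estimate, $\tau_p(x)\le C\langle x\rangle^{2-d}e^{-c\|x\|/\xi(p)}$ with $\xi(p)^2\asymp\X(p)$. I would first try to derive it from \eqref{eq:eta} by a Simon--Lieb type decomposition. Take a box of size $\xi$ where $\sum_{\partial}\tau_p$ is at most $1/2$. Iterating this gives exponential decay beyond scale $\xi$, while \eqref{eq:eta} bounds $\tau_p$ inside the box. Identifying $\xi^2\asymp\X$ uses the mean-field relation $\X(p)\asymp(p_c-p)^{-1}$ together with $\xi\asymp(p_c-p)^{-1/2}$, both of which follow from \eqref{eq:eta} via the triangle condition $\nabla(p_c)<\infty$. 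If that route fails, I would instead invoke the Markovian renormalisation estimates developed later in the paper.

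\textbf{Lower bound.} Apply the BK/Harris-type lower bound $\tau_p(x)\ge c\langle x\rangle^{2-d}$ for $\|x\|\le c'\xi(p)$. This follows from \eqref{eq:eta} and continuity of $\tau_p(x)\uparrow\tau_{p_c}(x)$, made quantitative through $\tau_{p_c}(x)-\tau_p(x)\le(p_c-p)\,\partial_p\tau$ and the bound $\partial_p\tau_p(x)\lesssim\sum_y\tau(y)\tau(x-y)\lesssim\langle x\rangle^{4-d}$. The correction is then at most $(p_c-p)\langle x\rangle^{4-d}\ll\langle x\rangle^{2-d}$ when $\|x\|^2\ll(p_c-p)^{-1}\asymp\X$. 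Evaluating $\square(p)$ at $x=0$ and restricting all summations to the ball of radius $c'\sqrt{\X}$ gives $\square(p)\ge c\sum_{\|y\|\le c'\sqrt\X}\langle y\rangle^{8-2d}\asymp\psi_d(\X(p))$.
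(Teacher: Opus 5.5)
Your lower bound is correct, but the upper bound as written has a gap, and you created it yourself. The split you wrote down first already closes. With $g=\tau_p\ast\tau_p\le C\langle y\rangle^{4-d}$ and $\sum_y g(y)=\X(p)^2$, your balancing condition $R^{d-4}\approx\X(p)^{2-(8-d)/2}$ reads $R^{d-4}\approx\X(p)^{(d-4)/2}$. That is exactly $R^2=\X(p)$, not $R^2\gg\X(p)$. Taking $R=\X(p)^{1/2}$ gives
\[
\square(p)\le\sum_y g(y)^2\lesssim\sum_{\|y\|_2\le R}\langle y\rangle^{8-2d}+R^{4-d}\X(p)^2\lesssim\psi_d(\X(p))+\X(p)^{(8-d)/2}\lesssim\psi_d(\X(p))
\]
for every $d>6$. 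This uses only the pointwise bound at $p$ and $\|\tau_p\|_1=\X(p)$. It is the same mass-versus-pointwise trade-off as the paper's argument (Lemma \ref{lem:TwoTriangleSquare}(c), via Lemma \ref{lem:TruncatedPoly} with $a=d-2$, $b=d-6$).

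The replacement you proposed is not a harmless alternative. It rests on $\tau_p(x)\lesssim\langle x\rangle^{2-d}e^{-c\|x\|_2/\xi}$ with $\xi^2\lesssim\X(p)$, which is the nontrivial near-critical plateau estimate \eqref{eq:Plateau}, and your Simon--Lieb sketch does not produce it. That iteration needs a box of radius $O(\X(p)^{1/2})$ whose boundary sum is below $1$. The pointwise bound only gives boundary sums of order $r$, and averaging over shells only yields such a radius of order $\X(p)$ (Lemma \ref{lem:NaiveExpTail}). The paper's own Theorem \ref{thm:ExponentialTail} gives scale $\X(p)^{1/2+\eps}$, which would cost a power of $\X(p)$ for $d<8$. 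So the upper bound is currently unproved; reverting to your first split repairs it.

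Your lower bound takes a genuinely more elementary route than the paper. The paper gets $\tau_p(x)=\Theta(\langle x\rangle^{2-d})$ for $\langle x\rangle\le\X(p)^{1/2}$ from the local limit theorem, Theorem \ref{thm:tau_local_CLT}, i.e.\ from the whole renormalisation machinery. You obtain the half you need from three classical inputs:
\begin{itemize}
\item $\tau_{p_c}(x)\ge c\langle x\rangle^{2-d}$, by \eqref{eq:eta} for large $x$ and positivity for bounded $x$;
\item the Russo--BK bound $\partial_q\tau_q(x)\le\tau_q\ast D\ast\tau_q(x)\le C\langle x\rangle^{4-d}$, integrated over $[p,p_c)$;
\item $\X(p)\lesssim(p_c-p)^{-1}$ from the triangle condition (Lemma \ref{lem:X'}).
\end{itemize}
Together these give the lower bound on a ball of radius $c(p_c-p)^{-1/2}\gtrsim\X(p)^{1/2}$. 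The correlation length $\xi(p)$ plays no role there, so drop it from the statement. What the paper's route buys is sharp asymptotics for $\tau_p$, which is more than this proposition needs.

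Spell out the last step as the paper does. Restrict the sum defining $\tau_p\ast\tau_p(y)$ to $z$ with $\|z\|_2$ and $\|y-z\|_2$ comparable to $\|y\|_2$. This gives $\tau_p\ast\tau_p(y)\ge c\langle y\rangle^{4-d}$ for $\langle y\rangle\le c'\X(p)^{1/2}$, and then use $\square(p)\ge\sum_y(\tau_p\ast\tau_p)(y)^2$.
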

\begin{remark}
The upper bound on $\square(p)$ only requires that \eqref{eq:initialization} hold at $p$. (See Lemma \ref{lem:TwoTriangleSquare}.)
\end{remark}
\begin{theorem} \label{thm:susceptibility}
Assume \eqref{eq:eta} holds. There exists $C_\X>0$ such that for all $\eps>0$, we have as $p\uparrow p_c$,
\[
\X(p)=C_\X(p_c-p)^{-1}+O((p_c-p)^{-\eps}\psi_d(1/(p_c-p))).
\]
\end{theorem}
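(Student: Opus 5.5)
The plan is to work with the derivative of $1/\X$. By Russo's formula,
\[
\frac{d\X}{dp}=\sum_{x}\sum_{\{u,v\}\in\edges}\proba_p\big(\{u,v\}\text{ pivotal for }0\slr x\big).
\]
I will show that
\[
\frac{d}{dp}\Big(\frac{1}{\X(p)}\Big)=-\frac{\X'(p)}{\X(p)^2}=-\frac{1}{C_\X}+O\big(\text{error}(p)\big),
\]
with $\int_p^{p_c}\text{error}(q)\,dq=O\big((p_c-p)^{1-\eps}\psi_d(1/(p_c-p))\,(p_c-p)\big)$ after inversion. Since $1/\X(p_c)=0$, integrating from $p$ to $p_c$ gives
\[
\frac{1}{\X(p)}=\frac{p_c-p}{C_\X}+O\Big(\int_p^{p_c}\text{err}\Big).
\]
Inverting this yields $\X(p)=C_\X(p_c-p)^{-1}+O\big((p_c-p)^{-2}\int_p^{p_c}\text{err}\big)$. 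For the claimed error it therefore suffices to prove
\[
\Big|\frac{\X'(p)}{\X(p)^2}-\frac{1}{C_\X}\Big|\le C_\eps\,\X(p)^{\eps}\,\psi_d(\X(p))\,\X(p)^{-1}.
\]
This uses the a priori mean-field relation $\X(p)\asymp(p_c-p)^{-1}$, which follows from \eqref{eq:eta} through the classical Aizenman–Newman bounds, since the triangle condition is finite.

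The key step is to compare $\X'(p)$ with $|\edges|\,\X(p)^2$ times a correction constant. Pivotality of $\{u,v\}$ means $0\slr u$ and $v\slr x$ occur with the two clusters disjoint off the edge. Dropping disjointness gives the BK upper bound $\X'\le|\edges|\X^2$. The defect is bounded by a sum of diagrams in which the two clusters intersect. At scale $\X$ this is controlled by the square/triangle diagram at the relevant length scale $\xi\approx\X^{1/2}$. By Proposition \ref{pro:Square}, or Lemma \ref{lem:TwoTriangleSquare} for the upper bound, the macroscopic part of this interaction is $O(\psi_d(\X))$ relative to a leading quantity. The microscopic part, coming from nearby intersections, contributes a multiplicative factor $\beta(p)$ converging to a limit $\beta(p_c)$. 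Thus
\[
\frac{\X'}{\X^2}=\beta(p)\big(1+O(\psi_d(\X)/\X)\big),
\]
and I set $1/C_\X=\beta(p_c)$.

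It remains to show $|\beta(p)-\beta(p_c)|=O\big(\X^{\eps-1}\psi_d(\X)\big)$. I would do this with the paper's Markovian renormalisation, approximating $p'$-open paths by chains of pointed $p$-clusters. This should express $\beta(p)$ through local quantities, such as the expected pivotal structure near the edge. Those quantities converge as $p\uparrow p_c$ at a rate governed by the tail $\sum_{|x|>\xi}\tau_{p_c}(x)^2\ldots$ and by the sharp asymptotics of $\tau_p$ from \eqref{eq:eta}. I would apply the scheme inductively over dyadic scales $\X(p_k)=2^k$. Each step transfers the estimate from $p_k$ to $p_{k+1}$ with relative error $O(\psi_d(2^k)2^{-k})$. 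Summing these errors gives the result, and the $\eps$ absorbs logarithmic losses from the number of scales and the constants.

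The main obstacle is this last point: obtaining a quantitative rate for the convergence of the microscopic factor $\beta(p)$ rather than mere existence of the limit. Qualitative convergence follows from monotonicity and local dependence, but the rate requires decoupling the local configuration around the pivotal edge from the macroscopic clusters. I would first try to bound $|\beta(p)-\beta(p')|$ by the probability that a cluster of size about $\X(p)$ reaches back and interacts with the local neighborhood. That probability should again be a square-diagram quantity of order $\psi_d(\X)/\X$, up to $\X^\eps$ losses.
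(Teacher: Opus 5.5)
There is a genuine gap. The framing is sound: the a priori bound $\X(p)\asymp(p_c-p)^{-1}$ holds under a finite triangle, and integrating $|\X'/\X^2-1/C_\X|\le C_\eps\X^{\eps-1}\psi_d(\X)$ and inverting does give the stated error. But that derivative bound is asserted, not proved, and it is the whole difficulty. Russo's formula and the off method give
\[
\X'(p)=\frac{1}{1-p}\sum_{(u,v)\in\vedges}\sum_{A}\proba_p(\C(0)=A)\,\1_{u\in A,\,v\notin A}\,\E_p|\C(v)\offx{v}A| .
\]
The deficit relative to the BK bound $\Omega\X^2$ (not $|\edges|\X^2$) is controlled only by $\X^2$ times bubble- and triangle-type diagrams. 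These are $O(1)$ but not small when $\nabla(p_c)$ is not small. So $\X'/\X^2$ differs from $\Omega$ by an order-one amount built from size-biased, IIC-type quantities, such as $\E_p[|\C(0)|\1_{0\slr e}]/\X(p)$ and $\E_p|\C(v)\offx{v}A|/\X(p)$ for local $A$.

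When the adjacent cluster cuts $\C(v)$ near $v$, a macroscopic piece is lost. So these are ratios of macroscopic quantities, not a "microscopic factor". The square diagram only controls interactions between non-adjacent clusters, which is exactly why the paper keeps adjacent correlations exactly in its Markov chain. You never define $\beta(p)$ and never derive $\X'/\X^2=\beta(p)(1+O(\psi_d(\X)/\X))$. You give no argument even for the existence of $\lim_{p\uparrow p_c}\beta(p)$. "Monotonicity and local dependence" do not apply: $\X'/\X^2$ is not known to be monotone, and the relevant quantities are not local. Existence of $\lim(p_c-p)\X(p)$ itself was obtained only recently, via multi-arm IICs and with no rate \cite{ScalingTorus}. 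Your dyadic scheme with relative error $O(\psi_d(2^k)2^{-k})$ per step just restates the missing bound $|\beta(p_{k+1})-\beta(p_k)|\lesssim\psi_d(2^k)2^{-k}$. Note also that you are targeting derivative asymptotics, which are stronger than what the theorem asserts.

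The idea you are missing is that the paper never needs any $p$-dependent coefficient to converge, and never differentiates. It fixes $p$ and takes $p'$ in the window $\X(p)^{1+\eps/2}\le\X(p')\le\X(p)^{1+\eps}$.

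1. Theorem \ref{thm:PointwiseError} gives $\X(p')=\|\tau^\square_{p,p'}\|_1+O(\psi_d(\X(p))\X(p)^{2\eps})$.
2. The Markov-chain identity \eqref{eq:Tau_X_Diff} and exponential mixing (Lemma \ref{eq:L1Diff_After_Mixing}) reduce $\|\tau^\square_{p,p'}\|_1$ to its first $O(\log\X(p))$ terms plus a pole. Those terms are then Taylor-expanded, and Theorem \ref{thm:p_c-p_c} replaces $\tilde p_c(p)$ by $p_c$.
3. Up to the same error, $p'\mapsto\|\tau^\square_{p,p'}\|_1$ is therefore a Laurent polynomial $\sum_{i=-1}^N a_i(p)(p_c-p')^i$ with unknown coefficients.
4. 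Since $P_\lambda f(x)=f(x)-\alpha^\lambda f(x/\alpha)$ annihilates $x^\lambda$, this yields $P_{-1}P_0\cdots P_N\Phi(x)=O(\psi_d(1/x)x^{-2\eps})$ for $\Phi(x)=\X(p_c-x)$, without knowing the $a_i(p)$.
5. Lemmas \ref{lem:downward_PolyPerateur1} and \ref{lem:downward_PolyPerateur2} invert the operators. $C_\X$ emerges only at the end, from the monotonicity of $\X$ and a telescoping sum.

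A derivative-based route would need a genuinely new quantitative decoupling estimate for the IIC-type factors above; the paper's annihilation trick bypasses this entirely.
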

In this paper, we are also interested in the average radius of gyration given by
\begin{equation}
\xi_2(p) := \sqrt{\frac{1}{\X(p)} \sum_{x\in \Z^d} \|x\|_2^2 \tau_p(x)}.
\label{eq:def-xi2}
\end{equation}
\begin{theorem} \label{thm:gyration}
Assume \eqref{eq:eta} holds. There exists $C_\xi>0$ such that for every $\eps>0$ we have as $p\uparrow p_c$,
\[
\xi_2(p)=C_\xi(p_c-p)^{-1/2}+O((p_c-p)^{1/2-\eps} \psi_d(1/(p_c-p))).
\]
\end{theorem}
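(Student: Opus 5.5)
The plan is to derive Theorem \ref{thm:gyration} from an asymptotic expansion of the second moment
\[
M_2(p):=\sum_x \|x\|_2^2\tau_p(x)=\X(p)\,\xi_2(p)^2 .
\]
Combined with Theorem \ref{thm:susceptibility}, it suffices to show that $M_2(p)=C_2(p_c-p)^{-2}+O((p_c-p)^{-1-\eps}\psi_d(1/(p_c-p)))$ for some $C_2>0$. Indeed, $\xi_2^2=M_2/\X$ then equals $(C_2/C_\X)(p_c-p)^{-1}\bigl(1+O((p_c-p)^{1-\eps}\psi_d)\bigr)$. Taking square roots gives $\xi_2=C_\xi(p_c-p)^{-1/2}+O((p_c-p)^{1/2-\eps}\psi_d)$ with $C_\xi=\sqrt{C_2/C_\X}$.

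To get the asymptotics of $M_2$, I would use a differential relation in $p$, mirroring the proof of Theorem \ref{thm:susceptibility}. By Russo's formula, $\frac{d}{dp}\tau_p(x)$ is a sum over edges $\{u,v\}$ of the probability that $0\slr u$, $v\slr x$, and the edge is pivotal. To leading order this is $\sum_{u,v}\tau_p(u)J(v-u)\tau_p(x-v)$, up to a correction controlled by the triangle/square diagram. This is where the Markov-chain approximation of open paths at $p'$ by chains of pointed $p$-clusters enters. Multiplying by $\|x\|^2$ and using $\|a+e+b\|^2=\|a\|^2+\|e\|^2+\|b\|^2+$ cross terms, whose sums vanish by lattice symmetry, gives
\[
\frac{dM_2}{dp}\approx 2|\edges_0|\,M_2\,\X+\sigma_L^2\X^2 .
\]
Here $|\edges_0|$ is the degree and $\sigma_L^2$ the second moment of the step. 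The analogous relation $\frac{d\X}{dp}\approx |\edges_0|\X^2$ yields $\X\sim C_\X(p_c-p)^{-1}$. Substituting it, $\frac{d}{dp}(M_2/\X^2)\approx \sigma_L^2+{}$error. Integrating from $p_0$ up to $p$ then gives $M_2/\X^2\to$ a positive finite constant $K$, so $M_2\sim K C_\X^2 (p_c-p)^{-2}$. The positive limit comes from the constant term $\sigma_L^2$ plus the integrable error.

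Next I would bound the errors. The correction to Russo's formula for $\frac{d}{dp}\tau_p$, once weighted by $\|x\|^2$, has two parts. One is a term of relative size $\square(p)=O(\psi_d(\X))$ by Proposition \ref{pro:Square}, times $\X^2$ or $M_2\X$. The other comes from the displacement inside the intermediate pointed clusters, bounded by $\X\cdot\sum_x\|x\|^2\tau_p\ast\tau_p(x)$-type diagrams. After dividing by $\X^2\sim (p_c-p)^{-2}$, this leaves a remainder of order $(p_c-p)^{1-\eps}\psi_d(1/(p_c-p))$ in the derivative of $M_2/\X^2$. I would integrate it from $p$ to $p_c$ (not from $p_0$) to express $M_2/\X^2=K-\int_p^{p_c}(\cdots)$. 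This gives $M_2/\X^2=K+O((p_c-p)^{1-\eps}\psi_d)$ for the stated error. The $\eps$ loss absorbs logarithmic factors from the inductive transfer $p\to p'$.

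The main obstacle is the error control for the weighted derivative. The $\|x\|^2$ weight can concentrate on the large-distance region where the cluster correlations are strongest, so I need weighted diagram bounds, such as $\sum_x\|x\|^2(\tau_p^{\ast 3})(x)\lesssim M_2\X^2$, with only a $\square(p)$ loss. I would first try to prove these directly from the upper bound $\tau_p(x)\le C\langle x\rangle^{2-d}$ (\eqref{eq:initialization}, propagated to all $p\in[p_0,p_c)$ by the inductive scheme). I would combine this with the exponential decay of $\tau_p$ beyond the correlation length $\xi_2(p)$ to cut off the sums at scale $(p_c-p)^{-1/2-\eps}$; this cutoff is exactly where the $\eps$ loss appears.
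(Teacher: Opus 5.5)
Your last step, passing from the asymptotics of $M_2=\|\tau_p\|_2^2$ and Theorem \ref{thm:susceptibility} to $\xi_2$, is how the paper concludes (via Theorem \ref{thm:Gyration_NotRenormalized}). The gap is in how you obtain the asymptotics of $M_2$.

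\textbf{Why the differential approach fails.} Russo's formula combined with the off method is exact here: $\frac{d}{dp}\tau_p=\frac{1}{1-p}\tau_p^{\square,1}$. Hence $\frac{d\X}{dp}=\frac{\Omega\X(p)^2}{1-p}\proba_p^\otimes(\Tcut>1)$. The factor $\proba_p^\otimes(\Tcut>1)$, the probability that two adjacent pointed clusters do not cut each other, is of order $1$ and bounded away from $1$. It is at most $1-p$, since $\vA_0\in\Ckill$ with probability at least $p$.

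A similar $O(1)$ local factor appears in $\frac{dM_2}{dp}$. There it is moreover correlated with the weights $\|\Delta(\vA_i)\|_2^2$ and $\langle\Delta(\vA_0),\Delta(\vA_1)\rangle$.

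You treat these factors as corrections ``of relative size $\square(p)$''. But $\square(p)\geq 1$ and $\square(p)=\Theta(\psi_d(\X(p)))$, so they are not errors, and the bookkeeping does not close. A relative correction $\square(p)$ to the term $2\Omega M_2\X$ contributes $\square(p)M_2\X/\X^2\asymp\psi_d(1/(p_c-p))\,(p_c-p)^{-1}$ to $\frac{d}{dp}(M_2/\X^2)$. This is not integrable at $p_c$, let alone $O((p_c-p)^{1-\eps}\psi_d)$.

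In
\[
\frac{d}{dp}\frac{M_2}{\X^2}=\frac{M_2}{\X^2}\left(\frac{M_2'}{M_2}-2\frac{\X'}{\X}\right),
\]
both terms in the parenthesis are of order $(p_c-p)^{-1}$. Your argument therefore needs the local factor inside $M_2'$ to equal the one inside $\X'$ up to relative error about $\psi_d(\X)/\X$. Equivalently, the non-cutting event between adjacent clusters must be asymptotically uncorrelated with the macroscopic displacements to that precision. Your plan neither states nor proves this. It is exactly the obstruction that forces lace-expansion methods to assume $\nabla(p_c)\ll 1$.

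\textbf{How the paper avoids this.} The paper never compares derivatives.
\begin{itemize}
\item It jumps macroscopically from $p$ to $p'$ with $\X(p)^{1+\eps/2}\leq\X(p')\leq\X(p)^{1+2\eps}$. In this range Proposition \ref{pro:L2Error} makes $\|\tau_{p'}\|_2^2-\|\tau^\square_{p,p'}\|_2^2$ negligible.
\item The local non-cutting factors are kept exactly inside the Markov chain of pointed clusters.
\item Exponential mixing (Theorem \ref{thm:MixingTime} and Lemmas \ref{lem:CovarianceDecay}--\ref{lem:End_L2}) shows that, beyond a logarithmic scale, $\|\tau_p^{\square,k}\|_2^2\,((\tilde p_c(p)-p)/(1-p))^k$ is affine in $k$ up to negligible errors.
\item Hence $\|\tau^\square_{p,p'}\|_2^2$ is a Laurent polynomial in $\tilde p_c(p)-p'$ with unknown $p$-dependent coefficients (Proposition \ref{pro:PolynomialApproximation_For_Xi2}).
\item After replacing $\tilde p_c(p)$ by $p_c$ (Theorem \ref{thm:p_c-p_c}, Lemma \ref{lem:Derive_L2}), the operator $P_{-2}P_{-1}\cdots P_N$ annihilates all these coefficients. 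Lemmas \ref{lem:downward_PolyPerateur1} and \ref{lem:downward_PolyPerateur2} then invert it.
\end{itemize}
The $O(1)$ local factors never need to be computed or shown to cancel.
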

The equivalence for $\X(p)$ was only proved recently in a paper by the author and Nachmias \cite{ScalingTorus}, using a different approach based on multi-arm infinite incipient clusters, and $\xi_2(p)$ was only known up to a constant.
We believe that the error factors given by Theorems \ref{thm:susceptibility} and \ref{thm:gyration} are near optimal.

Finally, as we will see why later, proving the next result is central to our proof of Theorem \ref{thm:Main}.
\begin{theorem} \label{thm:tau_local_CLT}
Assume \eqref{eq:eta} holds. There exist $\eps,C_\Psi, C_\Psi'>0$ such that as $p\uparrow p_c$ and $x\to \infty$,
\[
\tau_p(x) =C_\Psi \int_0^\infty e^{-t/\X(p)} e^{-C_\Psi'\|x\|^2_2/t}t^{-d/2}dt+o(\|x\|_2^{2-d-\eps}).
\]
\end{theorem}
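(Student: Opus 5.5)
We prove Theorem \ref{thm:tau_local_CLT} by induction in $p$, along a sequence $p_0<p_1<\dots\uparrow p_c$ chosen so that $\X(p_{k+1})\approx \lambda \X(p_k)$ for a fixed large $\lambda>1$. The key idea of the paper is to approximate open paths at $p'=p_{k+1}$ by a Markov chain of pointed $p$-open clusters. Concretely, a $p'$-open path from $0$ to $x$ is decomposed along the edges in $\edges_{p'}\setminus\edges_p$ into a sequence of $p$-clusters, linked by sprinkled edges. Summing over the number $n$ of sprinkled edges and neglecting the intersection constraints between successive clusters gives
\[
\tau_{p'}\approx \sum_{n\ge 0}\big(\tau_p\ast (\delta\, D)\big)^{\ast n}\ast \tau_p ,
\]
where $\delta\approx (p'-p)/p$ and $D$ is the step distribution. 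This is a random walk whose increments are one $p$-cluster displacement plus one edge. Inclusion--exclusion bounds the error from ignoring intersections: the interaction terms are diagrams with at least one square or triangle at the scale of the increment. By Lemma \ref{lem:TwoTriangleSquare} and Proposition \ref{pro:Square}, their relative size is $O(\psi_d(\X(p))/\X(p))$ times a polynomial factor.

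From the approximate renewal identity we extract the local CLT by Fourier analysis. Write $\hat\tau_{p'}(k)\approx \hat\tau_p(k)/(1-\delta \hat D(k)\hat\tau_p(k))$. The induction hypothesis at $p$ gives $\hat\tau_p(k)\approx \X(p)/(1+C\xi_2(p)^2|k|^2)$ at small $k$. Substituting, the same functional form reappears at $p'$, with updated parameters $\X(p')$ and $\xi_2(p')$ obtained by an explicit recursion. We track the ratio $\xi_2^2/\X$, which should converge geometrically along the sequence. Its limit fixes $C_\Psi'$, and matching the $1/|k|^2$ coefficient at criticality, using \eqref{eq:eta}, fixes $C_\Psi$. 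Inverting the Fourier transform of $\X/(1+a\X|k|^2)$ produces exactly $\int_0^\infty e^{-t/\X}e^{-C_\Psi'\|x\|^2/t}t^{-d/2}dt$ up to constants. Theorems \ref{thm:susceptibility} and \ref{thm:gyration} come out of the same recursion and feed back into it.

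The inductive hypothesis we carry is therefore a quantitative statement at $p_k$. It has two parts: $\tau_{p_k}$ matches the Gaussian-integral profile up to an error $\eta_k\|x\|^{2-d-\eps}$, and the uniform bound \eqref{eq:initialization} holds with a constant not depending on $k$. We must show that $\eta_{k+1}\le \eta_k+c\,\lambda^{-\eps' k}$, so that the errors are summable. For the base case we use (b) and monotonicity: for some $p_0$ close to $p_c$, \eqref{eq:initialization} holds, and the profile at $p_0$ is controlled crudely. Crude control suffices because $\X(p_0)$ is large and the recursion contracts the discrepancy. The step from Fourier control to pointwise control at $\|x\|\to\infty$ needs regularity of $\hat\tau$ near $k=0$, such as bounds on the derivatives $\partial^j\hat\tau_p$. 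We obtain these from moment bounds with exponent $2+\eps$, which come from \eqref{eq:initialization} and the exponential decay beyond the correlation length $\xi_2$.

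The main obstacle is the error control in the Markov approximation uniformly in $x$. The intersection terms must be bounded not just in total mass but pointwise, with relative size $o(\|x\|^{-\eps})$ at distance $x$. Otherwise the errors accumulate over roughly $\log\X$ induction steps and destroy the constant $A$. I would first try to show that each intersection diagram is a convolution of $\tau_p$ with a kernel of mass $O(\square(p)/\X(p))$ and finite $(2+\eps)$-moment relative to $\xi_2^2$. Such a kernel perturbs only the parameters $\X$ and $\xi_2$ and contributes a summable correction to the profile. For $6<d\le 8$, where $\psi_d$ grows, the decay $\psi_d(\X)/\X\to 0$ is only polynomial, so the choice of $\lambda$ and $\eps$ must be tuned to keep this sum convergent.
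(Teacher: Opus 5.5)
Your architecture is in the spirit of the paper: decompose $p'$-open paths into $p$-clusters joined by sprinkled edges, induct in $p$, and read off a killed-Brownian Green function. The gap is in your approximation $\tau_{p'}\approx\sum_{n\ge0}(\tau_p\ast\delta D)^{\ast n}\ast\tau_p$, and in the claim that everything it neglects has relative size $O(\psi_d(\X(p))/\X(p))$. This is the independent-increment scheme the paper contrasts itself with in Section~\ref{sec:FurtherRemarks}.

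The dominant neglected interaction is between \emph{consecutive} clusters $A_{i-1}\ni e_i^-$ and $A_i\ni e_i^+$. It sits at the junction, within distance $O(1)$ of the sprinkled edge, not at the scale $\X(p)^{1/2}$ of an increment. Already at $n=1$ you count configurations in which the cluster of $e^+$ meets the cluster $A_0$ of $0$, which the true decomposition excludes. Those with $e^+\in A_0$ alone carry, after summing over $x$, at least $\rho\,\X(p)\sum_{\vec e}\proba_p(0\slr e^-,\,0\slr e^+)\ge p\,\rho\,\Omega\,\X(p)^2$ by Harris' inequality. That is a fraction at least $p$ of the main term $\rho\,\Omega\,\X(p)^2$.

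The full overcount is governed by the open triangle $(\tau_p\ast\tau_p\ast\tau_p\ast D)(0)$. Lemma~\ref{lem:TwoTriangleSquare}(b) only makes this bounded; it does not tend to $0$ as $p\uparrow p_c$, and \eqref{eq:eta} gives no smallness. So every renewal step carries a $\Theta(1)$ relative error, with two consequences.
\begin{itemize}
\item The pole of your geometric series is displaced. In \eqref{eq:def_pcp}, $\tilde p_c(p)-p=(1-p)/(\Omega\X(p)\lambda_p)$ involves the Perron eigenvalue $\lambda_p$ of the killed cluster chain, and your recursion effectively sets $\lambda_p=1$.
\item The variance per step is $V(p)=V_0(p)+2\sum_{i\ge1}V_i(p)$ for the stationary chain, not the second moment of $\tau_p\ast D$.
\end{itemize}
Hence neither $\X$ nor $\xi_2^2/\X$ follows your explicit recursion, and the errors are not summable over induction steps. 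Carrying inclusion--exclusion to all orders gives a lace expansion, which needs a small triangle — exactly the barrier the theorem is meant to avoid. Only interactions between $A_i$ and $A_j$ with $j\ge i+2$, and the non-uniqueness of the sprinkled edges, are genuine square diagrams of relative size $\X(p')\psi_d(\X(p))/\X(p)^2$.

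The missing idea is to keep the adjacent-cluster correlation exactly. Sample $A_i$ as an independent cluster of $e_i^+$ with the edges touching $A_{i-1}$ closed, and require $e_{i+1}^-\in A_i\offx{e_i^+}A_{i-1}$. This yields $\tau^\square_{p,p'}$, the Green function of a killed Markov chain of pointed clusters. Only after this step are all remaining errors square diagrams (Theorem~\ref{thm:PointwiseError}). Exploiting it requires:
\begin{itemize}
\item the lower bound on $\lambda_p$ (Theorem~\ref{thm:LowerLambda});
\item the Doob transform with exponential mixing (Theorem~\ref{thm:MixingTime});
\item an LCLT for the chain's displacement;
\item identification of $E_{p,\infty}$ and $V(p)$ through the sharp asymptotics of $\X(p)$ and $\|\tau_p\|_2^2$ (Lemmas~\ref{lem:E_p,infty_accurate} and~\ref{lem:V_p,infty_accurate}).
\end{itemize}

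Two further steps of your plan are also insufficient as stated. First, moment bounds of order $2+\eps$ control $\Four[\tau_p]$ only near $\kappa=0$, while a pointwise error $o(\|x\|_2^{2-d-\eps})$ needs control at all frequencies. The paper proves separate intermediate- and high-frequency bounds (Theorems~\ref{thm:FT_Middle} and~\ref{thm:FT_High}, via local modifications of clusters), and compares $\tau_{p'}$ with $\tau^\square_{p,p'}$ in real space.

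Second, your recursion $\eta_{k+1}\le\eta_k+c\lambda^{-\eps'k}$ is additive, so it does not show that a crudely controlled base profile is forgotten. The paper avoids iterating the profile altogether. For each $x$ it jumps directly from $p_x$ with $\X(p_x)=\|x\|_2^{3/2}$ to $p'$. Roughly $\|x\|_2^{1/2}$ cluster steps then separate $0$ from $x$, and $\tau_{p_x}$ enters only through $E_{p_x,\infty}$ and $V(p_x)$.
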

\begin{remark}
Although we did not optimize the error term, the proof shows that any $\eps<1/8$ works.
\end{remark}
Following the intuition that $\tau_p$ behaves as the Green's function of a random walk stopped at an exponential time, Theorem \ref{thm:tau_local_CLT} can be interpreted as a local central limit theorem (LCLT).
This intuition will be made fully rigorous in our proofs.

In the same setting, the previous best subcritical bound for $\tau_p$ is due to Hutchcroft, Michta, and Slade \cite[Theorem 1.1]{hutchcroft2023high}, who proved that for some $c,C>0$, for every $p<p_c$ and $x\in \Z^d$,
\begin{equation}
\tau_p(x)\leq C \| x\|_2^{2-d} \exp(-c\|x\|_2(p_c-p)^{1/2}). \label{eq:Plateau}
\end{equation}
This bound is less accurate than Theorem \ref{thm:tau_local_CLT} when $\tau_p(x)\geq \langle x\rangle^{2-d-\eps}$, and becomes more accurate when $\|x\|_2/\log(\|x\|_2)\gg (p_c-p)^{-1/2}$ due to the exponential tail.
For more details, we refer to \cite{hutchcroft2023high} for a historical account of slightly subcritical estimates for $\tau$.

\paragraph{Other notations}
Let $\vedges$ be the set of oriented edges on $(\Z^d,\edges)$. By convention, for $\vec e\in \vedges$ we write $e^-,e^+$ for the vertices such that $\vec e=(e^-,e^+)$.
Let $\Omega$ be the degree of a vertex in $(\Z^d,\edges)$.
For every set or graph $S$ and $x\in \Z^d$, let $S+x$ be the natural translation of $S$ by $x$.
For $x=(x_i)_{1\leq i\leq d},y=(y_i)_{1\leq i\leq d}\in \R^d$, we write $\langle x,y\rangle=\sum_{i=1}^d x_i y_i$ for the usual scalar product.
Also, for every function $f:\Z^d\mapsto \R$, let $\|f\|_1=\sum_{x\in \Z^d} |f(x)|$ and let $\|f\|_\infty=\max_{x\in \Z^d} |f(x)|$.
We also write $\|f\|_2^2:= \sum_{x\in \Z^d} |f(x)| \|x\|_2^2$ for the weighted second moment of $f$, and let $\Ft{f}:\kappa \mapsto \sum_{x\in \Z^d} f(x) e^{\i\langle \kappa,x\rangle}$ be the Fourier transform of $f$.

For a graph $G=(V,E)$, we write $x\in G$ to mean $x\in V$. Let $|G|=|V|=\#V$. For any two graphs $G=(V,E), G'=(V',E')$, let $G\cup G'=(V\cup V',E\cup E')$, and let $G\cap G'=(V\cap V',E\cap E')$.
Let $\Comp(x)$ be the set of finite connected subgraphs of $\Z^d$ containing $x$.
Also, let $\vec \Comp(x)$ be the set of pointed clusters $(C,\vec e)$ such that $C\in \Comp(x)$ and $e^-\in C$.
Let $\Comp=\bigcup_x \Comp(x)$, and let $\vec \Comp=\bigcup_x \vec \Comp(x)$.
For every $x\in \Z^d, \vec C=(C,\vec e)\in \vec \Comp$, we write $x\in \vec C$ to mean $x\in C$.
For every set $S$, let $G\backslash S$ denote the subgraph $(V\backslash S, E\backslash (V\times S))$. Let $G\offx{x} S$ denote the cluster (i.e. largest connected subgraph) of $G\backslash S$ containing $x$.
By convention, if $x\notin G\backslash S$, then $G\offx{x} S$ is the empty graph written $\emptyset$. Also, if $G'=(V',E')$ is another graph, we write $G\backslash G'=G\backslash V'$ and $G\offx{x} G'=G\offx{x} V'$.

We write $\circ$ for the disjoint occurrence of events, meaning that they hold on edge-disjoint sets.
This notation is used both in the classical sense, notably with the BK inequality \cite{MR799280}, and for events restricted to some fixed graphs.
For instance, "$(0\slr x)\circ (x\slr y) \text{ in }G$" means that there exist two edge-disjoint paths from $0$ to $x$ and from $x$ to $y$ in the graph $G$.
\subsection{About the proofs} \label{subsec:about-proofs}
The bulk of the proofs is based on a new inductive approximation for the two-point function $\tau$.
Very often for some $p<p'$ with $\X(p)\ll \X(p')$, we assume some proper bounds on $\tau_{p}$, and we aim to approach $\tau_{p'}$.
To this end, we decompose a $p'$-open path from $0$ to $x$ as a sequence of $p$-open clusters $(A_i)_{0\leq i\leq k}$ with some oriented edges $\vec e_1, \vec e_2,\dots, \vec e_k$ in between that are $p'$-open but $p$-closed.
Naturally, those clusters are correlated, and one would usually neglect this correlation.
The main novelty of our approach is that we now keep the correlation between adjacent clusters.
To do so, we use Aizenman's off method, which in its simplest form states the following:
\begin{lemma}
Let $x\neq y \in \Z^d$. For every $X\in \Comp(x)$ with $y\notin X$, we have equality between (a) the conditional law of $\C_p(y)$ on the event $\{\C_p(x)=X\}$, and (b) the law of $(\C_p(y) \offx y X)$.
\end{lemma}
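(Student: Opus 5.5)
The plan is to prove the identity by conditioning on the cluster $\C_p(x)$ and using the independence of edge states in Bernoulli percolation. Set $\partial_E X$ for the set of edges of $\edges$ with exactly one endpoint in $X$, and $E(X)$ for the edges with both endpoints in $X$. The event $\{\C_p(x)=X\}$ is then an intersection of two conditions. The edges of $X$ must be open, the edges in $E(X)\setminus X$ must be closed, and every edge of $\partial_E X$ must be closed. (If $X$ is an induced-type condition, the precise description is: the open edges with at least one endpoint in $X$ are exactly the edges of $X$.) In particular, this event is measurable with respect to the states of edges having at least one endpoint in the vertex set of $X$, and it fixes all of those states deterministically. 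Edges with both endpoints outside $X$ remain independent Bernoulli$(p)$ variables under the conditioning, with their original law.

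Next I will identify the cluster of $y$ on $\{\C_p(x)=X\}$. Since $y\notin X$, every edge with an endpoint in $X$ is either closed or lies inside $X$, and edges inside $X$ never touch $y$'s cluster: otherwise $y$ would be connected to $x$, forcing $y\in X$. Thus, on this event, $\C_p(y)$ is the cluster of $y$ in the graph spanned by open edges with both endpoints in $\Z^d\setminus X$. This is exactly $\C_p(y)\offx y X$. Here I use the paper's convention $G\backslash S=(V\backslash S, E\backslash(V\times S))$, which removes the vertices of $X$ and all edges touching them, so the two graphs agree pathwise on the event.

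To conclude, the random variable $\C_p(y)\offx y X$ is a function only of the edges with both endpoints outside $X$. Hence it is independent of the event $\{\C_p(x)=X\}$, which depends only on edges touching $X$. Therefore, for any set $\mathcal{S}$ of graphs,
\[
\proba_p(\C_p(y)\in\mathcal{S}\mid \C_p(x)=X)=\proba_p(\C_p(y)\offx y X\in\mathcal{S}\mid \C_p(x)=X)=\proba_p(\C_p(y)\offx y X\in\mathcal{S}),
\]
where the first equality is the pathwise identification above and the second is independence. We also need $\proba_p(\C_p(x)=X)>0$ for the conditioning to be meaningful, which holds for $p\in(0,1)$ because $X$ is finite.

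The only delicate point, and the one I expect to require care, is the bookkeeping of which edges the conditioning event depends on. In particular, I must check that the edges between $X$ and its complement are all forced closed, and that they are excluded from the off-graph by the convention $E\backslash(V\times S)$. This is what makes the two descriptions coincide exactly rather than merely in distribution.
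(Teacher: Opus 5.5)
Your argument is correct, but it is not the route the paper takes. The paper never proves this introductory statement on its own: it is the case $k=1$ of Lemma~\ref{lem:GeneralOff}(a),(c), and that proof is a direct edge count. For disjoint finite $X\ni x$ and $Y\ni y$ one has $\proba_p(\C(x)=X,\C(y)=Y)=p^{\#E(X\cup Y)}(1-p)^{\#E^\uparrow(X\cup Y)}$. This factors as $\proba_p(\C(x)=X)\,\proba_p(\C(y)\offx{y}X=Y)$, because the off-event forces exactly the edges of $Y$ open and the edges of $E^\uparrow(Y)\setminus E^\uparrow(X)$ closed. When $Y$ meets $X$, both sides vanish, since the off-cluster cannot contain vertices of $X$.

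You argue structurally instead. The event $\{\C_p(x)=X\}$ depends only on edges touching $V(X)$, while $\C_p(y)\offx{y}X$ depends only on edges avoiding $V(X)$. Moreover, $\C_p(y)$ and $\C_p(y)\offx{y}X$ coincide on the event.

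What each route buys:
\begin{itemize}
\item Your version identifies the two laws on all measurable sets of subgraphs at once, with no restriction to finite targets $Y\in\Comp$. It therefore also covers the case where $\C_p(y)$ may be infinite, and it makes the underlying independence transparent.
\item The paper's count gives an explicit product formula that extends verbatim to $k+1$ clusters and to the iterated sets $\B_j$, which is what Section~\ref{sec:Diagrams} actually uses. Your approach would need an induction, conditioning on one cluster at a time.
\end{itemize}

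One point should be made explicit. The independence step needs $\C_p(y)\offx{y}X$ to be a function of the edges avoiding $V(X)$ for \emph{every} configuration, not only on $\{\C_p(x)=X\}$. This holds because $\C_p(y)\offx{y}X$ always equals the cluster of $y$ in the open graph restricted to $\Z^d\setminus V(X)$: every open path from $y$ that avoids $X$ lies in $\C_p(y)$. Your second paragraph only establishes an identification on the event, so state this general identity before invoking independence.
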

In other words, when we keep the correlation between $A_i$ and $A_{i-1}$, we may see $A_i$ as an independent $p$-open cluster of $e_i^+$ where the edges adjacent to $A_{i-1}$ are closed.
When closing those edges, we have to verify that we do not cut the path from $0$ to $x$.
For that reason, we need to check that for every $1\leq i \leq k$, we can still go from $e_i^+$ to $e_{i+1}^-$ in $A_i$ even after removing the edges adjacent to $A_{i-1}$.
Simply put, we must have $e_{i+1}^- \in A_i\offx {e_i^+} A_{i-1}$.

All this motivates the following approximation.
We look at the probability $\tau_{p,p',k}(x)$ that there exists a $p'$-open path between $0$ and $x$ such that $k\geq 0$ is the minimum number of edges on this path that are $p'$-open and $p$-closed.
We approximate $\tau_{p,p',k}(x)/((p'-p)/(1-p))^{k}$ by
\begin{equation}
\tau^{\square,k}_p(x):=
\hspace{-1em} \sum_{\substack{ \vec e_1,\dots \vec e_k\in \vedges \\  e_0^+=0, \, \, e_{k+1}^-=x}}  \hspace{-0.5em}
\sum_{\substack{ A_0,A_1,\dots, A_k\in \Comp \\ A_{-1}=\emptyset}}
\prod_{i=0}^k \proba_p(\C(e_i^+)=A_i)
\prod_{i=0}^k \1(e_{i+1}^-\in (A_i\offx{e_i^+} A_{i-1})).  \label{eq:def_TauSquare}
\end{equation}
Thus we approximate $\tau_{p'}=\sum_{k\geq 0} \tau_{p,p',k}$ by
$\tau^\square_{p,p'}:=\sum_{k\geq 0} ((p'-p)/(1-p))^k\tau^{\square,k}_p$.

The proofs are organized as follows.
In Section \ref{sec:Diagrams}, we show an upper bound and a lower bound on $\tau_{p'}-\tau^\square_{p,p'}$.
The errors correspond to several sums that can be represented as "square diagrams".
Those errors appear for three reasons.
The first is that we forget the correlation between $A_i$ and $A_j$ when $i\leq j-2$, and $A_i$ may cut the path from $0$ to $x$ in $A_j$.
The second is that the choice of the edges $(\vec e_1,\dots, \vec e_k)$ may not be unique.
The last concerns the upper bound and is more technical to understand: when taking into account the correlations between the different clusters, $A_{i-1}$ is not really the cluster of $e_{i-1}^+$, but only an upper bound for the inclusion.
So the fact that $A_{i-1}$ cuts the path from $e_i^+$ to $e_{i+1}^-$ in $A_i$ does not necessarily mean that $\C_p(e_{i-1}^+)$ cuts this path.

In Section \ref{sec:MarkovChain}, which is independent of Section \ref{sec:Diagrams}, we interpret $((A_i-e_i^+,\vec e_{i+1}-e_i^+))_{0\leq i \leq k}$ as a Markov chain on $\vec \Comp(0)$.
To this end, we first construct a preliminary transition kernel, based on \eqref{eq:def_TauSquare},
then we study its spectral properties, and we perform a Doob $h$-transform.
We then show, under \eqref{eq:initialization}, that this Markov chain mixes exponentially fast.
It is noteworthy that in this interpretation $\tau_p^{\square,k}(x)$ is related to the probability that the Markov chain of pointed $p$-open clusters ends at $x$,
that is the probability that we have $x=\sum_{i=0}^k (e_{i+1}^+-e_i^+)-(e_{k+1}^+-e_{k+1}^-)$.

In Section \ref{sec:FirstApplications}, we upper-bound the errors that appear in Section \ref{sec:Diagrams}.
We first focus on the case where \eqref{eq:initialization} is satisfied at $p$ with nearly no information at $p'$.
We then show exponential tails for $\tau_p$ under \eqref{eq:initialization}.
Finally we upper-bound the errors in the case where \eqref{eq:initialization} is satisfied both at $p$ and $p'$.

We then prove Theorems \ref{thm:susceptibility} and \ref{thm:gyration} in Section \ref{Sec:SharpAsymptotics}.
The main idea is to use the fast mixing of the Markov chain of $p$-open clusters defined in Section \ref{sec:MarkovChain}
to approach $p'\mapsto \|\tau_{p,p'}^\square\|_1$ and $p'\mapsto \|\tau_{p,p'}^\square\|^2_2$
by rational functions whose coefficients depend on $p$,
and whose degree is independent of $p$.
Then, by using the bounds of Section \ref{sec:FirstApplications}, we may approach $p'\mapsto \X(p')$ and $p'\mapsto \|\tau_{p'}\|_2^2$ with the same functions in some interval depending on $p$.
Finally we apply and invert some well-chosen linear operators to estimate $\X(p')$ and $\|\tau_{p'}\|_2^2$ as $p'\uparrow p_c$ without knowing any of the coefficients.

In Section \ref{sec:TCL} we perform the induction for Theorem \ref{thm:Main}, proving Theorem \ref{thm:tau_local_CLT} along the way. To this end, we first take $p<p_0$ such that the estimates of Section \ref{sec:FirstApplications} hold,
then take $p'>p_0$ such that $\X(p_0)\ll \X(p')\ll \X(p)^{2}/\square(p)$.
We then perform Fourier-transform calculus to show an LCLT for $\sum_{i=0}^k (e_{i+1}^+-e_i^+)$ where the edges correspond to the Markov chain of pointed $p$-open clusters defined in Section \ref{sec:MarkovChain}.
This in turn implies a LTCL for $\tau^{\square,k}_{p}$, and hence, by summing, for $\tau^{\square}_{p,p'}$.

At this stage, we only know that $\tau^\square_{p,p'}$ is close to a Laplace distribution whose parameters are related to some quantities associated with the Markov chain of $p$-open clusters.
To estimate those quantities, we similarly approach $\tau^\square_{p,p_0}$ by another Laplace distribution.
Using the induction hypothesis \eqref{eq:initialization}, and the bounds of Section \ref{sec:FirstApplications}, we can instead estimate those quantities using $\tau_{p_0}$.
In other words, we manage to approximate $\tau^\square_{p,p'}$ by a Laplace distribution whose parameters are described by $\X(p_0),\X(p')$, and the parameters of another Laplace distribution approximating $\tau_{p_0}$.
To conclude the induction, we use the bounds of Section \ref{sec:FirstApplications} to approach $\tau_{p'}$ by $\tau^\square_{p,p'}$.
\subsection{Further remarks} \label{sec:FurtherRemarks}

\textbf{Comparison with \cite{RWforMeanfield}} Independently, Duminil-Copin, Markar, Panis and Slade recently developed a similar renormalisation approach to show mean-field behavior in high dimensions.
As in this paper they inductively estimate $\tau_{p'}$ using $\tau_p$ by comparing with a random walk, but they use independent increments instead of Markovian increments. This diverging point comes from different objectives.
In \cite{RWforMeanfield}, the aim is to give a simple probabilistic proof that applies to many models (self avoiding walks, percolation, spins, and lattice trees). 
Whereas, our goal is to  bypass the barrier posed by local correlations that prevented a numerical proof of mean-field behavior in dimensions $7\leq d\leq 10$ and prevented sharp estimates on the two-point function. This explains why the proofs are totally different. The reader should thus think of the two papers not as overlapping, but as complementary.

\textbf{Numerical computations} 
By Theorem \ref{thm:Main}, mean-field behavior in dimensions $7\leq d\leq 10$ can be proved in finite time.
However, we do not determine whether this can be done within a reasonable human lifetime.
In fact, we do not even compute explicitly the function in Theorem \ref{thm:Main}.
Its computability follows simply from the fact that all the arguments in this paper are quantitative: every constant in the results can be replaced by an explicit function of the previous constants.

Besides keeping the presentation concise, we avoid doing so because this would result in absurdly large and useless constants.
The main reason is that the local modifications in Section \ref{sec:MarkovChain} are extremely costly.
Nonetheless, it seems that the triangle condition is small enough, even in dimension $d=7$, to bypass those local modifications entirely through numerical estimates.

There are two main reasons why we do not yet have a numerical proof of mean-field behavior for percolation in high dimensions.
First, there appears to be a serious computational barrier: numerically estimating $\tau_{p_0}$ seems very hard when $\X(p_0)\gtrsim 100$ and impossible when $\X(p)\gtrsim 1000$.
This forces us to consider Markov chains at best of length at most $\X(p_0)/\psi_d(\X(p_0))$, which is quite short.
Second, we do not have good numerical non-asymptotic LTCL estimates for Markov chains with short mixing times.
These issues are amplified in dimension $d=7$, where $\psi_d(\X(p))=\sqrt{\X(p)}$: this requires taking $\X(p_0)$ significantly larger, while $\tau_{p_0}$ is harder to compute for the same $\X(p_0)$.

To address these issues, one needs to substantially reduce the errors in these approximations.
One possibility would be to use the local behavior of near-critical clusters to better understand the Markov chain and its increments.
A second possibility would be for the Markov chain to retain more than the last cluster in its memory.
This seems much more effective, but would also require significant new theoretical ideas.
The errors would then no longer correspond to square diagrams, but to pentagonal, hexagonal, or higher-order diagrams.
We emphasize that simply using a Markov chain with memory two or three would not suffice.
Indeed, some errors come from the non-uniqueness of the choice of the edges $\vec e_1,\dots, \vec e_k$ and require a separate treatment. Notably the error $\Diamond_{p,p'}$ of Section \ref{sec:Diagrams} would not disapear by naively looking at longer memory Markov chain. 

Another way to significantly reduce the errors would be to develop a different renormalisation scheme to estimate inductively, as $L\to \infty$,
$\proba_{p_c}(0\slr x, d_{\text{chemical}}(0,x)=L).$
This would also provide new interesting asymptotic results and might be useful for checking the initialization.
Both the longer-memory construction and this alternative scheme should significantly reduce the numerical errors, while making the diagrams much more complex in theory.

\textbf{AI statement.} AI was used during this project to make the table of notations, revise english, and  proofread mathematics. Some part of proofs in Sections \ref{Sec:SharpAsymptotics} and \ref{sec:TCL} was also written using AI and were always personally rewritten. (Proposition \ref{pro:MixingTimeFunc}, Lemma \ref{lem:Bulk_L2} adapted from a previous version, Proposition \ref{pro:MiddleRotation}, Lemma \ref{lem:NotAligned}, final proof of Theorem \ref{thm:FT_Low}, Lemmas \ref{lem:V_p,infty_accurate} and \ref{lem:Compare_Green}, final proof of Proposition \ref{pro:Square}.) I also used AI to search for numerical estimates in dimensions $7\leq d\leq 10$, as well as for ways to obtain good non-asymptotic numerical LTCL, without success.
The setup has been VSCode+internal AI, with an external chat notably to finetune AI prompts and skills. 

\textbf{Aknowledgements} I am particularly grateful to Asaf Nachmias, Romain Panis, Tom Hutchcroft, and S\'ebastien Martineau for the fruitfull discussions we got about this project. Without them, I would have stayed stuck doing numerical computations, and this paper would not have been made. 

\subsection{Table of notation} \label{sec:Notation}
\small
\begin{longtable}{p{0.2\textwidth}p{0.7\textwidth}p{0.15\textwidth}}
\caption{Notation used throughout the paper.}\label{tab}\\
\hline
Symbol & \makebox[\linewidth][c]{Explanation} & Location\\
\hline
\endfirsthead
\hline
Symbol & \makebox[\linewidth][c]{Explanation} & Location\\
\hline
\endhead

\multicolumn{3}{c}{\textbf{Percolation model and main quantities}}\\
\hline
$\edges$ & Range-$L$ edge set, $\edges=\{\{x,y\}:0<\|x-y\|_1\leq L\}$. & Section~\ref{subsec:intro-main-results}\\
$\edges_p$ & Random set of $p$-open edges. & Section~\ref{subsec:intro-main-results}\\
$\proba_p,\E_p$ & Bernoulli bond-percolation law and expectation at density $p$. & Section~\ref{subsec:intro-main-results}\\
$\proba,\E$ & Probability, and expectation for the $\sigma$-algebra generated by $(\edges_p)_{0\leq p\leq 1}$. & Section~\ref{sec:Diagrams}\\
$x\slr y$ ; $x\lr y$ & Event that $x$ and $y$ are joined by a $p$-open path. & Section~\ref{subsec:intro-main-results}\\
$x\snlr y$ ; $x\nlr y$ & Complement of the event $x\lr y$ & Section~\ref{subsec:intro-main-results}\\
$\C(x)=\C_p(x)$ & Open cluster of $x$ at parameter $p$. & Section~\ref{subsec:intro-main-results}\\
$\tau_p(x)$ & Two-point function, $\tau_p(x)=\proba_p(0\leftrightarrow x)$. & Section~\ref{subsec:intro-main-results}\\
$\X(p)=\chi(p)$ & Expected cluster size, $\X(p)=\sum_x\tau_p(x)$. & Section~\ref{subsec:intro-main-results}\\
$\nabla(p)$ & Triangle diagram, $\tau_p\ast\tau_p\ast\tau_p(0)-1$. & Section~\ref{subsec:intro-main-results}\\
$\square(p)$ & Square diagram, $\max_x \tau_p\ast \tau_p\ast \tau_p\ast \tau_p(x)$. & Section~\ref{subsec:intro-main-results}\\
$\psi_d$ & Asymptotic order of $\square(p)$ as a function of $\X(p)$. & \eqref{eq:def-psi-d}\\
$\xi_2(p)$ & Radius of gyration, $\xi_2(p)^2=\X(p)^{-1}\sum_x\|x\|_2^2\tau_p(x)$. & \eqref{eq:def-xi2}\\
$\langle x\rangle$ & Regularized norm, $\max(1,\|x\|_2)$. & Section~\ref{subsec:intro-main-results}\\
\hline
\multicolumn{3}{c}{\textbf{Graphs, clusters, and transforms}}\\
\hline
$\vedges$ & Oriented edge set corresponding to $\edges$. & Section~\ref{subsec:intro-main-results}\\
$\vec e=(e^-,e^+)$ & Oriented edge with tail $e^-$ and head $e^+$. & Section~\ref{subsec:intro-main-results}\\
$\Omega$ & Vertex degree of the range-$L$ lattice. & Section~\ref{subsec:intro-main-results}\\
$D$ & Edge kernel, $D(x)=\1_{\{0,x\}\in\edges}$. & Section~\ref{subsec:diagram-main-results}\\
$E \circ F$ & Disjoint occurrence of events $E$ and $F$. & Section~\ref{subsec:intro-main-results}\\
$G\offx{x}S$ & Connected component of $x$ in $G$ after deleting $S$. & Section~\ref{subsec:intro-main-results}\\
$\Comp(x),\Comp$ & Set of finite connected subgraphs containing $x$; $\Comp=\bigcup_x \Comp(x)$. & Section~\ref{subsec:intro-main-results}\\
$\vec\Comp(x),\vec\Comp$ & Set of pointed clusters $(C,\vec e)$ with $C\in\Comp(x)$ and $e^-\in C$. $\vec\Comp=\bigcup_x \vec \Comp(x)$. & Section~\ref{subsec:intro-main-results}\\
$\|f\|_1,\|f\|_\infty,\|f\|_2^2$ & Norms $\sum_x|f(x)|$, $\max_x|f(x)|$, and $\sum_x|f(x)|\|x\|_2^2$. & Section~\ref{subsec:intro-main-results}\\
$\Ft f(\kappa)$& Fourier transform, $\sum_x f(x)e^{\i\langle\kappa,x\rangle}$. & Section~\ref{subsec:intro-main-results}\\
$\Lap_\kappa(f)$ & Laplace transform, $\sum_x e^{\langle\kappa,x\rangle}f(x)$. & \eqref{eq:def-Laplace-transform}\\
\hline
\multicolumn{3}{c}{\textbf{Square approximation and diagrammatic errors}}\\
\hline
$\tau_{p,p',k} (x)$ & Probability that $0$ and $x$ are connected by a $p'$-open path using exactly $k$ edges open at $p'$ but closed at $p$. & Section~\ref{subsec:about-proofs}\\
$\rho=\rho(p,p')$ & $\rho=(p'-p)/(1-p)$. & Section~\ref{sec:DiagramLower}\\
$\tau_p^{\square,k}$ & Approximation for $\tau_{p,p',k}/\rho^k$. & \eqref{eq:def_TauSquare}\\
$\tau^\square_{p,p'}$& Approximation for $\tau_{p'}$ defined by $\tau^\square_{p,p'}=\sum_{k\ge0}\rho^k\tau_p^{\square,k}$. & \eqref{eq:def_TauSquare}\\
$\tilde p_c(p)$ & Approximation for $p_c$ defined by $\tilde p_c(p):= \inf\{p',\|\tau_{p,p'}^\square\|_1=\infty\}$. & Section~\ref{subsec:diagram-main-results}\\
$\bar\tau_{p,p'}$ & $\1_{\{0\}}+\rho(p,p')\tau_{p'}\ast D$. & \eqref{eq:def-bar-tau}\\
$\bar\tau^\square_{p,p'}$&  $\1_{\{0\}}+\rho(p,p')\tau^\square_{p,p'}\ast D$. & \eqref{eq:def-bar-tau}\\
$\square_{p,p'},\Diamond_{p,p'},\twosquare_{p,p'} $ & Square diagram errors. & \eqref{eq:defsquare}--\eqref{eq:deftwosquare}\\
\hline
\multicolumn{3}{c}{\textbf{Off method and Markov chain}}\\
\hline
$(\edges_p^i)_{i\geq0}$ & Independent copies of the $p$-open edge configuration. & Section~\ref{subsec:off-off-method}\\
$\proba_p^i,\E_p^i$ & Law and expectation for the $i$th percolation copy. & Section~\ref{subsec:off-off-method}\\
$\proba_p^\circledast,\E_p^\circledast$ & Product law and expectation for the independent copies. & Section~\ref{subsec:off-off-method}\\
$\C_p^i(x)$ & $p$-open cluster of $x$ in copy $i$. & Section~\ref{subsec:off-off-method}\\
$\B_j((A_i,x_i)_{0\leq i \leq j})$ & Retained part of \(A_j\): $A_j\offx{x_j}\bigcup_{0\leq i<j}\B_i((A_\ell,x_\ell)_{0\leq \ell\leq i})$. & \eqref{eq:defOffOff}\\
$\vec A=(A,\vec a)$ & Pointed cluster state with cluster $A$ and oriented edge $\vec a$. & Section~\ref{sec:Rewriting}\\
$\delta(\vec A)$ & Edge increment, $\delta(\vec A)=a^+-a^-$. & \eqref{eq:def-pointed-increments}\\
$\Delta(\vec A)$ & Spatial increment, $\Delta(\vec A)=a^+$. & \eqref{eq:def-pointed-increments}\\
$\mu_p(\vec A)$ & Pointed-cluster weight, $\mu_p(\vec A)=\proba_p(\C(0)=A)$. & \eqref{eq:def-pointed-increments}\\
$\Ccut$ & Forbidden clusters transitions: \((\vec A,\vec B)\in\Ccut\iff b^-\notin B\offx{0}(A-a^+)\). & \eqref{eq:def-Ccut}\\
$\proba_p^\otimes,\E_p^\otimes$ & Product law and expectation for i.i.d. pointed clusters. & \eqref{eq:def-pointed-cluster-law}\\
$(\vec\A_i)_{i\geq0}$ & I.i.d. random pointed clusters under \(\proba_p^\otimes\), with law \(\mu_p/(\Omega\X(p))\). & \eqref{eq:def-pointed-cluster-law}\\
$\SA_k$ & Markov-chain displacement, $\SA_k=\sum_{i=0}^k\Delta(\vec\A_i)$. & \eqref{eq:def-SA}\\
$\Tcut$ & First killing time $t$ where $(\vec\A_{t-1},\vec\A_t)\in\Ccut$. & \eqref{eq:def-Tcut}\\
$\Ckill,\Clive$ & \(\Ckill=\{(A,\vec a)\in\vec\Comp(0):a^+\in A\}\) and \(\Clive=\vec\Comp(0)\setminus\Ckill\). & Section~\ref{sec:Rewriting}\\
$\Plive_p$ & Live initial mass, $\proba_p^\otimes(\vec\A_0\in\Clive)$. & Section~\ref{sec:Rewriting}\\
$\T_p$ & Killed transition operator: \(\T_p g(\vec A)=\E_p^\otimes[g(\vec\A_2)\1_{(\vec A,\vec\A_2)\notin\Ccut}]\). & \eqref{eq:def-Tp}\\
$\lambda_p,h_p$ & Perron eigenvalue and positive eigenfunction of $\T_p$. & Lemma~\ref{lem:vpd}\\
$\proba_p^h,\E_p^h$ & Law and expectation of the Doob-transformed chain with respect to $\T_p$. & \eqref{eq:DoobTransform}\\
$\P_p,\P_p^k$ & One-step and $k$-step kernels of the transformed chain. & \eqref{eq:MarkovTransition}, \eqref{eq:def-Ppk}\\
$\vpg_p$ & Stationary law of the transformed chain. & Lemma~\ref{lem:StationaryMeasure}\\
$\proba_p^\vpg,\E_p^\vpg$ & Stationary analogue of $\proba_p^h,\E_p^h$ & \eqref{eq:Proba^vpg} \\
$\Gamma_p(\vec A)$ & Mass to deal with the step not handled by the Doob $h_p$-transform & \eqref{eq:def_Gamma_p} \\
\hline
\multicolumn{3}{c}{\textbf{Mixing, tails, and asymptotic estimates}}\\
\hline
$f_r,\Ccut^r,\Tcut^r$ & Local modification, modified cut relation, and its killing time. & \eqref{eq:def_fr}--\eqref{eq:def-Tcutr}\\
$\|f\|_{\infty,r}$, $\|f\|_{1,r}$ & Truncated norms used to separate near and far contributions. & Section~\ref{Sec:1+1=2}\\
$\Xi(r,p,p')$ & Pointwise error factor for comparing $\tau_{p'}$ with $\tau^\square_{p,p'}$. & \eqref{eq:def-Xi-error}\\
$E_{p,k},E_{p,\infty}$ & Contribution of cluster chain of length $k$ to $\|\tau^\square_{p,\tilde p_c(p)}\|_1$, and limit as $k\to \infty$. & Section \ref{sec:p_c-p_c}\\
$V_{i,j,k}(p)$ &Conditional covariance $V_{i,j,k}(p)=\E_p^\otimes[\langle \Delta(\vA_i),\Delta(\vA_j)\rangle |\Tcut>k]$  & Section \ref{sec:SecondMoment}\\
$V_i(p)$ & Stationary covariance $V_{j-i}(p)=\E_p^\vpg[\langle \Delta(\vA_i),\Delta(\vA_j)\rangle]$.  & Section \ref{sec:SecondMoment} \\
$V(p)$ & Average $L_2$ increment per cluster step. $V(p)=\sum_{i\in \Z} V_i$ & Section \ref{sec:LowFrequencies}\\
$\Gauss_{v}(x)$ & Isotropic Gaussian density $\Gauss_{v}(x)=(2\pi v/d)^{-d/2}\exp(-(\|x\|^2_2 d)/(2v))$. & Section~\ref{seq:TCL_square} \\
$\Gauss_{v,\beta}(x)$ & Green function of a killed Brownian motion. $\Gauss_{v,\beta}(x)=\int_0^\infty \beta^t \Gauss_{vt}(x)dt.$ & Section~\ref{seq:TCL_square_2} \\
$C_v^\Gauss$ & Constant such that $\Gauss_{v,1}(x)=C_v^\Gauss\langle x\rangle^{2-d} $ for every $x\neq 0$, & Section~\ref{seq:TCL_square_2}
\end{longtable}
\normalsize
\begingroup
\sloppy
\enlargethispage{1\baselineskip}
\pagebreak 
\tableofcontents
\endgroup
\pagebreak 
\section{Diagrammatic bounds} \label{sec:Diagrams}
\subsection{Main results} \label{subsec:diagram-main-results}
Let $D:x\mapsto \1_{\{0,x\}\in \edges}$, and let $\1_{\{0\}}:x\mapsto \1_{x=0}$. Then let
\begin{equation}
\bar \tau_{p,p'}= \1_{\{0\}}+\frac{p'-p}{1-p}\tau_{p'}\ast D\quad \text{and} \quad  \bar \tau_{p,p'}^\square= \1_{\{0\}}+\frac{p'-p}{1-p} \tau_{p,p'}^\square\ast D.
\label{eq:def-bar-tau}
\end{equation}
We bound the errors in the approximation $\tau_{p'}\approx \tau^\square_{p,p'}$ using the following diagrammatic sums. Their definitions are represented in Figure \ref{fig:SquareDiagram}.
The reader may also look at Figures \ref{fig:SquareDiagramSimpleLower} and \ref{fig:SquareDiagramSimpleUpper}, together with the introduction, to see how those errors can appear. Let
\begin{equation}
\square_{p,p'}:= \tau_p\cdot(\tau_p\ast D\ast \tau^\square_{p,p'} \ast D \ast \tau_p), \label{eq:defsquare}
\end{equation}
\begin{equation}
\Diamond_{p,p'}:= (\tau_p\ast D\ast \bar \tau^\square_{p,p'} \ast \tau_p) \cdot (\tau_p\ast D\ast \bar \tau_{p,p'} \ast \tau_p), \label{eq:deftimessquare}
\end{equation}
\begin{equation}
\twosquare_{p,p'}:z \mapsto \sum_{v,y\in \Z^d} \tau_p(v)  \tau_p(z-v)\tau_p(y-v)[\tau_p \ast D\ast \bar \tau^\square_{p,p'} \ast \tau_p] (y)[\tau_p\ast D\ast\tau_p](z-y). \label{eq:deftwosquare}
\end{equation}
\begin{figure}[!h]
\centering
\includegraphics[scale=0.8]{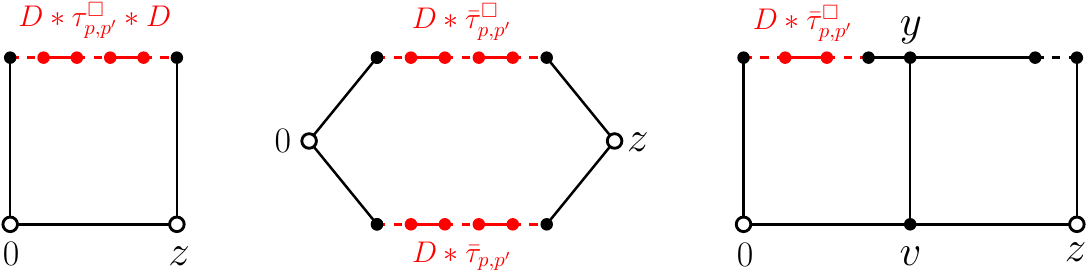}
\caption{\label{Fig:squareDiagram}
The diagrams corresponding to $\square_{p,p'}(z), \Diamond_{p,p'}(z), \twosquare_{p,p'}(z)$ respectively.
Each full segment corresponds to a $p$-open path.
Dashed segments represent edges that are closed at $p$ but open at $p'$.
The red part may consist of any number $0,1,2,\dots$ of $p$-open paths with the corresponding ($p'$-open and $p$-closed) edges in between.
The red part for $\square_{p,p'}$ cannot consist of a single edge.
Consecutive red paths cannot intersect.
} \label{fig:SquareDiagram}
\end{figure}

For $p<p_c$, we set $\tilde p_c(p):= \inf\{p',\|\tau_{p,p'}^\square\|_1=\infty\}$ (see Proposition \ref{pro:PrelimBootstrap} for details).
\begin{theorem} \label{thm:LowerBoundTau}
For every $p\leq p'<\inf(p_c,\tilde p_c(p))$, we have
\[
\tau_{p'}\geq \tau^\square_{p,p'}- ((p'-p)/(1-p))^2\bar \tau_{p,p'}^\square\ast  \tau_p\ast (\square_{p,p'}+\Diamond_{p,p'})\ast \tau_p \ast \bar \tau_{p,p'}^\square.
\]
\end{theorem}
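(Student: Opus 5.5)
The plan is to realise $\tau^{\square,k}_p$ as the expectation of a sum of indicators built from independent percolation copies, and to compare it with the event defining $\tau_{p,p',k}$. With $\rho=(p'-p)/(1-p)$, the $p'$-configuration is the union of $\edges_p$ and an independent $\rho$-sprinkling of the $p$-closed edges. On $\{0\slr x \text{ at } p'\}$ we fix a canonical minimal decomposition: a path using the minimal number $k$ of sprinkled edges, with the edges $\vec e_1,\dots,\vec e_k$ chosen by an explicit rule (for instance, the lexicographically first minimal sequence). Set $A_i=\C_p(e_i^+)$ with $e_0^+=0$. Then
\[
\tau_{p'}(x)=\sum_k \sum_{\vec e_1,\dots,\vec e_k}\proba(\text{the canonical sequence is }(\vec e_i)).
\]
The lower bound follows by writing $\rho^k\tau^{\square,k}_p(x)=\sum_{\vec e}\proba(G(\vec e))$, where $G(\vec e)$ is a ``good'' event, and checking that $G(\vec e)$ minus a few bad events is contained in the canonical event.

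\textbf{Step 1: off-method realisation.} Using Aizenman's off lemma iteratively, $\prod_i \proba_p(\C(e_i^+)=A_i)\1(e_{i+1}^-\in A_i\offx{e_i^+}A_{i-1})$ becomes the probability, in a single configuration, of the following event. The clusters $\C(e_i^+)\offx{e_i^+}$ of the previous retained cluster are successive, each contains $e_{i+1}^-$, and each $\vec e_{i+1}$ is sprinkled. This is exact only when the clusters $A_i$ and $A_j$ with $|i-j|\ge 2$ do not interact. The factor $\rho^k$ gives the sprinkled edges. Because $A_{i-1}$ is the true cluster here, closing the edges adjacent to $A_{i-1}$ is automatic when $\vec e_i$ is $p$-closed, which is why only the nonadjacent clusters cause trouble.

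\textbf{Step 2: identify the failure events.} A configuration counted in $\tau^{\square,k}_p$ fails to be counted in $\tau_{p,p',k}$ with canonical edges $(\vec e_i)$ in two ways.

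(i) Some $A_j$ intersects $A_i$ with $j\ge i+2$, so the off-structure is violated or the path is cut. This yields a $p$-connection from a vertex of $A_i$ to a vertex of $A_j$, with the chain $\vec e_{i+1},\dots,\vec e_j$ in between. Unfolding with BK gives the loop $\tau_p\cdot(\tau_p\ast D\ast\tau^\square_{p,p'}\ast D\ast\tau_p)$. Here the middle red part has at least one cluster, hence is not a single edge, and this is exactly $\square_{p,p'}$.

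(ii) The sequence is not canonical or not minimal: another choice of sprinkled edges exists that shares part of the structure. This produces two edge-disjoint routes between two pivot points, one going through the $\tau^\square$ chain (giving $\bar\tau^\square$, which allows zero edges) and one going through a true $p'$-connection (giving $\bar\tau_{p,p'}$). That is $\Diamond_{p,p'}$.

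The parts of the chain before the first bad index and after the last one are bounded by $\bar\tau^\square_{p,p'}\ast\tau_p$ on each side. The prefactor $\rho^2$ accounts for the two sprinkled edges $D$ flanking the defect. Summing over $k$ and over the positions of the defect gives the convolution in the statement. Convergence is guaranteed by $p'<\tilde p_c(p)$, which makes all the series finite.

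\textbf{Step 3: union bound.} Write $\tau_{p'}\ge\sum_{\vec e}\proba(G(\vec e))-\sum_{\vec e}\proba(G(\vec e)\cap\text{bad})$, and bound each bad term by the first defect using the BK inequality and the off lemma. This step relies on the fact that a lower bound tolerates overcounting in the error.

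\textbf{Main obstacle.} I expect the hardest part to be Step 2(ii): making the canonical choice precise so that non-uniqueness always produces a disjoint-occurrence structure matching $\Diamond_{p,p'}$ without any extra loop. Here I would first try choosing the first defect index and applying BK to the alternative route, treating it as an arbitrary $p'$-path, which is bounded by $\tau_{p'}$.
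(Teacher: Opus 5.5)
Your architecture is the paper's. You make the choice of $(\vec e_i)$ unique, condition on the $p$-clusters $B_i$ of $e_0^+=0,e_1^+,\dots,e_k^+$, compare with independent copies through an off-type identity, and charge the two defects to $\square_{p,p'}$ (an earlier non-adjacent cluster cuts the path) and to $\Diamond_{p,p'}$ (non-uniqueness). The gap is in Step~1, which is where the real work lies.

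Your $G(\vec e)$ lives on independent copies and the canonical event on a single configuration. So ``$G(\vec e)$ minus bad events is contained in the canonical event'' needs an explicit coupling, and the one you sketch rests on two false claims. First, $A_{i-1}$ is not the true cluster. Second, the correspondence is not exact even when non-adjacent clusters are disjoint.

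What is true (Lemma~\ref{lem:GeneralOff}(b)) is the following. For disjoint $B_i$, $\proba_p(\C(e_i^+)=B_i\ \forall i)=\proba^\circledast_p(\B_i(\mathbf A)=B_i\ \forall i)$ with $\B_i=\C^i_p(e_i^+)\offx{e_i^+}\bigcup_{j<i}\B_j$. The retained clusters are thus taken off \emph{all} previously retained clusters. By contrast, $\tau^{\square,k}_p(x)=\sum_{\vec e}\proba^\circledast_p(\forall i,\,E_i)$ with $E_i=\{e_{i+1}^-\in\C^i_p(e_i^+)\offx{e_i^+}\C^{i-1}_p(e_{i-1}^+)\}$ takes the off from the \emph{full} independent cluster $\C^{i-1}_p(e_{i-1}^+)\supseteq\B_{i-1}$.

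Already for $k=2$ with $\C^0_p\cap\C^2_p=\emptyset$ the two events differ. This happens when removing $\C^1_p(e_1^+)\setminus\B_1$ (the part of the first cluster cut off by $\C^0_p$) is what disconnects $x$ from $e_2^+$ in $\C^2_p(e_2^+)$; this is the origin of $\twosquare_{p,p'}$ in the upper bound. For the lower bound the discrepancy has the favourable sign precisely because of the inclusion $\B_{b-1}\subseteq\C^{b-1}_p(e_{b-1}^+)$. If $E_b$ holds but $e_{b+1}^-\notin\B_b$, the culprit must be some $\B_a\subseteq\C^a_p(e_a^+)$ with $a\leq b-2$. You then use that, given the clusters, the conditional probability of the connections plus sprinkled edges is $\rho^k\prod_i\1(e_{i+1}^-\in B_i)$, evaluated at $B_i=\B_i$ on $\{\forall i,\,E_i\}$.

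This also fixes the bad event in Step~2(i). It is not that $A_i$ and $A_j$ intersect, but that some $v\in\C^a_p(e_a^+)$ satisfies $(e_b^+\slr v)\circ(v\slr e_{b+1}^-)$ in $\edges^b_p$. The tree-graph inequality on the $a$-side and BK on the $b$-side then give exactly $\tau_p\cdot(\tau_p\ast D\ast\tau^{\square,b-a-2}_p\ast D\ast\tau_p)$. Bounding a mere intersection would need a tree-graph branch point on both sides. It would yield the loop $(\tau_p\ast\tau_p)\cdot(\tau_p\ast D\ast\tau^\square_{p,p'}\ast D\ast\tau_p)$, which dominates $\square_{p,p'}$ pointwise and carries one more $\tau_p$ around the loop, so it does not give the stated bound.

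Conversely, the step you flag as the main obstacle is routine once you condition on the clusters. Non-uniqueness forces a further $p$-closed, $p'$-open edge leaving some $B_a$ and reaching some $B_b$ with $a<b$. It does so either directly, or through a $p'$-path avoiding $\bigcup_iB_i$ followed by a second such edge. The conditional cost is at most $\rho^{k+1}\sum_{u\in B_a,v\in B_b}D\ast\bar\tau_{p,p'}(v-u)$. This bound is increasing in $(B_a,B_b)$, so you may replace $\B_a,\B_b$ by $\C^a_p,\C^b_p$ and drop the offs in $E_a,E_{a+1},E_b,E_{b+1}$. Independence of the copies then factors the prefix, middle and suffix into $\tau^{\square,\cdot}_p$, and BK in copies $a$ and $b$ gives $\Diamond_{p,p'}$.
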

\begin{theorem} \label{thm:UpperBoundTau}
For every $p\leq p'<\inf(p_c,\tilde p_c(p))$, we have:
\[
\tau_{p'}\leq \tau^\square_{p,p'}+((p'-p)/(1-p))^2\bar \tau^\square_{p,p'}\ast  \tau_p \ast  \twosquare_{p,p'} \ast \tau_p \ast \bar \tau_{p,p'}.
\]
\end{theorem}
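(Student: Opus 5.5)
We prove the upper bound of Theorem \ref{thm:UpperBoundTau} by explicitly exploring a $p'$-open path from $0$ to $x$ as a chain of $p$-clusters, and comparing the exploration law with the product weights appearing in \eqref{eq:def_TauSquare}.

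\textbf{Exploration.} On $\{0\slr x\}$ at level $p'$, set $Y_0=\C_p(0)$. If $x\notin Y_0$, we choose, by a fixed deterministic rule, a $p'$-open $p$-closed edge $\vec e_1$ with $e_1^-\in Y_0$ such that $e_1^+\slr x$ at level $p'$ off $Y_0$. We then set $Y_1=\C_p(e_1^+)\offx{e_1^+} Y_0$, which by Aizenman's off lemma is distributed as an independent $p$-cluster of $e_1^+$ with the edges adjacent to $Y_0$ closed. We continue in this way, taking $\vec e_{i+1}$ with $e_{i+1}^-\in Y_i$ and $Y_{i+1}$ the cluster of $e_{i+1}^+$ off $Y_0\cup\dots\cup Y_i$.

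\textbf{Iterated off lemma.} Iterating the off lemma in independent copies (the $\B_j$ construction) writes the probability of a given sequence $(Y_i,\vec e_i)$ as
\[
\prod_i \rho\,\proba_p\big(\C(e_i^+)\offx{e_i^+}(Y_0\cup\dots\cup Y_{i-1})=Y_i\big).
\]
Summing over the possible $p$-clusters $A_i\supseteq Y_i$ of $e_i^+$ in independent copies gives
\[
\tau_{p'}(x)\le\sum_k\rho^k\sum\prod_i\proba_p(\C(e_i^+)=A_i)\,\1\big(e_{i+1}^-\in Y_i\big).
\]

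\textbf{Comparison with the product weights.} The product weights in \eqref{eq:def_TauSquare} require instead $e_{i+1}^-\in A_i\offx{e_i^+}A_{i-1}$. The discrepancy arises when $e_{i+1}^-\in Y_i$ but the connection from $e_i^+$ to $e_{i+1}^-$ inside $A_i$ passes through $A_{i-1}\setminus Y_{i-1}$. Here $Y_{i-1}\subseteq A_{i-1}$ is only an inclusion, and the extra part of $A_{i-1}$ is attached to $Y_{i-1}$ through some earlier cluster $A_j$ with $j\le i-2$, or through the earlier part of the path. We bound the difference by a union bound over the first index $i$ at which this bad event occurs.

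\textbf{Bounding the bad event.} The indices before $i$ are bounded by $\bar\tau^\square_{p,p'}$, and everything after the defect is bounded crudely by $\bar\tau_{p,p'}$, using monotonicity and $\tau_{p'}$. The defect itself should be a vertex $z$ with:
\begin{itemize}
\item[(i)] a $p$-path in $A_{i-1}$ from $e_{i-1}^+$ to a point $v$;
\item[(ii)] from $v$, disjoint $p$-paths to $y$ and to $z$;
\item[(iii)] a red chain (giving $\tau_p\ast D\ast\bar\tau^\square_{p,p'}\ast\tau_p$) from $y$ back to a crossing;
\item[(iv)] a final $\tau_p\ast D\ast\tau_p$ closing the loop to $z-y$.
\end{itemize}
Applying the BK inequality to these disjoint paths produces exactly the kernel $\twosquare_{p,p'}$ in \eqref{eq:deftwosquare}. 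The two extra edges account for the factor $\rho^2$, and the surrounding $\tau_p$ legs give $\bar\tau^\square_{p,p'}\ast\tau_p\ast\twosquare_{p,p'}\ast\tau_p\ast\bar\tau_{p,p'}$.

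\textbf{Main obstacle.} The hardest step is the combinatorial identification of the defect geometry. One must show that whenever the retained cluster $Y_{i-1}$ differs from $A_{i-1}$ in a way that matters, the witnessing structure really splits into edge-disjoint pieces whose BK bound has the $\twosquare$ shape. This also requires handling non-uniqueness of the chosen edges through the deterministic choice rule. The condition $p'<\tilde p_c(p)$ guarantees that all the sums converge, so the interchange of summations is legitimate.
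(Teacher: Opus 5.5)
There is a genuine gap at the step where everything after the defect is bounded by $\bar\tau_{p,p'}$ ``using monotonicity and $\tau_{p'}$''.

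\textbf{Why the step fails.} By that point you have already expanded the whole $p'$-connection into
$\sum_k\rho^k\sum\prod_i\proba_p(\C(e_i^+)=A_i)\1(e_{i+1}^-\in Y_i)$.
To get this product form you discarded the deterministic choice rule. Fix a prefix $A_0,\dots,A_j$ ending at the first defect. The remaining factor is then a sum over all continuations $(\vec e_{j+2},\dots,\vec e_k)$. Read the off lemma backwards on the graph with $U=Y_0\cup\dots\cup Y_j$ deleted. The remaining factor is then the expected number of chains of distinct $p$-clusters, joined by $p'$-open $p$-closed edges, from $e_{j+1}^+$ to $x$ in $\Z^d\setminus U$. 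It is not the probability that such a chain exists. Distinct chains witness the same connection, so this quantity dominates the $p'$-connection probability rather than being dominated by it. No monotonicity turns it into $\tau_{p'}$.

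The only bound available inside your representation is $\1(e_{i+1}^-\in Y_i)\le\1(e_{i+1}^-\in A_i)$. It gives the naive chain $\sum_m\rho^m(\tau_p\ast D)^{\ast m}\ast\tau_p$, whose $\ell^1$ norm is $\X(p)\sum_m(\rho\,\Omega\X(p))^m$. This is infinite for $p'\ge p+\lambda_p(\tilde p_c(p)-p)$. That part of the admissible range is nonempty, since $\lambda_p\le\Plive_p<1$ and $p_c-p\ge 1/(\Omega\X(p))$. Bounding the tail by the expanded sum itself only gives a self-referential inequality, with that sum in place of $\tau_{p'}$. That is not the stated theorem.

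\textbf{What the paper does instead.} Your ingredients (the off lemma, a decomposition at the first defect, BK producing $\twosquare_{p,p'}$) are the right ones. The missing idea is to \emph{never expand past a defect}. The paper works with the hybrid quantities
$R_k=\proba\big(x\in\C_{p'}(e_k^+)\offx{e_k^+}\Df_{k-1}\big)$.
Here the past clusters $A_0,\dots,A_{k-1}$ live in independent copies and enter only through the retained set $\Df_{k-1}=\B_0\cup\dots\cup\B_{k-1}$. The future is still a genuine $p'$-connection probability.

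One application of the off method (Proposition~\ref{pro:HeredityForUpperTau}) gives
$R_k\le\Xi_k+P_k+\sum\frac{\Pi_{k+1}}{\Pi_k}R_{k+1}$.
The connection from $e_{k+1}^+$ off $\Df_k$ is independent of $\C_p(e_k^+)\offx{e_k^+}\Df_{k-1}$, which is what makes this factorisation valid.
\begin{itemize}
\item The good indicator is $e_{k+1}^-\in A_k\offx{e_k^+}(\Df_{k-2}\cup A_{k-1})$. It implies both $e_{k+1}^-\in\B_k$ and $e_{k+1}^-\in A_k\offx{e_k^+}A_{k-1}$, so the weights are dominated by $\tau_p^{\square,k}$.
\item The defect term $\Xi_k$ keeps the actual $p'$-connection. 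It is bounded by BK as $\tau^w_{p,p'}(e_k^+,x)\le\tau_p(w-e_k^+)[\tau_p\ast\bar\tau_{p,p'}](x-w)$. This is where the factor $\bar\tau_{p,p'}$ legitimately comes from.
\end{itemize}
Iterate $u_k\le v_k+u_{k+1}$, and use $\|\tau^\square_{p,p'}\|_1<\infty$ to send the remainder $\sum\Pi_{K+1}R_{K+1}$ to $0$.

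Compared with this, the defect geometry you flag as the main obstacle is routine. Take a branch point $y$ on a path in $A_{k-1}$ from $e_{k-1}^+$ to $e_k^-$ avoiding $\Df_{k-2}$. A side path from $y$ to $w$ must hit some $A_i$ with $i\le k-2$. Finish with the tree-graph bound in $A_i$.
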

\begin{remark}
One may notice that the errors in the upper and lower bounds both involve $\tau^\square_{p,p'}$ and $\tau_{p'}$.
However, in many cases, we will only have access to proper bounds on either $\tau_{p'}$ or $\tau^\square_{p,p'}$ but not on the other.
As we will see later, we can actually avoid this issue by using a bootstrap argument.
\end{remark}
\begin{remark}
Contrary to previous works using the lace expansion, we do not need to use an inclusion-exclusion principle, as the errors described in Theorems \ref{thm:LowerBoundTau} and \ref{thm:UpperBoundTau} are already negligible.
\end{remark}
\begin{remark}
The computations show that the order of magnitude of the errors is given by the case where the red parts in Figure \ref{fig:SquareDiagram} contain a minimal number of $p$-open paths (1,0,0,0).
This is the main reason we prefer to look at the diagrams as "square diagrams" instead of larger polygons.
\end{remark}
 \begin{figure}[!h]
\centering
\includegraphics[scale=0.8]{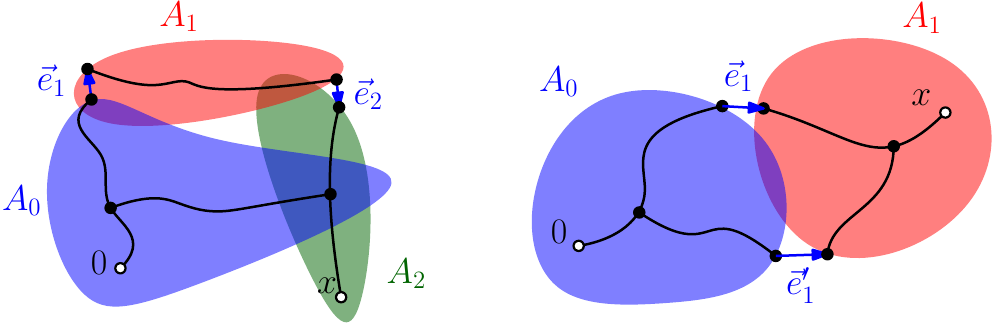}
\caption{We represent the two errors of Theorem \ref{thm:LowerBoundTau} in their simplest case.
The left figure corresponds to $\square_{p,p'}$, and the right corresponds to $\Diamond_{p,p'}$.
The black paths are $p$-open.
The edges $\vec e_1,\vec e_1',\vec e_2,\vec e_3$ are blue and are $p'$-open but $p$-closed.
$0,x$ are white.
$A_0$ is blue, $A_1$ is red, $A_2$ is green.
On the left, $A_0$ cuts the path from $\vec e_2$ to $x$ in $A_2$.
On the right, the choice of $\vec e_1$ is not unique. }
 \label{fig:SquareDiagramSimpleLower}
\includegraphics[scale=1]{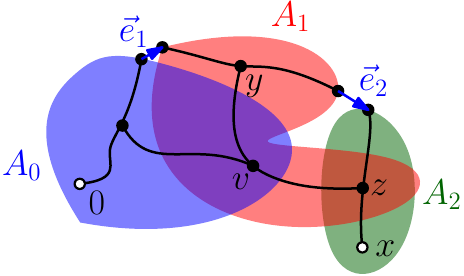}
\caption{We represent the error of Theorem \ref{thm:UpperBoundTau} in its simplest case.
The conventions are the same as in Figure \ref{fig:SquareDiagramSimpleLower}.
$A_1$ should cut the path from $\vec e_2$ to $x$ in $A_2$,
but after $A_0$ cuts $A_1$ it no longer does.}
 \label{fig:SquareDiagramSimpleUpper}
\end{figure}
\subsection{The off-off method} \label{subsec:off-off-method}
We develop here a general version of Aizenman's off method, which will be used in the proofs of Theorems~\ref{thm:LowerBoundTau} and~\ref{thm:UpperBoundTau} in Subsections~\ref{sec:DiagramLower} and~\ref{sec:UpperLower}, respectively.
We first introduce some notation that will be used throughout the section.
Beforehand, recall that we defined clusters as subgraphs of $(\Z^d,\edges)$ and not as simple subsets of $\Z^d$.
This is absolutely crucial in this whole section.

Let $(\edges^i_p)_{i\geq 0}$ be i.i.d. copies of $\edges_p$.
We denote the probability and expectation associated with $\edges^i_p$ by $\proba^i_p$ and $\E^i_p$ respectively.
We also write $\proba^\circledast_p$ for the product measure of $(\proba_p^i)_{i\geq 0}$, and $\E^\circledast_p$ for the corresponding expectation.
We will always specify which set of edges we are working with when we use $\proba^\circledast_p$ and $\E^\circledast_p$.
To avoid any ambiguity, the notations $\proba_p$ and $\E_p$ will only be used when we are working with $\edges_p$. The notations $\proba$ and $\E$ will be reserved for when we work with the sigma-algebra generated by $(\edges_p)_{0\leq p \leq 1}$, with the usual natural increasing coupling.

Also, for every vertex $x\in \Z^d$, let $\C^i(x)=\C_p^i(x)$ denote the cluster (largest connected subgraph) of $x$ in $(\Z^d,\edges^i_p)$.
For every finite family of connected graphs $A_0,A_1,\dots A_k$, and every $(x_i)_{0\leq i \leq k}$ in $\Z^d$, we define inductively $\B_j((A_i,x_i)_{0\leq i \leq k})=\B_j((A_i,x_i)_{0\leq i \leq j})$ such that for every $0\leq j \leq k$,
\begin{equation}
\label{eq:defOffOff} \B_j((A_i,x_i)_{0\leq i \leq j}) = A_j \offx{x_j} \left (\bigcup_{j'=0}^{j-1} \B_{j'}((A_i,x_i)_{0\leq i \leq j'}) \right),
\end{equation}
with the usual convention that $\B_0(A_0)=A_0$ if $x_0\in A_0$, and $\B_0(A_0)=\emptyset$ otherwise.
In general, $\B_j(\cdot)$ takes values in $\{\emptyset\}\cup \Comp$.
The following lemma provides a general way to construct a finite family of clusters conditioned to be disjoint:
\begin{lemma} \label{lem:GeneralOff}
Let $p\in [0,1]$.
Let $k\in \N$.
Let $x_0\neq x_1\neq \dots \neq x_k\in \Z^d$.
Let $(X_i)_{0\leq i \leq k}\in \Comp^{k+1}$.
\begin{compactitem}
\item[(a)] If for every $0\leq i<j \leq k$ we have $X_i\cap X_j=\emptyset$, then
\[
\proba_p( \C(x_0)=X_0,\dots, \C(x_k)=X_k) = \prod_{i=0}^k \proba^i_p \left( \left (\C(x_i)\offx{x_i} (X_0\cup X_1\cup \dots\cup X_{i-1} )\right )=X_i \right).
\]
\item[(b)] Let $\mathbf A=(\C_p^i(x_i),x_i)_{0\leq i \leq k}$. If the $(X_i)_{0\leq i \leq k}$ are as in (a), then
\[
\proba_p( \C(x_0)=X_0,\dots, \C(x_k)=X_k) =\proba_p^\circledast (\forall 0\leq i \leq k, \B_i(\mathbf A)=X_i).
\]
\item[(c)] If for some $i<j$ we have $X_i\cap X_j\neq \emptyset$, then the right-hand sides of (a) and (b) are null.
\end{compactitem}
\end{lemma}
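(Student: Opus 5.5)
The plan is to prove (a) by induction on $k$, using the simple off lemma of Section \ref{subsec:about-proofs} generalised to conditioning on several disjoint clusters. Then (b) follows from (a) by identifying $\B_i(\mathbf A)$ with the off-clusters. Finally, (c) is a direct check that the constraints are incompatible.

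\textbf{Step 1: the one-step off identity.} Let $S$ be a finite union of connected subgraphs that are pairwise disjoint, and suppose $x\notin S$. The event $\{\C(x_0)=X_0,\dots,\C(x_{i-1})=X_{i-1}\}$ is measurable with respect to the edges having at least one endpoint in $S=X_0\cup\dots\cup X_{i-1}$. Indeed, it says that the edges of each $X_j$ are open and all other edges touching $X_j$ are closed. Given this event, every edge with an endpoint in $S$ has a determined state. Every edge between $S$ and its complement is closed. The remaining edges, those inside $\Z^d\backslash S$, are still i.i.d.\ $p$-open.

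Now take $x_i\notin S$. On this event, $\C(x_i)$ is the cluster of $x_i$ in $\Z^d\backslash S$ using only edges not touching $S$. Its conditional law is therefore that of $\C^i(x_i)\offx{x_i} S$ under an independent copy. Here it matters that clusters are subgraphs: the off-cluster keeps exactly the edges of $\C^i(x_i)$ lying in $\Z^d\backslash S$. If instead $x_i\in S$, the event $\C(x_i)=X_i$ forces $X_i$ to be one of the $X_j$. Disjointness makes this impossible, since $x_i\in X_i\cap X_j$. Consistently, the off-cluster is $\emptyset\neq X_i$.

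Multiplying these conditional probabilities, the chain rule gives (a) by induction on $k$. The base case is $\proba_p(\C(x_0)=X_0)=\proba^0_p(\C(x_0)=X_0)$. The point I expect to need care is precisely this conditional independence: the conditioning event must determine only edges touching $S$, which requires each $X_j$ to be a full cluster, with its boundary edges closed.

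\textbf{Step 2: deducing (b).} Under $\proba^\circledast_p$ the copies are independent, so the right side of (b) is computed sequentially in $i$. On the event $\{\B_j(\mathbf A)=X_j \ \forall j<i\}$, the union $\bigcup_{j<i}\B_j(\mathbf A)$ equals $X_0\cup\dots\cup X_{i-1}$. Hence, by \eqref{eq:defOffOff}, $\B_i(\mathbf A)=\C^i(x_i)\offx{x_i}(X_0\cup\dots\cup X_{i-1})$. This depends only on copy $i$, so it is independent of the previous constraints. The right side of (b) is therefore the product in (a), which proves (b). The convention for $\B_0$ is consistent, since $x_0\in\C^0(x_0)$.

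\textbf{Step 3: (c).} Suppose $X_i\cap X_j\neq\emptyset$ with $i<j$. On the event that the first $j$ factors hold, $X_j$ equals an off-cluster avoiding $X_0\cup\dots\cup X_{j-1}\supseteq X_i$. In particular $X_j$ is disjoint from $X_i$, a contradiction. So the $j$-th factor vanishes, or an earlier one does, both in the product (a) and in the probability (b). The main obstacle is thus only the rigorous conditioning in Step 1, which is routine once the measurability is stated precisely.
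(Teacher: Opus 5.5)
Your proof is correct. For (b) and (c) it follows the paper: on the event, $\bigcup_{j<i}\B_j(\mathbf A)$ is replaced by $X_0\cup\dots\cup X_{i-1}$ and the copies are used independently, and an off-cluster cannot meet the removed set.

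For (a) you take a genuinely different route. The paper does not condition at all; it computes both sides explicitly.
\begin{itemize}
\item It writes the left side as $p^{\#E(X_0\cup\dots\cup X_k)}(1-p)^{\#E^{\uparrow}(X_0\cup\dots\cup X_k)}$, where $E^{\uparrow}(G)$ denotes the edges not in $G$ with at least one endpoint in $G$.
\item It identifies the $i$-th factor on the right as the probability that the edges of $X_i$ are open and those of $E^{\uparrow}(X_i)\setminus E^{\uparrow}(X_0\cup\dots\cup X_{i-1})$ are closed.
\item Disjointness then makes these edge sets partition $E(\bigcup_j X_j)$ and $E^{\uparrow}(\bigcup_j X_j)$.
\end{itemize}

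Your argument is instead the spatial-Markov version of Aizenman's off method. The event $E_{<i}:=\{\C(x_j)=X_j\ \forall j<i\}$ is measurable with respect to edges touching $S$. On $E_{<i}$ one has $\C(x_i)=\C(x_i)\offx{x_i}S$ when $x_i\notin S$, and this off-cluster depends only on edges avoiding $S$.

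This route is more conceptual. It also shows exactly where disjointness is used: only in the case $x_i\in S$, where without it the event $X_i=X_j$ could have positive probability. The paper's edge count is shorter and needs no care about null events, since it is an identity between explicit expressions.

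One point to tidy. Rather than multiplying conditional probabilities, which divides by $\proba_p(E_{<i})$ and this may vanish (e.g.\ $p\in\{0,1\}$ or $x_j\notin X_j$), state each step as
$\proba_p\bigl(E_{<i}\cap\{\C(x_i)=X_i\}\bigr)=\proba_p(E_{<i})\,\proba^i_p\bigl(\C(x_i)\offx{x_i}S=X_i\bigr)$ for $x_i\notin S$. This follows directly from your measurability and independence observations and gives (a) by induction with no conditioning.
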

\begin{remark}
Note that nothing is specific to $(\Z^d,\edges)$ in the proof, so the result still holds for Bernoulli bond percolation on any given graph.
\end{remark}
\begin{proof}
For (a), first note that if for some $i$ we have $x_i\notin X_i$, then the left-hand side is null.
The right-hand side is also null, since $(\C(x_i)\offx{x_i} (X_0\cup X_1\cup \dots \cup X_{i-1}))$ must either contain $x_i$ or be the empty graph.
So we may assume without loss of generality that for every $i$ we have $x_i\in X_i$.

Then, for every finite subgraph $G\subset (\Z^d,\edges)$, write $E(G)$ for its set of edges, and write $E^\uparrow (G)$ for the set of edges in $(\Z^d,\edges)$ that are not in $G$ and have at least one endpoint in $G$. We have:
\[
\proba_p(\C(x_0)=X_0,\dots, \C(x_k)=X_k) = p^{\# (E(X_0\cup X_1\cup \dots \cup X_k))} (1-p)^{\#(E^\uparrow(X_0\cup X_1\cup \dots \cup X_k))}.
\]
To compute the right-hand side of (a), observe that the event
	$\C_p^i(x_i) \offx{x_i} (X_0\cup X_1\cup\dots \cup X_{i-1})=X_i$
	is equivalent to the event that all the edges in $E(X_i)$ are open,
	and all the edges in $E^{\uparrow}(X_i)\backslash E^{\uparrow}(X_0\cup X_1\cup\dots \cup X_{i-1})$ are closed.
Thus, using that $X_0, X_1,\dots, X_k$ are disjoint,
\begin{align*}
\prod_{i=0}^k \proba^i_p(\C(x_i)\offx{x_i} (X_0\cup \dots & \cup X_{i-1})=X_i)   =\prod_{i=0}^k p^{\#(E(X_i))}(1-p)^{\#(E^{\uparrow}(X_i)\backslash E^{\uparrow}(X_0\cup X_1\cup\dots \cup X_{i-1}))}
\\ & = p^{\# E(X_0 \cup X_1 \cup\dots \cup X_k)} (1-p)^{\#(E^\uparrow(X_0 \cup X_1\cup \dots \cup X_k))},
\end{align*}
which proves (a).

To prove (b), by \eqref{eq:defOffOff}, we may rewrite the probability on the right-hand side as
\[
\E_p^\circledast \left [\prod_{j=0}^k \1\left (\left (\C_p^j(x_j) \offx{x_j} \bigcup_{i=0}^{j-1} \B_i(\mathbf A) \right)=X_j \right ) \right ].
\]
For $j=k$, using that $\B_i(\mathbf A)=X_i$ for every $i<j$ (otherwise the product is null), we may replace the corresponding indicator by $\1 ( (\C_p^k(x_k) \offx{x_k} \bigcup_{i=0}^{k-1} X_i)=X_k  )$.
We can repeat the same operation for $j=k-1, k-2,\dots, 0$ in this order.
We obtain that the above equals
\[
\E_p^\circledast \left [\prod_{j=0}^k \1\left ( \left (\C_p^j(x_j) \offx{x_j} \bigcup_{i=0}^{j-1} X_i\right)=X_j \right ) \right ],
\]
which is equal to the right-hand side of (a). Thus, (b) follows directly from (a).

To see (c), observe that $\C_p^j(x_j)\offx{x_j} (X_0\cup X_1\cup\dots \cup X_{j-1})$ is defined as the cluster of $x_j$ inside the graph $\C_p^j(x_j)$ after removing all the vertices of $X_0\cup X_1\cup\dots \cup X_{j-1}$.
So it cannot contain any of those vertices.
Thus, $\C_p^j(x_j)\offx{x_j} (X_0\cup X_1\cup\dots \cup X_{j-1})\neq X_j$, and the product in (a) is null.
Since the right-hand sides in (a) and (b) are equal, this concludes the proof.
\end{proof}
\subsection{Lower bound on $\tau_p$: proof of Theorem \ref{thm:LowerBoundTau}} \label{sec:DiagramLower}
Let $p\leq p'<\inf(p_c,\tilde p_c(p))$, let $k\geq 1$, and let $x\in \Z^d$.
In this section, we fix $e_0^+=0$ and $A_{-1}=\emptyset$. Let $\rho=(p'-p)/(1-p)$ be the probability that an edge is $p'$-open conditionally on being $p$-closed. For some oriented edges $(\vec e_i)_{1\leq i \leq k}$,
we denote by $E_x((\vec e_i)_{1\leq i \leq k})$ the $(\edges_p,\edges_{p'})$-measurable event on which the following hold:
\begin{compactitem}
\item[(a)] In $(\Z^d,\edges_p)$ we have $0\slr e_1^-$, $e_1^+\slr e_2^-$, \dots , $e_{k-1}^+\slr e_k^-$, $e_k^+\slr x$.
\item[(b)] The edges $\vec e_1,\dots, \vec e_k$ are $p'$-open but $p$-closed.
\item[(c)] The vertices $0$, $e_1^+$, $e_2^+$, \dots, $e_k^+$ are in distinct $p$-open clusters.
\item[(d)] The tuple $(\vec e_1,\dots, \vec e_k)$ is the only tuple of oriented edges satisfying (a), (b), (c). ($k$ may vary.)
\end{compactitem}
We have for every $k\geq 1$,
\begin{align}
\tau_{p,p',k}(x)  & \geq \sum_{\vec e_1,\dots \vec e_k\in \vedges} \proba(E_{x}((\vec e_i)_{1\leq i \leq k})) \notag
\\ & = \hspace{-2.5em} \sum_{\substack{ \vec e_1,\dots \vec e_k\in \vedges, \\  B_0\in \Comp(e_0^+),\dots, B_k\in \Comp(e_k^+) \\ \forall i\neq j,\, B_i\cap B_j=\emptyset}} \hspace{-2em}
\proba\Big (E_x((\vec e_i)_{1\leq i \leq k}) \Big |\forall 0\leq i\leq k,\, \C_p(e_i^+)=B_i \Big)
\proba\Big(\forall i,\, \C_p(e_i^+)=B_i \Big ).  \label{eq:LowerSplitTauk}
\end{align}
We write $f_x((B_i)_{0\leq i \leq k},(\vec e_i)_{1\leq i \leq k})$ for the first probability in \eqref{eq:LowerSplitTauk}. Let
\[
\mathbf A((e_i^+)_{0\leq i \leq k}):=(\C^i_p(e_i^+),e_i^+)_{0\leq i \leq k}.
\]
By Lemma \ref{lem:GeneralOff} (b),
we may rewrite the product of the two probabilities in \eqref{eq:LowerSplitTauk} as
\begin{equation}
f_x((B_i)_{0\leq i \leq k},(\vec e_i)_{1\leq i \leq k})\proba^\circledast_p(\forall 0\leq i \leq k,\, \B_i(\mathbf A((e_j^+)_{0\leq j \leq k}))=B_i ). \label{eq:LowerSplitTaukB}
\end{equation}

Next, observe that for any fixed $B_0,B_1,\dots, B_k$ as in \eqref{eq:LowerSplitTauk}, the conditional probability, under the event "$\C_p(e_i^+)=B_i$ for every $0\leq i \leq k$", that (a), (b), and (c) hold is given by
\begin{equation}
f^+_x((B_i)_{0\leq i \leq k},(\vec e_i)_{1\leq i \leq k}) := \rho^k \prod_{i=0}^k \1(e_{i+1}^-\in B_i), \label{eq:def:f^+_x}
\end{equation}
with the usual convention $e_{k+1}^-=x$.
Indeed, the edges $\vec e_1,\dots, \vec e_k$ must be $p'$-open and are conditioned to be $p$-closed, and the product comes from (a).
Also note that if (a), (b), (c) hold but (d) does not, then there exist $a<b$, and $u\in B_a$, and $v\in B_b$ such that:
\begin{compactitem}
\item Either $(u,v)\in \edges_{p'}\backslash \edges_p$ and is different up to orientation from $\vec e_1,\vec e_2,\dots, \vec e_k$.
\item Or there exist edges $(u,u'),(v,v')\in \edges_{p'}\backslash \edges_p$
different up to orientation from $\vec e_1,\vec e_2,\dots, \vec e_k$,
and a $p'$-open path from $u'$ to $v'$ that does not intersect $B_0\cup B_1\cup \dots \cup B_k$.
\end{compactitem}
So the conditional probability that (a), (b), and (c) hold but (d) does not is bounded by
\begin{equation}
f^-_x((B_i)_{0\leq i \leq k},(\vec e_i)_{1\leq i \leq k}):= \rho^{k+1} \sum_{0\leq a< b \leq k} S^-(B_a,B_b), \label{eq:LowerDef_f-}
\end{equation}
where for every subgraph $U,V\subset (\Z^d,\edges)$,
\[
S^-(U,V):=\sum_{u\in U,v\in V} \left [D+\rho D\ast \tau_{p'}\ast D  \right ](v-u) = \sum_{u\in U,v\in V} D\ast \bar \tau_{p,p'}(v-u) .
\]
The reason we did not add $\prod_{i=0}^k \1(e_{i+1}^-\in B_i)$ in \eqref{eq:LowerDef_f-} is that we need to force the following replacement events anyway. For every $0\leq i\leq k$, we consider the event
\begin{equation}
e_{i+1}^-\in \left (\C_p^i(e_i^+) \offx{e_i^+} \C_p^{i-1}(e_{i-1}^+) \right), \tag{$E_i(e_{i-1}^+,\vec e_i, e_{i+1}^-)$}
\end{equation}
with conventions $e_{-1}^+=e_0^+=0, e_{k+1}^-=x$, and $\C^{-1}_p(e_{-1}^+)=\emptyset$.
To simplify notation, we write $E_i$ instead of $E_i(e_{i-1}^+,\vec e_i,e_{i+1}^-)$, as there will be no risk of ambiguity.
We deduce that \eqref{eq:LowerSplitTaukB} is at least
\begin{equation*}
(f^+_x-f^-_x)((B_i)_{0\leq i \leq k},(\vec e_i)_{1\leq i \leq k})\proba^\circledast_p(\forall 0\leq i \leq k:\, E_i \text{ and } \B_i(\mathbf A((e_j^+)_{0\leq j \leq k}))=B_i ).
\end{equation*}
Next, by summing the above over all possible clusters $(B_i)_{0\leq i \leq k}$
that $(\B_i(\mathbf A((e_j^+)_{0\leq j \leq k})))_{0\leq i \leq k}$ can take,
we obtain, writing
$\mathbf B((\vec e_j)_{1\leq j \leq k})= ((\B_i(\mathbf A((e_j^+)_{0\leq j \leq k})))_{0\leq i \leq k},(\vec e_i)_{1\leq i \leq k})$, that
\[
\E_p^\circledast [(f^+_x-f_x^-)(\mathbf B((\vec e_j)_{1\leq j \leq k}))  \1(\forall 0\leq i \leq k, E_i) ].
\]
Recalling \eqref{eq:LowerSplitTauk} and \eqref{eq:LowerSplitTaukB}, we finally get the following result:
\begin{proposition} \label{pro:DecomposeThmLowerBoundTau}
For every $k\geq 1$, $p\leq p'<\inf(p_c,\tilde p_c(p))$, $x\in \Z^d$, we have
\[
\tau_{p,p',k}(x)\geq \sum_{\vec e_1,\dots \vec e_k\in \vedges, e_{k+1}^-=x} \E_p^\circledast \left [(f^+_x-f_x^-)(\mathbf B((\vec e_j)_{1\leq j \leq k})) \1(\forall 0\leq i \leq k, E_i) \right ].
\]
\end{proposition}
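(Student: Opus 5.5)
The plan is to assemble the chain of (in)equalities sketched just before the statement, justifying each step. Fix $k\ge1$, $x$, and $p\le p'<\inf(p_c,\tilde p_c(p))$.

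\textbf{Step 1: the events $E_x$ lie inside the event counted by $\tau_{p,p',k}(x)$.} On $E_x((\vec e_i))$ there is a $p'$-open path from $0$ to $x$ using exactly the $k$ edges $\vec e_i$ that are $p$-closed. Property (d) forces $k$ to be the minimum number of such edges. Indeed, any $p'$-open path with fewer $p$-closed edges would produce a tuple, possibly after extracting a subsequence of its $p$-closed edges with heads in distinct clusters, satisfying (a)--(c) and different from $(\vec e_i)$. Uniqueness (d) also makes the events $E_x((\vec e_i))$ pairwise disjoint over the tuples, so their probabilities sum to at most $\tau_{p,p',k}(x)$.

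\textbf{Step 2: decompose over the clusters.} Condition on the values $B_i$ of $\C_p(e_i^+)$, which are pairwise disjoint by (c); this gives \eqref{eq:LowerSplitTauk}. Then rewrite $\proba(\forall i,\ \C_p(e_i^+)=B_i)$ through Lemma \ref{lem:GeneralOff}(b) as a $\proba_p^\circledast$-probability on the off-off clusters $\B_i(\mathbf A)$, which gives \eqref{eq:LowerSplitTaukB}.

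\textbf{Step 3: lower bound the conditional probability, $f_x\ge f_x^+-f_x^-$.} Conditionally on the clusters, the event (a)$\wedge$(b)$\wedge$(c) has probability exactly $f_x^+$. Here the only extra randomness is that each edge $\vec e_i$, being adjacent to $B_{i-1}$, is $p$-closed and independently $p'$-open with probability $\rho$. Next I bound the probability that (d) fails. If it does, another tuple exists, and this yields either an extra $p'$-open, $p$-closed edge joining two of the $B_a$, or two such edges linked by a $p'$-open path avoiding $\bigcup B_i$.

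The step I expect to be the main obstacle is bounding this failure probability by $\rho^{k+1}S^-(B_a,B_b)$ summed over $a<b$. This requires conditional independence, given the clusters, of the status of edges outside $\bigcup B_i$ and their boundary, together with a BK-type bound to separate the detour path from the $\vec e_i$. I would argue that, conditionally on $\{\C_p(e_i^+)=B_i\}$, the $p'$-configuration off the boundary edges is a $p'$-percolation restricted to the complement. By monotonicity, the probability of a detour path there is at most $\tau_{p'}(v'-u')$, and each boundary edge contributes a factor $\rho$. This gives the factor $D\ast\bar\tau_{p,p'}$, and disjointness of the extra edges from the $\vec e_i$ gives $\rho^{k}\cdot\rho$.

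\textbf{Step 4: insert the events $E_i$ and sum.} Where $f_x^+-f_x^-\ge0$ requires the indicators $e_{i+1}^-\in B_i$, I insert the events $E_i$. This is legitimate because on $\{\B_i(\mathbf A)=B_i\}$ the event $E_i$ implies $e_{i+1}^-\in B_i$. Two cases are then handled separately:
\begin{itemize}
\item when $f^+>0$ is restricted by $E_i$, we only drop nonnegative terms of $f^+-f^-$;
\item when $f^+=0$, the factor $\1(\forall i, E_i)$ kills the term.
\end{itemize}
The point to check here is exactly that $E_i$ together with the off-off structure forces $e_{i+1}^-\in B_i$. This holds since $\B_i\subseteq \C^i\offx{e_i^+}\B_{i-1}\supseteq\C^i\offx{e_i^+}\C^{i-1}$, and the inclusion holds in the right direction via $\B_{i-1}\subseteq\C^{i-1}$ and the recursion \eqref{eq:defOffOff}; I would verify this carefully.

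Finally, sum over all admissible $(B_i)$, which turns the probability into an expectation over $\mathbf B((\vec e_j))$, and then sum over the edges $\vec e_1,\dots,\vec e_k$ to conclude.
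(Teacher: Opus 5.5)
Your Steps 1--3 follow the paper's chain \eqref{eq:LowerSplitTauk}--\eqref{eq:LowerSplitTaukB} and its bounds $f_x^\pm$. You also justify Step 1 and the conditional-independence part of Step 3 more carefully than the paper does. No BK is needed there: given $\{\C_p(e_i^+)=B_i\ \forall i\}$, the edges $\vec e_i$, the extra boundary edges and the edges of the detour lie on disjoint deterministic edge sets.

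The gap is in Step 4. You rely on the claim that on $\{\B_i(\mathbf A)=B_i\}$ the event $E_i$ forces $e_{i+1}^-\in B_i$, and this is false. Your two inclusions only say that $\B_i(\mathbf A)$ and $\C_p^i(e_i^+)\offx{e_i^+}\C_p^{i-1}(e_{i-1}^+)$ are both contained in $\C_p^i(e_i^+)\offx{e_i^+}\B_{i-1}(\mathbf A)$, which gives no inclusion between the two. Moreover, $\B_i(\mathbf A)$ also removes $\B_j(\mathbf A)$ for $j\le i-2$, and these need not lie in $\C^{i-1}_p(e_{i-1}^+)$.

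Here is a counterexample with $k=2$. Suppose $\C^0_p(0)$ meets every path from $e_2^+$ to $x$ inside $\C^2_p(e_2^+)$, while $\C^1_p(e_1^+)$ does not. Then $E_2$ holds but $x\notin\B_2(\mathbf A)$. This is exactly the left configuration of Figure~\ref{fig:SquareDiagramSimpleLower}, which produces the $\square_{p,p'}$ term in Proposition~\ref{pro:Part2ThmLowerBoundTau}; if your implication held, that error term would vanish. So the terms with $f_x^+=0$ are not killed by $\1(\forall i,E_i)$. In your other case, $f_x^+>0$ does not make $f_x^+-f_x^-$ nonnegative either.

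The inequality is nevertheless true, by a sign argument that uses no relation between $E_i$ and $\B_i$. For each admissible $(B_i)$ one has $f_x\ge f_x^+-f_x^-$; if some $e_{i+1}^-\notin B_i$, then $f_x=f_x^+=0$. There are two cases:
\begin{itemize}
\item If $f_x^+-f_x^-\le 0$, the corresponding term on the right is nonpositive, while $f_x\,\proba_p^\circledast(\forall i,\ \B_i(\mathbf A)=B_i)\ge 0$.
\item If $f_x^+-f_x^->0$, use $\proba_p^\circledast(\forall i:\ E_i,\ \B_i(\mathbf A)=B_i)\le\proba_p^\circledast(\forall i,\ \B_i(\mathbf A)=B_i)$.
\end{itemize}

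The same remark covers a point you pass over when turning the sum into an expectation. On $\bigcap_i E_i$, the vector $\mathbf B$ can take values that do not appear in \eqref{eq:LowerSplitTauk}, namely with some $\B_i(\mathbf A)=\emptyset$; this happens when $e_i^+\in\B_j(\mathbf A)$ for some $j\le i-2$. There $f_x^+=0$, so the integrand is $-f_x^-\le 0$, and including these values only lowers the right-hand side.

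The events $E_i$ are inserted only so that Propositions~\ref{pro:Part1ThmLowerBoundTau} and~\ref{pro:Part2ThmLowerBoundTau} can be computed later, not to make this step valid.
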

We now deal with $f^-_x$ and $f^+_x$ separately. We first have:
\begin{proposition} \label{pro:Part1ThmLowerBoundTau}
For every $p\leq p'<\inf(p_c,\tilde p_c(p))$ we have, writing $\rho=(p'-p)/(1-p)$,
\[
\hspace{-3em}\sum_{\substack{k\geq 1, e_{k+1}^-=x \\ \vec e_1,\dots \vec e_k\in \vedges }} \hspace{-1em}
\E_p^\circledast \left [f_x^-(\mathbf B((\vec e_j)_{1\leq j \leq k}))\1(\forall 0\leq i \leq k, E_i) \right ]
\leq \rho^2\bar \tau^\square_{p,p'} \ast \tau_p \ast \Diamond_{p,p'} \ast \tau_p\ast \bar \tau^\square_{p,p'}(x).
\]
\end{proposition}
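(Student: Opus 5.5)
We expand $f^-_x$ by its definition \eqref{eq:LowerDef_f-} as $\rho^{k+1}\sum_{0\le a<b\le k} S^-(\B_a,\B_b)$, where $\B_i=\B_i(\mathbf A((e_j^+)_{0\leq j\leq k}))$. Then we exchange the sums so that for each pair $a<b$ we must bound
\[
\rho^{k+1}\sum_{u,v} D\ast\bar\tau_{p,p'}(v-u)\,\E_p^\circledast\big[\1(u\in\B_a,\,v\in\B_b)\1(\forall i, E_i)\big].
\]
Since $\B_i\subset \C_p^i(e_i^+)$, we drop the off-off constraints where they are not needed. We keep $E_i$ for $i<a$ and $i>b$, so that the outer portions reproduce $\tau^{\square}$-type weights. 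For the indices $a\le i\le b$ we keep only what is needed to create the "diamond" geometry.

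\textbf{Outer factors.} For $i<a$, the events $E_0,\dots,E_{a-1}$ together with the clusters $\C^i_p(e_i^+)$ and the weights $\rho$ per edge sum over $\vec e_1,\dots,\vec e_{a}$ to give $\rho^{a}$ times $\tau^{\square,a-1}$-type sums. Combined with the final edge $\vec e_a$, this summed over $a$ gives $\bar\tau^\square_{p,p'}$, evaluated at the point $e_a^-$ (via \eqref{eq:def_TauSquare}, and \eqref{eq:def-bar-tau} to absorb one $D$ and one $\rho$). However, $E_a$ couples $\C^a$ with $\C^{a-1}$, so we relax $E_a$ to $e_{a+1}^-\in\C^a_p(e_a^+)$ and lose the off constraint there. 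The same applies symmetrically at the end: the chain $b+1,\dots,k$ summed over $k$ yields $\bar\tau^\square_{p,p'}$ from $e_{b+1}^-$ (or $e_b^+$) to $x$. Here we only need $E_{b+1}$ to be relaxed to $e_{b+2}^-\in\C^{b+1}$, which is an upper bound since the indicator of the $\offx{}$ event is dominated by plain inclusion. The indices $i>b+1$ keep their exact $E_i$ structure. This accounts for the factor $\rho^2\bar\tau^\square_{p,p'}\ast\cdots\ast\bar\tau^\square_{p,p'}$, where the two extra $\rho$'s are $\vec e_a$-free bookkeeping. We must check the powers: the total is $\rho^{k+1}$, with $k$ edges plus one for $S^-$.

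\textbf{Middle factor.} Cluster $a$ contains $e_a^+$, $e_{a+1}^-$ and $u$, so bounding with independent copies gives a tree-graph bound $\sum_w \tau_p(w-e_a^+)\tau_p(e_{a+1}^--w)\tau_p(u-w)$. Similarly, cluster $b$ contains $e_b^+$, $e_{b+1}^-$ and $v$, giving $\sum_{w'}\tau_p(w'-e_b^+)\tau_p(e_{b+1}^- - w')\tau_p(v-w')$. The clusters strictly between $a$ and $b$, with their off constraints $E_{a+1},\dots,E_{b-1}$ kept, produce $\tau_p\ast D\ast\bar\tau^\square_{p,p'}\ast\tau_p$ from $w$ to $w'$ along the chain. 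Meanwhile $u\to v$ contributes $\tau_p\ast D\ast\bar\tau_{p,p'}\ast\tau_p$, with the $\tau_p$ legs $w\to u$ and $v\to w'$ absorbed. Hence the two routes between $w$ and $w'$ form the pointwise product $\Diamond_{p,p'}(w'-w)$, and the legs $e_a^+\to w$ and $w'\to e_{b+1}^-$ give the two $\tau_p$ convolutions in the statement. The case $a+1=b$, where the red chain is empty, corresponds to the $\1_{\{0\}}$ term of $\bar\tau^\square$ after one $D$. Whether the $D$'s sit correctly needs care but is bookkeeping.

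\textbf{Main obstacle.} The delicate point is justifying that after dropping $E_a,E_b$ (partially), the expectation under $\proba_p^\circledast$ factorizes exactly into these pieces. Using independent copies, each $\C^i$ is a separate $p$-cluster, so $\proba^i_p(e_a^+\slr e_{a+1}^-,\,e_a^+\slr u)$ is bounded via BK/tree-graph by the three-$\tau_p$ star. The difficulty is that the off constraints for $a<i<b$ reference $\C^{i-1}$ including $\C^a$. I would resolve this by conditioning on $\C^a$: once $\C^a$ is fixed, the chain $a+1,\dots$ is a Markov structure in which $E_{a+1}$ depends on $\C^a$ only through the removal. I would then upper bound $E_{a+1}$ by plain inclusion $e_{a+2}^-\in\C^{a+1}$, which makes cluster $a$ independent of the rest, while keeping exact off structure from $a+2$ on. This loses only a term already dominated by $\bar\tau^\square$, since the first cluster of a $\tau^\square$ chain has no predecessor constraint anyway.
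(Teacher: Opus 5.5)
Your proposal is correct and follows essentially the same route as the paper. You bound $S^-(\B_a,\B_b)$ by $S^-(\C^a_p(e_a^+),\C^b_p(e_b^+))$ and relax $E_a,E_{a+1},E_b,E_{b+1}$ to plain inclusions, so that the independent copies factorize into $\tau_p^{\square,a-1}$, $\tau_p^{\square,b-a-2}$, $\tau_p^{\square,k-b-1}$ and a two-cluster expectation. You then apply the tree-graph/BK bound to clusters $a$ and $b$ to get the diamond $(\tau_p\ast D\ast\tau_p^{\square,b-a-2}\ast D\ast\tau_p)\cdot(\tau_p\ast D\ast\bar\tau_{p,p'}\ast\tau_p)$, and resum over $a,b,k$.

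Two small bookkeeping points, neither affecting validity. The conditioning on $\C^a_p(e_a^+)$ is unnecessary once $E_{a+1}$ is relaxed. The initial $\bar\tau^\square_{p,p'}$ factor ends at $e_a^+$, not $e_a^-$.
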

\begin{proof}
Recall \eqref{eq:LowerDef_f-}, and observe that $S^-$ is increasing in the sense that for every pair of subgraphs $U\subset U', V\subset V'$ we have $S^-(U,V)\leq S^-(U',V')$.
Moreover, recall that by definition, for every $0\leq i \leq k$, $\B_i(\mathbf A((e_j^+)_{0\leq j \leq k}))$ is defined as the cluster of $e_i^+$ inside $\C_p^i(e_i^+)$ after removing some vertices.
It follows for every $0\leq a<b\leq k$ that
\[
S^-(\B_a(\mathbf A((e_j^+)_{0\leq j \leq k})),\B_b(\mathbf A((e_j^+)_{0\leq j \leq k})))\leq S^-(\C_p^a(e_a^+),\C_p^b(e_b^+)) .
\]
Hence, by \eqref{eq:LowerDef_f-}, the sum in the proposition is bounded by
\[
\sum_{0\leq a<b\leq k} \rho^{k+1}\sum_{\substack{\vec e_1,\dots \vec e_k\in \vedges\\ \, e_0^+=0, e_{k+1}^-=x}} \E_p^\circledast \left [S^-(\C_p^a(e_a^+),\C_p^b(e_b^+)) \1(\forall 0\leq i \leq k, E_i) \right ].
\]
Let $S_x^{a,b,k}$ denote the above sum after removing the summation over $a,b,k$ and the power term.

To upper-bound $S_x^{a,b,k}$, we may remove the "off" from the events $\1(E_a)$, $\1(E_{a+1})$, $\1(E_{b})$, $\1(E_{b+1})$. Thus, using that $E_i$ only depends on $\edges^i_p$ and $\edges^{i-1}_p$,
we obtain the upper bound
 \begin{align}
 S^{a,b,k}_x\leq \sum_{\vec e_1,\dots \vec e_k\in \vedges} \proba^\circledast_p(\forall 0\leq i<a, E_i) & \proba^\circledast_p(\forall i\in [a+2,b-1], E_i,e_{a+2}^-\in \C_p^{a+1}(e_{a+1}^+)) \notag \\
\proba^\circledast_p(\forall i\in [b+2,k]: E_i, e_{b+2}^-\in \C_p^{b+1}(e_{b+1}^+)) &\E^\circledast_p\left [ S^{-}(\C_p^a(e_a^+),\C_p^b(e_b^+)) \1_{e_{a+1}^-\in \C_p^a(e_a^+) }\1_{e_{b+1}^-\in \C_p^b(e_b^+) } \right ], \label{eq:BeforeRewriteCross}
 \end{align}
valid whenever $0<a\leq a+2 \leq b <k$.
In general, \eqref{eq:BeforeRewriteCross} still holds with the following modifications: when $a=0$, the first probability is removed; when $a=b-1$, the second is removed; and when $b=k$, the third probability is removed.

Let us rewrite and bound \eqref{eq:BeforeRewriteCross}. We focus on the case $1\leq a\leq a+2\leq b<k$, as the other cases can be treated similarly. We first rewrite the first probability in \eqref{eq:BeforeRewriteCross}.
When $a\geq 1$, we have:
\[
\sum_{\vec e_1,\dots \vec e_{a-1}\in \vedges} \proba^\circledast_p(\forall 0\leq i<a, E_i)
= \sum_{\vec e_1,\dots \vec e_{a-1}\in \vedges}
\proba^\circledast_p \left(\forall 0\leq i<a, e_{i+1}^-\in \left (\C_p^i(e_i^+) \offx{e_i^+} \C_p^{i-1}(e_{i-1}^+)\right )\right ).
\]
Summing over $A_0=\C_p^0(e_0^+)$, $A_1=\C^1_p(e_1^+)$,\dots, $A_{a-1}=\C^{a-1}_p(e_{a-1}^+)$, which are by definition independent clusters, we obtain that the last sum equals
\[
\sum_{\vec e_1,\dots \vec e_{a-1}\in \vedges} \sum_{\substack{ A_0,A_1,\dots, A_{a-1} \in \Comp \\ A_{-1}=\emptyset}}
\prod_{i=0}^{a-1} \proba_p(\C(e_i^+)=A_i) \prod_{i=0}^{a-1} \1\left(e_{i+1}^-\in \left (A_i\offx{e_i^+} A_{i-1}\right )\right ),
\]
which we recognize as $\tau_p^{\square,a-1}(e_a^{-})$. By treating the other probability term in \eqref{eq:BeforeRewriteCross} in exactly the same way, we obtain, when $1\leq a\leq a+2\leq b<k$,
\begin{equation}
S_x^{a,b,k} \leq \sum_{\vec e_a, \vec e_{a+1}, \vec e_b, \vec e_{b+1}}
\tau_p^{\square,a-1}(e_a^{-})
\tau_p^{\square,b-a-2}(e_b^{-}-e_{a+1}^+)
\tau_p^{\square,k-b-1}(x-e_{b+1}^{+} )
\E^\circledast_p\left [ \dots \right ], \label{eq:PartlyRewriteCross}
\end{equation}
where $\E^\circledast_p\left [ \dots \right ]$ is the same expectation as in \eqref{eq:BeforeRewriteCross}, which we now bound.

By definition of $S^-$, this expectation equals
\[
\sum_{u,v\in \Z^d} D\ast \bar \tau_{p,p'}(v-u) \E^\circledast_p \left [ \1_{u,e_{a+1}^-\in \C_p^a(e_a^+)} \1_{v,e_{b+1}^-\in \C_p^b(e_b^+)}\right ].
\]
When $u, e_{a+1}^-\in \C_p^a(e_a^+)$, there exists $w_a\in \Z^d$ such that $(e_a^+ \slr w_a)\circ (w_a\slr e_{a+1}^-)\circ (w_a\slr u)$ occur disjointly in $\edges_p^a$.
We proceed similarly for $\edges_p^b$.
We deduce by the BK inequality that the expectation in \eqref{eq:BeforeRewriteCross} is bounded by
\begin{equation}
\sum_{w_a,w_b,u,v\in \Z^d} D\ast \bar \tau_{p,p'}(v-u)
\tau_p(u-w_a)\tau_p(w_a-e_a^+)\tau_p(e_{a+1}^--w_a)
\tau_p(w_b-e_b^+)\tau_p(e_{b+1}^--w_b)\tau_p(v-w_b),
\label{eq:DoubleBK_ForRewriteCross}
\end{equation}
We recognize $\sum_{u,v} \tau_p(u-w_a)\tau_p(v-w_b)[D\ast \bar \tau_{p,p'}](v-u)=\tau_p\ast D\ast \bar \tau_{p,p'}\ast \tau_p(w_b-w_a)$.
Moreover,
\[
\sum_{\vec e_{a+1},\vec e_b} \tau_p(e_{a+1}^--w_a) \tau_p^{\square,b-a-2}(e_b^{-}-e_{a+1}^+)  \tau_p(e_b^+-w_b)
=\tau_p\ast D\ast \tau^{\square,b-a-2}_p \ast D\ast \tau_p(w_b-w_a).
\]
So by writing for $b\geq a+2$:
\[
\Diamond_{p,p'}^{b-a} := (\tau_p\ast D\ast \tau^{\square,b-a-2}_p \ast D\ast \tau_p) (\tau_p\ast D\ast \bar \tau_{p,p'} \ast \tau_p),
\]
we deduce from the previous bound \eqref{eq:DoubleBK_ForRewriteCross} on the expectation that the sum in \eqref{eq:PartlyRewriteCross} is bounded by
\[
\sum_{\vec e_a, \vec e_{b+1},w_a,w_b}
\tau_p^{\square,a-1}(e_a^{-}) \tau_p(w_a-e_a^{+})
\Diamond_{p,p'}^{b-a}(w_b-w_a)
\tau_p(e_{b+1}^--w_b)\tau_p^{\square,k-b-1}(x-e_{b+1}^{+} ).
\]
Therefore, by rewriting the last sum, we obtain, when $1\leq a\leq a+2\leq b<k$, the bound
\[
S_x^{a,b,k}\leq \tau_p^{\square,a-1}\ast D\ast \tau_p\ast \Diamond_{p,p'}^{b-a} \ast \tau_p\ast D\ast \tau_p^{\square,k-b-1}(x).
\]
A similar calculation shows that the bound still holds in general with the following replacements: when $a=0$, one needs to remove the factor $\tau_p^{\square,a-1}\ast D$.
When $b=k$, one needs to remove the factor $D\ast \tau_p^{\square,k-b-1}(x)$.
When $b=a+1$, one needs to set
$ \Diamond_{p,p'}^{1} :=(\tau_p\ast D \ast \tau_p) (\tau_p\ast D\ast \bar \tau_{p,p'} \ast \tau_p).$
Informally, in the expressions, $\tau_{p}^{\square,-1}(x)\ast D$ is replaced by $\1_{\{0\}}:x\mapsto 1_{x=0}$ whenever it appears.
(If $D$ had an inverse for the convolution, we would set $\tau_p^{\square,-1}$ to be that inverse, but it does not.)

After summing the previous bounds over $a,b,k$, the resulting bound takes a much nicer form. Recall $\rho=(p'-p)/(1-p)$.
With the change of variables $i=b-a$ and $j=k-b$, we get
\begin{align*}
\sum_{\substack{ k\in \N \\ 0\leq a<b \leq k}} \rho^{k+1} S_x^{a,b,k} \leq  \rho^2 & \left (\1_{\{0\}}+\sum_{a= 1}^\infty \rho^a\tau_p^{\square,a-1}\ast D \right )  \ast \tau_p \ast \left ( \sum_{i=1}^\infty \rho^{i-1} \Diamond_{p,p'}^{i} \right )
\\ &  \ast \tau_p \ast \left (\1_{\{0\}}+\sum_{j= 1}^\infty \rho^j \tau_p^{\square,j-1}\ast D \right )(x) .
\end{align*}
We recognize the function inside the first and last parentheses as $\1_{\{0\}}+\rho D\ast \tau_{p,p'}^\square=\bar \tau^\square_{p,p'}$. The middle sum equals
\[
\left ( \tau_p\ast D\ast \left (\1_{\{0\}}+\sum_{i= 2}^\infty  \rho^{i-1} D\ast \tau_p^{\square,i-2} \right )\ast \tau_p  \right )(\tau_p\ast D\ast \bar \tau_{p,p'}\ast \tau_p ),
\]
and we recognize again $\bar \tau^\square_{p,p'}$, then $\Diamond_{p,p'}$. This gives the desired bound.
\end{proof}
\begin{proposition} \label{pro:Part2ThmLowerBoundTau}
For every $p\leq p'<\inf(p_c,\tilde p_c(p))$ we have, writing $\rho=(p'-p)/(1-p)$,
\[
\sum_{\substack{k\geq 1, \\ \vec e_1,\dots \vec e_k\in \vedges, \\ e_{k+1}^-=x  }} \hspace{-1em}
\E_p^\circledast \left [f_x^+(\mathbf B((\vec e_j)_{1\leq j \leq k})) \prod_{i=0}^k \1(E_i) \right ]
\geq [\tau_{p,p'}^\square-\tau_p -\rho^2\bar \tau^\square_{p,p'} \ast \tau_p \ast \square_{p,p'} \ast \tau_p\ast \bar \tau^\square_{p,p'}](x).
\]
\end{proposition}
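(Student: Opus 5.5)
We need to lower bound the sum of $\E^\circledast_p[f^+_x(\mathbf B)\prod_i \1(E_i)]$. Since $f^+_x(\mathbf B)=\rho^k\prod_{i=0}^k \1(e_{i+1}^-\in \B_i(\mathbf A))$, the plan is to compare the indicator $\prod_i \1(e_{i+1}^-\in \B_i)\prod_i\1(E_i)$ with $\prod_i \1(E_i)$ alone. The point is that $\E^\circledast_p[\prod_{i=0}^k\1(E_i)]$ summed over $\vec e_1,\dots,\vec e_k$ is exactly $\tau^{\square,k}_p(x)$. This follows by summing over the independent clusters $A_i=\C^i_p(e_i^+)$, exactly as in the rewriting step in the proof of Proposition \ref{pro:Part1ThmLowerBoundTau}. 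Thus
\[
\text{LHS}\geq \sum_{k\geq1}\rho^k\tau^{\square,k}_p(x)-\sum_{k\geq 1}\rho^k\sum_{\vec e_1,\dots,\vec e_k}\proba^\circledast_p\Big(\forall i,\,E_i,\ \exists j:\ e_{j+1}^-\notin \B_j(\mathbf A)\Big).
\]
The first sum is $\tau^\square_{p,p'}(x)-\tau^{\square,0}_p(x)=\tau^\square_{p,p'}(x)-\tau_p(x)$, which accounts for the $-\tau_p$ term. It remains to bound the error by $\rho^2\bar\tau^\square_{p,p'}\ast\tau_p\ast\square_{p,p'}\ast\tau_p\ast\bar\tau^\square_{p,p'}(x)$.

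For the error, I will use a union bound over the \emph{first} index $b$ at which $e_{b+1}^-\notin\B_b$. By minimality, $\B_i$ still connects $e_i^+$ to $e_{i+1}^-$ for all $i<b$, and $\B_i\subset\C^i_p(e_i^+)$. On $E_b$, the path from $e_b^+$ to $e_{b+1}^-$ survives removal of $\C^{b-1}_p(e_{b-1}^+)\supset\B_{b-1}$. Hence it must be cut by some $\B_a$ with $a\leq b-2$, and in particular by $\C^a_p(e_a^+)$. So there exist $a\leq b-2$ and a vertex $u\in\C^a_p(e_a^+)\cap\C^b_p(e_b^+)$ with $u$ on a $p$-open connection in $\edges^b_p$ between $e_b^+$ and $e_{b+1}^-$. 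As in Proposition \ref{pro:Part1ThmLowerBoundTau}, I then drop the offs in $E_a,E_{a+1},E_b,E_{b+1}$ and keep the others, which factorise into $\tau^{\square,a-1}_p$, $\tau^{\square,b-a-2}_p$ and $\tau^{\square,k-b-1}_p$ pieces.

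Next I apply BK in each of the copies $\edges^a_p$ and $\edges^b_p$. In copy $a$, the events $u,e_{a+1}^-\in\C^a_p(e_a^+)$ give a branch point $w_a$ with three disjoint arms. In copy $b$, the vertex $u$ lies on the path from $e_b^+$ to $e_{b+1}^-$ (after BK, $(e_b^+\slr u)\circ(u\slr e_{b+1}^-)$). This yields the factors
\[
\tau_p(w_a-e_a^+)\,\tau_p(e_{a+1}^--w_a)\,\tau_p(u-w_a)\,\tau_p(u-e_b^+)\,\tau_p(e_{b+1}^--u).
\]
The middle chain from $e_{a+1}$ to $e_b$ gives $\tau_p\ast D\ast\tau^{\square,b-a-2}_p\ast D\ast\tau_p(u-w_a)$. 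Multiplying by $\tau_p(u-w_a)$ produces $\tau_p\cdot(\tau_p\ast D\ast\tau^{\square,b-a-2}_p\ast D\ast\tau_p)(u-w_a)$, i.e. the $(b-a)$-th term of $\square_{p,p'}$. Summing over $b-a\geq2$ with weights $\rho^{b-a-2}$ reconstructs $\tau^\square_{p,p'}$ inside $\square_{p,p'}$. This also explains why the red part cannot be a single edge: $b-a\geq 2$ leaves at least one cluster between them.

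The outer legs $\tau^{\square,a-1}_p\ast D\ast\tau_p$ and $\tau_p\ast D\ast\tau^{\square,k-b-1}_p$ sum into $\bar\tau^\square_{p,p'}\ast\tau_p$ on each side, with the boundary conventions ($a=0$, $b=k$) handled as before. The powers of $\rho$ balance: $\rho^k=\rho^{a}\rho^{b-a-2}\rho^{k-b}\rho^2$.

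The main obstacle is justifying the cutting step rigorously: that failure at index $b$ forces an intersection with some $\C^a_p(e_a^+)$, $a\leq b-2$, located \emph{on} the $e_b^+\to e_{b+1}^-$ connection in copy $b$. One must check that removing $\bigcup_{j<b}\B_j$ instead of $\C^{b-1}_p$ can only add cuts coming from earlier $\B_a\subset\C^a_p$, so that the $\C^{b-1}$ part is already excluded by $E_b$. One must also check that the cut vertex lies on every open path, hence BK applies with $u$ as a pivot-like point. I would first try to argue that every $p$-open path in $\C^b_p(e_b^+)\offx{e_b^+}\C^{b-1}_p$ from $e_b^+$ to $e_{b+1}^-$ meets $\bigcup_{a\leq b-2}\B_a$. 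Picking one such path and its first hit gives the required $u$ together with the disjoint arms.
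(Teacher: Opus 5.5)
Your proposal is correct and follows the paper's proof essentially step for step. It uses the exact rewriting as $\sum_{k\geq 1}\rho^k\tau_p^{\square,k}(x)$ minus a failure term, and the observation that a failure at index $b$ forces some $a\leq b-2$ and a vertex of $\C^a_p(e_a^+)$ on a path of $\edges^b_p$ from $e_b^+$ to $e_{b+1}^-$. It then drops the offs in $E_a,E_{a+1},E_b,E_{b+1}$ and applies the tree-graph and BK inequalities to assemble $\square_{p,p'}$ between two copies of $\bar\tau^\square_{p,p'}\ast\tau_p$.

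Neither your worry about the cut vertex lying on \emph{every} open path nor the restriction to the first failing index is needed. As you yourself conclude, it suffices to take one self-avoiding path in $\edges^b_p$ avoiding $\C^{b-1}_p(e_{b-1}^+)$; this path must meet $\bigcup_{a\leq b-2}\B_a$, and the union bound over that vertex does the rest.
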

\begin{proof}
We have (see the paragraph containing \eqref{eq:PartlyRewriteCross})
\[
\sum_{\vec e_1,\dots \vec e_k\in \vedges, e_{k+1}^-=x} \E_p^\circledast \left [ \prod_{i=0}^k \1(E_i) \right ]  = \tau_p^{\square,k}(x).
\]
Summing over $k\geq 1$, we get by definition of $f^+_x$ (see \eqref{eq:def:f^+_x}) that the sum in the proposition equals
\begin{equation}
\sum_{k=1}^\infty \rho^k \tau_p^{\square,k}(x)
- \sum_{\substack{k\geq 1, e_{k+1}^-=x\\ \vec e_1,\dots \vec e_k\in \vedges}}  \rho^k
\E^\circledast_p \left [ \prod_{i=0}^k \1(E_i) \left (1 -  \prod_{i=0}^k \1(e_{i+1}^-\in \B_i(\mathbf A((e_j^+)_{0\leq j \leq k}))) \right ) \right ]. \label{eq:bad_outside_f^+}
\end{equation}
We recognize the first sum as $\tau_{p,p'}^\square(x)-\tau_p(x)$, and it suffices to upper-bound the second sum.

To this end, first note that for $b\in \{0,1\}$, the events $E_b$ and $e_{b+1}^-\in \B_b(\mathbf A((e_j^+)_{0\leq j \leq k}))$ are equivalent.
Let $2\leq b \leq k$.
If $E_b$ holds but $ e_{b+1}^-\notin \B_b(\mathbf A((e_j^+)_{0\leq j \leq k}))$,
then after removing  $\C^{b-1}_p(e_{b-1}^+)$,
the vertices $e_b^+, e_{b+1}^-$ are connected in $(\Z^d,\edges_p^b)$,
but after removing $\bigcup_{a<b} \B_a(\mathbf A((e_j^+)_{0\leq j \leq k}))$,
they are no longer connected.
Also, by definition, for every $a$, we have
$\B_a(\mathbf A((e_j^+)_{0\leq j \leq k})) \subset \C_p^a(e_a^+).$
So there exist $a\leq b-2$ and $v\in \C_p^a(e_a^+)$ such that $(e_b^+\slr v)\circ (v\slr e_{b+1}^-)$ in $(\Z^d,\edges_p^b)$. In other words, by a union bound, the second sum in \eqref{eq:bad_outside_f^+} is bounded by
\[
\sum_{\substack{k\geq 1\\ 2\leq b\leq k \\ 0\leq a \leq b-2}} \rho^k
\sum_{\substack{ \vec e_1,\dots \vec e_k\in \vedges \\ e_{k+1}^-=x}}\sum_{v\in \Z^d}
\E^\circledast_p \left [\1(v\in \C_p^a(e_a^+)) \1\left ((e_b^+\slr v)\circ (v\slr e_{b+1}^-) \text{ in } \edges_p^b \right )   \prod_{i=0}^k \1(E_i)  \right ].
\]
We then proceed as in the last proof. Let $S_x^{a,b,k}$ denote the above sum after removing the summation over $a,b,k$ and the power term.

To upper-bound $S_x^{a,b,k}$, we may remove the "off" from the events $\1(E_a)$, $\1(E_{a+1}), \1(E_b), \1(E_{b+1})$. Thus, using that $E_i$ depends only on $\edges^i_p$ and $\edges^{i-1}_p$, we obtain the upper bound
  \begin{align}
  S^{a,b,k}_x\leq \sum_{\vec e_1,\dots \vec e_k\in \vedges} \sum_{v\in \Z^d} \proba^\circledast_p(\forall 0\leq i<a, E_i) & \proba^\circledast_p(\forall i\in [a+2,b-1], E_i,e_{a+2}^-\in \C_p^{a+1}(e_{a+1}^+)) \notag \\
\proba^\circledast_p(\forall i\in [b+2,k]: E_i, e_{b+2}^-\in \C_p^{b+1}(e_{b+1}^+)) &\proba_p^a(e_a^+\slr e_{a+1}^-\slr v)\proba_p^b((e_b^+\slr v)\circ (v\slr e_{b+1}^-)), \label{eq:BeforeRewriteSquare}
  \end{align}
valid whenever $0\leq a<a+2\leq b\leq k$, with the convention that the first probability is $1$ whenever $a=0$, and the third probability is $1$ whenever $b=k$.
Proceeding exactly as in the proof of \eqref{eq:PartlyRewriteCross}, when $0<a<a+2\leq b<k$, we may rewrite the last sum as
\begin{equation}
\sum_{\vec e_a, \vec e_{a+1}, \vec e_b, \vec e_{b+1}} \sum_{v\in \Z^d}
\tau_p^{\square,a-1}(e_a^{-})
\tau_p^{\square,b-a-2}(e_b^{-}-e_{a+1}^+)
\tau_p^{\square,k-b-1}(x-e_{b+1}^{+})
\proba_p^a(\dots )\proba_p^b(\dots), \label{eq:AfterRewriteSquare}
\end{equation}
where $\proba_p^a(\dots),\proba_p^b(\dots)$ are the last two probabilities in \eqref{eq:BeforeRewriteSquare}.

By the tree graph inequality \cite{MR762034} and the BK inequality, we have
\[
\proba_p^a(e_a^+\slr e_{a+1}^-\slr v) \leq \sum_{w\in \Z^d} \tau_p(w-e_a^+) \tau_p(v-w) \tau_p(e_{a+1}^--w),
\]
and
\[
\proba_p^b((e_b^+\slr v)\circ (v\slr e_{b+1}^-))\leq \tau_p(v-e_b^+)\tau_p(e_{b+1}^--v).
\]
We then write
\[
\square_{p}^{b-a} := [\tau_p\ast D\ast \tau_p^{\square,b-a-2} \ast D\ast \tau_p] \cdot \tau_p.
\]
So we recognize
\[
\sum_{\vec e_{a+1}, \vec e_b} \tau_p(v-w) \tau_p(e_{a+1}^--w)\tau_p^{\square,b-a-2}(e_b^--e_{a+1}^+)\tau_p(v-e_{b}^+) =\square_p^{b-a}(v-w).
\]
Hence, by \eqref{eq:BeforeRewriteSquare}--\eqref{eq:AfterRewriteSquare}, we have
\begin{align*}
S_x^{a,b,k} &  \leq \sum_{\vec e_a,\vec e_{b+1},v,w} \tau_p^{\square,a-1}(e_a^-)\tau_p(w-e_a^+) \square_p^{b-a}(v-w)\tau_p(e_{b+1}^--v)\tau_p^{\square,k-b-1}(x-e_{b+1}^+)
\\ & =\tau_p^{\square,a-1}\ast D\ast \tau_p\ast \square_p^{b-a}\ast \tau_p\ast D\ast \tau_p^{\square,k-b-1} (x).
 \end{align*}
By a similar argument, the bound remains true when either $a=0$ or $b=k$, with the following modifications: when $a=0$, $\tau_p^{\square,a-1}\ast D$ is removed, and when $b=k$, $D\ast \tau_p^{\square,k-b-1}$ is removed.

It remains to sum the bound over $0\leq a<a+2\leq b\leq k$ to bound the second sum in \eqref{eq:bad_outside_f^+}. Recall $\rho=(p'-p)/(1-p)$. With the change of variables $i=b-a$ and $j=k-b$, we have
\begin{align*}
\sum_{\substack{k\geq 1\\ 2\leq b\leq k \\ 0\leq a \leq b-2}} \rho^k S_x^{a,b,k} \leq  \rho^2 & \left (\1_{\{0\}}+\sum_{a= 1}^\infty \rho^a\tau_p^{\square,a-1}\ast D \right )  \ast \tau_p \ast \left ( \sum_{i=2}^\infty \rho^{i-2} \square_p^i \right )  \\
&  \ast \tau_p \ast \left (\1_{\{0\}}+\sum_{j= 1}^\infty \rho^j \tau_p^{\square,j-1}\ast D \right )(x) .
\end{align*}
We recognize the function inside the first and last parentheses as $\1_{\{0\}}+\frac{p'-p}{1-p}D\ast \tau_{p,p'}^\square=\bar \tau^\square_{p,p'}$. The middle sum equals
\[
\left [ \tau_p\ast D\ast \left (\sum_{i= 2}^\infty  \rho^{i-2} \tau_p^{\square,i-2} \right )\ast D\ast \tau_p  \right ]\cdot \tau_p,
\]
and we recognize again $\tau^\square_{p,p'}$, then $\square_{p,p'}$. This gives the desired bound.
\end{proof}
\begin{proof}[Proof of Theorem \ref{thm:LowerBoundTau}.]
We have $\tau_{p'}=\tau_p+\sum_{k\geq 1} \tau_{p,p',k}$, so Theorem \ref{thm:LowerBoundTau} follows by summing Propositions \ref{pro:DecomposeThmLowerBoundTau}, \ref{pro:Part1ThmLowerBoundTau}, and \ref{pro:Part2ThmLowerBoundTau}.
\end{proof}

\subsection{Upper bound on $\tau_p$: proof of Theorem \ref{thm:UpperBoundTau}} \label{sec:UpperLower}
For the upper bound, we sample the clusters of $e_0^+,e_1^+,\dots$ inductively and stop whenever we find a bad event.
This allows us to consider fewer error terms.
We first have to introduce some notation.
Recall that we fix $\rho=(p'-p)/(1-p)$.
For every sequence of clusters $(A_i)_{i\geq 0}$ in $\Comp$, and $(a_i)_{i\geq 1}$ in $\Z^d$, let $\Df_0(A_0)=A_0$, $\Df_1(A_0,A_1,a_1)=A_0\cup (A_1\offx{a_1} A_0)$, and then define inductively for $i\geq 2$,
\[
\Df_i(\{A_j\}_{0\leq j\leq i},\{a_j\}_{1\leq j\leq i}) := \Df_1(\Df_{i-1}(\{A_j\}_{0\leq j< i},\{a_j\}_{1\leq j<i}),A_i,a_i).
\]
We consider the following error terms.
First, for every $p\leq p'<p_c$ and $a,w,x\in \Z^d$, let
\[
\tau_p^w (a,x) :=\proba_p((a\slr w)\circ (w\slr x)),
\]
then let
\[
\tau_{p,p'}^w(a,x):=\tau_p^w(a,x)+\rho \sum_{\vec e \in \vedges} \tau_p^w(a,e^-)\tau_{p'}(x-e^+).
\]
Finally, for every pair of finite subgraphs $A_0,A_1$, and $a_1,a_2,x\in \Z^d$, let
\[
\Xi_{a_2,x}(A_0,A_1,a_1):=\sum_{w\in \Z^d} \1(w\in A_1, w\notin A_1\offx{a_1} A_0)\tau_{p,p'}^w(a_2,x).
\]
The next result is the central piece for the induction.
\begin{proposition} \label{pro:HeredityForUpperTau}
For every $p\leq p'<p_c$, finite subgraphs $A_0,A_1$ of $\Z^d$, vertices $a_1,a_2,x\in \Z^d$,
\begin{align*}
& \proba \left (x\in \C_{p'}(a_2) \offx{a_2} \Df_1(A_0,A_1,a_1) \right )\leq \Xi_{a_2,x}(A_0,A_1,a_1) +\proba(x\in \C_p(a_2) \offx{a_2} (A_0\cup A_1))
\\ & + \rho  \sum_{(A_2,\vec e)\in \vec \Comp(a_2)} \proba(\C_p(a_2)=A_2)  \1_{e^-\in A_2\offx {a_2} (A_0\cup A_1)} \proba \left (x\in \C_{p'}(e^+) \offx{e^+}\Df_2(A_0,A_1,A_2,a_1,a_2) \right ).
\end{align*}
\end{proposition}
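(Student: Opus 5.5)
Write $D_1=\Df_1(A_0,A_1,a_1)$, and let $G$ be the graph $(\Z^d,\edges_{p'})$ with the vertices of $D_1$ removed. The plan is a decomposition according to the $p$-open part of the $p'$-cluster of $a_2$ in $G$. We look at the $p$-open cluster of $a_2$ in the graph with $A_0\cup A_1$ removed, namely $A_2':=\C_p(a_2)\offx{a_2}(A_0\cup A_1)$. On the event $x\in \C_{p'}(a_2)\offx{a_2}D_1$, one of three cases occurs.

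\textbf{Case (i).} A $p'$-open path in $G$ from $a_2$ to $x$ passes through a vertex $w\in A_1\setminus (A_1\offx{a_1}A_0)$. Such a vertex lies in $A_1$, but it is not removed, since it is not in $D_1$.

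\textbf{Case (ii).} No such path exists, and $x\in A_2'$.

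\textbf{Case (iii).} Neither of the above holds, and the path must leave $A_2'$ through a $p$-closed, $p'$-open edge.

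\emph{Case (i).} We split the path at the first such $w$. If the portion from $a_2$ to $w$ is $p$-open, we get $(a_2\slr w)\circ(w\slr x)$ at level $p$. Otherwise the first $p$-closed, $p'$-open edge $\vec e$ yields $p$-connections $a_2\slr e^-$ and $w$, followed by $e^+\slr x$ at level $p'$. We should arrange things so that the point $w$ sits in the $p$-open part before $\vec e$. To achieve this, we take $w$ as the branching point of a tree connecting $a_2$, $e^-$ and the relevant vertex of $A_1$. Using BK on the edge-disjoint pieces, together with independence of the state of $\vec e$ (which contributes the factor $\rho$), we get the bound $\tau^w_{p,p'}(a_2,x)$. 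Summing over $w$ gives $\Xi_{a_2,x}$.

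I expect the main obstacle here: getting exactly $\tau_p^w(a_2,e^-)\tau_{p'}(x-e^+)$ rather than a more complicated term when $w$ lies after $\vec e$. My first attempt would be to define $w$ as the last vertex of $A_1$ before the first $p$-closed edge. If no vertex of $A_1$ occurs before that edge, the path through $A_1$ is in fact not needed, and we fall into Case (iii) with $D_1$ replaced by a smaller removed set. This reduction must be checked carefully, and it is plausibly why the statement only requires an upper bound.

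\emph{Case (ii).} Here $x\in \C_p(a_2)\offx{a_2}(A_0\cup A_1)$, which is the second term of the bound. To see this, note that in the absence of Case (i), the $p$-open path avoids $A_1$ entirely, and it avoids $A_0$ because $A_0\subset D_1$.

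\emph{Case (iii).} We condition on $\C_p(a_2)=A_2$ and apply Aizenman's off method (Lemma \ref{lem:GeneralOff}). We choose $\vec e$ to be the first $p$-closed, $p'$-open edge with $e^-\in A_2\offx{a_2}(A_0\cup A_1)$ on a path to $x$ avoiding $D_1$. The remaining path from $e^+$ to $x$ is $p'$-open and avoids $D_1\cup(A_2\offx{a_2}D_1)=\Df_2$. It uses only edges not adjacent to $A_2$'s retained part, with the boundary edges of $A_2$ closed at $p$. Conditionally on $\C_p(a_2)=A_2$, the configuration off $A_2$ is independent, and the event that $\vec e$ is $p'$-open has conditional probability $\rho$.

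There is a subtlety: $A_2\offx{a_2}(A_0\cup A_1)$ differs from $A_2\offx{a_2}D_1$. The vertices of $A_1\setminus D_1$ that were excised are handled by Case (i). Dropping the constraint that the edges adjacent to $A_2$ stay closed only increases the probability, giving the factor $\proba(x\in\C_{p'}(e^+)\offx{e^+}\Df_2)$. Finally, a union bound over $(A_2,\vec e)$ yields the third term.
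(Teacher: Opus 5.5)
Your case (i) is where the argument breaks, and the patch you sketch does not repair it. The term $\tau^w_{p,p'}(a_2,x)=\tau^w_p(a_2,x)+\rho\sum_{\vec e}\tau^w_p(a_2,e^-)\tau_{p'}(x-e^+)$ only pays for configurations in which $w$ lies on the initial $p$-open stretch of the path, before its first $p$-closed edge. Your case (i) contains every configuration in which some $p'$-open path avoiding $D_1$ visits a vertex $w\in A_1$ with $w\notin A_1\offx{a_1}A_0$. This includes visits that happen only after one or several $p$-closed, $p'$-open edges, and nothing in the right-hand side controls those through $\Xi_{a_2,x}$.

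Your first sub-case is also misstated. If only the portion from $a_2$ to $w$ is $p$-open, you do not get $(a_2\slr w)\circ(w\slr x)$ at level $p$; you must cut at the first $p$-closed edge of the whole path. Sending the bad configurations to case (iii) is inconsistent with your proof of case (iii). There, the claim that the tail after $\vec e$ avoids $\Df_2=D_1\cup(A_2\offx{a_2}D_1)$ relies exactly on excluding case (i). Without it, the tail may pass through vertices of $A_2\offx{a_2}D_1$ that are not in $A_2\offx{a_2}(A_0\cup A_1)$, for instance $w$ itself. Separately, $\vec e$ must be the \emph{last} exit of the path, not the first $p$-closed edge, or the tail may re-enter.

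The missing observation is that $p'$-paths through $A_1\setminus D_1$ never need to be controlled. The tail only has to avoid $D_1\cup(A_2\offx{a_2}D_1)$, which a last-exit choice from $A_2\offx{a_2}D_1$ guarantees, and it may visit $A_1\setminus D_1$ elsewhere freely. The set $A_1\setminus D_1$ only matters inside the $p$-cluster of $a_2$. The paper therefore splits according to $R:=\C_p(a_2)\offx{a_2}D_1$, the retained part off $D_1$ rather than off $A_0\cup A_1$.

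If $x\in R$, then either $x\in \C_p(a_2)\offx{a_2}(A_0\cup A_1)$, or a self-avoiding $p$-open path from $a_2$ to $x$ inside $R$ meets some $w\in A_1$ with $w\notin A_1\offx{a_1}A_0$, and BK gives $\tau^w_p(a_2,x)$.

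If $x\notin R$, let $\vec e$ be the last exit of the $p'$-path from $R$. It is $p$-closed, the tail avoids $\Df_2$, and the union bound gives the third term, but with $\1_{e^-\in A_2\offx{a_2}D_1}$ in place of $\1_{e^-\in A_2\offx{a_2}(A_0\cup A_1)}$. To bound the excess, replace the last probability with $\tau_{p'}(x-e^+)$ and run the same $w$-argument on the $p$-path from $a_2$ to $e^-$ inside $A_2$. Summing over $A_2$ yields $\rho\sum_{\vec e}\sum_{w}\1(w\in A_1,\,w\notin A_1\offx{a_1}A_0)\,\tau^w_p(a_2,e^-)\tau_{p'}(x-e^+)$, which is the second half of $\Xi_{a_2,x}(A_0,A_1,a_1)$. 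That mismatch of indicators, not $p'$-paths through $A_1$, is where this term comes from.
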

\begin{proof}
Write $\Dcr=  \Df_1(A_0,A_1,a_1)$. We may assume $a_2\notin \Dcr$, otherwise the left-hand side is null.

We distinguish between $x\in \C_p(a_2)\offx{a_2} \Dcr$ and $x\in (\C_{p'}(a_2)\offx{a_2} \Dcr)\backslash (\C_p(a_2)\offx{a_2} \Dcr)$. Let $P_1$ be the probability of the first case occurring, and $P_2$ the probability of the second case.
For the first case,
if $x\in \C_p(a_2) \offx{a_2} \Dcr$ but $x\notin \C_p(a_2) \offx{a_2} (A_0\cup A_1)$,
then there must exist a self-avoiding path $\gamma$ from $a_2$ to $x$ in $\C_p(a_2)$
that goes through a vertex $w$ satisfying $w\in A_0\cup A_1$ and $w\notin \Dcr$.
By definition of $\Df_1(A_0,A_1,a_1)$, we must have $w\in A_1$, $w\notin A_1 \offx{a_1} A_0$.
Thus, by a union bound,
\begin{equation}
P_1 \leq \proba(x\in \C_p(a_2)\offx{a_2} (A_0\cup A_1))+\sum_{w\in \Z^d} \1(w\in A_1,w\notin A_1\offx{a_1} A_0) \proba_p((a_2\slr w)\circ (w\slr x)). \label{eq:P1bound_for_UpperDiagram}
\end{equation}

We deal similarly with the second case.
Since $x\in \C_{p'}(a_2) \offx{a_2} \Dcr$, there exists a $p'$-open path $\gamma$ from $a_2$ to $x$ such that $\gamma\cap \Dcr=\emptyset$.
By modifying this path, we may assume that once it leaves $\C_p(a_2)\offx{a_2} \Dcr$, it never enters again.
Let $\vec e$ be the oriented edge where $\gamma$ leaves $\C_p(a_2)\offx{a_2} \Dcr$.
Such an edge exists since $x\notin  \C_p(a_2)\offx{a_2} \Dcr$.
Because $\gamma\cap \Dcr=\emptyset$, we have $e^+\notin \Dcr$, and hence the edge $\vec e$ must be $p$-closed.
Then note that $x \in \C_{p'}(e^+) \offx{e^+} \Df_1(\Dcr,\C_p(a_2),a_2)$ since the part of $\gamma$ from $e^+$ to $x$ avoids both $\Dcr$ and $\C_p(a_2)\offx{a_2} \Dcr$.
Therefore, by a union bound, we have
\begin{equation}
P_2 \leq \rho\sum_{(A_2,\vec e)\in \vec \Comp(a_2)}  \hspace{-1em} \proba(\C_p(a_2)=A_2)  \1_{e^-\in A_2\offx {a_2} \Dcr} \proba \left (x\in \C_{p'}(e^+) \offx{e^+} \Df_1(\Dcr,A_2,a_2) \right ),\label{eq:P2bound_for_UpperDiagram}
\end{equation}
where the term $\rho=(p'-p)/(1-p)$ is for the probability that $\vec e$ is $p'$-open given that it is $p$-closed.

It remains to upper-bound the analogous sum in which we also add the factor $\1_{e^-\notin A_2\offx{a_2} (A_0\cup A_1)}$.
Let $S_{\text{bad}}$ stand for this sum.
By increasing the sum, we may replace the rightmost probability by $\proba_{p'} (e^+\slr x)=\tau_{p'}(x-e^+)$.
Then, exactly as in the first part of the proof, there must be $w\in \Z^d$ such that $w\in A_1$, $w\notin A_1\offx{a_1} A_0$, and $(a_2\slr w)\circ (w\slr e^-)$ in $A_2$.
Thus,
\[
S_{\text{bad}} \leq \sum_{(A_2,\vec e)\in \vec \Comp(a_2)} \sum_{w\in \Z^d} \proba(\C_p(a_2)=A_2) \1_{w\in A_1, w\notin A_1\offx{a_1} A_0}\1_{(a_2\slr w)\circ (w\slr e^-) \text{ in } A_2}\tau_{p'}(x-e^+) .
\]
Summing first over $A_2$, we may rewrite the last bound as
\begin{equation}
S_{\text{bad}} \leq \sum_{\vec e \in \vedges} \sum_{w\in \Z^d} \1(w\in A_1, w\notin A_1\offx{a_1} A_0) \tau_p^w(a_2,e^-) \tau_{p'}(x-e^+). \label{eq:Sbadbound_for_UpperDiagram}
\end{equation}
Summing \eqref{eq:P1bound_for_UpperDiagram}, \eqref{eq:P2bound_for_UpperDiagram}, and $\rho\cdot$\eqref{eq:Sbadbound_for_UpperDiagram} yields the desired result.
\end{proof}
To perform the induction for Theorem \ref{thm:UpperBoundTau}, we have to introduce some notation and conventions.
As usual, $e_0^+=0$ and $A_{-1}=\emptyset$.
We will drop the dependence on $x,p,p'$ in some notation, as they are fixed throughout the proof.
For brevity, for $k\geq 0$, we write $\Abf_k=(\{A_i\}_{0\leq i <k}, \{\vec e_i\}_{1\leq i \leq k})$.
Let
\[
\Df_k(\Abf_k,A_k)=\Df_k(\{A_i\}_{0\leq i \leq k},\{e^+_i\}_{1\leq i \leq k}).
\]
By convention, we let $\Df_{-1}(\cdot)=\emptyset$ and $\Df_{-2}(\cdot)=\emptyset$. To simplify the notation, we will write $\Dbf_k$ instead of $\Df_k(\Abf_k,A_k)$, as there will be no risk of ambiguity.
We then let for $k\geq 1$,
 \[
 \Xi_k(\Abf_k)=\Xi_{e_k^+,x}(\Dbf_{k-2},A_{k-1},e_{k-1}^+) .
 \]
By convention, $\Xi_0(\cdot)=0$. Also note that $\Xi_1(\cdot)=0$ since $\Dbf_{-1}=\emptyset$. For $k\geq 0$, let
 \[
 P_k(\Abf_k)=\proba(x\in \C_p(e_k^+)\offx{e_k^+} (\Dbf_{k-2} \cup A_{k-1})) .
 \]
Let $\Pi_0(\cdot)=1$. For $k\geq 1$ let
 \[
 \Pi_k(\Abf_k)= \rho^{k}\prod_{i=0}^{k-1}\proba(\C_p(e_i^+)=A_i)\prod_{i=0}^{k-1}\1(e_{i+1}^-\in A_i \offx{e_i^+} (\Dbf_{i-2} \cup A_{i-1})) .
 \]
Finally, for $k\geq 0$, let
 \[
 R_k(\Abf_k) =\proba \big (x\in \C_{p'}(e_{k}^+) \offx{e_{k}^+ }\Dbf_{k-1} \big).
 \]

By definition of $\Df$, we have $\Df_1(\Dbf_{k-2},A_{k-1},e_{k-1}^+)=\Dbf_{k-1}$. Similarly,
\[
\Df_2(\Dbf_{k-2},A_{k-1},A_k,e_{k-1}^+,e_k^+)= \Df_1(\Df_1(\Dbf_{k-2},A_{k-1},e_{k-1}^+),A_k,e_k^+)=\Dbf_k.
\]
So by Proposition \ref{pro:HeredityForUpperTau}, we have for every $k\geq 1$, $A_0,A_1,\dots, A_{k-1} \in \Comp$, $\vec e_1,\vec e_2,\dots, \vec e_k\in \vedges$,
\begin{align*}
& \proba \left (x\in \C_{p'}(e_k^+) \offx{e_k^+} \Dbf_{k-1} \right )\leq \Xi_{e_k^+,x}(\Dbf_{k-2},A_{k-1},e^+_{k-1}) +\proba(x\in \C_p(e_k^+) \offx{e_k^+} (\Dbf_{k-2}\cup A_{k-1}))
\\ & + \rho  \sum_{\substack{ A_k\in \Comp\\\vec e_{k+1}\in \vedges}} \proba(\C_p(e_k^+)=A_k)  \1_{e_{k+1}^-\in A_k\offx{e_k^+} (\Dbf_{k-2}\cup A_{k-1})} \proba \left (x\in \C_{p'}(e_{k+1}^+) \offx{e^+_{k+1}} \Dbf_{k} \right ),
\end{align*}
We may further rewrite the above inequality as
\[
R_k(\Abf_k) \leq \Xi_k(\Abf_k) + P_k(\Abf_k)+\sum_{(A_k,\vec e_{k+1})\in \vec \Comp} \frac{\Pi_{k+1}(\Abf_{k+1})}{\Pi_k(\Abf_k)} R_{k+1}(\Abf_{k+1}).
\]
By multiplying by $\Pi_k(\Abf_k)$, and summing over $A_0,\dots, A_{k-1}\in \Comp,\vec e_1,\dots, \vec e_k \in \vedges$, which we simply write as a sum over $\Abf_k$, we get, for every $k\geq 1$,
\begin{equation}
\sum_{\Abf_{k}} \Pi_k(\Abf_k) R_k(\Abf_k) \leq \sum_{\Abf_k} \Pi_k(\Abf_k)\Xi_k(\Abf_k) + \sum_{\Abf_{k}} \Pi_k(\Abf_k) P_k(\Abf_k)+\sum_{\Abf_{k+1}} \Pi_{k+1}(\Abf_{k+1}) R_{k+1}(\Abf_{k+1}). \label{eq:HeredityForUpperBound}
\end{equation}
The inequality is still valid for $k=0$ with our conventions. Indeed, by a union bound,
\begin{align*}
\proba_{p'}(0\slr x) & \leq \proba_p(0\slr x)+ \sum_{(A_0,\vec e_1)\in \vec \Comp} \proba(\C_p(0)=A_0,\,  \vec e_1\in \edges_{p'}\backslash \edges_{p},\, x\in \C_{p'}(e_1^+) \offx{e_1^+} A_0).
\\ & =\proba_p(0\slr x)+\sum_{(A_0,\vec e_1)\in \vec \Comp} \proba(\C_p(0)=A_0)\rho\proba(x\in \C_{p'}(e_1^+) \offx{e_1^+} A_0).
\end{align*}
Next, note that \eqref{eq:HeredityForUpperBound} is of the form $u_k\leq v_k+u_{k+1}$, where $(u_k)_{k\geq 0}$ and $(v_k)_{k\geq 0}$ are nonnegative.
For such sequences, we have, by induction, for every $K\geq 0$, $u_0\leq \sum_{0\leq k\leq K} v_k+u_{K+1}$.
In other words,
\[
\proba_{p'}(0\slr x)\leq \sum_{\substack{0\leq k \leq K \\ \Abf_k}} \Pi_k(\Abf_k)\Xi_k(\Abf_k) + \sum_{\substack{0\leq k \leq K \\ \Abf_k}} \Pi_k(\Abf_k) P_k(\Abf_k)+\sum_{\Abf_{K+1}} \Pi_{K+1}(\Abf_{K+1}) R_{K+1}(\Abf_{K+1}).
\]
We write $S_\Xi$ and $S_P$ for the limits (possibly infinite) as $K\to \infty$ of the first two sums.
We can take the limit because we are summing non-negative terms.
We write $S_R(K)$ for the third sum.
The second sum is the main term, corresponding to $\tau_{p,p'}^\square$.
The first sum is the error term.
The last sum is the remainder that goes to $0$ as we perform more steps of the induction.
More precisely, Theorem \ref{thm:UpperBoundTau} will follow by summing the next three results:
\begin{lemma}
If $p\leq p'<\inf(p_c,\tilde p_c(p))$, then as $k\to \infty$ we have $S_R(k)\to 0$.
\end{lemma}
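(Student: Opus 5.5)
We need $S_R(K)=\sum_{\Abf_{K+1}}\Pi_{K+1}(\Abf_{K+1})R_{K+1}(\Abf_{K+1})\to 0$. The plan is to bound $R_{K+1}$ trivially, bound the weights $\Pi_{K+1}$ by the summand of $\tau^{\square,K}_p$, and then use that $\rho^K\|\tau_p^{\square,K}\|_1$ is summable in $K$.

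\textbf{Step 1: bound $R_{K+1}$.} Since $\C_{p'}(e_{K+1}^+)\offx{e_{K+1}^+}\Dbf_K\subset \C_{p'}(e_{K+1}^+)$, we get
\[
R_{K+1}(\Abf_{K+1})\leq \tau_{p'}(x-e_{K+1}^+)\leq 1 .
\]

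\textbf{Step 2: compare $\Pi_{K+1}$ with the summand of $\tau^{\square,K}_p$.} The constraint in $\Pi_{K+1}$ is
\[
e_{i+1}^-\in A_i\offx{e_i^+}(\Dbf_{i-2}\cup A_{i-1}),
\]
and this set is contained in $A_i\offx{e_i^+}A_{i-1}$. Removing more vertices only shrinks the component of $e_i^+$. Hence each indicator in $\Pi_{K+1}$ is at most the corresponding indicator in \eqref{eq:def_TauSquare}, and
\[
\Pi_{K+1}(\Abf_{K+1})\leq \rho^{K+1}\prod_{i=0}^{K}\proba_p(\C(e_i^+)=A_i)\,\1\bigl(e_{i+1}^-\in A_i\offx{e_i^+}A_{i-1}\bigr).
\]
Sum over $A_0,\dots,A_K$ and $\vec e_1,\dots,\vec e_K$ with $e_{K+1}^-$ fixed. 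By \eqref{eq:def_TauSquare} this yields $\rho^{K+1}\tau_p^{\square,K}(e_{K+1}^-)$.

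\textbf{Step 3: collect the bound.} Summing over the last edge $\vec e_{K+1}$ gives
\[
S_R(K)\leq \rho^{K+1}\sum_{\vec e}\tau_p^{\square,K}(e^-)\,\tau_{p'}(x-e^+)=\rho\cdot\rho^K\,\tau_p^{\square,K}\ast D\ast\tau_{p'}(x)\leq \rho\,\Omega\,\rho^K\|\tau_p^{\square,K}\|_1 .
\]
The last inequality uses $\tau_{p'}\leq 1$ and the fact that $\|D\|_1=\Omega$.

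\textbf{Step 4: summability.} Since $p'<\tilde p_c(p)$, the definition of $\tilde p_c(p)$ gives
\[
\|\tau^\square_{p,p'}\|_1=\sum_K\rho^K\|\tau_p^{\square,K}\|_1<\infty .
\]
All terms are nonnegative, so $\rho^K\|\tau_p^{\square,K}\|_1\to 0$, and therefore $S_R(K)\to 0$.

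\textbf{Main obstacle.} The only delicate point is this finiteness at $p'$ itself. If $\tilde p_c(p)$ is defined as an infimum, it is not immediate that $\|\tau^\square_{p,p'}\|_1<\infty$ for all $p'<\tilde p_c(p)$. Monotonicity of $p'\mapsto\rho(p,p')$ settles it: the sum is a power series in $\rho$ with nonnegative coefficients, hence nondecreasing in $p'$. So if it were infinite at some $p'$, then $\tilde p_c(p)\leq p'$, a contradiction. Here I rely on Proposition~\ref{pro:PrelimBootstrap} for the precise statement.
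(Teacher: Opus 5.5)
Your proof is correct and is essentially the paper's argument: bound $R_{K+1}$ by $1$, drop the $\Dbf_{i-2}$ to recognise the summand of $\tau_p^{\square,K}$, sum over the last edge to get $\Omega\rho^{K+1}\|\tau_p^{\square,K}\|_1$, and conclude from $\|\tau^\square_{p,p'}\|_1<\infty$. The ``main obstacle'' you flag is not actually one: $p'<\inf\{p'':\|\tau^\square_{p,p''}\|_1=\infty\}$ already forces $p'$ to lie outside that set, so no monotonicity argument is needed.
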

\begin{proof}
$R_{k+1}(\Abf_{k+1})$ is a probability, so we bound it by $1$. Thus, it suffices to bound
\[
\rho^{k+1}\sum_{\substack{ A_0,A_1,\dots, A_k \in \Comp \\ \vec e_1,\dots, \vec e_{k+1}\in \vedges}} \prod_{i=0}^k \proba(\C_p(e_i^+)=A_i)\prod_{i=0}^k\1(e_{i+1}^-\in A_i \offx{e_i^+} (A_{i-1}\cup \Dbf_{i-2})).
\]
By erasing $\Dbf_{i-2}$, we recognize $\tau_p^{\square,k}$ in \eqref{eq:def_TauSquare}. More precisely, the above sum is bounded by
\[
\rho^{k+1} \sum_{\vec e_{k+1}\in \vedges} \tau_{p}^{\square,k}(e_{k+1}^-) =\Omega \rho^{k+1} \|\tau_p^{\square,k}\|_1 .
\]
By definition of $\tilde p_c(p)$, we have $\|\tau_{p,p'}^\square\|_1=\sum_{k=0}^\infty \rho^k \|\tau_p^{\square,k}\|_1<\infty$. So $S_R(k)\to 0$.
\end{proof}
\begin{lemma}
If $p\leq p'<\inf(p_c,\tilde p_c(p))$, then $S_P\leq \tau_{p,p'}^\square(x)$.
\end{lemma}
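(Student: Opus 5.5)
The plan is to compare $S_P$ term by term with $\tau^\square_{p,p'}(x)=\sum_{k\geq 0}\rho^k\tau_p^{\square,k}(x)$, using only that removing more vertices makes clusters smaller. Recall that
\[
S_P=\sum_{k\geq 0}\sum_{\Abf_k}\Pi_k(\Abf_k)P_k(\Abf_k).
\]
Here $P_k(\Abf_k)=\proba(x\in \C_p(e_k^+)\offx{e_k^+}(\Dbf_{k-2}\cup A_{k-1}))$. By monotonicity of the off operation, enlarging the removed set can only shrink the cluster. So erasing $\Dbf_{k-2}$ gives
\[
P_k(\Abf_k)\leq \proba(x\in \C_p(e_k^+)\offx{e_k^+}A_{k-1})=\sum_{A_k\in\Comp}\proba(\C_p(e_k^+)=A_k)\1(x\in A_k\offx{e_k^+}A_{k-1}),
\]
where the equality follows by summing over the possible values $A_k$ of $\C_p(e_k^+)$. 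With the convention $e_{k+1}^-=x$, the indicator is exactly the $i=k$ factor in \eqref{eq:def_TauSquare}.

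Next, I will treat the factors inside $\Pi_k(\Abf_k)$ the same way. Each indicator satisfies $\1(e_{i+1}^-\in A_i\offx{e_i^+}(\Dbf_{i-2}\cup A_{i-1}))\leq \1(e_{i+1}^-\in A_i\offx{e_i^+}A_{i-1})$, again because removing fewer vertices only enlarges the component of $e_i^+$. This is precisely the step already used in the proof of the previous lemma. After these replacements, $\Pi_k(\Abf_k)P_k(\Abf_k)$ is bounded by $\rho^k$ times the summand of \eqref{eq:def_TauSquare}, with $A_{-1}=\emptyset$ and $e_0^+=0$.

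Summing over $\Abf_k$ and $A_k$ then gives $\sum_{\Abf_k}\Pi_kP_k\leq\rho^k\tau_p^{\square,k}(x)$. For $k=0$ the bound reads $P_0=\tau_p(x)=\tau_p^{\square,0}(x)$, since $\Dbf_{-2}=A_{-1}=\emptyset$. Summing over $k$ yields $S_P\leq\tau^\square_{p,p'}(x)$, and this is finite because $p'<\tilde p_c(p)$, although finiteness is not actually needed for the inequality.

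The only step needing care is the monotonicity $G\offx{y}S'\subset G\offx{y}S$ for $S\subset S'$, which is immediate from the definition as a connected component of $G\backslash S$. The remaining work is bookkeeping of the conventions for $k=0,1$, namely $\Dbf_{-1}=\Dbf_{-2}=\emptyset$.
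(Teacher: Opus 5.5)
Your proposal is correct and is essentially the paper's proof: drop $\Dbf_{k-2}$ from $P_k$ (and the $\Dbf_{i-2}$ from the indicators in $\Pi_k$, which the paper leaves implicit), expand over the value $A_k$ of $\C_p(e_k^+)$, recognize $\rho^k\tau_p^{\square,k}(x)$, and sum over $k$. The monotonicity $G\offx{y}S'\subset G\offx{y}S$ for $S\subset S'$ that you isolate is exactly what justifies each of these dropping steps.
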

\begin{proof}
Dropping the $\Dbf_{k-2}$ in the definition of $P_k$, we have
\[
P_k(\Abf_k)\leq \proba(x\in \C_p(e_k^+)\offx{e_k^+} A_{k-1})=\sum_{A_k\in \Comp, e_{k+1}^-=x} \proba(\C_p(e_k^+)=A_k)\1(e_{k+1}^-\in A_k\offx{e_k^+}A_{k-1}).
\]
So
\[
\sum_{\Abf_k}\Pi_k(\Abf_k)  P_k(\Abf_k) \leq \rho^{k}  \sum_{\substack{ A_0,A_1,\dots, A_k \in \Comp \\ \vec e_1,\dots, \vec e_k\in \vedges, e_{k+1}^-=x}}
\prod_{i=0}^{k}\proba(\C_p(e_i^+)=A_i)\prod_{i=0}^{k}\1(e_{i+1}^-\in A_i \offx{e_i^+} A_{i-1}).
\]
We recognize the above sum as $\tau_p^{\square,k}(x)$ in \eqref{eq:def_TauSquare}. By summing over $k$, we get
$ S_P \leq  \sum_{k\geq 0} \rho^k \tau_p^{\square,k}(x) =\tau_{p,p'}^\square(x).$
\end{proof}
\begin{proposition}
If $p\leq p'<\inf(p_c,\tilde p_c(p))$, we have
\[
S_\Xi\leq ((p'-p)/(1-p))^2\bar \tau^\square_{p,p'}\ast  \tau_p \ast  \twosquare_{p,p'} \ast \tau_p \ast \bar \tau_{p,p'}(x).
\]
\end{proposition}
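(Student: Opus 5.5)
We must bound $S_\Xi=\sum_{k\geq 2}\sum_{\Abf_k}\Pi_k(\Abf_k)\Xi_k(\Abf_k)$; note $\Xi_0=\Xi_1=0$. The plan mirrors the proofs of Propositions \ref{pro:Part1ThmLowerBoundTau} and \ref{pro:Part2ThmLowerBoundTau}. Recall
\[
\Xi_k(\Abf_k)=\sum_w \1\bigl(w\in A_{k-1},\,w\notin A_{k-1}\offx{e_{k-1}^+}\Dbf_{k-2}\bigr)\,\tau^w_{p,p'}(e_k^+,x).
\]

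\textbf{Step 1: locate the cut.} On the event in the indicator, the vertex $w$ is connected to $e_{k-1}^+$ in $A_{k-1}$. Every such path meets $\Dbf_{k-2}\subset\bigcup_{a\leq k-2}A_a$. Moreover, in $\Pi_k$ we have $e_k^-\in A_{k-1}\offx{e_{k-1}^+}(\Dbf_{k-3}\cup A_{k-2})$. So there are an index $a\leq k-2$ and a vertex $v\in A_a$ (in fact in $\Dbf_{k-2}$) such that, in $A_{k-1}$, there is $y$ with
\[
(e_{k-1}^+\slr y)\circ(y\slr e_k^-)\circ(y\slr v)\circ(v\slr w)
\]
occurring disjointly. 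This is a tree-graph decomposition with branch point $y$. A union bound over $a$, $v$ and $y$ then replaces the indicator.

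\textbf{Step 2: drop the offs and factor.} In $\Pi_k$, drop the $\Dbf_{i-2}$ in all indicators, and drop the off entirely for indices $a$, $a+1$, $k-1$ (and the relevant neighbours). This gives the same factorisation as in \eqref{eq:BeforeRewriteCross}. Summing over the intermediate clusters produces:
\begin{itemize}
\item $\tau_p^{\square,a-1}(e_a^-)$ before cluster $a$;
\item $\tau_p^{\square,k-a-3}(e_{k-1}^--e_{a+1}^+)$ between clusters $a+1$ and $k-1$;
\item a factor $\proba_p(e_a^+\slr e_{a+1}^-\slr v)$ for cluster $a$, bounded via the tree graph inequality by $\sum_u\tau_p(u-e_a^+)\tau_p(e_{a+1}^--u)\tau_p(v-u)$;
\item for cluster $k-1$, by BK, $\tau_p(y-e_{k-1}^+)\tau_p(e_k^--y)\tau_p(v-y)\tau_p(w-v)$.
\end{itemize}
The final factor is $\tau^w_{p,p'}(e_k^+,x)\leq\sum_z \tau_p(w-e_k^+)\,\tau_p(z-w)\,\bar\tau_{p,p'}$-type terms. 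More precisely, by BK $\tau_p^w(a,x)\leq\tau_p(w-a)\tau_p(x-w)$, so
\[
\tau^w_{p,p'}(e_k^+,x)\leq \tau_p(w-e_k^+)\,[\tau_p\ast\bar\tau_{p,p'}](x-w),
\]
after absorbing $D$ into the convolution.

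\textbf{Step 3: recognise the diagram.} Sum over $\vec e_{a+1}$ and $\vec e_{k-1}$ together with $\tau_p^{\square,k-a-3}$, and then over $k-a$ with weights $\rho$. This gives
\[
\tau_p\ast D\ast\bar\tau^\square_{p,p'}\ast\tau_p(y-u).
\]
The factor $\bar\tau^\square$ accounts for the case $a=k-2$, where the middle is empty; this explains the $\1_{\{0\}}$ term. The remaining loop consists of
\begin{itemize}
\item $u\to v$, $v\to y$, $v\to w$, and $w\to e_k^+$;
\item $e_k^+$ linked back to $y$ by $\tau_p(e_k^--y)$ and $D$.
\end{itemize}
Set the origin of the $\twosquare$ diagram at $u$ and its endpoint at the outgoing vertex $w$ (called $z$ in the definition), with $v$ the inner vertex. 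Then $\tau_p(v)\tau_p(z-v)\tau_p(y-v)[\tau_p\ast D\ast\bar\tau^\square\ast\tau_p](y)[\tau_p\ast D\ast\tau_p](z-y)$ matches \eqref{eq:deftwosquare}. Here the last bracket is the path $y\to e_k^-\to e_k^+\to w$, with $\tau_p(w-e_k^+)$ supplying the final $\tau_p$. The outer factors are as follows:
\begin{itemize}
\item $\sum_a\rho^a\tau_p^{\square,a-1}\ast D$ gives $\bar\tau^\square_{p,p'}$, followed by the $\tau_p(u-e_a^+)$ leg;
\item after $w$, the remaining $\tau_p(x-w)$ plus the $\rho D\ast\tau_{p'}$ continuation give $\tau_p\ast\bar\tau_{p,p'}$.
\end{itemize}
Counting powers of $\rho$: $\Pi_k$ contributes $\rho^k$, of which $k-2$ are absorbed into the $\bar\tau^\square$ factors and two remain, giving the $\rho^2$ prefactor.

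\textbf{Main obstacle.} The delicate step is Step 1. The offending vertex $w$ lies in $A_{k-1}$ but is disconnected from $e_{k-1}^+$ by the earlier clusters $\Dbf_{k-2}$, while $e_k^-$ must remain connected to $e_{k-1}^+$. One must extract a single disjoint tree structure in $A_{k-1}$ that splits off $v$ and $w$ without double-using edges, so that BK applies. The attempted extraction is: take the geodesic from $e_{k-1}^+$ to $w$, let $v$ be its first vertex in $\Dbf_{k-2}$, and let $y$ be the branch point with the path to $e_k^-$.

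A secondary point is the bookkeeping of the boundary cases $a=0$ and $a=k-2$, which the $\1_{\{0\}}$ conventions in $\bar\tau^\square$ handle as in the earlier proofs.
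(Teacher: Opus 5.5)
Your proposal is the paper's proof: the union bound $\Xi_k\le\sum_{a\le k-2}\Xi_{a,k}$ via a four-arm disjoint occurrence in $A_{k-1}$, dropping the offs at $a,a+1,k-1$, the tree-graph and BK bounds, $\tau^w_{p,p'}(e_k^+,x)\le\tau_p(w-e_k^+)[\tau_p\ast\bar\tau_{p,p'}](x-w)$, and the identification of $\bar\tau^\square_{p,p'}$, $\twosquare_{p,p'}$ and the prefactor $\rho^2$ all match. The one step that fails as written is the witness you propose in Step~1.

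Taking $v$ as the first vertex of $\Dbf_{k-2}$ on a geodesic $\gamma$ from $e_{k-1}^+$ to $w$ does not stop the part of $\gamma$ after $v$ from returning to the path towards $e_k^-$ and using one of its edges. Then the arms $y\slr e_k^-$ and $v\slr w$ cannot be made edge-disjoint. Concretely, let $A_{k-1}$ consist of:
\begin{itemize}
\item a long path $\gamma_1$ from $e_{k-1}^+$ to $e_k^-$ avoiding $\Dbf_{k-3}\cup A_{k-2}$, ending with the edges $\{b,c\},\{c,e_k^-\}$;
\item a short path $e_{k-1}^+,v,b$;
\item the edges $\{c,v'\},\{v',w\}$,
\end{itemize}
where $v,v'\in\Dbf_{k-2}$. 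The geodesic to $w$ is $e_{k-1}^+,v,b,c,v',w$. Its only vertex before $v$ is $e_{k-1}^+$, so your rule gives $y=e_{k-1}^+$ and this $v$. But every path from $e_{k-1}^+$ to $e_k^-$ and every path from $v$ to $w$ must use $\{b,c\}$, so the four-arm event fails for this pair $(y,v)$.

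The claim of Step~1 is still true; you only need the paper's order of construction.
\begin{itemize}
\item First fix a self-avoiding path $\gamma_1$ in $A_{k-1}$ from $e_{k-1}^+$ to $e_k^-$ avoiding $\Dbf_{k-2}$. It exists because $\Dbf_{k-2}\subset\Dbf_{k-3}\cup A_{k-2}$ and the last indicator in $\Pi_k$ is nonzero.
\item Then take a self-avoiding path in $A_{k-1}$ from $w$ to $\gamma_1$, stopped at its first vertex $y$ on $\gamma_1$, and let $\gamma_2$ be its reversal.
\item The segment of $\gamma_1$ up to $y$ followed by $\gamma_2$ joins $e_{k-1}^+$ to $w$, so it meets $\Dbf_{k-2}$. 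Since $\gamma_1$ avoids $\Dbf_{k-2}$, some $v\in\Dbf_{k-2}$ lies on $\gamma_2$.
\item Cutting $\gamma_1$ at $y$ and $\gamma_2$ at $v$ gives four edge-disjoint arms, because both paths are self-avoiding and $\gamma_2$ meets $\gamma_1$ only at $y$.
\end{itemize}
In the example this gives $y=c$ and $v=v'$. With this replacement the rest of your argument goes through exactly as in the paper.
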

\begin{proof}
We first bound $\Xi_k(\Abf_k)$. Recall that $\Xi_0(\cdot)=0$ and $\Xi_1(\cdot)=0$. We may restrict to the case
\[
e_{k}^-\in A_{k-1} \offx{e^+_{k-1}} (A_{k-2}\cup \Dbf_{k-3})
\]
since otherwise $\Pi_k(\Abf_k)=0$. For $k\geq 2$, by definition,
 \begin{align*}
 \Xi_k(\Abf_k) & =\Xi_{e_k^+,x}(\Dbf_{k-2},A_{k-1},e_{k-1}^+),
 \\ & = \sum_{w\in \Z^d} \1(w\in A_{k-1}, w\notin A_{k-1}\offx{e_{k-1}^+} \Dbf_{k-2})\tau_{p,p'}^w(e_k^+,x)
 \end{align*}
Assume that for some $w\in \Z^d$ we have $w\in A_{k-1}$ but $w\notin A_{k-1}\offx{e_{k-1}^+} \Dbf_{k-2}$.
Then we may consider in $A_{k-1}$ a self-avoiding path $\gamma_1$ from $e_{k-1}^+$ to $e_k^-$ that avoids $\Dbf_{k-2} \subset A_{k-2} \cup \Dbf_{k-3}$.
Then consider another self-avoiding path $\gamma_2$ in $A_{k-1}$
starting from $y\in \gamma_1$ and ending at $w$
and such that $\gamma_2$ intersects $\gamma_1$ only at $y$.
$\gamma_2$ must intersect $\Dbf_{k-2}$ at some vertex $v$
since otherwise we would have $w\in A_{k-1}\offx{e_{k-1}^+} \Dbf_{k-2}$.
Thus, $ \Xi_k(\Abf_k)$ is bounded by
\[
\sum_{y,v,w\in \Z^d} \1(v\in  \Dbf_{k-2}) \1((e_{k-1}^+\slr y)\circ (y\slr e_k^-)\circ (y\slr v)\circ (v\slr w)\text{ in } A_{k-1})\tau_{p,p'}^w(e_k^+,x).
\]
Hence, since, by definition, $ \Dbf_{k-2}\subset \bigcup_{i=0}^{k-2} A_i$, we get $\Xi_k(\Abf_k)\leq \sum_{i=0}^{k-2} \Xi_{i,k}(\Abf_k),$
where
\begin{equation}
\Xi_{i,k}(\Abf_k) := \sum_{y,v,w\in \Z^d} \1(v\in A_i) \1((e_{k-1}^+\slr y)\circ (y\slr e_k^-)\circ (y\slr v)\circ (v\slr w)\text{ in } A_{k-1})\tau_{p,p'}^w(e_k^+,x). \label{eq:def_Xi_i,k}
\end{equation}
Note already that $\Xi_{i,k}(\Abf_k)$ depends only on $A_i,A_{k-1}, e_{k-1}^+$ and $\vec e_k$.

Next, for every $0\leq i \leq k-2$, by dropping the $\Dbf_{\cdot}(\cdot)$ in the definition of $\Pi_k$, and by also dropping the off in the indicator for $j\in \{i, i+1,k-1\}$, we get
\[
\Pi_k(\Abf_k) \Xi_{i,k}(\Abf_k)\leq \rho^k \Pi_{i,k}^1(\Abf_k)\Pi_{i,k}^2(\Abf_k)\Pi_{i,k}^3(\Abf_k),
\]
where the different terms of the product are defined below. Let
\[
\Pi_{i,k}^1(\Abf_k) :=  \prod_{j=0}^{i-1} \proba(\C_p(e_j^+)=A_j) \prod_{j=0}^{i-1} \1(e_{j+1}^-\in A_j \offx{e_j^+} A_{j-1}),
\]
with the convention $\Pi^1_{0,k}(\cdot)=1$. For $i<k-3$ let
\[
\Pi_{i,k}^2(\Abf_k) := \prod_{j=i+1}^{k-2} \proba(\C_p(e_j^+)=A_j) \1(e_{i+2}^-\in A_{i+1})\prod_{j=i+2}^{k-2} \1(e_{j+1}^-\in A_j \offx{e_j^+} A_{j-1}).
\]
Let $\Pi^2_{k-2,k}(\cdot)=1$, and let $\Pi^2_{k-3,k}(\Abf_k)= \proba(\C_p(e_{k-2}^+)=A_{k-2}) \1(e_{k-1}^-\in A_{k-2})$. Then let
\[
\Pi_{i,k}^3(\Abf_k) :=  \Xi_{i,k}(\Abf_k) \proba(\C_p(e_i^+)=A_i) \1(e_{i+1}^-\in A_i)\proba(\C_p(e_{k-1}^+)=A_{k-1})\1(e_{k}^-\in A_{k-1}).
\]
Note the following dependencies:
\begin{compactitem}
\item $\Pi_{i,k}^1(\Abf_k)$ depends only on $\{A_j\}_{j<i}$ and $\{\vec e_j\}_{1\leq j\leq i}$.
\item $\Pi_{i,k}^2(\Abf_k)$ depends only on $\{A_j\}_{i<j<k-1}$ and $\{\vec e_j\}_{i< j \leq k-1}$.
\item $\Pi_{i,k}^3(\Abf_k)$ depends only on $\Abf_{i,k}:=(A_i, A_{k-1}, e_{i}^+, \vec e_{i+1}, e_{k-1}^+, \vec e_{k})$.
\end{compactitem}
These sets of variables are essentially disjoint, so we may deal with each product separately. First, if $i\geq 1$, then for every $\vec e_i$, by simply recognizing \eqref{eq:def_TauSquare},
 \[
 \sum_{\substack{ \{A_j\}_{j<i}, \\ \{\vec e_j\}_{1\leq j < i}}} \Pi^1_{i,k}(\{A_j\}_{j<i},\{\vec e_j\}_{1\leq j\leq i})= \tau_p^{\square,i-1}(e_i^-).
 \]
Similarly, if
$i<k-2$, for every $\vec e_{i+1}, \vec e_{k-1}$,
  \[
  \sum_{\substack{ \{A_j\}_{i<j<k-1},\\ \{\vec e_j\}_{i+1< j<k-1}} } \Pi^2_{i,k}(\{A_j\}_{i<j<k-1},\{\vec e_j\}_{i< j\leq k-1})= \tau_p^{\square,k-i-3}(e_{k-1}^--e_{i+1}^+ ),
  \]
  by recognizing \eqref{eq:def_TauSquare} after changing the summation index and translating everything by $-e_{i+1}^+$. Therefore, whenever $1\leq i<k-2$,
\begin{equation}
\sum_{\Abf_k} \Pi_k(\Abf_k)\Xi_{i,k}(\Abf_k)\leq \rho^k \sum_{\Abf_{i,k}} [D\ast \tau_p^{\square,i-1}] (e_i^+) [D\ast \tau_p^{\square,k-i-3}] (e_{k-1}^+-e_{i+1}^+)  \Pi^3_{i,k} (\Abf_{i,k}).\label{eq:PartialSumTwoSquare}
\end{equation}
The bound \eqref{eq:PartialSumTwoSquare} still holds when $i=0$ or $i=k-2$ by replacing $D\ast \tau_p^{\square,-1}(\cdot)$ by $\1_{\{0\}}$ when it appears.

We then bound the sum in \eqref{eq:PartialSumTwoSquare}. We first sum only over $A_i, A_{k-1}$ while fixing the other variables. We have
\begin{align*}
\sum_{A_i, A_{k-1}} \Pi^3_{i,k}(\Abf_{i,k}) & = \sum_{A_i, A_{k-1}}\sum_{y,v,w\in \Z^d} \proba(\C_p(e_i^+)=A_i) \1( v,e_{i+1}^-\in A_i)  \proba(\C_p(e_{k-1}^+) =A_{k-1} )
\\ &\hspace{3em} *\1((e_{k-1}^+\slr y)\circ (y\slr e_k^-)\circ (y\slr v)\circ (v\slr w)\text{ in } A_{k-1})  \tau_{p,p'}^w(e_k^+,x)
\\ & = \sum_{y,v,w}  \proba_p(e_i^+\slr e_{i+1}^-\slr v) \proba_p(e_{k-1}^+\slr y \circ y\slr e_k^- \circ y\slr v \circ v\slr w) \tau_{p,p'}^w(e_k^+,x).
 \end{align*}
Thus, by the BK inequality and the tree graph inequality \cite{MR762034}, the last sum is bounded by
 \[
 \sum_{y,v,w,w'} \tau_p(w'-e_i^+) \tau_p(e_{i+1}^--w')\tau_p(v-w') \tau_p(e_{k-1}^+-y)\tau_p(e_k^--y) \tau_p(v-y)\tau_p(w-v) \tau_{p,p'}^w(e_k^+,x).
 \]
Also, by the BK inequality, we have for every $x'\in \Z^d$, $\tau_{p}^w(e_k^+,x') \leq \tau_p(w-e_k^+)\tau_p(x'-w)$. So
 \begin{align*}
 \tau_{p,p'}^w(e_k^+,x)&  \leq \tau_p(w-e_k^+) \tau_p(x-w) + \rho \tau_p(w-e_k^+) \sum_{\vec e\in \vedges} \tau_p(e^--w)\tau_{p'}(x-e^+)
 \\ & =\tau_p(w-e_k^+) [\tau_p\ast \bar \tau_{p,p'}](x-w).
 \end{align*}
Reorganizing the terms, we thus get that the sum in \eqref{eq:PartialSumTwoSquare} is bounded by
\begin{align*}
& \sum_{w,w'}  [\tau_p\ast \bar \tau_{p,p'}](x-w) \Bigg( \sum_{e_i^+} [D\ast \tau_p^{\square,i-1}](e_i^+)\tau_p(w'-e_i^+) \Bigg ) \sum_{y,v} \Bigg (\sum_{\vec e_k} \tau_p(e_k^--y)\tau_p(w-e_k^+) \Bigg  ) \\
 &  \Big (\sum_{\vec e_{i+1}, e_{k-1}^+} \tau_p(e_{i+1}^--w') [D\ast \tau_p^{\square,k-i-3}] (e_{k-1}^+-e_{i+1}^+) \tau_p(y-e_{k-1}^+) \Big )
\tau_p(y-v)\tau_p(w-v)\tau_p(v-w'),
\end{align*}
keeping the convention that $D\ast \tau_p^{\square,-1}$ must be replaced by $\1_{\{0\}}$  whenever it appears.
We recognize the first parenthesis as $D\ast \tau_p^{\square,i-1}\ast \tau_p(w')$.
The second parenthesis is $\tau_p\ast D\ast \tau_p(w-y)$.
The third parenthesis is $\tau_p\ast D\ast \tau_p^{\square,k-i-3} \ast D \ast \tau_p(y-w')$.
This motivates us to define, for $a\geq -1$,
\[
\twosquare_p^{a}(w-w'):=\sum_{y,v} [\tau_p\ast D\ast \tau_p^{\square,a} \ast D \ast \tau_p](y-w')[\tau_p\ast D\ast \tau_p](w-y)\tau_p(y-v)\tau_p(w-v)\tau_p(v-w'),
\]
again removing $D\ast \tau_p^{\square,-1}$ when $a=-1$. The sum in \eqref{eq:PartialSumTwoSquare} is then bounded by
\[
\sum_{w,w'} D\ast \tau_p^{\square,i-1}\ast \tau_p(w') \twosquare_p^{k-i-3}(w-w') \tau_p \ast \bar \tau_{p,p'}(x-w) = D\ast \tau_p^{\square,i-1}\ast \tau_p \ast  \twosquare_p^{k-i-3} \ast \tau_p \ast \bar \tau_{p,p'}(x).
\]

It remains to sum the bound over all $k\geq 2$ and $0\leq i \leq k-2$. We have
\begin{align*}
S_\Xi=\sum_{\substack{ k\geq 2,\\  \Abf_k}} \Pi_k(\Abf_k)\Xi_k(\Abf_k) & \leq \sum_{\substack{ k\geq 2, \Abf_k \\  0\leq i \leq k-2 }}  \Pi_k(\Abf_k)\Xi_{i,k} (\Abf_k)
\\ & \leq \sum_{\substack{ k\geq 2,\\  0\leq i \leq k-2}} \rho^k D\ast \tau_p^{\square,i-1}\ast \tau_p \ast  \twosquare_p^{k-i-3} \ast \tau_p \ast \bar \tau_{p,p'}(x).
\end{align*}
With the change of variables $a=k-i-3$, $S_\Xi$ is bounded by
\[
\rho^2 \left ( \1_{\{0\}}+D\ast \sum_{i=1}^\infty   \rho^i \tau_p^{\square,i-1} \right )\ast \tau_p\ast \left (\sum_{a=-1}^\infty \rho^{a+1}  \twosquare_p^{a} \right)  \ast \tau_p \ast \bar \tau_{p,p'}(x).
\]
Recalling $\rho=(p'-p)/(1-p)$, we recognize the first parenthesis as $\1_{\{0\}}+ \rho D\ast \tau_{p,p'}^\square=\bar \tau_{p,p'}^\square$. The function in the second parenthesis sends $w\in \Z^d$ to
\[
\sum_{y,v} \left [\tau_p\ast \left (\1_{\{0\}}+D\ast  \sum_{a\geq 0}^\infty \rho^{a+1} \tau_p^{\square,a} \right )\ast D \ast \tau_p \right ](y)[\tau_p\ast D\ast \tau_p](w-y)\tau_p(y-v)\tau_p(w-v)\tau_p(v).
\]
We recognize again $\bar \tau_{p,p'}^\square$ and then $\twosquare_{p,p'}$. Therefore,
\[
S_\Xi \leq \rho^2 \bar \tau_{p,p'}^\square \ast \tau_p \ast \twosquare_{p,p'}\ast \tau_p\ast \bar \tau_{p,p'}(x).\qedhere
\]
\end{proof}
\section{Markov chain interpretation} \label{sec:MarkovChain}
In Subsection~\ref{sec:Rewriting}, we rewrite $\tau_p^{\square,k}$ using a Markov chain of pointed clusters, leaving some proofs for Subsection~\ref{sec:SpectralConstruction}.
One object that appears in this rewriting and becomes increasingly relevant as the paper progresses is the maximal eigenvalue $\lambda_p$.
The goal of Subsection~\ref{sec:LowerLambda} is to prove a lower bound on $\lambda_p$ under the condition that \eqref{eq:initialization} holds at $p$.
We then show in Subsection~\ref{sec:ExponentialMixing} that, when \eqref{eq:initialization} holds, the Markov chain mixes exponentially fast.
Finally, Subsection~\ref{Sec:HighFrequencies} gives a high-frequency bound for $\Four[\tau_{p}^{\square,k}]$.
This subsection will only be relevant in Section~\ref{sec:TCL} for proving an LTCL for $\tau_{p}^{\square,k}$ and is conveniently placed here because it shares many tools with the rest of the section.
\subsection{Rewriting the definition of $\tau_p^{\square,k}$} \label{sec:Rewriting}
The main goal of this section is to give a Markov chain interpretation of the definition \eqref{eq:def_TauSquare}, and to describe several properties of this Markov chain.
We start by rewriting \eqref{eq:def_TauSquare} in a more suitable form.
First, we add an auxiliary oriented edge $\vec e_{k+1}=(x,e_{k+1}^+)$, and multiply by $1/\Omega$ to compensate.
(Recall that $\Omega$ is the degree of a vertex.) Then we translate each cluster $A_i$ by $-e_i^+$, so that all clusters are now in $\Comp(0)$.
Finally, we perform the change of variables $\vec e_{i-1} \gets \vec e_i-e_{i-1}^+$.
By doing so, the condition $e_{k+1}^-=x$ becomes  $\sum_{i=0}^k e_{i}^+ + e_{k}^- -e_k^+=x$.
Thus, we may rewrite \eqref{eq:def_TauSquare} as
\[
\frac{1}{\Omega} \hspace{-1em}
\sum_{\substack{ \vec e_0,\dots, \vec e_{k}\in \vedges \\  A_0,\dots, A_k\in \Comp(0) \\ e_{-1}^+=0, A_{-1}=\emptyset } } \hspace{-1em}
\1\left (\sum_{i=0}^k e_{i}^+ + e_{k}^- -e_k^+=x\right)
\prod_{i=0}^k \proba_p(\C(0)=A_i)
\prod_{i=0}^k \1(e_i^-\in (A_i\offx{0} (A_{i-1}-e_{i-1}^+))).
\]
Then, for every $\vec A=(A,\vec a) \in\vec  \Comp(0)$, let
\begin{equation}
\delta(\vec A)=a^+-a^- \quad \text{and} \quad \Delta(\vec A)=a^+ \quad \text{and} \quad \mu_p(\vec A):= \proba_p(\C(0)=A) .
\label{eq:def-pointed-increments}
\end{equation}
We also write $\Ccut$ for the set of pairs $\vec A=(A,\vec a)\in \vec \Comp(0)$, $\vec B=(B,\vec b)\in \vec \Comp(0)$ such that
\begin{equation}
(\vec A,\vec B)\in \Ccut \quad \Longleftrightarrow \quad \left (b^-\notin (B \offx{0} (A-a^+)) \right ).
\label{eq:def-Ccut}
\end{equation}
We may now rewrite \eqref{eq:def_TauSquare} as
\[
\frac{1}{\Omega}  \sum_{\vec A_0,\vec A_1,\dots, \vec A_k\in \vec \Comp(0)} \1\left (\sum_{i=0}^k \Delta(\vec A_i) -\delta(\vec A_k)=x\right) \prod_{i=0}^k \mu_p(\vec A_i) \prod_{i=1}^k \1((\vec A_{i-1},\vec A_i)\notin \Ccut ),
\]
where we have dropped the indicator for $i=0$, since $e_0^-\in A_0$ is implied by $(A_0,\vec e_0)\in\vec \Comp(0)$.

We then note that for every $p<p_c$ we have
\begin{align}
\sum_{\vec A} \mu_p(\vec A) =\sum_{A\in \Comp(0)} \sum_{\vec e:e^-\in A} \mu_p(A)
=\Omega \sum_{A\in \Comp(0)} \sum_{e^-\in A} \proba_p(\C(0)=A) = \Omega\sum_{e^-}\proba_p(0\slr e^-)=\Omega \X(p)
. \label{eq:OrientedX}
\end{align}
Thus, we may consider a probability space $\proba_p^\otimes$, with corresponding expectation $\E^\otimes_p$,
and a sequence of i.i.d. random variables $(\vec \A_i)_{i\geq 0}=((\A_i,\vec \abf_i))_{i\geq 0}$  in $\vec \Comp(0)$
such that for every $\vec A\in \vec \Comp(0)$ we have
\begin{equation}
\proba^\otimes_p(\vec \A_i=\vec A)=\mu_p(\vec A)/(\Omega \X(p)).
\label{eq:def-pointed-cluster-law}
\end{equation}
We may also define, for every $k\geq 0$,
\begin{equation}
\SA_k:=\sum_{i=0}^k \Delta(\vec \A_i) =\sum_{i=0}^k \abf_i^+,
\label{eq:def-SA}
\end{equation}
and
\begin{equation}
\Tcut:=\inf\{t\in \N, (\vec \A_{t-1},\vec \A_t)\in \Ccut\},
\label{eq:def-Tcut}
\end{equation}
with the convention $\inf \emptyset=\infty$. With this notation, \eqref{eq:def_TauSquare} becomes
\begin{equation}
\tau_p^{\square,k}(x)=\Omega^k \X(p)^{k+1} \proba_p^\otimes(\SA_k-\delta(\vec \A_k)=x,\Tcut>k) . \label{eq:RewriteA_TauSquare}
\end{equation}

We are now able to recognize a Markov chain of i.i.d. components killed when forbidden transitions are used. To study such a Markov chain, it is usual to introduce a Doob $h$-transform.
First, note that for every $\vec A=(A,\vec a)\in\vec \Comp(0)$ such that $a^+\in A$, we have $\{\vec A\}\times \vec  \Comp(0)\subset \Ccut$.
Let $\Ckill$ be the set of such terminal $\vec A$, and let $\Clive:=\vec \Comp(0)\backslash \Ckill$.
Let $\Plive_p:=\proba_p^\otimes(\vA_0\in \Clive)$.
Note that $\Plive_p>0$.
We consider the operator $\T_p$, acting on the space of bounded functions $g:\vec \Comp(0)\mapsto \R$, defined by
\begin{equation}
\T_p g(\vec A):=\E_p^\otimes[g(\vec \A_2)\1_{(\vec A, \vec \A_2)\notin \Ccut}].
\label{eq:def-Tp}
\end{equation}
\begin{lemma} \label{lem:vpd}
$\T_p$ admits a maximal eigenvalue $0<\lambda_p\leq 1$ and there exists a unique nonzero associated eigenvector (up to scaling) $h_p:\vec \Comp(0) \mapsto \R$, which satisfies
\[
\T_p h_p=\lambda_p h_p \quad ; \quad h_p=0 \text{ on }\Ckill \quad ; \quad \inf_{\vec A\in \Clive} h_p(\vec A)\geq  \|h_p\|_\infty \frac{ (1-p)^\Omega}{\Omega \X(p)}.
\]
\end{lemma}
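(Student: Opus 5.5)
The plan is to reduce $\T_p$ to a finite-rank operator and use Perron--Frobenius. Write $\nu=\mu_p/(\Omega\X(p))$ for the law of $\vA_2$. Then $\T_p g(\vec A)=\sum_{\vec B}\nu(\vec B)g(\vec B)\1_{(\vec A,\vec B)\notin\Ccut}$.

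\textbf{Step 1: elementary structure.} If $\vec A\in\Ckill$, then $\{\vec A\}\times\vec\Comp(0)\subset\Ccut$, so $\T_p g(\vec A)=0$ for every $g$. Hence any eigenvector with nonzero eigenvalue vanishes on $\Ckill$. Next, $\T_p$ is a positive operator on bounded functions with $\|\T_p\|_{\infty\to\infty}\le 1$, since $\nu$ is a probability measure. So every eigenvalue has modulus at most $1$.

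\textbf{Step 2: compactness.} The key observation is that the kernel $(\vec A,\vec B)\mapsto \1_{(\vec A,\vec B)\notin\Ccut}$ depends on $\vec A$ only through $A-a^+$. Also, $\nu$ is summable, so truncating to clusters with finitely many vertices near the origin gives a good approximation.

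\begin{itemize}
\item[(i)] Fix $\eps>0$ and choose a finite set $F$ with $\nu(F^c)<\eps$.
\item[(ii)] The function $\vec A\mapsto \T_p(g\1_F)(\vec A)$ depends only on $(A-a^+)\cap V_F$, where $V_F$ is the finite set of vertices used by the clusters in $F$. It therefore takes finitely many values, and $\T_p(\cdot\,\1_F)$ has finite rank.
\item[(iii)] The remainder satisfies $\|\T_p(g\1_{F^c})\|_\infty\le\eps\|g\|_\infty$.
\end{itemize}

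So $\T_p$ is a norm limit of finite-rank operators, hence compact on $\ell^\infty$. Krein--Rutman, or equivalently Perron--Frobenius for the finite-rank approximants plus a limiting argument, then gives the following. The spectral radius $\lambda_p$ is an eigenvalue with a nonnegative eigenvector $h_p$, provided $\lambda_p>0$.

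\textbf{Step 3: positivity of $\lambda_p$ and the lower bound on $h_p$.} Let $\vec O=(\{0\},\vec e)$ be the single-vertex cluster pointed at any edge $\vec e$. It lies in $\Clive$, and $\nu(\vec O)\ge (1-p)^\Omega/(\Omega\X(p))$, since $\mu_p(\vec O)=(1-p)^\Omega$. For any $\vec A\in\Clive$, the pair $(\vec A,\vec O)$ is not in $\Ccut$: indeed $b^-=0\in \{0\}\offx 0 (A-a^+)$, because $0\notin A-a^+$ exactly when $a^+\notin A$. Hence, for $g\ge0$,
\[
\T_p g(\vec A)\ \ge\ \frac{(1-p)^\Omega}{\Omega\X(p)}\,g(\vec O)\qquad\text{for all }\vec A\in\Clive .
\]
Two consequences follow.
\begin{itemize}
\item[(a)] Applying this to $g=\1_{\Clive}$ and iterating gives $\T_p^n\1\ge c^n$ on $\Clive$. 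Hence $\lambda_p\ge c>0$. In fact $\lambda_p\ge\nu(\vec O)$ directly, since $\T_p\1_{\vec O}\ge \nu(\vec O)\1_{\vec O}$.
\item[(b)] Consider $h_p\ge0$ with $\lambda_p h_p=\T_p h_p$ and $h_p=\T_p h_p/\lambda_p$. The displayed inequality gives $h_p\ge \lambda_p^{-1}c\,h_p(\vec O)$ on $\Clive$.
\end{itemize}
To compare with $\|h_p\|_\infty$, I use that $\vec O$ cuts nothing. Since $O-o^+=\{-o^+\}$ is a single vertex, and the only vertex removed from $B$ is at most one neighbour of $0$, I expect $(\vec O,\vec B)\notin\Ccut$ for the bulk of $\vec B$. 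More precisely, I would aim to show that $h_p(\vec O)\ge h_p(\vec A)$ for every $\vec A$. The argument is the monotonicity $\1_{(\vec A,\vec B)\notin\Ccut}\le \1_{(\vec O',\vec B)\notin\Ccut}$ when $\vec O'$ is a single-vertex cluster, because removing fewer vertices can only enlarge $B\offx0(\cdot)$. Combining this with $\lambda_p\le1$ gives the inequality $\inf_{\Clive}h_p\ge \|h_p\|_\infty(1-p)^\Omega/(\Omega\X(p))$. I expect this to be the main obstacle. One must choose the pointing of $\vec O$ so that the removed vertex $-o^+$ never matters, or else take the best of the $\Omega$ single-vertex states, each weighted by $\nu(\vec O)$.

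\textbf{Step 4: uniqueness.} The lower bound shows that every nonnegative eigenvector for $\lambda_p$ is strictly positive on $\Clive$. Moreover, $\T_p$ is irreducible on $\Clive$, since every state reaches $\vec O$ in one step. Standard Perron--Frobenius/Krein--Rutman uniqueness for compact irreducible positive operators then gives uniqueness up to scaling. To see this, suppose $h$ is another eigenvector for $\lambda_p$, which we may take real. Subtract $t h_p$ with the critical $t$ so that $h-th_p\ge0$ and vanishes somewhere on $\Clive$. The positivity bound forces $h-th_p\equiv0$.
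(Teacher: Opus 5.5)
Your route differs from the paper's. The paper never proves compactness. It takes Perron--Frobenius eigenpairs $(\tilde\lambda_i,\tilde h_i)$ of truncations to finite sets $S_i$, passes to the monotone limit of $\tilde\lambda_i$ and a diagonal pointwise limit of $\tilde h_i$, and proves $h\neq0$ through the bound $\sum_b h(\vec{\mathbf O}_b)\ge1$. It then needs a separate contradiction argument for maximality. Your observation is correct: the kernel sees $\vec A$ only through the vertex set $A-a^+$, so restricting $\vec B$ to a finite set $F$ of $\nu$-mass at least $1-\eps$ gives a finite-rank operator within $\eps$ of $\T_p$ in norm. This makes $\T_p$ compact on $\ell^\infty(\vec\Comp(0))$. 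Krein--Rutman then hands you a nonnegative eigenvector at $r(\T_p)$, and maximality comes for free, since every eigenvalue has modulus at most $r(\T_p)\le1$. The price is infinite-dimensional spectral theory instead of finite Perron--Frobenius only. Your proof that $r(\T_p)\ge\nu(\vec O)>0$ and your uniqueness argument are fine. In the latter, use $\inf_{\Clive}(h-th_p)=0$ rather than ``vanishes somewhere'', since the infimum need not be attained.

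The step you flagged, however, is not proved as written. The monotonicity $\1_{(\vec A,\vec B)\notin\Ccut}\le\1_{(\vec O',\vec B)\notin\Ccut}$ fails for an arbitrary single-vertex state. Here $O'-o'^{+}=\{-o'^{+}\}$, and removing fewer vertices helps only if $\{-o'^{+}\}\subset A-a^+$, i.e.\ $a^+-o'^{+}\in A$. So $h_p(\vec O)\ge h_p(\vec A)$ for one fixed $\vec O$ is unjustified.

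The missing observation is that $a^-\in A$ always, so $a^--a^+\in A-a^+$. Take $b=a^+-a^-$. Then $B\offx{0}(A-a^+)\subset B\offx{0}\{-b\}$, hence $\1_{(\vec A,\vec B)\notin\Ccut}\le\1_{(\vec{\mathbf O}_{a^+-a^-},\vec B)\notin\Ccut}$; this is the paper's \eqref{eq:NulIsBetter}. For any nonnegative $g$ with $\T_p g=\lambda_p g$, this gives $g(\vec A)\le g(\vec{\mathbf O}_{a^+-a^-})$, so $\|g\|_\infty\le\max_b g(\vec{\mathbf O}_b)$. Every $\vec{\mathbf O}_b$ is a legal successor of every state in $\Clive$ (your Step 3, i.e.\ \eqref{eq:irreductible}). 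So apply your bound (b) with a maximizing $b^\ast$ and use $\lambda_p\le1$: this gives $g\ge\lambda_p^{-1}\frac{(1-p)^\Omega}{\Omega\X(p)}g(\vec{\mathbf O}_{b^\ast})\ge\frac{(1-p)^\Omega}{\Omega\X(p)}\|g\|_\infty$ on $\Clive$. This is exactly your ``best of the $\Omega$ states'' option. It is needed for the stated lower bound on $h_p$, and also for the strict positivity that your uniqueness step relies on.
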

 \begin{proof}
 See next section.
 \end{proof}
 Given the previous lemma, we can now consider a probability space $\proba^h_p$ with expectation $\E^h_p$ such that for every $k\geq 0$ and bounded function $F:(\vec \Comp(0))^{k+1}\mapsto \R$, we have
 \begin{equation}
 \E^h_p[F(\vec \A_0,\vec \A_1,\dots \vec \A_k)]=\E^\otimes_p \left [\frac{h_p(\vec \A_k)}{h_p(\vec \A_0)} F(\vec \A_0,\vec \A_1,\dots, \vec \A_k)\lambda_p^{-k}\1_{\Tcut>k} \middle | \vA_0\in \Clive \right ].\label{eq:DoobTransform}
 \end{equation}
 This is known as a Doob $h$-transform. The process $(\vec \A_i)_{i\geq 0}$ is now a Markov chain on this probability space
with transition kernel $\P_p$ given, for every $\vec A,\vec B\in \Clive$, by
\begin{equation}
\proba^h_p(\vec \A_{k+1}=\vec B |\vec \A_k=\vec A)=\frac{h_p(\vec B)}{\lambda_p h_p(\vec A)}\frac{\mu_p(\vec B)}{\Omega \X(p)}\1((\vec A,\vec B)\notin \Ccut). \label{eq:MarkovTransition}
\end{equation}
For $k\geq 0$, we also write
\begin{equation}
\P^k_p(\vec B|\vec A):=\proba^h_p(\vA_k=\vec B|\vA_0=\vec A).
\label{eq:def-Ppk}
\end{equation}
 \begin{lemma} \label{lem:StationaryMeasure}
 \label{lem:vpg} $\P_p$ has a unique stationary probability measure $\vpg_p$ on $ \Clive$.
 \end{lemma}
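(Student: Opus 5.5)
The stationary measure should be written down explicitly from the Perron eigenfunction of the adjoint, and uniqueness will follow from a uniform Doeblin-type minorization of $\P_p$. The transition kernel \eqref{eq:MarkovTransition} factorizes as
\[
\P_p(\vec B|\vec A)=\frac{h_p(\vec B)}{\lambda_p h_p(\vec A)}\,\nu(\vec B)\,\1((\vec A,\vec B)\notin\Ccut),
\qquad \nu:=\mu_p/(\Omega\X(p)).
\]
This suggests seeking $\vpg_p$ of the form $\vpg_p(\vec B)\propto h_p(\vec B)\,g_p(\vec B)\,\nu(\vec B)$, where $g_p$ is a left eigenfunction. Concretely, $g_p$ should solve
\[
\sum_{\vec A} g_p(\vec A)\,\nu(\vec A)\,\1((\vec A,\vec B)\notin\Ccut)=\lambda_p\, g_p(\vec B),
\]
that is, $\T_p^* g_p=\lambda_p g_p$, where $\T_p^*$ is the adjoint of $\T_p$ in $L^2(\nu)$. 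Given such a $g_p$, stationarity is a one-line computation: the factor $h_p(\vec A)$ cancels and the eigen-equation yields $\vpg_p\P_p=\vpg_p$. Normalizability holds because $h_p$ is bounded, $g_p$ is bounded (see below), and $\nu$ is a probability measure. The measure lives on $\Clive$ since $h_p=0$ on $\Ckill$.

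To construct $g_p$, I would mimic the argument of Lemma \ref{lem:vpd} for the adjoint. The kernel $\1((\vec A,\vec B)\notin\Ccut)$ is bounded, so $\T_p^*$ acting on bounded functions is a positive operator. The key input is the same as in Lemma \ref{lem:vpd}: for every $\vec A\in\Clive$, the set of $\vec B$ with $(\vec A,\vec B)\notin\Ccut$ contains all pointed clusters $(\{0\},\vec b)$ with $b^-=0$, provided $0\notin A-a^+$. The latter holds because $a^+\notin A$. This gives a uniform lower bound of order $(1-p)^\Omega/(\Omega\X(p))$ on transition masses.

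Alternatively, and more simply, I would avoid building $g_p$ and argue directly via Doeblin's condition. Let $\vec O$ denote a fixed state $(\{0\},\vec b)$ with $b^+\neq 0$, which lies in $\Clive$. For every $\vec A\in\Clive$,
\[
\P_p(\vec O|\vec A)=\frac{h_p(\vec O)\nu(\vec O)}{\lambda_p h_p(\vec A)}\ge \frac{h_p(\vec O)\nu(\vec O)}{\|h_p\|_\infty}\ge \frac{(1-p)^{2\Omega}}{(\Omega\X(p))^2}=:c>0,
\]
using $\lambda_p\le1$ and the lower bound on $h_p$ from Lemma \ref{lem:vpd}.

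Thus $\P_p(\cdot|\vec A)\ge c\,\delta_{\vec O}$ uniformly in $\vec A$. By the standard Doeblin argument, $\P_p$ is a strict contraction in total variation on the space of probability measures on the countable set $\Clive$:
\[
\|\eta\P_p-\eta'\P_p\|_{TV}\le(1-c)\|\eta-\eta'\|_{TV}.
\]
Banach's fixed point theorem in the complete space of probability measures then yields existence and uniqueness of $\vpg_p$. One can also write $\vpg_p=\sum_{n\ge0}c(1-c)^n\,\delta_{\vec O}\,Q^n$ with $Q=(\P_p-c\delta_{\vec O})/(1-c)$, the regeneration representation.

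The main point to check carefully is that $\vec O$ is indeed allowed after any live $\vec A$. This requires $b^-=0\in \{0\}\offx{0}(A-a^+)$, which amounts to $a^+\notin A$, exactly the definition of $\Clive$. We also need $\vec O\in\Clive$, so that $h_p(\vec O)>0$ and Lemma \ref{lem:vpd} applies; this holds because $b^+\neq0$. Everything else is routine.
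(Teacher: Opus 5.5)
Your proposal is correct. The argument you actually rely on, the Doeblin part, uses exactly the same input as the paper: \eqref{eq:irreductible} together with the lower bound $\inf_{\Clive}h_p\ge\|h_p\|_\infty(1-p)^\Omega/(\Omega\X(p))$ from Lemma \ref{lem:vpd}. Only the concluding mechanism differs.

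The paper argues that the chain is irreducible (via \eqref{eq:NulIsBetter} and \eqref{eq:irreductible}) and positive recurrent, since at every step it has a uniformly positive probability of jumping to some $\vec{\mathbf O}_b$, and then cites standard results for countable chains. You instead turn the same fact into a one-point minorization $\P_p(\cdot|\vec A)\ge c\,\delta_{\vec O}$. Existence and uniqueness then follow from the $(1-c)$-contraction in total variation and Banach's fixed point theorem.

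Your route has three advantages. It is self-contained. It gives for free a crude geometric convergence rate, with $c\asymp\X(p)^{-2}$. Most usefully, it never invokes irreducibility. This matters because the chain is in fact not irreducible on all of $\Clive$. Take $B$ to be a self-avoiding path starting at $0$, with no other edges, which visits every lattice neighbour of $0$ before ending at $b^-$, and take $b^+\notin B$. Then $\vec B\in\Clive$, but $\vec B$ is cut by every $\vec{\mathbf O}_b$. By \eqref{eq:NulIsBetter} it is therefore cut by every $\vec A$, so it is never visited after time $0$. The paper's appeal to irreducibility must therefore be read on the closed class reachable from the $\vec{\mathbf O}_b$'s, with the remaining states transient. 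Your contraction argument handles this automatically. The paper's route is shorter but leans on that implicit reading.

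The adjoint-eigenfunction construction in your first paragraphs is unnecessary. It would also need a separate existence argument for a left Perron eigenmeasure on an infinite state space, so you can simply drop it.
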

   \begin{proof}
	   The Markov chain is irreducible and recurrent on $\Clive$. See next section.
 \end{proof}

 We now rewrite \eqref{eq:RewriteA_TauSquare} for $k\geq1$ using the Doob $h$-transform \eqref{eq:DoobTransform}.
We have to be careful here because the event in \eqref{eq:RewriteA_TauSquare} can hold with $\vA_k\in \Ckill$, while $\vA_k\notin \Ckill$ under $\proba^h_p$.
	 If we forget this, we only get
 \begin{align}
 \tau_p^{\square,k}(x) & \geq \Omega^k \X(p)^{k+1} \proba_p^\otimes(\SA_k-\delta(\vec \A_k)=x,\Tcut>k,\vec \A_k\notin \Ckill) \notag
 \\ & =\X(p) (\Omega\X(p)\lambda_p)^k \Plive_p\E_p^h\left [\1_{\SA_k-\delta(\vec \A_k)=x} \frac{h_p(\vec \A_0)}{h_p(\vec \A_k)} \right ].  \label{eq:RewriteB_Wrong_TauSquare}
 \end{align}
	 However, we know that $\Tcut>k$ implies $\vec \A_{k-1}\notin \Ckill$. So we can use the $h$-transform up to time $k-1$, and deal with the last step separately. First, by \eqref{eq:RewriteA_TauSquare}, we have
 \[
 \tau_p^{\square,k}(x) =\Omega^{k-1}\X(p)^k \E^\otimes_p \left[\1_{\Tcut\geq k} \sum_{\vec A_k\in \vec \Comp(0)} \mu_p(\vec A_k)\1_{(\vec \A_{k-1},\vec A_k)\notin\Ccut} \1_{\SA_{k-1}+\Delta(\vec A_k)-\delta(\vec A_k)=x} \right ].
 \]
	 Thus, by using the Doob $h$-transform \eqref{eq:DoobTransform}, we get
 \begin{equation}
 \tau_p^{\square,k}(x)=
 \X(p)(\Omega\X(p)\lambda_p)^{k-1} \Plive_p
 \E^h_p \left[\frac{h_p(\vec \A_0)}{h_p(\vec \A_{k-1})} \sum_{\vec A_k\in \vec \Comp(0)}   \mu_p(\vec A_k) \1_{(\vec \A_{k-1},\vec A_k)\notin\Ccut} \1_{\SA_{k-1}+\Delta(\vec A_k)-\delta(\vec A_k)=x} \right ]. \label{eq:RewriteB_TauSquare}
 \end{equation}

To conclude the section, let us show a quick application of \eqref{eq:RewriteB_Wrong_TauSquare} and \eqref{eq:RewriteB_TauSquare}. The next result is the basis of the many bootstrap arguments in Section \ref{sec:FirstApplications}.
It also ensures that the following alternative definition of $\tilde p_c(p)$ is consistent with Section \ref{sec:Diagrams}. For every $p<p_c$, let
\begin{equation}
\tilde p_c(p):= p+\frac{1-p}{\Omega \X(p)\lambda_p}. \label{eq:def_pcp}
\end{equation}
\begin{proposition} \label{pro:PrelimBootstrap}
For every $p<p_c$, $x\in \Z^d$, the map $p'\mapsto \tau_{p,p'}^\square(x)$ is increasing and analytic in $p'\in [p,\tilde p_c(p))$.
Also, the map $p'\mapsto \|\tau^\square_{p,p'}\|_1$ is increasing and continuous in $p'\in [p,\tilde p_c(p))$, and diverges as $p'\to \tilde p_c(p)$.
\end{proposition}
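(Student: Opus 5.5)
The plan is to view $\tau^\square_{p,p'}(x)=\sum_{k\ge0}\rho^k\tau_p^{\square,k}(x)$ as a power series in $\rho=\rho(p,p')=(p'-p)/(1-p)$. Since $p'\mapsto\rho$ is an increasing affine bijection from $[p,\tilde p_c(p))$ onto $[0,1/(\Omega\X(p)\lambda_p))$, it suffices to control the growth of the nonnegative coefficients $\tau_p^{\square,k}(x)$ and $\|\tau_p^{\square,k}\|_1$. Monotonicity is immediate from the nonnegativity of the coefficients. Analyticity on $[p,\tilde p_c(p))$ will follow once we show that the radius of convergence in $\rho$ is at least $1/(\Omega\X(p)\lambda_p)$. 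For the $\ell^1$ norm, Tonelli gives $\|\tau^\square_{p,p'}\|_1=\sum_k\rho^k\|\tau_p^{\square,k}\|_1$, and this power series is increasing and continuous inside its radius of convergence. It then remains to show that this radius is exactly $1/(\Omega\X(p)\lambda_p)$ and that the series diverges at that radius; by Abel/monotone convergence the latter gives divergence as $p'\uparrow\tilde p_c(p)$.

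\textbf{Upper bound on the coefficients.} Starting from \eqref{eq:RewriteB_TauSquare}, we bound $\tau_p^{\square,k}(x)\le\|\tau_p^{\square,k}\|_1$. Summing \eqref{eq:RewriteB_TauSquare} over $x$ removes the indicator, and the inner sum $\sum_{\vec A_k}\mu_p(\vec A_k)$ is at most $\Omega\X(p)$ by \eqref{eq:OrientedX}. The ratio $h_p(\vA_0)/h_p(\vA_{k-1})$ is at most $\Omega\X(p)/(1-p)^\Omega$ by Lemma~\ref{lem:vpd}, because $\vA_{k-1}\in\Clive$ under $\proba^h_p$. This yields
\[
\|\tau_p^{\square,k}\|_1\le C_p(\Omega\X(p)\lambda_p)^{k},
\]
with $C_p$ independent of $k$ (for $k=0$ we simply have $\|\tau_p^{\square,0}\|_1=\X(p)$). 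Hence the radius of convergence in $\rho$ is at least $(\Omega\X(p)\lambda_p)^{-1}$, for both $\tau^\square_{p,p'}(x)$ and the $\ell^1$ norm. This proves analyticity, monotonicity and continuity on $[p,\tilde p_c(p))$.

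\textbf{Lower bound and divergence.} We sum \eqref{eq:RewriteB_Wrong_TauSquare} over $x$ and get
\[
\|\tau_p^{\square,k}\|_1\ge\X(p)(\Omega\X(p)\lambda_p)^k\Plive_p\,\E^h_p[h_p(\vA_0)/h_p(\vA_k)].
\]
Since $h_p(\vA_k)\le\|h_p\|_\infty$ and $h_p(\vA_0)\ge\|h_p\|_\infty(1-p)^\Omega/(\Omega\X(p))$ on $\Clive$, the expectation is bounded below by a constant $c_p>0$ independent of $k$. Therefore $\rho^k\|\tau_p^{\square,k}\|_1\ge c_p'>0$ for every $k$ when $\rho=(\Omega\X(p)\lambda_p)^{-1}$. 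The series diverges at $\tilde p_c(p)$, and, being monotone, $\|\tau^\square_{p,p'}\|_1\to\infty$ as $p'\uparrow\tilde p_c(p)$. This also shows that \eqref{eq:def_pcp} agrees with the earlier definition $\inf\{p':\|\tau^\square_{p,p'}\|_1=\infty\}$.

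\textbf{Main obstacle.} The delicate point is the uniform-in-$k$ control of the ratio $h_p(\vA_0)/h_p(\vA_{k-1})$, including the separate treatment of the last step through \eqref{eq:RewriteB_TauSquare}. This is exactly what the quantitative positivity bound in Lemma~\ref{lem:vpd} provides, so the proof reduces to that lemma. One should also check that $\lambda_p>0$, so that $\tilde p_c(p)<\infty$, and that the $k=0$ term is handled separately since \eqref{eq:RewriteB_TauSquare} is stated for $k\ge1$.
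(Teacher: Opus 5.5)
Your proposal is correct and follows essentially the same route as the paper. Both arguments establish $\|\tau_p^{\square,k}\|_1=\Theta((\Omega\X(p)\lambda_p)^k)$ by summing \eqref{eq:RewriteB_Wrong_TauSquare} and \eqref{eq:RewriteB_TauSquare} over $x$ and bounding the $h_p$-ratios with Lemma~\ref{lem:vpd}, and then read off monotonicity, analyticity, continuity and divergence at $\tilde p_c(p)$ from the nonnegative power series in $(p'-p)/(1-p)$.
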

\begin{proof}
The first assertion follows from the second since for every $k\geq 0$, we have $\tau_p^{\square,k}\geq 0$ pointwise, and
$\tau^\square_{p,p'}=\sum_{k\geq 0} (p'-p)^k/(1-p)^k\tau_p^{\square,k}.$

To show the second assertion, it suffices to prove that as $k\to \infty$,
\[
\left (\frac{\tilde p_c(p)-p }{1-p} \right)^k \|\tau_p^{\square,k}\|_1=\Theta(1),
\]
which is equivalent by \eqref{eq:def_pcp} to
\begin{equation}
\|\tau_p^{\square,k}\|_1=\Theta((\Omega\X(p)\lambda_p)^k). \label{eq:AsympBootstrap}
\end{equation}
For the lower bound, we sum \eqref{eq:RewriteB_Wrong_TauSquare} over $x\in \Z^d$ to get
\[
\|\tau_p^{\square,k}\|_1\geq  \X(p) (\Omega\X(p)\lambda_p)^k \Plive_p \E_p^h\left [\frac{h_p(\vec \A_0)}{h_p(\vec \A_k)} \right ].
\]
The expectation is bounded away from $0$ by Lemma \ref{lem:vpd}, since $\proba^h_p (\vec \A_0, \vec \A_k\in\Clive)=1$.
For the upper bound, we can sum \eqref{eq:RewriteB_TauSquare} over $x\in \Z^d$, and drop the indicator $ \1((\vec \A_{k-1},\vec A_k)\notin\Ccut)$ to get
 \[
 \|\tau_p^{\square,k}\|_1\leq  \X(p)(\Omega\X(p)\lambda_p)^{k-1} \Plive_p \E^h_p \left[\frac{h_p(\vec \A_0)}{h_p(\vec \A_{k-1})} \sum_{\vec A_k\in \vec \Comp(0)}   \mu_p(\vec A_k) \right ].
	 \]
	 The sum over $\vec A_k$ is constant (equal to $\Omega \X(p)$). As before, Lemma \ref{lem:vpd} concludes the proof.
\end{proof}
We fix the following terminology for later use.
For every $\vec A\in \vec \Comp(0)$, let $-\vec A$ denote its natural central reflection with respect to the origin.
We say that a function $f$ on $\Clive$ is symmetric (resp. antisymmetric) if for every $\vec A$ we have $f(\vec A)=f(-\vec A)$ (resp. $f(\vec A)=-f(-\vec A)$).
\begin{lemma}\label{lem:Symmetric_hp_vpg} $\vpg_p$ and $h_p$ are symmetric.
\end{lemma}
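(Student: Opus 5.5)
The plan is to show that the central reflection $\vec A\mapsto -\vec A$ is a symmetry of the whole construction, and then use the uniqueness statements of Lemmas \ref{lem:vpd} and \ref{lem:StationaryMeasure}. First we fix the reflection carefully. For $\vec A=(A,\vec a)\in\vec\Comp(0)$ we set $-\vec A:=(-A,-\vec a)$, where $-\vec a=(-a^-,-a^+)$ keeps the orientation. Then $-\vec A\in\vec\Comp(0)$ and $(-a)^-=-a^-\in -A$. The lattice $(\Z^d,\edges)$ is invariant under $x\mapsto -x$, so $\proba_p(\C(0)=-A)=\proba_p(\C(0)=A)$, that is, $\mu_p(-\vec A)=\mu_p(\vec A)$. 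Hence the law \eqref{eq:def-pointed-cluster-law} of $\vec\A_i$ is reflection invariant. We also have $\Ckill$ and $\Clive$ invariant, since $a^+\in A$ if and only if $-a^+\in -A$.

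The key check is that $\Ccut$ is invariant under simultaneous reflection: $(\vec A,\vec B)\in\Ccut$ if and only if $(-\vec A,-\vec B)\in\Ccut$. Reflection commutes with deleting vertices and with taking the cluster of $0$. Therefore $(-B)\offx{0}((-A)-(-a^+))=-(B\offx{0}(A-a^+))$, and $-b^-$ lies in the left side exactly when $b^-\in B\offx{0}(A-a^+)$.

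From these invariances, write $(Rg)(\vec A)=g(-\vec A)$. Changing variables $\vec\A_2\to-\vec\A_2$ in \eqref{eq:def-Tp} gives $\T_p R=R\T_p$. Hence $Rh_p$ is an eigenvector for $\lambda_p$ which vanishes on $\Ckill$. By the uniqueness in Lemma \ref{lem:vpd}, $Rh_p=c\,h_p$. Since $h_p$ does not change sign on $\Clive$ (its infimum there is a positive multiple of $\|h_p\|_\infty$) and $R$ preserves $\Clive$ and sup norms, we get $c=1$. So $h_p$ is symmetric.

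Then, by \eqref{eq:MarkovTransition}, all factors are reflection invariant, so $\P_p(-\vec B\mid-\vec A)=\P_p(\vec B\mid\vec A)$. Consequently $\vpg_p\circ R$ is again a stationary probability measure on $\Clive$. The uniqueness in Lemma \ref{lem:StationaryMeasure} then gives $\vpg_p(-\vec A)=\vpg_p(\vec A)$.

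I expect the only delicate point to be the invariance of $\Ccut$, specifically getting the orientation convention for $-\vec A$ right, since the translation by $a^+$ must map correctly. With the convention above it works directly. If the paper's reflection instead reverses edges, one would need to adjust, but the natural reflection preserves orientation.
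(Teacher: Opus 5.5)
Your proof is correct and follows the paper's argument: the reflection preserves $\mu_p$, $\Clive$ and $\Ccut$, so $\T_p$ commutes with it and the uniqueness in Lemma \ref{lem:vpd} gives the symmetry of $h_p$; then $\P_p$ is reflection-invariant, and uniqueness of the stationary measure gives the symmetry of $\vpg_p$. Beyond the paper, you also write out the $\Ccut$-invariance computation and explain why the scaling constant equals $1$, both of which the paper leaves implicit.
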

\begin{proof}
The map $\vec A\mapsto -\vec A$ preserves $\Clive$, $\mu_p$, and $\Ccut$.
Thus $\T_p$ commutes with this reflection, so the uniqueness in Lemma \ref{lem:vpd} gives the symmetry of $h_p$.
The reflected measure $(\vpg_p(-\vec A))_{\vec A\in \Clive}$ is stationary for $\P_p$, so the uniqueness in Lemma \ref{lem:StationaryMeasure} gives the symmetry of $\vpg_p$.
\end{proof}
\subsection{Proof of Lemmas \ref{lem:vpd} and \ref{lem:StationaryMeasure}.} \label{sec:SpectralConstruction}
\textbf{Construction of $\lambda=\lambda_p$ and $h=h_p$:} Let $(S_i)_{i\geq 0}$ be an increasing sequence of finite subsets of $\vec \Comp(0)$ such that $\bigcup_{i\in \N} S_i=\vec \Comp(0)$.
Then, for every $i\in \N$, we define the truncated operator $\tilde \T_{i}$ acting on the space of bounded functions $g:S_i\mapsto \R$ by
 \begin{equation}
 \tilde \T_i g(\vec A):=\E_p^\otimes[g(\vec \A_2)\1(\vec \A_2\in S_i) \1((\vec A,\vec \A_2)\notin \Ccut)]. \label{eq:DefTruncatedForDoob}
 \end{equation}
By the Perron-Frobenius theorem, the operator $\tilde \T_i$ admits a maximal eigenvalue $\tilde \lambda_i\geq 0$
with a non-negative eigenvector $\tilde h_i$ such that $\tilde \T_i \tilde h_i= \tilde \lambda_i \tilde h_i$
and $\|\tilde h_i\|_\infty=1$.
Since $(S_i)$ is increasing, $(\tilde \lambda_i)$ is non-decreasing.
Also note that for every $\vec A\in S_i$, we have
\begin{equation}
\tilde \lambda_i\tilde h_i(\vec A)= \tilde \T_i \tilde h_i(\vec A) \leq
\|\tilde h_i\|_\infty.\label{eq:NaiveBoundEigen}
\end{equation}
So $(\tilde \lambda_i)$ is bounded by $1$ and, by monotonicity, converges to some $0\leq \lambda \leq 1$.

Then let $\tilde h_i=0$ on $\vec \Comp(0)\backslash S_i$.
Since $\|\tilde h_i\|_\infty=1$ for every $i$, a diagonal extraction yields a sequence $(i_n)_{n\in \N}$ such that for every $\vec A \in \vec \Comp(0)$, the sequence $(\tilde h_{i_n}(\vec A))_{n\in \N}$ converges.
Up to redefining the sets $(S_i)_{i\in \N}$ as $(S_{i_n})_{n\in \N}$, we may directly assume that for every $\vec A \in \vec \Comp(0)$, the sequence $(\tilde h_i(\vec A))_{i\in \N}$ converges to some $h(\vec A)\in [0,1]$.
By \eqref{eq:NaiveBoundEigen} and dominated convergence (uniform integrability is easily checked by removing the indicators in \eqref{eq:DefTruncatedForDoob}), we see that $\T_p h=\lambda h$.

It remains to show that $h\neq 0$ and $\lambda>0$.
To this end, let $\mathbb B$ be the set of all $b\in \Z^d$ such that $\{0,b\}\in \edges$.
Let $\mathbf O$ denote the graph $(\{0\},\emptyset)$.
For $b\in \mathbb B$, let $\vec{\mathbf{O}}_b=(\mathbf O,\{0,b\})$.
Note that for every $\vec A=(A,\vec a), \vec B\in \vec \Comp(0)$,
\begin{equation}
\1((\vec A,\vec B)\notin \Ccut)\leq \sum_{b\in\mathbb B} \1((\vec{\mathbf{O}}_{b},\vec B)\notin \Ccut). \label{eq:NulIsBetter}
\end{equation}
Indeed, if $(\vec A,\vec B)\notin \Ccut$, then $(\vec{\mathbf{O}}_{a^+-a^-},\vec B)\notin \Ccut$. In particular, for every $b,b'\in \mathbb B$, we have $(\vec{\mathbf{O}}_{b},\vec{\mathbf{O}}_{b'})\notin \Ccut$. Thus, by the Perron-Frobenius theorem, for every $i$ large enough, $\tilde \lambda_i>0$. So $\lambda>0$.
Then, summing \eqref{eq:NulIsBetter} over $\vec \A_2=\vec B$ according to \eqref{eq:DefTruncatedForDoob}, we get for every $i$ large enough and $\vec A \in \vec \Comp(0)$,
\[
\tilde \T_i \tilde h_i(\vec A) \leq \sum_{b\in \mathbb B} \E_p^\otimes[\tilde h_i(\vec \A_2)\1(\vec \A_2\in S_i) \1((\vec{\mathbf{O}}_b,\vec \A_2)\notin \Ccut)]= \sum_{b\in \mathbb B} \tilde \T_i\tilde h_i(\vec{\mathbf{O}}_b).
\]
By choosing $\vec A$ to maximize $\tilde h_i(\vec A)$ and using $\tilde \T_i \tilde h_i= \tilde \lambda_i \tilde h_i$, we obtain
 $1\leq \sum_{b\in \mathbb B} \tilde h_i(\vec{\mathbf{O}}_b).$
By taking $i\to \infty$, we deduce
\begin{equation}
\sum_{b\in \mathbb B} h(\vec{\mathbf{O}}_b) \geq 1. \label{eq:SumWorstComponents}
\end{equation}

 \textbf{Other properties of $h$:} Note that for every $\vec A\in \Ckill$ and every $\vec B\in \vec \Comp(0)$, we have $(\vec A, \vec B)\in \Ccut$, so $\T_p h(\vec A)=0$. In other words, $h=0$ on $\Ckill$. Then note that
\begin{equation}
\forall \vec A \in \Clive, \forall b\in \mathbb B, \quad (\vec A,\vec{\mathbf{O}}_b)\notin \Ccut. \label{eq:irreductible}
\end{equation}
Indeed, if $a^+\notin A$, then removing $A-a^+$ does not disconnect $0$ and $0$.
Together with \eqref{eq:SumWorstComponents}, we obtain, on $\Clive$,
\[
\T_p h(\vec A)\geq  \sum_{b\in \mathbb B} \proba_p^\otimes(\vec \A_2=\vec{\mathbf{O}}_b)h(\vec{\mathbf{O}}_b)\geq (1-p)^{\Omega}/(\Omega \X(p)).
\]
 Thus,
 \begin{equation}
 \inf_{\vec A \in \Clive}h(\vec A)\geq \|h\|_\infty (1-p)^{\Omega}/(\Omega \X(p))>0. \label{eq:InfVPD}
 \end{equation}

\textbf{Maximality of $\lambda$:}
Suppose for contradiction that there exist $\lambda'>\lambda$ and $h'\neq 0$ such that $\lambda' h'=  \T_p h'$. Then, by setting $h''=|h'|$, we have $\lambda' h''\leq  \T_p h''$.
Proceeding as above, we get $h''=0$ on $\Ckill$ and $\inf_{\vec A \in \Clive}h''(\vec A)>0.$
Also, by dominated convergence, for every $i$ large enough,
\[
\E_p^\otimes [h''(\vec \A_2)\1(\vec \A_2\notin S_i)] \leq \frac{\lambda'-\lambda}{2}\inf_{\vec C\in \Clive} h''(\vec C).
\]
We may rewrite $\lambda' h''(\vec A)\leq  \T_p h''(\vec A)$ as
\[
\lambda' h''(\vec A)-\tilde \T_i h''(\vec A)\leq \E_p^\otimes[h''(\vec \A_2)\1(\vec \A_2\notin S_i)\1((\vec A,\vec \A_2)\notin \Ccut)].
\]
Thus, combining the two previous displays and $h''\geq 0$, we get
\[
(\lambda+\lambda')h''/2\leq \tilde \T_i h'',
\]
which contradicts the maximality of $\tilde \lambda_i\leq \lambda$.

\textbf{Uniqueness of $h$:} Let $h'$ be another eigenfunction for $\lambda$.
Let $a$ be the smallest real such that $a h+h'$ is non-negative on $\Clive$.
By continuity, $\inf_{\vec A\in \Clive} [a h+h'](\vec A)=0$.
The same argument as for \eqref{eq:InfVPD} then implies $a h+h'=0$.
So $h$ and $h'$ are collinear.
This concludes the proof of Lemma \ref{lem:vpd}.

\textbf{Proof of Lemma \ref{lem:StationaryMeasure}:} Simply note that by \eqref{eq:NulIsBetter}
and \eqref{eq:irreductible} the Markov chain is irreducible.
By \eqref{eq:irreductible} and the bound $\inf_{\vec A \in \Clive}h(\vec A)>0$,
the chain has, at each step, a positive probability of returning to some $\vec{\mathbf{O}}_b$,
and so it is positive recurrent.
Lemma \ref{lem:StationaryMeasure} follows from standard results on Markov chains.
\subsection{Lower-bounding $\lambda_p$ under \eqref{eq:initialization}} \label{sec:LowerLambda}
The goal of this section is to prove the following result:
\begin{theorem} \label{thm:LowerLambda}
Let $p<p_c$ and $C>0$ be such that \eqref{eq:initialization} holds. There exist $c,K>0$, depending on $d,L,C$, such that if $\X(p)\geq K$, then $\lambda_p\geq c$.
\end{theorem}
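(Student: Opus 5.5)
The plan is to bound $\lambda_p$ from below by a Collatz--Wielandt (sub-invariance) argument for the positive operator $\T_p$. The maximality argument in the proof of Lemma~\ref{lem:vpd} shows that a bounded $g\geq 0$, $g\not\equiv 0$, with $\T_p g\geq c\, g$ forces $\lambda_p\geq c$. The argument there works for any nonnegative nonzero $g$: one passes to the truncations $\tilde\T_i$ and uses that $\tilde\lambda_i\uparrow\lambda_p$. I will take $g=\1_{G}$ for a well-chosen set $G\subset\Clive$ of ``good'' pointed clusters, and show that
\[
\inf_{\vec A\in G}\proba_p^\otimes\big(\vA_2\in G,\ (\vec A,\vA_2)\notin\Ccut\big)\geq c .
\]
Equivalently, by \eqref{eq:RewriteA_TauSquare}, it would suffice to show $\proba^\otimes_p(\Tcut>k)\geq c^k$, since summing \eqref{eq:RewriteA_TauSquare} over $x$ gives $\|\tau^{\square,k}_p\|_1=\Omega^k\X(p)^{k+1}\proba_p^\otimes(\Tcut>k)$, and \eqref{eq:AsympBootstrap} then identifies $\lambda_p$ with its exponential rate.

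First, I bound the one-step cut probability. If $(\vec A,\vec B)\in\Ccut$, then $0\slr b^-$ in $B$ through some $v\in A-a^+$. Since $b^-$ is, under the size-biased law \eqref{eq:def-pointed-cluster-law}, a point of $B$ chosen with weight $1/\X(p)$, the tree graph inequality gives
\[
\proba^\otimes_p\big((\vec A,\vA_2)\in\Ccut\big)\leq \frac{1}{\X(p)}\sum_{v\in A-a^+}\sum_{y}\proba_p(0\slr v,\,0\slr y)\leq F(\vec A):=\sum_{v\in A-a^+}\tau_p\ast\tau_p(v).
\]
Under \eqref{eq:initialization} we have $\tau_p\ast\tau_p(v)\leq C'\langle v\rangle^{4-d}$, and $F(\vec A)$ is small as soon as $A$ stays away from $a^+$, or is thin near $a^+$. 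I therefore set $G=\{\vec A\in\Clive: F(\vec A)\leq 1/2\}$, possibly with an extra truncation explained below. With this choice, every $\vec A\in G$ survives the next step with probability at least $1/2$, and it remains to show that $\proba^\otimes_p(\vA_2\in G)\geq 1/2+c$, uniformly in $p$ as long as $\X(p)\geq K$.

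For the second step I split $F(\vA_2)$ into a near part, over $\|v-\abf_2^+\|\leq R$, and a far part. The far part I control in probability rather than in expectation. In $d>8$, Markov's inequality together with $\E^\otimes_p[F(\vA_2)]\lesssim\sum_u \tau_p\ast\tau_p(u)\langle u\rangle^{4-d}$ works directly: the tree-graph/BK computation reduces the mean to this square-type sum, which is finite under \eqref{eq:initialization}, and its tail beyond $R$ is small. In $d\in\{7,8\}$ the mean diverges with $\X(p)$ (this is the $\psi_d$ growth of $\square(p)$). There I would instead estimate $\E[\min(F_{>R},1)]$ by a multiscale union bound. At scale $r$, a contribution of order $1$ requires the cluster to carry roughly $r^{d-4}$ points within distance $r$ of $\abf_2^+$. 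By the one-arm/tree-graph bounds derived from \eqref{eq:initialization}, this has probability $O(r^{-2})$ times a polynomially small factor. Summing over dyadic $r>R$ then gives $o_R(1)$, uniformly in $p$.

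The near part is the main obstacle. The tail of the near part is not small uniformly. Typical size-biased clusters near $p_c$ are dense near $\abf_2^+$ with probability bounded away from $0$, but not close to $1$. I would handle this with a finite-energy/local modification argument, chosen so that the constants depend only on $d,L,C$ and not on $p$. Conditionally on the configuration outside the ball $B(\abf_2^+,R)$, resampling the edges in the ball so that the cluster enters it along a single thin path ending at $\abf_2^+$ costs a factor $c(R,d,L)>0$. It changes $\mu_p$ by at most a bounded factor, and it makes the near part of $F$ at most $1/4$. If this only yields $\proba(\vA_2\in G)\geq c_1$ for a small $c_1$ rather than more than $1/2$, I would modify $G$ instead of trying to improve the bound. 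The modified $G$ keeps the threshold $F\leq 1/2$ but requires in addition that the local geometry near $a^+$ is thin. Then $\T_p\1_G\geq \tfrac12\, c_1\,\1_G$ still holds, and this suffices. Finally, the choice of $K$ enters only through making $R\ll\sqrt{\X(p)}$ available, so that the multiscale bounds from \eqref{eq:initialization} apply on all relevant scales.
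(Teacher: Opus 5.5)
There is a genuine gap, and it sits exactly where you place the main obstacle.

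\textbf{The near part.} Your $F(\vec A)$ is a union bound over potential cut vertices $v\in A-a^+$. Near the root of the next cluster, such a bound is of order one and cannot be made small. It always contains $\tau_p\ast\tau_p(a^+-a^-)\ge 2\tau_p(a^+-a^-)$, plus a contribution from every vertex of the thin path inside $\ball_R(a^+)$. Nothing in \eqref{eq:initialization}, with an arbitrary $C$, forces the total below $1/4$.

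No modification of $\vec A$ alone can help, because local cuts genuinely have probability of order one. Take any $\vec A\in\Clive$ and the event that the only open edge of $\A_2$ at $0$ leads to $a^--a^+\in A-a^+$. Its size-biased probability is of order $p(1-p)^{\Omega-1}$, bounded below in terms of $d,L$, and on it the pair is cut as soon as $\abf_2^-\neq 0$.

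What is missing is not a smaller union bound but a mechanism that rules out local cuts outright. In the paper this is the two-sided modification $f_r$. It thins each cluster inside $\ball_r(a^+)$ \emph{and} opens every edge of $\ball^\edges_r(0)$. A thin tip of one modified cluster then cannot disconnect the fully open ball around the root of the next one, so only far-away interactions remain to be union-bounded (Lemmas \ref{lem:NoCutAfterModif_a}--\ref{lem:NoCutAfterModif_c}). The modification is applied to every cluster of the chain, and its cost is paid once per step through the density comparison of Proposition \ref{pro:RelateLocalLambda}. The resulting averaged bound $\proba^\otimes_p((\vA_1,\vA_2)\in\Ccut^r)<1/4$ is then turned into an eigenvalue bound by Lemma \ref{lem:NaiveFrobeniusBound}.

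\textbf{The far part in $d\in\{7,8\}$.} Your claimed $O(r^{-2})$ is a one-arm bound for an unbiased cluster, but $\abf_2^-$ is a typical point of a size-biased cluster. The expected number of points of $\A_2$ within distance $r$ of $\abf_2^+$ is of order $\sum_{\|u\|_2\le r}\tau_p\ast\tau_p(u)$, that is $r^4$ in the mean-field regime, which is at least $r^{d-4}$ for $d\le 8$. So the per-scale contribution $r^{8-d}$ to your far part is typical, not rare.

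The divergence comes from using the tree-graph inequality. A cut vertex lies on a self-avoiding path from $0$ to $b^-$, so $(0\slr v)\circ(v\slr b^-)$ occurs in $B$. The BK inequality then gives $\proba^\otimes_p((\vec A,\vA_2)\in\Ccut)\le\sum_{v\in A-a^+}\tau_p(v)$. Its size-biased mean is an open-triangle diagram, bounded under \eqref{eq:initialization}, with a tail beyond $R$ that vanishes in every $d>6$.

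\textbf{The fallback step.} The bound $\T_p\1_G\ge\frac12c_1\1_G$ does not follow from $\proba^\otimes_p(\vA_2\in G)\ge c_1$ and a cut probability at most $1/2$ when $c_1\le 1/2$. You need a lower bound on the joint event.

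Your Collatz--Wielandt route can be rescued by pushing the modification inside the kernel: $\T_p\1_G(\vec A)\ge c(r)\,\proba^\otimes_p\big(f_r(\vA_2)\in G,\,(\vec A,f_r(\vA_2))\notin\Ccut\big)$, with $c(r)$ the finite-energy constant of Proposition \ref{pro:RelateLocalLambda}. Here $G$ should be a set of clusters that already have the modified local shape, large $\|a^+\|$ and small far sums. Lower-bounding the unmodified joint event directly by finite energy would not work. It would require the far-cut bounds to beat $c(r)$, which is exponentially small in $r^d$, while the triangle tail decays only polynomially in $r$.
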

\begin{remark}
Although we do not try to optimize the constants $K,c$, if the triangle condition is small enough,
one may completely skip the local modifications of the clusters in the proof below,
and obtain good constants for $K,c$ that do not depend on $d,L,C$.
\end{remark}
We are led by the intuition that correlations between clusters are local: if a cluster cuts another cluster, i.e. $(\vec A,\vec B)\in \Ccut$, the cut should be close to $0$ in $\vec B$.
By modifying the clusters locally, which costs a multiplicative constant, we may avoid cuts that are at distance at most $r>0$ for some fixed $r$.
If $r$ is large enough, we can deal with cuts that occur at distance at least $r$ apart.

Formally, for every $x\in \R^d$, $r\in \R_+$, let $\ball_r(x):=\{y:\|y-x\|_1\leq r\}$, and let $\ball^\edges_r(x)$ be the set of edges with both endpoints in $\ball_r(x)$.
We consider a function $f_r:\vec \Comp(0)\mapsto \vec \Comp(0)$ such that for every $(A,\vec a)$, $f_r(A,\vec a)$ is of the form $(B,\vec a)$ with
\begin{equation}
\big( A \cup \ball^\edges_r(0) \big ) \offx{0} \ball_{r}(a^+)=B\backslash \ball_{r}(a^+). \label{eq:def_fr}
\end{equation}
We will choose $f_r$ later.  We write $\Ccut^r$ for the set of pairs $\vec A,\vec B$ such that
\begin{equation}
(\vec A,\vec B)\in \Ccut^r  \quad \Longleftrightarrow \quad  (f_r(\vec A),f_r(\vec B))\in \Ccut.
\label{eq:def-Ccutr}
\end{equation}
Using the notations of Section \ref{sec:Rewriting}, let
\begin{equation}
\Tcut^r:=\inf\{t\in \N, (\vA_{t-1},\vA_t)\in \Ccut^r\}.
\label{eq:def-Tcutr}
\end{equation}
Since by \eqref{eq:AsympBootstrap} and \eqref{eq:RewriteA_TauSquare} we have $\proba_p^\otimes(\Tcut>k)=\Theta(\lambda_p^k)$ as $k\to \infty$, the next result allows us to lower-bound $\lambda_p$ using $\Tcut^r$.
\begin{proposition} \label{pro:RelateLocalLambda}
Let $p<p_c$ be such that $\X(p)\geq 4$. For every $r>0$, there exists $c>0$ depending only on $d,L,r$ such that for every $k\geq 0$,
\[
\proba_p^\otimes(\Tcut>k)\geq c^{k+1} \proba_p^\otimes(\Tcut^r>k).
\]
\end{proposition}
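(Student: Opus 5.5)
The plan is a direct comparison of the two path sums, using $f_r$ as a many-to-one map on pointed clusters. By \eqref{eq:def-pointed-cluster-law} and \eqref{eq:def-Tcut},
\[
\proba_p^\otimes(\Tcut>k)=(\Omega\X(p))^{-(k+1)}\sum_{\vec B_0,\dots,\vec B_k\in\vec\Comp(0)}\prod_{i=0}^k\mu_p(\vec B_i)\prod_{i=1}^k\1((\vec B_{i-1},\vec B_i)\notin\Ccut),
\]
and the same identity holds for $\Tcut^r$ with $\Ccut^r$ in place of $\Ccut$. By \eqref{eq:def-Ccutr}, $(\vec A_{i-1},\vec A_i)\notin\Ccut^r$ exactly when $(f_r(\vec A_{i-1}),f_r(\vec A_i))\notin\Ccut$. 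So it suffices to prove the \emph{fiber bound}
\begin{equation*}
\sum_{\vec A:\ f_r(\vec A)=\vec B}\mu_p(\vec A)\ \leq\ c^{-1}\mu_p(\vec B)\qquad\text{for every }\vec B\in\vec\Comp(0)\text{ in the image of } f_r .
\end{equation*}
Indeed, restricting the $\vec B$-sum to images of $f_r$ and inserting this bound for each of the $k+1$ factors gives
\[
\sum_{\vec B}\prod_i\mu_p(\vec B_i)\,\1(\text{no cut})\ \geq\ c^{k+1}\sum_{\vec A}\prod_i\mu_p(\vec A_i)\,\1(\text{no $r$-cut}).
\]
The normalisation $(\Omega\X(p))^{-(k+1)}$ is the same on both sides, since $f_r$ keeps the pointing edge $\vec a$. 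Therefore no further property of $\Tcut^r$ is needed.

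\textbf{Choosing $f_r$.} Given $\vec A=(A,\vec a)$, let $B_{\rm out}:=(A\cup\ball_r^\edges(0))\offx{0}\ball_r(a^+)$, which is forced by \eqref{eq:def_fr}. Inside $\ball_r(a^+)$ I would add a deterministic set of edges of $\ball^\edges_r(a^+)$, chosen to depend only on $a^+$ and on $B_{\rm out}$, with two goals. First, it must make $a^-$ connected to $0$, so that $(B,\vec a)\in\vec\Comp(0)$; this uses $r\geq L$, so that $a^-\in\ball_r(a^+)$. Second, if $a^+\notin B_{\rm out}$, it should keep $a^+\notin B$ whenever possible. If $\ball_r(0)\cap\ball_r(a^+)\neq\emptyset$, then $B$ is a subgraph of a bounded region and we just pick one fixed choice. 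In all cases $B$ differs from $B_{\rm out}$ only inside the ball $\ball_r(a^+)$, whose size is bounded in terms of $d,L,r$.

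\textbf{Fiber bound (main step).} Fix $\vec B$. Every $\vec A$ with $f_r(\vec A)=\vec B$ satisfies $\C(0)=A \Rightarrow \big(\C(0)\cup\ball^\edges_r(0)\big)\offx{0}\ball_r(a^+)=B\setminus\ball_r(a^+)$. Hence
\[
\sum_{\vec A: f_r(\vec A)=\vec B}\mu_p(\vec A)\leq\proba_p\big((\C(0)\cup\ball^\edges_r(0))\offx0\ball_r(a^+)=B\setminus\ball_r(a^+)\big).
\]
I expect this to be the main obstacle: the parts of $A$ hanging off $\ball_r(a^+)$ are unbounded, so one cannot compare $\mu_p(A)$ with $\mu_p(B)$ pointwise. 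Summing over the whole fiber avoids this. The event on the right is determined by the states of the edges in $E(B_{\rm out})$ and in $E^\uparrow(B_{\rm out})$ outside the two balls, together with the edges inside $\ball_r(0)$; it puts no constraint on the edges of $\ball^\edges_r(a^+)$. On the other hand, $\{\C(0)=B\}$ is the intersection of that event with a condition on the boundedly many edges touching $\ball_r(0)\cup\ball_r(a^+)$. Changing the states of these at most $N=N(d,L,r)$ edges changes the probability by a factor at most $\min(p,1-p)^{-N}$, by the same edge-counting formula $p^{\#E}(1-p)^{\#E^\uparrow}$ used in the proof of Lemma \ref{lem:GeneralOff}. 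This gives the fiber bound with $c=\min(p,1-p)^N$.

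\textbf{Uniformity in $p$.} It remains to make $c$ independent of $p$. We have $p\le p_c<1$, where $p_c$ depends only on $d,L$. The hypothesis $\X(p)\ge4$ bounds $p$ from below: for $p$ small, $\X(p)\le\sum_n(\Omega p)^n$ is close to $1$, so $\X(p)\ge 4$ forces $p\ge c_0(d,L)>0$. Hence $\min(p,1-p)\ge c_1(d,L)$, and $c=c_1^N$ depends only on $d,L,r$. This is the step where the hypothesis $\X(p)\geq 4$ is used.
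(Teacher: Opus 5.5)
Your proposal follows the paper's proof. Both reduce the claim to a uniform bound on $\proba_p^\otimes(f_r(\vA_1)=\vec B)/\proba_p^\otimes(\vA_1=\vec B)$. Both dominate the fiber by the probability of the event in \eqref{eq:def_fr}. Both pay $\min(p,1-p)^{-N}$ for $N=N(d,L,r)$ edges near the two balls. Both use $\X(p)\ge 4$ together with an upper bound on $p$ to make the constant uniform; the paper uses the explicit bounds $1/(9\Omega)<p<1/2$.

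Your ``Choosing $f_r$'' paragraph is unnecessary and should not be part of the proof. The proposition is later applied with the paper's own $f_r$, which is built from $v_r(\vec A)$. Your fiber bound uses only \eqref{eq:def_fr}, so it already covers every admissible $f_r$.

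One step is misjustified. Let
\[
E:=\{(\C(0)\cup\ball^\edges_r(0))\offx{0}\ball_r(a^+)=B\setminus\ball_r(a^+)\}.
\]
You claim $E$ is determined by edges away from $\ball_r(a^+)$. It is not. $\C(0)\cup\ball^\edges_r(0)$ is a union of graphs, so an open edge from $w\in\ball_r(0)$ to some $u\notin\ball_r(0)$ belongs to it only if $w\in\C(0)$. That membership may rely on a path through $\ball_r(a^+)$. Closing a single edge of $\ball^\edges_r(a^+)$ can then remove $u$, and everything behind it, from the off-cluster. So the bounded-factor step cannot be justified by independence.

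The repair is short. Let $E^*$ be the event that:
\begin{itemize}
\item every edge of $B$ not touching $\ball_r(0)\cup\ball_r(a^+)$ is open;
\item every edge of $E^\uparrow(B)$ not touching these balls is closed.
\end{itemize}
Then $E\subseteq E^*$, by the following two checks.
\begin{itemize}
\item An edge of $B$ away from the balls lies in the off-cluster. It is not in $\ball^\edges_r(0)$, so it lies in $\C(0)$ and is open.
\item Take an open edge of $E^\uparrow(B)$ away from the balls, with endpoint $u\in B\setminus\ball_r(a^+)$. Since $u\notin\ball_r(0)$, we have $u\in\C(0)$, so the edge lies in $\C(0)$. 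Neither endpoint is in $\ball_r(a^+)$, so the edge lies in the off-cluster, hence in $B$. This is a contradiction.
\end{itemize}
The event $E^*$ depends only on edges not touching the balls. Moreover, $\{\C(0)=B\}$ is $E^*$ intersected with prescribed values on at most $N$ edges touching the balls. Hence
\[
\proba_p(E)\le\proba_p(E^*)\le\min(p,1-p)^{-N}\mu_p(\vec B).
\]
This is equivalent to the paper's argument that one reaches $\{\C(0)=B\}$ by opening or closing at most $N$ edges.
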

\begin{proof}
By definition, the second probability is equal to
\[
\proba_p^\otimes\left (\forall 0\leq i \leq k-1 : \quad f_r(\vec \A_i), f_r(\vec \A_{i+1})\notin \Ccut \right ),
\]
which we can rewrite, by splitting according to the values of $f_r(\vA_0),\dots f_r(\vA_k)$, as
\[
\sum_{\vec B_0,\dots, \vec B_k\in \vec \Comp(0)}\prod_{i=0}^k \proba_p^\otimes(f_r(\vec \A_i)=\vec B_i) \prod_{i=0}^{k-1} \1( (\vec B_i, \vec B_{i+1})\notin \Ccut).
\]
We upper-bound this last sum by
\[
\left (\sup_{\vec B\in \vec \Comp(0)} \frac{\proba_p^\otimes(f_r(\vec \A_1)=\vec B)}{\proba_p^\otimes(\vA_1=\vec B)}\right )^{k+1} \hspace{-1em}
\sum_{\vec B_0,\dots, \vec B_k\in \vec \Comp(0)}
\prod_{i=0}^k \proba_p^\otimes(\vA_i=\vec B_i)
\prod_{i=0}^{k-1} \1( (\vec B_i, \vec B_{i+1})\notin \Ccut).
\]
Removing the supremum, we recognize $ \proba_p^\otimes(\Tcut>k)$. So it suffices to upper-bound this supremum by a constant depending only on $r,d,L$.

To this end, note that by \eqref{eq:def_fr}, for a given $\vec B=(B,\vec b)$, the probability in the supremum equals
\[
\frac{1}{\Omega \X(p)}\proba_p \Bigg (0\slr b^-,\big ( \C(0) \cup \ball^\edges_r(0) \big ) \offx{0} \ball_r(b^+)=B\backslash \ball_r(b^+)\Bigg ).
\]
If the above event holds, then we can get the event $\{\C_p(0)=B\}$ by
 opening/closing at most $N\in \N$ edges, where $N$ is a constant depending only on $r,d,L$.
Indeed, by opening all the edges of $\ball^\edges_r(0)$ and then closing all the edges adjacent to $ \ball_r(b^+)$, one gets $\C(0)=B\backslash \ball_r(b^+)$.
It remains to open all the edges of $B$ which have at least one endpoint in $\ball_r(b^+)$ and to close some of the edges adjacent to those edges.
 We deduce that the last probability is at most
\begin{equation}
\proba_p(\C(0)=B) (p(1-p))^{-N} =\mu_p(\vec B)(p(1-p))^{-N}. \label{eq:CostLocalModif}
\end{equation}

We have almost finished, but we need a bound independent of $p$.
To this end, note that $\X(1/2)=\infty$ since $(\Z^d,\edges)$ contains a copy of the nearest-neighbor lattice on $\Z^2$ and $p_c(\Z^2)=1/2$.
So $p<1/2$.
Also $p>1/(9\Omega)$, since by comparison with a Galton--Watson process, it is easy to get $\X(1/(9\Omega))<4$.
Therefore, we may replace $p(1-p)$ in \eqref{eq:CostLocalModif} by $1/(18\Omega)$.
Hence, we get an upper bound on the supremum as desired, which concludes the proof.
\end{proof}

 We now define, for $r\geq 5L$, the function $f_r$ in order to optimize the probability that, for typical clusters $\vec A=(A,\vec a),\vec B=(B,\vec b)\in \vec \Comp(0)$, we have $(\vec A,\vec B)\notin \Ccut^r$.
To this end, note that if $\vec A$ cuts $\vec B$, then the cuts typically occur locally around $0$ in $B$ (resp. around $\vec a$ in $\vec A$).
Thus, we want to minimize the number of vertices in $A$ around $\vec a$, and maximize the number of edges in $\vec B$ around $0$.
 We define $f_r(\vec A)$ as $(\vec A',\vec a)\in \vec \Comp(0)$, where $A'$ is constructed to satisfy \eqref{eq:def_fr} as follows (see Fig. \ref{fig:LocalModif}):
 \begin{compactitem}
 \item We first add to $A$ all the edges in $\ball^\edges_r(0)$.
\item We then close all the edges adjacent to $\ball_r(a^+)$, and only keep the connected component of $0$.
\item Next, take an arbitrary self-avoiding open path from $0$ to $a^-$ in $A$,
and let $v_r(\vec A)$ be the vertex preceding the first entry in $\ball_r(a^+)$ of this path.
$v_r(\vec A)$ is well defined when $\|a^+\|_\infty\geq 3r$.
(Otherwise, we simply let $\vec A'$ be an arbitrary cluster satisfying \eqref{eq:def_fr}.)
Note that $\|v_r(\vec A)-a^+\|_1\in(r,r+L]$,
and $v_r(\vec A)$ is in the cluster that we are currently constructing.
\item Finally, open a path of minimal length from $v_r(\vec A)$ to $a^-$ that stays in $\{v_r(\vec A)\}\cup \ball_r(a^+)\backslash\{a^+\}$.
\end{compactitem}

Our construction ensures that there is a path in $f_r(\vec A)$ from $0$ to $a^-$ that can be split into three:
\begin{compactitem}
\item A first path $\Gamma^1_r(\vec A)$ in $\ball_r(0)$ that ends up at some $w_r(\vec A)$.
\item A second path $\Gamma^2_r(\vec A)$ that starts at $w_r(\vec A)$
then stays in $A\backslash (\ball_r(0)\cup \ball_r(a^+))$ and ends up at $v_r(\vec A)$,
and is also a subpath of a self-avoiding path from $0$ to $a^-$ in $A$.
\item The path $\Gamma^3_r(\vec A)$ from $v_r(\vec A)$ to $a^-$ that stays in $\{v_r(\vec A)\}\cup \ball_r(a^+)\backslash\{a^+\}$.
\end{compactitem}
This will be particularly important in the proof of the following main proposition.

 \begin{figure}[!h]
\centering
\includegraphics[scale=1.1]{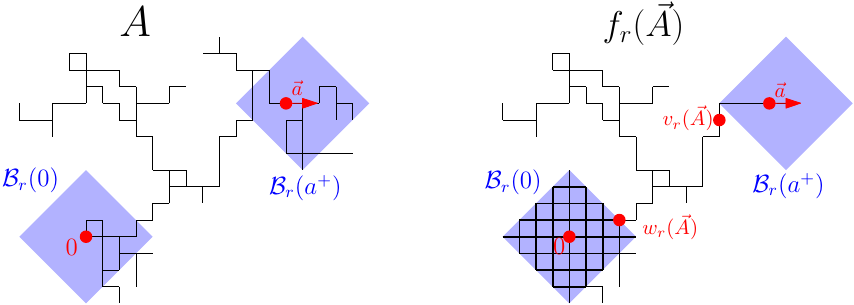}
\caption{The left panel represents a cluster $\vec A$, and the right panel represents $f_r(\vec A)$.
The vertices $0,w_r(\vec A), v_r(\vec A)$ and the oriented edge $\vec a=(a^-,a^+)$ are red.
The balls $\ball_r(0),\ball_r(a^+)$ are blue.
Black segments represent edges.
The added edges are represented as thicker segments.}
 \label{fig:LocalModif}
 \end{figure}
\begin{proposition} \label{pro:OrientedOpenTriangle}
Let $p<p_c$ and $C>0$ be such that \eqref{eq:initialization} holds. If $r>0$ is large enough, depending on $d,L,C$, and if $\X(p)$ is large enough, depending on $d,L,C,r$, we have
\[
\proba^\otimes_p((\vec \A_1,\vec \A_2)\in \Ccut^r) < 1/4.
\]
\end{proposition}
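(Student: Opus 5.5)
The plan is to show that, with the local modifications, a cut can only come from a genuinely macroscopic event. Macroscopic cuts are then controlled by a triangle-type sum, which is small for large $r$ when $d>6$. The remaining degenerate configurations occur with probability $O(r^{d}/\X(p))$.

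Write $\vA_1=(A,\vec a)$, $\vA_2=(B,\vec b)$, $f_r(\vA_1)=(A',\vec a)$ and $f_r(\vA_2)=(B',\vec b)$. Recall that under $\proba_p^\otimes$ we have $\proba_p^\otimes(a^-=y)=\tau_p(y)/\X(p)$, and similarly for $b^-$.

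\textbf{Step 1: degenerate events.} First discard the events $\|a^+\|_\infty<3r$, $\|b^+\|_\infty<3r$, $\|a^++b^+\|_1\le 3r$, and the event that $\ball_{2r}(-a^+)$ meets the path $\Gamma^2_r(\vA_2)$. The probability of each is at most a constant times $r^d\sup_y\proba(\cdot=y)$, or a sum of the form $\sum_{y}\tau_p(y)\,r^d\,\tau_p(y)/\X(p)$. Under \eqref{eq:initialization} all of these are $O_{C,d,L}(r^{d}/\X(p))$, hence at most $1/16$ once $\X(p)$ is large depending on $r$.

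\textbf{Step 2: locating the cut.} Outside the degenerate events, we analyse $(\vA_1,\vA_2)\in\Ccut^r$ deterministically. The set $A'-a^+$ meets $\ball_r(0)$ only along the translated minimal path $\Gamma^3_r(\vA_1)-a^+$. This path avoids $0$ (since $a^+$ was excluded) and is a thin path of length $O(r/L)$. Since $B'$ contains all of $\ball_r^\edges(0)$ and $r\ge 5L$, removing such a path does not disconnect $\ball_r(0)\cap\Z^d$; this is a short combinatorial check. Hence $0$ remains connected to $w_r(\vA_2)$ through $\Gamma^1_r(\vA_2)$ rerouted inside the ball.

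A cut therefore requires one of two things:
\begin{itemize}
\item[(i)] some vertex $u$ with $\|u\|_1>r$, $u\in\Gamma^2_r(\vA_2)$ and $u+a^+\in A$ (using Step 1 to exclude the added ball of $A'$);
\item[(ii)] $A'-a^+$ meets $\ball_{r}(b^+)\supset\Gamma^3_r(\vA_2)$, which, outside the degenerate events, forces a vertex $u\in \ball_{r}(b^+)$ with $u+a^+\in A$.
\end{itemize}

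\textbf{Step 3: bounding (i), the main term.} Since $\Gamma^2_r(\vA_2)$ is part of a self-avoiding path from $0$ to $b^-$ in $B$, we get $(0\slr u)\circ(u\slr b^-)$. By BK, after averaging over $\vec b$, this gives $\proba^\otimes(u\in\Gamma^2_r(\vA_2))\le \tau_p(u)$. For $A$, apply the tree-graph inequality to $0,a^-,a^++u$ and average over $\vec a$ with weight $1/(\Omega\X(p))$. This gives
\[
\proba^\otimes(u+a^+\in A)\le \frac{1}{\X(p)}\sum_{w,y}\tau_p(w)\tau_p(y-w)\tau_p(y+u-w)\lesssim \tau_p\ast\tau_p(u)+\text{(edge shift)} .
\]
Under \eqref{eq:initialization}, $\tau_p\ast\tau_p(u)\le C'\langle u\rangle^{4-d}$, so the contribution of (i) is at most
\[
C''\sum_{\|u\|_1>r}\langle u\rangle^{2-d}\langle u\rangle^{4-d}=O(r^{6-d}),
\]
which is below $1/8$ once $r$ is large since $d>6$.

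It is important here to use that the cut vertex lies on a \emph{path} of $B$, costing only $\tau_p(u)$, rather than merely in $B$, which would cost $\tau_p\ast\tau_p(u)$. The latter would give a square diagram, divergent for $d\le 8$. I expect this step to be the main obstacle. The care needed is in justifying that a cut on $\Gamma^2_r(\vA_2)$ really yields a vertex of $A$ lying on that self-avoiding path, given the modifications, and in handling the edge shift between $a^-$ and $a^+$.

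\textbf{Step 4: bounding (ii).} The event $u\in\ball_r(b^+)$ costs $O(r^d\,\tau_p(u)/\X(p))$, summing over the position of $b^-$. Combined with the previous bound $\tau_p\ast\tau_p(u)$ for $u+a^+\in A$, this gives $O(r^d\X(p)^{-1}\sum_u\langle u\rangle^{6-2d})=O(r^d/\X(p))$, since $6-2d<-d$ for $d>6$.

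\textbf{Conclusion.} Choose first $r$ large (depending on $d,L,C$) and then $\X(p)$ large (depending also on $r$). This makes the total of Steps 1, 3 and 4 strictly less than $1/4$.
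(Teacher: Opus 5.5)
Steps 1, 3 and 4 follow the paper's proof. Your degenerate events are the paper's first lemma together with the ``case (b)'' parts of Lemmas \ref{lem:NoCutAfterModif_b} and \ref{lem:NoCutAfterModif_c}. Your main term $\sum_{\|u\|_1>r}\tau_p(u)\,\tau_p\ast\tau_p(u)=O(r^{6-d})$ is case (a) of Lemma \ref{lem:NoCutAfterModif_b}, and Step 4 is case (a) of Lemma \ref{lem:NoCutAfterModif_c}. Step 3 is fine as written.

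The gap is in Step 2: the ``short combinatorial check'' is false. Removing the translated minimal path $P=\Gamma^3_r(\vA_1)-a^+$ from $\ball_r(0)$ can isolate vertices near $\partial\ball_r(0)$, where the $\ell^1$-ball is thin. Take $L=1$, $v_r(\vA_1)-a^+=re_1+e_2$ and $a^--a^+=e_1$. The corner $re_1$ has a single neighbour inside $\ball_r(0)$, namely $(r-1)e_1$. The path $re_1+e_2\to(r-1)e_1+e_2\to(r-1)e_1\to(r-2)e_1\to\dots\to e_1$ has minimal length $r$ and avoids $0$ and $re_1$, yet it isolates $re_1$. If $w_r(\vA_2)=re_1$ and $\Gamma^2_r(\vA_2)$ leaves the ball through $(r+1)e_1$, then $\vA_1$ can cut $\vA_2$ although neither (i) nor (ii) occurs. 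Likewise, $w_r(\vA_2)$ may itself lie on $P$. So your dichotomy is not exhaustive. These configurations are also not degenerate in the sense of Step 1: their probability is governed by $r$, not by $\X(p)$, so they need their own estimate.

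What is true deterministically, and what the paper uses, is the following. $P$ is a minimal path heading to a neighbour of $0$, so it is within $O(L)$ of $\partial\ball_r(0)$ only during its first few steps. Hence $w_r(\vA_2)$ can be removed or separated from $0$ inside the ball only if $\|v_r(\vA_1)-a^+-w_r(\vA_2)\|_1\le 9L$.

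This event must be bounded separately. The vertex $v_r(\vA_1)$ lies on a self-avoiding path from $0$ to $a^-$ in $A$. The vertex $w_r(\vA_2)$, which satisfies $\|w_r(\vA_2)\|_1>r-L$, lies on one from $0$ to $b^-$ in $B$. Applying BK and averaging over $\vec a,\vec b$ under \eqref{eq:initialization} bounds the probability by a constant times
\[
\sum_{\|w\|_1>r-L}\ \sum_{y:\,\|y+w\|_1\le 10L}\langle y\rangle^{2-d}\langle w\rangle^{2-d}=O(r^{4-d}),
\]
which is small for $r$ large, uniformly in $\X(p)$ (this is Lemma \ref{lem:NoCutAfterModif_a}). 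With this term added, your argument closes and coincides with the paper's.
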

We need to check that $0\slr \abf_2^-$ holds with high probability in $f_r(\vec \A_2)\backslash (f_r(\vec \A_1)-\abf_1^+)$.
To do so, it is enough to prove that, w.h.p.,
(1) $\|\abf_1^+\|_\infty,\|\abf_2^+\|_\infty \geq 3r$,
then, in this case, that inside $f_r(\vec \A_2) \backslash (f_r(\vec \A_1)-\abf_1^+)$
we have (2)  $0$ and $w_r(\vec \A_2)$ are connected,
(3) $\Gamma^2_r(\vec \A_2)$ is open,
(4) $\Gamma^3_r(\vec \A_2)$ is open.
This is the content of the following four lemmas.
\begin{lemma}
In the setting of Proposition \ref{pro:OrientedOpenTriangle}, we have, for every $i\in \N$,
\[
\proba^\otimes_p(\|\abf^+_i\|_\infty<3r) < 1/32.
\]
\end{lemma}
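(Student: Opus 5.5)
We need to bound $\proba^\otimes_p(\|\abf_i^+\|_\infty<3r)$. By \eqref{eq:def-pointed-cluster-law}, $\vA_i$ has law $\mu_p/(\Omega\X(p))$. We compute this by summing over pointed clusters. For fixed $A\in\Comp(0)$ the admissible oriented edges $\vec a$ are the $\Omega$ edges out of each vertex $a^-\in A$, and $a^+=a^-+\delta$ with $\|\delta\|_1\le L$. Hence
\[
\proba^\otimes_p(\|\abf_i^+\|_\infty<3r)=\frac{1}{\Omega\X(p)}\sum_{A\in\Comp(0)}\proba_p(\C(0)=A)\sum_{\vec a:\,a^-\in A}\1(\|a^+\|_\infty<3r).
\]
Exchanging the sums, as in \eqref{eq:OrientedX}, this equals
\[
\frac{1}{\Omega\X(p)}\sum_{\vec a\in\vedges}\tau_p(a^-)\1(\|a^+\|_\infty<3r).
\]
Since $\|a^+\|_\infty<3r$ forces $\|a^-\|_\infty<3r+L$, this is at most
\[
\frac{1}{\X(p)}\sum_{y:\,\|y\|_\infty<3r+L}\tau_p(y).
\]

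Next we bound the remaining sum using $\tau_p\le 1$. We do not even need \eqref{eq:initialization} here: the sum is at most $\#\{y:\|y\|_\infty<3r+L\}\le(6r+2L+1)^d$. Therefore
\[
\proba^\otimes_p(\|\abf_i^+\|_\infty<3r)\le\frac{(6r+2L+1)^d}{\X(p)}.
\]
If one prefers to use the hypothesis, \eqref{eq:initialization} gives the slightly better $C\sum_{\|y\|_\infty<3r+L}\langle y\rangle^{2-d}=O_C(r^2)$. Either way the bound depends only on $d,L,r$ (and $C$).

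To conclude, Proposition \ref{pro:OrientedOpenTriangle} allows $\X(p)$ to be taken large depending on $d,L,C,r$. It suffices to require $\X(p)>32(6r+2L+1)^d$, which gives the strict bound $<1/32$.

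There is no real obstacle. The only point needing care is the change of summation from pointed clusters to edges, with its factor $\Omega$ matching the normalization $\Omega\X(p)$. The law of $\vA_i$ does not depend on $i$ because the $\vA_i$ are i.i.d. under $\proba^\otimes_p$, so the bound holds uniformly in $i\in\N$.
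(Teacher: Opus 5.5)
Your proof is correct and is essentially the paper's argument. The paper drops the constraint $a^-\in A$ first, so that the sum over $A$ gives $1$ and the sum over $\vec a$ gives $\Omega\,O(r^d)$; you instead sum out $A$ to get $\tau_p(a^-)$ and then use $\tau_p\le 1$, reaching the same $O(r^d)/\X(p)$ bound, which is then made smaller than $1/32$ by taking $\X(p)$ large depending on $d,L,r$.
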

\begin{proof}
The desired probability can be written as
\[
\frac{1}{\Omega \X(p)} \sum_{(A,\vec a)\in \vec \Comp(0)} \proba_p(\C(0)=A) \1(a^-\in A, \|a^+\|_\infty\leq 3r).
\]
Increasing the sum, we may remove the event $a^-\in A$, so we can sum over $\vec a$ and $A$ separately.
The sum over $A$ gives $1$.
The sum over $\vec a$ gives $\Omega O(r^d)$.
So we can indeed take $\X(p)$ large enough that the probability is as small as desired.
\end{proof}
\begin{lemma} \label{lem:NoCutAfterModif_a}
In the setting of Proposition \ref{pro:OrientedOpenTriangle}, we have
\[
\proba^\otimes_p\left (\|\abf^+_1\|_\infty\geq 3r,\|\abf_2^+\|_\infty \geq 3r, \quad w_r(\vec \A_2) \notin \left (f_r(\vec \A_2) \offx{0} (f_r(\vec \A_1)-\abf_1^+) \right ) \right )<1/16.
\]
\end{lemma}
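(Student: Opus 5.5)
The plan is to exploit the fact that, after the local modifications \eqref{eq:def_fr}, the only part of $f_r(\vec \A_1)-\abf_1^+$ that can lie in $\ball_r(0)$ is the translated path $\Gamma^3_r(\vec \A_1)-\abf_1^+$.

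\textbf{Step 1 (what is removed near $0$).} In $f_r(\vec \A_1)$ all edges adjacent to $\ball_r(\abf_1^+)$ are closed, except along the minimal path $\Gamma^3_r(\vec \A_1)$ that is added afterwards. So, on $\|\abf_1^+\|_\infty\geq 3r$, the set $(f_r(\vec \A_1)-\abf_1^+)\cap\ball_r(0)$ is contained in a geodesic-type path $\gamma_1$. This path $\gamma_1$
\begin{compactitem}
\item has $O(r)$ vertices and avoids $0$;
\item starts at $v_r(\vec \A_1)-\abf_1^+$, which lies at $\|\cdot\|_1$-distance in $(r,r+L]$ from $0$;
\item ends at the neighbour $\abf_1^- -\abf_1^+$ of $0$.
\end{compactitem}

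\textbf{Step 2 (deterministic connectivity).} On $\|\abf_2^+\|_\infty\geq 3r$, the graph $f_r(\vec \A_2)$ contains every edge of $\ball^\edges_r(0)$. Moreover $w_r(\vec \A_2)\in\ball_r(0)$ is the last vertex of $\Gamma^1_r(\vec \A_2)$ before it exits, so it lies within distance $L$ of $\partial\ball_r(0)$.

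I will show by a purely combinatorial argument, using $d>6\geq 2$ and $r\geq 5L$, that removing a minimal-length path such as $\gamma_1$ from the full box graph $\ball_r(0)$ leaves every vertex at distance $>2L$ from $\gamma_1$ connected to $0$ inside $\ball_r(0)\setminus\gamma_1$. The reason is that one can route around a straight-ish path in a transverse coordinate direction.

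Consequently the bad event of the lemma forces $\mathrm{dist}(w_r(\vec \A_2),\gamma_1)\leq 2L$. Since $w_r(\vec \A_2)$ is near the boundary sphere and $\gamma_1$ is a minimal path, this in turn forces the existence of vertices $z,z'$ such that:
\begin{compactitem}
\item $\|z\|_1,\|z'\|_1\in[r-O(L),r+L]$ and $\|z-z'\|_1=O(L)$;
\item $0\slr z\slr \abf_2^-$ in $\A_2$;
\item $\abf_1^++z'\in\A_1$ with $0\slr \abf_1^+ + z'$ and $\abf_1^+ + z'\slr \abf_1^-$ in $\A_1$.
\end{compactitem}
This step, the precise route-around argument, is fiddly but elementary. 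If the "far from $\gamma_1$" statement fails near the junction where $\gamma_1$ enters the ball, I will absorb that junction into the $O(L)$ neighbourhood.

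\textbf{Step 3 (probabilistic bound).} By independence of $\vec\A_1,\vec\A_2$ and \eqref{eq:def-pointed-cluster-law}, the probability is at most
\[
\sum_{z,z'}\frac{1}{\Omega\X(p)}\sum_{\vec a}\proba_p(0\slr z\slr a^-)\cdot\frac{1}{\Omega\X(p)}\sum_{\vec a}\proba_p(0\slr a^++z'\slr a^-).
\]

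For the first factor, the BK inequality gives $\proba_p(0\slr z\slr a^-)\leq\tau_p(z)\tau_p(a^--z)$. Summing over $\vec a$ gives $\Omega\tau_p(z)\X(p)$, so the first factor is at most $\tau_p(z)\leq C r^{2-d}$ by \eqref{eq:initialization}.

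For the second factor, the tree graph inequality gives a bound by $\sum_w\tau_p(w)\tau_p(a^++z'-w)\tau_p(a^--w)$. Summing over $\vec a$ first gives at most $\Omega\X(p)\max_{|u-z'|\leq L}\tau_p\ast\tau_p(u)$. Under \eqref{eq:initialization} and $d>6$ this is $O(\Omega\X(p)\, r^{4-d})$, using the standard convolution estimate $\langle\cdot\rangle^{2-d}\ast\langle\cdot\rangle^{2-d}\lesssim\langle\cdot\rangle^{4-d}$.

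There are $O(r^{d-1}L^{O(1)})$ pairs $(z,z')$. The total is therefore $O(r^{d-1}\cdot r^{2-d}\cdot r^{4-d})=O(r^{5-d})$, which is $<1/16$ for $r$ large depending on $d,L,C$, since $d>6$. The condition that $\X(p)$ is large is used only to discard the cases $\|\abf_i^+\|_\infty<3r$, as in the previous lemma.

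I expect the main obstacle to be Step 2: rigorously reducing "$w_r$ is disconnected from $0$ in the box minus $\gamma_1$" to "$w_r$ is within $O(L)$ of $\gamma_1$", uniformly over the range-$L$ lattice and over the possible shapes of minimal paths.
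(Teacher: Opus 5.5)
Your proposal follows the paper's proof: the same deterministic reduction to $w_r(\vec\A_2)$ lying within $O(L)$ of the entry point $v_r(\vec\A_1)-\abf_1^+$ (the paper simply asserts this, with constant $9L$), followed by a union bound using BK and \eqref{eq:initialization}. The only bookkeeping difference is that the paper sums over all $\|w\|_1>r-L$ with BK on both clusters and concludes by dominated convergence, whereas you count a shell of $O(r^{d-1})$ pairs and obtain $O(r^{5-d})$.

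Two points need tightening. First, $z'$ must be $v_r(\vec\A_1)-\abf_1^+$ itself, because the other vertices of $\gamma_1$ come from added edges and are not vertices of $\A_1$. Second, the $\A_2$ event must be the disjoint one $(0\slr z)\circ(z\slr\abf_2^-)$. It holds because $w_r(\vec\A_2)$ lies on a self-avoiding path from $0$ to $\abf_2^-$ in $\A_2$, and it is what licenses BK. It is genuinely needed: using the tree-graph bound on both factors would give only $O(r^{7-d})$, which is not small when $d=7$.
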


\begin{proof}

Recall that all the edges in $\ball^\edges_r(0)$ are open in $f_r(\vec B)$, and that inside $\ball_r(a^+)$, $f_r(\vec A)$ consists of a minimal path from $v_r(\vec A)$ to $a^-$  that does not go through $a^+$. It follows that if
\[
w_r(\vec B)\notin f_r(\vec B) \offx{0}(f_r(\vec A)-a^+),
\]
then we must have $\|(v_r(\vec A)-a^+)-w_r(\vec B)\|_1\leq 9L$. Thus, the desired probability is bounded by
\[
\proba_p^\otimes \left (
\exists v\in \A_1, \exists w\in \A_2:
\quad \|w\|_1> r-L,
\quad \|v-\abf_1^+-w\|_1\leq 9L,
\quad \substack{  (0\slr v)\circ (v\slr \abf_1^-) \text{ in } \A_1 \\ (0\slr w)\circ (w\slr \abf_2^-) \text{ in } \A_2}  \right ).
\]
By a union bound, the last probability is bounded by
\[
\sum_{v\in \Z^d} \sum_{w\notin \ball_{r-L}(0)} \proba_p^\otimes (\|v-\abf_1^+-w\|_1\leq 9L, (0\slr v)\circ (v\slr \abf_1^-) \text{ in } \A_1) \proba_p^\otimes((0\slr w)\circ (w\slr \abf_2^-) \text{ in } \A_2 ),
\]
which we may rewrite as
\[
\frac{1}{\Omega^2 \X(p)^2} \sum_{v\in \Z^d} \sum_{w\notin \ball_{r-L}(0)} \sum_{\substack{\vec a_1, \vec a_2 \\ \|v-a_1^+-w\|_1\leq 9L }} \proba_p((0\slr v)\circ (v\slr a_1^-)) \proba_p((0\slr w)\circ (w\slr a_2^-)).
\]
By the BK inequality, the last sum is then bounded by
\[
\frac{1}{\Omega^2 \X(p)^2} \sum_{v\in \Z^d} \sum_{w\notin \ball_{r-L}(0)} \sum_{\substack{\vec a_1, \vec a_2 \\ \|v-a_1^+-w\|_1\leq 9L }} \tau_p(v)\tau_p(a_1^--v) \tau_p(w)\tau_p(a_2^--w).
\]
Summing over $\vec a_2$ yields a factor $\X(p)\Omega$. Then, making the change of variables $\vec a_1\gets \vec a_1-v$ and summing over $v$ yields another term $\X(p)$. Thus the last sum equals
\[
\frac{1}{\Omega} \sum_{w\notin \ball_{r-L}(0)} \sum_{\vec a_1: \|-a_1^+-w\|_1\leq 9L} \tau_p(a_1^-) \tau_p(w).
\]
Using \eqref{eq:initialization}, we get that the last sum is bounded by
\[
C^2 \sum_{w\notin \ball_{r-L}(0)} \sum_{a^-_1: \|a_1^-+w\|_1\leq 10L} \|a_1^-\|_2^{2-d} \|w\|_2^{2-d}.
\]
Since the last sum is finite, dominated convergence shows that it tends to $0$ as $r\to \infty$. In other words, by taking $r$ large enough, the probability in the lemma is as small as we want.
\end{proof}
\begin{lemma} \label{lem:NoCutAfterModif_b}
In the setting of Proposition \ref{pro:OrientedOpenTriangle}, we have
\[
\proba^\otimes_p\left (\|\abf^+_1\|_\infty\geq 3r,\|\abf_2^+\|_\infty \geq 3r, \quad \Gamma^2_r(\vec \A_2)\backslash\{w_r(\vec \A_2)\} \not \subset \left (f_r(\vec \A_2) \backslash (f_r(\vec \A_1)-\abf_1^+) \right ) \right )<1/16.
\]
\end{lemma}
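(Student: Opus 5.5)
The plan is to follow the template of the proof of Lemma \ref{lem:NoCutAfterModif_a}: reduce the bad event to a diagrammatic configuration, then bound it with the BK inequality, \eqref{eq:initialization}, and dominated convergence as $r\to\infty$. The first step is to identify what a vertex of $\Gamma^2_r(\vec \A_2)\backslash\{w_r(\vec \A_2)\}$ lying in $f_r(\vec \A_1)-\abf_1^+$ looks like. Such a vertex $u$ lies outside $\ball_r(0)\cup\ball_r(\abf_2^+)$ and on a self-avoiding path from $0$ to $\abf_2^-$ in $\A_2$, so $(0\slr u)\circ(u\slr \abf_2^-)$ holds in $\A_2$ and $\|u\|_1>r$. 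Since $u$ lies in $f_r(\vec \A_1)-\abf_1^+$, we have $u+\abf_1^+\in f_r(\vec \A_1)$. There are two sub-cases.
\begin{itemize}
\item[(i)] $u+\abf_1^+\in\ball_r(\abf_1^+)$, i.e. $\|u\|_1\leq r$. This contradicts $u\notin\ball_r(0)$, so it cannot happen. This is the point of choosing $f_r$ to be thin near $a^+$.
\item[(ii)] $u+\abf_1^+\in \ball_r(0)$ or $u+\abf_1^+\in\A_1$. If $u+\abf_1^+\in\ball_r(0)$ but not in $\A_1$, then $\|u+\abf_1^+\|_1\le r$.
\end{itemize}

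In the generic case $v:=u+\abf_1^+\in\A_1$, with $0\slr v$ in $\A_1$ and $0\slr \abf_1^-$. By the tree graph inequality there is $y$ with $(0\slr y)\circ(y\slr v)\circ(y\slr\abf_1^-)$. A union bound over $u,y$ and the oriented edges then bounds the probability by
\[
\frac{1}{\Omega^2\X(p)^2}\sum_{u\notin\ball_r(0)}\sum_{y}\sum_{\vec a_1,\vec a_2}\tau_p(y)\tau_p(u+a_1^+-y)\tau_p(a_1^--y)\,\tau_p(u)\tau_p(a_2^--u).
\]
Summing over $\vec a_2$ gives $\Omega\X(p)$. Substituting $a_1^-\gets a_1^- - y$ and summing over $y$ gives another $\X(p)$. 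What remains is of the form $\sum_{u\notin\ball_r(0)}\tau_p(u)\,[\tau_p\ast D\ast\tau_p](-u)$ up to reflection and constants. Under \eqref{eq:initialization}, $\tau_p\ast\tau_p(u)\lesssim C^2\langle u\rangle^{4-d}$, so the summand is $O(\langle u\rangle^{6-2d})$. This is summable because $d>6$, so the tail over $\|u\|_1>r$ tends to $0$ as $r\to\infty$.

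The remaining case $u+\abf_1^+\in\ball_r(0)$ gives $\|u+\abf_1^+\|_1\le r$. I would bound it by the probability that $\A_2$ has a vertex within distance $r$ of $-\abf_1^+$. By BK this is bounded by
\[
\frac{1}{\Omega\X(p)^2}\cdot\X(p)\sum_{\vec a_1}\sum_{u:\|u+a_1^+\|_1\le r}\tau_p(a_1^-)\tau_p(u),
\]
after again summing out $\vec a_2$, and then by $O(r^d)\sum_z\tau_p(z)\tau_p(-z+O(r))/\X(p)=O(r^d C^2/\X(p))$, since $\tau\ast\tau$ is bounded by \eqref{eq:initialization}. This is small once $\X(p)$ is large depending on $r$, in the same way the first lemma handles $\|\abf_i^+\|_\infty<3r$.

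I expect the main obstacle to be the bookkeeping for $f_r(\vec \A_1)$. One must verify that the added path $\Gamma^3_r(\vec\A_1)$ and the added ball $\ball_r(0)$ are the only new vertices. Vertices of $\Gamma^3_r$ lie in $\ball_r(\abf_1^+)\cup\{v_r\}$, and $v_r\in\A_1$ anyway, so this holds. One must also check that the off-operation only removes vertices, which does not hurt an upper bound. Once this is settled, the three contributions are each below $1/48$ for $r$, then $\X(p)$, large, giving the bound $1/16$.
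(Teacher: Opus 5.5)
Your proposal is correct and follows the paper's proof essentially step for step. You use the same split into the case $u+\abf_1^+\in\A_1$ (tree-graph plus BK, reducing to $\sum_{\|u\|_1>r}\tau_p(u)\,[\tau_p\ast D\ast\tau_p](u)$, whose tail vanishes as $r\to\infty$ because $d>6$) and the case $\|u+\abf_1^+\|_1\le r$ (an $O(r^d/\X(p))$ bound), with $r$ fixed first and then $\X(p)$ taken large. One cosmetic slip: in the second case you describe the event as ``$\A_2$ has a vertex near $-\abf_1^+$'', while your displayed bound uses the BK factor $\tau_p(u)$ coming from $(0\slr u)\circ(u\slr \abf_2^-)$; either version gives a bounded sum, so nothing breaks.
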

\begin{proof}
If a vertex $z\in \Gamma^2_r(\vec \A_2)\backslash\{w_r(\vec \A_2)\}$ is missing, then the following holds:
First, since $z\in \Gamma^2_r(\vec \A_2)\backslash\{w_r(\vec \A_2)\}$,
we must have $\|z\|_1>r$ and $(0\slr z)\circ (z\slr \abf_2^-)$ in $\A_2$.
Also, $z$ is in $f_r(\vec \A_1)-\abf_1^+$, which can be decomposed into three parts: a path in $\ball_r(0)$, a part of $\vec \A_1-a_1^+$, and $\ball^\edges_r(-a_1^+)$.
So, since $\|z\|_1> r$, we either have (a) $z+\abf_1^+ \in \A_1$ or (b) $\|z+\abf_1^+\|_1\leq r$.

By a union bound, the probability of case (a) occurring is bounded by
\[
\sum_{z\notin \ball_r(0)} \proba_p^\otimes (z+\abf_1^+\in \A_1)\proba_p^\otimes((0\slr z)\circ (z\slr \abf_2^-)\text{ in }\A_2),
\]
which we may rewrite as in the previous proof as
\[
\frac{1}{\Omega^2 \X(p)^2} \sum_{z\notin \ball_r(0)} \sum_{\vec a_1, \vec a_2} \proba_p(0\slr (z+a_1^+)\slr a_1^-)\proba_p((0\slr z)\circ (z\slr a_2^-)).
\]
By the BK inequality and the tree graph inequality \cite{MR762034}, the last sum is then bounded by
\[
\frac{1}{\Omega^2 \X(p)^2} \sum_{z\notin \ball_r(0),y\in \Z^d} \sum_{\vec a_1, \vec a_2} \tau_p(y)\tau_p(z+a_1^+-y) \tau_p(y-a_1^-)\tau_p(z)\tau_p(a_2^--z).
\]
As in the previous proof, summing over $\vec a_2$ yields a factor of $\Omega \X(p)$. We then let $\delta=a_1^+-a_1^-$ and make the change of variables $y\gets y+a^+_1$, so the last sum equals
\[
\frac{1}{\Omega\X(p)} \sum_{z\notin \ball_r(0),y\in \Z^d} \sum_{\delta, (0,\delta)\in \edges} \sum_{a_1^+\in \Z^d} \tau_p(y-a_1^+)\tau_p(z-y) \tau_p(y-\delta)\tau_p(z).
\]
Summing over $a_1^+$ yields another factor $\X(p)$. It remains
\[
\frac{1}{\Omega} \sum_{z\notin \ball_r(0),y\in \Z^d} \sum_{\delta\in \ball_L(0)}  \tau_p(z-y) \tau_p(y-\delta)\tau_p(z).
\]
We conclude as in the proof of Lemma \ref{lem:NoCutAfterModif_a} by directly substituting the bound from \eqref{eq:initialization}.

For case (b), $\|z+\abf_1^+\|_1\leq r$, we proceed similarly. By a union bound, the corresponding probability is bounded by
\[
\sum_{z\notin \ball_r(0)} \proba_p^\otimes (\|z+\abf_1^+\|_1\leq r)\proba_p^\otimes((0\slr z)\circ (z\slr \abf_2^-)\text{ in }\A_2),
\]
which we may rewrite as
\[
\frac{1}{\Omega^2 \X(p)^2} \sum_{z\notin \ball_r(0)} \sum_{\substack{ \vec a_1,\vec a_2\\ \|z+a_1^+\|_1\leq r}}\proba_p(0\slr a_1^-)\proba_p((0\slr z)\circ (z\slr a_2^-)).
\]
As before, we can use the BK inequality and then sum over $a_2^-$, resulting in a $\Omega \X(p)$ term. It remains to bound
\[
\frac{1}{\Omega \X(p)} \sum_{z\notin \ball_r(0)} \sum_{\vec a_1: \|z+a_1^+\|_1\leq r}\proba_p(0\slr a_1^-)\proba_p(0\slr z).
\]
Using \eqref{eq:initialization}, we get that the last sum is bounded by
\[
\frac{C^2 }{\Omega \X(p)} \sum_{z\notin \ball_r(0)} \sum_{\vec a_1: \|z+a_1^+\|_1\leq r}\langle a_1^-\rangle^{2-d} \langle z\rangle^{2-d}.
\]
For every $r\geq 0$, the last sum is bounded, so we may take $\X(p)$ large enough (depending in particular on $r$) that the desired probability is as small as we want. This concludes the last case.
\end{proof}
\begin{lemma} \label{lem:NoCutAfterModif_c}
In the setting of Proposition \ref{pro:OrientedOpenTriangle}, we have
\[
\proba^\otimes_p\left (\|\abf^+_1\|_\infty\geq 3r,\|\abf_2^+\|_\infty \geq 3r, \quad \Gamma^3_r(\vec \A_2)\backslash \{v_r(\vec \A_2)\} \not \subset \left (f_r(\vec \A_2) \backslash (f_r(\vec \A_1)-\abf_1^+) \right ) \right )<1/16.
\]
\end{lemma}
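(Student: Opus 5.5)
The plan is to follow the template of Lemmas \ref{lem:NoCutAfterModif_a} and \ref{lem:NoCutAfterModif_b}: identify how a vertex of $\Gamma^3_r(\vec \A_2)$ can be removed, use a union bound, apply the BK and tree graph inequalities, and then check that the result is $O_r(1)/\X(p)$ or smaller. The key geometric observation is that $\Gamma^3_r(\vec \A_2)\subset \ball_{r+L}(\abf_2^+)$. When $\|\abf_2^+\|_\infty\ge 3r$, every vertex $z$ of this path therefore satisfies $\|z\|_1\ge 2r-L>r+L$, using $r\ge 5L$.

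Next I decompose $f_r(\vec \A_1)-\abf_1^+$ into three pieces: the ball $\ball_r(-\abf_1^+)$, the translated part of $\A_1-\abf_1^+$, and the minimal path inside $\ball_{r+L}(0)$ coming from $\Gamma^3_r(\vec\A_1)$. Since $z$ is far from $0$, the third piece is excluded. So if some $z\in\Gamma^3_r(\vec\A_2)\setminus\{v_r(\vec\A_2)\}$ is missing, then $\|z-\abf_2^+\|_1\le r+L$ and either (a) $z+\abf_1^+\in\A_1$, or (b) $\|z+\abf_1^+\|_1\le r$. Note that, unlike Lemma \ref{lem:NoCutAfterModif_b}, there is no connection constraint on $z$ inside $\A_2$; the only information available is that $z$ lies near $\abf_2^+$, at the cost of a factor $O(r^d)$ from the number of choices of $z$.

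For case (b), the two constraints together force $\|\abf_1^+ +\abf_2^+\|_1\le 2r+L$. Writing the probability as in the previous proofs, it is bounded by
\[
\frac{1}{\Omega^2\X(p)^2}\sum_{\vec a_1,\vec a_2:\ \|a_1^++a_2^+\|_1\le 2r+L}\tau_p(a_1^-)\tau_p(a_2^-)\le \frac{O(r^d)}{\X(p)^2}\,\max_x \tau_p\ast\tau_p(x).
\]
The bubble is finite under \eqref{eq:initialization} because $d>4$, so this term vanishes as $\X(p)\to\infty$.

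For case (a), I use the tree graph inequality on $0\slr(z+a_1^+)\slr a_1^-$ in $\A_1$. Summing over $\vec a_1$ with $\delta=a_1^+-a_1^-$ fixed gives a bound of the form $\sum_\delta \tau_p^{\ast 3}(z+\delta)\le C'\langle z\rangle^{6-d}$, obtained from \eqref{eq:initialization} and the usual convolution estimates, with no factor of $\X(p)$ appearing. Summing over the $O(r^d)$ choices of $z$ near $a_2^+$ then leaves
\[
\frac{O_r(1)}{\X(p)^2}\sum_{\vec a_2}\tau_p(a_2^-)\langle a_2^+\rangle^{6-d}.
\]
The main obstacle is this last sum when $6<d\le 8$, since $\tau_p(x)\langle x\rangle^{6-d}$ need not be summable. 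I plan to split at a radius $M$. Inside the ball, \eqref{eq:initialization} gives $O(M^{8-d})$, or $O(\log M)$ when $d=8$. Outside, the sum is at most $M^{6-d}\X(p)$. Choosing $M=\X(p)^{1/2}$ makes the sum $O(\X(p)\,\psi_d(\X(p))/\X(p))=o(\X(p))$ since $d>6$. Hence case (a) is $o_r(1)/\X(p)$. Taking $\X(p)$ large enough, depending on $d,L,C,r$, makes both contributions smaller than $1/32$, which concludes the proof.
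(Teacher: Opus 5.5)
The error is in case (a): the sum over $\vec a_1$ does produce a factor $\X(p)$, contrary to your claim. The rest of your reduction matches the paper. Vertices of $\Gamma^3_r(\vec\A_2)\setminus\{v_r(\vec\A_2)\}$ lie near $\abf_2^+$, hence far from $0$. So a removed vertex $z$ forces either (a) $z+\abf_1^+\in\A_1$ or (b) $\|z+\abf_1^+\|_1\le r$. Your treatment of (b), through the event $\|\abf_1^++\abf_2^+\|_1\le 2r+L$, is correct and gives $O_r(1)/\X(p)^2$.

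\textbf{Where case (a) fails.} After the tree graph inequality, fix $\delta=a_1^+-a_1^-$ and substitute $u=a_1^+-y$. The sum over $\vec a_1$ is then
\[
\sum_{a_1^+,\,y}\tau_p(y)\,\tau_p(z+a_1^+-y)\,\tau_p(y-a_1^++\delta)=\Big(\sum_y\tau_p(y)\Big)\sum_u\tau_p(z+u)\,\tau_p(\delta-u)=\X(p)\,(\tau_p\ast\tau_p)(z+\delta).
\]
The branch from the branch point to the root $0$ is summed freely and produces $\X(p)$; no $\tau_p^{\ast3}$ arises.

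Your bound $\sum_{\vec a_1}\proba_p(0\slr(z+a_1^+)\slr a_1^-)\le C'\langle z\rangle^{6-d}$, uniform in $p$, is in fact false. Since $z+a_1^+=a_1^-+z+\delta$, Harris--FKG gives $\proba_p(0\slr a_1^-,\ a_1^-\slr a_1^-+z+\delta)\ge\tau_p(a_1^-)\tau_p(z+\delta)$. So the left side is at least $\X(p)\tau_p(z+\delta)$, which grows like $\X(p)$ for fixed $z$ (recall $p\ge 1/(9\Omega)$ once $\X(p)\ge 4$). Hence your display $\frac{O_r(1)}{\X(p)^2}\sum_{\vec a_2}\tau_p(a_2^-)\langle a_2^+\rangle^{6-d}$ is unjustified, and so is the conclusion that case (a) is $o_r(1)/\X(p)$.

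\textbf{The fix.} It is immediate, and it is exactly the paper's computation. Use $(\tau_p\ast\tau_p)(w)\le C\langle w\rangle^{4-d}$ from Lemma \ref{lem:TwoTriangleSquare}(a), and sum over the $O(r^d)$ vertices $z\in\ball_r(a_2^+)$. Case (a) is then bounded by
\[
\frac{O_r(1)}{\X(p)}\sum_{\vec a_2}\tau_p(a_2^-)\,\langle a_2^+\rangle^{4-d}\le\frac{O_r(1)}{\X(p)}\sum_{x\in\Z^d}\langle x\rangle^{6-2d},
\]
and the last sum converges precisely because $d>6$. So the non-summability of $\tau_p(x)\langle x\rangle^{6-d}$ for $6<d\le 8$, which you called the main obstacle, comes from the miscount. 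The same holds for your splitting at radius $\X(p)^{1/2}$. With the correct exponent $4-d$ the sum converges absolutely, and case (a) is $O_r(1)/\X(p)$, which suffices.
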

\begin{proof}
We proceed as before.
If a vertex $z\in \Gamma^3_r(\vec \A_2)\backslash\{v_r(\vec \A_2)\}$ is missing, then we must have $\|z-\abf_2^+\|_1\leq r$.
Also, $z\in f_r(\vec \A_1)-\abf_1^+$.
Using $\|\abf_2^+\|_1\geq 3r$, we have $\|z\|_1>r$, so either (a) $z+\abf_1^+ \in \A_1$ or (b) $\|z+\abf_1^+\|_1\leq r$.

By a union bound, the probability of case (a) occurring is bounded by
\[
\sum_{z\in \Z^d} \proba_p^\otimes (z+\abf_1^+\in \A_1)\proba_p^\otimes(\|z-\abf_2^+\|_1\leq r),
\]
which we may rewrite as
\[
\frac{1}{\Omega^2  \X(p)^2}\sum_{\vec a_1, \vec a_2} \sum_{z\in \ball_r(a_2^+)} \proba_p(0\slr (z+a_1^+)\slr a_1^-) \tau_p(a_2^-).
\]
Then, by the tree graph inequality \cite{MR762034}, the last sum is bounded by
\[
\frac{1}{\Omega^2  \X(p)^2}  \sum_{\vec a_1, \vec a_2} \sum_{z\in \ball_r(a_2^+)} \sum_{y\in \Z^d} \tau_p(y)\tau_p(z+a_1^+-y)\tau_p(y-a_1^-)\tau_p(a_2^-).
\]
As in the previous proof, we let $\delta=a_1^+-a_1^-$ and make the change of variables $y\gets y+a^+_1$, then sum over $a_1^+$ to get that the last sum equals
\[
\frac{1}{\Omega^2 \X(p)} \sum_{\vec a_2} \sum_{y\in \Z^d}\sum_{z\in \ball_r(a_2^+)}\sum_{\delta\in \ball_L(0)}\tau_p(z-y) \tau_p(y-\delta)\tau_p(a_2^-).
\]
By \eqref{eq:initialization}, the last sum is bounded by
\[
\frac{C^3}{\Omega^2 \X(p)} \sum_{\vec a_2} \sum_{y\in \Z^d}\sum_{z\in \ball_r(a_2^+)}\sum_{\delta\in \ball_L(0)}\langle z-y\rangle^{2-d} \langle y-\delta\rangle^{2-d} \langle a_2^-\rangle^{2-d}.
\]
This last sum is finite for fixed $r$, so we may take $\X(p)$ large enough (depending on $r$) that the corresponding probability is as small as we want.

We upper-bound the probability corresponding to case (b) by
\[
\sum_{z\in \Z^d} \proba_p^\otimes (\|z+\abf_1^+\|_1\leq r,\|z-\abf_2^+\|_1\leq r),
\]
which we can rewrite as
\[
\frac{1}{\Omega^2\X(p)^2} \sum_{z\in \Z^d} \sum_{\vec a_1,\vec a_2} \1(\|z+a_1^+\|_1\leq r) \1(\|z-a_2^+\|_1\leq r) \tau_p(a_1^-)\tau_p(a_2^-).
\]
Using \eqref{eq:initialization} as before, we see that the last sum is uniformly bounded. Thus, we may take $\X(p)$ large enough that the probability corresponding to case (b) is as small as we want.
\end{proof}

Our next step is to use Proposition \ref{pro:OrientedOpenTriangle} to lower-bound $\proba_p^\otimes(\Tcut^r>k)$ as $k\to \infty$.
To this end, we use the following general result:
\begin{lemma} \label{lem:NaiveFrobeniusBound}
Let $S\neq \emptyset$ be a finite set. Let $(v_s)_{s\in S}\in \R_+^S$. Let $(u_{s,s'})_{s,s'\in S}\in[0,1]^{S^2}$. Let
\[
\varepsilon_{\mathrm F}:=\left (\sum_{s,s'\in S} v_s (1-u_{s,s'}) v_{s'} \right )\left (\sum_{s\in S} v_s\right )^{-2}.
\]
If $\varepsilon_{\mathrm F}<1/2$, then the matrix $(u_{s,s'}v_{s'})_{s,s'\in S}$ has a maximal eigenvalue of at least $(1-\sqrt{2\varepsilon_{\mathrm F}}) \sum_{s\in S} v_s$.
\end{lemma}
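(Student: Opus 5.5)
We prove the bound by a Collatz--Wielandt argument: we find a nonempty set $G\subset S$ on which the row sums of the principal submatrix are at least a prescribed $\lambda^*$. The set $G$ is obtained by a greedy peeling procedure, and the $\sqrt{2}$ comes from an integral bound on a monotone potential. Write $V=\sum_s v_s$ and $M=(u_{s,s'}v_{s'})_{s,s'}$. For $G\subset S$ define
\[
\Phi(G)=\sum_{s,s'\in G}v_s(1-u_{s,s'})v_{s'},\qquad w^G_s=\sum_{s'\in G}(1-u_{s,s'})v_{s'} .
\]
By hypothesis $\Phi(S)=\varepsilon_{\mathrm F}V^2$. For $s\in G$ we have the identity
\[
(M\1_G)_s=\sum_{s'\in G}u_{s,s'}v_{s'}=v(G)-w^G_s,\qquad v(G):=\sum_{s\in G}v_s .
\]

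\textbf{Step 1 (Perron--Frobenius reduction).} $M$ has nonnegative entries. If $G\neq\emptyset$ and $(M\1_G)_s\ge\lambda^*$ for every $s\in G$, then the principal submatrix $M_G$ has spectral radius at least $\lambda^*$, by the Collatz--Wielandt inequality applied to the nonnegative vector $\1_G$. Monotonicity of the Perron root under taking principal submatrices of nonnegative matrices then gives that the maximal eigenvalue of $M$ is at least $\lambda^*$. Fix $a>\sqrt{2\varepsilon_{\mathrm F}}$, noting $a<1$ since $\varepsilon_{\mathrm F}<1/2$, and set $\lambda^*=(1-a)V$. Letting $a\downarrow\sqrt{2\varepsilon_{\mathrm F}}$ then gives the claim.

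\textbf{Step 2 (peeling).} Start with $G_0=S$. While there is $s\in G_j$ with $v(G_j)-w^{G_j}_s<\lambda^*$, remove one such $s$ to obtain $G_{j+1}$. The procedure stops either at a nonempty $G$, and then Step 1 applies, or at $G=\emptyset$. Removing $s$ from $G_j$ decreases $\Phi$ by at least $v_s w^{G_j}_s$, since the discarded column terms are nonnegative. At removal we have $w^{G_j}_s>v(G_j)-\lambda^*=\mu_j-(1-a)V$, where $\mu_j=v(G_j)$.

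\textbf{Step 3 (potential bound; main point).} Suppose the procedure empties a set of mass at least $m$. Because $\Phi\ge0$, summing the drops gives
\[
\varepsilon_{\mathrm F}V^2\ \ge\ \sum_j v_{s_j}\bigl(\mu_j-(1-a)V\bigr).
\]
Since $\mu$ decreases by $v_{s_j}$ at step $j$ and the integrand is increasing in $\mu$, each summand dominates $\int_{\mu_{j+1}}^{\mu_j}(\mu-(1-a)V)\,d\mu$. This is the delicate step: the comparison must use the threshold at the mass before removal, which is the larger value. Restricting to the steps while $\mu\ge(1-a)V$, where the terms are positive, we obtain
\[
\varepsilon_{\mathrm F}V^2\ \ge\ \int_{(1-a)V}^{V}\bigl(\mu-(1-a)V\bigr)\,d\mu=\tfrac{a^2V^2}{2}.
\]
This contradicts $a>\sqrt{2\varepsilon_{\mathrm F}}$. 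Hence the procedure stops before the mass drops below $(1-a)V$, so it stops at a nonempty $G$. Step 1 then gives $\lambda_{\max}(M)\ge(1-a)V$, and letting $a\downarrow\sqrt{2\varepsilon_{\mathrm F}}$ proves the lemma.

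The points to check are that the mass indeed passes through the whole interval $[(1-a)V,V]$ if the procedure empties $S$ (it does, since $\mu$ decreases from $V$ to $0$), and the discrete-versus-integral comparison when some single $v_s$ is large. The latter is handled because only the portion of each step lying above $(1-a)V$ is integrated, and the summand's threshold at $\mu_j$ dominates that portion.
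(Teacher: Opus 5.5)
Your proof is correct, and it takes a genuinely different route from the paper's. The paper argues spectrally. It conjugates $M$ by $\mathrm{diag}(\sqrt{v_s})$ and bounds the result entrywise from below by the symmetric matrix $M'=(\sqrt{v_s}\min(u_{s,s'},u_{s',s})\sqrt{v_{s'}})$. It then writes $M'$ as the rank-one matrix $(\sqrt{v_sv_{s'}})$, whose top eigenvalue is $\sum_s v_s$, minus a symmetric nonnegative perturbation, and bounds the operator norm of that perturbation by its Frobenius norm. There the $\sqrt2$ comes from $(1-\min(u,u'))^2\le(1-u)+(1-u')$.

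You never symmetrize or perturb eigenvalues. You only use subinvariance $M\1_G\ge\lambda^*\1_G$ with a test vector produced by greedy peeling, and your $\sqrt2$ comes from the potential integral $\int_{(1-a)V}^{V}(\mu-(1-a)V)\,d\mu=a^2V^2/2$.

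The paper's argument is shorter and reaches $1-\sqrt{2\varepsilon_{\mathrm F}}$ without a limit in $a$. However, it relies on the symmetrization trick and, strictly, on $v_s>0$ for the similarity; otherwise one first restricts to the support of $v$. Your argument handles zero weights automatically and yields an explicit certificate: a set $G$ with $v(G)\ge(1-a)V$ and $M\1_G\ge(1-a)V\,\1_G$. Hence $M^k\1_G\ge((1-a)V)^k\1_G$ for every $k$. In the proof of Theorem~\ref{thm:LowerLambda}, this would bound the truncated survival probability from below for each $k$, rather than only through the asymptotics of the Perron root.

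One wording fix in Step 3. You cannot pass from $\varepsilon_{\mathrm F}V^2\ge\sum_j v_{s_j}(\mu_j-(1-a)V)$ to the restricted sum by discarding the negative summands, since that goes the wrong way. Instead, stop at the last step $J$ with $\mu_J>(1-a)V$ and use $\varepsilon_{\mathrm F}V^2=\Phi(S)\ge\Phi(S)-\Phi(G_{J+1})$, which is exactly where $\Phi\ge0$ is needed. Equivalently, bound each drop below by $v_{s_j}\max(0,\mu_j-(1-a)V)$, using $w^{G_j}_{s_j}\ge0$.
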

\begin{proof}
 Using the change of basis matrix $(\sqrt{v_s}\1_{s=s'})_{s,s'\in S}$, we see that $M:=(u_{s,s'}v_{s'})_{s,s'\in S}$ is similar to the matrix
 \[
 \tilde M:=(\sqrt{v_s}u_{s,s'}\sqrt{v_{s'}})_{s,s'\in S}.
 \]
  Since the entries are all non-negative, it is then enough to lower-bound the maximal eigenvalue of
  \[
  M':=(\sqrt{v_s}\inf(u_{s',s},u_{s,s'})\sqrt{v_{s'}})_{s,s'\in S}.
  \]
Let $V:=(\sqrt{v_s}\sqrt{v_{s'}})_{s,s'\in S}$. Note that $V$ has maximal eigenvalue $\sum_{s\in S} v_s$.
Since the largest singular value is an operator norm and, for symmetric non-negative matrices, coincides with the maximal eigenvalue, it suffices to show that $
  M'':=V-M'$
  has a maximal eigenvalue at most $\sqrt{2\varepsilon_{\mathrm F}} \sum_{s\in S} v_s$. To this end, simply note that the Frobenius norm of $M''$ is given by
  \[
\|M''\|_F^2=\sum_{s,s'\in S} v_s(1-\inf(u_{s',s},u_{s,s'}))^2 v_{s'}\leq \sum_{s,s'\in S} v_s((1-u_{s',s})+(1-u_{s,s'})) v_{s'} =2\varepsilon_{\mathrm F} \left (\sum_{s\in S} v_s\right )^2 .
\]
	  Since the square of the Frobenius norm of a symmetric matrix equals the sum of its eigenvalues squared, this concludes the proof.
\end{proof}
\begin{proof}[Proof of Theorem \ref{thm:LowerLambda}]
Let $S$ be a large enough finite subset approximating $\vec \Comp(0)$.
We consider the transition matrix $M_{p,S}=\left (\1_{(\vec A_1,\vec A_2)\notin \Ccut^r}\1_{\vec A_1\in S} \1_{\vec A_2\in S} \mu_p(\vec A_2)\right )_{\vec A_1,\vec A_2}$.
By Proposition \ref{pro:OrientedOpenTriangle},
\[
\varepsilon_{\mathrm F}:=
\left (\sum_{\vec A_1,\vec A_2\in \vec \Comp(0)} \mu_p(\vec A_1) \1_{(\vec A_1,\vec A_2)\in \Ccut^r} \mu_p(\vec A_2)\right )
/ \left(\sum_{\vec A_1\in \vec \Comp(0)} \mu_p(\vec A_1) \right)^2 <1/4.
\]
By Lemma \ref{lem:NaiveFrobeniusBound} and dominated convergence, if $S$ is large enough, then $M_{p,S}$ has a maximal eigenvalue strictly larger than $(1-\sqrt{2/4})\Omega \X(p)$. It follows that, as $k\to \infty$, we have
\[
\sum_{\vec A_0, \vec A_1, \vec A_2,\dots, \vec A_k\in S} \mu_p(\vec A_0) \prod_{i=0}^{k-1} M_{p,S}(\vec A_i, \vec A_{i+1})\gg (1-1/\sqrt{2})^k (\Omega \X(p))^k,
\]
which we can rewrite as
\[
\proba_p^\otimes(\Tcut^r>k,\quad \forall 0\leq i \leq k, \vA_i\in S) \gg (1-1/\sqrt{2})^k .
\]
Thus, as $k\to \infty$,
\[
\proba_p^\otimes(\Tcut^r>k) \gg (1-1/\sqrt{2})^k .
\]
Finally, recall that by \eqref{eq:AsympBootstrap} and \eqref{eq:RewriteA_TauSquare} we have $\proba_p^\otimes(\Tcut>k)=\Theta(\lambda_p^k)$ as $k\to \infty$.
Taking $k\to \infty$ in Proposition \ref{pro:RelateLocalLambda} thus concludes the proof.
\end{proof}
\subsection{Exponentially fast mixing under \eqref{eq:initialization}} \label{sec:ExponentialMixing}
The goal of this section is to prove exponentially fast mixing for $(\vA_i)_{i\in \N}$ under $\proba^h_p$ in total variation:
\begin{theorem} \label{thm:MixingTime}
Let $p<p_c$ and $C>0$ be such that \eqref{eq:initialization} holds. There exist $0<c<1$ and $K>0$, depending on $d,L,C$, such that if $\X(p)\geq K$, then for every $\vec A \in \Clive$, for every $k\in \N$, we have
\[
\sum_{\vec B\in \Clive}\left |  \P_p^k(\vec B|\vec A)-\vpg_p(\vec B)\right | \leq c^k \X(p).
\]
\end{theorem}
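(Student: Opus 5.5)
We would prove Theorem \ref{thm:MixingTime} by a Doeblin (minorization) argument combined with coupling. The natural small set is $\{\vec{\mathbf O}_b : b\in\mathbb B\}$, which by \eqref{eq:irreductible} can be reached from any $\vec A\in\Clive$ in one step. Its one-step mass is only of order $1/(\Omega\X(p))$, far too small to give a contraction rate $c$ that is independent of $p$. So we instead look for a minorization over a block of a bounded number $m$ of steps, with a constant independent of $p$. This is where Theorem \ref{thm:LowerLambda} and the local-modification estimates of Proposition \ref{pro:OrientedOpenTriangle} enter.

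\textbf{Step 1 (kernel comparison).} By \eqref{eq:MarkovTransition},
\[
\P_p(\vec B|\vec A)=\frac{h_p(\vec B)}{\lambda_p h_p(\vec A)}\,\frac{\mu_p(\vec B)}{\Omega\X(p)}\,\1((\vec A,\vec B)\notin\Ccut).
\]
The factor $h_p(\vec B)/h_p(\vec A)$ is a priori only controlled by Lemma \ref{lem:vpd}, up to a factor $\Omega\X(p)/(1-p)^\Omega$. The loss of this factor once is what produces the prefactor $\X(p)$ in the statement: it enters when we convert a total-variation bound for the chain started from a distribution of comparable density into one started from a point mass. Theorem \ref{thm:LowerLambda} gives $\lambda_p\geq c_0$.

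\textbf{Step 2 (two-step minorization).} We aim to show that there are a probability measure $\nu$ on $\Clive$ and $\beta>0$, depending only on $d,L,C$, such that for all $\vec A,\vec A'\in\Clive$,
\[
\sum_{\vec B}\min\bigl(\P^2_p(\vec B|\vec A),\P^2_p(\vec B|\vec A')\bigr)\geq\beta .
\]
The idea is that after one step the chain forgets $\vec A$ except through the cut constraint, and the cut is local. Concretely, we would bound below the probability of the good event on which the intermediate cluster $\vec\A_1$ satisfies $\|\abf_1^+\|_\infty\geq 3r$, and neither $\vec A$ cuts $\vec\A_1$ nor $\vec\A_1$ cuts $\vec\A_2$, in a way that does not depend on $\vec A$.

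Replacing clusters by their $f_r$-modifications costs a factor $c^{k+1}$, as in Proposition \ref{pro:RelateLocalLambda}. The estimates of Lemmas \ref{lem:NoCutAfterModif_a}--\ref{lem:NoCutAfterModif_c} are uniform in the second argument once the first cluster is fixed. This uniformity comes from the union bounds used there: the sums over $\vec a_2$ did not depend on cluster one beyond the BK factors, and the analogous bounds with $\vec A$ fixed are controlled via \eqref{eq:initialization}.

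To handle the $h_p$ ratio, we would write $\P^2_p(\vec B|\vec A)$ as
\[
\frac{h_p(\vec B)}{\lambda_p^2 h_p(\vec A)}\,\proba_p^\otimes\bigl(\vec\A_2=\vec B,\ \text{no cut}\mid\vec\A_0=\vec A\bigr),
\]
and use that $h_p(\vec A)=\lambda_p^{-1}\T_p h_p(\vec A)\leq\lambda_p^{-1}\E^\otimes_p[h_p]$. Combining this with the good-event lower bound gives a lower bound on $\P^2_p(\cdot|\vec A)$ of the form $\beta\,\nu$, where
\[
\nu(\vec B)\propto h_p(\vec B)\mu_p(\vec B)\,\1(\vec B\text{ good}).
\]
This needs $\E^\otimes_p[h_p\1_{\text{good}}]\gtrsim\E^\otimes_p[h_p]$, which we would get from the positivity argument of Proposition \ref{pro:OrientedOpenTriangle} applied with $h_p$-weights. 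Then $h_p(\vec A)$ cancels.

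\textbf{Step 3 (conclusion).} Doeblin's theorem gives
\[
\sum_{\vec B}\bigl|\P^{2k}_p(\cdot|\vec A)-\P^{2k}_p(\cdot|\vec A')\bigr|\leq 2(1-\beta)^k ,
\]
and integrating $\vec A'$ against $\vpg_p$ (Lemma \ref{lem:StationaryMeasure}) yields the claim with $c=(1-\beta)^{1/2}$. If the minorization only holds for $\vec A$ whose $h_p$-value is typical, we fall back on the crude bound from Lemma \ref{lem:vpd}, which contributes the factor $\X(p)$.

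\textbf{Main obstacle.} The main difficulty is Step 2, namely making the no-cut estimates uniform over the starting state $\vec A$ and controlling the $h_p$-weighting. A starting $\vec A$ with a huge cluster near $-a^+$ could cut most next clusters. We would handle this by noting that the first step already lands, with probability bounded below, on clusters $\vec\A_1$ whose relevant neighbourhood is not cut by $\vec A$. The cut must then occur near $0$ in $\A_1$, and the $f_r$-modification, which opens all of $\ball^\edges_r(0)$, repairs it at bounded cost.
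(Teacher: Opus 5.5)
\textbf{There is a genuine gap at Step 2.} Step 2 asserts a two-step Doeblin minorization that is uniform over all starting states $\vec A,\vec A'\in\Clive$. Nothing in your argument establishes it, and the paper never proves a minorization that is uniform in the starting state.

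First, the no-cut estimates you cite are annealed. In Lemmas \ref{lem:NoCutAfterModif_a}--\ref{lem:NoCutAfterModif_c} the first cluster is random, and the union bounds are closed with BK weights such as $\proba_p((0\slr v)\circ(v\slr a_1^-))$, summed over $v$ and $\vec a_1$. If $\vec A=(A,\vec a)$ is fixed, the same computation gives roughly a sum of $\tau_p(v-a^+)$ over the vertices $v\in A$ with $\|v-a^+\|_1>r$. That sum is not small, and not even bounded, when $A$ crowds around $a^+$. So the lemmas give no uniformity in $\vec A$.

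Second, the $h_p$ bookkeeping discards exactly the factor you need. You bound
$h_p(\vec A)=\lambda_p^{-1}\E^\otimes_p[h_p(\vA_1)\1_{(\vec A,\vA_1)\notin\Ccut}]$ by $\lambda_p^{-1}\E^\otimes_p[h_p]$. This drops $\proba^\otimes_p((\vec A,\vA_1)\notin\Ccut)$, which can be of order $1/\X(p)$. For example, if $A-a^+$ contains every neighbour of $0$, then $(\vec A,\vec B)\notin\Ccut$ forces $b^-=0$, an event of $\proba^\otimes_p$-probability $1/\X(p)$. For such $\vec A$ your lower bound on $\P^2_p(\cdot|\vec A)$ has total mass $O(1/\X(p))$, so $\beta$ depends on $p$.

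If you keep the indicator, Step 2 reduces to a conditional claim: given that $\vA_1$ is not cut by $\vec A$, it fails to cut a good $\vec B$ with probability at least $\beta'$, uniformly in $\vec A$. The local-surgery trick does not prove this. Conditioning on an adversarial $\vec A$ can pin $\vA_1$ to atypical local configurations; in the example it forces $\abf_1^-=0$, where the $f_r$-surgery is not even available. In general, re-routing $\vA_1$ inside $\ball_r(\abf_1^+)$ can itself create a cut by $\vec A$. Your ``Main obstacle'' paragraph names this problem but does not resolve it. Note also that a genuinely uniform Doeblin bound would give the estimate with an $O(1)$ prefactor. So your explanation of the factor $\X(p)$ is inconsistent with your own method, and the $\X(p)$ in the statement suggests uniform minorization is not what is available.

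\textbf{The Step 3 fallback does not close the gap either.} A bound on $\|h_p\|_\infty/\inf_{\Clive}h_p$ from Lemma \ref{lem:vpd} does not control how long the chain can stay among states from which minorization fails. You need a drift mechanism for that mass, and this is the missing idea.

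The paper supplies it as follows.
\begin{itemize}
\item It minorizes only from $\Cgood\cap\Cgood'$, over three steps. This uses the switching estimate of Proposition \ref{pro:OrientedOpenTriangleSwitching} with $f^\uparrow_r,f^\downarrow_r$, then Lemmas \ref{lem:PrelimCoupling} and \ref{lem:PrelimCouplingBis} and Proposition \ref{pro:Coupling}.
\item Here $\Cgood'$ excludes precisely the states with small no-cut probability $\eta^\uparrow_p(\vec A)=\proba^\otimes_p((\vec A,\vA_2)\notin\Ccut)$, such as your adversarial $\vec A$.
\item Bad states are handled through the $1/h_p$-weighted norms $\|\cdot\|_{h\pm}$. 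These grow by at most $\lambda_p^{-1}$ per step (Lemma \ref{lem:BasicEvoMass}).
\item Weighted mass routed through $\Clive\setminus(\Cgood\cap\Cgood')$ carries the small factor $\nu^\uparrow_p(\Clive\setminus(\Cgood\cap\Cgood'))$, by Corollary \ref{cor:SizeCgood} and the definition of $\Cgood'$.
\item Proposition \ref{pro:TrueCoupling} then shows that over four steps either the $\ell^1$ distance contracts by $1-c'$, or one of the $h$-norms contracts by $\lambda_p^4/2$. Hence $\|v\|_1^N\|v\|_{h+}\|v\|_{h-}$ halves every four steps for mean-zero $v$.
\item The prefactor $\X(p)$ appears only when converting back to $\ell^1$, through $(\|h_p\|_\infty/\inf_{\Clive}h_p)^{2/(N+2)}$.
\end{itemize}
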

First, let us introduce some notation. For every $\vec A$ we write
\begin{equation}
\nu^\uparrow_p (\vec A) :=\proba_p^\otimes(\vec \A_1=\vec A,(\vec \A_1,\vec \A_2)\notin \Ccut)
\quad \text{and} \quad
\nu^\downarrow_p (\vec A) :=\proba_p^\otimes(\vec \A_2=\vec A,(\vec \A_1,\vec \A_2)\notin \Ccut). \label{eq:def_nu_up+down}
\end{equation}
We also let $\Zb_p:=\proba_p^\otimes((\vec \A_1,\vec \A_2)\notin \Ccut)$.
Note that $\|\nu^\uparrow_p\|_1=\|\nu^\downarrow_p\|_1=\Zb_p$.
For Theorem \ref{thm:MixingTime}, we will need to prove that $\Zb_p\geq \lambda_p^2$.
For future reference, we prove the following more general result.
\begin{lemma} \label{lem:LambdaLowMemory}
For every $k\geq 0$, we have $\proba_p^\otimes(\Tcut > k,\vA_k\in \Clive)\geq \lambda_p^{k+1}$.
\end{lemma}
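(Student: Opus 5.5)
The plan is to compare the indicator with the Perron eigenfunction $h_p$ of Lemma \ref{lem:vpd} and to iterate the eigenvalue equation. We normalise $h_p$ so that $\|h_p\|_\infty=1$. Since $h_p=0$ on $\Ckill$ and $0\le h_p\le 1$, we have pointwise
\[
\1_{\Tcut>k}\1_{\vA_k\in\Clive}\geq \1_{\Tcut>k}\,h_p(\vA_k).
\]
It therefore suffices to show that $\E_p^\otimes[\1_{\Tcut>k}h_p(\vA_k)]\geq \lambda_p^{k+1}$.

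\textbf{Step 1 (iterate $\T_p$).} Under $\proba_p^\otimes$ the $\vA_i$ are i.i.d. We also have $\1_{\Tcut>k}=\prod_{i=1}^k\1_{(\vA_{i-1},\vA_i)\notin\Ccut}$. Conditioning on $\vA_0,\dots,\vA_{k-1}$ and integrating out $\vA_k$ replaces $\1_{(\vA_{k-1},\vA_k)\notin\Ccut}h_p(\vA_k)$ by $\T_p h_p(\vA_{k-1})=\lambda_p h_p(\vA_{k-1})$, using \eqref{eq:def-Tp}. Repeating this for $\vA_{k-1},\dots,\vA_1$ gives
\[
\E_p^\otimes[\1_{\Tcut>k}h_p(\vA_k)]=\lambda_p^k\,\E_p^\otimes[h_p(\vA_0)].
\]
All quantities are nonnegative and bounded, so no integrability issue arises.

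\textbf{Step 2 (lower bound $\E_p^\otimes[h_p(\vA_0)]\ge\lambda_p$).} For every $\vec A$ we can drop the indicator to get
\[
\lambda_p h_p(\vec A)=\E_p^\otimes[h_p(\vA_2)\1_{(\vec A,\vA_2)\notin\Ccut}]\le \E_p^\otimes[h_p(\vA_2)]=\E_p^\otimes[h_p(\vA_0)].
\]
Taking the supremum over $\vec A$ yields $\lambda_p\|h_p\|_\infty\le \E_p^\otimes[h_p(\vA_0)]$, that is, $\lambda_p\le\E_p^\otimes[h_p(\vA_0)]$. The supremum need not be attained, but this does not matter because the inequality holds for each $\vec A$.

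Combining the two steps gives $\proba_p^\otimes(\Tcut>k,\vA_k\in\Clive)\ge\lambda_p^{k+1}$. For $k=0$ the statement reads $\proba_p^\otimes(\vA_0\in\Clive)\ge\lambda_p$, which is exactly Step 2 with the same first inequality. No step is a serious obstacle. The only point needing care is the normalisation: it must be $\|h_p\|_\infty=1$ rather than something else, so that both the pointwise comparison of the indicator and the supremum argument in Step 2 go through.
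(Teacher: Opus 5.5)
Your proof is correct, and it takes a different route from the paper's.

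The paper argues by submultiplicativity. It cuts the chain into $n$ blocks of length $k$ and discards the cut constraints between consecutive blocks. It keeps only the requirement that the last state of each block lies in $\Clive$, which is forced by $\Tcut>kn$. The i.i.d. structure then gives $\proba_p^\otimes(\Tcut>kn)\le \proba_p^\otimes(\Tcut\ge k,\vA_{k-1}\in\Clive)^n$. Combining this with $\proba_p^\otimes(\Tcut>N)=\Theta(\lambda_p^N)$, from \eqref{eq:AsympBootstrap} and \eqref{eq:RewriteA_TauSquare}, and letting $n\to\infty$ yields the claim.

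You instead proceed in three steps:
\begin{itemize}
\item you dominate $\1_{\Clive}$ by $h_p/\|h_p\|_\infty$;
\item you turn $\E_p^\otimes[\1_{\Tcut>k}h_p(\vA_k)]$ into exactly $\lambda_p^k\E_p^\otimes[h_p(\vA_0)]$ by iterating $\T_p h_p=\lambda_p h_p$;
\item you close with $\E_p^\otimes[h_p(\vA_0)]\ge\lambda_p\|h_p\|_\infty$, the same one-line bound the paper uses later in Lemma \ref{eq:SumForCoupling}.
\end{itemize}

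Your route is non-asymptotic and more self-contained. It uses only that $h_p$ is a bounded eigenfunction vanishing on $\Ckill$ that is nonnegative. Nonnegativity, which holds by the construction in Lemma \ref{lem:vpd}, is what you really rely on when you drop the indicator and compare with $\1_{\Clive}$; the normalisation itself is cosmetic. The paper's route goes through the Doob transform and the positivity bound $\inf_{\Clive}h_p>0$, which are needed for \eqref{eq:AsympBootstrap}. In exchange, the paper's argument uses only that $\lambda_p$ is the exponential decay rate of $\proba_p^\otimes(\Tcut>N)$. It is therefore a Fekete-type statement that never invokes an eigenfunction identity directly.
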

\begin{proof}
Note that for every $n\in \N$, $k\geq 1$,
\begin{align*}
\proba_p^\otimes(\Tcut>kn) & \leq \proba_p^\otimes(\forall 0\leq i< n, \forall 0\leq j<k-1,\, (\vec \A_{ki+j},\vec \A_{ki+j+1})\notin \Ccut,\, \vA_{ki+k-1 }\in \Clive)
\\ & =\proba_p^\otimes(\Tcut\geq k, \vA_{k-1}\in \Clive )^n.
\end{align*}
Since by \eqref{eq:AsympBootstrap} and \eqref{eq:RewriteA_TauSquare} we have $\proba_p^\otimes(\Tcut>N)=\Theta(\lambda_p^N)$, taking $n\to \infty$ concludes the proof.
\end{proof}

The main idea behind the proof of Theorem \ref{thm:MixingTime} is that, for typical $\vec A_1,\vec A_2,\vec A_3,\vec A_4$ such that $(\vec A_1,\vec A_2),(\vec A_3,\vec A_4)\notin \Ccut$,
we may modify $\vec A_2, \vec A_3$ locally so that we also have $(\vec A_2, \vec A_3)\notin \Ccut$.
This in turn implies that we can go from $\vec A_1$ to $\vec A_4$ with positive probability.
To this end, we use the functions $f^\uparrow_r$ and $f^\downarrow_r$, defined as $f_r$ was in the previous subsection, except that for $f^\uparrow_r$,
we do not open the edges near $0$,
	and to construct $f^\downarrow_r(A,\vec a)$,
	we do not perform the modifications in $\ball_r(\vec a)$ (see Figure \ref{fig:LocalModifB}).
 \begin{figure}[!h]
\centering
\includegraphics[scale=1.1]{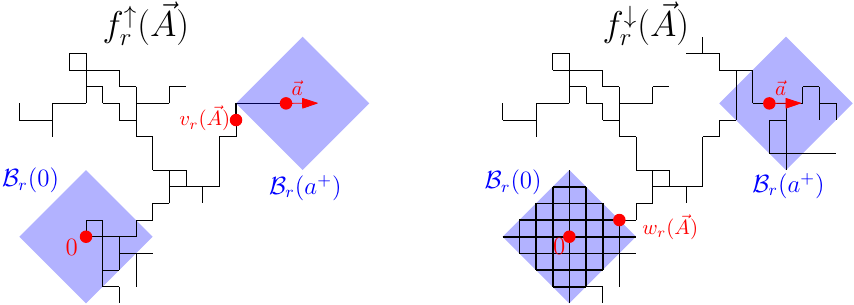}
\caption{$f^\uparrow_r(\vec A)$ and $f^\downarrow_r(\vec A)$ are represented,
where $\vec A$ is the same cluster as in Figure \ref{fig:LocalModif}.
The drawing conventions are also the same.
$f^\uparrow_r(\vec A)$ is obtained by modifying $\vec A$ around $\vec a$,
and $f^\downarrow_r(\vec A)$ is obtained by modifying $\vec A$ around $0$.}
 \label{fig:LocalModifB}
 \end{figure}
 To simplify the notation, we let $E_r(\vec A_1,\vec A_2,\vec A_3,\vec A_4)$ be the event that
 \[
 (\vec A_1,f_r^\uparrow(\vec A_2))\notin \Ccut \quad \text{and} \quad (f_r^\uparrow(\vec A_2),f_r^\downarrow(\vec A_3))\notin \Ccut \quad \text{and} \quad (f_r^\downarrow(\vec A_3),\vec A_4)\notin \Ccut .
\]
\begin{proposition} \label{pro:OrientedOpenTriangleSwitching}
Let $p<p_c$ and $C>0$ be such that \eqref{eq:initialization} holds. For every $\varrho>0$, if $r>0$ is large enough depending on $d,L,C,\varrho$, and if $\X(p)$ is large enough depending on $d,L,C,r,\varrho$, we have
\[
\proba^\otimes_p((\vec \A_1,\vec \A_2)\notin \Ccut ,(\vec \A_3,\vec \A_4)\notin\Ccut , E_r(\vec \A_1,\vec \A_2,\vec \A_3,\vec \A_4)) \geq (1-\varrho)\Zb_p^2.
\]
\end{proposition}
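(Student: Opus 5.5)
We reduce the statement to bounding three failure probabilities. The four clusters are i.i.d. under $\proba_p^\otimes$, so
\[
\proba^\otimes_p((\vA_1,\vA_2)\notin\Ccut,(\vA_3,\vA_4)\notin\Ccut)=\Zb_p^2.
\]
It therefore suffices to show that each of the three failure events
\[
F_1:=\{(\vA_1,\vA_2)\notin\Ccut,\ (\vA_1,f^\uparrow_r(\vA_2))\in\Ccut\},\qquad
F_2:=\{(f^\uparrow_r(\vA_2),f^\downarrow_r(\vA_3))\in\Ccut\},
\]
\[
F_3:=\{(\vA_3,\vA_4)\notin\Ccut,\ (f^\downarrow_r(\vA_3),\vA_4)\in\Ccut\}
\]
has probability at most $\varrho\Zb_p^2/3$. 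By Lemma \ref{lem:LambdaLowMemory} with $k=1$ and Theorem \ref{thm:LowerLambda}, we have $\Zb_p\geq\lambda_p^2\geq c^2$ once $\X(p)\geq K$. Hence it is enough to show that $\proba(F_i)$ can be made smaller than any fixed $\eta>0$ by first choosing $r$ large and then $\X(p)$ large.

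For $F_2$ we copy the proof of Proposition \ref{pro:OrientedOpenTriangle}. Near $0$, $f^\downarrow_r(\vA_3)$ contains all of $\ball^\edges_r(0)$. Near $\abf_2^+$, $f^\uparrow_r(\vA_2)$ is only a minimal path. So the cutting configurations are exactly those handled by Lemmas \ref{lem:NoCutAfterModif_a}--\ref{lem:NoCutAfterModif_c}, together with the event $\|\abf_i^+\|_\infty<3r$ of the first lemma. Each of these is $o(1)$ after taking $r$ large and then $\X(p)$ large, by the same BK and tree-graph computations and \eqref{eq:initialization}.

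For $F_1$ we note that $f^\uparrow_r$ modifies $\vA_2$ only inside $\ball_r(\abf_2^+)$ (it removes vertices there and adds the path $\Gamma^3_r$), and leaves the cluster near $0$ untouched. In the first situation, $(\vA_1,\vA_2)\notin\Ccut$ holds, but after the modification the path from $0$ to $\abf_2^-$ in $f^\uparrow_r(\vA_2)\backslash(\A_1-\abf_1^+)$ is blocked. Then either $\Gamma^3_r(\vA_2)$ meets $\A_1-\abf_1^+$, or the retained path $\Gamma^2_r$ (a subpath of a self-avoiding path of $\A_2$ from $0$ to $v_r(\vA_2)$, which uses vertices of $\A_2$ already allowed) meets it. The latter is impossible if we choose the self-avoiding path inside $\A_2\offx{0}(\A_1-\abf_1^+)$, which we may do by making $v_r$ depend on this choice. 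This subtlety must be handled explicitly, or else bounded separately as in Lemma \ref{lem:NoCutAfterModif_b}(a). The remaining case, $\Gamma^3_r(\vA_2)$ hitting $\A_1-\abf_1^+$, forces a point $z$ with $\|z-\abf_2^+\|_1\leq r$ and $z+\abf_1^+\in\A_1$. This is exactly case (a) of Lemma \ref{lem:NoCutAfterModif_c}, and its probability is $O_r(1/\X(p))$.

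$F_3$ is symmetric. The modification $f^\downarrow_r$ only enlarges $\A_3$ near $0$ (it adds $\ball^\edges_r(0)$), so cuts appear only when $\A_4$'s path from $0$ to $\abf_4^-$ passes near $\ball_r(-\abf_3^+)$. This gives a term of the type in case (b) of Lemmas \ref{lem:NoCutAfterModif_b} and \ref{lem:NoCutAfterModif_c}: a sum $\frac{1}{\X(p)}\sum_{z}\1(\|z+a^+\|_1\le r)\tau_p(a^-)\tau_p(z)$, which is $O_r(1/\X(p))$. The main obstacle I expect is $F_1$: making sure the local modification near $\abf_2^+$ cannot destroy an existing non-cut path, in particular because $v_r$ is defined through an arbitrary self-avoiding path. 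I would first try redefining that path to avoid $\A_1-\abf_1^+$, a dependence on $\vA_1$ that is harmless because only the events matter, not the cost in Proposition \ref{pro:RelateLocalLambda}. If this fails, I would fall back on union bounds over BK diagrams with one vertex far and one within distance $r$, and use \eqref{eq:initialization} to make them vanish as in Lemma \ref{lem:NoCutAfterModif_a}.
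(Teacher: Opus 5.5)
Your reduction is the same as the paper's. You isolate the three failure events, union-bound them against $\Zb_p^2$, and use that $\Zb_p\geq\lambda_p^2$ is bounded below by Lemma \ref{lem:LambdaLowMemory} and Theorem \ref{thm:LowerLambda}. Your treatments of $F_2$ and $F_3$ are sound.

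\textbf{The gap is in $F_1$.} It concerns the map $f^\uparrow_r$ as actually defined, where $v_r(\vA_2)$ comes from an arbitrary self-avoiding path $\pi$ that depends on $\vA_2$ alone.
\begin{itemize}
\item Your fallback, ``bound it as in Lemma \ref{lem:NoCutAfterModif_b}(a)'', fails. In $F_1$ neither cluster is modified near $0$, so a union bound over hitting points $z\in\ball_r(0)$ only gives the open triangle $\sum_{z,y,\delta}\tau_p(z-y)\tau_p(y-\delta)\tau_p(z)$. This is $O(1)$ but does not tend to $0$ as $r$ or $\X(p)$ grows. Near $0$ you must use the hypothesis $(\vA_1,\vA_2)\notin\Ccut$, and with an arbitrary $\pi$ that hypothesis says nothing directly about $\pi$.
\item Your main route makes $\pi$ depend on $\vA_1$. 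That proves the proposition for a different map, hence a different event $E_r$. It can be salvaged, but not for the reason you give. The ratio $\sup_{\vec B}\proba^\otimes_p(f^\uparrow_r(\vA_2)=\vec B)/\proba^\otimes_p(\vA_2=\vec B)$ is exactly what Lemma \ref{lem:PrelimCoupling} and Proposition \ref{pro:RotateVsRotate} use downstream. It survives only because, in both places, the preceding cluster is fixed, and the argument of Proposition \ref{pro:RelateLocalLambda} uses nothing beyond the analogue of \eqref{eq:def_fr}, uniformly in the choice of path.
\end{itemize}

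\textbf{How to handle the paper's map directly.} Work on $F_1$ with $\|\abf_2^+\|_\infty\geq 3r$.
\begin{itemize}
\item Since $(\vA_1,\vA_2)\notin\Ccut$, there is a self-avoiding path $\gamma$ from $0$ to $\abf_2^-$ in $\A_2$ avoiding $\A_1-\abf_1^+$. Stop it at the vertex $v'$ preceding its first entrance into $\ball_r(\abf_2^+)$.
\item Let $m$ be the last vertex of $\pi[0,v_r]$, in the order of $\pi$, that lies on $\gamma[0,v']$.
\item Both $\gamma[0,v']$ and $\pi[0,v_r]$ survive in $f^\uparrow_r(\A_2)$. So following $\gamma$ to $m$, then $\pi$ to $v_r$, then $\Gamma^3_r(\vA_2)$ gives a non-cut path unless one of the following happens:
  \begin{itemize}
  \item $\Gamma^3_r(\vA_2)\backslash\{v_r\}$ is hit. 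This is case (a) of Lemma \ref{lem:NoCutAfterModif_c}, with probability $O_r(1/\X(p))$.
  \item $\pi$ is hit strictly after $m$ at some $z$ with $\|z\|_1>r$. This is case (a) of Lemma \ref{lem:NoCutAfterModif_b}, which is small once $r$ is large.
  \item $\pi$ is hit strictly after $m$ at some $z\in\ball_r(0)$.
  \end{itemize}
\item In the last case, $\pi[z,v_r]$ and $\gamma[0,v']$ are vertex-disjoint. So $\A_2$ contains two disjoint connections from $\ball_r(0)$ to $\ball_{r+L}(\abf_2^+)$. By BK and \eqref{eq:initialization}, this has $\proba^\otimes_p$-probability at most $\X(p)^{-1}\sum_{a}\sum_{z,s,s'}\tau_p(a+s')\tau_p(a+s-z)$, with $z\in\ball_r(0)$ and $s,s'\in\ball_{r+L}(0)$. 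Since $\sum_a\langle a\rangle^{2-d}\langle a+b\rangle^{2-d}$ is uniformly bounded for $d>4$, this is $O(r^{3d}/\X(p))$.
\end{itemize}
All three cases are small after taking $r$ large and then $\X(p)$ large, which closes $F_1$.
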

\begin{proof}
By directly adapting the proof of Proposition \ref{pro:OrientedOpenTriangle}, we obtain the following three statements:
\[
\proba^\otimes_p((\vA_1,\vA_2)\notin \Ccut, (\vec \A_1,f_r^\uparrow(\vec \A_2))\in \Ccut ) \leq \varrho,
\]
and
\[
\proba^\otimes_p((f_r^\uparrow(\vec \A_2),f_r^\downarrow(\vec \A_3))\in \Ccut) \leq \varrho,
\]
and
\[
\proba^\otimes_p((\vA_3,\vA_4)\notin \Ccut, (f_r^\downarrow(\vec \A_3),\vec \A_4)\in \Ccut ) \leq \varrho.
\]
We omit the details, as the proofs are essentially the same.

By a union bound, we deduce that the desired probability is at least
\[
\proba_p^\otimes((\vec \A_1,\vec \A_2)\notin \Ccut,(\vec \A_3,\vec \A_4)\notin \Ccut)-3\varrho =\Zb_p^2-3\varrho.
\]
Since $\varrho>0$ is arbitrary, and since by Lemma \ref{lem:LambdaLowMemory} and Theorem \ref{thm:LowerLambda}, $\Zb_p\geq \lambda_p^2$ is bounded below by a constant, this concludes the proof.
\end{proof}
We then write, dropping the dependence on $(p,r,\varrho')$, where $\varrho'$ will be chosen much later,
\[
\sigma(\vec A_1):=\proba^\otimes_p(\vA_1=\vec A_1,  (\vec \A_1,\vec \A_2)\notin \Ccut ,(\vec \A_3,\vec \A_4)\notin\Ccut , E_r(\vec \A_1,\vec \A_2,\vec \A_3,\vec \A_4)),
\]
and let $\Cgood$ be the set of $\vec A\in \Clive$ such that $\sigma(\vec A) \geq (1-\varrho') \nu^\uparrow_p(\vec A)\Zb_p$.
\begin{corollary} \label{cor:SizeCgood}
We have $\nu^\uparrow_p(\vec \Comp(0)\backslash \Cgood)\leq (\varrho/\varrho')\Zb_p$.
\end{corollary}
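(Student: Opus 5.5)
This is a Markov-type inequality: we compare the total mass of $\sigma$ with the total mass of $\nu^\uparrow_p\Zb_p$, and localise the deficit on the bad set. The constant $\varrho$ and the quantity $r$ are those of Proposition \ref{pro:OrientedOpenTriangleSwitching}, assumed to hold for the chosen parameters. The first step is the pointwise upper bound
\[
\sigma(\vec A)\leq \proba^\otimes_p(\vA_1=\vec A,(\vA_1,\vA_2)\notin\Ccut)\,\proba^\otimes_p((\vA_3,\vA_4)\notin\Ccut)=\nu^\uparrow_p(\vec A)\Zb_p,
\]
for every $\vec A$. It comes from dropping the event $E_r$ and using that $(\vA_1,\vA_2)$ and $(\vA_3,\vA_4)$ are independent under $\proba^\otimes_p$.

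Summing $\sigma$ over all $\vec A_1$ gives exactly the probability in Proposition \ref{pro:OrientedOpenTriangleSwitching}. Hence $\sum_{\vec A}\sigma(\vec A)\geq (1-\varrho)\Zb_p^2$. Since $\sum_{\vec A}\nu^\uparrow_p(\vec A)=\Zb_p$, the deficit satisfies
\[
\sum_{\vec A}\big(\nu^\uparrow_p(\vec A)\Zb_p-\sigma(\vec A)\big)\leq \varrho \Zb_p^2,
\]
and every summand is nonnegative by the pointwise bound.

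Each $\vec A\notin\Cgood$ contributes at least $\varrho'\nu^\uparrow_p(\vec A)\Zb_p$ to this sum. For $\vec A\in\Clive\setminus\Cgood$ this holds by definition of $\Cgood$. For $\vec A\in\Ckill$ it holds because $\nu^\uparrow_p(\vec A)=0$, since $\{\vec A\}\times\vec\Comp(0)\subset\Ccut$. Restricting the sum to $\vec\Comp(0)\setminus\Cgood$ therefore gives
\[
\varrho'\Zb_p\,\nu^\uparrow_p(\vec\Comp(0)\setminus\Cgood)\leq \varrho\Zb_p^2.
\]
Dividing by $\varrho'\Zb_p>0$ yields $\nu^\uparrow_p(\vec\Comp(0)\setminus\Cgood)\leq(\varrho/\varrho')\Zb_p$, which is the claim.

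The only point to check with some care is the independence factorisation in the first step, which is immediate since the $\vA_i$ are i.i.d. We also need $\Zb_p>0$ to divide; this follows from Lemma \ref{lem:LambdaLowMemory}, which gives $\Zb_p\geq\lambda_p^2$, together with $\lambda_p>0$ from Lemma \ref{lem:vpd}.
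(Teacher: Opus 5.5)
Your proof is correct and is essentially the paper's argument: the pointwise bound $\sigma\le\nu^\uparrow_p\Zb_p$ from independence, summation against Proposition \ref{pro:OrientedOpenTriangleSwitching}, and localizing the resulting deficit on the complement of $\Cgood$ (the paper phrases this as splitting the sum over $\Cgood$ and its complement). Your explicit handling of $\Ckill$ and of $\Zb_p>0$ simply makes precise what the paper's ``basic calculation'' leaves implicit.
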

\begin{proof}
Note that for every $\vec A\in \vec \Comp(0)$, we have
\[
\sigma(\vec A)\leq \proba^\otimes_p(\vA_1=\vec A, (\vec \A_1,\vec \A_2)\notin \Ccut ,(\vec \A_3,\vec \A_4)\notin\Ccut)=\nu^\uparrow_p(\vec A)\Zb_p.
\]
Summing over all $\vec A$, since by Proposition \ref{pro:OrientedOpenTriangleSwitching}, $\sum_{\vec A} \sigma(\vec A)\geq (1-\varrho)\Zb_p^2$, we get
\[
(1-\varrho)\Zb_p^2\leq \nu^\uparrow_p(\Cgood)\Zb_p+(1-\varrho')\nu^\uparrow_p(\vec \Comp(0)\backslash \Cgood)\Zb_p=\nu^\uparrow_p(\vec \Comp(0))\Zb_p-\varrho'\nu^\uparrow_p(\vec \Comp(0)\backslash \Cgood)\Zb_p.
\]
The desired result then follows from $\nu^\uparrow_p(\vec \Comp(0))=\Zb_p$ and a basic calculation.
\end{proof}
We are particularly interested in $\Cgood$ because of the next property.
\begin{lemma} \label{lem:PrelimCoupling}
In the setting of Proposition \ref{pro:OrientedOpenTriangleSwitching}, there exists a constant $0<c<1$, depending on $d,L,C,r,\varrho$, such that for every $\vec A_1\in \Clive$,
\[
\sum_{\vec A_4 \in \Clive}\min \left (
\proba^\otimes_p\left (\vA_1=\vec A_1, \vA_4=\vec A_4, \substack{ (\vec A_1,\vec \A_2)\notin \Ccut \\ (\vec \A_2,\vec \A_3)\notin \Ccut \\ (\vec \A_3,\vec A_4)\notin \Ccut } \right ),
c\nu_p^\uparrow(\vec A_1 ) \nu_p^\downarrow (\vec A_4)
\right)\geq c\sigma(\vec A_1).
\]
\end{lemma}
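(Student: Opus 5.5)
The strategy is to exhibit, for each $\vec A_4$, an explicit set of "switched" configurations of the middle two clusters. Their probability controls the first term of the minimum from below, by a constant times a quantity whose sum over $\vec A_4$ is comparable to $\sigma(\vec A_1)$. Concretely, fix $\vec A_1\in\Clive$ and write
\[
\sigma(\vec A_1)=\sum_{\vec A_4}\sigma(\vec A_1,\vec A_4),
\]
where $\sigma(\vec A_1,\vec A_4)$ is the same probability as in the definition of $\sigma$, with the extra constraint $\vA_4=\vec A_4$. On the event defining $\sigma(\vec A_1,\vec A_4)$, the triple $(\vec A_1,f_r^\uparrow(\vA_2),f_r^\downarrow(\vA_3),\vec A_4)$ is a cut-free chain by the definition of $E_r$.

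We then push forward. Exactly as in the proof of Proposition \ref{pro:RelateLocalLambda}, since $f^\uparrow_r,f^\downarrow_r$ only modify a bounded number $N=N(d,L,r)$ of edges, we have for every $\vec B$
\[
\proba_p^\otimes(f^\uparrow_r(\vA_2)=\vec B)\le (18\Omega)^{N}\proba_p^\otimes(\vA_2=\vec B),
\]
and similarly for $f^\downarrow_r$. The bounds $1/(9\Omega)<p<1/2$ are used here as before, and the preimage multiplicity is also bounded in terms of $r,d,L$. Summing over the values of $(f_r^\uparrow(\vA_2),f_r^\downarrow(\vA_3))$ then gives
\[
\sigma(\vec A_1,\vec A_4)\le c_0^{-1}\,\proba_p^\otimes\Bigl(\vA_1=\vec A_1,\vA_4=\vec A_4,(\vec A_1,\vA_2),(\vA_2,\vA_3),(\vA_3,\vec A_4)\notin\Ccut\Bigr)=:c_0^{-1}Q(\vec A_4),
\]
with $c_0$ depending only on $d,L,r$. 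This is the key comparison, and it is the step I expect to be the main technical obstacle: one has to check carefully that the modifications $f^\uparrow_r$ (near $\vec a$, without opening edges near $0$) and $f^\downarrow_r$ (near $0$, without touching $\ball_r(a^+)$) really change only $O_r(1)$ edges and are finite-to-one with bounded multiplicity. This is a direct adaptation of \eqref{eq:CostLocalModif}.

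Next we handle the minimum. Observe that
\[
\sigma(\vec A_1,\vec A_4)\le \proba_p^\otimes(\vA_1=\vec A_1,(\vA_1,\vA_2)\notin\Ccut)\,\proba_p^\otimes(\vA_4=\vec A_4,(\vA_3,\vA_4)\notin\Ccut)=\nu^\uparrow_p(\vec A_1)\nu^\downarrow_p(\vec A_4),
\]
because the events involving $(\vA_1,\vA_2)$ and $(\vA_3,\vA_4)$ are independent. Hence, taking $c\le c_0$ with $c\le 1$,
\[
\min\bigl(Q(\vec A_4),\,c\,\nu^\uparrow_p(\vec A_1)\nu^\downarrow_p(\vec A_4)\bigr)\ \ge\ \min\bigl(c_0\sigma(\vec A_1,\vec A_4),\,c\,\sigma(\vec A_1,\vec A_4)\bigr)\ \ge\ c\,\sigma(\vec A_1,\vec A_4).
\]
Summing over $\vec A_4\in\Clive$ yields the claim. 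The terms with $\vec A_4\in\Ckill$ contribute nothing to the right side anyway, if we restrict $\sigma$ accordingly, or they can simply be included, since the sum runs over all $\vec A_4$ where $\sigma$ is positive. One should double-check that $\sigma(\vec A_1,\cdot)$ is supported on $\vec A_4$ for which the left side is summed; if $\Ckill$ terms matter, they can be absorbed by noting that the definition of $\sigma$ with $\vec A_4\in\Ckill$ could be excluded by adjusting $\Cgood$. The constant $c$ depends only on $d,L,r$, hence on $d,L,C,r,\varrho$ as required.
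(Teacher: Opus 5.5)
Your proposal is correct and is essentially the paper's proof. The paper likewise shows
$\proba_p^\otimes(E_r(\vec A_1,\vA_2,\vA_3,\vec A_4))\le M^\uparrow M^\downarrow\,\proba_p^\otimes((\vec A_1,\vA_2),(\vA_2,\vA_3),(\vA_3,\vec A_4)\notin\Ccut)$
by bounding the pushforward densities of $f_r^\uparrow,f_r^\downarrow$ as in Proposition \ref{pro:RelateLocalLambda}. It then bounds both entries of the minimum below by $c\,\sigma(\vec A_1,\vec A_4)$, using $\sigma(\vec A_1,\vec A_4)\le\nu_p^\uparrow(\vec A_1)\nu_p^\downarrow(\vec A_4)$, and sums over $\vec A_4$.

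The $\Ckill$ issue you raise is glossed over in the paper too, but your proposed fix via $\Cgood$ is beside the point, since that set constrains $\vec A_1$, not $\vec A_4$. The issue is simply harmless. In Lemma \ref{lem:PrelimCouplingBis}, the $\vec A_4\in\Ckill$ parts of $\sum c\,\nu_p^\uparrow(\vec A_1)\nu_p^\downarrow(\vec A_4)$ and of $c\,\sigma(\vec A_1)$ cancel, by that same inequality $\sigma(\vec A_1,\vec A_4)\le \nu_p^\uparrow(\vec A_1)\nu_p^\downarrow(\vec A_4)$.
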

\begin{proof}
It suffices to check that for every $\vec A_1,\vec A_4\in \Clive$ we have
\begin{equation}
\proba^\otimes_p\big ((\vec A_1,\vec \A_2)\notin \Ccut, (\vec \A_2,\vec \A_3)\notin \Ccut,(\vec \A_3,\vec A_4)\notin \Ccut\big ) \geq c\proba_p^\otimes(E_r(\vec A_1,\vec \A_2,\vec \A_3,\vec A_4)). \label{eq:BridgeCost}
\end{equation}
Indeed, if \eqref{eq:BridgeCost} holds, then the desired minimum is bounded below by
\[
c \proba^\otimes_p(\vec \A_1=\vec A_1, \vec \A_4=\vec A_4, (\vec \A_1,\vec \A_2)\notin \Ccut ,(\vec \A_3,\vec \A_4)\notin\Ccut , E_r(\vec \A_1,\vec \A_2,\vec \A_3,\vec \A_4)).
\]
If we omit the event $E_r(\cdot)$ in the probability, we get $ c\nu_p^\uparrow(\vec A_1 ) \nu_p^\downarrow (\vec A_4)$. Summing over $\vec A_4$, we deduce that the sum in the lemma is bounded below by $c\sigma(\vec A_1)$.

The proof of \eqref{eq:BridgeCost} is essentially the same as the proof of Proposition \ref{pro:RelateLocalLambda}. More precisely, we can rewrite the probability on the right-hand side of \eqref{eq:BridgeCost} as
\[
\sum_{\vec B_2, \vec B_3}
\proba_p^\otimes(f_r^\uparrow (\vec \A_2)=\vec B_2)
\proba_p^\otimes(f_r^\downarrow (\vec \A_3)=\vec B_3)
\1_{(\vec A_1, \vec B_2)\notin \Ccut}
\1_{(\vec B_2, \vec B_3)\notin \Ccut}
\1_{(\vec B_3, \vec A_4)\notin \Ccut},
\]
which we upper-bound by
\begin{equation}
M^\uparrow M^\downarrow  \sum_{\vec B_2, \vec B_3} \proba_p^\otimes(\vA_2=\vec B_2,\vA_3=\vec B_3) \1_{(\vec A_1, \vec B_2)\notin \Ccut}\1_{(\vec B_2, \vec B_3)\notin \Ccut}\1_{(\vec B_3, \vec A_4)\notin \Ccut}, \label{eq:RewritingBridge}
\end{equation}
where
\[
M^\uparrow : = \sup_{\vec B\in \vec \Comp(0)} \frac{\proba_p^\otimes(f^\uparrow_r(\vec \A_2)=\vec B)}{\proba_p^\otimes(\vA_2=\vec B)},
\]
and where $M^\downarrow$ is defined similarly.
By adapting the last part of the proof of Proposition \ref{pro:RelateLocalLambda}, the two suprema can be bounded above by a constant.
We recognise the sum in \eqref{eq:RewritingBridge} as the left-hand side of \eqref{eq:BridgeCost}.
This concludes the proof.
\end{proof}
We now wish to show an analog of Lemma \ref{lem:PrelimCoupling} using $\proba_p^h$, which introduces a bias through $h_p$. To this end, we first estimate a few related sums:
\begin{lemma} \label{eq:SumForCoupling}
We have
\[
\Zb_p^h:=\sum_{\vec A} \nu^{\downarrow}_p(\vec A) h_p(\vec A) = \lambda_p \E_p^\otimes[h_p(\vA_0)] \geq \lambda_p^2 \|h_p\|_\infty.
\]
Moreover, for every $v:\Clive \mapsto \R$, we have
\[
\|h_p\cdot v\|_1 \leq \lambda_p^{-2} \Zb^h_p \|v\|_1.
\]
\end{lemma}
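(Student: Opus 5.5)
The plan is to prove the identity first and then derive both inequalities from it, using only the eigen-equation $\T_p h_p=\lambda_p h_p$, the vanishing of $h_p$ on $\Ckill$, and Lemma \ref{lem:LambdaLowMemory}.

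\textbf{The identity.} By \eqref{eq:def_nu_up+down},
\[
\sum_{\vec A}\nu^\downarrow_p(\vec A)h_p(\vec A)=\E_p^\otimes\big[h_p(\vA_2)\1_{(\vA_1,\vA_2)\notin\Ccut}\big].
\]
We condition on $\vA_1$. By the definition \eqref{eq:def-Tp} of $\T_p$, the conditional expectation equals $\T_p h_p(\vA_1)$. By Lemma \ref{lem:vpd} this is $\lambda_p h_p(\vA_1)$. Since the $\vA_i$ are identically distributed, this gives $\lambda_p\E_p^\otimes[h_p(\vA_0)]$.

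\textbf{The lower bound $\lambda_p\E_p^\otimes[h_p(\vA_0)]\geq\lambda_p^2\|h_p\|_\infty$.} We must show $\E_p^\otimes[h_p(\vA_0)]\geq\lambda_p\|h_p\|_\infty$.

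1. Iterating the eigen-equation $k$ times gives, for every $\vec A$,
\[
\lambda_p^k h_p(\vec A)=\E_p^\otimes\big[h_p(\vA_k)\1_{\Tcut>k}\,\big|\,\vA_0=\vec A\big],
\]
where the chain is started from $\vA_0=\vec A$ and the later steps are i.i.d.

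2. Choose $\vec A^*$ with $h_p(\vec A^*)$ close to $\|h_p\|_\infty$; the supremum need not be attained, so we take an approximate maximizer and let the error go to $0$.

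3. Take $k=1$. Then $\lambda_p h_p(\vec A^*)=\E_p^\otimes[h_p(\vA_1)\1_{(\vec A^*,\vA_1)\notin\Ccut}]\leq\E_p^\otimes[h_p(\vA_1)]$. This yields $\E_p^\otimes[h_p]\geq\lambda_p\|h_p\|_\infty$, which is exactly what is needed.

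\textbf{The bound $\|h_p\cdot v\|_1\leq\lambda_p^{-2}\Zb^h_p\|v\|_1$.} Here $\|h_p\cdot v\|_1\leq\|h_p\|_\infty\|v\|_1$, so it suffices to have $\|h_p\|_\infty\leq\lambda_p^{-2}\Zb_p^h$, which is the inequality just proved, rearranged.

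The weighted version of this step is where the main care is needed. If instead $\|\cdot\|_1$ is to be read with respect to the cluster law $\mu_p/(\Omega\X(p))$ or $\nu^\downarrow_p$, the same pointwise bound $h_p\leq\|h_p\|_\infty$ still applies. In that case I would also check that Lemma \ref{lem:LambdaLowMemory} is not needed beyond the above.

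The main potential obstacle is the uniform integrability required to exchange sums in the conditioning step. This is harmless, because $h_p$ is bounded and $\mu_p/(\Omega\X(p))$ is a probability measure.
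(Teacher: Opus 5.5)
Your proof is correct and follows the paper's argument. The identity comes from conditioning on $\vA_1$ and recognizing $\T_p h_p=\lambda_p h_p$. The lower bound comes from dropping the indicator in $\lambda_p h_p(\vec A)=\T_p h_p(\vec A)\le \E_p^\otimes[h_p(\vA_1)]$ for arbitrary $\vec A$, which uses $h_p\geq 0$. The last bound is just $\|h_p\cdot v\|_1\le\|h_p\|_\infty\|v\|_1$. The $k$-step iteration, the approximate maximizer, the appeal to Lemma \ref{lem:LambdaLowMemory}, and the weighted-norm hedge are all unnecessary, since $\|\cdot\|_1$ here is the plain counting $\ell^1$ norm.
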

\begin{proof}
First, we have
\[
\Zb_p^h=\sum_{\vec A_0,\vec A_1}
h_p(\vec A_1) \frac{\mu_p(\vec A_0) \mu_p(\vec A_1)}{\Omega^2\X(p)^2}
\1_{(\vec A_0,\vec A_1)\notin \Ccut}
= \sum_{\vec A_0} \frac{\mu_p(\vec A_0)}{\Omega\X(p)}
\E_p^\otimes\left [ h_p(\vA_1) \1_{(\vec A_0,\vA_1)\notin \Ccut} \right ].
\]
We recognize the expectation as $[\T_p(h_p)](\vec A_0)$, and the claimed equality follows from Lemma \ref{lem:vpd}.
For the first inequality, we have for every $\vec A_0$, using again $\T_p(h_p)=\lambda_p h_p$, and dropping the indicator,
\[
\lambda_p h_p(\vec A_0)\leq \E^\otimes_p[h_p(\vA_1)],
\]
and the inequality follows since $\vec A_0$ is arbitrary. Finally, for the second inequality,
\[
\|h_p\cdot v\|_1 \leq \|h_p\|_\infty \| v\|_1\leq \lambda_p^{-2} \Zb_p^h\|v\|_1. \qedhere
\]
\end{proof}
Next, note that by \eqref{eq:irreductible}, $\nu_p^\uparrow$ is positive on $\Clive$. We write
\[
\eta^\uparrow_p: \vec A \in \Clive \mapsto \frac{\Omega\X(p) \nu_p^\uparrow(\vec A)}{\mu_p(\vec A)}>0.
\]

\begin{lemma} \label{lem:PrelimCouplingBis}
In the setting of Lemma \ref{lem:PrelimCoupling}, if $\varrho'\leq \lambda_p^2/3$, we have for every $\vec A_1\in \Cgood$,
\[
\sum_{\vec A_4 \in \Clive}\min \left (\frac{h_p(\vec A_1)}{\eta^\uparrow_p(\vec A_1)}\P_p^3(\vec A_4|\vec A_1),c h_p(\vec A_4) \nu_p^\downarrow (\vec A_4) \right)\geq \frac{2}{3} c \Zb_p^h.
\]
\end{lemma}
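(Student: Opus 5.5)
The plan is to unwind the definition of $\P_p^3$ so that the left-hand side becomes an $h_p$-weighted version of the sum in Lemma~\ref{lem:PrelimCoupling}. The weight $h_p$ will then be handled by a complement argument using $\|h_p\|_\infty\leq \lambda_p^{-2}\Zb_p^h$ from Lemma~\ref{eq:SumForCoupling}.

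\textbf{Step 1 (rewriting the kernel).} Write $Q(\vec A_4)$ for the probability appearing inside the minimum of Lemma~\ref{lem:PrelimCoupling}, i.e. the $\proba_p^\otimes$-probability that $\vA_1=\vec A_1$, $\vA_4=\vec A_4$ and the three consecutive pairs are not in $\Ccut$. Iterating \eqref{eq:MarkovTransition} three times, the $h_p$ factors of $\vec A_2,\vec A_3$ telescope, and we get
\[
\P_p^3(\vec A_4|\vec A_1)=\frac{h_p(\vec A_4)}{\lambda_p^3h_p(\vec A_1)}\cdot\frac{\Omega\X(p)}{\mu_p(\vec A_1)}\,Q(\vec A_4).
\]
Multiplying by $h_p(\vec A_1)/\eta^\uparrow_p(\vec A_1)=h_p(\vec A_1)\mu_p(\vec A_1)/(\Omega\X(p)\nu^\uparrow_p(\vec A_1))$ gives
\[
\frac{h_p(\vec A_1)}{\eta^\uparrow_p(\vec A_1)}\P_p^3(\vec A_4|\vec A_1)=h_p(\vec A_4)\lambda_p^{-3}\frac{Q(\vec A_4)}{\nu_p^\uparrow(\vec A_1)}.
\]
Since $\lambda_p\leq 1$, each summand of the lemma is at least
\[
\frac{h_p(\vec A_4)}{\nu^\uparrow_p(\vec A_1)}\,m(\vec A_4),\qquad m(\vec A_4):=\min\big(Q(\vec A_4),c\nu^\uparrow_p(\vec A_1)\nu^\downarrow_p(\vec A_4)\big).
\]
Here $c$ is the constant from Lemma~\ref{lem:PrelimCoupling}. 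It therefore suffices to show
\[
\sum_{\vec A_4\in\Clive} h_p(\vec A_4)\,m(\vec A_4)\geq \tfrac23 c\,\nu^\uparrow_p(\vec A_1)\Zb_p^h.
\]

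\textbf{Step 2 (complement argument).} Since $m\leq c\nu^\uparrow_p(\vec A_1)\nu^\downarrow_p$ pointwise and $0\leq h_p\leq\|h_p\|_\infty$, we split
\[
\sum_{\vec A_4\in\Clive} h_p m = c\nu^\uparrow_p(\vec A_1)\sum_{\vec A_4\in\Clive} h_p\nu^\downarrow_p-\sum_{\vec A_4\in\Clive} h_p\big(c\nu^\uparrow_p(\vec A_1)\nu^\downarrow_p-m\big).
\]
We now bound the two sums on the right.
\begin{itemize}
\item The first sum equals $\Zb_p^h$, because $h_p=0$ on $\Ckill$ by Lemma~\ref{lem:vpd}.
\item The second sum is at most $\|h_p\|_\infty\big(c\nu^\uparrow_p(\vec A_1)\Zb_p-\sum_{\vec A_4\in\Clive} m(\vec A_4)\big)$, using $\sum_{\Clive}\nu^\downarrow_p\leq\|\nu^\downarrow_p\|_1=\Zb_p$.
\end{itemize}
For $\vec A_1\in\Cgood$, Lemma~\ref{lem:PrelimCoupling} and the definition of $\Cgood$ give
\[
\sum_{\vec A_4\in\Clive} m\geq c\sigma(\vec A_1)\geq c(1-\varrho')\nu^\uparrow_p(\vec A_1)\Zb_p,
\]
so the bracket is at most $c\varrho'\nu^\uparrow_p(\vec A_1)\Zb_p$.

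\textbf{Step 3 (conclusion).} By Lemma~\ref{eq:SumForCoupling}, $\|h_p\|_\infty\leq\lambda_p^{-2}\Zb_p^h$, and $\Zb_p\leq1$ since it is a probability. Combining with Step 2,
\[
\sum_{\vec A_4\in\Clive} h_p m\geq c\nu^\uparrow_p(\vec A_1)\Zb_p^h\big(1-\varrho'\lambda_p^{-2}\big)\geq \tfrac23 c\,\nu^\uparrow_p(\vec A_1)\Zb_p^h,
\]
where the last inequality uses $\varrho'\leq\lambda_p^2/3$. Dividing by $\nu^\uparrow_p(\vec A_1)>0$ (positive on $\Clive$ by \eqref{eq:irreductible}) gives the claim.

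The main point needing care is Step 1: the normalizations $\Omega\X(p)$, the telescoping of $h_p$ along the path, and the ratio $\mu_p/\nu^\uparrow_p$ hidden in $\eta^\uparrow_p$ must match exactly, and the direction $\lambda_p^{-3}\geq1$ must be used correctly. Everything after that is elementary bookkeeping.
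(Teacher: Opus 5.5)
Your proof is correct and is essentially the paper's argument. Your Step~1 is the Doob-transform identity the paper uses to rewrite $h_p\cdot v$, and your defect $c\nu^\uparrow_p(\vec A_1)\nu^\downarrow_p-m$ is exactly the paper's $v=\bigl(c\nu^\uparrow_p(\vec A_1)\nu^\downarrow_p-Q\bigr)^+$ (via $\min(x,y)=y-(y-x)^+$), which both of you control by Lemma~\ref{lem:PrelimCoupling}, the definition of $\Cgood$, and $\|h_p\|_\infty\leq\lambda_p^{-2}\Zb_p^h$; the only point you leave implicit is that the intermediate states in the three-step expansion range over $\Clive$ rather than $\vec\Comp(0)$, which is harmless because states in $\Ckill$ are cut from every successor.
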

\begin{proof}
Fix $\vec A_1\in \Cgood $. Let
\[
v: \vec A_4 \in \Clive \mapsto
\left (c\nu_p^\uparrow(\vec A_1 ) \nu_p^\downarrow (\vec A_4)
-\proba^\otimes_p\left (\vA_1=\vec A_1,\vA_4=\vec A_4,\substack{ (\vec A_1,\vec \A_2)\notin \Ccut \\ (\vec \A_2,\vec \A_3)\notin \Ccut \\ (\vec \A_3,\vec A_4)\notin \Ccut } \right ) \right )^+,
\]
where $(\cdot)^+=\max(0,\cdot)$ denotes the positive part of a real number. Using the generic identity $\min(x,y)=y-(y-x)^+$, valid for $x,y\in \R$, we may rewrite Lemma \ref{lem:PrelimCoupling} as
\[
\|v\|_1 \leq c(\nu_p^\uparrow(\vec A_1 )\Zb_p -\sigma(\vec A_1)) \leq c\varrho' \nu^{\uparrow}_p(\vec A_1) \Zb_p.
\]
It follows from Lemmas \ref{eq:SumForCoupling} and \ref{lem:LambdaLowMemory} that
\[
\|h_p\cdot v\|_1 \leq \lambda_p^{-2} \Zb_p^h \|v\|_1\leq (c/3)\nu^{\uparrow}_p(\vec A_1)\Zb_p^h.
\]
Furthermore, by \eqref{eq:DoobTransform} we may rewrite $h_p\cdot v$ as
\[
h_p\cdot v : \vec A_4 \in \Clive \mapsto \left (c\nu_p^\uparrow(\vec A_1 ) h_p(\vec A_4) \nu_p^\downarrow (\vec A_4)-\lambda_p^3 \frac{\mu_p(\vec A_1)}{\Omega\X(p)}h_p(\vec A_1)\P_p^3(\vec A_4|\vec A_1) \right )^+.
\]
Combining the bound on $\|h_p\cdot v\|_1$, the inequality $\lambda_p\leq 1$ (see Lemma \ref{lem:LambdaLowMemory}), and the generic identity $\min(x,y)=y-(y-x)^+$, it results that the sum in the lemma is at least
\[
\sum_{\vec A_4\in \Clive } ch_p(\vec A_4) \nu_p^\downarrow (\vec A_4) -\frac{\|h_p\cdot v\|_1}{\nu_p^\uparrow(\vec A_1)} \geq c\Zb_p^h - \frac{c}{3} \Zb_p^h.  \qedhere
\]
\end{proof}
We now introduce a few other notations.
Let $\Fc$ be the set of vectors $(v_{\vec A})_{\vec A\in \Clive}$ with $\|v\|_1<\infty$.
We let $\Fc^+$ be the set of such non-negative vectors.
Let $\Fc^+_1:=\{v\in \Fc^+,\|v\|_1=1\}$.
The Markov kernel $\P_p$ acts naturally on $\Fc,\Fc^+,\Fc^+_1$ by $\P_p:v\mapsto \left (\sum_{\vec A\in \Clive}v_{\vec A}\P_p(\vec A, \vec B) \right)_{\vec B\in \Clive}$.
For every pointed cluster $\vec A$, we write $\1_{\vec A}$ for the vector $(\1_{\vec A'=\vec A})_{\vec A'\in \Clive}\in \Fc^+_1$.
To show Theorem \ref{thm:MixingTime}, we need to upper-bound $\|\P^k_p(\1_{\vec A}-\vpg_p)\|_1$.
More generally, for every $v,w\in \Fc_1^+$, we bound the evolution of $\|\P^k_p(v-w)\|_1$ as $k$ increases.
Let $\Cgood'$ be the set of $\vec A\in \Clive$ with $\eta^\uparrow_p(\vec A)\geq \varrho''$, where $\varrho''$ will be chosen later.
The next result is the central argument for such an upper bound:
\begin{proposition} \label{pro:Coupling}
In the setting of Lemma \ref{lem:PrelimCouplingBis}, for every $v,w\in \Fc^+$ with support in $\Cgood \cap \Cgood'$, we have
\[
\|\min(\P^3_p(h_p\cdot v), \P^3_p(h_p\cdot w))\|_1\geq (c/3) \lambda_p^2\varrho'' \|h_p\|_\infty\min(\|v\|_1,\|w\|_1).
\]
\end{proposition}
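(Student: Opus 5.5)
The plan is to compare both $\P^3_p(h_p\cdot v)$ and $\P^3_p(h_p\cdot w)$ from below with multiples of a single reference measure $m(\vec B):=c\,h_p(\vec B)\nu_p^\downarrow(\vec B)$ on $\Clive$. This measure has total mass $\|m\|_1=c\Zb^h_p$ by Lemma \ref{eq:SumForCoupling}. Both lower bounds will capture at least two thirds of the mass of $m$, so their overlap captures at least one third, and we then convert $\Zb^h_p$ into $\lambda_p^2\|h_p\|_\infty$.

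\textbf{Step 1 (reduction).} Since $\min(\cdot,\cdot)$ is monotone in each argument and $\P^3_p$ is a positive operator, we may shrink the larger of $v,w$. We therefore assume $\|v\|_1=\|w\|_1=s$. For instance, if $\|v\|_1>\|w\|_1$, we replace $v$ by $(s/\|v\|_1)v\le v$ with $s=\|w\|_1$; the support condition is preserved.

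\textbf{Step 2 (lower bound by truncated kernels).} For $\vec A\in\Cgood\cap\Cgood'$, define
\[
g_{\vec A}(\vec B):=\min\Big(\frac{h_p(\vec A)}{\eta^\uparrow_p(\vec A)}\P^3_p(\vec B|\vec A),\,m(\vec B)\Big).
\]
Two properties follow directly:
\begin{itemize}
\item $0\le g_{\vec A}\le m$ pointwise.
\item $\|g_{\vec A}\|_1\ge \tfrac23 c\Zb^h_p=\tfrac23\|m\|_1$, by Lemma \ref{lem:PrelimCouplingBis}.
\end{itemize}
Since $\eta^\uparrow_p(\vec A)\ge\varrho''$ on $\Cgood'$, we have $h_p(\vec A)\P^3_p(\vec B|\vec A)\ge \varrho''\,g_{\vec A}(\vec B)$. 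Hence
\[
\P^3_p(h_p\cdot v)(\vec B)=\sum_{\vec A}v_{\vec A}h_p(\vec A)\P^3_p(\vec B|\vec A)\ge \varrho'' s\, G_v(\vec B),\qquad G_v:=s^{-1}\sum_{\vec A}v_{\vec A}g_{\vec A}.
\]
The function $G_v$ is a convex combination of the $g_{\vec A}$. It therefore still satisfies $0\le G_v\le m$ and $\|G_v\|_1\ge\tfrac23\|m\|_1$. The same holds for $G_w$.

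\textbf{Step 3 (overlap).} For reals $0\le a,b\le M$ we have $\min(a,b)\ge a+b-M$. Applying this pointwise gives
\[
\|\min(G_v,G_w)\|_1\ge\|G_v\|_1+\|G_w\|_1-\|m\|_1\ge\tfrac13\|m\|_1=\tfrac{c}{3}\Zb^h_p .
\]
Combining with Step 2 yields
\[
\|\min(\P^3_p(h_p v),\P^3_p(h_p w))\|_1\ge \varrho'' s\,\tfrac c3\Zb^h_p\ge \tfrac c3\lambda_p^2\varrho''\|h_p\|_\infty s,
\]
where the last inequality uses $\Zb^h_p\ge\lambda_p^2\|h_p\|_\infty$ from Lemma \ref{eq:SumForCoupling}. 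This is the claim.

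The only delicate point is the step where the two kernels are compared through the common dominating measure $m$. Lemma \ref{lem:PrelimCouplingBis} alone gives mass $\tfrac23$ for each starting cluster, but not a common overlap. The cap by $m$ inside the minimum is what makes the inequality $\min(a,b)\ge a+b-M$ applicable. Everything else is bookkeeping with the normalization $\eta^\uparrow_p\ge\varrho''$.
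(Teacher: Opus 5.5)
Your proof is correct and follows the paper's argument: the same reference measure $c\,h_p\nu^\downarrow_p$, Lemma \ref{lem:PrelimCouplingBis}, the normalization $\eta^\uparrow_p\ge\varrho''$, the overlap inequality $\min(a,b)\ge a+b-M$ (the paper's $\min(x,z)\ge\min(x,y)+\min(y,z)-y$), and $\Zb^h_p\ge\lambda_p^2\|h_p\|_\infty$. The only difference is that the paper first reduces to point masses $\1_{\vec A_1},\1_{\vec A_1'}$ via superadditivity of the minimum, whereas you average the truncated kernels $g_{\vec A}$ into $G_v,G_w$; this treats general $v,w$ in one step and avoids the implicit pairing of the masses of $v$ and $w$ that the paper's reduction needs.
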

\begin{proof}
Without loss of generality, after decreasing the larger vector, which does not change the right-hand side, we may focus on the case where $\|v\|_1=\|w\|_1$.
Also, if the statement of the proposition is satisfied for $(v,w)$ and $(v',w')$, then, by the triangle inequality, it is also satisfied for $(v+v',w+w')$.
Thus, it suffices to check the statement for $v,w$ of the form $\1_{\vec A_1}$ and $\1_{\vec A'_1}$.
In that case, we may rewrite the left-hand side of the proposition as
\[
\sum_{\vec A_4\in \Clive} \min \left (h_p(\vec A_1)\P_p^3(\vec A_4|\vec A_1),h_p(\vec A'_1)\P_p^3(\vec A_4|\vec A'_1)\right).
\]
Using $\eta^\uparrow_p(\vec A_1),\eta^\uparrow_p(\vec A'_1)\geq \varrho''$ by definition of $\Cgood'$, the last sum is at least
\[
\varrho''
\sum_{\vec A_4\in \Clive} \min \left (\frac{h_p(\vec A_1)}{\eta^\uparrow_p(\vec A_1)} \P_p^3(\vec A_4|\vec A_1),\frac{h_p(\vec A'_1)}{\eta^\uparrow_p(\vec A'_1)}  \P_p^3(\vec A_4|\vec A'_1)\right),
\]
Then, using the generic inequality $\min(x,z)\geq \min(x,y)+\min(y,z)-y$, valid on $\R_+$, and Lemma \ref{lem:PrelimCouplingBis}, this last sum is bounded below by
\[
(4/3)c\Zb_p^h-c\sum_{\vec A_4\in \Clive} h_p(\vec A_4)\nu_p^\downarrow(\vec A_4)=\frac{c}{3}\Zb_p^h \geq \frac{c\lambda_p^2}{3}\|h_p\|_\infty,
\]
using Lemma \ref{eq:SumForCoupling} for the last inequality.
\end{proof}
The previous results ensure that we may couple two Markov chains with kernel $\P_p$ with good probability, when started from $\vec A_1,\vec A'_1\in\Cgood\cap \Cgood'$.
To deal with the complementary set $\Clive \backslash (\Cgood\cap \Cgood')$,
we are interested in
\[
\|v\|_{h+}:=\sum_{\vec A\in \Clive} \frac{\max(v_{\vec A},0)}{h_p(\vec A)} \quad \text{and} \quad \|v\|_{h-}:=\sum_{\vec A\in \Clive} \frac{\max(-v_{\vec A},0)}{h_p(\vec A)},
\]
and, more precisely, in the evolution of $\|\P_p^k(v)\|_{h+}$ and $\|\P_p^k(v)\|_{h-}$ as $k$ increases.
We first show the following general bounds:
\begin{lemma} \label{lem:BasicEvoMass}
For every $v\in \Fc$, we have
\[
\|\P_p(v)\|_1\leq \|v\|_1 \quad \text{and} \quad \|\P_p(v)\|_{h+}\leq (1/\lambda_p)\|v\|_{h+} \quad \text{and} \quad \|\P_p(v)\|_{h-}\leq (1/\lambda_p)\|v\|_{h-}.
\]
\end{lemma}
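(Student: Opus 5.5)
The first inequality is the stochasticity of $\P_p$. For every $v\in\Fc$ we have
\[
\|\P_p(v)\|_1=\sum_{\vec B}\Big|\sum_{\vec A} v_{\vec A}\P_p(\vec A,\vec B)\Big|\leq \sum_{\vec A}|v_{\vec A}|\sum_{\vec B}\P_p(\vec A,\vec B)=\|v\|_1,
\]
since each row of the kernel \eqref{eq:MarkovTransition} sums to $1$ on $\Clive$. That the rows sum to one is exactly the eigen-equation $\T_p h_p=\lambda_p h_p$ of Lemma \ref{lem:vpd}, together with $h_p=0$ on $\Ckill$.

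For the $h$-weighted norms, we first reduce to non-negative vectors. Write $v=v^+-v^-$. Since $\P_p$ has non-negative entries, $\max(\P_p(v),0)\leq \P_p(v^+)$ pointwise, and similarly $\max(-\P_p(v),0)\leq \P_p(v^-)$. So it suffices to show $\|\P_p(u)\|_{h+}\leq \lambda_p^{-1}\|u\|_{h+}$ for $u\in\Fc^+$, and by linearity it is enough to take $u=\1_{\vec A}$ with $\vec A\in\Clive$. In that case \eqref{eq:MarkovTransition} gives
\[
\|\P_p(\1_{\vec A})\|_{h+}=\sum_{\vec B\in\Clive}\frac{1}{h_p(\vec B)}\frac{h_p(\vec B)}{\lambda_p h_p(\vec A)}\frac{\mu_p(\vec B)}{\Omega\X(p)}\1_{(\vec A,\vec B)\notin\Ccut}=\frac{\proba_p^\otimes(\vA_2\in\Clive,(\vec A,\vA_2)\notin\Ccut)}{\lambda_p h_p(\vec A)}.
\]
The numerator is at most $1$, so this is at most $\lambda_p^{-1}/h_p(\vec A)=\lambda_p^{-1}\|\1_{\vec A}\|_{h+}$. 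Summing over $\vec A$ with weights $u_{\vec A}$ gives the claim for $\|\cdot\|_{h+}$, and the $\|\cdot\|_{h-}$ case is identical applied to $v^-$.

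The only delicate point is bookkeeping. The division by $h_p$ is legitimate because $h_p>0$ on $\Clive$ (Lemma \ref{lem:vpd}), and all vectors are supported on $\Clive$. Convergence of the sums is not a concern: all terms are non-negative (Tonelli), and $\|v\|_{h\pm}$ may be taken to be possibly infinite without affecting the inequalities.
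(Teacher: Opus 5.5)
Your proof is correct and follows essentially the same route as the paper: stochasticity of $\P_p$ gives the $L_1$ bound, and for the $h$-weighted norms you cancel the $h_p(\vec B)$ factors, discard the negative part of $v$, and bound the remaining sum over $\vec B$ by $1$. The only differences are organizational: you decompose $v^+$ into point masses $\1_{\vec A}$ and keep the cut indicator before bounding the resulting probability by $1$, whereas the paper manipulates the double sum directly.
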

\begin{proof}
The first inequality follows directly from the fact that $\P_p$ is a Markov kernel. For the second, by definition of $\P_p$, we have
\[
\|\P_p(v)\|_{h+}=\sum_{\vec B\in \Clive}\frac{1}{h_p(\vec B)}\max\left (0, \sum_{\vec A \in\Clive} \frac{h_p(\vec B)}{\lambda_p h_p(\vec A)}\frac{\mu_p(\vec B)}{\Omega\X(p)} v_{\vec A} \1_{(\vec A, \vec B)\notin \Ccut} \right ).
\]
The $h_p(\vec B)$ terms cancel each other. We can remove the indicator $\1_{(\vec A, \vec B)\notin \Ccut}$. We may then remove the contribution of the sum coming from the $\vec A$ with $v_{\vec A}<0$. Rearranging the sums, we get
\[
\|\P_p(v)\|_{h+} \leq \frac{1}{\lambda_p} \sum_{\vec A\in \Clive} \frac{\max(0,v_{\vec A})}{h_p(\vec A)}  \sum_{\vec B}\frac{\mu_p(\vec B)}{\Omega\X(p)} .
\]
The last sum over $\vec B$ equals $1$, and then we recognize $\|v\|_{h+}/\lambda_p$. The proof for $\|v\|_{h-}$ is the same.
\end{proof}
\begin{proposition} \label{pro:TrueCoupling}
Let $p<p_c$ and $C>0$ be such that \eqref{eq:initialization} holds. There exist $c',K>0$, depending on $d,L,C$, such that if $\X(p)\geq K$, then for every $v,w\in \Fc^+_1$, at least one of the following inequalities holds
\[
\frac{\|\P^4_p(v-w)\|_1}{\|v-w\|_1} \leq (1-c') \quad \text{or} \quad \frac{\|\P^4_p(v-w)\|_{h+}}{ \|v-w\|_{h+}} \leq \frac{\lambda_p^4}{2} \quad \text{or} \quad \frac{\|\P^4_p(v-w)\|_{h-}}{ \|v-w\|_{h-}}\leq \frac{\lambda_p^4}{2}.
\]
\end{proposition}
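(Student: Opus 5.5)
Write $u=v-w$ and decompose $u=u^+-u^-$ with $u^\pm\geq 0$ of disjoint supports. Since $v,w\in\Fc^+_1$, we have $\|u^+\|_1=\|u^-\|_1=\|u\|_1/2$. The plan is a trichotomy according to where the masses $u^+$ and $u^-$ sit relative to the good set $G:=\Cgood\cap\Cgood'$. Fix $\varrho''$ small, $\varrho'\leq\lambda_p^2/3$, then $\varrho$, $r$ and finally $K$, in that order. Theorem \ref{thm:LowerLambda} keeps $\lambda_p$ bounded below, so all these choices can depend only on $d,L,C$.

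\emph{Case 1:} $u^+$ and $u^-$ both have a fixed fraction $\beta$ of their $\|\cdot\|_1$-mass in $G$. After one step of $\P_p$ nothing is lost in $\|\cdot\|_1$ by Lemma \ref{lem:BasicEvoMass}. I prefer instead to apply Proposition \ref{pro:Coupling} directly at time $0$ to the restrictions $v'=u^+\1_G/h_p$ and $w'=u^-\1_G/h_p$. These give $\P^3_p(h_pv')=\P^3_p(u^+\1_G)$ and similarly for $w'$. The Proposition yields
\[
\|\min(\P^3_p(u^+\1_G),\P^3_p(u^-\1_G))\|_1\geq (c/3)\lambda_p^2\varrho''\|h_p\|_\infty\min(\|u^+\1_G/h_p\|_1,\|u^-\1_G/h_p\|_1),
\]
and $\|f/h_p\|_1\geq \|f\|_1/\|h_p\|_\infty$ turns the right side into $\gtrsim \beta\|u\|_1$. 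The common part cancels in $\P^3_p(u^+)-\P^3_p(u^-)$, so $\|\P^3_p u\|_1\leq(1-2c''\beta)\|u\|_1$. One further step is harmless by Lemma \ref{lem:BasicEvoMass}, which gives the first alternative.

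\emph{Case 2:} say $u^+$ has at least a fraction $1-\beta$ of its mass outside $G$. Then I aim for the $h+$ alternative. Points outside $\Cgood$ carry little $\nu^\uparrow$-mass by Corollary \ref{cor:SizeCgood}, and points outside $\Cgood'$ have small $\eta^\uparrow_p$ by definition. Together this says that from such points the killed transition has small total weight $\sum_{\vec B}\mu_p(\vec B)\1_{(\vec A,\vec B)\notin\Ccut}/(\Omega\X(p))=\eta^\uparrow_p(\vec A)$. Redo the computation in Lemma \ref{lem:BasicEvoMass} while keeping the indicator. This gives
\[
\|\P_p(f)\|_{h+}\leq\lambda_p^{-1}\sum_{\vec A}\frac{f^+_{\vec A}}{h_p(\vec A)}\eta^\uparrow_p(\vec A),
\]
so mass on $\Clive\setminus\Cgood'$ contracts by the factor $\varrho''/\lambda_p$. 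For mass on $\Clive\setminus\Cgood$ I would instead use $h_p\geq\|h_p\|_\infty(1-p)^\Omega/(\Omega\X(p))$ from Lemma \ref{lem:vpd} together with the smallness of $\nu^\uparrow$ there. A three-step version of the same argument then gives a bound of order $\varrho/\varrho'$ times constants. With parameters small enough, the outside part contributes at most $\lambda_p^4/4$ of $\|u\|_{h+}$ after four steps. The inside part is a fraction at most $\beta$ of the $\|\cdot\|_1$-mass, and by Lemma \ref{lem:BasicEvoMass} it grows by at most $\lambda_p^{-4}$. Converting this $\beta$ fraction in $\|\cdot\|_1$ into a small fraction of $\|\cdot\|_{h+}$ is the delicate point, since $1/h_p$ weights differ. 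I would resolve it by defining the case split directly in the $\|\cdot\|_{h\pm}$ norms, and using $\|h_p\|_\infty$-normalisation in Case 1 instead. The symmetric case for $u^-$ gives the third alternative.

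The main obstacle is the step through $\Cgood^c$ in Case 2. Corollary \ref{cor:SizeCgood} controls $\nu^\uparrow$-mass, which is the mass weighted by the starting measure, and not pointwise transition weights. I expect to need to average over the start point, exploiting $1/h_p$ weighting against $\mu_p$. If this fails, I would enlarge the definition of $G$ by intersecting with a level set of $h_p\cdot\eta^\uparrow_p$, and recheck Case 1 on that smaller set.
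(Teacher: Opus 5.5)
Your Case 1 is fine. Your one-step bound $\|\P_p(f)\|_{h+}\le\lambda_p^{-1}\sum_{\vec A}f^+_{\vec A}\eta^\uparrow_p(\vec A)/h_p(\vec A)$ is also correct. But Case 2 has a genuine gap, exactly where you suspect it, and neither proposed remedy closes it.

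Membership in $\Cgood$ is not a pointwise smallness condition. The definition does not exclude a state $\vec A\in\Cgood'\setminus\Cgood$ with $\eta^\uparrow_p(\vec A)$ of order one, which merely fails the local-modification event $E_r$ with conditional probability larger than $\varrho'$. Your time-$0$ split allows $u^+=\1_{\vec A}$ for such a state, since $v$ is arbitrary. Then
\[
\frac{\|\P^4_p\1_{\vec A}\|_{h+}}{\|\1_{\vec A}\|_{h+}}=\lambda_p^{-4}\proba_p^\otimes(\Tcut>4,\vA_4\in\Clive\mid\vA_0=\vec A),
\]
and nothing forces this to be at most $\lambda_p^4/2$. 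Meanwhile your Case 1 is unavailable, because there is no mass in $G$ at time $0$.

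The remedies do not help:
\begin{itemize}
\item Averaging over the start point is impossible for a point mass.
\item Passing through $h_p\ge\|h_p\|_\infty(1-p)^\Omega/(\Omega\X(p))$ from Lemma \ref{lem:vpd} costs a factor of order $\X(p)$. The constant $\varrho/\varrho'$ of Corollary \ref{cor:SizeCgood} cannot absorb this, since the threshold on $\X(p)$ is chosen after $\varrho$.
\item Intersecting $G$ with a level set of $h_p\eta^\uparrow_p$ is also useless, since the problematic states are those outside $\Cgood$ with large $\eta^\uparrow_p$.
\end{itemize}

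The missing idea is to spend the first of the four steps spreading the mass before splitting. Decompose $\P_p(u^+)=v_1+v_2+v_3$, supported on $\Cgood\cap\Cgood'$, $\Clive\setminus\Cgood$, and $\Cgood\setminus\Cgood'$ respectively, and likewise $\P_p(u^-)=w_1+w_2+w_3$.

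For $f\ge 0$, dropping the indicator $\1_{(\vec A_0,\vec B)\notin\Ccut}$ in the Doob kernel gives, uniformly in the starting point,
\[
(\P_pf)_{\vec B}/h_p(\vec B)\le\lambda_p^{-1}\|f\|_{h+}\,\mu_p(\vec B)/(\Omega\X(p)).
\]
Combine this with your own $\eta^\uparrow_p$ estimate and the identity $\mu_p\eta^\uparrow_p/(\Omega\X(p))=\nu^\uparrow_p$. This yields
\[
\|\P_p(v_2)\|_{h+}\le\lambda_p^{-2}\|u^+\|_{h+}\,\nu^\uparrow_p(\Clive\setminus\Cgood),\qquad \|\P_p(v_3)\|_{h+}\le\lambda_p^{-2}\|u^+\|_{h+}\,\nu^\uparrow_p(\Clive\setminus\Cgood').
\]
This is exactly where the averaged bound of Corollary \ref{cor:SizeCgood} legitimately enters, together with $\nu^\uparrow_p(\Clive\setminus\Cgood')\le\varrho''$.

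Then split in the $h+$ norm, as you suggested. Take $\varrho'=\lambda_p^2/3$, $\varrho=\lambda_p^8\varrho'/6$ and $\varrho''=\lambda_p^8/6$. Either $\|v_1\|_{h+}\ge(\lambda_p^7/6)\|u^+\|_{h+}$, or Lemma \ref{lem:BasicEvoMass} over the remaining three steps gives the $h+$ alternative; the same holds for $u^-$ with the $h-$ alternative. If the first option holds for both, apply Proposition \ref{pro:Coupling} to $v_1/h_p$ and $w_1/h_p$. Using $\|u^\pm\|_{h+}\ge\|u^\pm\|_1/\|h_p\|_\infty$ then gives the $\|\cdot\|_1$ alternative. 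This is why the statement involves $\P^4_p$: one spreading step plus the three coupling steps.
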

\begin{proof}
Let $\varrho'= \lambda_p^2/3$, $\varrho= \lambda_p^8\varrho'/6$ and $\varrho''= \lambda_p^8/6$. This choice is possible since, by Theorem \ref{thm:LowerLambda}, $\varrho, \varrho',\varrho''$ are bounded below by a constant depending only on $d,L,C$.
Note that it suffices to check the desired result for every $v,w$ with disjoint support.
Indeed, if the result holds for $v'=2\max(0,v-w)/(\|v-w\|_1)$ and $w'=2\max(0,w-v)/(\|v-w\|_1)$, which are both in $\Fc^+$, it also holds for $v,w$ since $v'-w'=2(v-w)/\|v-w\|_1$.
We will assume throughout the proof that $v,w$ indeed have disjoint support.
In that case,
\[
\|v-w\|_1=2\quad \text{and} \quad \|v-w\|_{h+}=\|v\|_{h+} \quad \text{and}  \quad \|v-w\|_{h-}=\|w\|_{h+}.
\]

We may decompose $ \P_p(v)$ as $v_1+v_2+v_3$,
where $v_1$ is supported on $\Cgood\cap \Cgood'$, $v_2$ is supported on $\Clive\backslash \Cgood$, and $v_3$ is supported on $\Cgood\backslash\Cgood'$. We decompose $\P_p(w)=w_1+w_2+w_3$ similarly. By the triangle inequality, we have
\[
\|\P^4_p(v)\|_{h+} \leq \|\P^3_p(v_1)\|_{h+}+ \|\P^3_p(v_2)\|_{h+}+ \|\P^3_p(v_3)\|_{h+}.
\]
Then, using Lemma \ref{lem:BasicEvoMass}, we get
\begin{equation}
\|\P^4_p(v)\|_{h+} \leq \lambda_p^{-3} \|v_1\|_{h+}+\lambda_p^{-2} \|\P_p(v_2)\|_{h+}+ \lambda_p^{-2}\|\P_p(v_3)\|_{h+}. \label{eq:SplitGoodBadCoupling}
\end{equation}

We have
\[
\|\P_p(v_2)\|_{h+}
= \sum_{\substack{\vec A_0,\vec A_2\in \Clive \\ \vec A_1\notin \Cgood}}
\frac{1}{h_p(\vec A_2)}\frac{h_p(\vec A_2)}{\lambda_p^2 h_p(\vec A_0)}
v_{\vec A_0} \proba_p^\otimes(\vA_1=\vec A_1,\vA_2=\vec A_2)
\1_{(\vec A_0,\vec A_1)\notin \Ccut}\1_{(\vec A_1,\vec A_2)\notin \Ccut}.
\]
The $h_p(\vec A_2)$ terms cancel. By increasing the sum, we drop the indicator $\1_{(\vec A_0,\vec A_1)\notin \Ccut}$, then sum over $\vec A_0$ to get a $\|v\|_{h+}$ term. Thus,
\[
\|\P_p(v_2)\|_{h+}\leq \lambda_p^{-2}\|v\|_{h+} \sum_{\vec A_1\notin \Cgood} \proba_p^\otimes(\vA_1=\vec A_1,(\vA_1,\vA_2)\notin\Ccut)=\lambda_p^{-2}\|v\|_{h+}\nu^\uparrow_p(\Clive \backslash\Cgood).
\]
By Corollary \ref{cor:SizeCgood}, Lemma \ref{lem:LambdaLowMemory}, and $\varrho= \lambda_p^8\varrho'/6$, we get
\[
\|\P_p(v_2)\|_{h+}\leq (\lambda_p^6/6)\|v\|_{h+} .
\]
Similarly, using \eqref{eq:OrientedX},
\[
\nu^\uparrow_p(\Clive\backslash \Cgood')=\sum_{\vec A_1\notin \Cgood'} \nu^\uparrow_p(\vec A_1)\leq \sum_{\vec A_1\notin \Cgood'} \frac{\mu_p(\vec A_1)}{\Omega\X(p)} \varrho'' \leq \frac{\lambda_p^8}{6},
\]
we get
\[
\|\P_p(v_3)\|_{h+}\leq \lambda_p^{-2}(\lambda_p^8/6)\|v\|_{h+}=(\lambda_p^6/6)\|v\|_{h+} .
\]
Therefore, by \eqref{eq:SplitGoodBadCoupling}, we either have $\|v_1\|_{h+}\geq (\lambda_p^7/6)\|v\|_{h+}$ or
\[
\|\P^4_p(v)\|_{h+}/\|v\|_{h+} \leq \lambda_p^{-3} (\lambda_p^7/6)+\lambda_p^{-2} (\lambda_p^6/6)+ \lambda_p^{-2}(\lambda_p^6/6)=\lambda_p^4/2.
\]
We proceed similarly for $\|\P^4_p(w)\|_{h+}$.

It remains to deal with the case $\|v_1\|_{h+}\geq (\lambda_p^7/6)\|v\|_{h+}$ and $\|w_1\|_{h+}\geq (\lambda_p^7/6)\|w\|_{h+}$. In that case, by applying Proposition \ref{pro:Coupling} to $v_1/h_p$ and $w_1/h_p$, we get
\[
\|\min(\P_p^3(v_1), \P_p^3(w_1))\|_1 \geq (c/3) \lambda_p^2\varrho''\|h_p\|_\infty \min(\|v_1\|_{h+},\|w_1\|_{h+}).
\]
Using $\P_p(v)\geq v_1$ and $\P_p(w)\geq w_1$ pointwise, we get
\[
\|\min(\P_p^4(v), \P^4_p(w))\|_1 \geq (c/3) \lambda_p^2\varrho''\|h_p\|_\infty (\lambda_p^7/6)\min(\|v\|_{h+},\|w\|_{h+}).
\]
Also note that $\|v\|_{h+}=\|v/h_p\|_1\geq \|v\|_1/\|h_p \|_\infty=1/\|h_p\|_\infty$. Similarly $\|w\|_{h+}\geq 1/\|h_p\|_\infty$. Therefore,
\[
\|\min(\P_p^4(v), \P^4_p(w))\|_1 \geq (c/3) \lambda_p^2\varrho''(\lambda_p^7/6).
\]
Hence, since $\P_p^4(v)$ and $\P^4_p(w)$ are in $\Fc_1^+$, we deduce
\[
\|\P_p^4(v)-\P^4_p(w)\|_1/2 \leq 1-(c/3) \lambda_p^2(\lambda_p^8/6)(\lambda_p^7/6).
\]
Since, by Theorem \ref{thm:LowerLambda}, $\lambda_p$ is bounded below by a positive constant, this concludes the last case.
\end{proof}
\begin{proof}[Proof of Theorem \ref{thm:MixingTime}]
Let $N>0$. Let
\[
f: v\in \Fc\mapsto \|v\|^N_1 \|v\|_{h+}\|v\|_{h-}.
\]
By Theorem \ref{thm:LowerLambda}, Lemma \ref{lem:BasicEvoMass}, and Proposition \ref{pro:TrueCoupling}, if $N>0$ is large enough (i.e. such that $(1-c')^N\leq \lambda_p^8/2$), we have for every $v,w\in \Fc_1^+$ that
\[
f(\P_p^4(v-w)) \leq (1/2)f(v-w).
\]
Stated differently, writing $\Fc_0$ for the subset of $v\in \Fc$ with $\sum_{\vec A} v_{\vec A}=0$, we have for every $v\in \Fc_0$,
\[
f(\P_p^4(v)) \leq (1/2)f(v).
\]
Since $\P_p^4$ also acts on $\Fc_0$, we can recursively apply this inequality to get, for every $k\in \N$, $v\in \Fc_0$,
\begin{equation}
f(\P_p^{4k}(v)) \leq (1/2)^kf(v). \label{eq:IteratedCoupling}
\end{equation}
Also note that for every $v\in \Fc_0$, we have
\[
\|v\|_{h+}= \|\max(0,v/h_p)\|_1\geq \|\max(0,v)\|_1/\|h_p\|_\infty =\|v\|_1/(2\|h_p\|_\infty).
\]
Similarly, we have $\|v\|_{h-}\geq \|v\|_1/(2\|h_p\|_\infty)$. Also,
\[
\max(\|v\|_{h-},\|v\|_{h+}) \leq \|v\|_1/\left (2\inf_{\vec A\in \Clive} h_p(\vec A) \right ).
\]
Hence, by \eqref{eq:IteratedCoupling} and Lemma \ref{lem:vpd}, we get for every $v\in \Fc_0$,
\[
\|\P_p^{4k}(v)\|^{N+2}_1\leq 2^{-k}\|v\|^{N+2}_1 \left (\frac{\|h_p\|_\infty}{\inf_{\vec A\in \Clive} h_p(\vec A)} \right )^2 \leq 2^{-k} \|v\|^{N+2}_1 \frac{\Omega^2 \X(p)^2}{(1-p)^{2\Omega}}.
\]
Applying the last result to $v=\1_{\vec A}-\vpg_p$, we get, for every $k\geq 0$,
\[
\|\P^{4k}_p(\1_{\vec A}-\vpg_p) \|_1 \leq \left (2^{-k}2^{N+2}  \frac{\Omega^2 \X(p)^2}{(1-p)^{2\Omega}}\right )^{1/(N+2)} \leq 2^{-(k+1)/(N+2)}\X(p),
\]
where the last inequality holds whenever $\X(p)$ is large enough. Finally, since, by Lemma \ref{lem:BasicEvoMass}, $k\mapsto  \|\P^{k}_p(\1_{\vec A}-\vpg_p) \|_1$ is decreasing, we have, for every $k\in \N$,
\[
\|\P^{k}_p(\1_{\vec A}-\vpg_p) \|_1 \leq 2^{-k/(4N+8)}\X(p).
\]
Since the left-hand side equals the sum in Theorem \ref{thm:MixingTime}, this concludes the proof.
\end{proof}
\subsection{High frequencies for $\Four[\tau_{p}^{\square,k}]$ for Section \ref{sec:TCL}} \label{Sec:HighFrequencies}
The aim of this subsection is to show the following result, which will only be used in Section \ref{sec:TCL}.
We do it here for convenience, because the proof shares many similarities with the proof of Theorem \ref{thm:MixingTime}.
First, recall, for $f:\Z^d\mapsto \R$, that $\Four[f]:\kappa\mapsto \sum_{x\in \Z^d} f(x)e^{\i\langle \kappa,x\rangle}$ denotes the Fourier transform of $f$.
\begin{theorem} \label{thm:FT_High}
Let $C>0$. There exists $F,c,C'>0$ depending only on $d,L,C$ such that for every $p<p_c$ satisfying \eqref{eq:initialization} and $\X(p)\geq F$, we have for every $k\geq 3$, for every $\kappa\in [-\pi,\pi]^{d}$,
\[
|\Four[\tau_p^{\square,k}](\kappa)| \leq C' (1-c\|\kappa\|^2_\infty)^k\|\tau_p^{\square,k}\|_1.
\]
\end{theorem}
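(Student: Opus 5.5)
We express $\Four[\tau_p^{\square,k}]$ through the killed chain and gain a factor $(1-c\|\kappa\|_\infty^2)$ at each block of a bounded number of steps. By \eqref{eq:RewriteA_TauSquare},
\[
\Four[\tau_p^{\square,k}](\kappa)=\Omega^k\X(p)^{k+1}\E_p^\otimes\big[e^{\i\langle\kappa,\SA_k-\delta(\vA_k)\rangle}\1_{\Tcut>k}\big].
\]
This is the total mass of the product of twisted operators $\T_p^\kappa g(\vec A):=\E_p^\otimes[e^{\i\langle\kappa,\Delta(\vA_2)\rangle}g(\vA_2)\1_{(\vec A,\vA_2)\notin\Ccut}]$, with the last step modified by the phase $e^{-\i\langle \kappa,\delta\rangle}$. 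After the Doob $h$-transform, as in \eqref{eq:RewriteB_TauSquare}, this becomes $\lambda_p^{k}$ times (up to bounded prefactors, using Lemma \ref{lem:vpd}) a twisted kernel $\P_p^\kappa(\vec B|\vec A)=\P_p(\vec B|\vec A)e^{\i\langle\kappa,\Delta(\vec B)\rangle}$ acting on $\Fc$. Meanwhile $\|\tau_p^{\square,k}\|_1=\Theta(\X(p)(\Omega\X(p)\lambda_p)^k)$ with constants controlled by Lemma \ref{lem:vpd}, Lemma \ref{lem:LambdaLowMemory} and Theorem \ref{thm:LowerLambda}. So it suffices to show that, for some fixed block length $m$ (we take $m=4$, matching Proposition \ref{pro:TrueCoupling}), $\|(\P_p^\kappa)^m v\|_{\ast}\le(1-c\|\kappa\|_\infty^2)\|v\|_\ast$ in a suitable norm $\|\cdot\|_\ast$ comparable to $\|\cdot\|_1$ and the weighted norms $\|\cdot\|_{h\pm}$. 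The first and last few steps, together with the $X(p)$ factor in $\|h_p\|_\infty/\inf h_p$, are absorbed into $C'$; this is why we need $k\ge3$.

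For the contraction we reuse the coupling of Section \ref{sec:ExponentialMixing}. Trivially $|\P_p^\kappa v|\le \P_p|v|$ pointwise, so Lemma \ref{lem:BasicEvoMass} still holds. The gain comes from the good set: for $\vec A_1\in\Cgood\cap\Cgood'$, Lemma \ref{lem:PrelimCouplingBis} shows that a fixed fraction of the mass of $\P_p^3(\cdot|\vec A_1)$ dominates $c\,h_p\nu_p^\downarrow$, a measure independent of $\vec A_1$. Along these transitions the intermediate clusters $\vA_2,\vA_3$ are essentially free, distributed as independent pointed clusters up to the bounded local modifications $f_r^\uparrow,f_r^\downarrow$, whose Radon--Nikodym cost is bounded as in Proposition \ref{pro:RelateLocalLambda}. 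So the phase $e^{\i\langle\kappa,\Delta(\vA_2)+\Delta(\vA_3)\rangle}$ is averaged against a law comparable to $\mu_p$.

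The key input is a non-degeneracy estimate: $|\E[e^{\i\langle\kappa,\Delta(\vA_2)\rangle}\mid\cdot\,]|\le 1-c\|\kappa\|_\infty^2$. To get it, we fix the cluster $A_2$ and average only over the choice of the boundary edge $\vec a_2$ among the edges at $a_2^-$. Since $\delta(\vA_2)$ ranges over all of $\ball_L(0)$ (the $\Omega$ lattice directions) with comparable weights, averaging $e^{\i\langle\kappa,\delta\rangle}$ over these directions gives modulus at most $1-c\|\kappa\|_\infty^2$ on $[-\pi,\pi]^d$, since $\kappa\neq0$ is not in the dual lattice. The comparability of weights holds because the cut constraint $(\vec A_2,\vA_3)\notin\Ccut$ after modification by $f^\uparrow_r$ is insensitive to which neighbouring edge at $a_2^-$ is chosen, up to a bounded factor.

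Combining this with the triangle inequality in the spirit of Proposition \ref{pro:Coupling}, the good part of the mass loses a fraction $c\|\kappa\|_\infty^2$ in modulus. The bad part ($\Clive\setminus(\Cgood\cap\Cgood')$) is handled exactly as in Proposition \ref{pro:TrueCoupling}: either its $\|\cdot\|_{h+}$ mass shrinks by $\lambda_p^4/2$, or the good part carries a constant fraction. Iterating with a functional of the form $\|v\|_1^N\|v\|_{h}$, as in the proof of Theorem \ref{thm:MixingTime}, gives $(1-c\|\kappa\|^2_\infty)^{k}$ after renormalising by $\lambda_p^k$.

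The main obstacle is the non-degeneracy step. We must show that, conditional on the coupling event, the edge increment $\delta$ still has a spread-out law, uniformly in $p$ and in the conditioning, despite the $h_p$-bias. We also need the phases from distinct directions not to be realigned by the correlation with the next cluster. We would first try to handle this by constructing the local modification so as to explicitly randomise $\vec a_2$ over all $\Omega$ edges at a fixed $a_2^-$ with bounded density ratio.
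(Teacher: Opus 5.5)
\textbf{Gap: the proposed source of cancellation fails in the nearest-neighbour model.} Your overall plan is in the right spirit: lose a factor $1-c\|\kappa\|_\infty^2$ per block of bounded length, using local modifications with bounded Radon--Nikodym cost. The trouble is in the step everything rests on. You fix $A_2$ and $a_2^-$, average $e^{\i\langle\kappa,\delta\rangle}$ over the edge increments $\delta$ with comparable weights, and claim modulus at most $1-c\|\kappa\|_\infty^2$ because $\kappa\notin 2\pi\Z^d$.

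For $L=1$, which is the case of main interest in the paper, the increments are the $2d$ vectors $\pm e_j$. At $\kappa=(\pi,\dots,\pi)$ every admissible $\delta$ gives $e^{\i\langle\kappa,\delta\rangle}=-1$. So any convex combination, uniform or with merely comparable weights, has modulus exactly $1$, whereas the theorem requires $1-c\pi^2$. By continuity the bound also fails in a neighbourhood of each point of $\{\pm\pi\}^d$.

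The correct criterion is aperiodicity of the step set, not $\kappa\notin2\pi\Z^d$, and nearest-neighbour steps are periodic because the lattice is bipartite. Hence no local modification that only re-randomises $\vec a_2$ at a fixed $a_2^-$ can give the estimate. The randomness must change the parity of the displacement. Separately, for a fixed cluster the weights are not even comparable: some directions put $a_2^+$ inside $A_2$ (so $\vA_2\in\Ckill$) or create a cut with $\vA_3$. You flag this, and a modification could repair it, but it cannot repair the parity problem.

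\textbf{How the paper gets non-degeneracy.} Proposition \ref{pro:RotateVsRotate} takes $i$ with $|\kappa_i|=\|\kappa\|_\infty$ and compares $(\vA_1,\vA_2)$ with $(f_r^\uparrow(\vA_1),f_r^\downarrow(\vA_2)-\delta_i)$. In other words, it shifts the locally modified second cluster by the single unit vector $-\delta_i$.
\begin{itemize}
\item This changes $\Delta(\vA_1)+\Delta(\vA_2)$ by exactly $-\delta_i$, so it multiplies $e^{\i\langle\kappa,\Delta(\vA_1)+\Delta(\vA_2)\rangle}$ by exactly $e^{-\i\kappa_i}$.
\item The modification has bounded Radon--Nikodym cost and preserves the non-cut constraints with high probability (Proposition \ref{pro:OrientedOpenTriangleSwitching2}).
\item Therefore the conditional phase cannot concentrate on an arc of length $|\kappa_i|$, and \eqref{eq:Concentred_Periodicity} gives $\Amp_{3,\kappa}\le 1-c\kappa_i^2$.
\end{itemize}
Since the shift is by one lattice unit, the parity issue disappears.

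\textbf{The block argument.} The paper also avoids any twisted-kernel contraction. It conditions on $\vA_0,\vA_3,\vA_6,\dots$ under the i.i.d.\ law $\proba_p^\otimes$, applies the triangle inequality, and sorts blocks into three types:
\begin{itemize}
\item blocks with small $\Amp_{3,\kappa}$;
\item blocks with small $P_3^\otimes$, paid for by Lemma \ref{lem:LambdaLowMemory} and Theorem \ref{thm:LowerLambda};
\item the remaining bad blocks, which are rare by Proposition \ref{pro:SmallRotation} and independent across disjoint pairs.
\end{itemize}
Your suggested iteration with a functional like $\|v\|_1^N\|v\|_h$ would need $N$ of order $\|\kappa\|_\infty^{-2}$ and a complex-valued analogue of Proposition \ref{pro:TrueCoupling}, neither of which you address. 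The decisive gap, however, is the periodicity issue above.
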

For every $i\geq 0$, $\vec A_0, \vec A_i\in \vec \Comp(0)$, let
\[
P^\otimes_i(\vec A_0,\vec A_i):= \proba^\otimes_p(\Tcut>i|\vA_0=\vec A_0, \vA_i=\vec A_i).
\]
Then let $\Amp_{i,\kappa}(\vec A_0,\vec A_i)\in \R_+$ and $\Rot_{i,\kappa}(\vec A_0,\vec A_i)$ be such that $|\Rot_{i,\kappa}(\vec A_0,\vec A_i)|=1$ and such that
\[
\Amp_{i,\kappa}(\vec A_0,\vec A_i)\Rot_{i,\kappa}(\vec A_0,\vec A_i) = \E_p^\otimes \left [e^{\i\langle \kappa, \Delta(\vA_1)+\dots \Delta(\vA_{i-1})\rangle}\middle |\vA_0=\vec A_0, \vA_i=\vec A_i,\Tcut>i  \right ].
\]
By convention, $\Amp_{i,\kappa}(\vec A_0,\vec A_i)=1$ if $P^\otimes_i(\vec A_0,\vec A_i)=0$.
 Let us first assume the next technical result:
\begin{proposition} \label{pro:SmallRotation}
Let $C,\varrho,\varrho'>0$. There exists $F,c>0$ such that for every $p<p_c$ satisfying \eqref{eq:initialization} and $\X(p)\geq F$, for every $\kappa\in [-\pi,\pi]^d$, we have
\[
\proba_p^\otimes \left (\Upsilon_{3,\kappa}(\vA_0,\vA_3)>1-c\|\kappa\|^2_\infty \quad \text{ and } \quad P_3^\otimes(\vA_0,\vA_3)> \varrho \right ) \leq \varrho'.
\]
\end{proposition}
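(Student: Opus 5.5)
The plan is to produce the required loss of amplitude from \emph{local modifications} of the middle cluster $\vA_1$ near its endpoint $\abf_1^+$, reusing the philosophy of Proposition~\ref{pro:RelateLocalLambda} and Lemmas~\ref{lem:NoCutAfterModif_a}--\ref{lem:NoCutAfterModif_c}.

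\textbf{Step 1: reduce the amplitude bound to a comparability statement.} Conditionally on $\vA_0=\vec A_0$, $\vA_3=\vec A_3$ and $\Tcut>3$, the pair $(\vA_1,\vA_2)$ has law proportional to
\[
\mu_p(\vec A_1)\mu_p(\vec A_2)\1_{(\vec A_0,\vec A_1)\notin\Ccut}\1_{(\vec A_1,\vec A_2)\notin\Ccut}\1_{(\vec A_2,\vec A_3)\notin\Ccut},
\]
and the rotated variable is $X=\abf_1^++\abf_2^+$. Let $e_1,\dots,e_d$ be the unit vectors. If for each $i$ the conditional law of $X$ satisfies $\sum_z\min(P(X=z),P(X=z+e_i))\ge\eta$, then splitting off this common mass gives
\[
\bigl|\E[e^{\i\langle\kappa,X\rangle}]\bigr|\le 1-\eta\,(1-|\cos\kappa_i|)/2 .
\]
The coordinate-wise bound $1-\cos\kappa_i\ge 2\kappa_i^2/\pi^2$ on $[-\pi,\pi]$ is not what enters here. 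Because of the absolute value, the factor is $1-|\cos\kappa_i|$, which vanishes at $\kappa_i=\pm\pi$, so the single shift by $e_i$ cannot control the corner $\|\kappa\|_\infty$ near $\pi$.

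To handle this corner I would enlarge the shift set. I would use two shifts $y\in\{e_i,2e_i\}$, each with comparable mass $\eta$. Since $\max(1-|\cos\kappa_i|,1-|\cos 2\kappa_i|)$ is bounded below by a constant times $\kappa_i^2$ on all of $[-\pi,\pi]$, choosing $i$ to maximise $|\kappa_i|$ gives $\Upsilon_{3,\kappa}\le1-c\|\kappa\|_\infty^2$.

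\textbf{Step 2: the modification maps.} For each shift $y\in Y:=\{e_i,2e_i:1\le i\le d\}$, I would build a map $g_y$ on pointed clusters. Given $\vec A=(A,\vec a)$, $g_y(\vec A)$ is obtained by opening a bounded path of edges from $a^-$ to a new vertex $a'^-$ inside a fixed ball $\ball_{r}(a^+)$, and choosing a new oriented edge $\vec a'$ with $a'^+=a^++y$. The procedure mimics the construction of $f_r$ and $f_r^\uparrow$.

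As in the proof of Proposition~\ref{pro:RelateLocalLambda}, two facts hold. First, $\mu_p(g_y(\vec A))\ge(18\Omega)^{-N}\mu_p(\vec A)$. Second, $g_y$ is at most $M$-to-one, with $N,M$ depending only on $d,L,r$. Hence the law of $(g_y(\vA_1),\vA_2)$ restricted to the no-cut events is comparable to that of $(\vA_1,\vA_2)$, and its $X$ is shifted by exactly $y$. It then suffices that, for most configurations, $g_y$ preserves the three no-cut indicators.

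\textbf{Step 3: modifications rarely create cuts, and Markov's inequality.} Let $\mathrm{Bad}$ be the event that $\Tcut>3$ but some $g_y(\vA_1)$ creates a cut with $\vA_0$ or with $\vA_2$. A new cut requires one of two things.
\begin{itemize}
\item[(i)] $\vA_0$, translated, meets the modified region near $a_1^-$ or the path to it. This is bounded like Lemma~\ref{lem:NoCutAfterModif_c}, giving $O_r(1/\X(p))$.
\item[(ii)] The modified part of $\vA_1-\abf_1^+$, which now lies near $0$ in the frame of $\vA_2$, disconnects $0$ from $\abf_2^-$ in $\A_2$. This is where the main difficulty lies.
\end{itemize}
For (ii), I would first apply the same trick as in $f_r$: restrict to configurations where $\A_2$ still connects $0$ to $\abf_2^-$ after removing $\ball_{2r}(0)$, apart from an initial path. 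The probability of failure is controlled by Lemmas~\ref{lem:NoCutAfterModif_a}--\ref{lem:NoCutAfterModif_b} with $r$ large. A remaining cut then forces a two-cluster diagram that tends to $0$ as $r\to\infty$ under \eqref{eq:initialization}.

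This yields $\proba_p^\otimes(\mathrm{Bad})\le\varepsilon(r,\X(p))$, which can be made small. By Markov's inequality, the set of pairs $(\vec A_0,\vec A_3)$ with $P_3^\otimes>\varrho$ and conditional $\mathrm{Bad}$-mass larger than $1/2$ has probability at most $2\varepsilon/\varrho\le\varrho'$. On the complementary pairs, Step~2 gives $\eta\ge c(d,L,r)>0$, and Step~1 concludes.

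I expect case (ii) to be the genuinely delicate point. There the modification sits exactly at the origin of $\vA_2$'s frame, and one must make sure the choice of $r$ (fixed first) and then of $F$ is compatible with the comparability constants of Step~2.
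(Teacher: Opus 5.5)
Your architecture matches the paper's. You perform a local surgery with bounded Radon--Nikodym cost that shifts $\abf_1^++\abf_2^+$ by a unit vector, show the surgery rarely creates cuts, apply Markov's inequality over the endpoint pairs $(\vA_0,\vA_3)$, and choose $i$ maximising $|\kappa_i|$. However, the surgery you pick fails at your point (ii).

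\textbf{The gap: modifying only $\vA_1$.} In the frame of $\vA_2$, the removed set is $g_y(\vA_1)$ translated by its new endpoint, and its modified part sits right at the origin. It contains a neighbour of $0$ (the translated tail of the new pointed edge) and the path you opened in $\ball_r(\abf_1^+)$, which now lies within distance about $r$ of $0$. Since $\A_2$ is untouched near $0$, nothing forces it to connect $0$ to $\abf_2^-$ around this new set. For instance, $\A_2$ may leave $0$ through a single open edge that runs into the opened path while avoiding the old removed set. Such configurations carry a proportion of the no-cut mass that does not vanish as $r\to\infty$, $\X(p)\to\infty$. Because $g_y$ is a function of $\vA_1$ alone, it cannot steer around them.

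Consequently $\proba_p^\otimes(\mathrm{Bad})$ stays of order one, and your Markov step collapses. A merely positive average proportion of good configurations would not rescue it either, since that proportion could concentrate on few endpoint pairs. Restricting to $\A_2$ that reach $\abf_2^-$ outside $\ball_{2r}(0)$ ``apart from an initial path'' does not help, because that initial path is exactly what the surgery can cut. In $f_r$ this is handled by \emph{opening} all edges of the ball in the second cluster, not by restricting to it.

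\textbf{The missing idea: act on both sides of the junction.} Thin out $\vA_1$ near $\abf_1^+$ with $f_r^\uparrow$, and open $\ball_r(0)$ in $\vA_2$ with $f_r^\downarrow$. Then realise the unit shift by translating $f_r^\downarrow(\vA_2)$ by $-\delta_i$, i.e.\ re-rooting it at $\delta_i$. This is free: $0$ and $\delta_i$ both lie in the opened ball, so $\mu_p$ is unchanged. Moreover $\A_2-\abf_2^+$ does not move, so the junction with $\vA_3$ only sees a far-away modification. At the junction with $\vA_1$, a thin path meets a full ball and cannot disconnect it. This is the event $E_{r,i}$, and Proposition~\ref{pro:OrientedOpenTriangleSwitching2} gives it probability at least $(1-\varrho)\Zb_p^2$.

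\textbf{Two smaller points.} First, the cancellation factor in your Step~1 is wrong. Split half of the overlap $\eta=\sum_z\min(P(X=z),P(X=z+e_i))$ into pairs $\{z,z+e_i\}$. Since $|1+e^{\i\kappa_i}|=2\cos(\kappa_i/2)$ for $|\kappa_i|\le\pi$, this gives $|\E[e^{\i\langle\kappa,X\rangle}]|\le 1-\eta\,(1-\cos(\kappa_i/2))$. Moreover $1-\cos(\kappa_i/2)\ge\kappa_i^2/(2\pi^2)$ on all of $[-\pi,\pi]$, and the cancellation is total at $\kappa_i=\pm\pi$. So there is no corner problem and the single shift $e_i$ suffices. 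This is also why the paper works with the half-width arc $D_{\zeta,\kappa}$ and the factor $1-\cos(\kappa_i/2)$. Your proposed remedy would not have worked anyway: both $1-|\cos\kappa_i|$ and $1-|\cos 2\kappa_i|$ vanish at $\kappa_i=\pm\pi$. With the corrected factor, your overlap argument is a fine substitute for the paper's arc argument.

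Second, $f_r$-type surgeries are not boundedly-many-to-one on clusters, since they discard whatever hangs off the ball. The comparability you need must instead be proved at the level of configurations, by changing at most $N$ edges, as in Proposition~\ref{pro:RelateLocalLambda}.
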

\begin{proof}[Proof of Theorem \ref{thm:FT_High} given Proposition \ref{pro:SmallRotation}]
First, by integrating \eqref{eq:RewriteA_TauSquare}, we have
\[
\Four[\tau_p^{\square,k}](\kappa)= \Omega^k \X(p)^{k+1} \E_p^\otimes[e^{\i\langle \SA_k-\delta(\vec \A_k),\kappa\rangle},\Tcut>k].
\]
We condition on $\vA_0,\vA_3,\vA_6,\dots, \vA_{3I}$, where $I$ is the largest integer such that $3I<k$, so that we may rewrite the last expectation as
\[
\E_p^\otimes  \left [
e^{-\i\langle \delta(\vA_k),\kappa\rangle }
\prod_{j=0}^{I} e^{\i \langle \Delta(\vA_{3j}),\kappa \rangle}
\prod_{j=0}^{I-1}[P_3^\otimes\cdot \Amp_{3,\kappa}\cdot  \Rot_{3,\kappa}](\vA_{3j},\vA_{3j+3})
\prod_{3I<j\leq k} e^{\i \langle \Delta(\vA_j),\kappa \rangle} \1_{(\vA_{j-1},\vA_j)\notin \Ccut}
\right ].
\]
Thus, by the triangle inequality,
\begin{equation}
|\Four[\tau_p^{\square,k}](\kappa)|
\leq \Omega^k \X(p)^{k+1}
\E_p^\otimes \left [
\prod_{j=0}^{I-1}[P_3^\otimes\cdot \Amp_{3,\kappa}](\vA_{3j},\vA_{3j+3})
\prod_{j=3I+1}^k \1_{(\vA_{j-1},\vA_j)\notin \Ccut}
\right ]. \label{eq:Fourrier_Partial_Cond}
\end{equation}

Fix $\varrho,\varrho'>0$. By Theorem \ref{thm:LowerLambda}, we may take $\varrho,\varrho'>0$ small enough such that $\varrho\leq \lambda_p^9/2$ and $\varrho'\leq \lambda_p^{54}/3^6$, whenever $\X(p)$ is large enough.
 Let $c$ be as in Proposition \ref{pro:SmallRotation}.
  Let
 \[
S_1:=\{(\vec A_0,\vec A_3)\in (\vec \Comp(0))^2,  \Amp_{3,\kappa}(\vec A_0,\vec A_3) \leq 1-c\|\kappa\|_\infty^2\},
\]
 and
  \[
  S_2:=\{(\vec A_0,\vec A_3)\in (\vec \Comp(0))^2,  P_3^\otimes(\vec A_0,\vec A_3)\leq \varrho\},
  \]
  and let
  \[
  S_3:=(\vec \Comp(0))^2\backslash (S_1\cup S_2).
  \]
	 We write $N_1,N_2,N_3$ for the number of pairs $(\vA_{3j},\vA_{3j+3})$ with $0\leq j <I$ that are in $S_1,S_2,S_3$, respectively.
We split the expectation in \eqref{eq:Fourrier_Partial_Cond} according to whether $N_1\geq I/3$, $N_2\geq I/3$, or $N_3\geq I/3$, and we denote the corresponding expectations by $E_1,E_2,E_3$, respectively.
	  First, for $E_1$, using that $\Amp_{3,\kappa}(\vA_{3j},\vA_{3j+3})\leq 1-c\|\kappa\|_\infty^2$ for at least $I/3$ pairs, and $0\leq \Amp_{3,\kappa}\leq 1$ pointwise,
\begin{align*}
E_1& \leq \E_p^\otimes \left [(1-c\|\kappa\|_\infty^2)^{I/3}\prod_{j=0}^{I-1}P_3^\otimes(\vA_{3j},\vA_{3j+3}) \prod_{j=3I+1}^k \1_{(\vA_{j-1},\vA_j)\notin \Ccut} \right ]
\\ & =(1-c\|\kappa\|^2_\infty)^{I/3}\proba_p^\otimes(\Tcut>k) .
\end{align*}

For $E_2$, using $0\leq P_3^\otimes\leq 1$ and $0\leq \Amp_{3,\kappa}\leq 1$ pointwise,
and $P_3^\otimes(\vA_{3j},\vA_{3j+3})\leq \varrho$ for at least $I/3$ values of $j$, we have $E_2\leq \varrho^{I/3}.$
Then, using Lemma \ref{lem:LambdaLowMemory} and $\varrho\leq \lambda_p^9/2$, we get
\[
E_2 \leq (1/2)^{I/3} \lambda_p^{3I} \leq (1/2)^{I/3}\proba_p^\otimes(\Tcut>k)/\lambda_p^4.
\]

For $E_3$, by splitting odd and even indices, and using basic inequalities for sums of independent Bernoulli random variables, we get
\[
E_3\leq \proba_p^\otimes(N_3\geq I/3) \leq 2^I \varrho'^{I/6}\leq (2/3)^I \lambda_p^{3I}\leq (2/3)^I\proba_p^\otimes(\Tcut>k)/\lambda_p^4,
\]
using again Lemma \ref{lem:LambdaLowMemory} for the last inequality.

Summing the bounds on $E_1,E_2,E_3$, and returning first to \eqref{eq:Fourrier_Partial_Cond} and then to \eqref{eq:RewriteA_TauSquare}, we deduce
\begin{align*}
|\Four[\tau_p^{\square,k}](\kappa)| & \leq ((1-c\|\kappa\|^2_\infty)^{I/3}+(1/2)^{I/3}/\lambda_p^4+ (2/3)^I/\lambda_p^4) \Omega^k \X(p)^{k+1}\proba_p^\otimes(\Tcut>k)
\\ & = ((1-c\|\kappa\|^2_\infty)^{I/3}+(1/2)^{I/3}/\lambda_p^4+ (2/3)^I/\lambda_p^4) \|\tau_p^{\square,k}\|_1.
\end{align*}
By Theorem \ref{thm:LowerLambda}, $\lambda_p$ is bounded below by a constant, and the desired result follows by fixing $c$ small enough and taking $\X(p)$ large enough.
 \end{proof}
 The main idea behind the proof of Proposition \ref{pro:SmallRotation} is that we can usually pay a multiplicative constant to locally modify $\vA_1,\vA_2$ to change $\Delta(\vA_1)+\Delta(\vA_2)$.
More precisely, let $(\delta_1,\dots,\delta_d)$ be the usual orthonormal basis of $(\R^d,\langle\cdot,\cdot \rangle)$.
Let $f_r^\uparrow (\vec A)$ and $f_r^\downarrow(\vec A)$ be defined as in the previous subsection (see Figure \ref{fig:LocalModifB}).
 For $1\leq i \leq d$, let $E_{r,i}(\vec A_0,\vec A_1,\vec A_2,\vec A_3)$ be the event that, 
 \[
(\vec A_0,f_r^\uparrow(\vec A_1))\notin \Ccut \quad \text{and} \quad (f_r^\uparrow(\vec A_1),f_r^\downarrow(\vec A_2)-\delta_i)\notin \Ccut \quad \text{and} \quad (f_r^\downarrow(\vec A_2)-\delta_i,\vec A_3)\notin \Ccut .
\]
 \begin{proposition} \label{pro:OrientedOpenTriangleSwitching2}
 Let $p<p_c$ and $C>0$ be such that \eqref{eq:initialization} holds. For every $\varrho>0$, if $r>0$ is large enough depending on $d,L,C,\varrho$, and if $\X(p)$ is large enough depending on $d,L,C,r,\varrho$, we have for every $1\leq i \leq d$,
\[
\proba^\otimes_p( (\vec \A_0,\vec \A_1)\notin \Ccut ,(\vec \A_2,\vec \A_3)\notin\Ccut , E_{r,i}(\vec \A_0,\vec \A_1,\vec \A_2,\vec \A_3)) \geq (1-\varrho)\Zb_p^2.
\]
\end{proposition}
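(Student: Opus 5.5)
The plan is to copy the proof of Proposition \ref{pro:OrientedOpenTriangleSwitching}, which already gives the analogous statement without the shift by $\delta_i$. By a union bound, it suffices to show that each of the following three probabilities is at most $\varrho/3$ (times a constant, which can be absorbed):
\[
\proba^\otimes_p((\vA_0,\vA_1)\notin \Ccut,\,(\vA_0,f_r^\uparrow(\vA_1))\in \Ccut),\qquad
\proba^\otimes_p((f_r^\uparrow(\vA_1),f_r^\downarrow(\vA_2)-\delta_i)\in \Ccut),
\]
\[
\proba^\otimes_p((\vA_2,\vA_3)\notin \Ccut,\,(f_r^\downarrow(\vA_2)-\delta_i,\vA_3)\in \Ccut).
\]
Here $f_r^\downarrow(\vA_2)-\delta_i$ is read as the pointed cluster obtained by shifting the pointing edge, so that $a^+$ moves by $-\delta_i$, while the cluster near $0$ keeps the form imposed by $f^\downarrow_r$. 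We may have to adjust $f^\downarrow_r$ slightly so that this is still a valid element of $\vec\Comp(0)$: the modification around $0$ should include the vertex $-\delta_i$ and a short path to it. This is only a local change of bounded size. Once the three bounds hold, the conclusion follows exactly as before, since $\Zb_p\ge\lambda_p^2$ is bounded below by Lemma \ref{lem:LambdaLowMemory} and Theorem \ref{thm:LowerLambda}.

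\textbf{First term.} This is literally the same as in Proposition \ref{pro:OrientedOpenTriangleSwitching}. The modification $f^\uparrow_r$ near $\abf_1$ does not involve $\delta_i$, so the bound is inherited directly. It comes from adapting Lemmas \ref{lem:NoCutAfterModif_a}--\ref{lem:NoCutAfterModif_c}.

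\textbf{Third term.} Shifting by $-\delta_i$ only moves $f^\downarrow_r(\vA_2)$ inside a ball of radius $1\le r$. The arguments of Lemmas \ref{lem:NoCutAfterModif_a}--\ref{lem:NoCutAfterModif_c} apply unchanged, with every window $\ball_r(\cdot)$ or $\|\cdot\|_1\le 9L$ enlarged by one unit. The resulting diagrams are again sums of the form $\sum \langle\cdot\rangle^{2-d}\langle\cdot\rangle^{2-d}$ restricted to $\|w\|>r-L-1$, which tend to $0$ as $r\to\infty$. Otherwise they are of order $O_r(1)/\X(p)$, which tends to $0$ as $\X(p)\to\infty$.

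\textbf{Middle term (main obstacle).} This is the step I expect to be hardest. We must check that $f^\uparrow_r(\vA_1)$, translated by $-\abf_1^+$, does not disconnect $0$ from the pointing edge of $f^\downarrow_r(\vA_2)-\delta_i$. I would split according to the three pieces of the path $\Gamma^1_r,\Gamma^2_r,\Gamma^3_r$, as in the four lemmas before Proposition \ref{pro:OrientedOpenTriangle}.

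Near $0$, $f^\downarrow_r$ has opened all of $\ball^\edges_r(0)$. Near $\abf_1^+$, $f^\uparrow_r$ leaves only a minimal path avoiding $\abf_1^+$. A cut therefore requires either $\|v_r(\vA_1)-\abf_1^+-w_r\|_1\le 9L+1$, or a vertex of the far part of $\vA_1$ lying on $\Gamma^2_r(\vA_2)$ or $\Gamma^3_r(\vA_2)$. In both cases the tree-graph and BK inequalities, followed by \eqref{eq:initialization}, give the same convergent diagrams as before. The shift by $\delta_i$ enters only as a bounded translation of the summation variable. Since the convergent sums are uniform under bounded translations, the tails again vanish as $r\to\infty$ and the remaining terms vanish as $\X(p)\to\infty$.

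The delicate point is making sure the shifted pointing edge still has its tail in the modified cluster, and that the modification near $0$ does not itself create a cut. I would first try to settle this by a direct construction, using $r\ge 5L$, so that $\ball_r(0)$ contains both $0$ and $-\delta_i$ with all edges open.
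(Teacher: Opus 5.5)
Your route is the paper's: it proves this statement exactly like Proposition~\ref{pro:OrientedOpenTriangleSwitching}. That means a union bound over the three bad events, each made small by adapting Lemmas~\ref{lem:NoCutAfterModif_a}--\ref{lem:NoCutAfterModif_c}, followed by the fact that $\Zb_p\geq\lambda_p^2$ is bounded below.

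What needs correcting is your reading of $f^\downarrow_r(\vA_2)-\delta_i$. Write $f^\downarrow_r(\vA_2)=(B,\vec b)$. If you shift only the pointing edge, you get an element of $\vec\Comp(0)$ only when $b^--\delta_i\in B$. Nothing makes this likely: $b^-$ is a typical vertex of a sparse cluster, usually far from $0$. When it fails, the pair with $f^\uparrow_r(\vA_1)$ is automatically in $\Ccut$, since $B\offx{0}(\cdot)\subset B$. Your proposed patch, adding $-\delta_i$ and a path to it near $0$, acts at the wrong place and does not repair this.

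The intended object, following the paper's convention $S+x$ for the natural translate, is the translate of the whole pointed cluster, $(B-\delta_i,\vec b-\delta_i)$. It lies in $\vec\Comp(0)$ for free: $f^\downarrow_r$ opens $\ball^\edges_r(0)$, so $\delta_i\in B$, and no modification of $f^\downarrow_r$ is needed.

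With this reading the argument is simpler than you expect. Whether $(\vec A,\vec B)\in\Ccut$ depends on $\vec A$ only through $A-a^+$, which is translation invariant. Hence the third event is literally $(f^\downarrow_r(\vA_2),\vA_3)\notin\Ccut$. The first event does not involve $\delta_i$ at all. Both therefore have the same law as the corresponding events in Proposition~\ref{pro:OrientedOpenTriangleSwitching}, and no enlarged windows are needed there.

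Only the middle event changes. With $f^\uparrow_r(\vA_1)=(A',\vec a)$ it reads $b^-\in B\offx{\delta_i}(A'-a^++\delta_i)$. So the root moves from $0$ to $\delta_i$ inside the fully open ball, and the cutting set is translated by $\delta_i$. That is exactly the bounded translation (window $9L+1$) your middle-term argument already handles. With the correct reading your proof goes through, and the ``delicate point'' you flag disappears.
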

\begin{proof}
The proof is exactly the same as that of Proposition \ref{pro:OrientedOpenTriangleSwitching} and is thus omitted.
\end{proof}
Recall the definitions of $\nu^\downarrow_p$, $\nu^\uparrow_p$, and $\Zb_p$ from \eqref{eq:def_nu_up+down} in the previous subsection.
Let
\[
\sigma_{i}(\vec A_0,\vec A_3):=\proba^\otimes_p(\vA_0=\vec A_0, \vA_3=\vec A_3, (\vec \A_0,\vec \A_1)\notin \Ccut ,(\vec \A_2,\vec \A_3)\notin\Ccut , E_{r,i}(\vec \A_0,\vec \A_1,\vec \A_2,\vec \A_3)),
\]
and let $\vec \Comp_{i,\varrho}$ be the set of $\vec A_0,\vec A_3\in \Clive$ such that $\sigma_i(\vec A_0,\vec A_3) \geq (1-\sqrt{\varrho}) \nu^\uparrow_p(\vec A_0)\nu_p^\downarrow(\vec A_3)$.
\begin{corollary} \label{cor:SizeCgood2}
We have $\nu^\uparrow_p\times \nu^\downarrow_p((\vec \Comp(0))^2\backslash \vec \Comp_{i,\varrho})\leq \sqrt{\varrho} \Zb_p^2$.
\end{corollary}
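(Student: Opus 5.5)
This is a Markov-type averaging argument, the two-variable analogue of Corollary \ref{cor:SizeCgood}. The only inputs are a pointwise upper bound on $\sigma_i$ and a lower bound on its total mass.

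\emph{Pointwise bound.} Dropping the event $E_{r,i}$ gives, for every $\vec A_0,\vec A_3$,
\[
\sigma_i(\vec A_0,\vec A_3)\leq \proba^\otimes_p\big(\vA_0=\vec A_0,(\vA_0,\vA_1)\notin\Ccut\big)\,\proba^\otimes_p\big(\vA_3=\vec A_3,(\vA_2,\vA_3)\notin\Ccut\big).
\]
The factorisation holds because the pairs $(\vA_0,\vA_1)$ and $(\vA_2,\vA_3)$ are independent under $\proba_p^\otimes$. By stationarity of the i.i.d.\ sequence and \eqref{eq:def_nu_up+down}, the right-hand side equals $\nu^\uparrow_p(\vec A_0)\nu^\downarrow_p(\vec A_3)$.

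\emph{Total mass.} Summing $\sigma_i$ over all pairs and applying Proposition \ref{pro:OrientedOpenTriangleSwitching2} gives $\sum\sigma_i\geq (1-\varrho)\Zb_p^2$. Since $\|\nu^\uparrow_p\|_1=\|\nu^\downarrow_p\|_1=\Zb_p$, the product measure $\nu^\uparrow_p\times\nu^\downarrow_p$ has total mass $\Zb_p^2$.

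\emph{Splitting.} Write $B=(\vec\Comp(0))^2\backslash\vec\Comp_{i,\varrho}$. On $\vec\Comp_{i,\varrho}$, bound $\sigma_i$ by $\nu^\uparrow_p\nu^\downarrow_p$. On $B$, either the pair is not in $\Clive^2$, or $\sigma_i<(1-\sqrt\varrho)\nu^\uparrow_p\nu^\downarrow_p$. This yields
\[
(1-\varrho)\Zb_p^2\leq \Zb_p^2-\sqrt{\varrho}\,(\nu^\uparrow_p\times\nu^\downarrow_p)(B\cap\Clive^2)+\text{(mass off }\Clive^2).
\]
The mass off $\Clive^2$ vanishes. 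Indeed, $\nu^\uparrow_p$ is zero on $\Ckill$, since $\vec A\in\Ckill$ forces $(\vec A,\cdot)\in\Ccut$. Also $\nu^\downarrow_p(\vec A_3)$ for $\vec A_3\in\Ckill$ contributes nothing to $\sigma_i$, and this carries over to the measure of $B$ once we restrict to pairs with $\vec A_0\in\Clive$. The only care needed is this bookkeeping on $\Ckill$: I would check that pairs with $\vec A_3\in\Ckill$ either carry zero $\nu^\uparrow_p\times\nu^\downarrow_p$-mass or are implicitly excluded by the definition of $\vec\Comp_{i,\varrho}\subset\Clive^2$. If the latter, I would note that the corollary is only used on $\Clive^2$ and interpret the complement accordingly.

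\emph{Conclusion.} Rearranging gives $(\nu^\uparrow_p\times\nu^\downarrow_p)(B)\leq \frac{\varrho}{\sqrt\varrho}\Zb_p^2=\sqrt\varrho\,\Zb_p^2$, which is the claim.
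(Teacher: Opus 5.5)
Your averaging argument is the paper's route. The paper only says the proof is that of Corollary~\ref{cor:SizeCgood} with $\varrho'=\sqrt{\varrho}$: the pointwise bound $\sigma_i\leq\nu^\uparrow_p\nu^\downarrow_p$ from independence of $(\vA_0,\vA_1)$ and $(\vA_2,\vA_3)$, the total mass $\sum\sigma_i\geq(1-\varrho)\Zb_p^2$ from Proposition~\ref{pro:OrientedOpenTriangleSwitching2}, and Markov's inequality at threshold $1-\sqrt{\varrho}$.

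The gap is the bookkeeping on $\Ckill$, which is exactly where the two-variable case differs from Corollary~\ref{cor:SizeCgood}. Your claim that the $\nu^\uparrow_p\times\nu^\downarrow_p$-mass off $(\Clive)^2$ vanishes is false. It is true that $\nu^\uparrow_p(\Ckill)=0$, but $\Ccut$ only excludes $\Ckill$ in the \emph{first} coordinate, so $\nu^\downarrow_p$ charges $\Ckill$.
\begin{itemize}
\item For instance, let $\vec A$ be the single edge $\{0,b\}$ pointed by $(0,b)$. Then $\vec A\in\Ckill$ and $\nu^\downarrow_p(\vec A)=\proba^\otimes_p(\vA_2=\vec A)\Plive_p>0$.
\item Quantitatively, adding $a^+$ and the edge $\{a^-,a^+\}$ to any $(A,\vec a)\in\Clive$ is injective. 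It cannot create a cut, since it only enlarges $A\offx{0}(A_1-a_1^+)$. It multiplies $\mu_p$ by at least $p(1-p)^{\Omega}$. Using $1/(9\Omega)<p<1/2$, this gives $\nu^\downarrow_p(\Ckill)\geq c\,\Zb_p$ with $c=c(d,L)>0$.
\end{itemize}
Your side remark is also wrong: $\sigma_i(\vec A_0,\vec A_3)$ can be positive for $\vec A_3\in\Ckill$, since nothing forces $\vA_3$ to be live. In any case, what matters is the measure of $B$, not $\sigma_i$. Since $\vec\Comp_{i,\varrho}\subset(\Clive)^2$ by definition, all of $\Clive\times\Ckill$ lies in $B$, with mass $\Zb_p\,\nu^\downarrow_p(\Ckill)\geq c\,\Zb_p^2$. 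So the inequality as literally stated fails once $\sqrt{\varrho}<c$, and your final step does not follow.

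Your computation does prove something, once you delete the spurious ``$+$(mass off $(\Clive)^2$)'' term. Pairs off $(\Clive)^2$ satisfy $\sigma_i\leq\nu^\uparrow_p\nu^\downarrow_p$ and are already counted in $\Zb_p^2$. What you get is $(\nu^\uparrow_p\times\nu^\downarrow_p)((\Clive)^2\backslash\vec\Comp_{i,\varrho})\leq\sqrt{\varrho}\,\Zb_p^2$.

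Reinterpreting the complement this way does not suffice downstream. Corollary~\ref{cor:Bad_Case_SmallRotation} must control all pairs $(\vA_0,\vA_3)\notin\vec\Comp_{i,\varrho}$, including those with $\vA_3\in\Ckill$ and $P_3^\otimes$ large. The right repair is to the statement itself: drop the restriction to $\Clive$ in the definition of $\vec\Comp_{i,\varrho}$ (the proof of Proposition~\ref{pro:RotateVsRotate} never uses it). Pairs with $\vec A_0\in\Ckill$ then satisfy the defining inequality trivially, since $\sigma_i=\nu^\uparrow_p(\vec A_0)=0$. Your Markov argument then gives the corollary verbatim.
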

\begin{proof}
The proof is essentially the same as that of Corollary \ref{cor:SizeCgood}, with $\varrho'=\sqrt{\varrho}$, and is thus omitted.
\end{proof}
\begin{corollary} \label{cor:Bad_Case_SmallRotation}
We have
\[
\proba_p^\otimes ((\vA_0,\vA_3)\notin \vec \Comp_{i,\varrho},\quad P_3^\otimes(\vA_0,\vA_3)> \varrho^{1/4}  ) \leq \varrho^{1/4}.
\]
\end{corollary}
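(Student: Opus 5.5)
We need to bound $\proba_p^\otimes$ of the event $\{(\vA_0,\vA_3)\notin \vec\Comp_{i,\varrho},\ P_3^\otimes(\vA_0,\vA_3)>\varrho^{1/4}\}$. The plan is to first rewrite this probability in terms of $\nu^\uparrow_p\times\nu^\downarrow_p$, and then apply Corollary \ref{cor:SizeCgood2}. Since $\vA_0$ and $\vA_3$ are independent under $\proba_p^\otimes$ with law $\mu_p/(\Omega\X(p))$, the probability equals
\[
\sum_{(\vec A_0,\vec A_3)\notin \vec\Comp_{i,\varrho}} \proba_p^\otimes(\vA_0=\vec A_0)\proba_p^\otimes(\vA_3=\vec A_3)\1\big(P_3^\otimes(\vec A_0,\vec A_3)>\varrho^{1/4}\big).
\]
On the indicator we bound $1\le \varrho^{-1/4}P_3^\otimes(\vec A_0,\vec A_3)$. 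Then
\[
\proba_p^\otimes(\vA_0=\vec A_0)\proba_p^\otimes(\vA_3=\vec A_3)P_3^\otimes(\vec A_0,\vec A_3)=\proba_p^\otimes(\vA_0=\vec A_0,\vA_3=\vec A_3,\Tcut>3).
\]

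The key step is to dominate this joint probability by $\nu^\uparrow_p(\vec A_0)\nu^\downarrow_p(\vec A_3)$. The event $\{\Tcut>3\}$ is contained in $\{(\vA_0,\vA_1)\notin\Ccut\}\cap\{(\vA_2,\vA_3)\notin\Ccut\}$: we drop the middle constraint on $(\vA_1,\vA_2)$. The pairs $(\vA_0,\vA_1)$ and $(\vA_2,\vA_3)$ are independent, so we obtain
\[
\proba_p^\otimes(\vA_0=\vec A_0,\vA_3=\vec A_3,\Tcut>3)\le \nu^\uparrow_p(\vec A_0)\,\nu^\downarrow_p(\vec A_3).
\]
Here we use that $\nu^\uparrow_p$ and $\nu^\downarrow_p$, defined in \eqref{eq:def_nu_up+down} with indices $(1,2)$, are shift-invariant in the indices by the i.i.d. structure.

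Summing over pairs outside $\vec\Comp_{i,\varrho}$, the probability is at most
\[
\varrho^{-1/4}\,\nu^\uparrow_p\times\nu^\downarrow_p\big((\vec\Comp(0))^2\setminus\vec\Comp_{i,\varrho}\big)\le \varrho^{-1/4}\sqrt{\varrho}\,\Zb_p^2\le \varrho^{1/4},
\]
by Corollary \ref{cor:SizeCgood2} and $\Zb_p\le 1$. One small check: $\vec\Comp_{i,\varrho}$ was defined as a subset of $\Clive^2$, so pairs involving $\Ckill$ belong to the complement. These pairs cause no trouble, since they are still included in the sum bounded by Corollary \ref{cor:SizeCgood2}, whose complement is taken in $(\vec\Comp(0))^2$.

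There is no real obstacle; the only point requiring care is the Markov-type step replacing the indicator by $\varrho^{-1/4}P_3^\otimes$ and the index bookkeeping for $\nu^\uparrow_p$ and $\nu^\downarrow_p$.
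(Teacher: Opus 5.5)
Your proof is correct and follows the same argument as the paper's. Both replace the indicator by $\varrho^{-1/4}P_3^\otimes$ (a Markov-type bound), drop the middle non-cut constraint to dominate by $\nu^\uparrow_p\nu^\downarrow_p$, and conclude with Corollary~\ref{cor:SizeCgood2}. Stating the domination for the joint law, $\proba_p^\otimes(\vA_0=\vec A_0,\vA_3=\vec A_3,\Tcut>3)\le\nu^\uparrow_p(\vec A_0)\nu^\downarrow_p(\vec A_3)$, as you do, is a cleaner form of the paper's inequality $P_3^\otimes\le\nu^\uparrow_p\nu^\downarrow_p$, which as written tacitly reads $\nu^\uparrow_p,\nu^\downarrow_p$ as densities with respect to the law of $\vA_0$.
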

\begin{proof}
By definition of $\nu^\uparrow_p,\nu^\downarrow_p$ (see \eqref{eq:def_nu_up+down}), we have
\[
\nu^\uparrow_p\times \nu^\downarrow_p((\vec \Comp(0))^2\backslash \vec \Comp_{i,\varrho})=\E_p^\otimes[\nu^\uparrow_p(\vA_0)\nu_p^\downarrow(\vA_3)\1_{(\vA_0,\vA_3)\notin \vec \Comp_{i,\varrho}}].
\]
Then it follows directly from the definition of $P_3^\otimes$ and \eqref{eq:def_nu_up+down} that $ P_3^\otimes(\vA_0,\vA_3)\leq \nu_p^\uparrow(\vA_0)\nu_p^\downarrow(\vA_3)$.
Thus, the desired result follows from Corollary \ref{cor:SizeCgood2} and Markov's inequality.
\end{proof}
Since $\varrho>0$ can be taken arbitrarily small, it suffices to deal with the case $(\vA_0,\vA_3)\in \vec \Comp_{i,\varrho}$.
\begin{proposition}\label{pro:RotateVsRotate}
In the setting of Proposition \ref{pro:OrientedOpenTriangleSwitching2},
there exists $c>0$, depending only on $d,L,C,r,\varrho$, such that
if $\X(p)$ is large enough, depending only on $d,L,C,r,\varrho$,
we have, for every $1\leq i \leq d$,
every $(\vec A_0,\vec A_3)\in \vec \Comp_{i,\varrho}$ with $P_3^\otimes (\vec A_0,\vec A_3)\geq 2\sqrt{\varrho}$,
every $\kappa=(\kappa_i)_{1\leq i \leq d}\in [-\pi,\pi]^d$,
\[
\Upsilon_{3,\kappa}(\vec A_0,\vec A_3) \leq 1-c\kappa^2_i.
\]
\end{proposition}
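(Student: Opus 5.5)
The plan is to exhibit, inside the conditional law of $(\vA_1,\vA_2)$ given $\vA_0=\vec A_0$, $\vA_3=\vec A_3$, $\Tcut>3$, a fixed positive fraction of mass that can be paired with mass whose increment $\Delta(\vA_1)+\Delta(\vA_2)$ is shifted by exactly $-\delta_i$. A standard characteristic-function argument then gives the factor $1-c\kappa_i^2$. Write $Q$ for this conditional law and $X=\Delta(\vA_1)+\Delta(\vA_2)$, so that $\Amp_{3,\kappa}(\vec A_0,\vec A_3)=|\E_Q[e^{\i\langle\kappa,X\rangle}]|$. Only $\vA_1,\vA_2$ enter the phase, so $\vec A_0,\vec A_3$ play no role in it.

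\textbf{Step 1 (a good set of positive $Q$-mass).} Let $G$ be the set of pairs $(\vec B_1,\vec B_2)$ such that all three transitions $(\vec A_0,\vec B_1)$, $(\vec B_1,\vec B_2)$, $(\vec B_2,\vec A_3)$ are not in $\Ccut$, and moreover $E_{r,i}(\vec A_0,\vec B_1,\vec B_2,\vec A_3)$ holds. Let $W$ be the $\mu_p\otimes\mu_p$-mass of pairs with $(\vec A_0,\vec B_1),(\vec B_2,\vec A_3)\notin\Ccut$; this mass is proportional to $\nu^\uparrow_p(\vec A_0)\nu^\downarrow_p(\vec A_3)$ after the normalisation of \eqref{eq:def_nu_up+down}.
\begin{itemize}
\item Since $(\vec A_0,\vec A_3)\in\vec\Comp_{i,\varrho}$, the event $E_{r,i}$ fails on at most a $\sqrt\varrho$ fraction of $W$.
\item Since $P_3^\otimes(\vec A_0,\vec A_3)\geq 2\sqrt\varrho$, the full no-cut event has mass at least $2\sqrt\varrho\, W$, and this mass is the total mass of $Q$ before normalisation.
\end{itemize}
Hence $G$ carries mass at least $\sqrt\varrho\, W$. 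Because $P_3^\otimes\le 1$, this gives $Q(G)\geq \sqrt\varrho$.

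\textbf{Step 2 (the shifted copy).} Consider the map $\Phi:(\vec B_1,\vec B_2)\mapsto (f_r^\uparrow(\vec B_1),f_r^\downarrow(\vec B_2)-\delta_i)$. On $G$ the event $E_{r,i}$ says exactly that $\Phi(\vec B_1,\vec B_2)$ is again an admissible middle pair between $\vec A_0$ and $\vec A_3$. The construction of $f^\uparrow_r,f^\downarrow_r$ keeps the pointing edge and only modifies the clusters in bounded balls. I expect it to give $X\circ\Phi=X-\delta_i$; this needs to be checked against the exact normalisation of the translated cluster in $\vec\Comp(0)$, and if the shift comes out as $+\delta_i$ nothing below changes.

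The main obstacle is to show that $\Phi$ does not concentrate mass: I need
\[
\mu_p\otimes\mu_p(\Phi^{-1}(\vec B_1,\vec B_2)\cap G)\leq M\,\mu_p(\vec B_1)\mu_p(\vec B_2)
\]
with $M$ depending only on $d,L,r$. I would prove this exactly as in the proof of Proposition \ref{pro:RelateLocalLambda} and Lemma \ref{lem:PrelimCoupling}. The preimages of a given cluster differ from it only on edges in two balls of radius $O(r)$, so there are boundedly many of them, and each has weight at most $(18\Omega)^{N}$ times that of the image. Translation preserves $\mu_p$. Consequently the image measure $m'=\Phi_*(Q|_G)$ satisfies $m'\le M Q$.

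\textbf{Step 3 (Fourier bound).} Set $\eta=\min(\sqrt\varrho/M,\ 1/2)$ (or a fixed fraction thereof). By Step 1, $Q(G)\geq\sqrt\varrho$, so the measure $m:=\eta\,Q|_G/Q(G)$ has total mass $\eta$. By Step 2, both $m\le Q$ and $m\circ\Phi^{-1}\le Q$ hold after reducing $\eta$ by a further constant; I would also make these two pieces disjoint (or split the overlap) so that
\[
Q=m+m\circ\Phi^{-1}+R
\]
with $R\ge 0$ of mass $1-2\eta$. Then
\[
\bigl|\E_Q e^{\i\langle\kappa,X\rangle}\bigr|\leq (1-2\eta)+\eta\,\bigl|1+e^{-\i\kappa_i}\bigr|=1-2\eta\bigl(1-|\cos(\kappa_i/2)|\bigr)\leq 1-c\kappa_i^2,
\]
using $|\kappa_i|\le\pi$. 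Here $c$ depends only on $\eta$, hence only on $d,L,C,r,\varrho$.

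Finally, the largeness of $\X(p)$ enters only through Proposition \ref{pro:OrientedOpenTriangleSwitching2}, which is needed for $\vec\Comp_{i,\varrho}$ to be well defined and non-degenerate, and through Step 2, where it guarantees that the conditions $\|\abf^+\|_\infty\ge 3r$ used in defining $f^\uparrow_r,f^\downarrow_r$ are in force.
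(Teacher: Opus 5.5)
Your proof is correct. Steps 1 and 2 use the same two ingredients as the paper's proof.

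\begin{itemize}
\item \emph{The good set.} $G$ consists of the admissible pairs on which $E_{r,i}$ holds. Its conditional mass is bounded below using $(\vec A_0,\vec A_3)\in\vec\Comp_{i,\varrho}$ and $P_3^\otimes(\vec A_0,\vec A_3)\ge 2\sqrt\varrho$.
\item \emph{The map.} $\Phi=(f^\uparrow_r,f^\downarrow_r-\delta_i)$ sends $G$ into admissible pairs, multiplies the phase by $e^{-\i\kappa_i}$, and has bounded density. The shift is indeed $-\delta_i$: $f^\downarrow_r(\vec A)$ contains $\ball_r(0)\ni\delta_i$, so its translate by $-\delta_i$ lies in $\vec\Comp(0)$ with the same $\mu_p$-weight.
\end{itemize}

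Your Step 3 is a genuinely different final step. The paper fixes the phase $\zeta$ of the conditional characteristic function and works with the open arc $D_{\zeta,\kappa}$ of half-width $|\kappa_i|/2$. It bounds $\Amp_{3,\kappa}\le 1-(1-\cos(\kappa_i/2))\,Q(g\notin D_{\zeta,\kappa})$. After a case split on whether that probability is already at least $1/4$, it uses $\Phi$ to push $G\cap\{g\in D_{\zeta,\kappa}\}$ outside the arc. Your decomposition $Q=m+\Phi_*m+R$ gives the bound in one line via $|1+e^{-\i\kappa_i}|=2|\cos(\kappa_i/2)|$, with no choice of $\zeta$ and no case split. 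You do not need to make the pieces disjoint: taking $\eta\le\sqrt\varrho/(2M)$ already gives $m\le Q/2$ and $\Phi_*m\le Q/2$.

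Two justifications need correcting.

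\textbf{The density bound in Step 2.} The bound itself is true; it is the supremum bound from Proposition \ref{pro:RelateLocalLambda} and Lemma \ref{lem:PrelimCoupling}. But your reason for it is wrong. $f^\uparrow_r$ closes every edge adjacent to $\ball_r(a^+)$ and keeps only the component of $0$. Any finite piece of $A$ attached to the rest only through that ball is therefore discarded, so a given image has infinitely many preimages. The correct argument works at the level of configurations. On the event $\{f^\uparrow_r(\C(0),\vec b)=B\}$, resetting a bounded number $N=N(d,L,r)$ of edges near $\ball_r(b^+)$ to their state in $B$ produces the event $\{\C(0)=B\}$. Hence the \emph{total} weight of all preimages is at most $(p(1-p))^{-N}\mu_p(\vec B)$. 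This is also the only place largeness of $\X(p)$ enters: it keeps $1/(9\Omega)<p<1/2$. The condition $\|\abf^+\|_\infty\ge 3r$ and the size of $\vec\Comp_{i,\varrho}$ play no role in this proposition.

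\textbf{The bound on $Q(G)$ in Step 1.} From ``$G$ carries mass at least $\sqrt\varrho\, W$'', the inequality $P_3^\otimes\le 1$ does not give $Q(G)\ge\sqrt\varrho$. You need $P_3^\otimes\le W$ instead. More simply, the mass of $G$ is at least $P_3^\otimes-\sqrt\varrho\, W\ge P_3^\otimes/2$, so $Q(G)\ge 1/2$.
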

\begin{proof}
Fix $(\vec A_0, \vec A_3)\in \vec \Comp_{i,\varrho}$. Let $S$ be the set of all pairs $(\vec A_1,\vec A_2)\in \vec \Comp(0)$ such that $(\vec A_0,\vec A_1)$, $(\vec A_1,\vec A_2)$, $(\vec A_2,\vec A_3)\notin \Ccut$. Let
$g:\vec A_1,\vec A_2\mapsto e^{\i\langle \Delta(\vec A_1)+\Delta(\vec A_2),\kappa\rangle}$.
We have
\[
\Amp_{3,\kappa} (\vec A_0, \vec A_3) = \left | \E_p^\otimes\left [g(\vA_1,\vA_2) \middle | (\vA_1,\vA_2)\in S\right ] \right |.
\]
Thus, for some $\zeta=\zeta(\vec A_0,\vec A_3) \in \R$, we have
 \[
 \Amp_{3,\kappa} (\vec A_0, \vec A_3) = \E_p^\otimes\left [\langle g(\vA_1,\vA_2),e^{\i\zeta} \rangle \middle | (\vA_1,\vA_2)\in S\right ],
 \]
 where  $\langle \cdot, \cdot \rangle$ denotes the usual scalar product on the complex plane $\R+\i\R$.
Let $S'$ be the set of all pairs $(\vec A_1,\vec A_2)$ such that $g(\vec A_1,\vec A_2) \in D_{\zeta,\kappa}:=\{e^{\i x}, x\in (\zeta-|\kappa_i|/2,\zeta+|\kappa_i|/2)\}$.
 By Markov's inequality, we have
  \begin{equation}
  \Amp_{3,\kappa}\leq 1-(1-\cos(\kappa_i/2)) \proba_p^\otimes \left [(\vA_1,\vA_2)\notin S'\middle | (\vA_1,\vA_2)\in S\right ]
  . \label{eq:Concentred_Periodicity}
  \end{equation}
Since $1-\cos(\kappa_i/2)=\Theta(\kappa_i^2)$, it suffices to lower-bound the above probability by a constant depending on $d,L,C,r,\varrho$ to deduce the inequality of the proposition.

To this end, let $S_E$ be the set of all pairs $(\vec A_1, \vec A_2)$ such that the event $E_{r,i} (\vec A_0,\vec A_1,\vec A_2,\vec A_3)$ holds. By a union bound,
\[
\proba_p^\otimes((\vA_1,\vA_2)\in S'\cap S_E)\geq \proba_p^\otimes((\vA_1,\vA_2)\in S)-\proba_p^\otimes((\vA_1,\vA_2)\in S\backslash S_E)-\proba_p^\otimes((\vA_1,\vA_2)\in S\backslash S').
\]
The first probability is $P_3^\otimes(\vec A_0,\vec A_3)$.
Since $(\vec A_0,\vec A_3)\in \vec \Comp_{i,\varrho}$, the second probability is at most $\sqrt{\varrho} \nu^\uparrow_p(\vec A_0)\nu_p^\downarrow(\vec A_3)\leq \sqrt{\varrho}$.
We may assume that the last probability is at most $P_3^\otimes(\vec A_0,\vec A_3)/4$, otherwise the conditional probability in \eqref{eq:Concentred_Periodicity} is at least $1/4$, which would conclude the proof.
Hence,
\begin{equation}
\proba_p^\otimes((\vA_1,\vA_2)\in S'\cap S_E)\geq (3/4)P_3^\otimes(\vec A_0,\vec A_3)-\sqrt{\varrho}\geq P_3^\otimes(\vec A_0,\vec A_3)/4, \label{eq:Prob_S'+S_E}
\end{equation}
using $P_3^\otimes(\vec A_0,\vec A_3)\geq 2\sqrt{\varrho}$ for the last inequality.

We now claim that for some constant $c>0$, depending only on $d,L,C,r,\varrho$, we have
\begin{equation}
\proba_p^\otimes((\vA_1,\vA_2)\in S\backslash S') \geq c \proba_p^\otimes((\vA_1,\vA_2)\in S'\cap S_E). \label{eq:Rotating_Inequality}
\end{equation}
Provided this inequality holds, the desired result follows directly from \eqref{eq:Concentred_Periodicity} and \eqref{eq:Prob_S'+S_E}. It remains to prove \eqref{eq:Rotating_Inequality}. We may rewrite the probability on its right-hand side as
\[
\proba_p^\otimes((f^\uparrow_r(\vA_1),f_r^\downarrow(\vA_2)-\delta_i)\in S, \quad
 g(\vA_1,\vA_2)\in D_{\zeta,\kappa}).
 \]
 Since $\Delta(f^\uparrow_r(\vA_1))=\Delta(\vA_1)$ and $\Delta(f^\downarrow_r(\vA_2))=\Delta(\vA_2)$, we have
 \[
 g(f^\uparrow_r(\vA_1),f^\downarrow_r(\vA_2)-\delta_i)=g(\vA_1,\vA_2)e^{-\i\kappa_i}\notin D_{\zeta,\kappa},
 \]
	  provided $g(\vA_1,\vA_2)\in D_{\zeta,\kappa}$. Thus, the probability on the right-hand side of \eqref{eq:Rotating_Inequality} is bounded by
  \[
  \sum_{\vec B_1,\vec B_2} \proba_p^\otimes(f^\uparrow_r(\vA_1)=\vec B_1) \proba_p^\otimes(f^\downarrow_r(\vA_2)-\delta_i=\vec B_2) \1((\vec B_1,\vec B_2)\in S\backslash S'),
  \]
	  which is bounded by
  \[
  \left(\sup_{\vec B_1\in \vec \Comp(0)} \frac{ \proba_p^\otimes(f^\uparrow_r(\vA_1)=\vec B_1)}{\proba_p^\otimes(\vA_1=\vec B_1)}\right)
  \left(\sup_{\vec B_2\in \vec \Comp(0)} \frac{\proba_p^\otimes(f^\downarrow_r(\vA_2)-\delta_i=\vec B_2)}{\proba_p^\otimes(\vA_2=\vec B_2)}\right)
  \proba_p^\otimes ((\vA_1,\vA_2)\in S\backslash S').
  \]
Exactly as in the proof of Proposition \ref{pro:RelateLocalLambda} and Lemma \ref{lem:PrelimCoupling},
we can bound the two suprema by a constant $c>0$ depending only on $d,L,r$,
which concludes the proof of \eqref{eq:Rotating_Inequality}.
\end{proof}
\begin{proof}[Proof of Proposition \ref{pro:SmallRotation}]
Fix $\varrho''>0$ small enough so that $(\varrho'')^{1/4}\leq \varrho,\varrho'$ and $2\sqrt{\varrho''}\leq\varrho$. Take $i$ such that $|\kappa_i|=\|\kappa\|_\infty$. By a union bound, writing $\vA_{0,3}=(\vA_0,\vA_3)$, the probability of the proposition is bounded by
\[
\proba_p^\otimes (\vA_{0,3}\notin \vec \Comp_{i,\varrho''}, P_3^\otimes(\vA_{0,3})> (\varrho'')^{1/4})+\proba_p^\otimes(\vA_{0,3}\in \vec \Comp_{i,\varrho''}, P_3^\otimes(\vA_{0,3})> 2\sqrt{\varrho''}, \Upsilon_{3,\kappa}(\vA_{0,3}) > 1-c\kappa^2_i).
\]
By Corollary \ref{cor:Bad_Case_SmallRotation}, the first probability is at most $(\varrho'')^{1/4}\leq \varrho'$.
Provided $c$ is small enough and $\X(p)$ is large enough, Proposition \ref{pro:RotateVsRotate} shows that the second probability is null.
\end{proof}

\section{Bounding the errors, and exponential tail for $\tau_p$} \label{sec:FirstApplications}
The goal of this section is twofold: bound $\tau_{p'}-\tau_{p,p'}^\square$ using the diagrammatic errors from Section~\ref{sec:Diagrams}, and prove a proper exponential tail for $\tau_p$.
We first prove in Subsection~\ref{sec:OpenTriangle} some generic convolution bounds that are used throughout the section. Subsection~\ref{sec:L1AtP} then shows a rudimentary bound on $\|\tau_{p'}-\tau_{p,p'}^\square\|_1$, which is needed to establish the exponential decay of $\tau_p$ in
Subsection~\ref{sec:ExpTail}. The main result of this section, Theorem~\ref{thm:PointwiseError}, is then proved in Subsection~\ref{Sec:1+1=2}. It provides both pointwise and $L_1$ bounds on $\tau_{p'}-\tau_{p,p'}^\square$.
Finally, Subsection~\ref{sec:L2AtPPrime} derives an $L_2$ bound on $\tau_{p'}-\tau_{p,p'}^\square$.

In this section, we will only assume \eqref{eq:initialization}. This means that we will not have access to the consequences of \eqref{eq:eta}, notably \eqref{eq:Plateau}, which would otherwise make many computations much easier.
\label{sec:FirstCalculus}
\subsection{Open triangle condition, and uniform square condition under \eqref{eq:initialization}.} \label{sec:OpenTriangle}
We start with some generic inequalities that we will repeatedly use in this section.
\begin{lemma} \label{lem:TruncatedPoly}
Let
$a,b >0$, and let $f,g:x\in \Z^d \mapsto \R$ be such that pointwise $|f(x)|\leq \langle x\rangle ^{-a}$ and $|g(x)|\leq \langle x\rangle ^{-b}$. There exists $C=C(d,L,a,b)\in (0,\infty)$ such that for every $x\in \Z^d$,
\[
|(f\ast g)(x)| \leq \begin{cases} C\langle x\rangle^{-b} & \text{ if } a>d \text{ and } a\geq b
\\ C\langle x\rangle^{d-a-b} & \text{ if } a,b<d \text{ and } a+b>d
\end{cases}
\]
Assume furthermore that $1\leq \|f\|_1<\infty $. Then
\[
|(f\ast g)(x)| \leq \begin{cases} C\log \left (2+\|f\|^{1/(d-a)}_1/\langle x\rangle \right ) & \text{ if } a+b=d
\\ C  \|f\|_1^{1-b/(d-a)} & \text{ if }  a+b<d
\end{cases}
\]
\end{lemma}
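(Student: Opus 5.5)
We reduce everything to the nonnegative case, replacing $f,g$ by $\langle\cdot\rangle^{-a}$ and $\langle\cdot\rangle^{-b}$ in the first two bounds, and by $|f|$ and $\langle\cdot\rangle^{-b}$ in the last two. This is legitimate since $|f\ast g|\le |f|\ast|g|$. We then split the sum $\sum_y |f(y)||g(x-y)|$ into three regions: $R_1=\{\|y\|_2\le \langle x\rangle/2\}$, $R_2=\{\|x-y\|_2\le \langle x\rangle/2\}$, and $R_3$ the rest. On $R_3$ both $\langle y\rangle$ and $\langle x-y\rangle$ are at least of order $\langle x\rangle$, and in fact $\langle x-y\rangle \gtrsim \langle y\rangle$ there. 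The constants depend only on $d$, $a$, $b$; the dependence on $L$ is vacuous here.

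\textbf{First two bounds.} In the case $a>d$ and $a\ge b$, on $R_1$ we have $\langle x-y\rangle\asymp\langle x\rangle$, so the contribution is at most $C\langle x\rangle^{-b}\sum_y\langle y\rangle^{-a}\le C\langle x\rangle^{-b}$. On $R_2$ we have $\langle y\rangle\asymp\langle x\rangle$, giving $\langle x\rangle^{-a}\sum_{\|z\|\le\langle x\rangle/2}\langle z\rangle^{-b}$. This is at most $\langle x\rangle^{-a}\cdot C\langle x\rangle^{\max(d-b,0)}$, up to a log factor when $b=d$, and since $a>d$ this is $\le C\langle x\rangle^{-b}$. On $R_3$ the summand is $\lesssim\langle y\rangle^{-a-b}$ summed over $\|y\|\gtrsim\langle x\rangle$, giving $\langle x\rangle^{d-a-b}\le\langle x\rangle^{-b}$. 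In the case $a,b<d$ with $a+b>d$, the regions $R_1$ and $R_2$ give $\langle x\rangle^{-b}\langle x\rangle^{d-a}$ and $\langle x\rangle^{-a}\langle x\rangle^{d-b}$ respectively, and $R_3$ gives $\sum_{\|y\|\gtrsim\langle x\rangle}\langle y\rangle^{-a-b}\lesssim\langle x\rangle^{d-a-b}$, which converges because $a+b>d$.

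\textbf{Last two bounds.} Here the tail $a+b\le d$ is not summable, so we use $\|f\|_1$ to truncate at a radius. Set $R:=\|f\|_1^{1/(d-a)}\ge1$; this is the scale at which the pointwise bound $\langle y\rangle^{-a}$ would have mass $\|f\|_1$. The key observation is a rearrangement argument: since $|f|\le\langle\cdot\rangle^{-a}$ and $\sum|f|=\|f\|_1$, the sum $\sum_y|f(y)|\langle x-y\rangle^{-b}$ is maximised when the mass of $|f|$ is pushed toward $x$. So it is bounded by the extremal configuration
\[
\sum_{\|y-x\|\le R'}\langle y\rangle^{-a}\langle x-y\rangle^{-b}\quad\text{or, cruder,}\quad \sum_{y}\min(\langle y\rangle^{-a},\,\cdot)\,\langle x-y\rangle^{-b}.
\]
Concretely, we split $y$ according to $\langle x-y\rangle\le R$ or $>R$. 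On $\{\langle x-y\rangle>R\}$ the contribution is at most $R^{-b}\|f\|_1=\|f\|_1^{1-b/(d-a)}$. On $\{\langle x-y\rangle\le R\}$ we use $|f(y)|\le\langle y\rangle^{-a}$ and the three-region analysis restricted to the ball $B(x,R)$.

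For $a+b<d$, the sum over the ball is at most
\[
C\max\bigl(\langle x\rangle^{-a},\,\cdot\bigr)\,R^{d-b}
\]
with worst case $x$ near $0$, giving $\sum_{\|y\|\le 2R}\langle y\rangle^{-a-b}\lesssim R^{d-a-b}=\|f\|_1^{1-b/(d-a)}$. For general $x$ the bound is only smaller, by comparing $\langle y\rangle^{-a}\langle x-y\rangle^{-b}$ with the symmetric decreasing rearrangement (Hardy–Littlewood). For $a+b=d$ the same computation yields a logarithm. If $\langle x\rangle\ge R$, then on the ball $\langle y\rangle\gtrsim\langle x\rangle$ except on $R_1$, which is empty at scale $R$ when $\langle x\rangle\ge 2R$; so we get $\langle x\rangle^{-a}R^{d-b}=(R/\langle x\rangle)^a\le C$. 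If $\langle x\rangle\le R$, the shell sum $\sum_{\langle x\rangle\lesssim\|y\|\lesssim R}\|y\|^{-d}$ gives $\log(R/\langle x\rangle)$, while the inner part $\|y\|\lesssim\langle x\rangle$ is $O(1)$ by the second bound's computation at the borderline. Together these give $C\log(2+R/\langle x\rangle)$, and the far part contributes $R^{-b}\|f\|_1=1$.

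\textbf{Main obstacle.} The step that needs care is justifying the "mass concentrates near $x$" rearrangement without a clean inequality. If the rearrangement is not clean, we avoid it: we bound the ball part directly by $\sum_{y\in B(x,R)}\langle y\rangle^{-a}\langle x-y\rangle^{-b}$ and do the three-region split inside $B(x,R)$, checking the two cases $\langle x\rangle\le 2R$ and $\langle x\rangle>2R$ separately.
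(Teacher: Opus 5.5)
Your proposal is correct and is essentially the paper's argument: for the last two bounds the paper likewise cuts at $\|x-y\|_2^{d-a}=\|f\|_1$, bounds the far part by $\|f\|_1^{1-b/(d-a)}$, and treats the near part by the same three-region split (with the minor difference that it bounds the region $\|y\|_2<\|x\|_2/2$ by $\min(\|f\|_1,C\|x\|_2^{d-a})\langle x\rangle^{-b}$ instead of restricting it to the ball), while for the first two bounds it simply cites Proposition~1.7 of \cite{MR1959796}, whose content is the standard three-region computation you give. The rearrangement detour is unnecessary, since your fallback split inside $B(x,R)$ already closes the argument; just note that in your $R_2$ estimate for the first bound with $b>d$, it is the hypothesis $a\ge b$, not $a>d$, that yields $\langle x\rangle^{-a}\le\langle x\rangle^{-b}$.
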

\begin{proof}
In this proof, $C$ denotes a constant depending on $d,L,a,b$ that may change from line to line. The first two cases are from Proposition 1.7 of \cite{MR1959796}.
For the other two cases, we write
\[
f\ast g(x) =\sum_{y\in \Z^d} f(y) g(x-y).
\]
We divide the sum into several cases. We bound the sum corresponding to the case $\|x-y\|^{d-a}_2\geq \|f\|_1$ by
\[
C\sum_{y\in \Z^d} f(y)\|f\|_1^{-b/(d-a)}=C\|f\|_1^{1-b/(d-a)}.
\]

For the case $\|x-y\|_2\geq \|x\|_2/2$ and $\|y\|_2\geq \|x\|_2/2$ and $\|x-y\|^{d-a}_2< \|f\|_1$, by the triangle inequality $\|x-y\|_2/\|y\|_2\in [1/3,3]$, so we can upper-bound the corresponding sum by
\[
C\sum_{\substack{y\in \Z^d,\\ \|x\|_2/2 \leq \|y\|_2\leq 3\|f\|_1^{1/(d-a)}}} \langle y\rangle^{-a-b}\leq \begin{cases} C \log(1+6\|f\|^{1/(d-a)}_1/\langle x\rangle) & \text{ if } a+b=d
\\ C  \|f\|_1^{1-b/(d-a)} & \text{ if } a+b<d
\end{cases} .
\]

We upper-bound the sum corresponding to the case $\|x-y\|_2 \geq \|x\|_2/2$ and $\|y\|_2< \|x\|_2/2$ by
\[
C\sum_{\substack{ y\in \Z^d, \\ \|y\|_2\leq \|x\|_2/2 }} f(y)\langle x\rangle^{-b} \leq \min(\|f\|_1,C\|x\|_2^{d-a})\langle x\rangle^{-b} \leq C\|f\|_1^{1-b/(d-a)},
\]
using $0<a, b<d$ and $a+b\leq d$ for the last inequality.

We upper-bound the contribution of the case $\|x-y\|_2< \|x\|_2/2$ and $\|y\|_2\geq \|x\|_2/2$ and $\|x-y\|^{d-a}_2< \|f\|_1$ by
\[
C \hspace{-1em}\sum_{\substack{ y\in \Z^d, \\ \|x-y\|_2 < \min(\|f\|^{1/(d-a)}_1,\|x\|_2/2)}} \hspace{-2em}\langle x\rangle^{-a} \langle x-y\rangle^{-b} \leq C \langle x\rangle^{-a} \min(\|f\|^{1/(d-a)}_1,\|x\|_2)^{d-b} \leq C\|f\|_1^{1-b/(d-a)},
\]
using $0<a, b<d$ and $a+b\leq d$ for the last inequality. This concludes this last case.
\end{proof}
\begin{lemma} \label{lem:TwoTriangleSquare}
For every $C>0$, there exist $C_2, C_3,C_4>0$ such that the following statements hold for every $p$ satisfying \eqref{eq:initialization}:
\begin{itemize}
\item[(a)] For every $x\in \Z^d$, we have $\tau_p\ast \tau_p (x) \leq C_2 \langle x\rangle^{4-d}.$
\item[(b)] For every $x\in \Z^d$, we have  $\tau_p\ast \tau_p \ast \tau_p(x) \leq C_3  \langle x\rangle^{6-d}.$
\item[(c)] For every $x\in \Z^d$,  if $\X(p)\geq 2$, we have $\tau_p\ast \tau_p\ast \tau_p \ast \tau_p(x)\leq C_4\psi_d(\X(p)).$
\end{itemize}
\end{lemma}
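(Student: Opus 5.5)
The plan is to apply Lemma~\ref{lem:TruncatedPoly} repeatedly, using \eqref{eq:initialization} in the form $\tau_p(x)\le C\langle x\rangle^{2-d}$. Constants depend on $d,L,C$ only. Since $\tau_p/C$ satisfies the hypotheses of Lemma~\ref{lem:TruncatedPoly} with $a=b=d-2$, we can track exponents by hand.

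For (a), take $f=g=\tau_p/C$ with $a=b=d-2$. Then $a,b<d$ and $a+b=2d-4>d$ because $d>6$, so the second case of Lemma~\ref{lem:TruncatedPoly} gives $\tau_p\ast\tau_p(x)\le C^2C(d,L)\langle x\rangle^{d-2(d-2)}=C_2\langle x\rangle^{4-d}$. For (b), convolve once more: take $f=\tau_p\ast\tau_p/C_2$ with $a=d-4$ and $g=\tau_p/C$ with $b=d-2$. Both exponents are below $d$ and $a+b=2d-6>d$ (since $d>6$), so we get $\langle x\rangle^{d-(d-4)-(d-2)}=\langle x\rangle^{6-d}$. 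If one needs the constant to be $\ge1$ for the lemma, enlarge $C$ harmlessly.

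For (c), the four-fold convolution no longer decays, so the second part of Lemma~\ref{lem:TruncatedPoly} is needed, and it requires a function with finite $\ell^1$ norm controlled by $\X(p)$. I will split $\tau_p^{\ast4}=(\tau_p)\ast(\tau_p^{\ast3})$. Take $f=\tau_p/C$, so $a=d-2$ and $\|f\|_1=\X(p)/C$. If needed, assume $C\ge1$ and $\X(p)\ge 2C$, handling small $\X(p)$ by the trivial bound $\tau_p^{\ast4}(x)\le \X(p)\cdot C_3\le 2CC_3$. Take $g=\tau_p^{\ast3}/C_3$ with $b=d-6$.
\begin{itemize}
\item If $d<8$: $a+b=2d-8<d$, and the last case gives $\|f\|_1^{1-(d-6)/2}=\|f\|_1^{(8-d)/2}\asymp \X(p)^{(8-d)/2}$.
\item If $d=8$: $a+b=d$, and the bound is $\log(2+\|f\|_1^{1/2}/\langle x\rangle)\le \log(2+\X(p))\le C\log\X(p)$, using $\X(p)\ge2$.
\item If $d>8$: $a=d-2>d-6=b$ and $a+b>d$, so the second case of the first display gives a bound of order $\langle x\rangle^{8-d}\le 1$.
\end{itemize}
One subtlety is that $b=d-6$ must be positive, which holds since $d>6$.

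The main obstacle is minor bookkeeping. Lemma~\ref{lem:TruncatedPoly} assumes $\|f\|_1\ge1$ and pointwise bounds of exactly $\langle x\rangle^{-a}$, so the constants must be normalized. There is also the small-$\X(p)$ regime, where $\psi_d(\X(p))$ could be small: for $d=8$, $\log 2>0$, so $\X(p)\ge 2$ keeps $\psi_d$ bounded below. For $d<8$, $\psi_d\ge1$ when $\X(p)\ge1$. In all cases, adjusting $C_4$ handles the bounded range.
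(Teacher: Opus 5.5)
Your proof is correct and follows the paper's argument exactly: three successive applications of Lemma~\ref{lem:TruncatedPoly} with exponent pairs $(d-2,d-2)$, $(d-2,d-4)$ and $(d-2,d-6)$, the last one split according to $d<8$, $d=8$, $d>8$. The normalization and small-$\X(p)$ bookkeeping you add, which the paper leaves implicit, is sound. Note also that $C\ge 1$ holds automatically, since $\tau_p(0)=1=\langle 0\rangle^{2-d}$.
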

\begin{proof}
(a), (b), (c) are proved inductively using \eqref{eq:initialization} and Lemma \ref{lem:TruncatedPoly}, with $a=b=d-2$, then $a=d-2, b=d-4$, then $a=d-2, b=d-6$.
\end{proof}
\subsection{$L_1$ bound assuming only \eqref{eq:initialization} at $p$.} \label{sec:L1AtP}
The goal of this section is to prove a bound on $\|\tau_{p,p'}^\square-\tau_{p'}\|_1$ assuming only \eqref{eq:initialization} at $p$ and that $\X(p')$ is not too large.
In Section \ref{Sec:1+1=2}, we will show better estimates when we assume either that \eqref{eq:initialization} is satisfied at $p'$, or an analogous bound for $\tau_{p,p'}^\square$.
In this subsection we write $\mathfrak{s}(p):=(\X(p)/\psi_d(\X(p)))^{1/3}$.
\begin{proposition} \label{pro:BeforeBootstrap}
Let $C>0$. There exist $c,F,K>0$, depending only on $d,L,C$, such that for every $p<p'$, if the following properties hold:
\begin{itemize}
\item[(a)] $\X(p)\geq F$ and $(p,C)$ satisfies \eqref{eq:initialization}
\item[(b)] $\X(p')\leq c\X(p) \mathfrak{s}(p)$
\item[(c)] $\|\tau_{p,p'}^\square\|_1 \leq c\X(p) \mathfrak{s}(p)$
\end{itemize}
then we have
\[
\|\tau_{p,p'}^\square-\tau_{p'}\|_1\leq K \left (\min(\X(p'),\|\tau_{p,p'}^\square\|_1)/\X(p) \right)^4 \psi_d(\X(p)).
\]
\end{proposition}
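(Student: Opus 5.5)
The plan is to sum Theorems \ref{thm:LowerBoundTau} and \ref{thm:UpperBoundTau} over $x\in\Z^d$ and bound the $L_1$ norms of the error terms by Young's inequality $\|f\ast g\|_1\le\|f\|_1\|g\|_1$. Write $\rho=(p'-p)/(1-p)$, $X=\X(p)$, $M'=\X(p')$ and $M^\square=\|\tau^\square_{p,p'}\|_1$.

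\textbf{Preliminary bounds.} We have $\|\bar\tau_{p,p'}\|_1=1+\rho\Omega M'$ and $\|\bar\tau^\square_{p,p'}\|_1=1+\rho\Omega M^\square$. Next we need $\rho\Omega X=O(1)$. We know $\rho\le (\tilde p_c(p)-p)/(1-p)=1/(\Omega X\lambda_p)$ when $p'<\tilde p_c(p)$, and Theorem \ref{thm:LowerLambda} gives $\lambda_p\ge c$. The requirement $p'<\tilde p_c(p)$ follows from (c) and Proposition \ref{pro:PrelimBootstrap}. A lower bound on $M'$ (or $M^\square$) by $\rho$ (e.g. $\tau_{p'}\ge\tau_p+\rho\,\tau_p\ast D\ast\tau_p$, so $M'\ge X+\rho\Omega X^2$) then gives $\rho\Omega X\lesssim m/X$, where $m$ is the relevant one of $M',M^\square$. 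The main terms therefore satisfy $\|\bar\tau\|_1\lesssim m/X$.

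\textbf{Bounding the diagrams.} Each diagram $\square_{p,p'},\Diamond_{p,p'},\twosquare_{p,p'}$ is a pointwise product whose $L_1$ norm we bound by $\|f\cdot g\|_1\le \|f\|_\infty\|g\|_1$, choosing the sup factor to be a closed loop of $\tau_p$'s. For $\square_{p,p'}=\tau_p\cdot(\tau_p\ast D\ast\tau^\square\ast D\ast\tau_p)$, we would like to avoid needing pointwise control of $\tau^\square$. So instead write $\|\square_{p,p'}\|_1=\sum_y \tau^\square(y)\,[D\ast\tau_p\ast\tau_p\ast D](\cdot)$-type sums, i.e. $\le \|\tau^\square\|_1 \sup_y (D\ast\tau_p\ast\tau_p\ast\tau_p\ast D)$. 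By Lemma \ref{lem:TwoTriangleSquare} the supremum is at most a triangle, so $\|\square_{p,p'}\|_1\lesssim M^\square$. Doing this naively loses; the better route is to keep one $\tau_p\ast\tau_p$ from the outer convolutions $\tau_p\ast(\cdot)\ast\tau_p$ in Theorem \ref{thm:LowerBoundTau}, forming a square $\tau_p^{\ast4}$ of size $\psi_d(X)$ by Lemma \ref{lem:TwoTriangleSquare}(c). Concretely, we expand the total error $\rho^2\|\bar\tau^\square\ast\tau_p\ast\square\ast\tau_p\ast\bar\tau^\square\|_1$ as a sum over vertices and bound it by
\[
\rho^2\|\bar\tau^\square\|_1^2\,\|\tau^\square\|_1\,\|\tau_p\|_1^{2}\,\square(p)\cdot O(1),
\]
pairing the two free $\tau_p$ legs of the square with the sup. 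Using $\rho\lesssim 1/X$ and $\|\bar\tau^\square\|_1\lesssim m/X$, this is $\lesssim (m/X)^2\cdot\rho^2 X^2 M^\square\psi_d\lesssim (m/X)^4\psi_d\cdot X^0$ after using $\rho M^\square\lesssim m/X$ twice. $\Diamond$ and $\twosquare$ are handled identically, with $\bar\tau_{p,p'}$ in place of $\bar\tau^\square_{p,p'}$ where it appears.

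\textbf{Bootstrap (main obstacle).} The bounds above are mixed: they involve both $M'$ and $M^\square$, while hypotheses (b) and (c) control each separately. The goal is to show $M'\asymp M^\square$ up to the error, which allows replacing both by the minimum. Summing the two theorems gives $|M'-M^\square|\le K(\max(M',M^\square)/X)^4\psi_d(X)$. By (b) and (c), $\max/X\le c\,\mathfrak s(p)$, so the right side is at most $Kc^3\mathfrak s^3\psi_d\,(\max/X)=Kc^3X\cdot\max/X$. With $c$ small this is at most $\tfrac12\max$, hence $\max\le 2\min$, and substituting back yields the claim. The delicate point is making the power bookkeeping give exactly exponent $4$ and exactly the $\mathfrak s^3=X/\psi_d$ threshold, and in particular confirming $\rho\Omega X\lesssim m/X$ rather than merely $O(1)$. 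That step is where I expect trouble; if needed, I will extract it via $M^\square\ge X+\rho\,\|\tau_p^{\square,1}\|_1\gtrsim X+\rho X^2\Omega\lambda_p\Plive_p$ from \eqref{eq:RewriteB_Wrong_TauSquare}.
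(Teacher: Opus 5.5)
The gap is in your bound on $\|\square_{p,p'}\|_1$: your power count there is off by a full factor $\X(p)$.

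\emph{Why the route fails.} All functions are nonnegative, so
$\|\bar\tau^\square_{p,p'}\ast\tau_p\ast\square_{p,p'}\ast\tau_p\ast\bar\tau^\square_{p,p'}\|_1=\|\bar\tau^\square_{p,p'}\|_1^2\,\X(p)^2\,\|\square_{p,p'}\|_1$ holds exactly. The outer $\tau_p$'s cannot be paired with anything to close a loop; they contribute $\X(p)^2$, which $\rho^2$ cancels. Everything therefore rests on $\|\square_{p,p'}\|_1=(\tau_p\ast\tau_p\ast D\ast\tau^\square_{p,p'}\ast D\ast\tau_p)(0)$, a loop with only three $\tau_p$'s and the full $\tau^\square_{p,p'}$.

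Your displayed estimate amounts to $\|\square_{p,p'}\|_1\lesssim\|\tau^\square_{p,p'}\|_1\,\square(p)$. What it actually gives is of order $(m/\X(p))^2\|\tau^\square_{p,p'}\|_1\psi_d(\X(p))\approx (m/\X(p))^3\,\X(p)\,\psi_d(\X(p))$.

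``Using $\rho M^\square\lesssim m/X$ twice'' cannot remove the extra $\X(p)$. That inequality is just $\rho\lesssim 1/\X(p)$ again, and it yields $\rho^2\X(p)^2\|\tau^\square_{p,p'}\|_1\lesssim\|\tau^\square_{p,p'}\|_1$, not $(m/\X(p))^2$. No sharper control of $\rho$ rescues it either. Indeed \eqref{eq:RewriteA_TauSquare} gives $\|\tau^{\square,k}_p\|_1\le \X(p)(\Omega\X(p))^k$, so $\|\tau^\square_{p,p'}\|_1\geq 2\X(p)$ forces $\rho\,\Omega\X(p)\geq 1/2$.

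Two side remarks. First, $\rho\,\Omega\X(p)\lesssim m/\X(p)$ is weaker, not stronger, than $O(1)$, since $m\geq \X(p)$. Second, the inequality $\tau_{p'}\geq\tau_p+\rho\,\tau_p\ast D\ast\tau_p$ fails at $x=0$. Neither is needed: writing $\rho\le C'/\X(p)$, you get $\|\bar\tau_{p,p'}\|_1\le(1+C'\Omega)\X(p')/\X(p)$ and $\|\bar\tau^\square_{p,p'}\|_1\le(1+C'\Omega)\|\tau^\square_{p,p'}\|_1/\X(p)$ from $\X(p')\geq\X(p)$ and $\tau^\square_{p,p'}\geq\tau_p$.

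\emph{The missing idea.} Peel one $\tau_p$ off $\tau^\square_{p,p'}$. Dropping the last off-constraint in \eqref{eq:def_TauSquare} and summing over the last cluster gives $\tau^{\square,k}_p\le\tau^{\square,k-1}_p\ast D\ast\tau_p$. Hence, pointwise, $\tau^\square_{p,p'}\le\tau_p\ast\bar\tau^\square_{p,p'}$. Inserting this into the loop does two things at once: it creates the fourth $\tau_p$, and it replaces $\|\tau^\square_{p,p'}\|_1$ by $\|\bar\tau^\square_{p,p'}\|_1$, which carries exactly the missing factor $1/\X(p)$:
\[
\|\square_{p,p'}\|_1\le \Omega^2\,\|\tau_p\ast\tau_p\ast\tau_p\ast\tau_p\|_\infty\,\|\bar\tau^\square_{p,p'}\|_1\lesssim \psi_d(\X(p))\,\|\tau^\square_{p,p'}\|_1/\X(p).
\]

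For $\Diamond_{p,p'}$ and $\twosquare_{p,p'}$ no such step is needed. Their loops already contain four $\tau_p$'s and only barred functions. For $\twosquare_{p,p'}$ this holds after summing over $z$ and bounding the factor $\tau_p\ast\tau_p\ast D\ast\tau_p$ by a triangle. So $\|f\ast g\|_\infty\le\|f\|_\infty\|g\|_1$ directly gives
$\|\Diamond_{p,p'}\|_1\lesssim\psi_d(\X(p))\|\bar\tau_{p,p'}\|_1\|\bar\tau^\square_{p,p'}\|_1$ and $\|\twosquare_{p,p'}\|_1\lesssim\psi_d(\X(p))\|\bar\tau^\square_{p,p'}\|_1$.

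The total error is then $\psi_d(\X(p))$ times products of three or four of the ratios $\|\tau^\square_{p,p'}\|_1/\X(p)$ and $\X(p')/\X(p)$, each at least $1$. The fourth power comes from the $\Diamond_{p,p'}$ term. This gives the bound with a maximum. With this repair, your bound on $\rho$ via $\tilde p_c(p)$ and Theorem \ref{thm:LowerLambda}, and your final max-to-min bootstrap, coincide with the paper's argument.
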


Before proving Proposition \ref{pro:BeforeBootstrap}, let us show why it implies the following more important result.
\begin{proposition} \label{pro:AfterBootstrap}
If one removes one of the conditions (b) or (c) in Proposition \ref{pro:BeforeBootstrap}, and $c>0$ is small enough, then the conclusion still holds.
\end{proposition}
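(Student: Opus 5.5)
We argue by a continuity (bootstrap) argument in $p'$, with $p$ fixed satisfying (a).
The functions $p'\mapsto \X(p')$ and $p'\mapsto \|\tau^\square_{p,p'}\|_1$ are continuous and increasing on $[p,\inf(p_c,\tilde p_c(p)))$: the first since $\X$ is continuous and increasing on $[0,p_c)$, the second by Proposition~\ref{pro:PrelimBootstrap}. Both equal $\X(p)$ at $p'=p$, and $\X(p)\le c\X(p)\mathfrak s(p)$ once $F$ is large, because $\mathfrak s(p)\to\infty$ as $\X(p)\to\infty$ ($\psi_d(x)\ll x$ for $d>6$).

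Consider first the case where (c) is kept and (b) is removed. Let $c$ be the constant of Proposition~\ref{pro:BeforeBootstrap}, and apply the conclusion with a smaller constant $c_1\le c/2$ in place of (c). Define
\[
p^*:=\sup\{q\in[p,p'] : \X(q)\le c\X(p)\mathfrak s(p)\}.
\]
On $[p,p^*]$ the hypotheses (a), (b), (c) all hold, since $\|\tau^\square_{p,q}\|_1\le \|\tau^\square_{p,p'}\|_1\le c_1\X(p)\mathfrak s(p)$ by monotonicity in $q$. Hence
\[
\X(q)\le \|\tau^\square_{p,q}\|_1+K\bigl(\|\tau^\square_{p,q}\|_1/\X(p)\bigr)^4\psi_d(\X(p))\le c_1\X(p)\mathfrak s(p)+Kc_1^4\,\mathfrak s(p)^4\psi_d(\X(p)).
\]
Since $\mathfrak s(p)^3\psi_d(\X(p))=\X(p)$, the last term equals $Kc_1^4\,\X(p)\mathfrak s(p)$. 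Choosing $c_1$ so that $c_1+Kc_1^4<c$ gives $\X(q)<c\X(p)\mathfrak s(p)$ strictly. If $p^*<p'$, continuity of $\X$ would then let $\X(q)\le c\X(p)\mathfrak s(p)$ persist slightly beyond $p^*$, contradicting the definition of $p^*$. So $p^*=p'$, all three hypotheses hold at $p'$, and the conclusion follows.

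One more check is needed here: every $q\le p'$ must satisfy $q<\inf(p_c,\tilde p_c(p))$. This holds because $\|\tau^\square_{p,q}\|_1<\infty$ on $[p,p']$ (Proposition~\ref{pro:PrelimBootstrap} shows it diverges at $\tilde p_c(p)$), and because $\X(q)<\infty$ forces $q<p_c$.

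The case where (b) is kept and (c) is removed is symmetric, with the roles of the two functions swapped. Take $q^*$ to be the supremum of those $q\le p'$ with $q<\tilde p_c(p)$ and $\|\tau^\square_{p,q}\|_1\le c\X(p)\mathfrak s(p)$. On $[p,q^*)$ the proposition gives
\[
\|\tau^\square_{p,q}\|_1\le \X(q)+K\bigl(\X(q)/\X(p)\bigr)^4\psi_d(\X(p))\le (c_1+Kc_1^4)\,\X(p)\mathfrak s(p)<c\X(p)\mathfrak s(p).
\]
Because $\|\tau^\square_{p,q}\|_1$ diverges as $q\to\tilde p_c(p)$, this uniform bound shows $q^*<\tilde p_c(p)$ whenever $q^*\le p'$. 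Continuity then pushes $q^*$ up to $p'$, exactly as in the first case.

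The main obstacle is the endpoint bookkeeping: ensuring the continuity argument never reaches $\tilde p_c(p)$ or $p_c$, where the objects are undefined. The uniform strict bound combined with the divergence statement in Proposition~\ref{pro:PrelimBootstrap} handles this. Finally, in both cases the $\min$ in the conclusion is controlled by whichever of $\X(p')$ and $\|\tau^\square_{p,p'}\|_1$ was assumed small, so the stated bound holds with the new $c=c_1$.
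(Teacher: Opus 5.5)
Your argument is correct and is essentially the paper's continuity (bootstrap) argument, using the same key identity $\mathfrak s(p)^3\psi_d(\X(p))=\X(p)$ to turn the error into a small multiple of the threshold. The only difference is organizational: the paper runs a single sweep on $\max(\X(\tilde p),\|\tau^\square_{p,\tilde p}\|_1)$ with threshold $M=\eps c\X(p)\mathfrak s(p)$ and shows that both quantities exceed $M/2$ at the exit point, so monotonicity handles both cases at once, whereas you run two one-sided sweeps.
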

\begin{proof}
Assume that Proposition \ref{pro:BeforeBootstrap} holds for $c,F,K>0$, and take $0<\eps<1$ small enough.
Let $M=\eps c\X(p)\mathfrak{s}(p)$.
Let $p'':=\sup\{\tilde p: \max(\X(\tilde p),\|\tau_{p,\tilde p}^\square\|_1)\leq M\}$.
Since $\tilde p\mapsto \X(\tilde p)$ is continuous below $p_c$ and diverges as $\tilde p\to p_c$, we have $\X(p'')\leq M$.
Similarly, by Proposition  \ref{pro:PrelimBootstrap}, we have $\|\tau_{p,p''}^\square\|_1\leq M$.
Also, by continuity, we must have $\max(\X(p''),\|\tau_{p,p''}^\square\|_1)=M$.
Then by Proposition \ref{pro:BeforeBootstrap} applied to $p,p''$, we have
\[
|\|\tau_{p,p''}^\square\|_1-\X(p'')| \leq K(M/\X(p))^4\psi_d(\X(p))=M K(\eps c)^3\mathfrak{s}(p)^3\psi_d(\X(p))/\X(p)=K(\eps c)^3 M.
\]
Thus if $K(\eps c)^3< 1/2$, we get
\[
\min(\|\tau_{p,p''}^\square\|_1,\X(p''))> \max(\|\tau_{p,p''}^\square\|_1,\X(p'')) -M/2=M/2.
\]
Therefore, if either $\X(p')\leq M/2$ or $\|\tau_{p,p'}^\square\|_1\leq M/2$, then we have $p'\leq p''$, and $\max(\X(p'),\|\tau_{p,p'}^\square\|_1)\leq M$.
Hence, $p'$ satisfies conditions (b) and (c) in Proposition \ref{pro:BeforeBootstrap}, whose conclusion thus applies.
\end{proof}
To prove Proposition \ref{pro:BeforeBootstrap}, we need an upper bound on $p'-p$.
\begin{lemma} \label{lem:p'-p}
For every $C>0$, there exist $C',K>0$ depending only on $d,L,C$ such that for every $p$ satisfying \eqref{eq:initialization} and $\X(p)>K$, and every $p'$ with $\|\tau_{p,p'}^\square \|_1<\infty$, we have $(p'-p)/(1-p)\leq C'/\X(p).$
\end{lemma}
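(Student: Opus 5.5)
Since $\|\tau^\square_{p,p'}\|_1<\infty$ forces $p'<\tilde p_c(p)$ by Proposition \ref{pro:PrelimBootstrap}, and $\tilde p_c(p)=p+(1-p)/(\Omega\X(p)\lambda_p)$ by \eqref{eq:def_pcp}, we get directly
\[
\frac{p'-p}{1-p}<\frac{\tilde p_c(p)-p}{1-p}=\frac{1}{\Omega \X(p)\lambda_p}.
\]
So the plan reduces the lemma to a uniform lower bound on $\lambda_p$. That bound is exactly Theorem \ref{thm:LowerLambda}: under \eqref{eq:initialization} there are $c,K>0$ depending on $d,L,C$ such that $\X(p)\geq K$ implies $\lambda_p\geq c$. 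Taking the same $K$ and $C'=1/(\Omega c)$, which depends only on $d,L,C$ because $\Omega$ depends only on $d,L$, gives $(p'-p)/(1-p)\leq C'/\X(p)$.

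The one point to check is that the infimum definition of $\tilde p_c(p)$ from Section \ref{subsec:diagram-main-results} agrees with \eqref{eq:def_pcp}. The proof of Proposition \ref{pro:PrelimBootstrap} gives this: by \eqref{eq:AsympBootstrap} we have $\|\tau_p^{\square,k}\|_1=\Theta((\Omega\X(p)\lambda_p)^k)$. Hence the series $\sum_k\rho^k\|\tau_p^{\square,k}\|_1$ converges exactly when $\rho\,\Omega\X(p)\lambda_p<1$. In particular it diverges at $\rho=1/(\Omega\X(p)\lambda_p)$, because the terms stay bounded below by a positive constant.

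So finiteness of $\|\tau^\square_{p,p'}\|_1$ gives the strict inequality above. There is no real obstacle; all the substance sits in Theorem \ref{thm:LowerLambda}, which is taken as given.
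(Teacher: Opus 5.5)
Your proposal is correct and follows the paper's own proof: Proposition~\ref{pro:PrelimBootstrap} gives $p'<\tilde p_c(p)=p+(1-p)/(\Omega\X(p)\lambda_p)$, and Theorem~\ref{thm:LowerLambda} bounds $\lambda_p$ from below, which yields $C'=1/(\Omega c)$. Your check that the two definitions of $\tilde p_c(p)$ agree via \eqref{eq:AsympBootstrap} is also exactly how the paper justifies that consistency.
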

\begin{proof}
By Proposition \ref{pro:PrelimBootstrap}, $p'<\tilde p_c(p)=p+(1-p)/(\Omega \X(p)\lambda_p)$. The conclusion follows from Theorem \ref{thm:LowerLambda}.
\end{proof}

We first show a basic inequality:
\begin{lemma} \label{lem:ExpandSquare}
Let $0<p<p_c$. For every $k\geq 1$, we have $\tau^{\square,k}_p\leq \tau^{\square,k-1}_p\ast D\ast \tau_p$. For every $p'>p$, we have $\tau^\square_{p,p'}\leq \tau_p\ast \bar \tau_{p,p'}^\square$.
\end{lemma}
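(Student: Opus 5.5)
The plan is to drop the "off" constraint on the last cluster in the definition \eqref{eq:def_TauSquare} and then recognise the remaining sum. Fix $k\geq 1$. In \eqref{eq:def_TauSquare}, the only factors involving $A_k$ are $\proba_p(\C(e_k^+)=A_k)\,\1(x\in A_k\offx{e_k^+}A_{k-1})$, with $e_{k+1}^-=x$. We bound the indicator by $\1(x\in A_k)$, since $A_k\offx{e_k^+}A_{k-1}\subset A_k$. Summing over $A_k\in\Comp$ then gives $\proba_p(e_k^+\slr x)=\tau_p(x-e_k^+)$.

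What remains is a sum over $\vec e_1,\dots,\vec e_k$ and $A_0,\dots,A_{k-1}$ of exactly the summand defining $\tau_p^{\square,k-1}(e_k^-)$. The last constraint there is $\1(e_k^-\in A_{k-1}\offx{e_{k-1}^+}A_{k-2})$, which is precisely the $i=k-1$ term of the product with $e_{(k-1)+1}^-=e_k^-$. The cluster $A_{k-1}$ still appears in the dropped indicator, but once that indicator is removed nothing else depends on it. So we obtain
\[
\tau_p^{\square,k}(x)\leq \sum_{\vec e_k\in\vedges}\tau_p^{\square,k-1}(e_k^-)\,\tau_p(x-e_k^+).
\]
Writing $e_k^+=e_k^-+(e_k^+-e_k^-)$ and using that $D(z)=\1_{\{0,z\}\in\edges}$ counts the oriented edges with a given tail and increment $z$, the right-hand side equals $\tau_p^{\square,k-1}\ast D\ast\tau_p(x)$. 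This is the first claim.

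For the second claim, start from $\tau^\square_{p,p'}=\sum_{k\geq0}\rho^k\tau_p^{\square,k}$ with $\rho=(p'-p)/(1-p)$. The $k=0$ term is $\tau_p^{\square,0}=\tau_p$: with $A_{-1}=\emptyset$, it equals $\sum_{A_0}\proba_p(\C(0)=A_0)\1(x\in A_0)$. For $k\geq1$ apply the first claim, which gives
\[
\tau^\square_{p,p'}\leq \tau_p+\rho\Big(\sum_{k\geq1}\rho^{k-1}\tau_p^{\square,k-1}\Big)\ast D\ast\tau_p=\big(\1_{\{0\}}+\rho\,\tau^\square_{p,p'}\ast D\big)\ast\tau_p=\bar\tau^\square_{p,p'}\ast\tau_p .
\]
Here we use commutativity of convolution and the definition \eqref{eq:def-bar-tau}.

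The only delicate point is bookkeeping: checking that after the last indicator is relaxed, the index conventions ($e_0^+=0$, $A_{-1}=\emptyset$, $e_{k+1}^-=x$) make the remaining sum exactly $\tau_p^{\square,k-1}(e_k^-)$. If $p'\geq\tilde p_c(p)$, both sides may be infinite, and the inequality holds trivially in $[0,\infty]$ because all terms are nonnegative.
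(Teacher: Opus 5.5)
Your proposal is correct and follows essentially the same route as the paper's proof. You relax the last off-constraint to $\1(x\in A_k)$, sum over $A_k$ to get $\tau_p(x-e_k^+)$, recognise $\tau_p^{\square,k-1}(e_k^-)$ in what remains, and then sum $\rho^k$ times the first inequality over $k\geq 1$ to obtain $\tau^\square_{p,p'}\leq \tau_p+\rho\,\tau^\square_{p,p'}\ast D\ast\tau_p=\bar\tau^\square_{p,p'}\ast\tau_p$.
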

\begin{proof}
By dropping the last "off" in \eqref{eq:def_TauSquare}, we get that $\tau_p^{\square,k}(x)$ is bounded by
\[
\sum_{\substack{ \vec e_1,\dots \vec e_k\in \vedges \\  e_0^+=0, \, \, e_{k+1}^-=x}}
\hspace{-0.5em} \sum_{\substack{ A_0,A_1,\dots, A_k\in \Comp \\ A_{-1}=\emptyset}}
\prod_{i=0}^k \proba_p(\C(e_i^+)=A_i)
\1(x\in A_k)
\prod_{i=0}^{k-1} \1(e_{i+1}^-\in (A_i\offx{e_i^+} A_{i-1})).
\]
First, summing over $A_k$ yields a term $\tau_p(x-e_k^+)$. Then we recognize $\tau_p^{\square,k-1}(e_k^-)$ when we sum over everything except $\vec e_k$. In other words,
\[
\tau_p^{\square,k}(x) \leq \sum_{\vec e_k\in \vedges} \tau_p^{\square,k-1}(e_k^-)\tau_p(x-e_k^+)=\tau_p^{\square,k-1}\ast D\ast \tau_p(x),
\]
which yields the first inequality of the lemma since $x\in \Z^d$ is arbitrary. The second inequality follows directly from the first since $\tau_{p,p'}^\square=\sum_{k\geq 0} ((p'-p)/(1-p))^k\tau^{\square,k}_p$ and $\tau_p^{\square,0}=\tau_p$. Thus,
\[
\tau_{p,p'}^\square\leq \tau_{p}+(p'-p)/(1-p)\tau_p\ast D \ast \tau_{p,p'}^\square=\tau_p\ast \bar \tau_{p,p'}^\square. \qedhere
\]
\end{proof}
\begin{lemma} \label{lem:NaiveSquare}
In the setting of Proposition \ref{pro:BeforeBootstrap}, if $\X(p)$ is large enough, then for some constant $K_\square=K_\square(d,L,C,c)$,
\[
\|\square_{p,p'}\|_1\leq K_\square \psi_d(\X(p))\|\tau_{p,p'}^\square \|_1/\X(p) .
\]
\end{lemma}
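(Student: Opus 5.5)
We need to bound $\|\square_{p,p'}\|_1$, where $\square_{p,p'}=\tau_p\cdot(\tau_p\ast D\ast \tau^\square_{p,p'}\ast D\ast \tau_p)$. The plan is to sum the product over $z$ and split according to whether the convolution factor is in its "local" regime or its "global" regime.

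\textbf{Reduction to a square.} By definition,
\[
\|\square_{p,p'}\|_1=\sum_{z}\tau_p(z)\,[\tau_p\ast D\ast \tau^\square_{p,p'}\ast D\ast\tau_p](z).
\]
The idea is to remove one copy of $\tau^\square_{p,p'}$ by an $L_1$ bound, so that what remains is a square diagram in $\tau_p$. Since $\tau_p$ is symmetric, the sum rewrites as
\[
\sum_{u,w}\tau^\square_{p,p'}(w-u)\,[D\ast\tau_p\ast\tau_p\ast D\ast\tau_p\ast\ldots],
\]
or more simply as
\[
\sum_{y}\tau^\square_{p,p'}(y)\,[\tau_p\ast\tau_p\ast\tau_p\ast D\ast D](y)\le \|\tau^\square_{p,p'}\|_1\,\|D\ast D\ast \tau_p^{\ast 3}\|_\infty .
\]
Lemma \ref{lem:TwoTriangleSquare}(b) gives $\tau_p^{\ast3}(x)\le C_3\langle x\rangle^{6-d}$, which is bounded by $C_3$. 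This yields $\|\square_{p,p'}\|_1\le \Omega^2 C_3\|\tau^\square_{p,p'}\|_1$. That bound is off from the target by a factor $\X(p)/\psi_d(\X(p))$, so the crude estimate is too weak and a better one is needed.

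\textbf{Using the structure of $\tau^\square_{p,p'}$.} The extra gain must come from the fact that $\tau^\square_{p,p'}$ is itself "one $\tau_p$ further out". By Lemma \ref{lem:ExpandSquare}, $\tau^\square_{p,p'}\le\tau_p\ast\bar\tau^\square_{p,p'}$, and iterating once more gives
\[
\tau^\square_{p,p'}\le \tau_p+\rho\,\tau_p\ast D\ast\tau_p\ast\bar\tau^\square_{p,p'}.
\]
Substituting this into the expression for $\|\square_{p,p'}\|_1$ produces two pieces.

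The $\tau_p$ piece is $\sum_z \tau_p(z)\,[\tau_p\ast D\ast\tau_p\ast D\ast\tau_p](z)$, which is bounded by $\Omega^2\|\tau_p^{\ast 4}\|_\infty\le \Omega^2C_4\psi_d(\X(p))$ by Lemma \ref{lem:TwoTriangleSquare}(c). Since $\|\tau^\square_{p,p'}\|_1\ge\X(p)$, this is at most $K\psi_d(\X(p))\|\tau^\square_{p,p'}\|_1/\X(p)$.

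For the second piece, pull out $\|\bar\tau^\square_{p,p'}\|_1\le 1+\rho\Omega\|\tau^\square_{p,p'}\|_1$. What remains is $\rho$ times a supremum over translations of a five-fold convolution of $\tau_p$ (with $D$'s):
\[
\sup_y \sum_z\tau_p(z)\,[\tau_p^{\ast 4}\ast D\ast D\ast D](z-y)\;\lesssim\;\|\tau_p^{\ast5}\|_\infty .
\]
Lemma \ref{lem:TruncatedPoly}, with $a=d-2$ and $b=d-8$ when $d>8$ (or the logarithmic and power cases otherwise), bounds this by
\[
C\,\X(p)^{1-(8-d)^+/2-\ldots},
\]
which is of order $\X(p)\psi_d(\X(p))\cdot\X(p)^{-1}\cdot\X(p)$. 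Concretely, an extra convolution with $\tau_p$ multiplies the order by at most $\X(p)$, so $\|\tau_p^{\ast5}\|_\infty\lesssim \X(p)\psi_d(\X(p))$ up to constants. Combined with $\rho\le C'/\X(p)$ from Lemma \ref{lem:p'-p}, the second piece is at most
\[
C\,\psi_d(\X(p))\bigl(1+\rho\Omega\|\tau^\square_{p,p'}\|_1\bigr)\le K\psi_d(\X(p))\|\tau^\square_{p,p'}\|_1/\X(p).
\]

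\textbf{Main obstacle.} The delicate step is the bound $\|\tau_p^{\ast 5}\|_\infty\lesssim\X(p)\psi_d(\X(p))$. This requires Lemma \ref{lem:TruncatedPoly} in the regime $a+b<d$ using the finite total mass $\|\tau_p\|_1=\X(p)$. If the exponent bookkeeping does not close in low dimensions, I would instead apply Young's inequality: bound one factor by $\|\tau_p\|_1=\X(p)$ and the remaining four by $\|\tau_p^{\ast4}\|_\infty\le C_4\psi_d(\X(p))$. This gives exactly $\|\tau_p^{\ast 5}\|_\infty\le C_4\X(p)\psi_d(\X(p))$ and suffices.
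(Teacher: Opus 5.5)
Your proof is correct and uses the same ingredients as the paper: Lemma \ref{lem:ExpandSquare}, the bound $\|\tau_p\ast\tau_p\ast\tau_p\ast\tau_p\|_\infty\le C_4\psi_d(\X(p))$, $\rho\le C'/\X(p)$ from Lemma \ref{lem:p'-p}, and $\X(p)\le\|\tau^\square_{p,p'}\|_1$; the only difference is that the paper applies Lemma \ref{lem:ExpandSquare} once and immediately pulls out $\|\bar\tau^\square_{p,p'}\|_1\le(1+C'\Omega)\|\tau^\square_{p,p'}\|_1/\X(p)$, leaving $\Omega^2\|\tau_p\ast\tau_p\ast\tau_p\ast\tau_p\|_\infty$, so your second iteration only creates an extra $\tau_p$ that you then pay back with Young's inequality and the extra factor $\rho$. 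You should delete the garbled Lemma \ref{lem:TruncatedPoly} exponent computation in your second piece, since your Young's-inequality bound $\|\tau_p^{\ast 5}\|_\infty\le C_4\X(p)\psi_d(\X(p))$ is rigorous and is all you need.
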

\begin{proof}
By Lemma \ref{lem:ExpandSquare},
\[
\square_{p,p'}=\tau_p[\tau_p\ast D\ast \tau_{p,p'}^\square \ast D\ast \tau_p] \leq \tau_p[\tau_p\ast D\ast \tau_p\ast \bar \tau^\square_{p,p'} \ast D\ast \tau_p].
\]
So
\[
\|\square_{p,p'}\|_1\leq [\tau_p\ast (\tau_p\ast D\ast \tau_p\ast \bar \tau^\square_{p,p'} \ast D\ast \tau_p)](0).
\]
Then using the generic bound $\|f\ast g\|_\infty\leq  \|f\|_1\|g\|_\infty$, we get
\[
\|\square_{p,p'}\|_1 \leq \| \tau_p\ast \tau_p\ast \tau_p\ast \tau_p\|_\infty \|D\|^2_1 \|\bar \tau^\square_{p,p'}\|_1
.
\]
Observe that $\|D\|_1=\Omega$, where we recall that $\Omega$ is the degree of a vertex in $(\Z^d,\edges)$.
Since $(p'-p)/(1-p)\leq C'/\X(p)$ from Lemma \ref{lem:p'-p}, and since $\X(p)\leq \|\tau_{p,p'}^\square\|_1$ from $\tau_p\leq \tau_{p,p'}^\square$, we have
\begin{equation}
\|\bar \tau^\square_{p,p'}\|_1=1+((p'-p)/(1-p))\|D\ast \tau_{p,p'}^\square\|_1\leq (1+C'\Omega) \|\tau_{p,p'}^\square\|_1/\X(p). \label{eq:BarNorme1a}
\end{equation}
Then, by Lemma \ref{lem:TwoTriangleSquare},
\[
\| \tau_p\ast \tau_p\ast \tau_p\ast \tau_p\|_\infty \leq C_4(C) \psi_d(\X(p)).
\]
Thus,
\[
\|\square_{p,p'}\|_1\leq C_4(C) \Omega^2 \psi_d(\X(p)) (1+C'\Omega )\|\tau_{p,p'}^\square\|_1/\X(p). \qedhere
\]
\end{proof}
\begin{lemma} \label{lem:NaiveDiamond}
In the setting of Proposition \ref{pro:BeforeBootstrap}, if $\X(p)$ is large enough, then for some constant $K_\Diamond=K_\Diamond(d,L,C,c)$,
\[
\|\Diamond_{p,p'}\|_1\leq K_\Diamond \psi_d(\X(p))\|\tau_{p,p'}^\square \|_1\X(p')/\X(p)^2.
\]
\end{lemma}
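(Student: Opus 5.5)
We bound $\|\Diamond_{p,p'}\|_1$ in the same way as $\|\square_{p,p'}\|_1$ in Lemma \ref{lem:NaiveSquare}. The plan is to expand the two factors of the pointwise product into convolutions of $\tau_p$'s and $D$'s. We then pull out the $L_1$ norms of $\bar\tau^\square_{p,p'}$ and $\bar\tau_{p,p'}$, and control the remaining fourfold convolution of $\tau_p$ by Lemma \ref{lem:TwoTriangleSquare}(c).

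Concretely, by definition \eqref{eq:deftimessquare},
\[
\|\Diamond_{p,p'}\|_1=\sum_{z}(\tau_p\ast D\ast \bar \tau^\square_{p,p'} \ast \tau_p)(z)\,(\tau_p\ast D\ast \bar \tau_{p,p'} \ast \tau_p)(z).
\]
By symmetry of $\tau_p$ and $D$ under $x\mapsto -x$, this equals
\[
\big[(\tau_p\ast D\ast \bar \tau^\square_{p,p'} \ast \tau_p)\ast(\tau_p\ast D\ast \bar \tau_{p,p'} \ast \tau_p)^{-}\big](0),
\]
where $f^-(x)=f(-x)$. We rearrange this as $[\tau_p^{\ast 4}\ast D\ast D^-\ast \bar\tau^\square_{p,p'}\ast \bar\tau_{p,p'}^-](0)$. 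The bound $\|f\ast g\|_\infty\le \|f\|_\infty\|g\|_1$ then gives
\[
\|\Diamond_{p,p'}\|_1\le \|\tau_p^{\ast4}\|_\infty\,\Omega^2\,\|\bar\tau^\square_{p,p'}\|_1\,\|\bar\tau_{p,p'}\|_1 .
\]

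For the first factor, Lemma \ref{lem:TwoTriangleSquare}(c) gives $\|\tau_p^{\ast 4}\|_\infty\le C_4\psi_d(\X(p))$. For $\bar\tau^\square_{p,p'}$, we reuse \eqref{eq:BarNorme1a}, which gives the bound $(1+C'\Omega)\|\tau^\square_{p,p'}\|_1/\X(p)$. For $\bar\tau_{p,p'}$, Lemma \ref{lem:p'-p} gives
\[
\|\bar\tau_{p,p'}\|_1=1+\rho\,\Omega\,\X(p')\le 1+C'\Omega\,\X(p')/\X(p).
\]
Since $\X(p')\ge \X(p)$ by monotonicity, this is at most $(1+C'\Omega)\X(p')/\X(p)$. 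Multiplying the three bounds yields the claim with $K_\Diamond=C_4\Omega^2(1+C'\Omega)^2$.

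There is no serious obstacle here. The steps needing care are that $\X(p)$ must be large enough for Lemma \ref{lem:p'-p} and Lemma \ref{lem:TwoTriangleSquare}(c) to apply, and that $\|\tau^\square_{p,p'}\|_1<\infty$, which is guaranteed by hypothesis (c). Note that the argument nowhere uses hypothesis (b) of Proposition \ref{pro:BeforeBootstrap}.
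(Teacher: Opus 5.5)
Your proposal is correct and follows the paper's proof essentially line by line. Like the paper, you write $\|\Diamond_{p,p'}\|_1$ as a convolution evaluated at $0$, bound it by $\|\tau_p\ast\tau_p\ast\tau_p\ast\tau_p\|_\infty\|D\|_1^2\|\bar\tau^\square_{p,p'}\|_1\|\bar\tau_{p,p'}\|_1$, and then apply Lemma~\ref{lem:TwoTriangleSquare}(c), \eqref{eq:BarNorme1a} and Lemma~\ref{lem:p'-p}, arriving at the same constant $C_4\Omega^2(1+C'\Omega)^2$.
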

\begin{proof}
Similarly to the last proof, we have
\[
\|\Diamond_{p,p'}\|_1=(\tau_p\ast D\ast \bar \tau^\square_{p,p'} \ast \tau_p) \ast (\tau_p\ast D\ast \bar \tau_{p,p'} \ast \tau_p)(0),
\]
so
\[
\|\Diamond_{p,p'}\|_1 \leq \|\tau_p\ast\tau_p\ast \tau_p\ast \tau_p\|_\infty \|D\|_1^2   \|\bar \tau_{p,p'}\|_1 \|\bar \tau^\square_{p,p'}\|_1.
\]
Substituting the bounds from the previous proof for $\|\tau_p\ast\tau_p\ast \tau_p\ast \tau_p\|_\infty$, $\|D\|_1$, and $\|\bar \tau^\square_{p,p'}\|_1$, and using from Lemma \ref{lem:p'-p} that
\begin{equation}
\|\bar \tau_{p,p'}\|_1 = 1+((p'-p)/(1-p))\|D\ast \tau_{p'}\|_1 \leq (1+C'\Omega)\X(p')/\X(p) \label{eq:BarNorme1b},
\end{equation}
we get
\[
\|\Diamond_{p,p'}\|_1 \leq C_4(C) \Omega^2(1+C'\Omega)^2 \psi_d(\X(p)) \|\tau^\square_{p,p'}\|_1\X(p')/\X(p)^2. \qedhere
\]
\end{proof}
\begin{lemma} \label{lem:NaiveTwosquare}
In the setting of Proposition \ref{pro:BeforeBootstrap}, if $\X(p)$ is large enough, then for some constant $K_\twosquare=K_\twosquare(d,L,C,c)$,
\[
\|\twosquare_{p,p'}\|_1\leq K_\twosquare \psi_d(\X(p))\|\tau_{p,p'}^\square \|_1/\X(p).
\]
\end{lemma}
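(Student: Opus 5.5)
We follow the same pattern as Lemmas \ref{lem:NaiveSquare} and \ref{lem:NaiveDiamond}: sum \eqref{eq:deftwosquare} over $z$ and reduce everything to $\|\tau_p\ast\tau_p\ast\tau_p\ast\tau_p\|_\infty$ together with $L^1$ norms. We will \emph{not} use $\|\bar\tau_{p,p'}\|_1$, which only enters the outer convolution in Theorem \ref{thm:UpperBoundTau}, and not $\twosquare_{p,p'}$ itself.

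Writing $G:=\tau_p\ast D\ast\bar\tau^\square_{p,p'}\ast\tau_p$ and $H:=\tau_p\ast D\ast\tau_p$, we have
\[
\|\twosquare_{p,p'}\|_1=\sum_{v,y,z}\tau_p(v)\tau_p(z-v)\tau_p(y-v)G(y)H(z-y).
\]
We substitute $z=y+u$ and sum first over $u$. For fixed $v,y$ this gives
\[
\sum_u \tau_p(y+u-v)H(u)=(\tau_p\ast\check H)(y-v),
\]
where $\check H(u)=H(-u)=H(u)$ by symmetry. Since $\tau_p\ast H=\tau_p\ast\tau_p\ast D\ast\tau_p$, we obtain
\[
\|\twosquare_{p,p'}\|_1=\sum_{v,y}\tau_p(v)\,\tau_p(y-v)\,G(y)\,[\tau_p\ast\tau_p\ast D\ast\tau_p](y-v).
\]
Now we sum over $v$ for fixed $y$. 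We bound $\tau_p(y-v)\le 1$; with symmetry of $\tau_p$ and $D$, the inner sum becomes
\[
\sum_v \tau_p(v)[\tau_p^{\ast 3}\ast D](y-v)=\tau_p^{\ast4}\ast D(y)\le \Omega\, C_4\psi_d(\X(p)),
\]
by Lemma \ref{lem:TwoTriangleSquare}(c). The remaining factor is $\|G\|_1\le \X(p)^2\,\Omega\,\|\bar\tau^\square_{p,p'}\|_1$.

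This crude bound loses a factor of $\X(p)^2$, so the bound $\tau_p\le1$ must be replaced by a better allocation. Instead, we keep $\tau_p(y-v)$ and bound the factor $[\tau_p^{\ast3}\ast D](y-v)\le \Omega C_3$ uniformly, using Lemma \ref{lem:TwoTriangleSquare}(b) with $d>6$. We are then left with
\[
\sum_{v,y}\tau_p(v)\tau_p(y-v)G(y)=\sum_y (\tau_p\ast\tau_p)(y)G(y)=[\tau_p\ast\tau_p\ast G](0).
\]
Since $G=\tau_p\ast D\ast\bar\tau^\square_{p,p'}\ast\tau_p$, this equals $(\tau_p^{\ast4}\ast D\ast\bar\tau^\square_{p,p'})(0)$. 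It is therefore bounded by $\|\tau_p^{\ast4}\|_\infty\,\Omega\,\|\bar\tau^\square_{p,p'}\|_1\le C_4\Omega\psi_d(\X(p))\|\bar\tau^\square_{p,p'}\|_1$.

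Finally, \eqref{eq:BarNorme1a}, which uses Lemma \ref{lem:p'-p}, gives $\|\bar\tau^\square_{p,p'}\|_1\le(1+C'\Omega)\|\tau^\square_{p,p'}\|_1/\X(p)$. Combining the above yields the claim with $K_\twosquare=C_3C_4\Omega^2(1+C'\Omega)$.

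The main obstacle is exactly this allocation. The summation must be organised so that only one factor of $L^1$ norm appears, namely $\bar\tau^\square$, while the five $\tau_p$ factors of the pentagon-like loop are absorbed into $\|\tau_p^{\ast3}\|_\infty$ and $\|\tau_p^{\ast4}\|_\infty$. Identifying that the $H$-edge closes a triangle $\tau_p^{\ast3}\ast D$ is the key point; we should double-check the symmetry and translation bookkeeping there.
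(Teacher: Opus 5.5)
Your argument is correct and matches the paper's proof. You sum over $z$ to produce $[\tau_p\ast\tau_p\ast D\ast\tau_p](v-y)$, bound it uniformly by $\Omega C_3$ via Lemma \ref{lem:TwoTriangleSquare}(b), recognise the remaining sum as $[\tau_p^{\ast 4}\ast D\ast\bar\tau^\square_{p,p'}](0)\le \Omega C_4\psi_d(\X(p))\|\bar\tau^\square_{p,p'}\|_1$, and finish with \eqref{eq:BarNorme1a}. The discarded first attempt using $\tau_p(y-v)\le 1$ can simply be omitted.
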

\begin{remark}
$\|\twosquare_{p,p'}\|_1$ is actually much smaller than $\|\square_{p,p'}\|_1$ and $\|\Diamond_{p,p'}\|_1$ (see Lemmas \ref{lem:TruncatedNormSquare}, \ref{lem:TruncatedNormDiamond}, and \ref{lem:TruncatedNormTwoSquare} for the true orders).
\end{remark}
\begin{proof}
Summing \eqref{eq:deftwosquare} over $z$, we obtain
\[
\|\twosquare_{p,p'}\|_1 = \sum_{v,y\in \Z^d}\tau_p(v)\tau_p(v-y)  [\tau_p\ast D\ast \bar \tau_{p,p'}^\square\ast \tau_p(y)] [\tau_p\ast D\ast \tau_p\ast \tau_p(v-y)],
\]
which we upper-bound by
\[
\|\twosquare_{p,p'}\|_1\leq \Omega \|\tau_p\ast \tau_p\ast \tau_p\|_\infty\sum_{v,y\in \Z^d}\tau_p(v)\tau_p(v-y)  [\tau_p\ast D\ast \bar \tau_{p,p'}^\square\ast \tau_p(y)].
\]
By (a) and Lemma \ref{lem:TwoTriangleSquare}, we have $\|\tau_p\ast \tau_p\ast \tau_p\|_\infty\leq C_3(C)$, and the above sum can be rewritten as
\[
[\tau_p\ast D\ast \bar \tau_{p,p'}^\square\ast \tau_p\ast \tau_p\ast \tau_p](0),
\]
and we can then finish exactly as in the proof of Lemma \ref{lem:NaiveSquare}.
\end{proof}

\begin{lemma} \label{lem:BeforeBootstrap2}
In the setting of Proposition \ref{pro:BeforeBootstrap}, we have for some constant $K'=K'(d,L,C,c)$,
\[
\|\tau_{p,p'}^\square-\tau_{p'}\|_1\leq K' \left (\max(\X(p'),\|\tau_{p,p'}^\square\|_1)/\X(p) \right)^4 \psi_d(\X(p)).
\]
\end{lemma}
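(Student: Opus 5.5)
Take $L_1$ norms in Theorems \ref{thm:LowerBoundTau} and \ref{thm:UpperBoundTau}, using $\|f\ast g\|_1=\|f\|_1\|g\|_1$ and, for nonnegative functions, $\|f\cdot g\|_1\le\|f\|_1\|g\|_\infty$ where needed. Writing $\rho=(p'-p)/(1-p)$, both bounds give
\[
\|\tau_{p,p'}^\square-\tau_{p'}\|_1\leq \rho^2\|\bar\tau^\square_{p,p'}\|_1\,\X(p)^2\,\max\big(\|\square_{p,p'}\|_1+\|\Diamond_{p,p'}\|_1\,,\;\|\twosquare_{p,p'}\|_1\big)\,\max\big(\|\bar\tau^\square_{p,p'}\|_1,\|\bar\tau_{p,p'}\|_1\big).
\]
Both theorems require $p'<\inf(p_c,\tilde p_c(p))$. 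This holds because condition (c) of Proposition \ref{pro:BeforeBootstrap} makes $\|\tau^\square_{p,p'}\|_1$ finite, so $p'<\tilde p_c(p)$ by Proposition \ref{pro:PrelimBootstrap}, and condition (b) makes $\X(p')$ finite, so $p'<p_c$.

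Next, set $M:=\max(\X(p'),\|\tau^\square_{p,p'}\|_1)$ and bound each factor by earlier results, treating $\X(p)$ as large. Lemma \ref{lem:p'-p} gives $\rho\le C'/\X(p)$, so $\rho^2\X(p)^2\le C'^2$. By \eqref{eq:BarNorme1a} and \eqref{eq:BarNorme1b}, $\|\bar\tau^\square_{p,p'}\|_1$ and $\|\bar\tau_{p,p'}\|_1$ are each at most $(1+C'\Omega)M/\X(p)$. Lemmas \ref{lem:NaiveSquare} and \ref{lem:NaiveTwosquare} give $\|\square_{p,p'}\|_1,\|\twosquare_{p,p'}\|_1\lesssim \psi_d(\X(p))M/\X(p)$. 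Lemma \ref{lem:NaiveDiamond} gives $\|\Diamond_{p,p'}\|_1\lesssim\psi_d(\X(p))M^2/\X(p)^2$, which is at least as large because $M\ge\X(p)$: indeed $\tau_p\le\tau_{p,p'}^\square$ and $\tau_p\le\tau_{p'}$. Multiplying these bounds gives at most
\[
K'\,(M/\X(p))^{4}\,\psi_d(\X(p)),
\]
where we used $M/\X(p)\ge1$ to raise every power of $M/\X(p)$ up to the fourth. The constant $K'$ depends only on $d,L,C,c$ through $C',\Omega,K_\square,K_\Diamond,K_\twosquare$.

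The main point needing care is that the three error lemmas are stated \emph{in the setting of Proposition \ref{pro:BeforeBootstrap}}, where $\X(p)$ is large and conditions (a)–(c) hold. We must check that these hypotheses are all their proofs use. They need \eqref{eq:initialization} at $p$, which feeds Lemma \ref{lem:TwoTriangleSquare}(c); the bound $\rho\le C'/\X(p)$ from Lemma \ref{lem:p'-p}, which needs only $\|\tau^\square_{p,p'}\|_1<\infty$; and $p'<\inf(p_c,\tilde p_c(p))$, which was established above. The product-versus-convolution step in the lower bound is handled by $\|\tau_p\cdot g\|_1\le\|\tau_p\ast g\|_\infty$-type arguments, and these are already built into those lemmas. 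Finally, the smallness constraints in (b) and (c), of order $c\X(p)\mathfrak s(p)$, are not needed for this crude $\max$-version. They enter only later, when upgrading $\max$ to $\min$ in Proposition \ref{pro:BeforeBootstrap}.
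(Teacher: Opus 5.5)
Your proposal is correct and is essentially the paper's proof. Like the paper, you take $L_1$ norms in Theorems \ref{thm:LowerBoundTau} and \ref{thm:UpperBoundTau} using $\|f\ast g\|_1\le\|f\|_1\|g\|_1$, absorb $\rho^2\X(p)^2$ via Lemma \ref{lem:p'-p}, substitute \eqref{eq:BarNorme1a}, \eqref{eq:BarNorme1b} and Lemmas \ref{lem:NaiveSquare}, \ref{lem:NaiveDiamond}, \ref{lem:NaiveTwosquare}, and use $\max(\X(p'),\|\tau^\square_{p,p'}\|_1)\ge\X(p)$ to raise everything to the fourth power.

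The only slip is cosmetic. Combining the two one-sided bounds gives the \emph{sum} of the two $L_1$ error terms, as in the paper's display, not a single product with $\max$'s. Your displayed inequality is therefore off by a factor of at most $2$, which is absorbed into $K'$.
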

\begin{proof}
By Theorems \ref{thm:LowerBoundTau} and \ref{thm:UpperBoundTau}, we have
\[
\| \tau_{p'}-\tau_{p,p'}^\square \|_1\leq ((p'-p)/(1-p))^2
(\|\bar \tau^\square_{p,p'} \ast \tau_p\ast (\square_{p,p'}+\Diamond_{p,p'})
\ast \tau_p \ast \bar \tau_{p,p'}^\square\|_1
+ \| \bar \tau_{p,p'}^\square \ast \tau_p\ast \twosquare_{p,p'}
\ast \tau_p \ast \bar \tau_{p,p'}\|_1).
\]
Using the general inequality $\|f\ast g\|_1\leq \|f\|_1\|g\|_1$,
and using Lemma \ref{lem:p'-p}, we obtain
\[
\| \tau_{p'}-\tau_{p,p'}^\square \|_1\leq C'^2
(\|\bar \tau^\square_{p,p'}\|_1
(\|\square_{p,p'}\|_1+\|\Diamond_{p,p'}\|_1)
\|\bar \tau_{p,p'}^\square\|_1
+ \|\bar \tau^\square_{p,p'}\|_1
\|\twosquare_{p,p'} \|_1
\|\bar \tau_{p,p'}\|_1).
\]
The result follows by substituting the bounds from \eqref{eq:BarNorme1a}, \eqref{eq:BarNorme1b}, and Lemmas \ref{lem:NaiveSquare}, \ref{lem:NaiveDiamond}, and \ref{lem:NaiveTwosquare}.
\end{proof}
\begin{proof}[Proof of Proposition \ref{pro:BeforeBootstrap}]
It suffices to replace the maximum in Lemma \ref{lem:BeforeBootstrap2} by a minimum. We have by (b), (c), and $\mathfrak{s}(p)^3=\X(p)/\psi_d(\X(p))$ that
\[
|\|\tau_{p,p'}^\square\|_1-\X(p')| \leq \| \tau_{p,p'}^\square-\tau_{p'}\|_1 \leq c^3 K' \max(\X(p'),\|\tau_{p,p'}^\square\|_1).
\]
Note that if we decrease $c$, then Lemma \ref{lem:BeforeBootstrap2} still holds with the same constant $K'$. So we may assume $c^3 K'\leq 1/2$, and the previous inequality implies that
\[
\max(\|\tau_{p,p'}^\square\|_1,\X(p')) \leq 2 \min(\|\tau_{p,p'}^\square\|_1,\X(p')),
\]
and the desired result then follows from Lemma \ref{lem:BeforeBootstrap2}.
\end{proof}
\subsection{Exponential decay for $\tau_p(x)$ under \eqref{eq:initialization}} \label{sec:ExpTail}
The aim of this subsection is to show the following result.
\begin{theorem}\label{thm:ExponentialTail}
For every $\eps,C>0$ there exists $f_\eps(C)>0$ such that for every $p<p_c$ satisfying $\X(p)\geq f_\eps(C)$ and \eqref{eq:initialization}, we have
\begin{equation}
\forall x\in \Z^d, \quad \tau_p(x) \leq \exp \left (1-\frac{\|x\|_2}{\X(p)^{1/2+\eps}}\right ). \label{eq:ExpDecay}
\end{equation}
\end{theorem}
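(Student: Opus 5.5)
The plan is to prove exponential decay via a Hammersley--Simon--Lieb type argument on a box, where the required smallness comes from the fact that $p$-open paths must have length of order $\X(p)^{1/2}$ to cover distance. The cleanest route is to use the quantity $\phi_p(S)=\sum_{x\in S,\,y\notin S,\,\{x,y\}\in\edges} p\,\proba_p(0\slr x\text{ in }S)$ from Duminil-Copin--Tassion. Indeed, if for some finite $S\ni 0$ we have $\phi_p(S)\leq 1/e$, then iterating the Simon--Lieb inequality gives $\tau_p(x)\leq e\cdot e^{-\lfloor \|x\|_\infty/R\rfloor}$, where $R$ is the radius of $S$ (up to $L$). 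So it suffices to exhibit, for every large $\X(p)$, a box $S=\ball_R(0)$ with $R\leq \X(p)^{1/2+\eps}/(d+1)$ such that $\phi_p(S)\leq 1/e$. Adjusting constants, this yields \eqref{eq:ExpDecay}, since $\|x\|_\infty\geq \|x\|_2/\sqrt d$ absorbs into the $\eps$ once $\X(p)$ is large.

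To find such a box, I would average over radii. Let $N=\X(p)^{1/2+\eps/2}$ and sum $\phi_p(\ball_R(0))$ over $R\in[N,2N]$. Each pair $(x,y)$ with $x$ at $\ell_1$-distance in $[N-L,2N]$ from $0$ is counted for at most $L$ radii, and $\proba_p(0\slr x \text{ in } S)\leq\tau_p(x)$. Hence
\[
\sum_{R=N}^{2N}\phi_p(\ball_R(0))\leq L\Omega p\sum_{\|x\|_1\geq N-L}\tau_p(x),
\]
and it remains to show that the tail mass $\sum_{\|x\|_2\geq N'}\tau_p(x)$ is at most $o(N)$ for $N'\approx N$. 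Then some $R$ has $\phi_p\leq o(1)$.

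The tail-mass estimate is the main obstacle, since \eqref{eq:initialization} alone only gives $\sum_{\|x\|\leq N}\tau_p\lesssim N^2$, which bounds nothing in the tail. I would bound it through the second moment $\|\tau_p\|_2^2=\sum_x\|x\|_2^2\tau_p(x)$: by Markov,
\[
\sum_{\|x\|_2\geq N'}\tau_p(x)\leq \|\tau_p\|_2^2/N'^2,
\]
so it suffices to have $\|\tau_p\|_2^2\leq \X(p)^{2+\eps/2}$, say. To get this I would use the Markov chain machinery. For $p_-<p$ with $\X(p_-)=\X(p)^{1-\eps/10}$, \eqref{eq:initialization} holds at $p_-$ because $\tau$ is increasing. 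Proposition~\ref{pro:AfterBootstrap} then compares $\tau_p$ with $\tau^\square_{p_-,p}$, and Theorems~\ref{thm:LowerBoundTau}--\ref{thm:UpperBoundTau} give pointwise domination by nonnegative convolution expressions. The second moment of a convolution is additive: $\|f\ast g\|_2^2\leq \|f\|_2^2\|g\|_1+\|f\|_1\|g\|_2^2$ up to cross terms, which vanish by symmetry. Moreover, $\|\tau^{\square,k}_{p_-}\|_2^2$ grows like $k\cdot\X(p_-)\cdot\|\tau^{\square,k}\|_1$ times the per-step variance, using the fast mixing of Theorem~\ref{thm:MixingTime}, the summability of covariances, and the rewriting \eqref{eq:RewriteB_TauSquare}. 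Summing over $k$ with weights $\rho^k$ and $k\lesssim\X(p)/\X(p_-)$ effective steps then gives
\[
\|\tau_p\|_2^2\lesssim \X(p)\cdot\bigl(\X(p)/\X(p_-)\bigr)\cdot\|\tau_{p_-}\|_2^2/\X(p_-).
\]
Iterating this geometrically from a base scale (a finite induction on $\log\X$ with multiplicative constant losses $\X^{O(\eps/10)}$) yields $\|\tau_p\|_2^2\leq \X(p)^{2+\eps/2}$. The base case is handled by \eqref{eq:initialization} at small scale.

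The delicate points are controlling the error terms $\square,\Diamond,\twosquare$ in second moment, not just in $L^1$, and starting the induction. If this proves too heavy, a fallback is to show directly that $\|\tau_{p_-}\|_2^2$ is finite and then only lose constants per step.
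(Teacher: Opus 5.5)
There is a genuine gap. Your reduction to the Duminil-Copin--Tassion criterion is sound, but the input you feed into it cannot produce small $\eps$.

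\textbf{The second moment is too weak.} Averaging over $R\in[N,2N]$ only gives $\min_R\phi_p(\ball_R(0))\lesssim N^{-1}\sum_{\|x\|_1\geq N-L}\tau_p(x)$. So you need the tail mass beyond $N$ to be at most a small multiple of $N$. Chebyshev with the second moment bounds this tail mass by $\|\tau_p\|_2^2/N^2$. But $\|\tau_p\|_2^2$ cannot be smaller than order $\X(p)^2$: as in the proof that $C'_\xi>0$, \eqref{eq:initialization} gives $\X(p)\lesssim r^2+\|\tau_p\|_2^2/r^2$. Hence the tail bound is $\lesssim N$ only when $N\gtrsim\X(p)^{2/3}$, so at best you obtain \eqref{eq:ExpDecay} for $\eps>1/6$. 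This is not merely a weak estimate: for $\X(p)^{1/2}\le N\ll\X(p)^{2/3}$, a profile obeying \eqref{eq:initialization} with second moment of order $\X(p)^2$ can put mass $\X(p)^2/N^2\gg N$ in $[N,2N]$. With your concrete choices $N=\X(p)^{1/2+\eps/2}$ and $\|\tau_p\|_2^2\le\X(p)^{2+\eps/2}$, the bound is $\X(p)^{1-\eps/2}$. That is $o(N)$ only for $\eps>1/2$, which is exactly the range Lemma~\ref{lem:NaiveExpTail} already covers. Reaching every $\eps>0$ requires moments of order about $1/\eps$ (tail mass $\lesssim\X(p)^{1+n/2}/N^n$) or exponential moments, and at that point the DCT step is superfluous.

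\textbf{The second-moment recursion is not available.} Every $L^2$ estimate in the paper (Lemmas~\ref{lem:tail2ndMoment}, \ref{lem:BoundedConditionalMoments}, \ref{lem:L2NormBasicDiagrams} and Proposition~\ref{pro:L2Error}) is derived \emph{from} Theorem~\ref{thm:ExponentialTail}, and for a structural reason. The only control on increments of the $h$-transformed chain is Lemma~\ref{lem:OneStepBias} (and Lemma~\ref{lem:Tail_vpg}): $\proba^h_p(\Delta(\vA_1)=x\mid\vA_0)\le K'\,D\ast\tau_p(x)$, a kernel whose total mass is of order $\X(p)$. Against an exponential tail this loss only affects an exponentially small region. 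With moment information alone, the per-step second moment you can actually extract is of order $\|\tau_{p_-}\|_2^2$ rather than $\|\tau_{p_-}\|_2^2/\X(p_-)$, so your recursion does not close. Your base case also cannot come from \eqref{eq:initialization}, since $\sum_x\|x\|_2^2\langle x\rangle^{2-d}=\infty$.

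\textbf{How the paper proceeds.} It inducts on $\eps$ using exponential moments, starting from Lemma~\ref{lem:NaiveExpTail}. Assuming \eqref{eq:ExpDecay} at $p$, it bounds $\E^h_p[e^{\langle\kappa,\Delta(\vA_{l(p)})\rangle}\mid\vA_0]$ past the mixing time $l(p)\asymp\log\X(p)$. Combining interleaved subsequences by H\"older, it bounds $\Lap_\kappa(\tau^{\square,k}_p)$, and hence $\Lap_\kappa(\tau^\square_{p,p'})$ for $\X(p')=\X(p)^{10/9}$. It transfers this to $\tau_{p'}$ through Theorem~\ref{thm:UpperBoundTau} and concludes by Markov's inequality. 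Each step improves $\eps$ to any $\eps'>9\eps/10$, and the exponential-decay hypothesis is what absorbs the polynomial $h$-bias losses.
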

We start by proving a classical result for $\eps=1/2$, using Duminil-Copin--Tassion's approach \cite{duminil2015new}.
\begin{lemma} \label{lem:NaiveExpTail}
Let $p<p_c$. We have for every $x\in \Z^d$,
\[
\tau_p(x)\leq \exp \left (1-\left \lfloor \frac{\|x\|_1}{3L\X(p)} \right \rfloor \right ),
\]
where for every $y>0$, $\lfloor y\rfloor$ denotes the integer part of $y$.
\end{lemma}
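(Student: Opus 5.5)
We follow the Duminil-Copin--Tassion approach \cite{duminil2015new}, specialised to the box $S=\ball_{n}(0)$ with $n:=3L\X(p)$ rounded down. The bound is trivial when $\|x\|_1<3L\X(p)$, since then the right-hand side is at least $e\geq 1$. So it suffices to prove the iterative inequality
\[
\tau_p(x)\leq e^{-1}\max_{y\in \partial S+ \cdots}\tau_p(x-y)
\]
for all $x$ with $\|x\|_1\geq n$, and then iterate $\lfloor \|x\|_1/(3L\X(p))\rfloor$ times.

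\textbf{Key quantity.} For a finite set $S\ni 0$, set
\[
\varphi_p(S):=p\sum_{\{u,v\}\in \edges,\,u\in S,\,v\notin S}\proba_p(0\slr u \text{ in } S).
\]
The standard decomposition argument runs as follows. If $0\slr x$ with $x\notin S$, consider the set $\mathcal{Z}$ of vertices of $S$ connected to $0$ inside $S$. Some edge $\{u,v\}$ with $u\in \mathcal{Z}$, $v\notin S$ is open, and $v\slr x$ off $\mathcal{Z}$. Conditioning on $\mathcal{Z}$, which only depends on edges in $S$, and using a union bound gives
\[
\tau_p(x)\leq \sum_{u\in S,\,v\notin S} p\,\proba_p(0\slr u\text{ in }S)\,\tau_p(x-v)\leq \varphi_p(S)\max_{v:\,\mathrm{d}(v,S)\leq L}\tau_p(x-v).
\]

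\textbf{Bounding $\varphi_p$.} Next we show $\varphi_p(\ball_m(0))\leq e^{-1}$ for some $m\leq 3L\X(p)$. Summing over shells, note that
\[
\sum_{m=0}^{M-1}\varphi_p(\ball_m(0))\leq p\sum_{u}\tau_p(u)\cdot\#\{(m,v): u\in\ball_m,\ v\notin\ball_m,\ \|u-v\|_1\leq L\}.
\]
Each $u$ is counted for at most $L$ values of $m$ and at most $\Omega$ values of $v$, so the sum is at most $pL\Omega\X(p)$. Moreover $p\Omega\leq$ a constant: $p<1/2$ and $\Omega$ is fixed. A cruder but cleaner route is to use $p\Omega\X(p)\geq$ something. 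Actually $p\Omega\leq 1+o(1)$ is not available in general, so instead I will use the averaging with weight $\X(p)$ directly. There is some $m<M:=3L\X(p)$ with
\[
\varphi_p(\ball_m)\leq pL\Omega\X(p)/M=p\Omega/3.
\]
To get $\leq e^{-1}$ we need $p\Omega\leq 1$. This can be arranged by replacing the counting of $v$ by the counting of edges: each edge $\{u,v\}$ crosses $\partial\ball_m$ for at most $\|u-v\|_1\leq L$ values of $m$, which gives $\sum_m\varphi_p\leq pL\Omega\X(p)$ again. The factor $p\Omega$ is then handled via mean-field comparison: $p\Omega\leq 1$ fails only if $\X(p)=\infty$ (branching-process lower bound $\X(p)\geq\sum(p\Omega')^k$ on a tree-like subgraph). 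This constant-chasing is the main obstacle. If it fails, I would enlarge the box to $3L\X(p)$ times a constant and absorb the factor into the floor, adjusting the choice of box accordingly.

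\textbf{Iteration.} With such an $m\leq 3L\X(p)-L$, apply the decomposition repeatedly. Each step moves the target by at most $m+L\leq 3L\X(p)$ in $\ell^1$ and gains a factor $e^{-1}$. After $k=\lfloor\|x\|_1/(3L\X(p))\rfloor$ steps we obtain $\tau_p(x)\leq e^{-k}$, which is at most $\exp(1-k)$.
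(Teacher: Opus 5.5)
There is a genuine gap, in the step that bounds $\varphi_p$. Your decomposition $\tau_p(x)\le\sum_{u\in S,\,v\notin S}p\,\proba_p(0\slr u\text{ in }S)\,\tau_p(x-v)$ is correct. Your final iteration would also go through if you had some $m\le 3L\X(p)-L$ with $\varphi_p(\ball_m(0))\le e^{-1}$.

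The problem is producing that $m$. Because $\varphi_p$ is an edge-boundary quantity carrying a factor $p$ per boundary edge, averaging over $m$ only gives
\[
\min_m\varphi_p(\ball_m(0))\lesssim pL\Omega\X(p)/(3L\X(p))\approx p\Omega/3 .
\]
So you need $p\Omega\lesssim 3/e$, and this does not follow from $p<p_c$.

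Your proposed justification is backwards. A branching-process or self-avoiding-walk comparison shows that $\X(p)<\infty$ when $p(\Omega-1)<1$, i.e. $p_c\ge 1/(\Omega-1)$. It does not show that $\X(p)=\infty$ when $p\Omega>1$. In fact $p\Omega>1$ for every $p$ close enough to $p_c$; for instance $p\Omega$ approaches $2$ for nearest-neighbour percolation on $\Z^2$. So the bound you get on $\min_m\varphi_p(\ball_m(0))$ need not be below $e^{-1}$.

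Your fallback of enlarging the box by a constant factor proves the estimate with $3$ replaced by a larger constant. That is a weaker statement than the lemma.

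The missing idea is to use a vertex cut instead of an edge boundary, which removes the factor $p\Omega$ entirely; this is what the paper does.
\begin{itemize}
\item Let $S_i=\{y:L(i-1)<\|y\|_1\le Li\}$. These annuli partition $\Z^d\setminus\{0\}$, so $\sum_i\sum_{y\in S_i}\tau_p(y)=\X(p)-1$.
\item By pigeonhole there is some $i\le e\X(p)$ with $\sum_{y\in S_i}\tau_p(y)\le e^{-1}$.
\item Every edge has $\ell^1$-length at most $L$, so any open path from $0$ to $x$ with $\|x\|_1>Li$ must visit $S_i$. The BK inequality then gives $\tau_p(x)\le\sum_{y\in S_i}\tau_p(y)\,\tau_p(x-y)$.
\item Each step therefore costs a factor $e^{-1}$ and moves the target by at most $Li\le eL\X(p)<3L\X(p)$ in $\ell^1$.
\end{itemize}
With this in place, your induction yields the lemma with exactly the constant $3$.
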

\begin{proof}
First, for every $i\in \N$, let $S_i:=\{y\in \Z^d: L(i-1)<\|y\|_1\leq Li \}$. Note that the $S_i$ are disjoint and that $\bigcup_{i\in \N} S_i=\Z^d\backslash \{0\}$, so
\[
\sum_{i\in \N} \sum_{y\in S_i} \tau_p(y)= \X(p)-1.
\]
Thus, by the pigeonhole principle, there exists $k\leq e\X(p)$ such that $\sum_{y\in S_k} \tau_p(y)\leq 1/e$.

We show by induction on $n$ that
\begin{equation}
\forall x\in \Z^d,  \quad \|x\|_1\geq 3nL \X(p)\, \Rightarrow\, \tau_p(x)\leq e^{1-n}. \tag{$E_n$}
\end{equation}
The result is trivial for $n=1$. Let $n\geq 2$ be such that $E_{n-1}$ holds. Let $x$ be such that $\|x\|_1\geq 3 nL \X(p)$. If $0\slr x$, there exists $y\in S_k$ such that $(0\slr y) \circ (y\slr x)$. Thus, by the BK inequality,
\[
\tau_p(x)\leq \sum_{y\in S_k} \tau_p(y)\tau_p(x-y).
\]
Note that $\|x-y\|_1\geq \|x\|_1-\|y\|_1 \geq 3nL\X(p)-e\X(p)L>3(n-1)L\X(p)$, so by ($E_{n-1}$),
\[
\tau_p(x)\leq \sum_{y\in S_k} \tau_p(y)e^{1-(n-1)}\leq e^{1-n}.
\]
This concludes the induction, and hence the proof.
\end{proof}
We show Theorem \ref{thm:ExponentialTail} by induction.
More precisely, we assume that \eqref{eq:initialization} and \eqref{eq:ExpDecay} hold for some $\eps,C>0$, and prove better exponential tails for $\tau_{p'}$ where $p'$ satisfies $\X(p')=\X(p)^{1+\eps'}$ with $0<\eps'<1/8$.
To do so, by Theorem \ref{thm:UpperBoundTau}, it suffices to prove suitable exponential tails for $\tau_{p,p'}^\square$, or equivalently for $\tau^{\square,k}_p$.
Recalling Section \ref{sec:Rewriting}, we do it by proving exponential tails for $\SA_k$ under $\proba_p^h$.

First, let us introduce some notation. For $\kappa\in \R^d$ and $f\in \Z^d\mapsto \R_+$, we write
\begin{equation}
\Lap_{\kappa}(f):=\sum_{x\in \Z^d} e^{\langle \kappa,x\rangle} f(x)
\label{eq:def-Laplace-transform}
\end{equation}
for its Laplace transform. By Theorem \ref{thm:MixingTime}, there exists $C_l=C_l(d,L,C)>0$ such that, writing $l(p)=\lfloor C_l \log(\X(p))\rfloor$, if $\X(p)$ is large enough, then for every $\vec A\in \Clive$,
\begin{equation}
\sum_{\vec B\in \Clive} \left |\proba_p^h(\vA_{l(p)-1}=\vec B|\vA_0=\vec A)-\vpg_p(\vec B) \right | \leq 1/\X(p)^8.\label{eq:def_l(p)}
\end{equation}
We take $C_l$ large enough below, depending only on $d,L,C$.

We start with some rough bounds that we use when we are below the mixing time:
\begin{lemma} \label{lem:OneStepBias}
Let $C>0$. Let $p<p_c$. There exist $F,K,K'$ depending on $d,L,C$ such that if $\X(p)\geq F$ and \eqref{eq:initialization} is satisfied, then for every $x\in \Z^d$ we have
\[
\proba_p^h(\Delta(\vA_0)=x)\leq \frac{K}{\X(p)} D\ast \tau_p(x),
\]
and for every $\vec A_0\in \Clive$, we have
\[
\proba_p^h (\Delta(\vA_1)=x |\vA_0=\vec A_0)\leq K' D\ast \tau_p(x).
\]
\end{lemma}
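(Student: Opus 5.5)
We bound both probabilities by writing them out with the Doob transform \eqref{eq:DoobTransform}/\eqref{eq:MarkovTransition} and controlling the $h_p$ ratios with Lemma \ref{lem:vpd} and Lemma \ref{lem:LambdaLowMemory}, together with the lower bound $\lambda_p\geq c$ from Theorem \ref{thm:LowerLambda}.

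\textbf{First bound.} By \eqref{eq:DoobTransform} with $k=0$, the law of $\vA_0$ under $\proba_p^h$ is $\proba_p^\otimes(\vA_0\in\cdot\,|\,\vA_0\in\Clive)$. Hence
\[
\proba_p^h(\Delta(\vA_0)=x)\leq \frac{1}{\Plive_p}\cdot\frac{1}{\Omega\X(p)}\sum_{(A,\vec a)\in\vec\Comp(0)}\proba_p(\C(0)=A)\1(a^-\in A,\ a^+=x).
\]
Summing over $A$ gives $\tau_p(a^-)$, so the sum equals $\sum_{\vec a:a^+=x}\tau_p(a^-)=D\ast\tau_p(x)$.

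It remains to bound $\Plive_p$ from below by a constant. By Lemma \ref{lem:LambdaLowMemory} with $k=0$, $\Plive_p=\proba_p^\otimes(\vA_0\in\Clive)\geq\lambda_p$, and Theorem \ref{thm:LowerLambda} gives $\lambda_p\geq c$ once $\X(p)\geq F$. (One could instead bound $\Plive_p$ directly, since $\proba^\otimes_p(\vA_0\in\Ckill)\leq \frac1{\Omega\X(p)}\sum_{\vec a}\tau_p(a^-)\tau_p(a^+)$ is small under \eqref{eq:initialization} for $d>4$; but the eigenvalue route is shorter.) This gives the first bound with $K=1/c$.

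\textbf{Second bound.} By \eqref{eq:MarkovTransition},
\[
\proba_p^h(\Delta(\vA_1)=x\,|\,\vA_0=\vec A_0)=\sum_{\vec B:b^+=x}\frac{h_p(\vec B)}{\lambda_p h_p(\vec A_0)}\frac{\mu_p(\vec B)}{\Omega\X(p)}\1((\vec A_0,\vec B)\notin\Ccut).
\]
The obstacle is the ratio $h_p(\vec B)/h_p(\vec A_0)$: Lemma \ref{lem:vpd} only gives $\inf_{\Clive} h_p\geq \|h_p\|_\infty(1-p)^\Omega/(\Omega\X(p))$, which costs a factor $\X(p)$. That loss exactly cancels the $1/(\Omega\X(p))$ in the kernel, and cancelling it is what makes the bound $O(D\ast\tau_p)$ rather than $O(D\ast\tau_p/\X(p))$.

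Concretely, bound $h_p(\vec B)\leq\|h_p\|_\infty$ and $h_p(\vec A_0)\geq\|h_p\|_\infty(1-p)^\Omega/(\Omega\X(p))$, and drop the indicator. This gives
\[
\proba_p^h(\Delta(\vA_1)=x\,|\,\vA_0=\vec A_0)\leq \frac{\Omega\X(p)}{\lambda_p(1-p)^\Omega}\cdot\frac{1}{\Omega\X(p)}\sum_{\vec B:b^+=x}\mu_p(\vec B)=\frac{D\ast\tau_p(x)}{\lambda_p(1-p)^\Omega}.
\]
The last sum equals $D\ast\tau_p(x)$ by the same computation as in the first bound.

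Finally, $p<1/2$ (as in the proof of Proposition \ref{pro:RelateLocalLambda}), so $(1-p)^\Omega\geq 2^{-\Omega}$, and $\lambda_p\geq c$ by Theorem \ref{thm:LowerLambda}. Thus $K'=2^\Omega/c$ works.
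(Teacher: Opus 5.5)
Your proposal is correct and follows the paper's own proof essentially step by step. For the first bound you condition on $\Clive$ and use $\Plive_p\geq\lambda_p$ (Lemma \ref{lem:LambdaLowMemory}). For the second you bound the ratio $h_p(\vec B)/h_p(\vec A_0)$ via Lemma \ref{lem:vpd}, which trades the factor $\Omega\X(p)$ for the kernel's $1/(\Omega\X(p))$, and then conclude with Theorem \ref{thm:LowerLambda}. You also make explicit the uniform bound $(1-p)^\Omega\geq 2^{-\Omega}$ from $p<1/2$, which the paper uses only implicitly.
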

\begin{proof}
The first probability is equal to
\begin{equation}
\proba_p^\otimes(\Delta(\vA_0)=x|\vA_0\in \Clive)\leq \proba_p^\otimes(\Delta(\vA_0)=x)/\Plive_p= \frac{1}{\Omega\X(p)\Plive_p} \sum_{A\in \Comp(0)}\sum_{a^-\in A,(a^-,x)\in \vedges} \proba_p(\C(0)=A). \label{eq:naive}
\end{equation}
We recognize the last sum as $\tau_p\ast D(x)$. Also by Lemma \ref{lem:LambdaLowMemory},
\begin{equation}
\Plive_p\geq \lambda_p. \label{eq:Plive>lambda}
\end{equation}
Thus, the first inequality follows from \eqref{eq:naive}, \eqref{eq:Plive>lambda}, and Theorem \ref{thm:LowerLambda}.

For the second inequality, by \eqref{eq:MarkovTransition} and Lemma \ref{lem:vpd}, for every $\vec A_1\in \Clive$, we have
\[
\P_p^1(\vec A_1|\vec A_0) \leq \frac{h_p(\vec A_1)}{\lambda_p h_p(\vec A_0)} \proba_p^\otimes(\vA_1=\vec A_1) \leq \frac{\Omega\X(p)}{\lambda_p(1-p)^\Omega}\proba_p^\otimes(\vA_1=\vec A_1).
\]
Summing over $\vec A_1\in \Clive$ such that $\Delta(\vec A_1)=x$, and using
$\proba_p^\otimes(\Delta(\vA_1)=x)=\tau_p\ast D(x)/(\Omega\X(p))$, we obtain
\[
\proba_p^h (\Delta(\vA_1)=x |\vA_0=\vec A_0)
\leq \frac{1}{\lambda_p(1-p)^\Omega}D\ast \tau_p(x).
\]
Theorem \ref{thm:LowerLambda} concludes the proof.
\end{proof}
\begin{lemma} \label{lem:Tail_vpg} In the same setting, we have 
  \[ \sum_{\vec A\in \Clive} \vpg_p(\vec A)\1(\Delta(\vec A)=x)\leq K' D\ast \tau_p(x).\]
\end{lemma}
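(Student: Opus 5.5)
The plan is to use stationarity of $\vpg_p$ for the kernel $\P_p$ and reduce the claim to the second bound of Lemma \ref{lem:OneStepBias}, which controls one transition from an arbitrary starting state. Since $\vpg_p$ is stationary (Lemma \ref{lem:StationaryMeasure}), for every $\vec B\in\Clive$ we have
\[
\vpg_p(\vec B)=\sum_{\vec A\in \Clive}\vpg_p(\vec A)\,\P_p^1(\vec B|\vec A).
\]
Multiplying by $\1(\Delta(\vec B)=x)$ and summing over $\vec B\in\Clive$ gives
\[
\sum_{\vec B\in\Clive}\vpg_p(\vec B)\1(\Delta(\vec B)=x)=\sum_{\vec A\in\Clive}\vpg_p(\vec A)\,\proba_p^h(\Delta(\vA_1)=x\,|\,\vA_0=\vec A).
\]

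Applying the second inequality of Lemma \ref{lem:OneStepBias} to each conditional probability bounds it uniformly in $\vec A\in\Clive$ by $K'D\ast\tau_p(x)$. Since $\vpg_p$ is a probability measure on $\Clive$, summing over $\vec A$ gives exactly $K'D\ast\tau_p(x)$, with the same constant $K'$ and under the same hypotheses ($\X(p)\geq F$ and \eqref{eq:initialization}).

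The only point that needs checking is whether the bound of Lemma \ref{lem:OneStepBias} is truly uniform over all starting states $\vec A_0\in\Clive$, including atypical ones where $h_p(\vec A_0)$ is small. This holds because its proof uses $\inf_{\Clive}h_p\geq\|h_p\|_\infty(1-p)^\Omega/(\Omega\X(p))$ from Lemma \ref{lem:vpd}, which cancels the $\Omega\X(p)$ normalisation. No further obstacle is expected. As an alternative route, one could instead combine the first inequality of Lemma \ref{lem:OneStepBias} with the mixing bound \eqref{eq:def_l(p)}. That approach only yields the estimate up to an additive error of $\X(p)^{-8}$, so the stationarity argument above is the one to use.
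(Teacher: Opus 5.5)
Your proof is correct and is essentially the paper's argument: the paper also writes $\vpg_p(\vec A_1)=\sum_{\vec A_0\in\Clive}\vpg_p(\vec A_0)\P_p^1(\vec A_1|\vec A_0)$ by stationarity and then reuses the one-step bound of Lemma \ref{lem:OneStepBias}. That bound is uniform over $\vec A_0\in\Clive$ for exactly the reason you give, namely the lower bound on $\inf_{\Clive}h_p$ from Lemma \ref{lem:vpd}.
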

\begin{proof} For every $\vec A_1\in \Clive$, we have $\vpg_p(\vec A_1)=\sum_{\vec A_0\in \Clive}\vpg_p(\vec A_0)\P_p^1(\vec A_1|\vec A_0)$. The proof is then essentially the same as before.
\end{proof}
\begin{lemma} \label{lem:TrashTailBelowMixing}
Let $C,\eps>0$. Let $p<p_c$. There exists $F>0$, depending only on $d,L,C,\eps$, such that if $\X(p)\geq F$ and \eqref{eq:initialization} and \eqref{eq:ExpDecay} are satisfied, then for every $\kappa\in \R^d$ with
\begin{equation}
 4d\log(\X(p))\X(p)^{1/2+\eps} \leq \|\kappa\|_2^{-1}\leq \X(p)^3, \label{eq:Def_Kappa}
\end{equation}
 we have
\[
\E_p^h[e^{\langle \kappa,\Delta(\vA_0)\rangle} ]\leq \X(p)^2.
\]
Moreover, for every $\vec A_0\in \Clive$,
\[
\E_p^h\left [e^{\langle \kappa,\Delta(\vA_1)\rangle} \middle  |\vA_0=\vec A_0\right  ]\leq \X(p)^3 .
\]
\end{lemma}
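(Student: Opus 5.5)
The plan is to reduce both estimates to Lemma \ref{lem:OneStepBias} and Lemma \ref{lem:Tail_vpg}, which already compare the law of $\Delta(\vA_0)$ (resp.\ of $\Delta(\vA_1)$ given $\vA_0$) with $D\ast\tau_p$. For the first bound I write
\[
\E_p^h[e^{\langle \kappa,\Delta(\vA_0)\rangle}]=\sum_{x\in\Z^d} e^{\langle\kappa,x\rangle}\proba_p^h(\Delta(\vA_0)=x)\leq \frac{K}{\X(p)}\sum_{x} e^{\langle\kappa,x\rangle} D\ast\tau_p(x)=\frac{K}{\X(p)}\Lap_\kappa(D)\Lap_\kappa(\tau_p).
\]
The second bound is identical, with the prefactor $K/\X(p)$ replaced by $K'$. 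Since $\|\kappa\|_2\leq 1$, we have $\Lap_\kappa(D)\leq \Omega e^{L}$, a constant. It therefore suffices to show $\Lap_\kappa(\tau_p)\leq \X(p)^{3}/(K'\Omega e^L)$, and in fact I aim for $\Lap_\kappa(\tau_p)\leq 2\X(p)$ up to lower order.

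To bound $\Lap_\kappa(\tau_p)$, I split according to $\|x\|_2\leq R$ or $\|x\|_2>R$, where $R:=2d\log(\X(p))\X(p)^{1/2+\eps}$.

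On the ball, $|\langle\kappa,x\rangle|\leq \|\kappa\|_2 R\leq 1/2$ by the lower bound in \eqref{eq:Def_Kappa}. This part therefore contributes at most $e^{1/2}\X(p)$.

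Outside the ball, I use \eqref{eq:ExpDecay} together with $\|\kappa\|_2\leq 1/(4d\log(\X(p))\X(p)^{1/2+\eps})$. This gives
\[
e^{\langle\kappa,x\rangle}\tau_p(x)\leq \exp\Big(1-\frac{\|x\|_2}{\X(p)^{1/2+\eps}}\Big(1-\frac{1}{4d\log \X(p)}\Big)\Big)\leq e\,\exp\Big(-\frac{\|x\|_2}{2\X(p)^{1/2+\eps}}\Big).
\]
Summing over $\|x\|_2>R$ then gives at most $\sum_{m> R} C_d m^{d-1}e^{1-m/(2\X(p)^{1/2+\eps})}$. Write $s=\X(p)^{1/2+\eps}$, so the threshold is $R=2d\log(\X(p))s$. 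The tail of this sum is of order $s^d\,\mathrm{poly}(\log\X(p))\,\X(p)^{-d}$, which is $o(1)$ once $\eps<1/2$ and $\X(p)$ is large. If $\eps\geq 1/2$, I just bound the whole sum by $C s^d=O(\X(p)^{d(1/2+\eps)})$. That is too big, so in that case I would instead use the trivial bound $\tau_p\leq 1$ with Lemma \ref{lem:NaiveExpTail}. I expect we can restrict to small $\eps$, since the hypothesis with a smaller $\eps$ is stronger.

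Main obstacle: the constants in the exponent. The cutoff must be chosen so that the exponential factor $e^{\langle\kappa,x\rangle}$ is absorbed by half of the decay in \eqref{eq:ExpDecay}, while the polynomial volume factor $R^d$ is beaten by $e^{-R/(2s)}=\X(p)^{-d}$. This is exactly why the factor $4d\log\X(p)$ appears in \eqref{eq:Def_Kappa}; the choice $R=2d\log(\X(p))\,s$ makes the tail $\X(p)^{-d}$ times polylog factors.

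The upper bound $\|\kappa\|_2^{-1}\leq\X(p)^3$ is not needed for the argument; it only ensures $\|\kappa\|_2$ is nonnegligible elsewhere. Combining the two regions gives $\Lap_\kappa(\tau_p)\leq 2\X(p)$. Hence the first expectation is at most $2K\Omega e^{L}\leq\X(p)^2$ and the second at most $2K'\Omega e^L\X(p)\leq \X(p)^3$, for $\X(p)\geq F$.
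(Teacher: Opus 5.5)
Your argument is essentially the paper's. Lemma~\ref{lem:OneStepBias} reduces both bounds to $\Lap_\kappa(D)\Lap_\kappa(\tau_p)$, the region near the origin contributes $O(\X(p))$, and the far region is controlled by \eqref{eq:ExpDecay}. The only difference is the cutoff, and it matters. The paper splits at $\|x\|_2=\|\kappa\|_2^{-1}$ instead of your fixed $R=2d\log(\X(p))\X(p)^{1/2+\eps}$. The near part is then at most $e\X(p)$. The far part starts at $\|x\|_2>\|\kappa\|_2^{-1}\geq 4d\log(\X(p))\X(p)^{1/2+\eps}$, where $\langle\kappa,x\rangle\leq\|x\|_2/(4d\X(p)^{1/2+\eps})$. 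So the tail is $\X(p)^{d(1/2+\eps)-(4d-1)}$ up to logarithms, which is $o(1)$ for every $\eps<5/2$. For $\eps\geq 5/2$ the range \eqref{eq:Def_Kappa} is empty once $\X(p)\geq 2$. Hence no case distinction on $\eps$ is ever needed.

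With your cutoff the far tail is $\X(p)^{d(\eps-1/2)}$ up to logarithms. That fits within the budget $\Lap_\kappa(\tau_p)\lesssim\X(p)^3$ only for $\eps$ below about $1/2+3/d$; in particular $\eps=1/2$ itself is harmless, so $1/2$ is not where your argument breaks. For larger $\eps$ your write-up is not a proof. The justification ``restrict to small $\eps$, since the hypothesis with a smaller $\eps$ is stronger'' runs the wrong way. \eqref{eq:ExpDecay} at a smaller $\eps$ is not implied by the hypothesis, and the admissible range of $\kappa$ also moves with $\eps$, so there is no monotonicity to exploit. This case is genuinely needed: the induction proving Theorem~\ref{thm:ExponentialTail} starts from an $\eps>1/2$ supplied by Lemma~\ref{lem:NaiveExpTail}.

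Your other suggestion does work if you carry it out. Lemma~\ref{lem:NaiveExpTail} gives $\tau_p(x)\leq e^2\exp(-\|x\|_2/(3L\X(p)))$. For $\eps>1/2$ this decay scale is polynomially smaller than $R$, so the far sum is at most $\X(p)^{O(1)}\exp(-c\X(p)^{\eps-1/2}\log\X(p))=o(1)$. The cleanest repair, though, is simply to put the cutoff at $\|\kappa\|_2^{-1}$ as the paper does.
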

\begin{proof}
By Lemma \ref{lem:OneStepBias}, it suffices to bound $\Lap_\kappa(\tau_p\ast D)$. We have
\[
\Lap_\kappa(\tau_p\ast D) \leq \Lap_\kappa(D) \sum_{x\in \Z^d, \|x\|_2\leq \|\kappa\|_2^{-1}} \tau_p(x) e^{\langle x,\kappa \rangle}+\Lap_\kappa(D)\sum_{x\in \Z^d, \|x\|_2>\|\kappa\|_2^{-1}} e^{1+\langle \kappa,x\rangle -\|x\|_2/\X(p)^{1/2+\eps}}.
\]
By the Cauchy--Schwarz inequality, we have $\langle x,\kappa\rangle \leq 1$ in the first sum, so we may upper-bound the first sum by $e\X(p)$.
For the second sum, the Cauchy--Schwarz inequality and $\|\kappa\|_2 \leq 1/(4d\log(\X(p))\X(p)^{1/2+\eps} )$ give $\langle x,\kappa\rangle \leq \|x\|_2/\X(p)^{1/2+\eps}/(4d)$.
Then, by a basic calculation, as long as $\X(p)$ is large enough, the second sum is bounded by 1.
\end{proof}
We use the following bound after the mixing time:
\begin{lemma} \label{lem:NextStepExpTail}
Let $C,\eps>0$.
Let $p<p_c$.
There exists $F>0$, depending only on $d,L,C,\eps$,
such that if $\X(p)\geq F$,
and \eqref{eq:initialization} and \eqref{eq:ExpDecay} are satisfied,
then for every $\vec A\in \Clive$,
and every $\kappa\in \R^d$ satisfying \eqref{eq:Def_Kappa},
we have
\[
\E^h_p\left [e^{\langle \kappa,\Delta(\vA_{l(p)}) \rangle} \middle |\vA_0=\vec A \right ] \leq 1+\|\kappa\|_2^2l(p)^2\X(p)^{1+2\eps}.
\]
\end{lemma}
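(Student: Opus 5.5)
The plan is to split off the mixing error and then bound the stationary contribution with a second-order Taylor expansion, using symmetry to kill the first-order term.

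\textbf{Step 1 (reduction to stationarity).} Condition on $\vA_{l(p)-1}$ and use the Markov property of $\proba^h_p$:
\[
\E^h_p\big[e^{\langle\kappa,\Delta(\vA_{l(p)})\rangle}\,\big|\,\vA_0=\vec A\big]=\sum_{\vec B\in\Clive}\P_p^{l(p)-1}(\vec B|\vec A)\,\E^h_p\big[e^{\langle\kappa,\Delta(\vA_1)\rangle}\,\big|\,\vA_0=\vec B\big].
\]
Replace $\P_p^{l(p)-1}(\cdot|\vec A)$ by $\vpg_p$. By \eqref{eq:def_l(p)} the total variation error is at most $\X(p)^{-8}$. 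By the second bound of Lemma \ref{lem:TrashTailBelowMixing}, the inner expectation is at most $\X(p)^3$ uniformly in $\vec B$. So the replacement costs at most $\X(p)^{-5}$. Since \eqref{eq:Def_Kappa} gives $\|\kappa\|_2\geq\X(p)^{-3}$, we have $\X(p)^{-5}\le \|\kappa\|_2^2\X(p)^{1+2\eps}$ with room to spare. By stationarity, the main term is $\sum_{\vec B}\vpg_p(\vec B)e^{\langle\kappa,\Delta(\vec B)\rangle}$.

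\textbf{Step 2 (Taylor expansion and symmetry).} Use $e^y\le 1+y+y^2e^{|y|}$. The linear term $\sum_{\vec B}\vpg_p(\vec B)\langle\kappa,\Delta(\vec B)\rangle$ vanishes, because $\vpg_p$ is symmetric (Lemma \ref{lem:Symmetric_hp_vpg}) and $\Delta(-\vec B)=-\Delta(\vec B)$. It remains to show
\[
\sum_{\vec B}\vpg_p(\vec B)\langle\kappa,\Delta(\vec B)\rangle^2e^{|\langle\kappa,\Delta(\vec B)\rangle|}\lesssim\|\kappa\|_2^2\,l(p)^2\X(p)^{1+2\eps}.
\]

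\textbf{Step 3 (sharp one-point bound for $\vpg_p$; the main obstacle).} Lemma \ref{lem:Tail_vpg} gives only $\vpg_p(\Delta=x)\le K'D\ast\tau_p(x)$. That measure has mass of order $\X(p)$, so it loses a full factor $\X(p)$ in the second moment. I would instead prove $\vpg_p(\Delta=x)\le (K/\X(p))\,D\ast\tau_p(x)$, using the structure of the Doob transform.

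The stationary law has the form $\vpg_p(\vec B)\propto h_p(\vec B)\,\mu_p(\vec B)\,g_p(\vec B)$, where $\mu_p g_p$ is the left Perron eigenvector of $\T_p$. One checks $g_p\le$ const, since a left eigenvector of a kernel dominated by $\mu_p/(\Omega\X(p))$ is dominated in the same way. Then bound $h_p\le\|h_p\|_\infty$. For the normalisation, use the lower bound $\Zb^h_p\ge\lambda_p^2\|h_p\|_\infty$ from Lemma \ref{eq:SumForCoupling} together with Theorem \ref{thm:LowerLambda}; this yields the factor $1/(\Omega\X(p))$ up to constants.

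If this route is awkward, I would first try the alternative of writing $\vpg_p=\vpg_p\P_p^2$ and bounding the two-step kernel directly via \eqref{eq:MarkovTransition}. There the ratio $h_p(\vec B)/h_p(\vec A_0)$ has to be averaged against $\vpg_p(\vec A_0)$, instead of being bounded pointwise by \eqref{eq:InfVPD}, which loses the factor $\X(p)$.

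\textbf{Step 4 (moment computation).} With the bound from Step 3, set $R=l(p)\X(p)^{1/2+\eps}$. Split according to $\|x\|_2\le R$ or $\|x\|_2>R$.

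For $\|x\|_2\le R$: we have $|\langle\kappa,x\rangle|\le 1$, since $\|\kappa\|_2^{-1}\ge 4d\log(\X(p))\X(p)^{1/2+\eps}$ and we may take $C_l$ so that $R\le\|\kappa\|_2^{-1}$, or else adjust $R$ to $\min(R,\|\kappa\|_2^{-1})$. The contribution is then at most
\[
e\|\kappa\|_2^2R^2\,\frac{K}{\X(p)}\|D\ast\tau_p\|_1\lesssim\|\kappa\|_2^2\,l(p)^2\X(p)^{1+2\eps}.
\]

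For $\|x\|_2>R$: use \eqref{eq:ExpDecay}, and note that \eqref{eq:Def_Kappa} gives $|\langle\kappa,x\rangle|\le\|x\|_2/(4d\X(p)^{1/2+\eps})$. The summand is then bounded by $\|\kappa\|_2^2\|x\|_2^2e^{-\|x\|_2/(2\X(p)^{1/2+\eps})}$, and its sum is negligible because $R$ exceeds $\X(p)^{1/2+\eps}$ by the logarithmic factor $l(p)$.

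Combining Steps 1 to 4 gives the lemma.
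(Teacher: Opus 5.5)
Steps 1 and 2 and the near/far split in Step 4 are exactly the paper's argument: mixing replacement via \eqref{eq:def_l(p)} and Lemma \ref{lem:TrashTailBelowMixing}, symmetry of $\vpg_p$ to kill the linear term, and the tail handled by Lemma \ref{lem:Tail_vpg} together with \eqref{eq:ExpDecay}. The gap is Step 3, which you call the main obstacle. It is not proved, and it is not needed.

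Write $\phi=\vpg_p/h_p$ for the left Perron eigenvector. Your domination $\phi\le\|\phi\|_1\mu_p/(\lambda_p\Omega\X(p))$ is fine. The normalisation step does not follow, however. $\Zb^h_p=\sum\nu^\downarrow_p h_p$ concerns the one-step image of the i.i.d.\ law, not $\phi$. What you actually need is $\|h_p\|_\infty\E_p^\vpg[1/h_p(\vA_0)]=O(1)$. Running the Doob chain from the initial law proportional to $\mu_p h_p$ and using mixing, this quantity equals $E_{p,\infty}/\X(p)$ up to powers of $\lambda_p$. So your claim amounts to $E_{p,\infty}=O(\X(p))$. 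The paper obtains that only in Lemma \ref{lem:Large_k_L1}, under an extra hypothesis at a larger parameter. That lemma is also downstream of Theorem \ref{thm:ExponentialTail}, which rests on the present lemma, so using it here would be circular.

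The point you missed is that no sharp one-point bound is required. Your own bulk estimate only uses the total mass of the dominating measure, and $\vpg_p$ is a probability measure. Hence $\sum_{\|\Delta(\vec B)\|_2\le r}\vpg_p(\vec B)\langle\kappa,\Delta(\vec B)\rangle^2\le\|\kappa\|_2^2r^2$ directly. The loss of a factor $\X(p)$ you feared only affects the untruncated second moment. In the tail, the crude bound of Lemma \ref{lem:Tail_vpg} is absorbed by the exponential decay. The factor $l(p)^2\X(p)^{1+2\eps}$ in the statement is precisely this crude truncated bound.

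A second, smaller issue is the constant. The lemma has coefficient $1$ in front of $\|\kappa\|_2^2l(p)^2\X(p)^{1+2\eps}$. With $R=l(p)\X(p)^{1/2+\eps}$, your bulk term carries a factor $e$ times the mass $K\Omega$, so you only get $1+O(\|\kappa\|_2^2l(p)^2\X(p)^{1+2\eps})$. Moreover, $\|\kappa\|_2R\le 1$ would require $C_l\le 4d$, whereas $C_l$ must be large for \eqref{eq:def_l(p)}.

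The paper instead truncates at $r(p)=2d\log(\X(p))\X(p)^{1/2+\eps}$, a scale independent of $C_l$. Then $\|\kappa\|_2 r(p)\le 1/2$ by \eqref{eq:Def_Kappa}, so $|e^y-1-y|\le 2y^2$ applies. The bulk is at most $2\|\kappa\|_2^2r(p)^2\le(32d^2/C_l^2)\|\kappa\|_2^2l(p)^2\X(p)^{1+2\eps}$. The tail and mixing errors are each $O(\|\kappa\|_2^2\X(p)^{1+2\eps})$. Taking $C_l$ large absorbs all of these into coefficient $1$.
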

\begin{proof}
Fix $\vec A$. By \eqref{eq:def_l(p)}, there exists $w:\Clive \mapsto \R_+$ such that $\|w\|_1\leq 1/\X(p)^8$ and for every $\vec B\in \Clive$,
\[
\proba_p^h(\vA_{l(p)-1}=\vec B|\vA_0=\vec A) \leq \vpg_p(\vec B)+w(\vec B).
\]
Applying $\P_p$, it follows that for every $\vec B\in \Clive$,
\[
\proba_p^h(\vA_{l(p)}=\vec B|\vA_0=\vec A) \leq \vpg_p(\vec B)+[\P_p(w)](\vec B).
\]
Hence, the expectation in the lemma is bounded by $S_\vpg+S_w$, where
\[
S_\vpg:= \sum_{\vec B\in \Clive} \vpg_p(\vec B) e^{\langle \kappa, \Delta(\vec B)\rangle} \quad \text{and} \quad S_w:= \sum_{\vec B\in \Clive} [\P_p(w)](\vec B) e^{\langle \kappa, \Delta(\vec B)\rangle}.
\]
By Lemma \ref{lem:TrashTailBelowMixing}, we have $S_w\leq \|w\|_1 \X(p)^3\leq 1/\X(p)^5 \leq \|\kappa\|_2^2 \X(p)$.

So it suffices to bound $S_\vpg$. By Lemma \ref{lem:Symmetric_hp_vpg}, for every $\vec B\in \Clive$ we have $\vpg_p(\vec B)=\vpg_p(-\vec B)$.
Also, $\Delta(\vec B)=-\Delta(-\vec B)$. Furthermore, $\vpg_p$ is a symmetric probability measure. Hence,
\[
S_\vpg=1+\sum_{\vec B\in \Clive} \vpg_p(\vec B) \left (e^{\langle \kappa, \Delta(\vec B)\rangle} -1-\langle \kappa, \Delta(\vec B)\rangle \right ).
\]
Let $r(p)=2d\log(\X(p))\X(p)^{1/2+\eps}$. We split the last sum into two parts according to whether $\|\Delta(\vec B)\|_2\leq r(p)$ or $\|\Delta(\vec B)\|_2> r(p)$. Let $S_\vpg^-$ and $S_\vpg^+$ denote the respective sums.
For the first sum, note that when $-1\leq \langle \kappa,x\rangle\leq 1$, we have  
\[
|e^{\langle \kappa, x\rangle} -1-\langle \kappa,x \rangle|\leq 2\langle \kappa,x\rangle^2 \leq 2\|\kappa\|_2^2 \|x\|_2^2.
\]
Thus, since $\vpg_p$ is a probability measure, and $\|\kappa\|_2r(p)\leq 1$, we have
\[ S_\vpg^- \leq 2\|\kappa\|_2^2r(p)^2\leq \frac{32d^2}{C_l^2}\|\kappa\|_2^2l(p)^2\X(p)^{1+2\eps}.\]

For the second sum, Lemma \ref{lem:Tail_vpg} yields, for some constant $K>0$ depending only on $d,L,C$,
\[
S_\vpg^+\leq K\sum_{x\in \Z^d,\|x\|_2>r(p)}\tau_p\ast D(x) |e^{\langle \kappa, x\rangle} -1-\langle \kappa,x \rangle|.
\]
Using \eqref{eq:ExpDecay}, if $\X(p)$ is large enough, the last sum is then bounded by 
\[\Omega \sum_{x\in \Z^d: \|x\|_2> r(p)} \exp \left (2-\frac{\|x\|_2}{\X(p)^{1/2+\eps}}\right )e^{1+|\langle \kappa,x\rangle|}.
\]
By the Cauchy--Schwarz inequality, the last sum is bounded by
\[
\sum_{x\in \Z^d: \|x\|_2> r(p)} \exp \left (3-\frac{\|x\|_2}{\X(p)^{1/2+\eps}}+\|x\|_2\|\kappa\|_2\right ).
\]
Using $\|\kappa\|_2\leq 1/(4d\log(\X(p))\X(p)^{1/2+\eps})$, we deduce by a basic calculation that, if $\X(p)$ is large enough, then the last sum is bounded by
$\|\kappa\|_2^2 \X(p)^{1+2\eps}$. Taking $C_l$ large enough and then $\X(p)$ large enough, the desired inequality follows by summing the bounds on $S_w$, $S_\vpg^-$, and $S_\vpg^+$.
\end{proof}
\begin{lemma} \label{lem:kStepExpTail}
In the setting of Lemma \ref{lem:NextStepExpTail}, we have for every $k\geq 0$,
\[
\E_p^h[e^{\langle \kappa /l(p),\SA_k\rangle } ] \leq (1+\|\kappa\|_2^2l(p)\X(p)^{1+2\eps})^k \X(p)^3.
\]
\end{lemma}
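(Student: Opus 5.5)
We split $\SA_k$ into blocks of length $l(p)$ and use the Markov property of the Doob-transformed chain under $\proba_p^h$. Write $l=l(p)$ and $\kappa'=\kappa/l$. Since $\|\kappa'\|_2\leq\|\kappa\|_2$, the bounds of Lemma \ref{lem:TrashTailBelowMixing} also apply to $\kappa'$. The lower bound in \eqref{eq:Def_Kappa} may fail for $\kappa'$, but this is harmless: in the proof of Lemma \ref{lem:TrashTailBelowMixing} the lower bound on $\|\kappa\|_2^{-1}$ is used only to control $\langle x,\kappa\rangle$, and that control only improves when $\kappa$ is smaller.

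The natural first step is a one-step-after-mixing bound that holds from any starting state, at the level of a single increment. By Lemma \ref{lem:NextStepExpTail}, for every $\vec A\in\Clive$ and every $j\geq 0$,
\[
\E^h_p\big[e^{\langle\kappa',\Delta(\vA_{j+l})\rangle}\,\big|\,\mathcal F_j\big]\leq 1+\|\kappa'\|_2^2l^2\X(p)^{1+2\eps}=1+\|\kappa\|_2^2\X(p)^{1+2\eps},
\]
using the Markov property at time $j$ (the conditional law given $\mathcal F_j$ depends only on $\vA_j\in\Clive$).

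The difficulty is that the increments $\Delta(\vA_j)$ are dependent, so the exponential moment of the sum does not factorize. We handle this with H\"older's inequality over residue classes. Write $\SA_k=\sum_{m=0}^{l-1}T_m$, where $T_m=\sum_{j\equiv m\ (\mathrm{mod}\ l),\ j\leq k}\Delta(\vA_j)$. By H\"older with $l$ factors,
\[
\E^h_p\big[e^{\langle\kappa',\SA_k\rangle}\big]\leq\prod_{m}\E^h_p\big[e^{\langle\kappa,T_m\rangle}\big]^{1/l}.
\]
Inside each class the indices are exactly $l$ apart, so we peel off the last term by conditioning on $\mathcal F_{j-l}$ and apply the displayed bound with $\kappa$ in place of $\kappa'$. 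We then iterate down the class.

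This is where the main obstacle lies. With $\kappa$ in place of $\kappa'$, Lemma \ref{lem:NextStepExpTail} gives the factor $1+\|\kappa\|_2^2l^2\X(p)^{1+2\eps}$. We need at most $(1+\|\kappa\|_2^2l\X(p)^{1+2\eps})^{l}$ per class, so that after the power $1/l$ and the product over the $l$ classes we obtain the exponent $k$. I expect this bookkeeping to close as follows. Each class contains about $k/l$ terms, each contributing $(1+\|\kappa\|^2l^2\X^{1+2\eps})$. Using $1+ab\leq(1+b)^a$ for integer $a$, we have $(1+\|\kappa\|^2l^2\X^{1+2\eps})^{k/l}\leq(1+\|\kappa\|^2l\X^{1+2\eps})^{k}$. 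Raising to $1/l$ and multiplying over the $l$ classes gives $(1+\|\kappa\|^2l\X^{1+2\eps})^k$, as required.

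The first term of each class, with index $m<l$, is not yet mixed. For it we use the crude bounds of Lemma \ref{lem:TrashTailBelowMixing}: $\X(p)^2$ at time $0$ and $\X(p)^3$ after one step. After the $1/l$ power and the product over classes these contribute at most $\X(p)^3$. Some care is needed at time $0$, whose law is not a transition from $\Clive$, and one should check that the crude bound is applied with the correct $\kappa$ (namely $\kappa$, which satisfies \eqref{eq:Def_Kappa}). This yields the stated bound.
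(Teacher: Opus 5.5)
Your proposal is correct and is essentially the paper's proof. Both use residue classes mod $l(p)$ with H\"older's inequality, peel each class via Lemma~\ref{lem:NextStepExpTail} and the Markov property, and pass from exponent $k/l(p)$ to $k$ by Bernoulli's inequality $1+l\,b\le(1+b)^{l}$ (equivalent to the paper's $(1+x)^{1/n}\le 1+x/n$); both bound the first term of each class by Lemma~\ref{lem:TrashTailBelowMixing}.

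The preliminary discussion of $\kappa'=\kappa/l(p)$ is unnecessary, since after H\"older only $\kappa$ itself is used and it satisfies \eqref{eq:Def_Kappa}. Also, the bound in \eqref{eq:Def_Kappa} that can fail for $\kappa'$ is $\|\kappa'\|_2^{-1}\le\X(p)^3$, not the lower one. Finally, your sentence asking for $(1+\|\kappa\|_2^2l(p)\X(p)^{1+2\eps})^{l(p)}$ ``per class'' should read ``per term''.
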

\begin{proof}
We decompose $\SA_k$ as $\SA_k^0+\dots+\SA_k^{l(p)-1}$, where for every $0\leq j <l(p)$ and $k\geq 0$,
\[
\SA_{k}^j := \sum_{0\leq i \leq k} \1_{(i-j)/l(p)\in \Z} \Delta(\vA_i).
\]
By H\"{o}lder's inequality, for every $k\geq 0$, we have
\begin{equation}
\E^h_p \left[ e^{ \langle \kappa/l(p), \SA_k\rangle} \right ]\leq \left (\prod_{0\leq j<l(p)} \E^h_p\left [ e^{ \langle \kappa , \SA_k^j\rangle } \right ] \right )^{1/l(p)}.  \label{eq:HolderTail}
\end{equation}

By Lemma \ref{lem:NextStepExpTail}, for every $k'\geq l(p)$ and $0\leq j < l(p)$ with $(k'-j)/l(p) \in \Z$, we have
\[
\E^h_p\left [ e^{ \langle \kappa, \SA_{k'}^j \rangle} \right ]=
\sum_{\vec A\in \Clive}
 \E^h_p\left [ e^{ \langle \kappa, \SA_{k'-l(p)}^j \rangle}\1_{\vA_{k'-l(p)}=\vec A} \E_p^h \left [e^{ \langle \kappa,\Delta(\vA_{k'})\rangle}  \middle | \vA_{k'-l(p)}=\vec A \right ] \right ].
 \]
By Lemma \ref{lem:NextStepExpTail}, the conditional expectation is uniformly bounded by $1+\|\kappa\|_2^2l(p)^2\X(p)^{1+2\eps}$. Thus,
\[
\E^h_p\left [ e^{ \langle \kappa, \SA_{k'}^j \rangle} \right ]\leq  (1+\|\kappa\|_2^2l(p)^2\X(p)^{1+2\eps})\E^h_p\left [ e^{ \langle \kappa, \SA_{k'-l(p)}^j \rangle} \right ] .
\]
Since for every $0\leq j<l(p)$, the map $k'\mapsto \SA_{k'}^j$ is constant on every interval of the form $[j+il(p),j+(i+1)l(p)-1]$, the previous statement still holds when we only assume $k'\geq j+l(p)$.
Hence, by induction, for every $0\leq j <l(p)$ with $k\geq j$ we have
\[
\E^h_p\left [ e^{ \langle \kappa , \SA_k^j\rangle } \right ] \leq (1+\|\kappa\|_2^2l(p)^2\X(p)^{1+2\eps})^{k/l(p)} \E_p^h[e^{\langle \kappa, \Delta(\vA_j)\rangle }].
\]
Note that by symmetry, the right-hand side is larger than 1, so the bound also holds when $k<j$. Going back to \eqref{eq:HolderTail}, whenever $k\geq 0$, we get
\[
\E_p^h[e^{\langle \kappa/l(p),\SA_k\rangle } ] \leq (1+\|\kappa\|_2^2l(p)^2\X(p)^{1+2\eps})^{k/l(p)} \left (\prod_{0\leq j<l(p)} \E_p^h \left [e^{\langle \kappa, \Delta(\vA_j)\rangle } \right ]\right )^{1/l(p)}.
\]
Also, using that for $n\in \N$ and $x\geq 0$ we have $(1+x)^{1/n}\leq 1+x/n$, we get
\[
(1+\|\kappa\|_2^2l(p)^2\X(p)^{1+2\eps})^{k/l(p)}\leq (1+\|\kappa\|_2^2l(p)\X(p)^{1+2\eps})^k
\]

It remains to show that, for $j\geq 0$,
\begin{equation}
\E_p^h \left [e^{\langle \kappa, \Delta(\vA_j)\rangle } \right ] \leq \X(p)^3 .\label{eq:IndividualTail}
\end{equation}
The bound for $j=0$ follows directly from Lemma \ref{lem:TrashTailBelowMixing}. Also, by Lemma \ref{lem:TrashTailBelowMixing}, we have for $j\geq 1$,
\[
\E_p^h \left [e^{\langle \kappa, \Delta(\vA_j)\rangle } \right ]=\sum_{\vec A \in \Clive} \proba_p^h(\vA_{j-1}=\vec A) \E_p^h \left [e^{\langle \kappa, \Delta(\vA_j)\rangle } \middle |\vA_{j-1}=\vec A\right ]\leq \X(p)^3. \qedhere
\]
\end{proof}

\begin{lemma} \label{lem:AllStepExpTail}
In the setting of Lemma \ref{lem:NextStepExpTail}, we have for every $k\geq 1$,
\[
\Lap_{\kappa/l(p)}(\tau_{p}^{\square,k}) \leq \X(p)^7 (\Omega\X(p)\lambda_p)^{k-1} (1+\|\kappa\|_2^2l(p)\X(p)^{1+2\eps})^{k-1}.
\]
\end{lemma}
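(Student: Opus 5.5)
The plan is to start from the exact representation \eqref{eq:RewriteB_TauSquare} of $\tau_p^{\square,k}$ through the Doob $h$-transform, multiply it by $e^{\langle \kappa/l(p),x\rangle}$ and sum over $x$. This gives
\[
\Lap_{\kappa/l(p)}(\tau_p^{\square,k})=\X(p)(\Omega\X(p)\lambda_p)^{k-1}\Plive_p\,\E_p^h\Big[\tfrac{h_p(\vA_0)}{h_p(\vA_{k-1})}e^{\langle \kappa/l(p),\SA_{k-1}\rangle}\sum_{\vec A_k}\mu_p(\vec A_k)\1_{(\vA_{k-1},\vec A_k)\notin\Ccut}e^{\langle \kappa/l(p),\Delta(\vec A_k)-\delta(\vec A_k)\rangle}\Big].
\]
We then bound each factor crudely. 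First, $\Plive_p\le 1$. Second, by Lemma \ref{lem:vpd}, the ratio $h_p(\vA_0)/h_p(\vA_{k-1})\le \|h_p\|_\infty/\inf_{\Clive}h_p\le \Omega\X(p)(1-p)^{-\Omega}$, which is at most $K\X(p)$ since $p<1/2$ (as in the proof of Proposition \ref{pro:RelateLocalLambda}).

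For the inner sum we drop the indicator. Since $\Delta(\vec A_k)-\delta(\vec A_k)=a_k^-$, the sum equals $\sum_{A}\sum_{\vec a:a^-\in A}\proba_p(\C(0)=A)e^{\langle\kappa/l(p),a^-\rangle}=\Omega\Lap_{\kappa/l(p)}(\tau_p)$. We bound $\Lap_{\kappa/l(p)}(\tau_p)$ exactly as in Lemma \ref{lem:TrashTailBelowMixing}, splitting into $\|x\|_2\le\|\kappa\|_2^{-1}$ and its complement and using \eqref{eq:ExpDecay} together with $\|\kappa/l(p)\|_2\le\|\kappa\|_2$. This gives a bound of order $e\X(p)+1\le 3\X(p)$.

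Putting these together, the prefactor is at most $\X(p)\cdot K\X(p)\cdot 3\Omega\X(p)$, multiplied by $\E_p^h[e^{\langle\kappa/l(p),\SA_{k-1}\rangle}]$. Lemma \ref{lem:kStepExpTail} with $k-1$ bounds this expectation by $(1+\|\kappa\|_2^2l(p)\X(p)^{1+2\eps})^{k-1}\X(p)^3$. The total power of $\X(p)$ is then $3+3=6$ times a constant. Taking $\X(p)$ large absorbs the constant $3K\Omega$ into one extra factor $\X(p)$, which gives the claimed $\X(p)^7$.

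The only delicate point is that \eqref{eq:RewriteB_TauSquare} requires $k\ge1$, and that the $h$-ratio must be bounded uniformly rather than in expectation; the infimum bound in Lemma \ref{lem:vpd} handles the latter. No step looks like a real obstacle. The main care needed is in checking that the Laplace bound for $\tau_p$ holds for the rescaled $\kappa/l(p)$, which may lie below the range \eqref{eq:Def_Kappa}. Only the upper constraint on $\|\kappa\|_2$ is used there, so this is fine.
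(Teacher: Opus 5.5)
Your proposal is correct and follows essentially the same route as the paper: start from \eqref{eq:RewriteB_TauSquare}, drop the cut indicator, bound the $h_p$-ratio uniformly via Lemma \ref{lem:vpd}, identify the last-step sum as $\Omega\Lap_{\kappa/l(p)}(\tau_p)=O(\X(p))$ using \eqref{eq:ExpDecay}, and conclude with Lemma \ref{lem:kStepExpTail} applied at $k-1$. Your count of powers of $\X(p)$ matches the paper's: the prefactor is $O(\X(p)^3)\leq \X(p)^4$, and multiplying by the $\X(p)^3$ from Lemma \ref{lem:kStepExpTail} gives $\X(p)^7$.
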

\begin{proof}
By \eqref{eq:RewriteB_TauSquare}, $\Lap_{\kappa/l(p)}(\tau_p^{\square,k})$ is equal to
\[
\X(p)(\Omega\X(p)\lambda_p)^{k-1} \Plive_p
\E^h_p \left[\frac{h_p(\vec \A_0)}{h_p(\vec \A_{k-1})}
\sum_{\vec A_k\in \vec \Comp(0)}   \mu_p(\vec A_k)
\1_{(\vec \A_{k-1},\vec A_k)\notin\Ccut}
e^{ \langle \kappa/l(p), \SA_{k-1}+\Delta(\vec A_k)-\delta(\vec A_k) \rangle} \right ].
\]
Dropping the indicator, and using Lemma \ref{lem:vpd}, $\Lap_{\kappa/l(p)}(\tau_p^{\square,k})$ is bounded by
\[
\X(p)(\Omega\X(p)\lambda_p)^{k-1} \frac{\Omega\X(p)}{(1-p)^\Omega}
\E^h_p \left[ e^{ \langle \kappa/l(p), \SA_{k-1}\rangle} \right ]
\sum_{\vec A_k\in \vec \Comp(0)}  \mu_p(\vec A_k)
e^{ \langle \kappa/l(p), \Delta(\vec A_k)-\delta(\vec A_k) \rangle}.
\]
We first deal with the sum above. We can rewrite it as in the proof of Lemma \ref{lem:NextStepExpTail} as
\[
\Omega \sum_{x\in \Z^d} \tau_p(x)e^{\langle \kappa/l(p),x\rangle } \leq C_1(d,L)\X(p),
\]
using that $\X(p)$ is large enough and \eqref{eq:ExpDecay} for the inequality. Thus, if $\X(p)$ is large enough, we have
\[
\Lap_{\kappa/l(p)}(\tau_p^{\square,k}) \leq \X(p)^4(\Omega\X(p)\lambda_p)^{k-1} \E^h_p \left[ e^{ \langle \kappa/l(p), \SA_{k-1}\rangle} \right ] .
\]
Lemma \ref{lem:kStepExpTail} then concludes the proof.
\end{proof}
\begin{lemma} \label{eq:FirstExpTail_TauSquare}
Let $C,\eps>0$.
There exist $F,K,K'>0$ depending on $d,L,C,\eps$ such that for every $p<p'<p_c$ with $\X(p)\geq F$ and $K'\X(p)\leq \X(p')\leq \X(p)^{10/9}$,
if \eqref{eq:initialization} and \eqref{eq:ExpDecay} are satisfied at $p$,
then for every $\kappa\in \R^d$ with $\|\kappa \|_2=1/(K l(p)^2 \X(p')^{1/2}\X(p)^{\eps})$,
we have
$ \Lap_\kappa(\tau^\square_{p,p'})\leq \X(p)^{10}.$
\end{lemma}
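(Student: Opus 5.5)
The plan is to sum the per-$k$ bound of Lemma \ref{lem:AllStepExpTail} against the weights $\rho^k$, with $\rho=(p'-p)/(1-p)$, and to check that the resulting series is dominated by a geometric series with a ratio bounded away from $1$.

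\textbf{Step 1: reduce to a geometric series.} Write $\kappa=\kappa'/l(p)$ with $\|\kappa'\|_2=1/(Kl(p)\X(p')^{1/2}\X(p)^\eps)$. Since $K'\X(p)\leq \X(p')$, we have $\|\kappa'\|_2^{-1}\geq K\sqrt{K'}\,l(p)\X(p)^{1/2+\eps}\geq 4d\log(\X(p))\X(p)^{1/2+\eps}$ once $K,K'\geq 1$ and $C_l$ is large enough. Since $\X(p')\leq\X(p)^{10/9}$, we also have $\|\kappa'\|_2^{-1}\leq \X(p)^3$. Hence $\kappa'$ satisfies \eqref{eq:Def_Kappa}, and Lemma \ref{lem:AllStepExpTail} applies to it. The $k=0$ term is $\Lap_\kappa(\tau_p)\leq C_1\X(p)$, by the same computation as in the proof of Lemma \ref{lem:AllStepExpTail}. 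Therefore
\[
\Lap_\kappa(\tau^\square_{p,p'})\leq C_1\X(p)+\X(p)^7\sum_{k\geq1}\rho\,\big(\rho\,\Omega\X(p)\lambda_p(1+\beta)\big)^{k-1},\qquad \beta:=\|\kappa'\|_2^2l(p)\X(p)^{1+2\eps}=\frac{\X(p)}{K^2 l(p)\X(p')}.
\]
Using $\rho\leq C'/\X(p)$ from Lemma \ref{lem:p'-p}, it suffices to show that $q:=\rho\,\Omega\X(p)\lambda_p(1+\beta)\leq 1-\gamma$ for some $\gamma\gtrsim 1/\X(p)^{2}$. The sum is then at most $\X(p)^7(C'/\X(p))\gamma^{-1}\leq \X(p)^{9}$, which gives the claimed bound $\X(p)^{10}$.

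\textbf{Step 2: control $1-\rho\,\Omega\X(p)\lambda_p$ by $\X(p)/\X(p')$.} This is the main obstacle. By \eqref{eq:def_pcp}, $\rho\,\Omega\X(p)\lambda_p=\rho/\tilde\rho$, where $\tilde\rho=(\tilde p_c(p)-p)/(1-p)$. By \eqref{eq:AsympBootstrap} and Lemma \ref{lem:vpd}, there is a constant $c_0\geq\X(p)^{-3}$ such that $\rho^k\|\tau_p^{\square,k}\|_1\geq c_0\X(p)(\rho/\tilde\rho)^k$. Summing over $k$ gives $\|\tau^\square_{p,p'}\|_1\geq c_0\X(p)/(1-\rho/\tilde\rho)$.

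On the other hand, Proposition \ref{pro:AfterBootstrap} applies because $\X(p')\leq \X(p)^{10/9}\ll c\X(p)\mathfrak s(p)$ (indeed $\mathfrak s(p)\geq \X(p)^{1/6}$ up to logarithms). It gives $\|\tau^\square_{p,p'}\|_1\leq 2\X(p')$. Combining the two bounds yields
\[
1-\rho/\tilde\rho\geq \frac{c_0\X(p)}{2\X(p')}.
\]
The delicate point is that this gap must beat the factor $1+\beta$. With the crude $c_0$ it does not: $\beta\approx \X(p)/(K^2 l(p)\X(p'))$, while the gap is only about $\X(p)^{-2}\X(p)/\X(p')$. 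The $K^2$ alone cannot close a polynomial loss of this kind.

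I would therefore first obtain a sharper lower bound via \eqref{eq:RewriteB_Wrong_TauSquare}. Under \eqref{eq:initialization}, $\Plive_p$ and $\E^h_p[h_p(\vA_0)/h_p(\vA_k)]$ are bounded below by constants: use Theorem \ref{thm:MixingTime} together with $\E_p^\vpg[1/h_p]\geq 1/\|h_p\|_\infty$, and $\Plive_p\geq\lambda_p\geq c$. This makes $c_0$ a constant $c_*(d,L,C)$, uniformly for $k\geq l(p)$. The early terms $k<l(p)$ contribute at most $l(p)\X(p)\ll\X(p')$ and are harmless. This gives $1-\rho/\tilde\rho\geq c_*\X(p)/(4\X(p'))$, and then $K$ is chosen so that $K^{-2}\leq c_*/8$. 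With this choice $q\leq(1-\frac{c_*\X(p)}{4\X(p')})(1+\frac{c_*\X(p)}{8\X(p')})\leq 1-\frac{c_*\X(p)}{8\X(p')}$.

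\textbf{Step 3: conclude.} Take $\gamma=c_*\X(p)/(8\X(p'))\geq c_*\X(p)^{-1/9}/8$. The series is then at most $\X(p)^7(C'/\X(p))\gamma^{-1}\leq \X(p)^{7}$ for $F$ large. Adding the $k=0$ term gives $\Lap_\kappa(\tau^\square_{p,p'})\leq \X(p)^{10}$. The constant $K'$ is used only to guarantee \eqref{eq:Def_Kappa} and $\X(p')\gg l(p)\X(p)$.
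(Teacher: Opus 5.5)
Steps 1 and 3 are the paper's argument. The difference is Step 2, the lower bound on the gap $1-\rho/\tilde\rho=(\tilde p_c(p)-p')/(\tilde p_c(p)-p)$, and as written it does not go through.

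\textbf{The gap.} You say the constant $K'$ guarantees $\X(p')\gg l(p)\X(p)$. It does not: $l(p)=\lfloor C_l\log\X(p)\rfloor\to\infty$ while $K'$ is fixed, and the hypotheses allow $\X(p')=K'\X(p)$. In the regime $K'\X(p)\le\X(p')\lesssim l(p)\X(p)$, paying $c_*l(p)\X(p)$ for the discarded terms $k<l(p)$ only yields $1-\rho/\tilde\rho\gtrsim 1/l(p)$. It does not yield the claimed $c_*\X(p)/(4\X(p'))$.

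Your mixing argument for a constant $c_*$ also has an unstated ingredient. It needs $\E^h_p[h_p(\vA_0)]\gtrsim\|h_p\|_\infty$, which follows from Lemma~\ref{eq:SumForCoupling} ($\E^\otimes_p[h_p(\vA_0)]\ge\lambda_p\|h_p\|_\infty$). The bound $\E^\vpg_p[1/h_p]\ge 1/\|h_p\|_\infty$ alone is only half of what is required.

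\textbf{The fix.} Both problems disappear if you drop mixing and use Lemma~\ref{lem:LambdaLowMemory}. By \eqref{eq:RewriteA_TauSquare} and \eqref{eq:def_pcp}, for every $k\ge 0$,
\[
\rho^k\|\tau_p^{\square,k}\|_1=\rho^k\Omega^k\X(p)^{k+1}\proba_p^\otimes(\Tcut>k)\ge\lambda_p\X(p)\,(\rho/\tilde\rho)^k .
\]
Hence $2\X(p')\ge\|\tau^\square_{p,p'}\|_1\ge\lambda_p\X(p)/(1-\rho/\tilde\rho)$, i.e.\ $1-\rho/\tilde\rho\ge\lambda_p\X(p)/(2\X(p'))$. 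Here $\lambda_p$ is bounded below by Theorem~\ref{thm:LowerLambda}, and there is no early-term issue at all.

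Alternatively, keep your argument and note that $\beta=\X(p)/(K^2 l(p)\X(p'))\le K^{-2}\min(1/l(p),\X(p)/\X(p'))$. A gap of order $\min(1/l(p),\X(p)/\X(p'))$ then already beats the factor $1+\beta$ once $K$ is large.

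\textbf{Comparison with the paper.} With this repair your route is a genuine alternative. The paper bounds the two ends of the gap separately:
\begin{itemize}
\item $\tilde p_c(p)-p\le C_\lambda/\X(p)$, from \eqref{eq:def_pcp} and Theorem~\ref{thm:LowerLambda};
\item $\tilde p_c(p)-p'\ge p_2'-p'\ge C_\Omega/\X(p')$, where $\X(p_2')=2\X(p')$.
\end{itemize}
In the second bound, Proposition~\ref{pro:AfterBootstrap} is used at $p_2'$ only qualitatively: $\|\tau^\square_{p,p_2'}\|_1<\infty$, hence $p_2'\le\tilde p_c(p)$. The last inequality comes from integrating $\partial_p\X\le\Omega\X^2$.

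Your version instead uses the quantitative comparison $\|\tau^\square_{p,p'}\|_1\le 2\X(p')$ at $p'$ itself, together with a lower bound on $\proba_p^\otimes(\Tcut>k)$. It therefore needs neither the auxiliary point $p_2'$ nor the differential inequality for $\X$. The paper's version, in turn, needs no lower bound on $\|\tau_p^{\square,k}\|_1$.
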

\begin{proof}
Let
\[
A= \|\kappa\|_2^2 l(p)^3\X(p)^{1+2\eps} \leq \frac{1}{K^2}\frac{\X(p)}{\X(p')}.
\]
Note that if $K$ is large enough, then $l(p)\kappa$ satisfies \eqref{eq:Def_Kappa}. So, by definition of $\tau_{p,p'}^\square$ and Lemma \ref{lem:AllStepExpTail},
\begin{align}
\Lap_\kappa(\tau_{p,p'}^\square) & =\Lap_\kappa(\tau_p)+\sum_{k=1}^\infty \left (\frac{p'-p}{1-p} \right )^k \Lap_\kappa(\tau_p^{\square,k}) \notag
\\ & \leq \Lap_\kappa(\tau_p)+\X(p)^7  \sum_{k=0}^\infty  \left (\frac{p'-p}{1-p} \right )^k (\Omega \X(p)\lambda_p)^k (1+A)^k. \label{eq:Sum_Laplace}
\end{align}
Then, by definition of $\tilde p_c(p)$ (see \eqref{eq:def_pcp}), we have
\[
\frac{p'-p}{1-p}\Omega \X(p)\lambda_p (1+A)=\frac{p'-p}{1-p} \frac{1-p}{\tilde p_c(p)-p} (1+A)=(1+A)\left (1- \frac{\tilde p_c(p)-p'}{\tilde p_c(p)-p} \right).
\]
We thus want to lower-bound the last fraction. By \eqref{eq:def_pcp} and Theorem \ref{thm:LowerLambda}, if $\X(p)$ is large enough,
\[
\tilde p_c(p)-p \leq C_\lambda /\X(p),
\]
 for some $C_\lambda$ depending only on $d,L,C$.
Then let $p_2'$ be such that $\X(p_2')=2\X(p')$.
We have $\X(p_2')\leq 2\X(p)^{10/9}$, so by Proposition \ref{pro:AfterBootstrap},
if $\X(p)$ is large enough, we have $\|\tau_{p,p'_2}^\square\|_1<\infty$.
So $p'_2\leq \tilde p_c(p)$.
Moreover, we have the well-known inequality $\frac{\partial \X(p)}{\partial p}\leq \Omega \X(p)^2$.
Thus, by integration, for some constant $C_\Omega$ depending only on $d,L$, we have
\[
\tilde p_c(p)-p'\geq p_2'-p'\geq C_\Omega/\X(p').
\]
Hence, if $K$ is large enough, we have for some constant $c>0$ depending only on $d,L,C$,
\[
\frac{p'-p}{1-p}\Omega \X(p)\lambda_p (1+A) \leq \left (1+\frac{1}{K^2}\frac{\X(p)}{\X(p')} \right )\left (1- \frac{C_\Omega}{C_\lambda}\frac{\X(p)}{\X(p')} \right ) \leq 1-c \frac{\X(p)}{\X(p')}.
\]
Using $\sum_{k=0}^\infty (1-x)^k=1/x$ for $x>0$, \eqref{eq:Sum_Laplace} gives
\[
\Lap_\kappa (\tau_{p,p'}^\square)\leq \Lap_\kappa(\tau_p)+\X(p') \X(p)^6/c,
\]
and we can use \eqref{eq:ExpDecay} to bound $\Lap_\kappa(\tau_p)$.
\end{proof}
\begin{lemma} \label{lem:LaplaceLogConvexe}
In the setting of Lemma \ref{eq:FirstExpTail_TauSquare}, if $K$ is large enough then
\[
\Lap_\kappa(\tau^\square_{p,p'})\leq \X(p)^{1/9}\|\tau^\square_{p,p'}\|_1.
\]
\end{lemma}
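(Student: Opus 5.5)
The plan is to use log-convexity of the Laplace transform in $\kappa$ and interpolate between $\kappa=0$, where $\Lap_0(\tau^\square_{p,p'})=\|\tau^\square_{p,p'}\|_1$, and the larger frequency handled by Lemma \ref{eq:FirstExpTail_TauSquare}, where the transform is at most $\X(p)^{10}$.

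\textbf{Setup.} Let $K_0$ be the constant given by Lemma \ref{eq:FirstExpTail_TauSquare}. Take $K\geq 90K_0$, so that $t:=K_0/K$ satisfies $0<t\leq 1/90$. Given $\kappa$ with $\|\kappa\|_2=1/(Kl(p)^2\X(p')^{1/2}\X(p)^\eps)$, set $\kappa_0:=\kappa/t$. Then $\|\kappa_0\|_2=1/(K_0l(p)^2\X(p')^{1/2}\X(p)^\eps)$. Lemma \ref{eq:FirstExpTail_TauSquare} does not depend on the direction of $\kappa$, only on its norm. Hence it applies to $\kappa_0$, under the same hypotheses on $p,p'$, and gives $\Lap_{\kappa_0}(\tau^\square_{p,p'})\leq \X(p)^{10}$.

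\textbf{Interpolation.} Write $f=\tau^\square_{p,p'}\geq 0$. Each summand factors as $e^{\langle\kappa,x\rangle}f(x)=(e^{\langle\kappa_0,x\rangle}f(x))^t f(x)^{1-t}$. H\"older's inequality with exponents $1/t$ and $1/(1-t)$ then gives
\[
\Lap_\kappa(f)\leq \Lap_{\kappa_0}(f)^t\,\|f\|_1^{1-t}\leq \X(p)^{10t}\,\|f\|_1^{1-t}.
\]

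\textbf{Conclusion.} Since $\tau_p=\tau_p^{\square,0}\leq \tau^\square_{p,p'}$, we have $\|f\|_1\geq \X(p)\geq 1$, and therefore $\|f\|_1^{1-t}\leq \|f\|_1$. Combined with $10t\leq 1/9$, this yields $\Lap_\kappa(f)\leq \X(p)^{1/9}\|f\|_1$.

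The argument has no real obstacle. The only points needing care are that Lemma \ref{eq:FirstExpTail_TauSquare} is applied with its own fixed constant $K_0$, and that the rescaled vector $\kappa_0$ has exactly the norm that lemma requires. Both hold by the choice $\kappa_0=\kappa/t$.
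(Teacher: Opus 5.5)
Your proposal is correct and is the same argument as the paper's: it uses log-convexity of $\kappa\mapsto\Lap_\kappa(\tau^\square_{p,p'})$, which you make explicit via H\"older, to interpolate between $\kappa=0$ and the frequency of Lemma \ref{eq:FirstExpTail_TauSquare} with weight $t\le 1/90$; this is exactly the paper's ``increase $K$ by a factor $90$''. The only point the paper leaves implicit, and you handle correctly, is absorbing $\|\tau^\square_{p,p'}\|_1^{1-t}$ into $\|\tau^\square_{p,p'}\|_1$ using $\|\tau^\square_{p,p'}\|_1\geq\X(p)\geq 1$.
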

\begin{proof}
Since the Laplace transform of a positive function is log-convex, it suffices to increase $K$ by a factor $90$.
\end{proof}
To bound $\Lap_\kappa(\tau_{p'})$, we want to use Theorem \ref{thm:UpperBoundTau}. To this end, we first bound $\Lap_\kappa(\twosquare_{p,p'})$.
\begin{lemma} \label{lem:ExpTail_TwoSquare}
In the setting of Lemma \ref{eq:FirstExpTail_TauSquare}, if $K$ is large enough, and $\X(p)$ is large enough, then
\[
\Lap_\kappa(\twosquare_{p,p'})\leq e\|\twosquare_{p,p'}\|_1+1.
\]
\end{lemma}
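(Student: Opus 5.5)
We need to bound $\Lap_\kappa(\twosquare_{p,p'})$, where $\twosquare_{p,p'}(z)=\sum_{v,y}\tau_p(v)\tau_p(z-v)\tau_p(y-v)G(y)H(z-y)$ with $G:=\tau_p\ast D\ast\bar\tau^\square_{p,p'}\ast\tau_p$ and $H:=\tau_p\ast D\ast\tau_p$. The key observation is that the Laplace weight is multiplicative along any decomposition of $z$: writing $z=y+(z-y)$, we have $e^{\langle\kappa,z\rangle}=e^{\langle\kappa,y\rangle}e^{\langle\kappa,z-y\rangle}$, and similarly $e^{\langle \kappa,y\rangle}$ splits along the factors of the convolution $G$. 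The plan is to push all the exponential weight onto factors whose Laplace transforms we control, and to compare with the unweighted sum $\|\twosquare_{p,p'}\|_1$.

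\textbf{Step 1: split by the size of the displacement.} Fix $R:=\|\kappa\|_2^{-1}$. On the region where $\|z\|_2\leq R$ we have $e^{\langle\kappa,z\rangle}\leq e$ by Cauchy--Schwarz, and this part contributes at most $e\|\twosquare_{p,p'}\|_1$. It remains to show that the contribution of $\|z\|_2>R$ is at most $1$.

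\textbf{Step 2: localise the long displacement.} If $\|z\|_2>R$, then along the chain $0\to v\to z$ (the two factors $\tau_p(v),\tau_p(z-v)$), at least one of $\|v\|_2,\|z-v\|_2$ exceeds $R/2$. Consider the case $\|v\|_2>R/2$; the other case is symmetric. Then $\tau_p(v)\leq \exp(1-\|v\|_2/\X(p)^{1/2+\eps})$ by \eqref{eq:ExpDecay}. Since $R=Kl(p)^2\X(p')^{1/2}\X(p)^{\eps}\geq K\X(p)^{1/2+\eps}l(p)^2$, this gives a factor $\exp(-cKl(p)^2\X(p)^{\eps'})$ for some $\eps'>0$, which is super-polynomially small in $\X(p)$.

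We must also control the weight $e^{\langle\kappa,z\rangle}$. We distribute it by writing $z=v+(z-v)$ and $z=y+(z-y)$ with $y=v+(y-v)$. Then $\Lap_\kappa$ of each factor is bounded by a polynomial in $\X(p)$:
\begin{itemize}
\item $\Lap_\kappa(\tau_p)$ is bounded via \eqref{eq:ExpDecay}, since $\|\kappa\|_2\ll\X(p)^{-1/2-\eps}$;
\item $\Lap_\kappa(\bar\tau^\square_{p,p'})\leq 1+C\X(p)^{10}$ by Lemma \ref{eq:FirstExpTail_TauSquare};
\item $\Lap_\kappa(D)\leq \Omega e$.
\end{itemize}
Because the weights multiply along the loop, we bound $e^{\langle\kappa,z\rangle}$ by a product of factors $e^{|\langle\kappa,\cdot\rangle|}$ on a spanning tree of the diagram (the edges $0\to v$, $v\to y$, $y\to z$ through $G$ and $H$). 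We then drop the remaining two-point factors using $\tau_p\leq1$.

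\textbf{Step 3: assemble the bound.} Applying $\Lap_\kappa(f\ast g)=\Lap_\kappa(f)\Lap_\kappa(g)$, the contribution from $\|z\|_2>R$ is at most $\X(p)^{C}\exp(-cKl(p)^2\X(p)^{\eps'})$. Here the decaying factor comes from using half of the exponential decay of the long $\tau_p$ edge, and the other half absorbs that edge's own weight $e^{|\langle\kappa,v\rangle|}$. This is at most $1$ once $K$ and $\X(p)$ are large.

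\textbf{Main obstacle.} The delicate step is handling a long displacement carried by $G$ or $H$ rather than by the $\tau_p$ edges. This cannot occur without one of the short edges $0\to v$ or $v\to z$ being long, which is exactly why we choose the chain $0\to v\to z$ in Step 2. Beyond that, the remaining work is to check that dropping factors via $\tau_p\leq 1$ only costs polynomial powers of $\X(p)$, for instance the volume factors in the sum over $v$, which are absorbed by the exponential decay of $\tau_p(v)$.
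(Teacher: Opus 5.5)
Your argument is correct and is essentially the paper's. Both split at $\|z\|_2=\|\kappa\|_2^{-1}$, and on the far region both use the exponential tail \eqref{eq:ExpDecay} of whichever of $\tau_p(v),\tau_p(z-v)$ is long to beat all polynomial losses. The paper's bookkeeping is lighter: it drops $\tau_p(y-v)$ to get $\twosquare_{p,p'}(z)\leq(\tau_p\ast\tau_p)(z)\,\|G\ast H\|_1$, so it needs only $\Lap_\kappa(\tau_p)$ and $\|\bar\tau^\square_{p,p'}\|_1\leq\X(p)^2$, whereas you route weight through $G$ and $H$ and so also need $\Lap_\kappa(\bar\tau^\square_{p,p'})$. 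Two harmless slips: a dropped factor $G$ or $H$ is not bounded by $1$, so use $\|G\|_\infty\leq\|G\|_1$, which is polynomial in $\X(p)$; and since only $\X(p')\geq K'\X(p)$ is assumed, the decay is $\exp(-cKl(p)^2)$ with no extra $\X(p)^{\eps'}$, which still beats any polynomial because $l(p)\asymp\log\X(p)$.
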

\begin{proof}
We have
\begin{equation}
\Lap_{\kappa}(\twosquare_{p,p'})=\sum_{x\in \Z^d,\|x\|_2\leq 1/\|\kappa\|_2} \twosquare_{p,p'}(x)e^{\langle \kappa,x\rangle} + \sum_{x\in \Z^d,\|x\|_2>1/\|\kappa\|_2} \twosquare_{p,p'}(x)e^{\langle \kappa,x\rangle}. \label{eq:Split_Laplace_TwoSquare}
\end{equation}
By the Cauchy--Schwarz inequality, the first sum is bounded by $e\|\twosquare_{p,p'}\|_1$. For the second, by dropping the term $\tau_p(y-v)$ in \eqref{eq:deftwosquare}, we get for every $x\in \Z^d$,
\[
\twosquare_{p,p'}(x)\leq [\tau_p\ast \tau_p](x)[ \tau_p\ast D\ast \bar \tau_{p,p'}^\square \ast \tau_p\ast \tau_p\ast D\ast \tau_p](x) \leq [\tau_p\ast \tau_p](x) \Omega^2 \X(p)^4\|\bar \tau_{p,p'}^\square\|_1,
\]
using $\|\cdot\|_\infty \leq \|\cdot \|_1$ for the last inequality.
By Proposition \ref{pro:AfterBootstrap},
\[
\|\bar \tau_{p,p'}^\square\|_1\leq 1+\Omega \|\tau_{p,p'}^\square\|_1 \leq \X(p)^2.
\]
So the second sum in \eqref{eq:Split_Laplace_TwoSquare} is bounded by
\[
\Omega^3 \X(p)^6 \sum_{x\in \Z^d, \|x\|_2>\|\kappa\|^{-1}_2 }\tau_p\ast \tau_p(x) e^{\langle \kappa,x\rangle}\leq 2 \Omega^3 \X(p)^6 \Lap_\kappa(\tau_p)\sum_{x\in \Z^d, \|x\|_2> \|\kappa\|^{-1}_2/2 }\tau_p(x)e^{\langle\kappa,x\rangle},
\]
using for the last inequality that if $x_1+x_2=x$ and $\|x\|_2>\|\kappa\|^{-1}_2$ then either $\|x_1\|_2> \|\kappa\|^{-1}_2/2$ or $\|x_2\|_2> \|\kappa\|^{-1}_2/2$. 
Since $\|\kappa\|_2\leq 1/(K\log(\X(p))\X(p)^{1/2+\eps})$, we have by the Cauchy--Schwarz inequality $\langle x,\kappa \rangle <\|x\|_2/(2 \X(p)^{1/2+\eps})$. 
So by \eqref{eq:ExpDecay}, $\Lap_\kappa(\tau_p)\leq 2\X(p)^{1+d\eps}$, provided $\X(p)$ is large enough. Then, by \eqref{eq:ExpDecay}, the right-hand side is bounded by
\[
4\Omega^3 \X(p)^{6+1+d\eps} \sum_{x\in \Z^d} \1\left (\|x\|_2 > K\log(\X(p)) \X(p)^{1/2+\eps} \right ) \exp \left (1-\frac{\|x\|_2}{2 \X(p)^{1/2+\eps}}\right ).
\]
If $K$ is large enough, the last display goes to $0$ as $\X(p)\to \infty$.
\end{proof}
\begin{lemma} \label{lem:NaiveTailTau}
In the setting of Lemma \ref{eq:FirstExpTail_TauSquare}, if $K$ is large enough, and $\X(p)$ is large enough, then
\[
\Lap_\kappa(\tau_{p})\leq 3\X(p).
\]
\end{lemma}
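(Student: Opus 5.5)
The approach is to split the Laplace sum exactly as in the proof of Lemma \ref{lem:ExpTail_TwoSquare}, into a bulk part where $\langle \kappa,x\rangle$ is small and a tail part controlled by \eqref{eq:ExpDecay}. We write
\[
\Lap_\kappa(\tau_p)=\sum_{\|x\|_2\leq R}\tau_p(x)e^{\langle\kappa,x\rangle}+\sum_{\|x\|_2> R}\tau_p(x)e^{\langle\kappa,x\rangle},
\]
with a threshold $R$ chosen so that $\|\kappa\|_2 R$ is small. The natural choice is $R=\|x\|$-scale just above the decay scale of \eqref{eq:ExpDecay}, namely $R=M\log(\X(p))\X(p)^{1/2+\eps}$ for a large constant $M$.

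Since $\|\kappa\|_2=1/(Kl(p)^2\X(p')^{1/2}\X(p)^\eps)$ and $\X(p')\geq K'\X(p)$, we get
\[
\|\kappa\|_2\leq \frac{1}{K\sqrt{K'}\,l(p)^2\X(p)^{1/2+\eps}}.
\]
Since $l(p)=\lfloor C_l\log\X(p)\rfloor$, this gives $\|\kappa\|_2R\leq M/(K\sqrt{K'}C_l^2\log\X(p))$, which is at most $\log(3/2)$ once $\X(p)$ is large. By Cauchy--Schwarz, $\langle\kappa,x\rangle\leq \|\kappa\|_2R$ on the bulk. The bulk sum is therefore at most $e^{\|\kappa\|_2R}\X(p)\leq (3/2)\X(p)$; in fact it is $(1+o(1))\X(p)$.

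For the tail, Cauchy--Schwarz together with the bound on $\|\kappa\|_2$ (for $K$ large) gives $\langle\kappa,x\rangle\leq \|x\|_2/(2\X(p)^{1/2+\eps})$. Then \eqref{eq:ExpDecay} bounds the tail by
\[
\sum_{\|x\|_2>R}\exp\Big(1-\frac{\|x\|_2}{2\X(p)^{1/2+\eps}}\Big).
\]
This is a routine lattice sum. Comparing it with an integral in polar coordinates, it is $O\big((\X(p)^{1/2+\eps})^{d}\,\mathrm{poly}(\log\X(p))\,e^{-M\log(\X(p))/2}\big)=O(\X(p)^{d(1/2+\eps)-M/2+o(1)})$. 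Taking $M$ large depending on $d,\eps$ makes this $o(1)$, so in particular it is at most $\X(p)$ for large $\X(p)$.

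Adding the two parts gives $\Lap_\kappa(\tau_p)\leq (3/2)\X(p)+1\leq 3\X(p)$. The only delicate point is the bookkeeping of constants: $M$ is fixed first, depending on $d,\eps$; then $K$ and $\X(p)$ are taken large so that both $\|\kappa\|_2R$ is small and $\|\kappa\|_2\leq 1/(2\X(p)^{1/2+\eps})$. No step appears to be a genuine obstacle.
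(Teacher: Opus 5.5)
Your proposal is correct and follows the route the paper intends: its omitted proof (``as above'') refers to the split used in Lemma \ref{lem:ExpTail_TwoSquare}, with a bulk region where $\langle\kappa,x\rangle$ is bounded and a tail controlled by \eqref{eq:ExpDecay}. The only difference is cosmetic. The paper splits at $\|x\|_2=1/\|\kappa\|_2$, which bounds the bulk by $e\,\X(p)<3\X(p)$. Your threshold $M\log(\X(p))\X(p)^{1/2+\eps}$ instead gives a bulk factor $1+o(1)$, and both choices leave a negligible tail.
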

\begin{proof}
As above, this can be derived from \eqref{eq:ExpDecay}. We omit the details.
\end{proof}
\begin{lemma} \label{lem:Tail_Tau_p'}
In the setting of Lemma \ref{eq:FirstExpTail_TauSquare}, if $K$ is large enough, and $\X(p)$ is large enough, then
\[
\Lap_\kappa(\tau_{p'})\leq 2\Lap_\kappa(\tau_{p,p'}^\square)+\X(p).
\]
\end{lemma}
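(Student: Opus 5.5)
The plan is to apply Theorem \ref{thm:UpperBoundTau} and take Laplace transforms. Since $\Lap_\kappa$ is multiplicative under convolution of nonnegative functions and linear, we get
\[
\Lap_\kappa(\tau_{p'})\leq \Lap_\kappa(\tau^\square_{p,p'})+\rho^2\,\Lap_\kappa(\bar\tau^\square_{p,p'})\,\Lap_\kappa(\tau_p)^2\,\Lap_\kappa(\twosquare_{p,p'})\,\Lap_\kappa(\bar\tau_{p,p'}),
\]
with $\rho=(p'-p)/(1-p)$. The difficulty is that the right side contains $\Lap_\kappa(\bar\tau_{p,p'})=1+\rho\Lap_\kappa(D)\Lap_\kappa(\tau_{p'})$, which involves the quantity we are bounding. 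If the coefficient of $\Lap_\kappa(\tau_{p'})$ on the right is at most $1/2$, the inequality can be absorbed. For this to make sense we first need $\Lap_\kappa(\tau_{p'})<\infty$; this follows from Lemma \ref{lem:NaiveExpTail}, because $\|\kappa\|_2\ll 1/\X(p')$.

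Next we bound each factor.
\begin{itemize}
\item By Lemma \ref{lem:p'-p}, $\rho\leq C'/\X(p)$. Also $\Lap_\kappa(D)\leq 2\Omega$, since $\|\kappa\|_2$ is tiny.
\item By Lemma \ref{lem:NaiveTailTau}, $\Lap_\kappa(\tau_p)\leq 3\X(p)$.
\item By Lemma \ref{lem:LaplaceLogConvexe}, $\Lap_\kappa(\bar\tau^\square_{p,p'})\leq 1+2\Omega\rho\X(p)^{1/9}\|\tau^\square_{p,p'}\|_1$.
\item By Lemma \ref{lem:ExpTail_TwoSquare}, $\Lap_\kappa(\twosquare_{p,p'})\leq e\|\twosquare_{p,p'}\|_1+1$.
\end{itemize}
Proposition \ref{pro:AfterBootstrap} applies because $\X(p')\leq\X(p)^{10/9}\ll \X(p)\mathfrak s(p)$. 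It gives $\|\tau^\square_{p,p'}\|_1\asymp\X(p')$. Lemma \ref{lem:NaiveTwosquare} then gives $\|\twosquare_{p,p'}\|_1\lesssim \psi_d(\X(p))\X(p')/\X(p)$.

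Multiplying these bounds, the coefficient of $\Lap_\kappa(\tau_{p'})$ is of order
\[
\rho^3\,\X(p)^{1/9}\X(p')\cdot\X(p)^2\cdot\psi_d(\X(p))\X(p')/\X(p)\;\lesssim\;\X(p)^{1/9}\psi_d(\X(p))\X(p')^2/\X(p)^{2}.
\]
The main obstacle is showing that this is at most $1/2$. With $\X(p')\leq \X(p)^{10/9}$ the bound is about $\X(p)^{1/3}\psi_d(\X(p))$, which is not small. So the crude estimate from Lemma \ref{lem:ExpTail_TwoSquare} is too lossy, and the expected fix is to use the true order of $\|\twosquare_{p,p'}\|_1$ (the remark after Lemma \ref{lem:NaiveTwosquare} says it is much smaller). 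If that is still insufficient, the fallback is a bootstrap in $p'$, as in Proposition \ref{pro:AfterBootstrap}, to handle the implicit term. The intended shortcut is different. Apply Theorem \ref{thm:UpperBoundTau} only to the part of the error not multiplied by $\Lap_\kappa(\tau_{p'})$. Then control the self-referential part by the $L^1$ comparison: by Proposition \ref{pro:AfterBootstrap}, $\|\tau_{p'}-\tau^\square_{p,p'}\|_1\leq K(\X(p')/\X(p))^4\psi_d(\X(p))\leq \X(p')$. That part contributes at most a constant times $\Lap_\kappa(\tau^\square_{p,p'})$ plus lower-order terms.

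Once the coefficient is at most $1/2$, rearranging gives
\[
\Lap_\kappa(\tau_{p'})\leq 2\Lap_\kappa(\tau^\square_{p,p'})+2\,\mathrm{(remaining\ error)}.
\]
In the remaining error, $\Lap_\kappa(\bar\tau_{p,p'})$ is replaced by $1$, and that term is $O(\X(p)^{1/9}\psi_d\X(p')^2/\X(p)^2)$. By Lemma \ref{lem:LaplaceLogConvexe} this is dominated by $\Lap_\kappa(\tau^\square_{p,p'})$ or by $\X(p)$, after possibly enlarging $K$, which makes $\|\kappa\|_2$ smaller and shrinks all the Laplace factors toward their $L^1$ values.
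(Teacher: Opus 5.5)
Your strategy is the paper's: apply Theorem~\ref{thm:UpperBoundTau}, use multiplicativity of $\Lap_\kappa$, bound each factor with exactly the lemmas you list, and absorb the term containing $\Lap_\kappa(\tau_{p'})$. The gap is that the step you call the main obstacle comes from an arithmetic slip. As a result, your proposal never actually establishes the absorption.

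The coefficient of $\Lap_\kappa(\tau_{p'})$ carries $\rho^4$, not $\rho^3$. Two factors come from the prefactor $\rho^2$. A third sits inside $\Lap_\kappa(\bar\tau^\square_{p,p'})\le 1+2\Omega\rho\X(p)^{1/9}\|\tau^\square_{p,p'}\|_1$. The fourth comes from $\bar\tau_{p,p'}=\1_{\{0\}}+\rho\,\tau_{p'}\ast D$. Using $\rho\le C'/\X(p)$, $\X(p')\le\X(p)^{10/9}$ and $\psi_d(\X(p))\le\X(p)^{1/2}$, the coefficient is
\[
\lesssim\frac{\X(p)^{1/9}\psi_d(\X(p))\X(p')^2}{\X(p)^{3}}\le\X(p)^{-1/6},
\]
which is below $1/2$ once $\X(p)$ is large. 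A sanity check confirms this: the coefficient equals your constant term, which you computed correctly as $O(\X(p)^{1/9}\psi_d(\X(p))\X(p')^2/\X(p)^2)$, times $\rho\Lap_\kappa(D)=O(1/\X(p))$. So the two cannot have the same order. That constant term is at most $\X(p)^{5/6}\le\X(p)/2$. Absorbing then gives $\Lap_\kappa(\tau_{p'})\le 2\Lap_\kappa(\tau^\square_{p,p'})+\X(p)$ directly. This is the paper's proof, and the crude bounds already suffice.

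The remedies you propose instead would not have worked.
\begin{itemize}
\item The sharper bound $\|\twosquare_{p,p'}\|_1\lesssim\psi_{d+1}(\X(p))$ of Lemma~\ref{lem:TruncatedNormTwoSquare} needs hypotheses (a) and (b) of Theorem~\ref{thm:PointwiseError}. These are not part of the setting of Lemma~\ref{eq:FirstExpTail_TauSquare}. Even if granted, your miscomputed coefficient would still be of order $\X(p)^{2/9}\psi_{d+1}(\X(p))$ when $\X(p')=\X(p)^{10/9}$.
\item The $L^1$ comparison of Proposition~\ref{pro:AfterBootstrap} gives no control of $\Lap_\kappa$. That transform weights $\|x\|_2\gg\|\kappa\|_2^{-1}$ exponentially, which is precisely the information the lemma is meant to produce.
\end{itemize}

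Separately, your reason for $\Lap_\kappa(\tau_{p'})<\infty$ is wrong. By the choice of $\|\kappa\|_2$ in Lemma~\ref{eq:FirstExpTail_TauSquare} and $\X(p')\ge K'\X(p)$, we have $\|\kappa\|_2\X(p')\to\infty$ when $\eps<1/2$. So Lemma~\ref{lem:NaiveExpTail} does not apply. The paper absorbs without comment.

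To avoid needing finiteness at all, iterate the pointwise inequality $\tau_{p'}\le a+M\tau_{p'}$. Here $M$ is convolution with the nonnegative kernel $\rho^3\,\bar\tau^\square_{p,p'}\ast\tau_p\ast\twosquare_{p,p'}\ast\tau_p\ast D$, whose $L^1$ norm and Laplace transform are both below $1/2$. Since $\tau_{p'}\le 1$, the remainder $M^n\tau_{p'}$ tends to $0$ pointwise. Hence $\tau_{p'}\le\sum_{j\ge0}M^ja$, and applying $\Lap_\kappa$ gives $\Lap_\kappa(\tau_{p'})\le 2\Lap_\kappa(a)$.
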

\begin{proof}
By Theorem \ref{thm:UpperBoundTau}, we have
\[
\Lap_\kappa(\tau_{p'}) \leq \Lap_\kappa(\tau_{p,p'}^\square)+\left(\frac{p'-p}{1-p}\right)^2\Lap_\kappa(\bar \tau^\square_{p,p'}) \Lap_\kappa^2 (\tau_p)\Lap_\kappa(\twosquare_{p,p'})\Lap_\kappa(\bar \tau_{p,p'}).
\]
If $\X(p)$ is large enough, Proposition \ref{pro:AfterBootstrap} and $\X(p')\leq \X(p)^{10/9}$ give $\|\bar \tau^\square_{p,p'}\|_1\leq 2 \X(p')$.
Lemma \ref{lem:NaiveTwosquare} then gives $\|\twosquare_{p,p'}\|_1 \leq 2 K_\twosquare \psi_d(\X(p))\X(p')/\X(p)$.
Thus, by Lemma \ref{lem:ExpTail_TwoSquare}, if $K$ is large enough,
\[
\Lap_\kappa(\twosquare_{p,p'}) \leq C \psi_d(\X(p))\frac{\X(p')}{\X(p)}.
\]
Moreover, by Lemmas \ref{lem:p'-p}, \ref{lem:LaplaceLogConvexe}, and \ref{lem:NaiveTailTau},
\[
\Lap_\kappa(\bar \tau^\square_{p,p'})\leq C \X(p)^{1/9}\frac{\X(p')}{\X(p)}, \qquad
\Lap_\kappa(\tau_p)\leq 3\X(p), \qquad
\Lap_\kappa(\bar \tau_{p,p'})\leq C\left(1+\frac{\Lap_\kappa(\tau_{p'})}{\X(p)}\right).
\]
We get for some constant $C_\Lap$ depending on $d,L,C$,
\begin{align*}
\Lap_\kappa(\tau_{p'})&  \leq \Lap_\kappa(\tau_{p,p'}^\square)+C_\Lap\frac{1}{\X(p)^2} \left(\X(p)^{1/9}\frac{\X(p')}{\X(p)}\right)\X(p)^2
\left(\psi_d(\X(p))\frac{\X(p')}{\X(p)}\right)
\left(1+\frac{\Lap_\kappa(\tau_{p'})}{\X(p)}\right)
\\ & = \Lap_\kappa(\tau_{p,p'}^\square)+
C_\Lap  \psi_d(\X(p))\X(p)^{1/9}\frac{\X(p')^2}{\X(p)^2}
+\left(C_\Lap \psi_d(\X(p))\X(p)^{1/9} \frac{\X(p')^2}{\X(p)^3}\right) \Lap_\kappa(\tau_{p'})  .
\end{align*}
Since $ \X(p')\leq \X(p)^{10/9}$ and $\psi_d(\X(p))\leq \X(p)^{1/2}$, we deduce that if $\X(p)$ is large enough,
\[
\Lap_\kappa(\tau_{p'})  \leq \Lap_\kappa(\tau_{p,p'}^\square)+ \frac{\X(p)}{2}+ \frac{1}{2}\Lap_\kappa(\tau_{p'}). \qedhere
\]
\end{proof}
\begin{proof}[Proof of Theorem \ref{thm:ExponentialTail}]
We prove the result by induction on $\eps>0$.
By Lemma \ref{lem:NaiveExpTail}, the result is true for every $\eps>1/2$.
Let us assume that it is true at $\eps>0$, and let us show that it is true for every $\eps'>(9/10)\eps$.
By Lemmas \ref{lem:LaplaceLogConvexe}, \ref{lem:Tail_Tau_p'},
and Proposition \ref{pro:AfterBootstrap},
there exists $K>0$ depending on $d,L,C,\eps$
such that for every $p<p'$ with $\X(p)$ large enough depending on $d,L,C,\eps$,
and $\X(p')=\X(p)^{10/9}$,
for every $\kappa\in \R^d$
with $\|\kappa\|_2= 1/(K l(p)^2 \X(p')^{1/2}\X(p)^{\eps})$
we have
\[
\Lap_\kappa(\tau_{p'})\leq 3 \X(p')^{10/9}.
\]
Now let $x\in \Z^d$
and take $\kappa$ collinear to $x$.
We deduce from the last inequality and Markov's inequality that
\[
\tau_{p'}(x)\leq 3\X(p')^{10/9} e^{-\|x\|_2  \|\kappa\|_2}. 
 \]
Then, by definition of $\|\kappa\|_2$ and since $\X(p')=\X(p)^{10/9}$, as $\X(p)\to \infty$ with $\eps'>(9/10)\eps$ fixed, we have $\X(p')^{1/2+\eps'}\|\kappa\|_2/2\gg \log(\X(p'))$. Using this inequality twice, if $\X(p)$ is large enough, and $\|x\|_2\geq \X(p')^{1/2+\eps'}$, we get
\[
\tau_{p'}(x) \leq \exp(1-\|x\|_2\|\kappa\|_2/2)\leq \exp\left (1-\frac{\|x\|_2}{\X(p')^{1/2+\eps'}} \right ).
\]
This final bound is also trivially true for every $x\in \Z^d$ with $\|x\|_2 \leq \X(p')^{1/2+\eps'}$.
Hence, \eqref{eq:ExpDecay} with $p$ replaced by $p'$ and $\eps$ replaced by $\eps'$ holds.
Since $\eps'>(9/10)\eps$ is arbitrary, this concludes the induction and hence the proof.
\end{proof}

\subsection{Pointwise bound and $L_1$ bound assuming \eqref{eq:initialization} at $p'$} \label{Sec:1+1=2}
First, let us introduce some notation.
For every $r\geq 0$ and $f:\Z^d\mapsto \R$, we write $\|f\|_{\infty,r}:=\max_{x:\|x\|_2\geq r} |f(x)|$, and $\|f\|_{1,r}:=\sum_{x:\|x\|_2<r} |f(x)|$.
The following error term will appear in our computations.
For every $r\geq 0$, and $p\leq p'$ with $\X(p)<\infty$ let
\begin{equation}
\Xi(r,p,p'):=r^{d-2}\|\tau_p\|_{\infty,r} \frac{\min(r^2,\X(p'))^2}{\X(p)\X(p')}.
\label{eq:def-Xi-error}
\end{equation}
Under \eqref{eq:initialization} and suitable exponential tails for $\tau_p$,
we will usually think of $\Xi(x,p,p')$ as bounded by a constant
(see e.g. Lemma \ref{lem:AnnoyingXi} or \eqref{eq:Plateau}).
The aim of this subsection is to prove the following bounds:
\begin{theorem} \label{thm:PointwiseError}
Let $C>0$. There exist $c,F,K,K'>0$ depending only on $d,L,C$ such that for every $p\leq p'$, if $F\leq \X(p)<\infty$,
and at least one of the following lines holds:
\begin{itemize}
\item[(a)] $\X(p')\leq c\X(p)^2/\psi_d(\X(p))$ and $(p',C)$ satisfies \eqref{eq:initialization}
\item[(b)] $\|\tau_{p,p'}^\square\|_1\leq  c\X(p)^2/\psi_d(\X(p))$ and for every $x\in \Z^d$, we have $\tau_{p,p'}^\square(x)\leq C\langle x\rangle^{2-d}$,
\end{itemize}
then we have
\[
\forall x\in \Z^d, \quad |\tau_{p'}(x)-\tau_{p,p'}^\square(x)| \leq K\langle x\rangle^{2-d}\frac{\X(p')\psi_d(\X(p))}{\X(p)^2}(1+\Xi(\|x\|_2/15,p,p')),
\]
and
\[
\|\tau_{p'}-\tau_{p,p'}^\square\|_1 \leq K' \frac{\min(\X(p'),\|\tau_{p,p'}^\square\|_1)^2\psi_d(\X(p))}{\X(p)^2}.
\]
\end{theorem}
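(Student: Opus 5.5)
The approach is the same as in Subsection~\ref{sec:L1AtP}: combine Theorems~\ref{thm:LowerBoundTau} and~\ref{thm:UpperBoundTau}, but now estimate the error diagrams pointwise rather than only in $L_1$. The new input is the polynomial bound available at $p'$ (case (a)) or for $\tau^\square_{p,p'}$ (case (b)). Two reductions come first.

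\textbf{Reduction 1: $\rho$.} By Lemma~\ref{lem:p'-p}, $\rho=(p'-p)/(1-p)\le C'/\X(p)$.

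\textbf{Reduction 2: bootstrap to both hypotheses.} A bootstrap in the style of Proposition~\ref{pro:AfterBootstrap} should let us assume both (a) and (b) at once, with $C$ replaced by $2C$.
\begin{itemize}
\item Assume (a). Run $\tilde p$ continuously from $p$ to $p'$. As long as $\tau^\square_{p,\tilde p}\le 2C\langle x\rangle^{2-d}$, the pointwise estimate proved below gives $\tau^\square_{p,\tilde p}\le \tau_{\tilde p}+o(\langle x\rangle^{2-d})\le (C+o(1))\langle x\rangle^{2-d}$. Continuity in $\tilde p$ (Proposition~\ref{pro:PrelimBootstrap}) then closes the bootstrap.
\item Case (b) is symmetric, exchanging the roles of $\tau_{p'}$ and $\tau^\square_{p,p'}$.
\end{itemize}
With both bounds in hand, $\bar\tau_{p,p'}$ and $\bar\tau^\square_{p,p'}$ satisfy $\bar f\le \1_{\{0\}}+(C''/\X(p))\langle\cdot\rangle^{2-d}$. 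Their $L_1$ norms are $O(\X(p')/\X(p))$ by \eqref{eq:BarNorme1a}--\eqref{eq:BarNorme1b} and Proposition~\ref{pro:AfterBootstrap}, using $\X(p')\le c\X(p)^2/\psi_d$.

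\textbf{Step 1: truncated norms of the three diagrams.} Bound $\|\square_{p,p'}\|_{1,r}$, $\|\Diamond_{p,p'}\|_{1,r}$, $\|\twosquare_{p,p'}\|_{1,r}$ and their $\|\cdot\|_{\infty,r}$ analogues. These are the Lemmas~\ref{lem:TruncatedNormSquare}--\ref{lem:TruncatedNormTwoSquare} forward-referenced in the remark after Lemma~\ref{lem:NaiveTwosquare}.
\begin{itemize}
\item Each diagram is a product of two chains, e.g.\ $\square_{p,p'}=\tau_p\cdot(\tau_p\ast D\ast\tau^\square_{p,p'}\ast D\ast\tau_p)$.
\item One factor is a polynomially decaying chain, estimated with Lemma~\ref{lem:TruncatedPoly}: each extra $\tau_p$ convolution gains two powers.
\item The factors containing $\bar\tau^\square$ or $\bar\tau$ are handled by splitting them into their $\1_{\{0\}}$ part and their $(1/\X(p))\langle\cdot\rangle^{2-d}$ part.
\end{itemize}
The target is that, up to constants, $\|\square\|_1$ and $\|\Diamond\|_1$ are $\psi_d(\X(p))\cdot\min(\X(p'),\|\tau^\square_{p,p'}\|_1)/\X(p)$. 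The factor $\psi_d$ comes from Lemma~\ref{lem:TwoTriangleSquare}(c), since the square diagram is cut off at the scale where $\tau_p$ stops being polynomial.

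\textbf{Step 2: the $L_1$ bound.} Use $\|f\ast g\|_1\le\|f\|_1\|g\|_1$ on
\[
\rho^2\,\bar\tau^\square\ast\tau_p\ast(\text{diagram})\ast\tau_p\ast\bar\tau .
\]
This gives $\X(p)^{-2}\cdot(\X(p')/\X(p))^2\cdot\X(p)^2\cdot\psi_d\,\X(p')/\X(p)$. That is too big by a factor $\X(p')/\X(p)$, so the counting must be sharper. The improvement is to avoid paying $\|\bar\tau\|_1$ twice: use the exact generating-function structure of the red part (the sum over $k$ is itself a copy of $\bar\tau^\square$). Then one factor $\X(p')/\X(p)$ is absorbed into the single $\X(p)^{-2}$ normalisation, which yields $\min(\X(p'),\|\tau^\square\|_1)^2\psi_d/\X(p)^2$.

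\textbf{Step 3: the pointwise bound.} For $|\tau_{p'}(x)-\tau^\square_{p,p'}(x)|$, split each convolution chain according to which factor carries the displacement. Among the five factors $\bar\tau^\square,\tau_p,\text{diagram},\tau_p,\bar\tau$, at least one must jump a distance $\ge\|x\|_2/15$; the $1/15$ comes from splitting $x$ over roughly this many pieces and a safety margin.
\begin{itemize}
\item \textbf{A polynomial factor carries the jump.} Estimate it by $\langle x\rangle^{2-d}$ times the $L_1$ norms of the others. This gives the main term $K\langle x\rangle^{2-d}\X(p')\psi_d/\X(p)^2$, since only one $\X(p')/\X(p)$ is paid once a $\bar\tau$ factor is used pointwise.
\item \textbf{The diagram, or a $\tau_p$ inside it, carries the jump.} Then $\|\tau_p\|_{\infty,r}$ appears with $r=\|x\|_2/15$, multiplied by truncated $L_1$ masses of order $\min(r^2,\X(p'))$ per $\bar\tau$. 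This is exactly the $\Xi(\|x\|_2/15,p,p')$ term.
\end{itemize}

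\textbf{Main obstacle.} I expect the hardest part to be the bookkeeping in Step 3: making sure each jump scenario costs only one factor $\X(p')/\X(p)$, and that the $\psi_d$ square factor survives when the diagram is evaluated at a far point. I would first try to bound far evaluations of the diagrams by $\langle y\rangle^{2-d}\psi_d\cdot\X(p')/\X(p)^2$, via Lemma~\ref{lem:TruncatedPoly} in the $a+b<d$ regime.
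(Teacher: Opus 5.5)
Your architecture (Theorems \ref{thm:LowerBoundTau}--\ref{thm:UpperBoundTau}, truncated norms, bootstrap) matches the paper's, but Steps~1--2 miss the key estimate.

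\textbf{The diagram $L_1$ bound.} Your Step~1 target, $\|\square_{p,p'}\|_1,\|\Diamond_{p,p'}\|_1\lesssim\psi_d(\X(p))\min(\X(p'),\|\tau^\square_{p,p'}\|_1)/\X(p)$, is only the naive bound of Lemmas \ref{lem:NaiveSquare}--\ref{lem:NaiveDiamond}. It is exactly what produces the excess factor $\X(p')/\X(p)$ you find in Step~2. Your remedy does not remove it. The resummation over $k$ was already done in proving Theorems \ref{thm:LowerBoundTau}--\ref{thm:UpperBoundTau}; that is how the three red factors arise, and no further identity merges them under $\|f\ast g\|_1\le\|f\|_1\|g\|_1$. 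The missing fact is that under (a) and (b) together the diagrams have $L_1$ norm $O(\psi_d(\X(p)))$ \emph{uniformly in $p'$}. Your own heuristic (the loop is cut off where $\tau_p$ stops being polynomial) is the reason. The middle red factor lies on a loop closed by $\tau_p$, which pins it to scales $\lesssim\X(p)^{1/2}$. There, (b) makes it comparable to $\tau_p$, so the diagram is no bigger than the square diagram at $p$. Concretely, (b) and Lemma \ref{lem:TruncatedPoly} give $\tau_p\ast D\ast\tau^\square_{p,p'}\ast D\ast\tau_p\le K\langle\cdot\rangle^{6-d}$. Hence $\|\square_{p,p'}\|_1=\sum_x\tau_p(x)[\tau_p\ast D\ast\tau^\square_{p,p'}\ast D\ast\tau_p](x)\le K\psi_d(\X(p))$, by the case $a+b\le d$ of Lemma \ref{lem:TruncatedPoly} with $\|\tau_p\|_1=\X(p)$ as cutoff. $\Diamond_{p,p'}$ and $\twosquare_{p,p'}$ are handled the same way, using $\bar\tau_{p,p'},\bar\tau^\square_{p,p'}\le\1_{\{0\}}+K\langle\cdot\rangle^{2-d}/\X(p)$ (Lemmas \ref{lem:TruncatedNormSquare}--\ref{lem:TruncatedNormTwoSquare}). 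With this, your crude count in Step~2 already gives $K'\X(p')^2\psi_d(\X(p))/\X(p)^2$, and in Step~3 a $\bar\tau$ factor carrying the jump gives the main term.

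When an outer $\tau_p$ carries the jump, your first bullet uses full $L_1$ norms for the other factors and loses $\X(p')/\X(p)$ again. The preceding $\bar\tau$ factors must instead be measured in $\|\cdot\|_{1,r}\lesssim 1+\min(r^2,\X(p'))/\X(p)$; order both $\bar\tau$ factors first in Lemma \ref{lem:SplitConvolTail}. This, together with the bounds on $\|\square_{p,p'}\|_{\infty,r}$ and $\|\Diamond_{p,p'}\|_{\infty,r}$ for $r\ge\X(p)^{1/2}$, is where $\Xi$ comes from.

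\textbf{The bootstrap (Reduction~2).} This step has two holes.

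First, under \eqref{eq:initialization} alone you only have $\Xi(r,p,\tilde p)\le C\X(\tilde p)/\X(p)$. Near $\X(\tilde p)\approx c\X(p)^2/\psi_d(\X(p))$, the bound you would use for $|\tau_{\tilde p}-\tau^\square_{p,\tilde p}|$ is then of order $\X(p)\langle x\rangle^{2-d}/\psi_d(\X(p))$, not $o(\langle x\rangle^{2-d})$. Closing the bootstrap needs the exponential tail of Theorem \ref{thm:ExponentialTail}, via Lemma \ref{lem:AnnoyingXi} ($\Xi\le 1$ once $\X(\tilde p)\ge\X(p)^{6/5}$), together with the direct bound $\Xi\le C\X(\tilde p)/\X(p)$ below that threshold.

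Second, the bootstrap constrains infinitely many functions $\tilde p\mapsto\tau^\square_{p,\tilde p}(x)$, $x\in\Z^d$. Pointwise continuity (Proposition \ref{pro:PrelimBootstrap}) shows the good set is closed, but not that it extends past its supremum. The paper gets openness from a priori tails that reduce to finitely many $x$:
\begin{itemize}
\item Lemma \ref{lem:CheapTailTauSquare}, which applies slightly beyond $\sup S$ because $\varpi(p,\cdot)$ is small there and continuous (Lemma \ref{lem:KappaContinuous});
\item Lemma \ref{lem:NaiveExpTail}, for $\tau$.
\end{itemize}
The good set must also carry the $L_1$ constraint $\|\tau^\square_{p,\tilde p}\|_1\le c\X(p)^2/\psi_d(\X(p))$. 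This is closed using the $L_1$ estimate in the form $\max(\X(\tilde p),\|\tau^\square_{p,\tilde p}\|_1)\le 2\min(\X(\tilde p),\|\tau^\square_{p,\tilde p}\|_1)$.
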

As in the previous subsection, we first study the case where (a) and (b) are both satisfied, and then use a bootstrap argument to conclude.
Note that (a) and (b) both imply that \eqref{eq:initialization} is satisfied at $p$ since $\tau_p\leq \tau_{p'}$ and $\tau_{p}\leq \tau_{p,p'}^\square$.
We frequently use the following general inequality:
\begin{lemma} \label{lem:SplitConvolTail}
For every $k\in \N$, functions $f_1,f_2,\dots, f_k:\Z^d \mapsto \R_+$, and $r_1,r_2,\dots,r_k\geq 0$ with $\sum_{i=1}^k r_i=r$, we have
\[
\|f_1\ast f_2\ast \dots \ast f_k \|_{\infty, r} \leq \sum_{i=1}^k \|f_i\|_{\infty,r_i} \prod_{1\leq j<i} \|f_j\|_{1,r_j} \prod_{i<j\leq k} \|f_j\|_1.
\]
\end{lemma}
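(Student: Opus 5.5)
The inequality is a pointwise union-bound argument over where the "mass" of the point $x$ comes from in each convolution. Write $F=f_1\ast\cdots\ast f_k$. For fixed $x$ with $\|x\|_2\geq r$, we expand
\[
F(x)=\sum_{y_1+\dots+y_k=x} \prod_{j=1}^k f_j(y_j).
\]
Since all $f_j\geq 0$, it suffices to partition (or cover) the index set of tuples $(y_1,\dots,y_k)$ with $\sum y_j=x$ into $k$ pieces. The $i$-th piece is the set $T_i$ of tuples such that $\|y_j\|_2<r_j$ for all $j<i$ and $\|y_i\|_2\geq r_i$.

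The first step is to check that these sets cover all tuples. If a tuple had $\|y_j\|_2<r_j$ for every $j$, then by the triangle inequality $\|x\|_2\leq\sum_j\|y_j\|_2<\sum_j r_j=r$, contradicting $\|x\|_2\geq r$. So some index has $\|y_i\|_2\geq r_i$. Taking the smallest such $i$ places the tuple in $T_i$, and the $T_i$ are disjoint.

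The second step bounds the contribution of $T_i$. Bound $f_i(y_i)\leq \|f_i\|_{\infty,r_i}$, which is legitimate since $\|y_i\|_2\geq r_i$. Then drop the constraint $\sum y_j=x$ by using it to eliminate $y_i$: the remaining variables $(y_j)_{j\neq i}$ range freely, with $y_i$ determined by them. This upper-bounds the contribution by
\[
\|f_i\|_{\infty,r_i}\prod_{j<i}\sum_{\|y_j\|_2<r_j} f_j(y_j)\prod_{j>i}\sum_{y_j} f_j(y_j)=\|f_i\|_{\infty,r_i}\prod_{j<i}\|f_j\|_{1,r_j}\prod_{j>i}\|f_j\|_1 .
\]
Summing over $i$ gives the bound for $F(x)$, and taking the max over $\|x\|_2\geq r$ yields the claim.

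There is no real obstacle. The only points needing care are the boundary case where the $r_j$ are zero, which is harmless since then $\|f_j\|_{1,0}=0$, and the strict versus non-strict inequalities in the definitions of $\|\cdot\|_{\infty,r}$ and $\|\cdot\|_{1,r}$, which match the partition exactly as chosen.
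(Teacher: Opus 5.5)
Your proof is correct and takes essentially the same route as the paper. Both arguments expand the convolution at a point $x$ with $\|x\|_2\geq r$ and use the triangle inequality to get some index with $\|y_i\|_2\geq r_i$. They then bound $f_i(y_i)$ by $\|f_i\|_{\infty,r_i}$ and sum the remaining variables freely, keeping the truncation $\|y_j\|_2<r_j$ only for $j<i$. Your explicit choice of the smallest such index, which makes the $T_i$ a partition, is exactly what the paper's constraint on $j<i$ encodes implicitly.
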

\begin{proof}
For every $x$ with $\|x\|_2 \geq r$ we have
\[
f_1\ast f_2\ast \dots \ast f_k(x)=\sum_{\substack{ y_1,y_2,\dots, y_k\in \Z^d, \\ \sum_{i=1}^k y_i=x}} f_1(y_1)f_2(y_2)\dots f_k(y_k).
\]
We must have $\|y_i\|_2 \geq r_i$ for some $1\leq i \leq k$. In that case, we have $f_i(y_i)\leq \|f_i\|_{\infty,r_i}$, so
\[
f_1\ast f_2\ast \dots f_k(x)\leq \sum_{i=1}^k \|f_i\|_{\infty,r_i}  \sum_{\substack{\{y_j\}_{j\neq i} \\ \forall j<i, \|y_j\|_2<r_j}} \prod_{j\neq i} f_j(y_j),
\]
which equals the right-hand side of the lemma.
\end{proof}

We start by bounding pointwise the different errors in Theorems \ref{thm:LowerBoundTau} and \ref{thm:UpperBoundTau}.
Since the errors can be expressed as a convolution of $\tau_p$,
$\bar \tau^\square_{p,p'}$, $\bar \tau_{p,p'}$
and either $\square_{p,p'}$, $\Diamond_{p,p'}$, $\twosquare_{p,p'}$,
it suffices to estimate their truncated norms separately.
For $r\geq 0$, we write $\langle r\rangle =\max(1,r)$.
We first have:
\begin{lemma} \label{lem:TruncatedNormTau}
For every $(p,C)$ satisfying \eqref{eq:initialization}, $r\geq 0$, we have for some $C'$ depending only on $d,C$,
\[
\|\tau_p\|_{\infty,r} \leq C \langle r\rangle^{2-d} \quad \text{and} \quad  \|\tau_p\|_{1,r} \leq C' \langle r\rangle^{2} .
\]
\end{lemma}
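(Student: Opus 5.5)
Both bounds follow directly from \eqref{eq:initialization} together with the definitions $\|f\|_{\infty,r}=\max_{\|x\|_2\geq r}|f(x)|$ and $\|f\|_{1,r}=\sum_{\|x\|_2<r}|f(x)|$; no input beyond the assumed polynomial bound $\tau_p(x)\leq C\langle x\rangle^{2-d}$ is needed.

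\emph{Supremum bound.} For every $x$ with $\|x\|_2\geq r$ we have $\langle x\rangle\geq \langle r\rangle$. Indeed, if $r\geq 1$ then $\langle x\rangle\geq\|x\|_2\geq r=\langle r\rangle$, and if $r<1$ then $\langle x\rangle\geq 1=\langle r\rangle$. Since $d>6$, the exponent $2-d$ is negative, so
\[
\tau_p(x)\leq C\langle x\rangle^{2-d}\leq C\langle r\rangle^{2-d}.
\]
Taking the maximum over such $x$ gives $\|\tau_p\|_{\infty,r}\leq C\langle r\rangle^{2-d}$ with the same constant $C$.

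\emph{Truncated $L_1$ bound.} Using \eqref{eq:initialization} again,
\[
\|\tau_p\|_{1,r}\leq C\sum_{x\in\Z^d:\,\|x\|_2<r}\langle x\rangle^{2-d}.
\]
If $r\leq 1$, the only lattice point with $\|x\|_2<r$ is the origin (and the set is empty if $r=0$), so the sum is at most $C\leq C\langle r\rangle^2$. If $r>1$, decompose into dyadic shells $2^{j}\leq\|x\|_2<2^{j+1}$ for $0\leq j\leq\log_2 r$. Each shell contains $O(2^{jd})$ points, each contributing at most $2^{j(2-d)}$, so shell $j$ contributes $O(2^{2j})$. The origin contributes $1$. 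Summing this geometric series up to $2^j\leq r$ gives $O(r^2)=O(\langle r\rangle^2)$, with an implied constant depending only on $d$. The same estimate follows from comparing the sum with $\int_{\|y\|\leq r}\langle y\rangle^{2-d}\,dy$. Multiplying by $C$ yields $\|\tau_p\|_{1,r}\leq C'\langle r\rangle^2$ with $C'$ depending only on $d$ and $C$.

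The main point requiring care is that the counting constant for the shells must not depend on $L$ or $p$. This holds because the count involves only lattice points of $\Z^d$ and the decay $\langle x\rangle^{2-d}$. The constant therefore depends only on $d$ and $C$, as the statement claims.
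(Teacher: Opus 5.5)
Your proof is correct and matches the paper's argument. The first bound is just \eqref{eq:initialization} restricted to $\{\|x\|_2\geq r\}$, and the second is the same estimate $C\sum_{\|x\|_2<r}\langle x\rangle^{2-d}\leq C'\langle r\rangle^2$, which you justify with dyadic shells where the paper simply asserts it.
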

\begin{proof}
The first inequality is simply \eqref{eq:initialization} rewritten differently. For the second one, we have
\[
\|\tau_p\|_{1,r}=\sum_{x,\|x\|_2<r} \tau_p(x)\leq C\sum_{x,\|x\|_2<r} \langle x\rangle^{2-d}\leq C'\langle r\rangle^2. \qedhere
\]
\end{proof}
\begin{lemma} \label{lem:TruncatedNormTau2}
In the setting of Theorem \ref{thm:PointwiseError}, there exists $C''>0$ depending only on $d,L,C$ such that if (a) holds, and $\X(p)$ is large enough, then for every $r\geq 0$ we have
\[
\|\bar \tau_{p,p'}-\1_{\{0\}} \|_{\infty,r} \leq C'' \langle r\rangle^{2-d}/\X(p) \quad ; \quad  \|\bar \tau_{p,p'}-\1_{\{0\}}\|_{1, r} \leq C''\min(\langle r\rangle^2,\X(p'))/\X(p).
\]
\end{lemma}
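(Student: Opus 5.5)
The plan is to unfold the definition $\bar \tau_{p,p'}-\1_{\{0\}}=\rho\,\tau_{p'}\ast D$ with $\rho=(p'-p)/(1-p)$. Both bounds then reduce to bounds on $\tau_{p'}\ast D$, multiplied by an upper bound on $\rho$ of order $1/\X(p)$.

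\textbf{Step 1: bound $\rho$.} I first obtain $\rho\leq C'/\X(p)$. Under (a), $\X(p')\leq c\X(p)^2/\psi_d(\X(p))$. If $c$ is small this is at most $c\X(p)\mathfrak{s}(p)$ up to constants; the cleanest route is to take $c$ small enough that condition (b) of Proposition \ref{pro:BeforeBootstrap} holds. Proposition \ref{pro:AfterBootstrap} then gives $\|\tau^\square_{p,p'}\|_1<\infty$, so $p'<\tilde p_c(p)$, and Lemma \ref{lem:p'-p} yields $\rho\leq C'/\X(p)$. Here \eqref{eq:initialization} at $p$ follows from $\tau_p\leq\tau_{p'}$.

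The catch is that $\X(p)^2/\psi_d(\X(p))$ may exceed $\X(p)\mathfrak{s}(p)$, so the bootstrap range is too short. In that case I would bound $\rho$ directly, avoiding $\tilde p_c$. The standard differential inequality $\partial_p \X\leq \Omega\X^2$ gives $1/\X(p)-1/\X(p')\geq (p'-p)/\Omega$, and the reverse inequality $\partial_p\X\geq c_0\X^2$ holds under the triangle-type bound (Lemma \ref{lem:TwoTriangleSquare}(b) controls the triangle at $p'$ since \eqref{eq:initialization} holds at $p'$). Integrating the reverse inequality gives $p'-p\leq C/(c_0\X(p))$. This second route is the one I would actually use, since it works for all $p'$ with \eqref{eq:initialization} at $p'$. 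The only delicate input is a triangle-based lower bound on $\partial_p\X$ (Aizenman--Newman), uniform in $p$ once $\X(p)$ is large. This is the step I expect to be the main obstacle.

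\textbf{Step 2: pointwise bound.} By (a), $\tau_{p'}(x)\leq C\langle x\rangle^{2-d}$. Since $D$ is supported on $\ball_L(0)$, for $\|x\|_2\geq r$ we get $\tau_{p'}\ast D(x)\leq \Omega C\max_{\|y\|\leq L}\langle x-y\rangle^{2-d}\leq C_L\langle r\rangle^{2-d}$, using $\langle x-y\rangle\geq c_L\langle x\rangle$ for $|y|\leq L$. Multiplying by $\rho\leq C'/\X(p)$ gives the first inequality.

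\textbf{Step 3: truncated $L^1$ bound.} Write $\|\tau_{p'}\ast D\|_{1,r}\leq \min(\Omega\X(p'),\ \Omega\|\tau_{p'}\|_{1,r+L})$. Lemma \ref{lem:TruncatedNormTau} applied at $p'$ gives $\|\tau_{p'}\|_{1,r+L}\leq C'\langle r+L\rangle^2\leq C_L\langle r\rangle^2$. Multiplying by $\rho$ then gives $C''\min(\langle r\rangle^2,\X(p'))/\X(p)$, which is the second inequality.
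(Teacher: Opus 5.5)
Your proof is correct. Steps 2 and 3 are exactly what the paper does: unfold $\bar\tau_{p,p'}-\1_{\{0\}}=\rho\,\tau_{p'}\ast D$ and apply Lemma \ref{lem:TruncatedNormTau} at $p'$. The difference lies in Step 1, the bound $\rho\leq C'/\X(p)$.

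\textbf{The paper's route.} The paper gets this bound from Lemma \ref{lem:p'-p}, that is, from
\[
p'<\tilde p_c(p)=p+(1-p)/(\Omega\X(p)\lambda_p)
\]
together with the spectral lower bound $\lambda_p\geq c$ of Theorem \ref{thm:LowerLambda}. This requires $\|\tau^\square_{p,p'}\|_1<\infty$. That holds where the lemma is actually used, namely Proposition \ref{pro:PointwiseError}, where (b) is also assumed. It does not follow from (a) alone at this stage of the paper. You correctly noticed that Proposition \ref{pro:AfterBootstrap} only reaches $\X(p')\lesssim \X(p)\mathfrak{s}(p)$.

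\textbf{Your route.} Your second route instead uses the mean-field differential inequality $\partial_q\X(q)\geq c_0\X(q)^2$ on $[p,p']$, with $c_0$ depending only on a triangle bound. That triangle bound follows from Lemma \ref{lem:TwoTriangleSquare}(b) at $p'$ plus monotonicity in $q$. This is precisely the paper's later Lemma \ref{lem:X'}, which is proved by citing \cite{HutchcroftTriangle}. There is no circularity, since that lemma uses nothing beyond \eqref{eq:initialization} at $p'$.

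\textbf{Comparison.} Your argument proves the statement as literally written, under (a) alone, and bypasses $\tilde p_c$ and the Markov-chain spectral machinery entirely. The price is an external Aizenman--Newman-type input. The paper's argument stays internal to its own eigenvalue estimates; the paper even remarks after Lemma \ref{lem:X'} that $C_1(p'-p)\leq 1/\X(p)$ can be recovered from Lemma \ref{lem:p'-p} inductively. Your first route via Proposition \ref{pro:AfterBootstrap} is unnecessary and can simply be dropped.
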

\begin{proof}
Recall that $\X(p')=\|\tau_{p'}\|_1$ and $\bar \tau_{p,p'}=\1_{\{0\}}+(p'-p)/(1-p)\tau_{p'}\ast D$. Thus, the result follows directly from Lemmas \ref{lem:p'-p} and \ref{lem:TruncatedNormTau}.
\end{proof}
\begin{lemma} \label{lem:TruncatedNormTauSquare}
In the setting of Theorem \ref{thm:PointwiseError}, there exists $C'''>0$ depending only on $d,L,C$ such that if (b) holds, and $\X(p)$ is large enough, then for every $r\geq 0$ we have
\[
\|\bar \tau_{p,p'}^\square-\1_{\{0\}} \|_{\infty,r} \leq C''' \langle r\rangle^{2-d}/\X(p) \quad ; \quad  \|\bar \tau^\square_{p,p'}-\1_{\{0\}}\|_{1, r} \leq C'''\min(\langle r\rangle^2,\|\tau^\square_{p,p'}\|_1)/\X(p).
\]
\end{lemma}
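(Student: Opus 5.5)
The argument mirrors Lemma \ref{lem:TruncatedNormTau2}, with $\tau^\square_{p,p'}$ in place of $\tau_{p'}$. By \eqref{eq:def-bar-tau}, $\bar\tau^\square_{p,p'}-\1_{\{0\}}=\rho\,\tau^\square_{p,p'}\ast D$ with $\rho=(p'-p)/(1-p)$. So we need two ingredients. The first is a bound $\rho\le C'/\X(p)$. The second is the pointwise bound $\tau^\square_{p,p'}(x)\le C\langle x\rangle^{2-d}$, which (b) supplies. Convolving with $D$ then only shifts by at most $L$ and multiplies by at most $\Omega$.

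\textbf{Bounding $\rho$.} Condition (b) gives $\|\tau^\square_{p,p'}\|_1<\infty$, directly from the hypothesis $\|\tau^\square_{p,p'}\|_1\le c\X(p)^2/\psi_d(\X(p))$. Also, (b) implies \eqref{eq:initialization} at $p$, because $\tau_p\le\tau^\square_{p,p'}$ pointwise (the $k=0$ term). Hence Lemma \ref{lem:p'-p} applies once $\X(p)$ is large, and it gives $\rho\le C'/\X(p)$ with $C'$ depending only on $d,L,C$.

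\textbf{Sup bound.} Take $\|y\|_2\ge r$. Then
\[
\rho\,\tau^\square_{p,p'}\ast D(y)\le \rho\,\Omega\max_{\|z-y\|_1\le L}\tau^\square_{p,p'}(z)\le \rho\,\Omega\, C\max_{\|z-y\|_1\le L}\langle z\rangle^{2-d}.
\]
For $r\ge 2L$, every such $z$ has $\|z\|_2\ge\|y\|_2-L\ge r/2$, so this is at most $C\,\Omega\,2^{d-2}\langle r\rangle^{2-d}\rho$. For $r<2L$, use $\langle z\rangle^{2-d}\le 1\le (2L)^{d-2}\langle r\rangle^{2-d}$ instead. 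Combining with $\rho\le C'/\X(p)$ gives the first inequality.

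\textbf{Truncated $L^1$ bound.} We bound the truncated norm in two ways. First,
\[
\|\rho\,\tau^\square_{p,p'}\ast D\|_{1,r}\le\rho\,\|\tau^\square_{p,p'}\ast D\|_1=\rho\,\Omega\,\|\tau^\square_{p,p'}\|_1 .
\]
Second, the mass on $\{\|y\|_2<r\}$ only involves $z$ with $\|z\|_2<r+L$. Summing $C\langle z\rangle^{2-d}$ over that ball, exactly as in Lemma \ref{lem:TruncatedNormTau}, gives $O(\langle r\rangle^2)$. Taking the minimum of the two bounds and using $\rho\le C'/\X(p)$ yields the second inequality with a suitable $C'''$.

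\textbf{Where care is needed.} The only genuinely non-routine step is justifying Lemma \ref{lem:p'-p}. That lemma needs $p'<\tilde p_c(p)$, which we get from $\|\tau^\square_{p,p'}\|_1<\infty$ and Proposition \ref{pro:PrelimBootstrap}. It also needs \eqref{eq:initialization} at $p$, which is where Theorem \ref{thm:LowerLambda} enters. Both are supplied by (b), as explained above.
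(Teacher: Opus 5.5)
Your proof is correct and is essentially the paper's argument. The paper simply repeats the proof of Lemma \ref{lem:TruncatedNormTau2}: it combines $\rho\le C'/\X(p)$ from Lemma \ref{lem:p'-p} with the truncated-norm estimates of Lemma \ref{lem:TruncatedNormTau}, using (b) to supply the pointwise bound on $\tau^\square_{p,p'}$. You also write out the details the paper omits, namely checking the hypotheses of Lemma \ref{lem:p'-p} from (b) and handling the shift by at most $L$ caused by convolving with $D$.
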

\begin{proof}
The proof is exactly the same as for Lemma \ref{lem:TruncatedNormTau2}, using (b) instead of (a) to bound $\|\tau^\square_{p,p'}\|_{\infty,r}$ and $\|\tau^\square_{p,p'}\|_{1,r}$. We omit the details.
\end{proof}
We now deal with the main terms $\square_{p,p'}$, $\Diamond_{p,p'}$ and $\twosquare_{p,p'}$.
\begin{lemma} \label{lem:TruncatedNormSquare}
In the setting of Theorem \ref{thm:PointwiseError}, there exist $K^\square_1,K^\square_\infty>0$ depending only on $d,L,C$ such that if (a) and (b) hold, and $\X(p)$ is large enough, then for every $r\geq 0$ we have
\[
\|\square_{p,p'}\|_{\infty,r} \leq K^\square_\infty \langle r\rangle^{6-d} \|\tau_p\|_{\infty,r} \quad \text{and}  \quad \|\square_{p,p'}\|_1 \leq K^\square_1 \psi_d(\X(p)).
\]
\end{lemma}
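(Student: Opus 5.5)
Recall $\square_{p,p'}=\tau_p\cdot G$ with $G:=\tau_p\ast D\ast \tau^\square_{p,p'}\ast D\ast \tau_p$. The plan is to bound $G$ pointwise by a multiple of $\langle x\rangle^{6-d}$, and then handle the two claims separately.

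For the pointwise bound on $G$, note that (b) gives $\tau^\square_{p,p'}(x)\le C\langle x\rangle^{2-d}$. Hence $G$ is bounded by the convolution of three functions, each bounded by $C\langle\cdot\rangle^{2-d}$. The two $D$ factors only shift arguments by at most $L$, so each contributes a factor $\Omega$ and a constant change in the polynomial. Applying Lemma \ref{lem:TruncatedPoly} twice, as in Lemma \ref{lem:TwoTriangleSquare}(b) with $a=d-2$, $b=d-2$ and then $a=d-2$, $b=d-4$, gives
\[
G(x)\le K\langle x\rangle^{6-d}
\]
for a constant $K=K(d,L,C)$. Since $d>6$, this bound is decreasing in $\|x\|_2$.

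\textbf{Infinity bound.} For $\|x\|_2\ge r$ we have $\square_{p,p'}(x)=\tau_p(x)G(x)\le \|\tau_p\|_{\infty,r}\,K\langle r\rangle^{6-d}$, using monotonicity of $\langle\cdot\rangle^{6-d}$. This is the first claim.

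\textbf{$L_1$ bound.} Summing gives $\|\square_{p,p'}\|_1=\sum_x \tau_p(x)G(x)=[\tau_p\ast G](0)$ by symmetry. The naive estimate (Lemma \ref{lem:NaiveSquare}) loses a factor $\|\tau^\square_{p,p'}\|_1/\X(p)$, which may be large, so instead I use the pointwise bound: $\|\square_{p,p'}\|_1\le CK\sum_x \langle x\rangle^{2-d}\langle x\rangle^{6-d}$. The exponent is $8-2d$, which is summable for $d>8$ and gives $O(1)=O(\psi_d)$. For $d\le 8$ this sum diverges, and a cutoff is needed.

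\textbf{Main obstacle: the cutoff for $d\le 8$.} The plan is to cut at radius $R=\X(p)^{1/2}$.

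Inside $\|x\|_2<R$, the sum $\sum\langle x\rangle^{8-2d}$ is $O(R^{8-d})=O(\X(p)^{(8-d)/2})$ for $d<8$, and $O(\log R)$ for $d=8$; in both cases this is $O(\psi_d(\X(p)))$.

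Outside $\|x\|_2\ge R$, I bound $\tau_p$ by $C\langle x\rangle^{2-d}$ and $G$ via a truncated-$L_1$ argument. This needs decay of $G$ beyond scale $R$, so the polynomial bound on $\tau^\square_{p,p'}$ alone does not suffice.

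My first attempt uses $\|\tau_{p'}\|_1=\X(p')$ and its comparison with $\tau^\square$ through (a). Write $G\le \tau_p\ast D\ast\tau^\square\ast D\ast\tau_p$ and bound $\sum_{\|x\|_2\ge R}\tau_p(x)G(x)$ by Lemma \ref{lem:TruncatedPoly}(c)-type estimates. The finite $L_1$ mass of one $\tau_p$ factor, at most $\X(p)$, yields a $\log$ or power of $\X(p)^{1/2}$ exactly as in Lemma \ref{lem:TwoTriangleSquare}(c). Concretely, I would rewrite
\[
\|\square_{p,p'}\|_1=\sum_x\tau_p(x)G(x)=(\tau_p\ast\tau_p\ast\tau^\square\ast\tau_p)(0)
\]
up to $D$'s. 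I would then apply Lemma \ref{lem:TruncatedPoly} with $f=\tau_p$, which has $\|f\|_1=\X(p)$ and decay $a=d-2$, against $g=\tau_p\ast\tau^\square\ast\tau_p$, which has decay $b=d-6$. Since $a+b=2d-8\le d$ for $d\le8$, the truncated cases of that lemma give $C\log(2+\X(p)^{1/2})$ at $d=8$ and $C\X(p)^{1-(d-6)/2}=C\X(p)^{(8-d)/2}$ for $d<8$. This is $\psi_d(\X(p))$, completing the proof.
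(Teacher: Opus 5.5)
Your proof is correct and is essentially the paper's argument: the pointwise bound $G\le K\langle x\rangle^{6-d}$ from (b) and Lemma \ref{lem:TruncatedPoly}, the $\|\cdot\|_{\infty,r}$ bound by taking the product with $\tau_p$, and the $L_1$ bound via $\|\square_{p,p'}\|_1=[\tau_p\ast G](0)$ and the truncated cases of Lemma \ref{lem:TruncatedPoly} with $f=\tau_p$ ($a=d-2$, $\|f\|_1=\X(p)$) and $b=d-6$. Your intermediate worry that the region $\|x\|_2\geq \X(p)^{1/2}$ needs extra decay of $G$ is unfounded: that region contributes at most $K\X(p)^{(6-d)/2}\sum_x\tau_p(x)=K\X(p)^{(8-d)/2}$, using only the polynomial bound on $G$ and $\|\tau_p\|_1=\X(p)$, which is exactly what the lemma you end up invoking encodes.
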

\begin{proof}
By definition, for every $x\in \Z^d$, we have
\[
\square_{p,p'}(x)=\tau_p(x)\cdot[\tau_p\ast D\ast \tau_{p,p'}^\square \ast D\ast \tau_p](x).
\]
By assumptions (a) and (b) from Theorem \ref{thm:PointwiseError}, we can
inductively use Lemma \ref{lem:TruncatedPoly} to get, for some constant $K=K(d,L,C)$ and every $x\in \Z^d$,
\begin{equation}
[\tau_p\ast D\ast \tau_{p,p'}^\square \ast D\ast \tau_p](x)\leq K\langle x\rangle^{6-d}. \label{eq:ChapeauForSquare}
\end{equation}
The first inequality of the lemma then follows by product. For the second inequality, note that
\[
\|\square_{p,p'}\|_1=[\tau_p\ast (\tau_p\ast D\ast \tau_{p,p'}^\square \ast D\ast \tau_p)](0),
\]
so the second inequality follows from Lemma \ref{lem:TruncatedPoly}, (a), and \eqref{eq:ChapeauForSquare}.
\end{proof}
\begin{lemma} \label{lem:TruncatedNormDiamond}
In the setting of Theorem \ref{thm:PointwiseError},
there exist $K^\Diamond_1,K^\Diamond_\infty>0$ depending only on $d,L,C$
such that if (a) and (b) hold, and $\X(p)$ is large enough,
then the following inequalities hold:
For every $r\geq 0$,
\[
\|\Diamond_{p,p'}\|_{\infty,r} \leq K^\Diamond_\infty  \langle r\rangle^{8-2d}.
\]
For every $r\geq (\X(p))^{1/2}$ we have
\[
\|\Diamond_{p,p'}\|_{\infty,r} \leq K^\Diamond_\infty(r^{4-2d}\X(p)^2+\|\tau_{p}\|_{\infty,r/3} r^{6-d}).
\]
And,
\[
\|\Diamond_{p,p'}\|_1 \leq K^\Diamond_1 \psi_d(\X(p)).
\]
\end{lemma}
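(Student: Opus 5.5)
Write $\Diamond_{p,p'}=F\cdot G$ with
\[
F:=\tau_p\ast D\ast \bar \tau^\square_{p,p'}\ast \tau_p,\qquad G:=\tau_p\ast D\ast \bar \tau_{p,p'}\ast \tau_p .
\]
The plan is to bound $F$ and $G$ pointwise and then multiply. For the $L_1$ bound, the product structure is turned into a convolution evaluated at $0$, using the symmetry of all the kernels.

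\textbf{First pointwise bound.} Split $\bar\tau^\square_{p,p'}=\1_{\{0\}}+(\bar\tau^\square_{p,p'}-\1_{\{0\}})$.
\begin{itemize}
\item The first part gives $\tau_p\ast D\ast\tau_p\le C\langle x\rangle^{4-d}$, by Lemma \ref{lem:TwoTriangleSquare}(a).
\item For the second part, (b) gives $(\bar\tau^\square_{p,p'}-\1_{\{0\}})\le \rho\,\Omega\, C\langle\cdot\rangle^{2-d}$ up to a local smoothing by $D$, with $\rho\le C'/\X(p)\le C'$ by Lemma \ref{lem:p'-p}. Two applications of Lemma \ref{lem:TruncatedPoly}, with exponents $(d-2,d-2)$ and then $(d-2,d-4)$, give $\langle x\rangle^{6-d}$ times an $O(1)$ constant.
\end{itemize}
This only yields $F\le K\langle x\rangle^{6-d}$, and similarly for $G$ using (a). That is too weak, so we refine. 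In the second part we keep the factor $\rho\le C'/\X(p)$, and we replace the crude bound on $\tau_p\ast\tau_p\ast\tau^\square_{p,p'}$ by the sharper bound $\langle x\rangle^{4-d}\cdot \min(\langle x\rangle^2,\X(p'))$. This sharper bound comes from Lemma \ref{lem:SplitConvolTail}: split the three-fold convolution with radii $r/3$ each, and use $\|\tau^\square_{p,p'}\|_{1,r}\le C\min(r^2,\|\tau^\square_{p,p'}\|_1)$. With $\X(p')\lesssim \X(p)^2/\psi_d$, the product $\rho\min(r^2,\X(p'))$ is bounded by $r^2/\X(p)\cdot\min(1,\X(p')/r^2)$, which is $\lesssim 1$ once $r^2\gtrsim\X(p)$. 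For $r\le\X(p)^{1/2}$ the second part is at most $r^2/\X(p)\le 1$. In both cases $F,G\le K\langle x\rangle^{4-d}$, and hence $\|\Diamond_{p,p'}\|_{\infty,r}\le K\langle r\rangle^{8-2d}$.

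\textbf{Refined bound for $r\ge\X(p)^{1/2}$.} Apply Lemma \ref{lem:SplitConvolTail} to $F$ with radii $r/3$ on each factor.
\begin{itemize}
\item The terms where a $\tau_p$ factor is far give $\|\tau_p\|_{\infty,r/3}$ times $\|\tau_p\|_{1,r/3}\,\|\bar\tau^\square_{p,p'}\|_1\lesssim r^2\,\X(p')/\X(p)$. The problem here is that $\X(p')/\X(p)$ is not bounded.
\item The term where $\bar\tau^\square_{p,p'}-\1_{\{0\}}$ is far gives $r^{2-d}\X(p)^{-1}\cdot\|\tau_p\|_{1,r}^2\lesssim r^{6-d}/\X(p)$.
\end{itemize}
A better split is to bound one of the two factors by $K r^{4-d}$ (from the first bound) and to expand only the other. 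Doing so gives
\[
F G\le K r^{4-d}\bigl(r^{4-d}\cdot(\text{near mass})+\|\tau_p\|_{\infty,r/3}\,r^2\bigr).
\]
The near mass should be controlled by $\X(p)^2r^{-d}$-type terms, using that $\|\tau_p\|_{1}=\X(p)$ caps the near sums. This produces the two claimed terms $r^{4-2d}\X(p)^2$ and $\|\tau_p\|_{\infty,r/3}r^{6-d}$.

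\textbf{Main obstacle.} Balancing these truncations so that no stray factor $\X(p')/\X(p)$ survives is the step I expect to be delicate. The first thing to try is to always assign the far region to a $\tau_p$ factor or to the $\1_{\{0\}}$ part, and to use $\rho\,\X(p')\lesssim \X(p')/\X(p)$ only together with the $\langle r\rangle^{2-d}$ decay of $\bar\tau-\1_{\{0\}}$.

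\textbf{$L_1$ bound.} By symmetry,
\[
\|\Diamond_{p,p'}\|_1=\sum_x F(x)G(x)=\bigl(\tau_p\ast D\ast\bar\tau^\square_{p,p'}\ast\tau_p\ast\tau_p\ast \bar\tau_{p,p'}\ast D\ast\tau_p\bigr)(0).
\]
Expand both $\bar\tau$ factors as $\1_{\{0\}}$ plus their remainders.
\begin{itemize}
\item The $\1\cdot\1$ term is $\le\Omega^2\square(p)\le C_4\psi_d(\X(p))$, by Lemma \ref{lem:TwoTriangleSquare}(c).
\item Each remainder term carries a factor $\rho\le C'/\X(p)$ and an extra $\tau_{p'}$ or $\tau^\square_{p,p'}$. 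Using the pointwise $\langle\cdot\rangle^{2-d}$ bounds from (a), (b), we apply Lemma \ref{lem:TruncatedPoly} in the regime $a+b\le d$, where the output grows like $\|f\|_1^{1-b/(d-a)}$. With exponents $a=d-2$ and $b$ equal to the exponent $d-8$ or $d-10$ produced by the other factors, we get terms of order $(\X(p')/\X(p))^{\cdot}\,\psi$-type powers.
\end{itemize}
Because of condition (a)/(b), namely $\X(p')\le c\X(p)^2/\psi_d(\X(p))$, we expect these to be $O(\psi_d(\X(p)))$. This last verification is the part that needs checking dimension-by-dimension ($d<8$, $d=8$, $d>8$).
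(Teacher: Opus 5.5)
Your decomposition $\Diamond_{p,p'}=F\cdot G$ and the identity $\|\Diamond_{p,p'}\|_1=(F\ast G)(0)$ are the paper's. However, the estimates that carry the lemma are not established, and one step is false as written.

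\textbf{Pointwise bounds.} For $r^2\le\X(p)$ your Lemma \ref{lem:TruncatedPoly} argument is fine. For $r^2\gtrsim\X(p)$, your remainder estimate is $C'\langle r\rangle^{4-d}\min(r^2,\X(p'))/\X(p)$. Your claim that $\min(r^2,\X(p'))/\X(p)\lesssim 1$ ``once $r^2\gtrsim\X(p)$'' is backwards. In general this factor is $\lesssim1$ only for $r^2\lesssim\X(p)$. At $r^2=\X(p')$ it equals $\X(p')/\X(p)$, which is unbounded under (a)/(b).

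The loss comes from bounding the truncated mass $\|\tau_p\|_{1,r/3}$ by $r^2$ instead of by $\X(p)$. The missing step, which also resolves your ``main obstacle'', is to apply Lemma \ref{lem:SplitConvolTail} with $D\ast(\bar\tau^\square_{p,p'}-\1_{\{0\}})$ as the \emph{first} factor. Then its full $L_1$ norm ($\approx\X(p')/\X(p)$) never appears, and Lemma \ref{lem:TruncatedNormTauSquare} gives
\[
\X(p)^2\|D\ast(\bar\tau^\square_{p,p'}-\1_{\{0\}})\|_{\infty,r/3}+2\X(p)\|D\ast(\bar\tau^\square_{p,p'}-\1_{\{0\}})\|_{1,r/3}\|\tau_p\|_{\infty,r/3}\le K\bigl(\X(p)\langle r\rangle^{2-d}+\langle r\rangle^{2}\|\tau_p\|_{\infty,r/3}\bigr).
\]
Add $\|\tau_p\ast D\ast\tau_p\|_{\infty,r}\le K\X(p)\langle r\rangle^{2-d}$; the $\langle r\rangle^{4-d}$ bound you use for this part is too weak for the second inequality. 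With the analogue for $G$ via Lemma \ref{lem:TruncatedNormTau2}, this yields $F,G\le K(\X(p)\langle r\rangle^{2-d}+\langle r\rangle^{2}\|\tau_p\|_{\infty,r/3})$.

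By \eqref{eq:initialization}, this is $\le K\langle r\rangle^{4-d}$ when $r^2\ge\X(p)$, which gives the first bound. Multiplying the two copies, using $(a+b)^2\le 2a^2+2b^2$ and \eqref{eq:initialization} on one factor $\|\tau_p\|_{\infty,r/3}$, gives the second.

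Your ``better split'' cannot give the second bound. With $G\le Kr^{4-d}$ it yields $\X(p)r^{6-2d}+r^{6-d}\|\tau_p\|_{\infty,r/3}$. The first term exceeds $\X(p)^2r^{4-2d}$ by the factor $r^2/\X(p)$, and it is not absorbed by the second term once $\tau_p$ decays exponentially.

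\textbf{$L_1$ bound.} You pair $\tau_{p'}$ or $\tau^\square_{p,p'}$ (exponent $a=d-2$) against the rest with $b=d-8$ or $d-10$. This is not admissible in Lemma \ref{lem:TruncatedPoly} when $b\le 0$, that is, for $d\le 8$ (one remainder) and $d\le 10$ (two remainders). These are exactly the dimensions that matter; indeed $\tau_p^{\ast 4}$ has no decaying pointwise bound for $d\le 8$.

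The paper instead takes $f=\tau_p\ast\tau_p\ast\tau_p$, which is pointwise $\lesssim\langle\cdot\rangle^{6-d}$ by Lemma \ref{lem:TwoTriangleSquare}(b), has $\|f\|_1=\X(p)^3$, and has $a=d-6>0$. It pairs this against $g=D\ast D\ast\tau_p\ast\bar\tau_{p,p'}\ast\bar\tau^\square_{p,p'}$, which satisfies $g(x)\le K(\langle x\rangle^{2-d}+\langle x\rangle^{4-d}/\X(p)+\langle x\rangle^{6-d}/\X(p)^2)$, with all exponents positive for $d\ge7$. Lemma \ref{lem:TruncatedPoly} with $b\in\{d-2,d-4,d-6\}$ then bounds each of the three contributions by $O(\psi_d(\X(p)))$, with no dependence on $\X(p')$. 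For instance, each is $O(\X(p)^{1/2})$ when $d=7$.
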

\begin{proof}
In this proof, $K$ denotes a constant that depends only on $d,L,C$ and may change from line to line. By definition of $\Diamond_{p,p'}$, for every $x\in \Z^d$, we have
\[
\Diamond_{p,p'}(x) = [\tau_p\ast D \ast \bar \tau_{p,p'}^\square\ast \tau_p](x)\cdot [\tau_p\ast D \ast \bar \tau_{p,p'}\ast \tau_p](x).
\]
By Lemma \ref{lem:SplitConvolTail}, for every $r\geq 0$, $\|\tau_p\ast D \ast \bar \tau_{p,p'}^\square\ast \tau_p\|_{\infty,r}$ is bounded by
\[
\|\tau_p\ast D\ast \tau_p\|_{\infty,r}+\X(p)^2\|D\ast (\bar \tau^\square_{p,p'}-\1_{\{0\}})\|_{\infty,r/3}+2\X(p)\|D\ast (\bar \tau_{p,p'}^\square-\1_{\{0\}})\|_{1,r/3} \|\tau_p\|_{\infty,r/3}.
\]
Also, by Lemma \ref{lem:SplitConvolTail} and \eqref{eq:initialization}, we have
\[
\|\tau_p\ast D\ast \tau_p\|_{\infty,r}\leq \X(p)(\|D\ast \tau_p\|_{\infty,r/2}+\Omega \|\tau_p\|_{\infty,r/2})\leq K\X(p) \langle r\rangle^{2-d}.
\]
Thus, by Lemma \ref{lem:TruncatedNormTauSquare},
\begin{equation}
\|\tau_p\ast D \ast \bar \tau_{p,p'}^\square\ast \tau_p\|_{\infty,r} \leq K(\X(p) \langle r\rangle ^{2-d}+\langle r\rangle ^2\|\tau_p\|_{\infty,r/3} ).\label{eq:TruncatedCross}
\end{equation}
Also, by inductively using Lemma \ref{lem:TruncatedPoly}, (a), and Lemma \ref{lem:TruncatedNormTauSquare}, for every $r\geq 0$, we have
\[
\|\tau_p\ast D \ast \bar \tau_{p,p'}^\square\ast \tau_p\|_{\infty,r} \leq K(\langle r\rangle^{4-d}+\langle r\rangle^{6-d}/\X(p)).
\]
Taking the infimum of these two bounds, we get
 \begin{align}
 \|\tau_p\ast D \ast \bar \tau_{p,p'}^\square\ast \tau_p\|_{\infty,r} & \leq K \inf\left (\X(p)\langle r\rangle ^{2-d}+\langle r\rangle^2 \|\tau_p\|_{\infty,r/3},\langle r\rangle^{4-d}+ \langle r\rangle^{6-d}/\X(p) \right ). \notag
\\ & \leq K\langle r\rangle ^{4-d} \label{eq:TruncatedCross'}
  \end{align}

By a similar argument, using Lemma \ref{lem:TruncatedNormTau2} instead of Lemma \ref{lem:TruncatedNormTauSquare}, we have
  \begin{equation}
  \|\tau_p\ast D \ast \bar \tau_{p,p'} \ast \tau_p\|_{\infty,r}\leq K(\X(p)\langle r\rangle ^{2-d}+\langle r\rangle^2 \|\tau_p\|_{\infty,r/3}), \label{eq:TruncatedCross2'}
  \end{equation}
and
 \begin{equation}
 \|\tau_p\ast D \ast \bar \tau_{p,p'}\ast \tau_p\|_{\infty,r}\leq  K\langle r\rangle^{4-d}.\label{eq:TruncatedCross2}
 \end{equation}
The first bound of the lemma is obtained by taking the product of \eqref{eq:TruncatedCross'} and \eqref{eq:TruncatedCross2}.

By taking the product of \eqref{eq:TruncatedCross} with \eqref{eq:TruncatedCross2'}, and using the general inequality $(a+b)^2\leq 2(a^2+b^2)$, we obtain
\[
\|\Diamond_{p,p'}\|_{\infty,r}\leq K(\X(p)^2 \langle r\rangle^{4-2d}+\langle r\rangle^4 \|\tau_p\|_{\infty,r/3}^2).
\]
The second inequality is obtained using (a).

For the last inequality, we rewrite
\[
\|\Diamond_{p,p'}\|_1=[\tau_p\ast \tau_p\ast \tau_p]\ast [D\ast D\ast \tau_p\ast \bar \tau_{p,p'} \ast \bar \tau_{p,p'}^\square](0).
\]
By inductively using Lemma \ref{lem:TruncatedPoly}, with (a) and Lemmas \ref{lem:TruncatedNormTau2} and \ref{lem:TruncatedNormTauSquare}, we get for every $r\geq 0$,
\[
\|D\ast D\ast\tau_p\ast \bar \tau_{p,p'} \ast \bar \tau_{p,p'}^\square\|_{\infty,r} \leq K(\langle r\rangle ^{2-d}+\langle r\rangle ^{4-d}/\X(p)+\langle r\rangle^{6-d}/\X(p)^2).
\]
Also recall that by Lemma \ref{lem:TwoTriangleSquare} (b) we have
$\|\tau_p\ast \tau_p\ast \tau_p\|_{\infty,r}\leq \langle r\rangle^{6-d}.$ It follows by Lemma \ref{lem:TruncatedPoly} applied with $a=d-6$, $b\in \{d-2,d-4,d-6\}$ that we have when $d=7$,
\[
\|\Diamond_{p,p'}\|_1 \leq K ((\X(p)^3)^{1-5/6} +(\X(p)^3)^{1-3/6}/\X(p)+(\X(p)^3)^{1-1/6}/\X(p)^2)=3K\X(p)^{1/2}.
\]
Similarly, when $d=8$, by Lemma \ref{lem:TruncatedPoly} we have
\[
\|\Diamond_{p,p'}\|_1 \leq K (3\log(\X(p)) +(\X(p)^3)^{1-4/6}/\X(p)+(\X(p)^3)^{1-2/6}/\X(p)^2)=3K\log(\X(p))+2K.
\]
When $d=9$, still by Lemma \ref{lem:TruncatedPoly}, we have
\[
\|\Diamond_{p,p'}\|_1 \leq K (1+(\X(p)^3)^{1-5/6}/\X(p)+(\X(p)^3)^{1-3/6}/\X(p)^2)=K+2K/\sqrt{\X(p)}.
\]
We can do similar calculations for $d\in \{10,11,12\}$ and for $d\geq 13$. We omit the details.
\end{proof}
We now deal with $\twosquare_{p,p'}$, which corresponds to more complex correlations, and so is smaller.
\begin{lemma} \label{lem:TruncatedNormTwoSquare}
In the setting of Theorem \ref{thm:PointwiseError},
there exist $K^\twosquare_1,K^\twosquare_\infty>0$ depending only on $d,L,C$
such that if (a) and (b) hold, and $\X(p)$ is large enough,
then the following inequalities hold:
For every $0\leq r \leq \sqrt{\X(p)}$,
\[
\|\twosquare_{p,p'}\|_{\infty,r} \leq K^\twosquare_\infty  \langle r\rangle ^{7-2d}.
\]
For every $r\geq \sqrt{\X(p)}$,
\[
\|\twosquare_{p,p'}\|_{\infty,r} \leq K^\twosquare_\infty \|\tau_p\|_{\infty,r/2} r^{4-d} \psi_d(\X(p)).
\]
And,
\[
\|\twosquare_{p,p'}\|_1 \leq K^\twosquare_1 \psi_{d+1}(\X(p)).
\]
\end{lemma}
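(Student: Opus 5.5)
I will treat $\twosquare_{p,p'}$ through its defining formula \eqref{eq:deftwosquare}. The plan is to isolate the two ``cross'' factors
\[
Q^\square:=\tau_p\ast D\ast\bar\tau^\square_{p,p'}\ast\tau_p, \qquad Q:=\tau_p\ast D\ast\tau_p.
\]
Everything else is a product of three $\tau_p$'s around the vertex $v$. From \eqref{eq:TruncatedCross'} we have $Q^\square(y)\leq K\langle y\rangle^{4-d}$. From \eqref{eq:initialization} and Lemma \ref{lem:TruncatedPoly} we have $Q(u)\leq K\langle u\rangle^{4-d}$. Thus, pointwise,
\[
\twosquare_{p,p'}(z)\leq K\sum_{v,y}\langle v\rangle^{2-d}\langle z-v\rangle^{2-d}\langle y-v\rangle^{2-d}\langle y\rangle^{4-d}\langle z-y\rangle^{4-d}.
\]

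For the first bound ($r\leq\sqrt{\X(p)}$), I would do a pure power counting on this diagram with Lemma \ref{lem:TruncatedPoly}. Summing over $v$ first, with two of the three $\tau_p$ factors, gives a term like $\langle z\rangle^{2-d}$ times a local factor. More carefully, I would split according to which of $\|v\|,\|z-v\|$ is at least $\|z\|/2$. In each case one factor $\langle\cdot\rangle^{2-d}$ is pulled out as $\langle z\rangle^{2-d}$. The remaining chain $\tau_p\ast\tau_p\ast Q^\square\ast Q$ is then a convolution of kernels with exponents $d-2,d-2,d-4,d-4$. Its value at $z$ decays like $\langle z\rangle^{12-d-d}$ only when convergent, which holds for large $d$. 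For small $d$ one should instead use the product structure: the vertex $y$ sits between $Q^\square(y)$ and $Q(z-y)$, giving $\langle z\rangle^{4-d}$ times $\sum_y\langle y-v\rangle^{2-d}\langle\cdot\rangle^{4-d}$. Counting total scaling, with $5$ vertices integrated out, edge exponents summing to $3(d-2)+2(d-4)=5d-14$, and $2$ free integrations ($v,y$) over $\R^d$, gives the scaling $\langle z\rangle^{2d-(5d-14)}=\langle z\rangle^{14-3d}$. Since that exponent is better than claimed when all subintegrals converge, the claimed $7-2d$ must come from the divergent ones. So I will run Lemma \ref{lem:TruncatedPoly} case by case, bounding divergent inner sums with the truncated $\|\cdot\|_1$ versions cut at scale $\langle r\rangle$. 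This uses $r\leq\sqrt{\X(p)}$, so truncations at $r^2\leq\X(p)$ are harmless, and yields at worst $\langle r\rangle^{7-2d}$.

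For the second bound ($r\geq\sqrt{\X(p)}$), I would apply Lemma \ref{lem:SplitConvolTail}-style splitting: for $\|z\|\geq r$, either $\|v\|\geq r/2$ or $\|z-v\|\geq r/2$. That factor is bounded by $\|\tau_p\|_{\infty,r/2}$. The remaining sum over $v,y$ of the other $\tau_p$, $\tau_p(y-v)$, $Q^\square(y)$ and $Q(z-y)$ contains a square-type loop through $z$. I would bound it by $r^{4-d}$ (from whichever of $Q,Q^\square$ carries distance at least $\|z\|/2$, via \eqref{eq:TruncatedCross'}, \eqref{eq:TruncatedCross2}) times a closed square $\tau_p\ast\tau_p\ast\tau_p\ast\tau_p\ast D\ast\bar\tau^\square$-type quantity. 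By Lemma \ref{lem:TwoTriangleSquare}(c) together with Lemma \ref{lem:TruncatedNormTauSquare}, that quantity is $O(\psi_d(\X(p)))$.

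For the $L_1$ bound, summing over $z$ gives, as in Lemma \ref{lem:NaiveTwosquare}, the closed diagram
\[
\tau_p\ast D\ast\bar\tau^\square_{p,p'}\ast\tau_p\ast\tau_p\ast\tau_p\ast(\tau_p\ast D\ast\tau_p\cdot\tau_p)(0).
\]
The extra product factor $\tau_p\cdot(\tau_p\ast D\ast\tau_p)$ decays like $\langle x\rangle^{6-2d}$, which is summable and gains compared to a plain $\tau_p$. I would therefore bound it by an effective kernel with exponent $d-1$ or better. Then Lemma \ref{lem:TruncatedPoly} with $a=d-6$ and $b$ shifted by one yields $\psi_{d+1}(\X(p))$ in place of $\psi_d$, handling the $\bar\tau^\square-\1_{\{0\}}$ part with the factor $1/\X(p)$ from Lemma \ref{lem:TruncatedNormTauSquare}, exactly as in the dimension-by-dimension computation for $\|\Diamond_{p,p'}\|_1$.

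\textbf{Main obstacle.} I expect the main obstacle to be the careful case analysis in the first pointwise bound, namely getting precisely the exponent $7-2d$ in low dimensions $d=7,8$ where inner convolutions diverge and must be truncated at scale $\langle r\rangle$.
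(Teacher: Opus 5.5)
Your two pointwise bounds work in substance. The $L_1$ step, however, rests on a misidentified diagram, and as written it cannot produce the gain from $\psi_d$ to $\psi_{d+1}$.

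\textbf{The $L_1$ diagram.} Summing \eqref{eq:deftwosquare} over $z$, the only $z$-dependent factors are $\tau_p(z-v)$ and $[\tau_p\ast D\ast\tau_p](z-y)$. They combine into $[\tau_p\ast\tau_p\ast D\ast\tau_p](y-v)$, which sits \emph{in parallel} with $\tau_p(y-v)$. Hence
\[
\|\twosquare_{p,p'}\|_1=\big[\tau_p\ast\tau_p\ast\tau_p\ast D\ast\bar\tau^\square_{p,p'}\ast P\big](0),\qquad P:=\tau_p\cdot(\tau_p\ast\tau_p\ast\tau_p\ast D).
\]
This is not the diagram you wrote, in which the double edge is $\tau_p\cdot(\tau_p\ast D\ast\tau_p)$ and an extra $\tau_p$ sits in series.

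The difference matters. By Lemma \ref{lem:TwoTriangleSquare}(b), the true double edge satisfies $P\lesssim\langle x\rangle^{8-2d}$. This is \emph{not} summable for $d=7,8$, but its exponent is at least $d-1$, and that is exactly where the extra power comes from.

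In your diagram the double edge is summable, so convolving it with the neighbouring $\tau_p$ returns a kernel of exponent $d-2$. What is left is $\tau_p^{\ast4}\ast D\ast\bar\tau^\square_{p,p'}$ tested against a summable weight, whose natural size is $\psi_d(\X(p))$. The ``shift of $b$ by one'' you invoke is therefore not available.

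With the correct $P$, your plan does go through:
\begin{itemize}
\item $D\ast P\lesssim\langle x\rangle^{1-d}$;
\item $D\ast(\bar\tau^\square_{p,p'}-\1_{\{0\}})\ast P\lesssim\langle x\rangle^{3-d}/\X(p)$, by Lemmas \ref{lem:TruncatedNormTauSquare} and \ref{lem:TruncatedPoly};
\item Lemma \ref{lem:TruncatedPoly} with $a=d-6$, $\|\tau_p^{\ast 3}\|_1=\X(p)^3$ and $b\in\{d-1,d-3\}$ then gives $O(\psi_{d+1}(\X(p)))$.
\end{itemize}
The paper avoids this computation altogether by summing the two pointwise bounds over $x$. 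This gives $\sum_{\langle x\rangle\leq\sqrt{\X(p)}}\langle x\rangle^{7-2d}=O(\psi_{d+1}(\X(p)))$, plus $\sum_{\langle x\rangle>\sqrt{\X(p)}}\langle x\rangle^{6-2d}\psi_d(\X(p))=O(1)$.

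\textbf{The first pointwise bound.} It is described inaccurately, and the obstacle you anticipate is not there. After splitting on $v$ and extracting $\langle z\rangle^{2-d}$, what remains is $\big[Q^\square\ast((\tau_p\ast\tau_p)\cdot Q)\big](z)$, or its mirror image. This is a convolution against a product, not a four-fold chain. With your uniform bound $Q^\square\lesssim\langle\cdot\rangle^{4-d}$, nothing diverges and no truncation is needed:
\[
\langle z\rangle^{2-d}\sum_y\langle y\rangle^{4-d}\langle z-y\rangle^{8-2d}\lesssim\langle z\rangle^{7-2d}.
\]
This follows from Lemma \ref{lem:TruncatedPoly} for $d=7$ and $d\geq9$, and from a direct logarithmic estimate for $d=8$. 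It holds for every $r$, not only $r\leq\sqrt{\X(p)}$.

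\textbf{The second pointwise bound.} Your extra split on $y$ is a valid alternative to the paper's use of Lemma \ref{lem:SplitConvolTail} on $\tau_p\ast D\ast\bar\tau^\square_{p,p'}\ast\tau_p\ast\Diamond^1_{p,p'}$. One branch needs care. When $\|v\|\geq\|z\|/2$ and the distance is carried by $[\tau_p\ast D\ast\tau_p](z-y)$, the remaining factor is the \emph{open} quantity $(\tau_p^{\ast4}\ast D\ast\bar\tau^\square_{p,p'})(z)$.

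Bounding it by $\|\tau_p^{\ast4}\|_\infty\|D\ast\bar\tau^\square_{p,p'}\|_1$ loses a factor of order $\X(p')/\X(p)$, which can be as large as $\X(p)/\psi_d(\X(p))$. Instead, split $\bar\tau^\square_{p,p'}-\1_{\{0\}}$ at radius $\sqrt{\X(p)}$, using both truncated bounds of Lemma \ref{lem:TruncatedNormTauSquare}. The near part is $\lesssim\psi_d(\X(p))$ and the far part is $\lesssim\X(p)^{(8-d)/2}$, which indeed gives $O(\psi_d(\X(p)))$.
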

\begin{proof}
As in the previous proof, $K$ will denote a constant depending only on $d,L,C$ that may change from line to line. By definition, for every $z\in \Z^d$,
\begin{equation}
\twosquare_{p,p'}(z) =\sum_{v,y\in \Z^d} \tau_p(v) \tau_p(z-v) \tau_p(y-v)[\tau_p\ast D\ast \bar \tau_{p,p'}^\square\ast \tau_p](y)[\tau_p\ast D\ast \tau_p](z-y). \label{eq:recall_twosquare}
\end{equation}
We distinguish two cases, which we treat similarly. Either $\|v\|_2\geq \|z\|_2/2$ or $\|z-v\|_2\geq \|z\|_2/2$. We bound the contribution of the first case by
\begin{align}
\twosquare^1_{p,p'}(z)  := &\sum_{v,y\in \Z^d} \| \tau_p\|_{\infty,\|z\|_2/2}  \tau_p(z-v) \tau_p(y-v)[\tau_p\ast D\ast \bar \tau_{p,p'}^\square\ast \tau_p](y)[\tau_p\ast D\ast \tau_p](z-y) \notag
\\  = & \| \tau_p\|_{\infty,\| z\|_2/2} [\tau_p\ast D\ast \bar \tau_{p,p'}^\square\ast \tau_p \ast \Diamond^1_{p,p'}](z), \label{eq:def:twosquare^1}
\end{align}
where
\[
\Diamond^1_{p,p'}:x\mapsto [\tau_p\ast \tau_p](x)[\tau_p\ast D\ast \tau_p](x).
\]
By Lemma \ref{lem:TwoTriangleSquare} (a), for every $r\geq 0$,
\[
\|\tau_p\ast \tau_p\|_{\infty,r}\leq K \langle r\rangle^{4-d}\quad \text{and} \quad \|\tau_p\ast D\ast \tau_p\|_{\infty,r}\leq K \langle r\rangle^{4-d}.
\]
Thus, by taking products,
\begin{equation}
\|\Diamond^1_{p,p'}\|_{\infty,r}\leq K \langle r\rangle^{8-2d} \leq K  \langle r\rangle^{1-d} . \label{eq:Diamond_1}
\end{equation}

Applying Lemma \ref{lem:TruncatedPoly} twice, together with \eqref{eq:initialization} and Lemma \ref{lem:TruncatedNormTauSquare}, we deduce, for every $r\geq 0$,
\[
\|D\ast \bar \tau_{p,p'}^\square\ast \tau_p \ast \Diamond^1_{p,p'}\|_{\infty,r} \leq K(\langle r \rangle^{3-d}+ \langle r \rangle^{5-d}/\X(p)).
\]
Another application of Lemma \ref{lem:TruncatedPoly}, together with \eqref{eq:initialization}, gives
\[
\|\tau_p\ast D\ast \bar \tau_{p,p'}^\square\ast \tau_p \ast \Diamond^1_{p,p'}\|_{\infty,r} \leq K \left (\langle r \rangle^{5-d}+\log \left (2+ \frac{\sqrt{\X(p)}}{\langle r\rangle} \right ) \frac{\langle r \rangle^{7-d}}{\X(p)}\right ).
\]
Since $x\in \Z^d \mapsto \log(2+1/x)x^{2}$ is bounded on $(0,1]$, by \eqref{eq:def:twosquare^1} and \eqref{eq:initialization}, for every $r\leq \sqrt{\X(p)}$ we get
\begin{equation}
\|\twosquare^1_{p,p'}\|_{\infty,r}\leq K \langle r\rangle^{2-d}\langle r\rangle^{5-d}=K\langle r\rangle^{7-2d}.
\label{eq:Intermediary_L1_TwoSquare_A}
\end{equation}

On the other hand, Lemma \ref{lem:TwoTriangleSquare} gives
\begin{equation}
\|\Diamond^1_{p,p'}\|_1=[\tau_p\ast \tau_p\ast \tau_p\ast \tau_p\ast D](0)\leq K\psi_d(\X(p)). \label{eq:Diamond_1'}
\end{equation}
By Lemma \ref{lem:SplitConvolTail}, for every $r\geq 0$, we have
\begin{align*}
\|\tau_p\ast D\ast \bar \tau_{p,p'}^\square\ast \tau_p \ast \Diamond^1_{p,p'}\|_{\infty,r}
\leq \X(p)^2\|\Diamond^1_{p,p'}\|_1 & \|D\ast \bar\tau_{p,p'}^\square\|_{\infty,r/4}
+\X(p)^2 \|D\ast\bar\tau_{p,p'}^\square\|_{1,r/4}  \|\Diamond^1_{p,p'}\|_{\infty,r/4}\\
&+2 \X(p) \|\tau_p\|_{\infty,r/4}\|D\ast \bar \tau_{p,p'}^\square\|_{1,r/4} \|\Diamond^1_{p,p'}\|_1 .
\end{align*}
Substituting the bounds from Lemma \ref{lem:TruncatedNormTauSquare}, \eqref{eq:Diamond_1}, and \eqref{eq:Diamond_1'}, we get, for every $r\geq \sqrt{\X(p)}$,
\begin{align*}
\|\tau_p\ast D\ast \bar \tau_{p,p'}^\square\ast \tau_p \ast \Diamond^1_{p,p'}\|_{\infty,r} & \leq K\big ( \X(p) \psi_d(\X(p)) r^{2-d} + \X(p) r^2 r^{8-2d}+\|\tau_p\|_{\infty,r/4} r^2\psi_d(\X(p)) \big )
\\ & \leq 3Kr^{4-d}\psi_d(\X(p)),
\end{align*}
using $r\geq \sqrt{\X(p)}\geq \psi_d(\X(p))$ and \eqref{eq:initialization} for the last inequality. Thus, by \eqref{eq:def:twosquare^1}, for $r\geq \sqrt{\X(p)}$, we get
\begin{equation}
\|\twosquare^1_{p,p'}\|_{\infty,r} \leq 3K\|\tau_p\|_{\infty,r/2}r^{4-d}\psi_d(\X(p))
\label{eq:Intermediary_L1_TwoSquare_B}
\end{equation}

Going back to \eqref{eq:recall_twosquare}, we now have to deal with the case $\|z-v\|_2\geq \|z\|_2/2$. We bound the contribution  of this case to $\twosquare_{p,p'}$ by
\begin{align*}
\twosquare^2_{p,p'}(z)  := &\sum_{v,y\in \Z^d} \|\tau_p\|_{\infty,\|z\|_2/2} \tau_p(v) \tau_p(y-v)[\tau_p\ast D\ast \bar \tau_{p,p'}^\square\ast \tau_p](y)[ \tau_p\ast D\ast \tau_p](z-y)
\\  = & \|\tau_p\|_{\infty,\|z\|_2/2} [\Diamond^2_{p,p'} \ast \tau_p\ast D\ast \tau_p](z),
\end{align*}
where
\[
\Diamond^2_{p,p'}:y\mapsto [\tau_p\ast \tau_p](y)[\tau_p\ast D\ast \bar \tau^\square_{p,p'}\ast \tau_p](y).
\]
With the same proof as for Lemma \ref{lem:TruncatedNormDiamond}, we get that $ \Diamond^2_{p,p'}$ also satisfies \eqref{eq:Diamond_1} and \eqref{eq:Diamond_1'}.
Then, with the same proof as for $\twosquare^1_{p,p'}$, we get that $\twosquare^2_{p,p'}$ also satisfies the analogues of \eqref{eq:Intermediary_L1_TwoSquare_A} and \eqref{eq:Intermediary_L1_TwoSquare_B}.
We omit the details. The bounds on $\| \twosquare_{p,p'}\|_{\infty,r}$ are obtained by sum, since $\twosquare_{p,p'}\leq \twosquare^1_{p,p'}+\twosquare^2_{p,p'}$.

Finally, we have
\[
\|\twosquare_{p,p'}\|_1 \leq  \sum_{\substack{ x\in \Z^d, \\ \langle x\rangle\leq \sqrt{\X(p)}}} \|\twosquare_{p,p'}\|_{\infty,\langle x\rangle}+ \sum_{\substack{ x\in \Z^d, \\ \langle x\rangle> \sqrt{\X(p)}}} \|\twosquare_{p,p'}\|_{\infty,\langle x\rangle}.
\]
Substituting the bounds on $\|\twosquare_{p,p'}\|_{\infty,\langle x\rangle}$, we find that the first sum is bounded by
\[
K_\infty^\twosquare \sum_{\substack{ x\in \Z^d, \\ \langle x\rangle\leq \sqrt{\X(p)}}}\langle x\rangle ^{7-2d} \leq K\psi_{d+1}(\X(p)).
\]
Using \eqref{eq:initialization}, the second sum is bounded by
\[
K_\infty^\twosquare \hspace{-1em} \sum_{\substack{ x\in \Z^d, \\ \langle x\rangle> \sqrt{\X(p)}}}
 \|\tau_p\|_{\infty,\langle x\rangle /2}\langle x\rangle^{4-d}\psi_d(\X(p))
\leq K\X(p)^{(6-d)/2}\psi_d(\X(p)).
\]
Since $d\geq 7$, the last sum is bounded by a constant.
\end{proof}
We can now prove the main result for the bootstrap argument:
\begin{proposition} \label{pro:PointwiseError}
Let $C>0$. There exist $c,F,K,K'>0$ depending only on $d,L,C$ such that for every $p\leq p'$ with $\X(p)\geq F$, if (a) and (b) of Theorem \ref{thm:PointwiseError} hold, then its conclusion also holds.
\end{proposition}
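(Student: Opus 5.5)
The plan is to sandwich $\tau_{p'}-\tau^\square_{p,p'}$ between the error terms of Theorems \ref{thm:LowerBoundTau} and \ref{thm:UpperBoundTau}:
\[
|\tau_{p'}-\tau^\square_{p,p'}|\leq \rho^2\Big(\bar\tau^\square_{p,p'}\ast\tau_p\ast(\square_{p,p'}+\Diamond_{p,p'})\ast\tau_p\ast\bar\tau^\square_{p,p'}+\bar\tau^\square_{p,p'}\ast\tau_p\ast\twosquare_{p,p'}\ast\tau_p\ast\bar\tau_{p,p'}\Big),
\]
with $\rho\leq C'/\X(p)$ by Lemma \ref{lem:p'-p}. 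Each error is a convolution of five factors: two copies of $\tau_p$, two copies of $\bar\tau^\square_{p,p'}$ or $\bar\tau_{p,p'}$, and one diagram. I would estimate them factor by factor using the truncated norms just established.

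\textbf{$L_1$ bound.} This is immediate from $\|f\ast g\|_1\leq\|f\|_1\|g\|_1$. We have $\|\tau_p\|_1=\X(p)$. By Lemmas \ref{lem:TruncatedNormTau2}--\ref{lem:TruncatedNormTauSquare}, $\|\bar\tau_{p,p'}\|_1\lesssim\X(p')/\X(p)$ and $\|\bar\tau^\square_{p,p'}\|_1\lesssim\|\tau^\square_{p,p'}\|_1/\X(p)$. The diagrams have $L_1$ norms $O(\psi_d(\X(p)))$ by Lemmas \ref{lem:TruncatedNormSquare}, \ref{lem:TruncatedNormDiamond} and \ref{lem:TruncatedNormTwoSquare}. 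This gives a bound
\[
K\,\max(\X(p'),\|\tau^\square_{p,p'}\|_1)^2\,\psi_d(\X(p))/\X(p)^2 .
\]
Under the smallness assumptions $\X(p'),\|\tau^\square_{p,p'}\|_1\leq c\X(p)^2/\psi_d(\X(p))$, the resulting error is at most a small multiple of $\max(\X(p'),\|\tau^\square_{p,p'}\|_1)$. Hence the two quantities are comparable, and the max can be replaced by a min, exactly as in the proof of Proposition \ref{pro:BeforeBootstrap}.

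\textbf{Pointwise bound.} Fix $x$ and set $r=\|x\|_2$. I would apply Lemma \ref{lem:SplitConvolTail} with the radius split into five pieces of size roughly $r/5$. A further factor $3$ is lost inside the diagram bounds, which explains the $r/15$ appearing in $\Xi$. Each term of the split has one factor in $\|\cdot\|_{\infty,r/5}$, the factors before it in $\|\cdot\|_{1,r/5}$, and the factors after it in $\|\cdot\|_1$.

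\textbf{Terms where the diagram carries the sup.} The bounds $\|\square\|_{\infty,r}\lesssim r^{6-d}\|\tau_p\|_{\infty,r}$, $\|\Diamond\|_{\infty,r}\lesssim r^{8-2d}$ and $\|\twosquare\|_{\infty,r}\lesssim r^{7-2d}$ hold for small $r$, with refined versions for $r\geq\sqrt{\X(p)}$. They are multiplied by $\rho^2\X(p)^2\cdot O(\X(p')/\X(p))^2$ coming from the other factors. This should produce the main term $\langle x\rangle^{2-d}\X(p')\psi_d/\X(p)^2$ together with the $\Xi$ correction.

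\textbf{Terms where a $\tau_p$ or $\bar\tau$ carries the sup.} The bounds $\|\tau_p\|_{\infty,r}\leq C r^{2-d}$ and $\|\bar\tau-\1_{\{0\}}\|_{\infty,r}\lesssim r^{2-d}/\X(p)$ are multiplied by the $L_1$ norms, with $\|\text{diagram}\|_1\lesssim\psi_d$. The truncated $L_1$ norms $\min(r^2,\X(p'))$ are what produce the factor $\min(r^2,\X(p'))^2/(\X(p)\X(p'))$ in $\Xi$. For this, one should order the factors so that those preceding the sup-factor are the ones carrying $\min(r^2,\cdot)$.

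The main obstacle is this bookkeeping: every term must collapse to
\[
\langle x\rangle^{2-d}\,\frac{\X(p')\psi_d(\X(p))}{\X(p)^2}\bigl(1+\Xi\bigr).
\]
In particular, for $r\geq\sqrt{\X(p)}$ the $\Diamond$ and $\twosquare$ sup-bounds involve $\|\tau_p\|_{\infty,r/3}r^{6-d}$ and $r^{4-2d}\X(p)^2$. One must check that the $r^{4-2d}\X(p)^2$ term is dominated by $r^{2-d}\psi_d$ times the prefactors, which uses $r^2\geq\X(p)$. One must also check that the terms with $\|\tau_p\|_{\infty}$ are absorbed into $\Xi(r/15,p,p')$ via the $r^{d-2}\|\tau_p\|_{\infty,r}$ factor. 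I would first verify the case $r\leq\sqrt{\X(p)}$, where all the bounds are pure powers and Lemma \ref{lem:TruncatedPoly} applies directly, and only then handle the large-$r$ regime case by case.
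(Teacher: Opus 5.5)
Your route is the paper's. You sandwich $\tau_{p'}-\tau^\square_{p,p'}$ between the error terms of Theorems \ref{thm:LowerBoundTau} and \ref{thm:UpperBoundTau}. You get the $L_1$ bound from $\|f\ast g\|_1\le\|f\|_1\|g\|_1$ and then trade the max for a min using the smallness of $c$. You get the pointwise bound from Lemma \ref{lem:SplitConvolTail} with radius $\|x\|_2/5$ per factor. The $L_1$ part is fine.

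\textbf{The gap.} The pointwise bookkeeping fails as written at the terms where the diagram carries the sup. You multiply $\|\square_{p,p'}\|_{\infty,r}$, $\|\Diamond_{p,p'}\|_{\infty,r}$, $\|\twosquare_{p,p'}\|_{\infty,r}$ by $\rho^2\X(p)^2\,O(\X(p')/\X(p))^2$, i.e.\ by the full $L_1$ norms of the four other factors. For $r=O(1)$ this only gives a bound of order $(\X(p')/\X(p))^2$, whereas the target is $\X(p')\psi_d(\X(p))/\X(p)^2$. The overshoot factor $\X(p')/\psi_d(\X(p))\ge\X(p)/\psi_d(\X(p))$ is unbounded. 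Small $r$ is exactly where the diagram's sup-bound ($\langle r\rangle^{8-2d}$ or $\langle r\rangle^{7-2d}$) is not small. There it must be paired with truncated norms of all four companions, not full ones.

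\textbf{The fix.} Use commutativity before applying Lemma \ref{lem:SplitConvolTail}, as the paper does. Write the errors as $\bar\tau^\square_{p,p'}\ast\bar\tau^\square_{p,p'}\ast\tau_p\ast\tau_p\ast(\square_{p,p'}+\Diamond_{p,p'})$ and $\bar\tau^\square_{p,p'}\ast\bar\tau_{p,p'}\ast\tau_p\ast\tau_p\ast\twosquare_{p,p'}$, so that the diagram comes last. When the diagram carries the sup, you then get $\rho^2\|\bar\tau^\square_{p,p'}\|_{1,r}^2\|\tau_p\|_{1,r}^2$ times its sup-norm. Here $\|\bar\tau^\square_{p,p'}\|_{1,r}^2\|\tau_p\|_{1,r}^2$ is $O(\langle r\rangle^4)$ for $r^2\le\X(p)$ and $O(\min(r^2,\X(p'))^2)$ for $r^2\ge\X(p)$.
\begin{itemize}
\item In the first regime you get $\X(p)^{-2}\langle r\rangle^{12-2d}$. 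This is acceptable because $\langle r\rangle^{10-d}\le\X(p')\psi_d(\X(p))$ there.
\item In the second regime, the factor $\min(r^2,\X(p'))^2$ is exactly what converts the $\|\tau_p\|_{\infty,\cdot}\,r^{6-d}$ parts of the $\square$ and $\Diamond$ bounds into $\Xi(r/3,p,p')\cdot\X(p)r^{6-d}$ times $\langle r\rangle^{2-d}\X(p')/\X(p)^2$, and $\X(p)r^{6-d}\le\psi_d(\X(p))$. The inequality $\X(p)^2\le r^{d-4}\psi_d(\X(p))$ handles the $r^{4-2d}\X(p)^2$ part of $\Diamond$.
\end{itemize}
The same ordering puts both $\bar\tau$'s before the $\tau_p$'s, which is what your $\Xi$ mechanism needs when a $\tau_p$ carries the sup.

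Leaving the diagram in its natural middle slot is not enough either. For $r^2\ge\X(p)$ the two trailing full norms then cost $\X(p')$ instead of $\min(r^2,\X(p'))$. For $d=7$ the available bounds do not absorb that loss into $1+\Xi$.
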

\begin{proof}
As usual, $K=K(d,L,C)$ and $K'=K'(d,L,C)$ denote constants that may change from line to line. We start by bounding $\|\tau_{p'}-\tau_{p,p'}^\square\|_1$. By Theorems \ref{thm:LowerBoundTau} and \ref{thm:UpperBoundTau}, we have
\[
\|\tau_{p'}-\tau_{p,p'}^\square\|_1 \leq ((p'-p)/(1-p))^2 \max( \|\bar \tau_{p,p'}\|_1,\|\bar \tau^\square_{p,p'}\|_1)^2 \|\tau_p\|_1^2 (\|\square_{p,p'}\|_1+\|\Diamond_{p,p'}\|_1+\|\twosquare_{p,p'}\|_1).
\]
Substituting the bounds from Lemmas \ref{lem:p'-p}, \ref{lem:TruncatedNormTau2}, \ref{lem:TruncatedNormTauSquare}, \ref{lem:TruncatedNormSquare}, \ref{lem:TruncatedNormDiamond}, and \ref{lem:TruncatedNormTwoSquare}, we get
\begin{equation}
\|\tau_{p'}-\tau_{p,p'}^\square\|_1 \leq K\max(\X(p'),\|\tau_{p,p'}^\square\|_1)^2 \psi_d(\X(p))/\X(p)^2. \label{eq:DifL1withMax}
\end{equation}
Furthermore, using (a) and (b), we get
\[
\|\tau_{p'}-\tau_{p,p'}^\square\|_1 \leq cK\max(\X(p'),\|\tau_{p,p'}^\square\|_1).
\]
So if $c$ is small enough depending on the previous $K$, which we can now assume since we never used $c$ before,
\[
|\X(p')-\|\tau_{p,p'}^\square\|_1| \leq \frac{1}{2}\max(\X(p'),\|\tau_{p,p'}^\square\|_1),
\]
and so
\begin{equation}
\max(\X(p'),\|\tau_{p,p'}^\square\|_1)\leq 2\min(\X(p'),\|\tau_{p,p'}^\square\|_1). \label{eq:Max<Min}
\end{equation}
The bound on $\|\tau_{p'}-\tau_{p,p'}^\square\|_1$ follows from \eqref{eq:DifL1withMax}.

We now bound $\| \tau_{p'}-\tau^\square_{p,p'}\|_{\infty,5r}$ for every $r\geq 0$. By Lemma \ref{lem:SplitConvolTail} and Theorems \ref{thm:LowerBoundTau} and \ref{thm:UpperBoundTau}, it suffices to bound separately
\begin{align*}
& \text{(1)} \quad \|\bar \tau^\square_{p,p'}\|_{\infty,r}\|\bar \tau^\square_{p,p'}\|_1 \|\tau_p\|_1^2(\|\square_{p,p'}\|_1+\|\Diamond_{p,p'}\|_1)
 & &\text{(2)} \quad \|\bar \tau^\square_{p,p'}\|_{\infty,r} \|\bar \tau_{p,p'}\|_1 \|\tau_p\|_1^2 \|\twosquare_{p,p'}\|_1
\\ &\text{(3)} \quad  \|\bar \tau^\square_{p,p'}\|_1 \|\bar \tau_{p,p'}\|_{\infty,r} \|\tau_p\|_1^2 \|\twosquare_{p,p'}\|_1
 & &\text{(4)} \quad \|\bar \tau^\square_{p,p'}\|_{1,r}^2 \|\tau_p\|_{\infty,r}\|\tau_p\|_1(\|\square_{p,p'}+\Diamond_{p,p'}\|_1)
\\ & \text{(5)} \quad \|\bar \tau^\square_{p,p'}\|_{1,r} \|\bar \tau_{p,p'} \|_{1,r} \|\tau_p\|_{\infty,r}  \|\tau_p\|_1 \|\twosquare_{p,p'}\|_1
& &\text{(6)} \quad \|\bar \tau^\square_{p,p'}\|_{1,r}^2 \|\tau_p\|_{1,r}^2 (\|\square_{p,p'}+\Diamond_{p,p'}\|_{\infty,r})
\\
& \text{(7)} \quad  \|\bar \tau^\square_{p,p'}\|_{1,r}\|\bar \tau_{p,p'}\|_{1,r} \|\tau_p\|_{1,r}^2 \|\twosquare_{p,p'}\|_{\infty,r}.
\end{align*}
For (1), by Lemmas \ref{lem:TruncatedNormTauSquare}, \ref{lem:TruncatedNormSquare}, and \ref{lem:TruncatedNormDiamond}, and by $\|\tau_{p,p'}^\square\|_1\leq 2 \X(p')$, we have
\[
\|\bar \tau^\square_{p,p'}\|_{\infty,r}\|\bar \tau^\square_{p,p'}\|_1 \|\tau_p\|_1^2(\|\square_{p,p'}\|_1+\|\Diamond_{p,p'}\|_1) \leq K \langle r\rangle^{2-d} \X(p')\psi_d(\X(p)).
\]
We deal with (2) and (3) similarly by using Lemmas \ref{lem:TruncatedNormTau2}, \ref{lem:TruncatedNormTauSquare}, and \ref{lem:TruncatedNormTwoSquare}.
(Note that the bounds on $\twosquare_{p,p'}$ in Lemma \ref{lem:TruncatedNormTwoSquare} are strictly better than the ones on $\square_{p,p'}$ and $\Diamond_{p,p'}$.)

For (4), by Lemmas \ref{lem:TruncatedNormTauSquare}, \ref{lem:TruncatedNormSquare}, and \ref{lem:TruncatedNormDiamond}, and by $\|\tau_{p,p'}^\square\|_1\leq 2 \X(p')$, we have
\begin{align*}
\|\bar \tau^\square_{p,p'}\|_{1,r}^2 \|\tau_p\|_{\infty,r}\|\tau_p\|_1(\|\square_{p,p'}\|_1+\|\Diamond_{p,p'}\|_1) & \leq K\|\tau_p\|_{\infty,r}(1+ \min(r^2, \X(p'))/\X(p))^2\X(p) \psi_d(\X(p)),
\\ & \leq K'(1+\Xi(r/3,p,p'))r^{2-d}\X(p')\psi_d(\X(p)),
\end{align*}
by definition of $\Xi$. We deal with (5) similarly.

For (6), if $r^2\leq \X(p)$, by Lemmas \ref{lem:TruncatedNormTau}, \ref{lem:TruncatedNormTauSquare}, \ref{lem:TruncatedNormSquare}, \ref{lem:TruncatedNormDiamond}, we have
\[
\|\bar \tau^\square_{p,p'}\|_{1,r}^2 \|\tau_p\|_{1,r}^2 (\|\square_{p,p'}\|_{\infty,r}+\|\Diamond_{p,p'}\|_{\infty,r}) \leq K r^4 r^{8-2d} \leq K'r^{2-d} \X(p')\psi_d(\X(p)),
\]
using $r^3\leq \X(p)^{3/2}$ when $d=7$ and $r^{10-d}\leq \X(p)$ when $d\geq 8$. If $r\geq \sqrt{\X(p)}$, by the same lemmas and by $\|\tau_{p,p'}^\square\|_1\leq 2 \X(p')$, we have
\[
\|\bar \tau^\square_{p,p'}\|_{1,r}^2 \|\tau_p\|_{1,r}^2 (\|\square_{p,p'}\|_{\infty,r}+\|\Diamond_{p,p'}\|_{\infty,r}) \leq K \min(r^2,\X(p'))^2 (r^{4-2d}\X(p)^2 +\|\tau_p\|_{\infty,r/3}r^{6-d}).
\]
Since $r^2\geq \X(p)$ we have $\X(p)^2 \leq r^{d-4}\psi_d(\X(p))$, and so
\[
\min(r^2,\X(p'))^2 (r^{4-2d}\X(p)^2)\leq \X(p')r^{6-2d} \X(p)^2 \leq  \X(p') r^{2-d} \psi_d(\X(p)).
\]
For the other term, we have
\[
\min(r^2,\X(p'))^2\|\tau_p\|_{\infty,r/3}r^{6-d} \leq \Xi(r/4,p,p') \X(p') \X(p) r^{2-d}r^{6-d} \leq \Xi(r/4,p,p') \X(p') r^{2-d} \psi_d(\X(p))  .
\]
This concludes the treatment of (6). We deal with (7) similarly.

Summing our bounds on (1)--(7), and using Theorems \ref{thm:LowerBoundTau} and \ref{thm:UpperBoundTau} with Lemmas \ref{lem:p'-p} and \ref{lem:SplitConvolTail}, we get
\[
\| \tau_{p'}-\tau^\square_{p,p'}\|_{\infty,5r}\leq K\langle r\rangle^{2-d} \frac{\X(p')\psi_d(\X(p))}{\X(p)^2}(1+\Xi(r/3,p,p')).
\]
Taking $r=\| x\|_2/5$ concludes the proof.
\end{proof}
Our goal is now to show Theorem \ref{thm:PointwiseError} using Proposition \ref{pro:PointwiseError} and a bootstrap argument.
To this end, we first have to deal with a small issue.
Typically, bootstrap arguments are performed with a finite number of functions, but here we have two for every $x\in \Z^d$: $p'\mapsto \tau_{p'}(x)$ and $p'\mapsto \tau^\square_{p,p'}(x)$.
To avoid that issue, we need to prove separately some rough estimates on the tails of $\tau_{p'}(x)$ and $\tau^\square_{p,p'}(x)$.
For $\tau_{p'}(x)$, Lemma \ref{lem:NaiveExpTail} does the job.
For $\tau^\square_{p,p'}(x)$, we use the following two lemmas:
\begin{lemma} \label{lem:CheapTailTauSquare}
Let $0<p<p'<\inf(p_c,\tilde p_c(p))$. Assume that
\[
\varpi(p,p'):=((p'-p)/(1-p))^3\X(p)^2 \|\square_{p,p'}+\Diamond_{p,p'}\|_1\|\bar \tau_{p,p'}^\square\|_1<2^{-d}/\Omega,
\]
then, as $\|x\|_2\to \infty$, we have
\[
\tau^\square_{p,p'}(x)=o(\|x\|_2^{2-d}).
\]
\end{lemma}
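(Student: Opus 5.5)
The plan is to turn Theorem \ref{thm:LowerBoundTau} around: it gives an upper bound on $\tau^\square_{p,p'}$ in terms of $\tau_{p'}$ plus an error, and the error is linear in $\tau^\square_{p,p'}$ up to a small factor controlled by $\varpi(p,p')$. Since $p'<p_c$, Lemma \ref{lem:NaiveExpTail} gives exponential decay of $\tau_{p'}$ and $\tau_p$, so these are $o(\|x\|_2^{2-d})$. Write $\rho=(p'-p)/(1-p)$. Theorem \ref{thm:LowerBoundTau} gives pointwise
\[
\tau^\square_{p,p'}\leq \tau_{p'}+\rho^2\,\bar\tau^\square_{p,p'}\ast\tau_p\ast(\square_{p,p'}+\Diamond_{p,p'})\ast\tau_p\ast\bar\tau^\square_{p,p'}.
\]
Expand $\bar\tau^\square_{p,p'}=\1_{\{0\}}+\rho D\ast\tau^\square_{p,p'}$. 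The constant part $G:=\rho^2\tau_p\ast(\square_{p,p'}+\Diamond_{p,p'})\ast\tau_p$ decays fast, since each $\tau_p$ decays exponentially and $\square_{p,p'},\Diamond_{p,p'}$ carry a factor $\tau_p$ or a pointwise product controlled by $\tau_p$. I would check this by bounding $\square_{p,p'}\leq \tau_p\cdot\|\cdots\|_\infty$. The remaining terms contain $\tau^\square_{p,p'}$ once or twice, each with a prefactor $\rho$ and a convolution with $D$.

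To get a closed inequality I will avoid assuming a priori any polynomial pointwise decay of $\tau^\square_{p,p'}$; only $\|\tau^\square_{p,p'}\|_1<\infty$ is known, from $p'<\tilde p_c(p)$. For a cutoff $R$, I introduce the bounded quantity
\[
N_R(r):=\sup_{\|x\|_2\geq r}\min(\|x\|_2,R)^{d-2}\,\tau^\square_{p,p'}(x),
\]
which is finite because $\tau^\square_{p,p'}\leq\|\tau^\square_{p,p'}\|_1$. I then bound each convolution at a point $x$ with $\|x\|_2\geq r$ via Lemma \ref{lem:SplitConvolTail}, splitting $r$ so that a factor of $\tau^\square_{p,p'}$ is evaluated at distance $\geq r/2$. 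The alternative is that one of the fast-decaying factors ($\tau_p$, $\square_{p,p'}+\Diamond_{p,p'}$, $D$) carries distance $\geq r/2$ or so, and those contributions are $o(r^{2-d})$ uniformly in $R$. In the term where $\tau^\square_{p,p'}$ carries the large distance, the other factors enter only through $\ell^1$ norms. This yields a prefactor at most
\[
\rho^3\,\Omega\,\|\tau_p\|_1^2\,\|\square_{p,p'}+\Diamond_{p,p'}\|_1\,\|\bar\tau^\square_{p,p'}\|_1=\Omega\,\varpi(p,p'),
\]
counting the two choices of which $\bar\tau^\square$ copy is large, absorbed into the same quantity. Evaluating at distance $r/2$ costs $2^{d-2}$ in the weight.

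This gives, uniformly in $R$,
\[
N_R(r)\leq \varepsilon(r)+2^{d-2}\,\Omega\,\varpi(p,p')\,N_R(r/2),\qquad \varepsilon(r)\to0 .
\]
The hypothesis gives $\theta:=2^{d-2}\Omega\varpi<1/4$. Iterating (or taking $\sup_r$ first) shows $N_R$ is bounded independently of $R$. Letting $R\to\infty$ gives the finite $N(r)=\sup_{\|x\|_2\geq r}\|x\|_2^{d-2}\tau^\square_{p,p'}(x)$. Taking $\limsup_{r\to\infty}$ then gives $\limsup N\leq\theta\limsup N$, hence $\limsup N=0$, which is the claim.

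I expect the main obstacle to be the bookkeeping in the splitting step. One has to verify that every term in which $\tau^\square_{p,p'}$ does not carry the large distance is genuinely $o(r^{2-d})$. This needs a tail for $\square_{p,p'}+\Diamond_{p,p'}$, which contain $\tau^\square_{p,p'}$ and $\tau_{p'}$ inside; I would handle this by using that they are multiplied pointwise by an exponentially decaying convolution of $\tau_p$. One also has to check that the constants coming from $\|\cdot\|_{1,r}\leq\|\cdot\|_1$ and the split $r/2$ reproduce exactly the $2^{-d}/\Omega$ threshold. If the quadratic term (both copies of $\bar\tau^\square$ nontrivial) causes trouble, I would bound the copy not carrying the large distance by its $\ell^1$ norm, which is already inside $\varpi$.
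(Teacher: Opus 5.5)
Your plan matches the paper's proof. You invert Theorem~\ref{thm:LowerBoundTau}, use Lemma~\ref{lem:NaiveExpTail} to make $\tau_p$, $\tau_{p'}$ (and hence $\square_{p,p'}$ and $\Diamond_{p,p'}$) negligible beyond distance $l\log r$, and split with Lemma~\ref{lem:SplitConvolTail}. This gives $\|\tau^\square_{p,p'}\|_{\infty,r}\leq o(r^{-d})+2\Omega\varpi(p,p')\|\tau^\square_{p,p'}\|_{\infty,r/2-4l\log(r)}$; the factor $2$ comes from the two copies of $\bar\tau^\square_{p,p'}$ and is not absorbed, but the slack in $2^{-d}/\Omega$ covers it.

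The only difference is how the recursion is closed. The paper iterates the unweighted bound starting from $\|\tau^\square_{p,p'}\|_\infty\leq\|\tau^\square_{p,p'}\|_1$. Your weighted quantity $N_R$ with the $\limsup$ argument also works, provided you apply Lemma~\ref{lem:SplitConvolTail} at $s=\|x\|_2$ for each $x$ rather than at the fixed threshold $r$. Otherwise the ratio $\min(\|x\|_2,R)/\min(r/2,R)$ is unbounded and the claimed $2^{d-2}$ cost fails.
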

\begin{proof}
By Lemma \ref{lem:NaiveExpTail}, there exists $l>0$ such that as $r\to \infty$ we have
\[
\|\tau_{p'}\|_{\infty,l\log(r)}=o(r^{-d}).
\]
Since $\varpi(p,p')<1/2$, we have $\|\tau_{p,p'}^\square\|_1<\infty$, and so, as $r\to \infty$,
\[
\|\square_{p,p'}\|_{\infty,4l\log(r)} \leq \|\tau_{p}\|_{\infty,4l\log(r)} \|\tau_p\ast D\ast \tau_{p,p'}^\square\ast D\ast \tau_p\|_\infty =o(r^{-d}).
\]
Similarly, Lemma \ref{lem:SplitConvolTail} gives
\[
\|\Diamond_{p,p'}\|_{\infty,4l\log(r)} =O(\|\tau_p\ast D\ast \bar \tau_{p,p'}\ast \tau_p\|_{\infty,4l\log(r)} )=O(\|\tau_{p'}\|_{\infty,4l\log(r)})=o(r^{-d}).
\]
Recall that by Theorem \ref{thm:LowerBoundTau} we have
\[
\tau_{p,p'}^\square\leq \tau_{p'}+((p'-p)/(1-p))^2\bar \tau_{p,p'}^\square \ast \tau_p\ast (\square_{p,p'}+\Diamond_{p,p'})\ast \tau_p\ast \bar \tau^\square_{p,p'}.
\]
Also, for $s\geq L$,
\[
\|\bar \tau_{p,p'}^\square\|_{\infty,s}\leq ((p'-p)/(1-p))\Omega \|\tau_{p,p'}^\square\|_{\infty,s-L}.
\]
Thus, by Lemma \ref{lem:SplitConvolTail}, with the decomposition $r=(r/2-3l\log(r))+l\log(r)+4l\log(r)+l\log(r)+(r/2-3l\log(r))$, we have as $r\to \infty$,
\[
\|\tau_{p,p'}^\square\|_{\infty,r} \leq o(r^{-d})+ 2\Omega\varpi(p,p') \|\tau_{p,p'}^\square\|_{\infty,r/2-4l\log(r)}.
\]
The Master Theorem then yields the desired result.
\end{proof}
\begin{lemma} \label{lem:KappaContinuous}
For every $p<p_c$, the map $p'\mapsto \varpi(p,p')$ is continuous on $[p,\min(p_c,\tilde p_c(p)))$.
\end{lemma}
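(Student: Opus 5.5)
The plan is to write $\varpi(p,p')$ as a product of factors and check that each is continuous in $p'$ on $I:=[p,\min(p_c,\tilde p_c(p)))$, working with $\ell^1(\Z^d)$ continuity of the relevant functions. The prefactor $((p'-p)/(1-p))^3\X(p)^2$ is polynomial in $p'$. The factor $\|\bar \tau^\square_{p,p'}\|_1=1+\rho\,\Omega\|\tau^\square_{p,p'}\|_1$ is continuous on $I$ by Proposition~\ref{pro:PrelimBootstrap}. What remains is the term $\|\square_{p,p'}+\Diamond_{p,p'}\|_1=\|\square_{p,p'}\|_1+\|\Diamond_{p,p'}\|_1$; the equality holds because both diagrams are nonnegative.

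\emph{The square term.} Writing $\rho=(p'-p)/(1-p)$, we have
\[
\|\square_{p,p'}\|_1=[\tau_p\ast\tau_p\ast D\ast\tau^\square_{p,p'}\ast D\ast \tau_p](0)=\sum_{k\ge0}\rho^k c_k,\qquad c_k:=[\tau_p\ast\tau_p\ast D\ast \tau_p^{\square,k}\ast D\ast\tau_p](0)\ge 0 .
\]
Since $p<p_c$, the bound $\|f\ast g\|_\infty\le\|f\|_1\|g\|_\infty$ gives $\|\tau_p\ast\tau_p\ast\tau_p\|_\infty\le \X(p)^2$, and hence $c_k\le \Omega^2\X(p)^2\|\tau_p^{\square,k}\|_1$. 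So this power series in $\rho$, which has nonnegative coefficients, converges wherever $\sum_k\rho^k\|\tau_p^{\square,k}\|_1<\infty$, i.e.\ for every $p'\in I$ by the definition of $\tilde p_c(p)$. A power series is continuous strictly inside its disc of convergence, which settles this term.

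\emph{The diamond term.} Set $a_{p'}:=\tau_p\ast D\ast\bar\tau^\square_{p,p'}\ast\tau_p$ and $b_{p'}:=\tau_p\ast D\ast\bar\tau_{p,p'}\ast\tau_p$, so that $\|\Diamond_{p,p'}\|_1=\sum_x a_{p'}(x)b_{p'}(x)$. I would show that $p'\mapsto a_{p'}$ and $p'\mapsto b_{p'}$ are continuous into $\ell^1$. Convolution with the fixed $\ell^1$ functions $\tau_p$ and $D$ is a bounded operator on $\ell^1$, so it suffices to prove $\ell^1$ continuity of $p'\mapsto\tau^\square_{p,p'}$ and of $p'\mapsto\tau_{p'}$.
\begin{itemize}
\item[(i)] For $\tau^\square_{p,p'}$ this comes from the series: the tail $\sum_{k>N}\rho^k\|\tau_p^{\square,k}\|_1$ tends to $0$ uniformly for $p'$ in compact subsets of $I$.
\item[(ii)] For $\tau_{p'}$, each $\tau_{p'}(x)$ is nondecreasing and continuous in $p'<p_c$ (this is standard). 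Moreover $\|\tau_{p'}\|_1=\X(p')$ is continuous below $p_c$. By Scheffé's lemma, pointwise convergence together with convergence of the $\ell^1$ norms gives $\ell^1$ convergence.
\end{itemize}
Once both maps are $\ell^1$-continuous, the bilinear form $(a,b)\mapsto\sum_x a(x)b(x)$ is continuous on $\ell^1\times\ell^1$. Indeed, $|\langle a,b\rangle-\langle a',b'\rangle|\le\|a-a'\|_1\|b\|_\infty+\|a'\|_1\|b-b'\|_\infty$, and $\|\cdot\|_\infty\le\|\cdot\|_1$. This gives continuity of $\|\Diamond_{p,p'}\|_1$, and multiplying the continuous factors concludes the proof.

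\emph{Main obstacle.} The only non-formal input is the pointwise continuity of $p'\mapsto\tau_{p'}(x)$ below $p_c$ in (ii). Left continuity is immediate from the monotone coupling, since $\{0\slr x\}$ at level $p'$ is the increasing union of the events $\{0\slr x \text{ in } \edges_{q}\}$, $q<p'$. For right continuity I would use that for $p'<p_c$ the cluster of $0$ is almost surely finite. Consequently $\{0\slr x\}$ at level $p'$ is the decreasing intersection of the corresponding events at levels $q\downarrow p'$: if $0\snlr x$ at level $p'$, the finite cluster of $0$ is sealed by finitely many $p'$-closed edges, and these are still closed at some $q>p'$. This edge case, and the finiteness of $\X(p')$ on $I$ needed in (ii), both rely on $p'<p_c$, which holds on $I$.
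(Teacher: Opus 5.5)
Your proof is correct and follows essentially the same route as the paper. Both factor $\varpi$, take continuity of $\|\bar\tau^\square_{p,p'}\|_1$ from Proposition~\ref{pro:PrelimBootstrap}, and reduce to continuity of $\|\square_{p,p'}\|_1$ and $\|\Diamond_{p,p'}\|_1$ via a limit interchange. The difference is only in detail: where the paper simply invokes dominated convergence, you justify the interchange explicitly, using the power series in $\rho$ for the square term, and $\ell^1$-continuity via Scheff\'e together with pointwise continuity of $\tau_{p'}(x)$ below $p_c$ for the diamond term.
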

\begin{proof}
By Proposition \ref{pro:PrelimBootstrap}, $p'\mapsto \|\bar \tau_{p,p'}^\square\|_1$ is continuous below $\tilde p_c(p)$. Next, note that $p'\mapsto \|\square_{p,p'}\|_1$ is of the form
\[
A^\square_p+\sum_{x\in \Z^d} a^\square_p(x)\tau^\square_{p,p'}(x),
\]
where $A^\square_p\geq 0$, and for every $x\in \Z^d$, we have $a^\square_p(x)\geq 0$, and by Proposition \ref{pro:PrelimBootstrap}, $p'\mapsto \tau^\square_{p,p'}(x)$ is continuous below $\tilde p_c(p)$.
Moreover, for every $p'<\tilde p_c(p)$, we have
\[
\|\square_{p,p'}\|_1\leq \|D\|_1^2 \|\tau_p\|_1^3 \|\tau_{p,p'}^\square\|_1<\infty.
\]
Thus, by dominated convergence, $p'\mapsto \|\square_{p,p'}\|_1$ is continuous below $\tilde p_c(p)$.

Similarly, we have
\[
\|\Diamond_{p,p'}\|_1\leq \|D\|_1^2 \|\tau_p\|_1^4 \|\bar \tau_{p,p'}^\square\|_1\|\bar \tau_{p,p'}\|_1 <\infty,
\]
whenever $p'<\min(p_c,\tilde p_c(p))$. Thus, again by dominated convergence, $p'\mapsto \|\Diamond_{p,p'}\|_1$ is continuous below $\min(p_c,\tilde p_c(p))$. The map $p'\mapsto \varpi(p,p')$ is then continuous by sums and products.
\end{proof}

We then deal with $\Xi(r,p,p')$.
\begin{lemma} \label{lem:AnnoyingXi}
For every $C,\eps>0$, if $(p,C)$ satisfies \eqref{eq:initialization}, $\X(p)$ is large enough, and $\X(p')\geq \X(p)^{1+\eps}$, then for every $r\geq 0$, we have
\[
\Xi(r,p,p')\leq 1.
\]
\end{lemma}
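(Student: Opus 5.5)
The plan is to bound the two factors of $\Xi(r,p,p')=r^{d-2}\|\tau_p\|_{\infty,r}\min(r^2,\X(p'))^2/(\X(p)\X(p'))$ separately, according to where $r$ sits relative to the two scales $\X(p)^{1/2}$ and $\X(p)^{1/2+\eps/4}$. The inputs are \eqref{eq:initialization}, which gives $r^{d-2}\|\tau_p\|_{\infty,r}\le C$, and Theorem \ref{thm:ExponentialTail}, which gives the exponential tail \eqref{eq:ExpDecay} at $p$ with a small exponent $\eps'$, say $\eps'=\eps/8$. We are free to require $\X(p)\ge f_{\eps'}(C)$ because $\X(p)$ is assumed large.

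\textbf{Small $r$.} Take first $r\le \X(p)^{1/2+\eps/4}$. By \eqref{eq:initialization}, $\Xi\le C\min(r^2,\X(p'))^2/(\X(p)\X(p'))$. Using $\min(r^2,\X(p'))^2\le r^2\X(p')$, this becomes
\[
\Xi\le C r^2/\X(p)\le C\X(p)^{\eps/2}.
\]
That is not bounded by $1$, so this regime needs a sharper split. For $r\le \X(p)^{1/2}$ we instead use $\min(\cdot)^2\le r^4$ together with $\X(p')\ge \X(p)^{1+\eps}$, which gives
\[
\Xi\le C r^4/(\X(p)\X(p'))\le C\X(p)^{-\eps}\le 1
\]
once $\X(p)$ is large.

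\textbf{Intermediate range, the main obstacle.} For $\X(p)^{1/2}\le r\le \X(p)^{1/2+\eps/4}$ the bound $r^4/(\X(p)\X(p'))\le \X(p)^{\eps-\eps}=1$ comes out only up to constants, since $r^4\le \X(p)^{2+\eps}\le \X(p)\X(p')$ exactly at the top of the range. I would fix this by shrinking the range slightly, replacing $\eps/4$ by $\eps/8$, so that $r^4\le \X(p)^{2+\eps/2}$. Then $C r^4/(\X(p)\X(p'))\le C\X(p)^{-\eps/2}\le 1$ for all $r\le \X(p)^{1/2+\eps/8}$, which covers both the small and intermediate ranges at once.

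\textbf{Large $r$.} For $r\ge \X(p)^{1/2+\eps/8}$ we use \eqref{eq:ExpDecay} with $\eps'=\eps/16$:
\[
\|\tau_p\|_{\infty,r}\le \exp\!\bigl(1-r\X(p)^{-1/2-\eps/16}\bigr)\le \exp\!\bigl(1-\X(p)^{\eps/16}\bigr),
\]
and the decay is even faster as $r$ grows. Bounding $\min(r^2,\X(p'))^2\le r^4$ gives
\[
\Xi\le r^{d+2}\exp\!\bigl(1-r\X(p)^{-1/2-\eps/16}\bigr)/\X(p)^{2+\eps}.
\]
The map $r\mapsto r^{d+2}e^{-r/s}$ with $s=\X(p)^{1/2+\eps/16}$ is decreasing for $r\ge (d+2)s$. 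That holds throughout this range once $\X(p)$ is large, since $\X(p)^{\eps/16}\gg d+2$. So the supremum sits at $r=\X(p)^{1/2+\eps/8}$, where the value is $\mathrm{poly}(\X(p))\,e^{-\X(p)^{\eps/16}}\to 0$. Hence $\Xi\le 1$ here too for $\X(p)$ large, depending on $d,C,\eps$.

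The only delicate point is matching the thresholds: the cutoff between the \eqref{eq:initialization} regime and the \eqref{eq:ExpDecay} regime must be small enough that $r^4\ll \X(p)^{2+\eps}$, yet large enough that the exponential tail with exponent $\eps'$ kills every polynomial factor. The choices $\X(p)^{1/2+\eps/8}$ and $\eps'=\eps/16$ achieve both.
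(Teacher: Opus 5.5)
Your proof is correct and is essentially the paper's argument. You split at $r=\X(p)^{1/2+c\eps}$ with $c<1/4$, use \eqref{eq:initialization} together with $\min(r^2,\X(p'))^2\le r^4$ below the threshold, and use Theorem~\ref{thm:ExponentialTail} above it. Your observation that $c<1/4$ is forced is exactly right; the paper takes $c=1/5$, and its ``$r\leq\X(p)^{1/2+\eps/3}$'' in the small-$r$ case should read $\eps/5$.

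The only difference is in the large-$r$ range. There the paper converts the tail into $\|\tau_p\|_{\infty,r}\le r^{-d}$ and uses $\min(r^2,\X(p'))^2\le r^2\X(p')$ instead of $r^4$. This gives $\Xi\le 1/\X(p)$ directly, without your monotonicity check on $r^{d+2}e^{-r/s}$.
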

\begin{proof}
By Theorem \ref{thm:ExponentialTail}, we may assume that $\X(p)$ is large enough that for every $r\geq \X(p)^{1/2+\eps/5}$ we have
\begin{equation}
\|\tau_p\|_{\infty,r}\leq r^{-d}. \label{eq:tailRewritten}
\end{equation}
Thus, for every $r\geq \X(p)^{1/2+\eps/5}$,
\[
\Xi(r,p,p') = r^{d-2}\|\tau_p\|_{\infty,r} \frac{\min(r^2,\X(p'))^2}{\X(p)\X(p')} \leq 1.
\]
Also, for every $r\leq \X(p)^{1/2+\eps/3}$, \eqref{eq:initialization} gives
\[
\Xi(r,p,p') \leq C \frac{r^4}{\X(p)\X(p')} \leq C\X(p)^{-\eps/5}\leq 1,
\]
provided that $\X(p)$ is large enough.
\end{proof}

\begin{proof}[Proof of Theorem \ref{thm:PointwiseError}]
Dropping the dependence on $d,L$ in the notation, let $c_C,F_C,K_C,K'_C$ denote the constants of Proposition \ref{pro:PointwiseError}.
After reducing $c_C$, we may assume that $c_C$ is small enough depending on $C,K_C,K'_C$.
Also, we may assume without loss of generality that $c$ is decreasing in $C$, and that $F,K,K'$ are increasing.

Fix $C\geq 1$.
Let $p<p_c$ be such that $\X(p)\geq F_{2C}$, and assume that $(p,C)$ satisfies \eqref{eq:initialization}.
We first assume that $\X(p')\leq (c_{2C}/3)\X(p)^2/\psi_d(\X(p))$ and that $(p',C)$ satisfies \eqref{eq:initialization}.
Let $S$ denote the set of all $p''\in [p,p']$ such that $\|\tau^\square_{p,p''}\|_1\leq c_{2C}\X(p)^2/\psi_d(\X(p))$ and such that for every $x\in \Z^d$, we have $\tau^\square_{p,p''}(x)\leq 2C\langle x\rangle^{2-d}$.
Note that, by \eqref{eq:initialization} at $p$ and by $\tau^\square_{p,p}=\tau_p$, if $\X(p)$ is large enough, then $p\in S$.
Let $\tilde p=\sup S$. We first check that $\tilde p\in S$. By Proposition \ref{pro:PointwiseError}, for every $p''\in S$, we have
\begin{equation}
\forall x\in \Z^d, \quad |\tau_{p''}(x)-\tau_{p,p''}^\square(x)| \leq K_{2C}\langle x\rangle^{2-d}\frac{\X(p'')\psi_d(\X(p))}{\X(p)^2}(1+\Xi(\|x\|_2/20,p,p'')), \label{eq:ResultInBootstrapA}
\end{equation}
and
\begin{equation}
\|\tau_{p''}-\tau_{p,p''}^\square\|_1 \leq K'_{2C} \frac{\min(\X(p''),\|\tau_{p,p''}^\square\|_1)^2\psi_d(\X(p))}{\X(p)^2}. \label{eq:ResultInBootstrapB}
\end{equation}
Then, as in the proof of \eqref{eq:Max<Min}, we can assume that $c_{2C}$ is small enough that
\[
\max(\X(p''),\|\tau_{p,p''}^\square\|_1)\leq 2\min(\X(p''),\|\tau_{p,p''}^\square\|_1)\leq (2/3) c_{2C}\X(p)^2/\psi_d(\X(p)).
\]
By taking $p''\in S\to \tilde p$, it follows from Proposition \ref{pro:PrelimBootstrap}, and the continuity of $p''\mapsto \X(p'')$ below $p_c$, that we also have
\begin{equation}
\max(\X(\tilde p),\|\tau_{p,\tilde p}^\square\|_1)\leq (2/3) c_{2C}\X(p)^2/\psi_d(\X(p)). \label{eq:ResultInBootstrapB'}
\end{equation}
Then, by Proposition \ref{pro:PrelimBootstrap}, for every $x\in \Z^d$, the map $\tau^\square_{p,p''}(x)$ is continuous below $\tilde p_c(p)>\tilde p$. Also, the map
$p''\mapsto \tau_{p''}(x)$ is continuous below $p_c>\tilde p$. Thus \eqref{eq:ResultInBootstrapA} also holds when $p''=\tilde p$. We thus get for every $x\in \Z^d$,
\begin{equation}
|\tau_{\tilde p}(x)-\tau_{p,\tilde p}^\square(x)|\leq 2c_{2C}K_{2C}\langle x\rangle^{2-d}\leq (C/2)\langle x\rangle^{2-d}. \label{eq:ResultInBootstrapA'}
\end{equation}
Indeed, when $\X(\tilde p)\geq \X(p)^{6/5}$, we can use Lemma \ref{lem:AnnoyingXi} to upper-bound $\Xi(\|x\|_2/20,p,\tilde p)\leq 1$, and the inequality then follows from \eqref{eq:ResultInBootstrapB'}.
When $\X(\tilde p)\leq \X(p)^{6/5}$, we can use \eqref{eq:initialization} and the definition of $\Xi$ to obtain
\[
\Xi(\|x\|_2/20,p,\tilde p)\leq C \frac{\X(\tilde p)}{\X(p)}\leq C\X(p)^{1/4}\leq c_{2C} \frac{\X(p)^2}{ \X(\tilde p)\psi_d(\X(p))},
\]
provided that $\X(p)$ is large enough. Again \eqref{eq:ResultInBootstrapA'} follows. Together, \eqref{eq:ResultInBootstrapB'} and \eqref{eq:ResultInBootstrapA'} yield $\tilde p\in S$.

We now suppose for contradiction that $\tilde p<p'$, and show that there exists $\tilde p'\in S$ with $\tilde p'>\tilde p$, which contradicts $\tilde p=\sup S$.
First, by Lemmas \ref{lem:TruncatedNormSquare} and \ref{lem:TruncatedNormDiamond} and $\tilde p\in S$, there exist $K^\square_1(2C)>0$ and $K^\Diamond_1(2C)>0$ such that
\[
\|\square_{p,\tilde p}\|_1\leq K^\square_1(2C)\psi_d(\X(p)) \quad \text{and} \quad  \|\Diamond_{p,\tilde p}\|_1\leq K^\Diamond_1(2C)\psi_d(\X(p)).
\]
Thus, by \eqref{eq:ResultInBootstrapB'} and Lemma \ref{lem:p'-p}, if $c_{2C}$ is small enough, we have
\[
\varpi(p,\tilde p) =((\tilde p-p)/(1-p))^3\X(p)^2 \|\square_{p,\tilde p}+\Diamond_{p,\tilde p}\|_1\|\bar \tau_{p,\tilde p}^\square\|_1<2^{-d}/\Omega.
\]
So, by \eqref{eq:ResultInBootstrapB'} and Lemma \ref{lem:KappaContinuous}, there exists $p_1>\tilde p$ such that $\varpi(p,p_1)<2^{-d}/\Omega$.
By Lemma \ref{lem:CheapTailTauSquare}, there exists $R\in \R$ such that, for every $x$ with $\|x\|_2 \geq R$, we have
\begin{equation}
\tau_{p,p_1}^\square(x)\leq C\langle x\rangle^{2-d}. \label{eq:KappaTail}
\end{equation}
Similarly, by Lemma \ref{lem:NaiveExpTail}, if $R\in \R$ is large enough, then for every $\|x\|_2\geq R$, we have
\begin{equation}
\tau_{p_1}(x)\leq C\langle x\rangle^{2-d}. \label{eq:KappaTail2}
\end{equation}
Also, by \eqref{eq:ResultInBootstrapB'}
and Proposition \ref{pro:PrelimBootstrap},
for every $x\in \Z^d$ the maps $p''\mapsto \tau_{p''}(x)$,
$p''\mapsto \tau_{p,p''}^\square(x)$,
$p''\mapsto \X(p'')$
and $p''\mapsto \|\tau_{p,p''}^\square\|_1$
are continuous in a neighborhood of $\tilde p$.
So, by \eqref{eq:ResultInBootstrapB'} and \eqref{eq:ResultInBootstrapA'}, for every $p''>\tilde p$ close enough to $\tilde p$, we have
\begin{equation}
\max(\X(p''),\|\tau_{p,p''}^\square\|_1)\leq c_{2C}\X(p)^2/\psi_d(\X(p)), \label{eq:ResultInBootstrapB''}
\end{equation}
and for every $x\in \Z^d$, with $\|x\|_2<R$,
\begin{equation}
|\tau_{p''}(x)-\tau_{p,p''}^\square(x)|\leq C \langle x\rangle^{2-d}. \label{eq:ResultInBootstrapA''}
\end{equation}
By \eqref{eq:KappaTail} and \eqref{eq:KappaTail2}, the previous equation still holds whenever $\|x\|_2\geq R$ and $p<p''\leq p_1$.
Hence, there exists $p''>\tilde p$ such that \eqref{eq:ResultInBootstrapB''} holds and \eqref{eq:ResultInBootstrapA''} is satisfied for every $x\in \Z^d$, so $p''\in S$.
Therefore, we have the desired contradiction.
Thus, $p'\in S$, and we can use Proposition \ref{pro:PointwiseError} at $p'$.

The proof for the case where we only have (b) is essentially the same.
We take $p'$ such that $\|\tau_{p,p'}^\square\|_1\leq (c_{2C}/3)\X(p)^2/\psi_d(\X(p))$ and such that for every $x\in \Z^d$ we have $\tau_{p,p'}^\square(x)\leq C\langle x\rangle^{2-d}$.
We fix $S$ to be the set of all $p''\in [p,p']$ such that $\X(p'')\leq c_{2C} \X(p)^2/\psi_d(\X(p))$ and such that $(p'',2C)$ satisfies \eqref{eq:initialization}.
Again $p\in S$, and we let $\tilde p:=\sup S$.
The proof that $\tilde p\in S$ is essentially the same.
The proof that we cannot have $\tilde p<p'$ is also almost the same, so we omit the details.
\end{proof}
\subsection{$L_2$ bound assuming \eqref{eq:initialization} at $p'$} \label{sec:L2AtPPrime}
The goal of this subsection is to prove the following bound: (Recall that for a function $f:\Z^d\mapsto \R$, we have $\|f\|_2^2=\sum_{x\in \Z^d} |f(x)|\|x\|_2^2$.)
\begin{proposition} \label{pro:L2Error}
Let $\eps>0$.
In the setting of Theorem \ref{thm:PointwiseError},
there exists $K''$ depending on $d,L,C$,
such that if (a) or (b) holds,
$\X(p)$ is large enough depending on $d,L,C,\eps$,
and $c$ is small enough,
then
\[
\|\tau_{p'}-\tau_{p,p'}^\square\|_2^2 \leq K'' \frac{\X(p')^{3+\eps} \psi_d(\X(p))}{\X(p)^2}
\]
\end{proposition}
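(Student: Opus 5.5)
We combine the pointwise bound of Theorem~\ref{thm:PointwiseError} for $x$ at small and moderate distances with an exponential tail for large $x$. By Theorem~\ref{thm:PointwiseError} (applicable since (a) or (b) holds), the difference $\tau_{p'}-\tau^\square_{p,p'}$ is bounded in $L_1$ by $K'\X(p')^2\psi_d(\X(p))/\X(p)^2$. Its pointwise bound is $K\langle x\rangle^{2-d}\X(p')\psi_d(\X(p))\X(p)^{-2}(1+\Xi(\|x\|_2/15,p,p'))$. Write $R:=\X(p')^{1/2+\eps/4}$ and split $\|\tau_{p'}-\tau^\square_{p,p'}\|_2^2$ into the regions $\|x\|_2\le R$ and $\|x\|_2>R$.

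\textbf{Inner region.} Multiply the pointwise bound by $\|x\|_2^2$ and sum over $\|x\|_2\le R$. The main part gives $\sum_{\|x\|_2\le R}\langle x\rangle^{4-d}\asymp R^4$, which produces $R^4\X(p')\psi_d(\X(p))/\X(p)^2\le \X(p')^{3+\eps}\psi_d(\X(p))/\X(p)^2$. For the $\Xi$ part, recall $\Xi(r,p,p')\le r^{d-2}\|\tau_p\|_{\infty,r}\min(r^2,\X(p'))^2/(\X(p)\X(p'))$ and use \eqref{eq:initialization} at $p$. This gives $\langle x\rangle^{2-d}\Xi(\|x\|_2/15)\lesssim \langle x\rangle^{2-d}\min(\|x\|_2^2,\X(p'))^2/(\X(p)\X(p'))$. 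Summing against $\|x\|_2^2$ over the ball gives at most $R^4\X(p')/\X(p)$ (in the range $r^2\ge\X(p')$). Multiplying by the prefactor yields $R^4\X(p')\psi_d/\X(p)^3\cdot\X(p)$, which is again of the same order. Alternatively, when $\X(p')\ge\X(p)^{1+\eps'}$ we can invoke Lemma~\ref{lem:AnnoyingXi} directly; when $\X(p')$ is close to $\X(p)$, the trivial bound $\Xi\lesssim \X(p')/\X(p)$ suffices.

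\textbf{Outer region.} Bound $|\tau_{p'}-\tau^\square_{p,p'}|\le\tau_{p'}+\tau^\square_{p,p'}$. For $\tau_{p'}$ under (a), Theorem~\ref{thm:ExponentialTail} applies at $p'$ with exponent $\eps/8$ (as $\X(p')\ge\X(p)$ is large). It gives $\tau_{p'}(x)\le\exp(1-\|x\|_2/\X(p')^{1/2+\eps/8})$, so $\sum_{\|x\|_2>R}\|x\|_2^2\tau_{p'}(x)$ is superpolynomially small in $\X(p')$. For $\tau^\square_{p,p'}$ we need an analogous tail. Here we use the Laplace-transform machinery of Subsection~\ref{sec:ExpTail}: Lemmas~\ref{eq:FirstExpTail_TauSquare} and \ref{lem:LaplaceLogConvexe} give $\Lap_\kappa(\tau^\square_{p,p'})\le \X(p)^{O(1)}$ for $\|\kappa\|_2\approx (l(p)^2\X(p')^{1/2}\X(p)^{\eps/8})^{-1}$. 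Markov's inequality in each coordinate direction then makes the contribution from $\|x\|_2>R$ negligible. Under (b) the roles are reversed. The upper bound of Theorem~\ref{thm:UpperBoundTau}, together with Lemma~\ref{lem:Tail_Tau_p'}, transfers the Laplace bound to $\tau_{p'}$; it also shows $\X(p')\asymp\|\tau^\square_{p,p'}\|_1$.

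\textbf{Main obstacle.} The hardest step is the tail of $\tau^\square_{p,p'}$ (respectively $\tau_{p'}$) when $\X(p')$ may be as large as $c\X(p)^2/\psi_d(\X(p))$. Lemma~\ref{eq:FirstExpTail_TauSquare} was stated only for $\X(p')\le\X(p)^{10/9}$. To handle the full range I would iterate: build a chain $p=p_0<p_1<\dots<p_m=p'$ with $\X(p_{j+1})\le\X(p_j)^{10/9}$ and $m=O(1)$ steps. At each step, (a) or (b) propagates \eqref{eq:initialization} with a constant that is at most doubled, so Theorem~\ref{thm:ExponentialTail} remains applicable at each intermediate point. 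Alternatively, we can just rerun the proof of Lemma~\ref{eq:FirstExpTail_TauSquare}. The key inequality there, $\frac{p'-p}{1-p}\Omega\X(p)\lambda_p(1+A)\le1-c\X(p)/\X(p')$, only needs $A\ll\X(p)/\X(p')$. That requires $\|\kappa\|_2^2\lesssim (l(p)^3\X(p)^{2\eps}\X(p'))^{-1}$, which is exactly the scale $R^{-1}$ up to $\X^{\eps}$ factors. The polynomial prefactor $\X(p)^{O(1)}$ is then absorbed by the $\X(p')^{\eps/4}$ gap between $R$ and the decay scale.
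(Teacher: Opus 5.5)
Your route differs from the paper's and can be completed, but as written three steps are wrong or incomplete.

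\emph{Inner region.} Your first estimate of the $\Xi$-part is off by a factor $\X(p')/\X(p)$.

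\begin{itemize}
\item With only \eqref{eq:initialization} at $p$ one gets $\Xi\lesssim \X(p')/\X(p)$. The sum against $\|x\|_2^2$ over the ball is then $\lesssim R^4\X(p')/\X(p)$.
\item After the prefactor this becomes $R^4\X(p')^2\psi_d(\X(p))/\X(p)^3$, not $R^4\X(p')\psi_d(\X(p))/\X(p)^2$. The ratio $\X(p')/\X(p)$ can be of order $\X(p)/\psi_d(\X(p))$.
\item Your fallback is the correct argument. Lemma \ref{lem:AnnoyingXi} gives $\Xi\le 1$ when $\X(p')\ge \X(p)^{1+\eps/2}$. Otherwise the trivial bound costs only $\X(p)^{\eps/2}$, which you absorb by taking $R=\X(p')^{1/2+\eps/8}$.
\end{itemize}

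\emph{Tail of $\tau^\square_{p,p'}$.} Drop the chain $p_0<\dots<p_m$. The function $\tau^\square_{p,p'}$ is built from $p$-clusters, so tails at intermediate parameters say nothing about it.

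Rerunning Lemma \ref{eq:FirstExpTail_TauSquare} does work. Keeping its scale $\|\kappa\|_2=(Kl(p)^2\X(p')^{1/2}\X(p)^{\eps_0})^{-1}$ already gives \eqref{eq:Def_Kappa} and $A\le \X(p)/(K^2\X(p'))$ throughout $\X(p)\le\X(p')\le\X(p)^2$.

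However, it is not true that only $A\ll \X(p)/\X(p')$ is needed. That proof also uses $1-\beta\gtrsim \X(p)/\X(p')$, where $\beta:=(p'-p)/(\tilde p_c(p)-p)$. The paper obtains this from $p_2'$ with $\X(p_2')=2\X(p')$ and Proposition \ref{pro:AfterBootstrap}. That route is unavailable once $\X(p')\gg \X(p)\mathfrak{s}(p)$.

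Replace it as follows.
\begin{itemize}
\item By \eqref{eq:RewriteA_TauSquare} and Lemma \ref{lem:LambdaLowMemory}, $\|\tau^{\square,k}_p\|_1\ge \lambda_p\X(p)(\Omega\X(p)\lambda_p)^k$.
\item Summing over $k$ and using \eqref{eq:def_pcp} gives $\|\tau^\square_{p,p'}\|_1\ge \lambda_p\X(p)/(1-\beta)$.
\item Combined with $\|\tau^\square_{p,p'}\|_1\le 2\X(p')$ (Theorem \ref{thm:PointwiseError}) and Theorem \ref{thm:LowerLambda}, this yields $1-\beta\ge c\X(p)/\X(p')$ over the whole range.
\end{itemize}
Everything else in Lemmas \ref{lem:AllStepExpTail} and \ref{eq:FirstExpTail_TauSquare} uses only information at $p$. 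Hence $\Lap_\kappa(\tau^\square_{p,p'})\le \X(p)^{O(1)}$, and the contribution of $\|x\|_2>R$ is superpolynomially small.

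Under (b), do not go through Lemma \ref{lem:Tail_Tau_p'}; it is likewise proved only for $\X(p')\le \X(p)^{10/9}$. The bootstrap in the proof of Theorem \ref{thm:PointwiseError} shows that (b) implies (a) with constant $2C$ after shrinking $c$. So Theorem \ref{thm:ExponentialTail} applies at $p'$ directly; this is exactly the reduction the paper makes at the start of its proof.

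\emph{Comparison.} The paper never needs a tail for $\tau^\square_{p,p'}$.
\begin{itemize}
\item It takes second moments of the error terms in Theorems \ref{thm:LowerBoundTau} and \ref{thm:UpperBoundTau} directly. It uses $\|f\ast g\|_2^2=\|f\|_2^2\|g\|_1+\|f\|_1\|g\|_2^2$ for symmetric $f,g$, which reduces everything to seven products of $L_1$ norms and second moments.
\item The diagrams are controlled by Lemma \ref{lem:L2NormBasicDiagrams}, and $\|\tau_p\|_2^2$, $\|\tau_{p'}\|_2^2$ by Lemma \ref{lem:tail2ndMoment}.
\item The unknown $\|\tau^\square_{p,p'}\|_2^2$ enters through $\|\bar\tau^\square_{p,p'}\|_2^2$ with coefficient $O(c)$. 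It is absorbed into the left-hand side, giving $\|\tau^\square_{p,p'}\|_2^2\le K\X(p')^{2+\eps}$.
\end{itemize}
Your approach instead recycles the pointwise Theorem \ref{thm:PointwiseError} in the bulk. You therefore need no second-moment diagram bounds and no absorption step. The price is a new exponential tail for $\tau^\square_{p,p'}$ over the full range of $p'$, which you obtain as a byproduct.
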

\begin{remark}
In many inequalities of this section, we have an extra $\X(p)^\eps$ term.
This term comes from the fact that we use Theorem \ref{thm:ExponentialTail} to estimate the tail of $\tau_p$.
Under \eqref{eq:eta}, the computations can easily be adapted to remove this extra term, by working instead with \eqref{eq:Plateau}.
\end{remark}
By Theorem \ref{thm:PointwiseError}, if (a) or (b) is satisfied then the other assumption is also satisfied up to slightly increase $C$, and to assume that $c$ is small enough (see the proof of Theorem \ref{thm:PointwiseError}).
So we may assume, without loss of generality, that both (a) and (b) are satisfied.
\begin{lemma} \label{lem:tail2ndMoment}
Let $C,\eps>0$. For every $p$ satisfying \eqref{eq:initialization}, if $\X(p)$ is large enough, then $\|\tau_p\|_2^2 \leq \X(p)^{2+\eps}$.
\end{lemma}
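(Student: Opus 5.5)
We split the sum $\|\tau_p\|_2^2=\sum_{x}\tau_p(x)\|x\|_2^2$ at the radius $R:=\X(p)^{1/2+\eps/4}$, so that the two available inputs each cover one range: \eqref{eq:initialization} for $\|x\|_2<R$, and the exponential tail of Theorem \ref{thm:ExponentialTail} for $\|x\|_2\geq R$. The large-$x$ part cannot be bounded by \eqref{eq:initialization} alone, since $\sum_x \|x\|_2^{4-d}$ diverges.

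\textbf{Near part.} We use the trivial bound $\|x\|_2^2\leq R^2$, which gives
\[
\sum_{\|x\|_2<R}\tau_p(x)\|x\|_2^2\leq R^2\X(p)=\X(p)^{2+\eps/2}.
\]
Alternatively, \eqref{eq:initialization} gives $C\sum_{\|x\|_2<R}\langle x\rangle^{4-d}=O(R^4)=O(\X(p)^{2+\eps})$, which would also suffice. The first bound is cleaner and leaves room for a constant.

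\textbf{Far part.} We apply Theorem \ref{thm:ExponentialTail} with exponent $\eps/8$. It is available once $\X(p)\geq f_{\eps/8}(C)$, and gives
\[
\tau_p(x)\leq \exp\!\left(1-\|x\|_2/\X(p)^{1/2+\eps/8}\right).
\]
On $\|x\|_2\geq R$ the ratio $\|x\|_2/\X(p)^{1/2+\eps/8}$ is at least $\X(p)^{\eps/8}$. Hence
\[
\sum_{\|x\|_2\geq R}\|x\|_2^2\, e^{1-\|x\|_2/\X(p)^{1/2+\eps/8}}\leq K_d\,\X(p)^{(d+2)(1/2+\eps/8)}\int_{\X(p)^{\eps/8}}^\infty s^{d+1}e^{1-s}\,ds.
\]
The integral is superpolynomially small in $\X(p)$, so this term is $o(1)$ as $\X(p)\to\infty$.

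Summing the two parts gives $\|\tau_p\|_2^2\leq \X(p)^{2+\eps/2}+o(1)\leq \X(p)^{2+\eps}$ once $\X(p)$ is large enough, depending on $d,L,C,\eps$. The only step with any content is choosing the exponents so that the truncation radius sits strictly beyond the exponential-decay scale while $R^2\X(p)$ stays below $\X(p)^{2+\eps}$. The choice $R=\X(p)^{1/2+\eps/4}$ with tail exponent $\eps/8$ achieves this, so I do not expect a real obstacle.
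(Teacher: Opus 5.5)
Your proposal is correct and follows the paper's proof: the paper also splits at $\|x\|_2^2=\X(p)^{1+\eps/2}$ (your $R=\X(p)^{1/2+\eps/4}$) and bounds the near part by $\X(p)\cdot\X(p)^{1+\eps/2}$. It then uses Theorem~\ref{thm:ExponentialTail} to make the far part at most $1$. Your explicit choice of tail exponent $\eps/8$ and the rescaled integral just fill in the ``basic calculation'' that the paper leaves implicit.
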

\begin{proof}
We have
\[
\|\tau_p\|_2^2=\sum_{x\in \Z^d} \tau_p(x)\|x\|_2^2
\leq \sum_{x\in \Z^d, \|x\|_2^2 \leq \X(p)^{1+\eps/2}} \tau_p(x) \X(p)^{1+\eps/2}
+\sum_{x\in \Z^d, \|x\|_2^2> \X(p)^{1+\eps/2}} \tau_p(x)\|x\|_2^2.
\]
The first sum is bounded by $\X(p) \X(p)^{1+\eps/2}$, and by Theorem \ref{thm:ExponentialTail} the second sum is bounded by $1$ as long as $\X(p)$ is large enough.
\end{proof}
\begin{lemma} \label{lem:L2NormBasicDiagrams}
Let $\eps>0$.
In the setting of Theorem \ref{thm:PointwiseError}, there exist $K^\square_2,K^\Diamond_2,K^\twosquare_2 >0$ depending only on $d,L,C$ such that if (a) and (b) hold and $\X(p)$ is large enough depending on $d,L,C,\eps$, then
\[
\|\square_{p,p'}\|_2^2 \leq K^\square_2 \psi_{d-2}(\X(p))  \X(p)^{\eps} \quad ; \quad \|\Diamond_{p,p'}\|_2^2 \leq K^\Diamond_2\psi_{d-2}(\X(p)) \X(p)^{\eps},
\]
and
\[
\|\twosquare_{p,p'}\|_2^2\leq K^\twosquare_2\psi_{d-1}(\X(p)) \X(p)^{\eps}.
\]
\end{lemma}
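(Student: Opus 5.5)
The plan is to estimate each weighted second moment by writing $\|f\|_2^2 = \sum_x f(x)\|x\|_2^2$ and splitting at a radius $R=\X(p)^{1/2+\eps/4}$. Beyond $R$, Theorem \ref{thm:ExponentialTail} gives $\|\tau_p\|_{\infty,r}\leq r^{-d-10}$ for $r\geq R/4$, say. Each of $\square_{p,p'}$, $\Diamond_{p,p'}$ and $\twosquare_{p,p'}$ contains at least one bare factor of $\tau_p$ evaluated at a point of norm $\gtrsim\|x\|_2$. For $\square_{p,p'}$ this factor is $\tau_p(x)$ itself. For $\twosquare_{p,p'}$ it is either $\tau_p(v)$ or $\tau_p(z-v)$, as in the proof of Lemma \ref{lem:TruncatedNormTwoSquare}. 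For $\Diamond_{p,p'}$ one uses Lemma \ref{lem:SplitConvolTail} on one of the two convolution factors.

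Since the remaining factors are polynomially bounded in $\X(p),\X(p')$ (at most $\X(p)^{10}$, using $\X(p')\leq \X(p)^2$ and $\|\cdot\|_\infty\leq\|\cdot\|_1$), the contribution of $\|x\|_2>R$ is at most $\X(p)^{10}\sum_{\|x\|\geq R}\|x\|^2 \exp(-\|x\|/\X(p)^{1/2+\eps/5})=o(1)$. The weight $\|x\|_2^2$ is harmless here. A little care is needed so that the convolutions $\bar\tau^\square_{p,p'}$ and $\bar\tau_{p,p'}$, which are not exponentially decaying at scale $\sqrt{\X(p)}$, still come with an exponentially small factor. For $\Diamond_{p,p'}$ I will use the product structure: the factor $\tau_p\ast D\ast\bar\tau_{p,p'}\ast\tau_p(x)$ is only bounded, via \eqref{eq:TruncatedCross2}, by $\langle x\rangle^{4-d}$, so instead of relying on it I will bound $\Diamond_{p,p'}(x)$ directly by $\min$ of the pointwise bounds of Lemma \ref{lem:TruncatedNormDiamond}.

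Inside $\|x\|_2\leq R$, I bound $\sum_{\|x\|\leq R}\|x\|_2^2 f(x)$ by the dyadic sum $\sum_{r=2^j\leq R} r^{d+2}\|f\|_{\infty,r}$ and insert the truncated pointwise estimates already proved.

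For $\square_{p,p'}$, Lemma \ref{lem:TruncatedNormSquare} gives $\|\square_{p,p'}\|_{\infty,r}\leq K\langle r\rangle^{8-2d}$, so the sum is $\sum_{r\leq R} r^{10-d}$. This is $\asymp R^{10-d}$ for $d<10$, $\log R$ for $d=10$, and $O(1)$ for $d>10$, which is exactly $\psi_{d-2}(R^2)\leq \psi_{d-2}(\X(p))\X(p)^{\eps}$.

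For $\Diamond_{p,p'}$, I combine the first two bounds of Lemma \ref{lem:TruncatedNormDiamond}. For $r\leq\sqrt{\X(p)}$ the bound $\langle r\rangle^{8-2d}$ gives the same computation. For $\sqrt{\X(p)}\leq r\leq R$ I use $\langle r\rangle^{8-2d}$ as well, which costs only the extra factor $\X(p)^{O(\eps)}$.

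For $\twosquare_{p,p'}$, Lemma \ref{lem:TruncatedNormTwoSquare} gives $\langle r\rangle^{7-2d}$ on $r\leq\sqrt{\X(p)}$, hence $\sum r^{9-d}\lesssim \psi_{d-1}(\X(p))$. On $[\sqrt{\X(p)},R]$ it gives $r^{6-2d}\psi_d(\X(p))$, and $\sum r^{8-d}\psi_d(\X(p))$ over this range is at most $\X(p)^{(8-d)/2+O(\eps)}\psi_d(\X(p))$. This is $\lesssim \psi_{d-1}(\X(p))\X(p)^{\eps}$ after checking the cases $d=7,8$ and $d\geq 9$ separately.

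The main obstacle is the middle range $\sqrt{\X(p)}\leq r\leq R$ for $\Diamond_{p,p'}$. Here the second bound of Lemma \ref{lem:TruncatedNormDiamond} contains $r^{4-2d}\X(p)^2$, which is too weak. I therefore rely on the first bound $\langle r\rangle^{8-2d}$, which holds for all $r$, and absorb the logarithmic gap $R/\sqrt{\X(p)}=\X(p)^{\eps/4}$ into the $\X(p)^\eps$ factor. I expect this to work provided the constants $K^\Diamond_\infty$ do not depend on $\eps$. If it fails, the fallback is to redo the split-convolution estimate \eqref{eq:TruncatedCross} with the weight $\|x\|_2^2$ distributed over the factors, using Lemma \ref{lem:tail2ndMoment} for $\|\tau_p\|_2^2$.
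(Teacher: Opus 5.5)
Your proposal is correct and is essentially the paper's proof: bound $\|f\|_2^2\leq\sum_x\|x\|_2^2\|f\|_{\infty,\langle x\rangle}$, insert the truncated sup-norm estimates of Lemmas \ref{lem:TruncatedNormSquare}, \ref{lem:TruncatedNormDiamond} and \ref{lem:TruncatedNormTwoSquare}, and use \eqref{eq:initialization} up to radius $\X(p)^{1/2+O(\eps)}$ and Theorem \ref{thm:ExponentialTail} beyond that radius. One correction concerns your treatment of $\Diamond_{p,p'}$: the term $\X(p)^2r^{4-2d}$ is not too weak---it lies below $r^{8-2d}$ for $r\geq\sqrt{\X(p)}$, the paper uses it on that whole range, and it is indispensable for $r>R$, since $\langle r\rangle^{8-2d}$ alone gives a divergent weighted sum there when $d\leq 10$; the resulting contribution from $r>R$ is of order $\X(p)^2R^{6-d}\leq\X(p)^{(10-d)/2}$, which is not $o(1)$ for $d<10$ as you claim, but is still at most $\psi_{d-2}(\X(p))$.
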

\begin{proof}
By Lemma \ref{lem:TruncatedNormDiamond},
\begin{align*}
\|\Diamond_{p,p'}\|_2^2 & \leq \sum_{x\in \Z^d} \|x\|_2^2 \| \Diamond_{p,p'}\|_{\infty,\langle x\rangle}
\\ & \leq K_\infty^\Diamond \sum_{\substack{ x\in \Z^d,\\  \langle x\rangle^2\leq \X(p)}} \langle x\rangle^{10-2d}
+ K_\infty^\Diamond \X(p)^2
\hspace{-1em} \sum_{\substack{ x\in \Z^d,\\  \langle x\rangle^2> \X(p)}} \hspace{-1em}
\langle x\rangle^{6-2d}
+ K_\infty^\Diamond
\hspace{-1em}\sum_{\substack{ x\in \Z^d,\\  \langle x\rangle^2 >\X(p)}} \hspace{-1em}
\|\tau_p\|_{\infty,\langle x\rangle/3} \langle x\rangle^{8-d}  .
\end{align*}
By basic analysis, the first sum has order $\psi_{d-2}(\X(p))$, and the second sum has order $\X(p)^{(6-d)/2}\leq \psi_{d-2}(\X(p))/\X(p)$.
For the last sum, we further split according to $\|x\|_2^2\geq \X(p)^{1+\eps/3}$ or not.
By Theorem \ref{thm:ExponentialTail}, the contribution of the case $\|x\|_2^2\geq \X(p)^{1+\eps/3}$ is at most $1$ if $\X(p)$ is large enough.
For the case $\X(p)<\|x\|_2^2< \X(p)^{1+\eps/3}$,
we first use \eqref{eq:initialization}
to upper-bound $\|\tau_p\|_{\infty,\langle x\rangle/3}$ by $C' \langle x\rangle^{2-d}$,
and then, by basic analysis, the contribution is bounded by
$\psi_{d-2}(\X(p)^{1+\eps/3})\leq \psi_{d-2}(\X(p))\X(p)^{\eps}$.
The bound for $\square_{p,p'}$ is obtained in exactly the same way, using Lemma \ref{lem:TruncatedNormSquare} instead.

The computations for $\twosquare_{p,p'}$ are also essentially the same using Lemma \ref{lem:TruncatedNormTwoSquare}, with the exponents for $\langle x\rangle$ reduced by $1$. We omit the details.
\end{proof}

\begin{proof}[Proof of Proposition \ref{pro:L2Error}]
In this proof, $K,K'$ denote constants depending on $d,L,C$ that may change from line to line. It will be crucial at a late step of the proof that we may decrease $c$ without increasing those constants.
By Theorems \ref{thm:LowerBoundTau} and \ref{thm:UpperBoundTau}, we have
\[
\|\tau_{p'}-\tau_{p,p'}^\square\|_2^2
\leq \frac{(p'-p)^2}{(1-p)^2}
\left ( \| \bar \tau_{p,p'}^\square\ast  \tau_p\ast (\square_{p,p'}+\Diamond_{p,p'})
\ast \tau_p \ast \bar \tau_{p,p'}^\square\|_2^2
+\|\bar \tau^\square_{p,p'}\ast  \tau_p \ast  \twosquare_{p,p'}
\ast \tau_p \ast \bar \tau_{p,p'}\|_2^2 \right ).
\]
Since, for every pair of functions $f,g$ satisfying $f(x)=f(-x)$ and $g(x)=g(-x)$ on $\Z^d$, we have $\|f\ast g\|_2^2=\|f\|_2^2 \|g\|_1+ \|f\|_1 \|g\|_2^2$, it suffices to upper-bound each of the following terms separately:
\begin{align*}
& \text{(1)} \quad \|\bar \tau^\square_{p,p'}\|_2^2 \|\bar \tau^\square_{p,p'}\|_1 \|\tau_p\|_1^2(\|\square_{p,p'}\|_1+\|\Diamond_{p,p'}\|_1)
 & &\text{(2)} \quad \|\bar \tau^\square_{p,p'}\|_2^2 \|\bar \tau_{p,p'}\|_1 \|\tau_p\|_1^2 \|\twosquare_{p,p'}\|_1
\\ &\text{(3)} \quad  \|\bar \tau^\square_{p,p'}\|_1 \|\bar \tau_{p,p'}\|_2^2 \|\tau_p\|_1^2 \|\twosquare_{p,p'}\|_1
 & &\text{(4)} \quad \|\bar \tau^\square_{p,p'}\|^2_1 \|\tau_p\|_2^2\|\tau_p\|_1(\|\square_{p,p'}\|_1+\|\Diamond_{p,p'}\|_1)
\\ & \text{(5)} \quad \|\bar \tau^\square_{p,p'}\|_1\|\bar \tau_{p,p'} \|_1\|\tau_p\|^2_2 \|\tau_p\|_1 \|\twosquare_{p,p'}\|_1
& &\text{(6)} \quad \|\bar \tau^\square_{p,p'}\|_1^2 \|\tau_p\|_1^2 (\|\square_{p,p'}\|_2^2+\|\Diamond_{p,p'}\|_2^2)
\\
& \text{(7)} \quad  \|\bar \tau^\square_{p,p'}\|_1 \|\bar \tau_{p,p'}\|_1 \|\tau_p\|_1^2 \|\twosquare_{p,p'}\|_2^2.
\end{align*}
Recall that $\bar \tau_{p,p'}=\1_{\{0\}}+(p'-p)/(1-p)D\ast \tau_{p'}$, and  $\bar \tau^\square_{p,p'}=\1_{\{0\}}+(p'-p)/(1-p)D\ast \tau^\square_{p,p'}$, so by Lemma \ref{lem:p'-p},
\begin{equation}
\|\bar \tau_{p,p'}\|_2^2 \leq 1+K\frac{\|\tau_{p'}\|_2^2}{\X(p)} \quad \text{ and }  \quad   \|\bar \tau^\square_{p,p'}\|_2^2 \leq 1+K\frac{\|\tau^\square_{p,p'}\|_2^2}{\X(p)} \label{eq:2normBar}
\end{equation}
Also note that by Theorem \ref{thm:PointwiseError}, if $c$ is small enough then $\X(p')/\|\tau^\square_{p,p'}\|_1\in (1/2,2)$.

For (1) and (2), by Lemma \ref{lem:p'-p} and assumptions (a) and (b), we have
\[
\max(\|\bar \tau^\square_{p,p'}\|_1,  \|\bar \tau_{p,p'}\|_1)\leq 1+K \X(p')/\X(p)\leq  1+K'\X(p)/\psi_d(\X(p)),
\]
and then, plugging in the bounds of \eqref{eq:2normBar}, Lemmas \ref{lem:TruncatedNormSquare}, \ref{lem:TruncatedNormDiamond} and \ref{lem:TruncatedNormTwoSquare}, and $\|\tau_p\|_1=\X(p)$, we get
 \[
 \max((1),(2)) \leq K \|\tau^\square_{p,p'}\|_2^2 \X(p')\psi_d(\X(p))+K \X(p')^{3+\eps}\psi_d(\X(p)).
 \]

For (3), substituting the bounds from Lemmas \ref{lem:TruncatedNormTwoSquare} and \ref{lem:tail2ndMoment}, \eqref{eq:2normBar}, and (b) directly gives
\[
(3) \leq K \X(p')^{3+\eps} \psi_d(\X(p)).
\]
We can deal with (4) and (5) similarly. For (6) and (7), it suffices to substitute the bounds from Lemma \ref{lem:L2NormBasicDiagrams} and (a) and (b). We omit the details.

Combining these bounds and using Lemma \ref{lem:p'-p}, we get
\begin{equation}
\|\tau_{p'}-\tau_{p,p'}^\square\|_2^2 \leq K \|\tau^\square_{p,p'}\|_2^2\frac{\X(p')\psi_d(\X(p))}{\X(p)^2} + K' \frac{\X(p')^{3+\eps}\psi_d(\X(p))}{\X(p)^2}. \label{eq:L_2_Incomplete}
\end{equation}
Taking $c$ small enough that $cK\leq 1/2$, we have by assumption (a) that the first term on the right-hand side of \eqref{eq:L_2_Incomplete} is at most $\|\tau_{p,p'}^\square\|_2^2/2$. So, by the triangle inequality,
\[
\|\tau_{p,p'}^\square\|_2^2/2 \leq \|\tau_{p'}\|_2^2 + K' \frac{\X(p')^{3+\eps}\psi_d(\X(p))}{\X(p)^2}.
\]
By Lemma \ref{lem:tail2ndMoment}, we have $\|\tau_{p'}\|_2^2\leq \X(p')^{2+\eps}$. So by (a),
\[
\|\tau_{p,p'}^\square\|_2^2\leq K \X(p')^{2+\eps}.
\]
The desired result follows by substituting the corresponding bound on $\|\tau_{p,p'}^\square\|_2^2$ into \eqref{eq:L_2_Incomplete}.
\end{proof}

\section{Sharp asymptotics under \eqref{eq:etaUpper}} \label{Sec:SharpAsymptotics}
The goal of this section is to prove Theorems \ref{thm:susceptibility} and \ref{thm:gyration} under the following weaker assumption. There exists $B>0$ such that
\begin{equation}
\forall x\in \Z^d, \quad \tau_{p_c}(x) \leq B \langle x\rangle^{2-d}. \label{eq:etaUpper}
\tag{B}
\end{equation}
Those two theorems will be used to prove Theorems \ref{thm:Main} and \ref{thm:tau_local_CLT}. We also show technical results under condition \eqref{eq:initialization}, to reuse them in Section \ref{sec:TCL}. 
Subsection~\ref{sec:p_c-p_c} bounds $|\tilde p_c(p)-p_c|$. Subsection~\ref{sec:Chi} then proves Theorem~\ref{thm:susceptibility}.
In Subsection~\ref{sec:SecondMoment}, we prove several second-moment estimates, which will then be used in Subsection~\ref{sec:Gyr} to prove Theorem~\ref{thm:gyration}.
\subsection{Asymptotic for $\tilde p_c(p)-p_c$} \label{sec:p_c-p_c}
The goal of this section is to prove the following result:
\begin{theorem} \label{thm:p_c-p_c}
If \eqref{eq:etaUpper} holds, then we have as $p\uparrow p_c$,
\[
|\tilde p_c(p)-p_c|=O(\psi_d(\X(p))/\X(p)^2).
\]
\end{theorem}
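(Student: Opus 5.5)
We bound the two directions separately, using the $L_1$ comparison of Theorem \ref{thm:PointwiseError} between $\X(p')$ and $\|\tau^\square_{p,p'}\|_1$. Write $\rho(p')=(p'-p)/(1-p)$ and $M:=c\X(p)^2/\psi_d(\X(p))$, with $c$ the constant of Theorem \ref{thm:PointwiseError}. Under \eqref{eq:etaUpper}, $\tau_{p'}\le\tau_{p_c}\le B\langle x\rangle^{2-d}$ for every $p'<p_c$, so \eqref{eq:initialization} holds at every $p'<p_c$ with $C=B$. Hence condition (a) of Theorem \ref{thm:PointwiseError} holds for every $p'\in[p,p_c)$ with $\X(p')\le M$, once $\X(p)$ is large.

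\emph{Upper bound on $\tilde p_c(p)-p_c$.} Pick $p_1<p_c$ with $\X(p_1)=M/2$, which is possible by continuity and divergence of $\X$. Theorem \ref{thm:PointwiseError}(a) gives $\|\tau^\square_{p,p_1}\|_1<\infty$, so $p_1<\tilde p_c(p)$. Next I would compute $\|\tau^\square_{p,p'}\|_1$ through the Markov chain. By \eqref{eq:AsympBootstrap} and Theorem \ref{thm:MixingTime} (exponential mixing), $\|\tau^{\square,k}_p\|_1=\X(p)(\Omega\X(p)\lambda_p)^k(a_p+O(c^k\X(p)))$. Summing the geometric series gives
\[
\|\tau^\square_{p,p'}\|_1=\frac{a_p\X(p)}{1-\rho(p')/\rho(\tilde p_c(p))}+O(\X(p)^{O(1)}),
\]
with $a_p$ bounded above and below. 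Consequently $\tilde p_c(p)-p_1\asymp \X(p)/(\X(p)\,\X(p_1))\asymp \psi_d(\X(p))/\X(p)^2$, where I use $\rho(\tilde p_c)\asymp 1/\X(p)$ from Theorem \ref{thm:LowerLambda}. Since $p_1<p_c$, this yields $\tilde p_c(p)-p_c\le \tilde p_c(p)-p_1=O(\psi_d/\X(p)^2)$. Doing the same thing at $p_1$ directly is simpler: the map $p'\mapsto\X(p')$ has $\X(p_1)=M/2$, and $\|\tau^\square_{p,p_1}\|_1\asymp M$ by Theorem \ref{thm:PointwiseError}. The displayed formula then forces $1-\rho(p_1)/\rho(\tilde p_c)\asymp \X(p)/M$, that is $\tilde p_c-p_1\asymp\psi_d/\X(p)^2$.

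\emph{Upper bound on $p_c-\tilde p_c(p)$.} This is the harder direction. The same argument at $p_1$ gives $p_c-p_1\ge0$, but we also need $p_c-p_1=O(\psi_d/\X(p)^2)$. For that I would use the standard mean-field bound $\X(p')\ge c'(p_c-p')^{-1}$, which follows from $\partial_p\X\le\Omega\X^2$. Integrating this from $p_1$ gives $p_c-p_1\le \Omega^{-1}\X(p_1)^{-1}\lesssim \psi_d/\X(p)^2$. Combining with the first part, $p_c-\tilde p_c\le (p_c-p_1)+|p_1-\tilde p_c|=O(\psi_d/\X(p)^2)$, which gives both directions.

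\emph{Main obstacle.} The delicate step is the precise asymptotic for $\|\tau^\square_{p,p'}\|_1$ as $p'\uparrow\tilde p_c(p)$. The mixing estimate has an extra factor $\X(p)$ in its error, and the last step involves the killed boundary term $\Gamma_p$. Both only perturb finitely many terms of the geometric series by $\X(p)^{O(1)}$, so I expect $\|\tau^\square_{p,p'}\|_1\asymp \X(p)\big/\bigl(1-\rho(p')/\rho(\tilde p_c)\bigr)$ to hold whenever this quantity exceeds $\X(p)^{O(1)}$. Since $M\gg\X(p)$ only by a factor $\X(p)/\psi_d$, I would need the correction terms to be $O(\X(p))$. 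To get this, I would use Lemma \ref{lem:vpd} together with the lower bound $\Plive_p\ge\lambda_p$ to control the prefactor uniformly, rather than relying on the crude $\X(p)^{O(1)}$ bound.
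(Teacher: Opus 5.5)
Your skeleton matches the paper's. You pick a comparison point ($p_1$ for you, $p_2$ in the paper) with $\X\asymp \X(p)^2/\psi_d(\X(p))$ and show that both $\tilde p_c(p)$ and $p_c$ lie within $O(1/\X(p_1))$ of it.

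\textbf{Gap in the central step.} The bound $\tilde p_c(p)-p_1=O(1/\X(p_1))$ is not established, and the obstacle you flag is a genuine gap, not a technicality. With $\beta=(p_1-p)/(\tilde p_c(p)-p)$ one has $\|\tau^\square_{p,p_1}\|_1=\sum_{k\ge0}\beta^kE_{p,k}$ by \eqref{eq:Tau_X_Diff_prelim}. To convert $\|\tau^\square_{p,p_1}\|_1\ge\X(p_1)/2$ into $1-\beta\lesssim \X(p)/\X(p_1)$, you need \emph{upper} bounds: $E_{p,\infty}=O(\X(p))$ and $\sum_{k\lesssim \log\X(p)}E_{p,k}=o(\X(p_1))$.
\begin{itemize}
\item A priori you only have $E_{p,k}=O(\X(p)^2)$ uniformly in $k$ (Lemmas \ref{lem:vpd}, \ref{lem:Gamma} and Theorem \ref{thm:LowerLambda}). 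So the error in your geometric-series formula is only known to be $O(\X(p)^2\log\X(p))$, which is not small compared with $\X(p_1)\le c\X(p)^2/\psi_d(\X(p))$.
\item Hence the regime where your asymptotic is informative and the regime where Theorem \ref{thm:PointwiseError} identifies $\|\tau^\square_{p,p'}\|_1$ with $\X(p')$ do not overlap.
\item Lemma \ref{lem:vpd} cannot fix this. Its bound $\inf_{\Clive}h_p\ge\|h_p\|_\infty(1-p)^\Omega/(\Omega\X(p))$ is exactly where the factor $\X(p)$ is lost. Likewise $\Plive_p\ge\lambda_p$ bounds the prefactor from below, not above.
\item The sharp a priori input $E_{p,k}\ge\lambda_p\X(p)$ (from Lemma \ref{lem:LambdaLowMemory}) only yields $\tilde p_c(p)-p_1\gtrsim 1/\X(p_1)$, the useless direction.
\item In the paper, $E_{p,\infty}\le K_4\X(p)$ and the small-$k$ bound (Lemmas \ref{lem:Large_k_L1} and \ref{lem:Small_k_L1}) are \emph{derived from} Proposition \ref{pro:p_c(p)-p_2}, so you cannot use them here.
\end{itemize}

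\textbf{The missing idea.} It is the ratio argument of Proposition \ref{pro:p_c(p)-p_2}, which never needs the prefactors. Take $p_3<p_2$ with $\X(p_3)=\X(p_2)/2$ and $\X(p_2)\ge F'\log(\X(p))\X(p)$. Going from $p_3$ to $p_2$:
\begin{itemize}
\item the terms with $k<\ell(p)\approx K_1\log\X(p)$ are multiplied by at most $((p_2-p)/(p_3-p))^{\ell(p)}\le 5/4$, by Lemma \ref{lem:X'};
\item the post-mixing corrections are $O(\X(p)^{-4})$, by Lemma \ref{eq:L1Diff_After_Mixing};
\item the geometric tail is multiplied by the same factor times $(\tilde p_c(p)-p_3)/(\tilde p_c(p)-p_2)$.
\end{itemize}
All $E_{p,k}\ge0$, and Theorem \ref{thm:PointwiseError} makes $\|\tau^\square_{p,p_i}\|_1$ and $\X(p_i)$ agree up to a factor close to $1$. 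So the doubling $\X(p_2)=2\X(p_3)$ forces $(\tilde p_c(p)-p_3)/(\tilde p_c(p)-p_2)\ge6/5$. This gives $\tilde p_c(p)-p_2\le5(p_2-p_3)\le 5/(C_1\X(p_2))$, with $C_1$ as in Lemma \ref{lem:X'}.

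\textbf{Reversed inequality for $p_c-p_1$.} Integrating $\partial_p\X\le\Omega\X^2$ from $p_1$ to $p_c$ gives $1/\X(p_1)\le\Omega(p_c-p_1)$, i.e.\ $p_c-p_1\ge 1/(\Omega\X(p_1))$. That is a lower bound. The upper bound $p_c-p_1\le 1/(C_1\X(p_1))$ is the opposite mean-field inequality, which needs a bounded triangle diagram. It is Lemma \ref{lem:X'} applied with $p'=p_c$, which is legitimate because \eqref{eq:etaUpper} is \eqref{eq:initialization} at $p_c$. You repair this part simply by citing that lemma, as the paper does.
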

The main idea of the proof is to use Theorem \ref{thm:PointwiseError}, together with the facts that $\X(p)\approx (p_c-p)^{-1}$ as $p\uparrow p_c$, and that for $p'$ large enough we have $\|\tau_{p,p'}^\square\|_1=\Theta(1/(\tilde p_c(p)-p))$.
For the first approximation, we recall the following well-known result:
\begin{lemma} \label{lem:X'}
Let $C>0$. There exist $F,C_1,C_2$ depending on $d,L,C$ such that for every $p<p'\leq p_c$ with $\X(p)\geq F$, if \eqref{eq:initialization} holds at $p'$ then
\[
C_1(p'-p) \leq \frac{1}{\X(p)}- \frac{1}{\X(p')}\leq C_2(p'-p).
\]
The upper-bound still hold without \eqref{eq:initialization}.
\end{lemma}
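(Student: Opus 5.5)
We will use the classical differential inequalities for $\X$, integrated in the variable $1/\X$. Russo's formula and the BK inequality give, for every $q<p_c$, the upper bound
\[
\frac{d\X(q)}{dq}\leq \Omega\X(q)^2 ,
\]
which is the inequality already quoted in the proof of Lemma \ref{eq:FirstExpTail_TauSquare}. Equivalently, $-\frac{d}{dq}(1/\X(q))\leq \Omega$. Integrating over $[p,p']$ gives the upper bound with $C_2=\Omega$. This uses nothing about \eqref{eq:initialization}, which explains the final sentence of the statement. When $p'=p_c$ we interpret $1/\X(p_c)=0$ and pass to the limit, using left-continuity of $1/\X$ at $p_c$: $\X(q)\uparrow\infty$ as $q\uparrow p_c$.

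For the lower bound we need the matching inequality
\[
\frac{d\X(q)}{dq}\geq c\,\X(q)^2 \qquad \text{for } q\in[p,p'] .
\]
The standard route, due to Aizenman and Newman, is to write $\frac{d\X}{dq}$ via Russo's formula as a sum over pivotal edges $\{u,v\}$. Using the off method (Lemma \ref{lem:GeneralOff}) we condition on $\C(0)$ and write
\[
\frac{d\X(q)}{dq}=\frac{1}{1-q}\sum_{\{u,v\}}\proba_q\big(\{u,v\}\text{ pivotal for } 0\slr \cdot\big)\cdot\text{(cluster size)} .
\]
This leads to the bound
\[
\frac{d\X(q)}{dq}\geq \frac{\Omega}{1-q}\,\X(q)^2\big(1-C\,\nabla(q)'\big),
\]
where $\nabla(q)'$ is an error controlled by the open triangle $\max_x\tau_q\ast\tau_q\ast D\ast\tau_q(x)$. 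Since $\tau_q\leq\tau_{p'}$ for $q\leq p'$, assumption \eqref{eq:initialization} at $p'$ gives a pointwise bound on $\tau_q$, uniform in $q\in[p,p']$. By Lemma \ref{lem:TwoTriangleSquare}(b), the triangle is then bounded by a constant $C_3$. However, $C_3$ need not be small, so the naive Aizenman–Newman factor $(1-C\nabla)$ may fail to be positive. This is the main obstacle.

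To get around it, I would use the version of the lower bound valid with a large but finite triangle. One restricts the cluster from $v$ to avoid $\C(0)$ only within distance $r$ of the pivotal edge, as in the local modifications of Section \ref{sec:MarkovChain}. The far-away interactions then cost a factor bounded by the truncated triangle $\sum_{\|x\|\geq r}\tau\ast\tau(x)\tau(x)$, which tends to $0$ as $r\to\infty$ uniformly under \eqref{eq:initialization} by Lemma \ref{lem:TruncatedPoly}. The near interactions cost only a multiplicative constant depending on $r,d,L$, by the edge-flipping argument of Proposition \ref{pro:RelateLocalLambda}; this uses $p\in(1/(9\Omega),1/2)$, which holds once $\X(p)\geq F$. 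This yields $\frac{d\X}{dq}\geq c\X(q)^2$ on $[p,p']$.

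An alternative is to deduce the lower bound from the Markov chain picture. By Theorem \ref{thm:LowerLambda}, $\lambda_q$ is bounded below, so one-step extensions of pointed clusters survive with positive probability, giving $\frac{d}{dq}\X\geq c\X^2$. I would try the local-modification route first. Integrating $-\frac{d}{dq}(1/\X)\geq c$ on $[p,p']$ then gives $C_1=c$, with the same limiting convention at $p'=p_c$.
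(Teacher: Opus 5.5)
Your upper bound is the standard inequality $\frac{d\X}{dq}\le\Omega\X^2$ that the paper also relies on. For the lower bound you take a genuinely different route. The paper notes, via Lemma~\ref{lem:TwoTriangleSquare}(b), that the triangle is bounded on $[p,p']$. It then cites (1.4) and Theorem~1.1 of \cite{HutchcroftTriangle}, which give the two-sided differential inequality for $\X$ when the triangle is merely bounded rather than small. You instead build the lower bound from the paper's own Section~\ref{sec:MarkovChain} machinery.

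Your ``alternative'' is the cleanest way to do this, because of an exact identity:
\[
(1-q)\frac{d\X(q)}{dq}=\sum_{\vec e\in\vedges}\E_q\big[\1_{e^-\in\C(0),\,e^+\notin\C(0)}\,|\C(e^+)|\big]=\Omega\X(q)^2\,\Zb_q .
\]
The first equality is Russo's formula with closed pivotals. For the second, Lemma~\ref{lem:GeneralOff} replaces $|\C(e^+)|$, given $\C(0)=A$, by $|\C(e^+)\offx{e^+}A|$. Then summing $\1(b^-\in B\offx{0}(A-a^+))$ over the $\Omega$ edges $\vec b$ with a given tail gives $\Omega|B\offx{0}(A-a^+)|$.

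For $q\in[p,p']$, \eqref{eq:initialization} holds at $q$ because $\tau_q\le\tau_{p'}$, and $\X(q)\ge\X(p)\ge F$. So Lemma~\ref{lem:LambdaLowMemory} with $k=1$ and Theorem~\ref{thm:LowerLambda} give $\Zb_q\ge\lambda_q^2\ge c^2$, hence one can take $C_1=\Omega c^2$. There is no circularity, since Section~\ref{sec:MarkovChain} never uses Lemma~\ref{lem:X'}.

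Your primary local-modification route is exactly the proof of $\Zb_q\ge\frac34c^2$ already given by Propositions~\ref{pro:RelateLocalLambda} (with $k=1$) and~\ref{pro:OrientedOpenTriangle}, so you do not need to redo it. If you do redo it, keep their order of steps. First modify both clusters, paying the edge-flipping factor $K(r)^{-2}$ multiplicatively on the whole quantity. Only then subtract the far cuts, as a fraction $\epsilon(r)\to0$ of the full mass $\Omega\X^2$.

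The other order does not close. Suppose you subtract the truncated-triangle error from the unmodified quantity and then bound a ``near-only'' quantity below by $K(r)^{-2}\Omega\X^2$. You would then need $K(r)^{-2}>\epsilon(r)$. The edge-flipping argument only gives $K(r)^{-1}\ge(18\Omega)^{-O(r^d)}$, whereas $\epsilon(r)$ is only $O(r^{6-d})$.

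As for what each approach buys: the citation yields $C_1$ depending only on a triangle bound, with good quantitative control, but imports an external theorem. Your route is self-contained and keeps every constant explicit, if astronomically small. That fits the paper's computability claim and the remark following the lemma.
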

\begin{proof}
By Lemma \ref{lem:TwoTriangleSquare} (b), the triangle condition is bounded in $[p,p']$, so the result follows from (1.4) and Theorem 1.1 in \cite{HutchcroftTriangle}.
\end{proof}
\begin{remark}
It is possible to recover $C_1(p'-p)\leq 1/\X(p)$ by using Lemma \ref{lem:p'-p} inductively.
\end{remark}
The proof of the second approximation relies on Section \ref{sec:Rewriting}. Beforehand, let us deal with a technical issue arising when using
\eqref{eq:RewriteB_TauSquare}. Because the following sum appears in \eqref{eq:RewriteB_TauSquare}, it will frequently appear in our computations. For $\vec A\in \Clive$, let
\begin{equation}
\Gamma_p(\vec A):=\frac{1}{h_p(\vec A)}
\sum_{\vec B\in \vec \Comp(0)}\mu_p(\vec B)\1_{(\vec A,\vec B)\notin \Ccut} .
\label{eq:def_Gamma_p}
\end{equation}
We will use $\Gamma_p$ to deal with the last step, which is not handled by the Doob $h_p$-transform.
\begin{lemma} \label{lem:Gamma}
For every $p<p_c$ we have $\|\Gamma_p\|_\infty \leq \Omega^2\X(p)^2(1-p)^{-\Omega}/\|h_p\|_\infty$.
\end{lemma}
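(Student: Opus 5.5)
The plan is to bound the numerator and denominator of \eqref{eq:def_Gamma_p} separately, using only \eqref{eq:OrientedX} and the lower bound on $h_p$ from Lemma~\ref{lem:vpd}.

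For the numerator, we simply drop the indicator $\1_{(\vec A,\vec B)\notin \Ccut}$. By \eqref{eq:OrientedX}, this gives, for every $\vec A$,
\[
\sum_{\vec B\in \vec \Comp(0)}\mu_p(\vec B)\1_{(\vec A,\vec B)\notin \Ccut}\leq \sum_{\vec B}\mu_p(\vec B)=\Omega\X(p).
\]

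For the denominator, we use that $\Gamma_p$ is only defined on $\Clive$. Lemma~\ref{lem:vpd} gives, for every $\vec A\in\Clive$,
\[
h_p(\vec A)\geq \|h_p\|_\infty\,\frac{(1-p)^\Omega}{\Omega\X(p)}.
\]

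Dividing the two bounds, we get for every $\vec A\in\Clive$
\[
\Gamma_p(\vec A)\leq \Omega\X(p)\cdot\frac{\Omega\X(p)}{(1-p)^{\Omega}\|h_p\|_\infty}=\frac{\Omega^2\X(p)^2(1-p)^{-\Omega}}{\|h_p\|_\infty}.
\]
Taking the supremum over $\vec A\in\Clive$ yields the claim.

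I do not expect any real obstacle here. The only point to check is that $\Gamma_p$ is indeed defined only on $\Clive$, where $h_p>0$; this is exactly why the lower bound from Lemma~\ref{lem:vpd} applies and the division is legitimate. On $\Ckill$ we have $h_p=0$ and no bound of this kind would make sense.
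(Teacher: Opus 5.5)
Your proof is correct and follows the same route as the paper: drop the indicator, bound $\sum_{\vec B}\mu_p(\vec B)=\Omega\X(p)$ via \eqref{eq:OrientedX}, and bound $1/h_p$ on $\Clive$ using the lower bound from Lemma~\ref{lem:vpd}. Since $\Gamma_p\geq 0$, taking the supremum over $\Clive$ gives the stated bound on $\|\Gamma_p\|_\infty$.
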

\begin{proof}
By dropping the indicator, and using Lemma \ref{lem:vpd} and \eqref{eq:OrientedX}, we have
\[
\|\Gamma_p\|_\infty
\leq \frac{1}{\inf_{\vec A\in \Clive}h_p(\vec A)}
\sum_{\vec B\in \vec \Comp(0)} \mu_p(\vec B)
\leq  \Omega^2\X(p)^2(1-p)^{-\Omega}/\|h_p\|_\infty . \qedhere
\]
\end{proof}
The following probability measure will frequently appear in this section, since we will make many approximations using the mixing time of the Markov chain $(\vA_i)_{i\geq 0}$ under $\proba_p^h$. Let 
\begin{equation}
\proba_p^\vpg(\cdot)=\sum_{\vec A_0\in \Clive}\vpg_p(\vec A_0)\proba_p^h(\cdot |\vA_0=\vec A_0),
\label{eq:Proba^vpg}
\end{equation}
and let $\E_p^\vpg$ denote the associated expectation.

We can now focus on estimating $\|\tau_{p,p'}^\square\|_1$. Let $E_{p,0}:=\X(p)$, and for $k\geq 1$, let
\[
E_{p,k}:=\X(p)\lambda_p^{-k}\proba_p^\otimes(\Tcut>k)=\frac{ \Plive_p}{\Omega \lambda_p}  \E^h_p \left[h_p(\vec \A_0) \Gamma_p(\vA_{k-1}) \right ],
\]
so that, by \eqref{eq:RewriteB_TauSquare} and the definition of $\lambda_p$, we have for $k\geq 1$,
\begin{equation}
\|\tau_p^{\square,k}\|_1 =(\Omega \X(p)\lambda_p)^k E_{p,k}=\left (\frac{1-p}{\tilde p_c(p)-p}\right)^k E_{p,k} .
\label{eq:tau_E_p,infty}
\end{equation}
Also, by Theorem \ref{thm:MixingTime} and Lemma \ref{lem:vpd}, $E_{p,k}$ converges exponentially fast toward
\[
E_{p,\infty} =(\Plive_p/\Omega \lambda_p)\E_p^h[h_p(\vA_0)]\E_p^\vpg[\Gamma_p(\vA_0)] .
\]
Thus, it is natural for every $p\leq p'<\tilde p_c(p)$ to rewrite $\|\tau_{p,p'}^\square\|_1$ as
\begin{align}
\|\tau_{p,p'}^\square\|_1 & =\sum_{k=0}^\infty \left (\frac{p'-p}{1-p} \right )^k \|\tau_p^{\square,k}\|_1=\sum_{k=0}^\infty \left (\frac{p'-p}{\tilde p_c(p)-p } \right )^kE_{p,k} \label{eq:Tau_X_Diff_prelim}
\\ & =\sum_{k=0}^\infty \left (\frac{p'-p}{\tilde p_c(p)-p } \right )^k(E_{p,k}-E_{p,\infty}) +\frac{\tilde p_c(p)-p}{\tilde p_c(p)-p'}E_{p,\infty}. \label{eq:Tau_X_Diff}
\end{align}
We first deal with the indices $k$ beyond the mixing time for $(\vA_i)_{i\geq 0}$ under $\proba_p^h$.
\begin{lemma}\label{eq:L1Diff_After_Mixing}
Let $C>0$. There exist $F>0$ and $0<c<1$ depending on $d,L,C$ such that for every $p<p_c$ satisfying \eqref{eq:initialization} and $\X(p)\geq F$, every $N\geq 0$, we have
\[
\sum_{k\geq N} |E_{p,k}-E_{p,\infty}|  \leq \X(p)^4c^N.
\]
In particular, if $K_1$ is large enough, depending on $d,L,C$, we have 
\[
\sum_{k\geq K_1\log(\X(p))} |E_{p,k}-E_{p,\infty}|  \leq 1/\X(p)^4.
\]
\end{lemma}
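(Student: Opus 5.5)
We write $E_{p,k}-E_{p,\infty}$ as an expectation under the Doob-transformed chain and apply the total variation mixing bound of Theorem \ref{thm:MixingTime} at time $k-1$. The only inputs are the sup norms of $h_p$ and $\Gamma_p$ (Lemmas \ref{lem:vpd} and \ref{lem:Gamma}), and the lower bound on $\lambda_p$ from Theorem \ref{thm:LowerLambda}.

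Fix $k\geq 1$. Conditioning on $\vA_0$ and applying the Markov property at time $k-1$ gives
\[
E_{p,k}=\frac{\Plive_p}{\Omega\lambda_p}\sum_{\vec A\in\Clive}\proba^h_p(\vA_0=\vec A)\,h_p(\vec A)\sum_{\vec B\in \Clive}\P_p^{k-1}(\vec B|\vec A)\Gamma_p(\vec B).
\]
Replacing $\P_p^{k-1}(\cdot|\vec A)$ by $\vpg_p$ turns the inner sum into $\E^\vpg_p[\Gamma_p(\vA_0)]$, and the remaining outer sum is exactly $E_{p,\infty}$. Hence
\[
|E_{p,k}-E_{p,\infty}|\leq \frac{\Plive_p}{\Omega\lambda_p}\,\|h_p\|_\infty\|\Gamma_p\|_\infty\sup_{\vec A\in\Clive}\sum_{\vec B\in\Clive}\left|\P_p^{k-1}(\vec B|\vec A)-\vpg_p(\vec B)\right|.
\]

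Each factor is now controlled separately. Theorem \ref{thm:MixingTime} bounds the supremum by $c^{k-1}\X(p)$. Lemma \ref{lem:Gamma} gives $\|h_p\|_\infty\|\Gamma_p\|_\infty\leq \Omega^2\X(p)^2(1-p)^{-\Omega}$. Since $p<1/2$ (see the proof of Proposition \ref{pro:RelateLocalLambda}), the factor $(1-p)^{-\Omega}$ is at most $2^\Omega$. Finally $\Plive_p\leq 1$, and $\lambda_p$ is bounded below by a constant depending on $d,L,C$ by Theorem \ref{thm:LowerLambda}. Together these give
\[
|E_{p,k}-E_{p,\infty}|\leq K_0\X(p)^3c^{k-1}
\]
with $K_0=K_0(d,L,C)$.

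For $k=0$ we have $E_{p,0}=\X(p)$. We also have $|E_{p,\infty}|\leq K\X(p)^2$ from the same factor bounds, so $|E_{p,0}-E_{p,\infty}|\leq K\X(p)^2$. Summing the geometric series over $k\geq N$ yields
\[
\sum_{k\geq N}|E_{p,k}-E_{p,\infty}|\leq K_0'\X(p)^3c^{N}
\]
for all $N\geq 0$, where $c$ is the mixing rate and we pick up a factor $(1-c)^{-1}c^{-1}$. For $\X(p)\geq F$ large enough we have $K_0'\leq \X(p)$, which gives the first claim. For the second claim, choose $K_1$ with $c^{K_1\log\X(p)}=\X(p)^{K_1\log c}\leq \X(p)^{-8}$.

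The only real content is the mixing theorem. The one point to check is that the expression for $E_{p,k}$ under $\proba_p^h$ is valid for every $k\geq1$, since $\Gamma_p$ absorbs the last step. Care is also needed to keep the polynomial-in-$\X(p)$ prefactors explicit, because $\inf h_p/\|h_p\|_\infty$ can be as small as order $1/\X(p)$.
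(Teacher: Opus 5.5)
Your proposal is correct and follows the paper's own proof. Like the paper, you write $E_{p,k}-E_{p,\infty}$ through the Doob-transformed chain at time $k-1$, bound it by $\|h_p\|_\infty\|\Gamma_p\|_\infty$ times the total-variation distance (using Lemma \ref{lem:Gamma} and Theorem \ref{thm:LowerLambda}), and then sum the mixing bound of Theorem \ref{thm:MixingTime} over $k\geq N$. Your explicit handling of the $k=0$ term and your absorption of the constants into one extra power of $\X(p)$ supply details the paper leaves implicit.
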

\begin{proof}
We have for $k\geq 1$,
\[
E_{p,k}-E_{p,\infty}=\frac{\Plive_p}{\Omega \lambda_p} \sum_{\vec A_0, \vec A\in \Clive} h_p(\vec A_0)\proba_p^h(\vA_0=\vec A_0)\Gamma_p(\vec A) \left (\P^{k-1}_p(\vec A|\vec A_0)-\vpg_p(\vec A)\right )  .
\]
Thus, by Lemma \ref{lem:Gamma} and Theorem \ref{thm:LowerLambda}, if $\X(p)$ and $K$ are large enough depending on $d,L,C$, then
\begin{align*}
|E_{p,k}-E_{p,\infty}|
& \leq K\|h_p\|_\infty\|\Gamma_p\|_\infty
\max_{\vec A_0\in \Clive }
\left (
\sum_{\vec A\in \Clive}
\left |\P^{k-1}_p(\vec A|\vec A_0)-\vpg_p(\vec A)  \right |
\right )
\\ & \leq K\X(p)^2
\max_{\vec A_0\in \Clive }
\left (
\sum_{\vec A\in \Clive}
\left |\P^{k-1}_p(\vec A|\vec A_0)-\vpg_p(\vec A)  \right |
\right ).
\end{align*}
The desired result then follows by summing the bound in Theorem \ref{thm:MixingTime} over $k\geq N$.
\end{proof}
We then have the following analogue of Lemma \ref{lem:X'}:
\begin{proposition} \label{pro:p_c(p)-p_2}
Let $C>0$. There exist $c,F,F',K_2$ depending on $d,L,C$ such that for every $p<p_2<p_c$ such that $\X(p)\geq F$, \eqref{eq:initialization} is satisfied at $p_2$, and
\[
F' \log(\X(p)) \X(p) \leq \X(p_2) \leq c\X(p)^2 /\psi_d(\X(p)),
\]
we have
\[
0\leq \tilde p_c(p)-p_2\leq K_2/\X(p_2).
\]
\end{proposition}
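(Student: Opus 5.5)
The plan is to compare two expressions for $\|\tau_{p,p_2}^\square\|_1$: the exact series \eqref{eq:Tau_X_Diff}, which blows up like $(\tilde p_c(p)-p)/(\tilde p_c(p)-p_2)$, and the susceptibility $\X(p_2)$, which Theorem \ref{thm:PointwiseError} says is close to it.

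First, the lower bound $\tilde p_c(p)\ge p_2$. Hypothesis (a) of Theorem \ref{thm:PointwiseError} holds at $p_2$: \eqref{eq:initialization} holds at $p_2$, and $\X(p_2)\le c\X(p)^2/\psi_d(\X(p))$. Together with the bootstrap in its proof, this gives $\|\tau_{p,p_2}^\square\|_1<\infty$. By Proposition \ref{pro:PrelimBootstrap} this forces $p_2<\tilde p_c(p)$. The same theorem gives the quantitative estimate
\[
\big|\|\tau^\square_{p,p_2}\|_1-\X(p_2)\big|\le K'\X(p_2)^2\psi_d(\X(p))/\X(p)^2\le cK'\X(p_2).
\]
Hence $\|\tau^\square_{p,p_2}\|_1\le 2\X(p_2)$ once $c$ is small.

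Next, I lower-bound $\|\tau^\square_{p,p_2}\|_1$ via \eqref{eq:Tau_X_Diff}. The main term is $\frac{\tilde p_c(p)-p}{\tilde p_c(p)-p_2}E_{p,\infty}$, and I need $E_{p,\infty}\ge c'\X(p)$. From \eqref{eq:tau_E_p,infty} and the proof of Proposition \ref{pro:PrelimBootstrap}, $E_{p,k}=\Theta(\X(p))$ uniformly in $k$ with constants controlled by Lemma \ref{lem:vpd}. That lemma, however, loses a factor $\X(p)$ through $\inf h_p$, so I will argue more carefully. Dropping the indicator gives $E_{p,k}\le \X(p)$. For the lower bound I use $E_{p,k}=\X(p)\lambda_p^{-k}\proba^\otimes_p(\Tcut>k)$ together with Lemma \ref{lem:LambdaLowMemory}, which gives $\proba_p^\otimes(\Tcut>k)\ge\lambda_p^{k+1}$. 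Hence $E_{p,k}\ge \lambda_p\X(p)\ge c_\lambda\X(p)$ by Theorem \ref{thm:LowerLambda}. Letting $k\to\infty$ yields $E_{p,\infty}\ge c_\lambda \X(p)$.

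For the error terms, Lemma \ref{eq:L1Diff_After_Mixing} bounds the tail $k\ge K_1\log\X(p)$ by $\X(p)^{-4}$. Each of the first $K_1\log\X(p)$ terms has $|E_{p,k}-E_{p,\infty}|\le 2\X(p)$, and the geometric weights are at most $1$. The error sum is therefore $O(\X(p)\log\X(p))$. This gives
\[
2\X(p_2)\ge \|\tau^\square_{p,p_2}\|_1\ge c_\lambda\X(p)\frac{\tilde p_c(p)-p}{\tilde p_c(p)-p_2}-K\X(p)\log\X(p).
\]
The condition $\X(p_2)\ge F'\log(\X(p))\X(p)$ with $F'$ large makes the error at most $\X(p_2)$, so
\[
\tilde p_c(p)-p_2\le \frac{c_\lambda}{3}\,\frac{\X(p)(\tilde p_c(p)-p)}{\X(p_2)}.
\]
Finally, $\tilde p_c(p)-p=(1-p)/(\Omega\X(p)\lambda_p)\le C/\X(p)$ by \eqref{eq:def_pcp} and Theorem \ref{thm:LowerLambda}, which yields $\tilde p_c(p)-p_2\le K_2/\X(p_2)$.

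The main obstacle is the uniform lower bound $E_{p,\infty}\gtrsim\X(p)$ without losing powers of $\X(p)$. I will handle it with Lemma \ref{lem:LambdaLowMemory} as above rather than through $h_p$ bounds. A secondary check is that Theorem \ref{thm:PointwiseError}(a) legitimately applies, that is, that the bootstrap reaches $p_2$; its statement asserts exactly this.
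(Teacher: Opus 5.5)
Your key inequality points the wrong way. From
\[
2\X(p_2)\ \ge\ c_\lambda\X(p)\,\frac{\tilde p_c(p)-p}{\tilde p_c(p)-p_2}-K\X(p)\log\X(p)
\]
and $K\X(p)\log\X(p)\le \X(p_2)$, you get $\frac{\tilde p_c(p)-p}{\tilde p_c(p)-p_2}\le \frac{3\X(p_2)}{c_\lambda \X(p)}$. That is $\tilde p_c(p)-p_2\ge \frac{c_\lambda}{3}\,\frac{\X(p)(\tilde p_c(p)-p)}{\X(p_2)}$, a \emph{lower} bound on $\tilde p_c(p)-p_2$. That bound is true: it follows from $E_{p,k}\ge\lambda_p\X(p)$ for every $k$ by summing the geometric series. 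But it is not the statement.

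An upper bound on $\|\tau^\square_{p,p_2}\|_1$, combined with a lower bound on it by the main term, can only cap the blow-up factor $\frac{\tilde p_c(p)-p}{\tilde p_c(p)-p_2}$. To force that factor to be of order $\X(p_2)/\X(p)$ you must argue the other way:
\[
\tfrac12\X(p_2)\ \le\ \|\tau^\square_{p,p_2}\|_1\ \le\ \sum_{k<\ell(p)}E_{p,k}+\X(p)^{-4}+\frac{\tilde p_c(p)-p}{\tilde p_c(p)-p_2}\,E_{p,\infty},\qquad \ell(p)=\lceil K_1\log\X(p)\rceil .
\]
This needs \emph{upper} bounds $E_{p,\infty}\le K\X(p)$ and $\sum_{k<\ell(p)}E_{p,k}\le\tfrac14\X(p_2)$. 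The lower bound $E_{p,\infty}\ge c_\lambda\X(p)$ that you worked for plays no role.

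Those upper bounds are not available at this point, and your justification ``dropping the indicator gives $E_{p,k}\le\X(p)$'' fails. Since $E_{p,k}=\X(p)\lambda_p^{-k}\proba_p^\otimes(\Tcut>k)$, it would need $\proba_p^\otimes(\Tcut>k)\le\lambda_p^k$, which nothing in the paper provides. Dropping the last indicator in \eqref{eq:RewriteB_TauSquare}, as in the proof of Proposition~\ref{pro:PrelimBootstrap}, leaves the factor $\E_p^h[h_p(\vA_0)/h_p(\vA_{k-1})]$. Lemma~\ref{lem:vpd} only bounds this by $\|h_p\|_\infty/\inf_{\Clive}h_p=O(\X(p))$, so you get $E_{p,k}=O(\X(p)^2)$. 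The low-$k$ error $O(\X(p)^2\log\X(p))$ then swamps $\X(p_2)\le c\X(p)^2/\psi_d(\X(p))$. In the paper, $E_{p,\infty}\le K_4\X(p)$ and $\sum_{k\le K_1\log\X(p)}E_{p,k}\le K_3\X(p)\log\X(p)$ (Lemmas~\ref{lem:Large_k_L1} and~\ref{lem:Small_k_L1}) are \emph{derived from} this proposition.

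The missing idea is a doubling comparison that uses only the nonnegativity of $E_{p,k}$ and $E_{p,\infty}$. Take $p_3$ with $\X(p_3)=\X(p_2)/2$ and compare the splittings of \eqref{eq:Tau_X_Diff_prelim} at $p_2$ and $p_3$. The terms $k<\ell(p)$ grow by at most $\bigl(\frac{p_2-p}{p_3-p}\bigr)^{\ell(p)}\le 5/4$, using Lemma~\ref{lem:X'} and $\X(p_2)\ge F'\X(p)\log\X(p)$. The mixing corrections for $k\ge\ell(p)$ are at most $\X(p)^{-4}$ in absolute value (Lemma~\ref{eq:L1Diff_After_Mixing}). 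The main term is multiplied by exactly $\bigl(\frac{p_2-p}{p_3-p}\bigr)^{\ell(p)}\frac{\tilde p_c(p)-p_3}{\tilde p_c(p)-p_2}$.

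Theorem~\ref{thm:PointwiseError} pins $\|\tau^\square_{p,p_i}\|_1$ to $\X(p_i)$ up to relative error $1/9$ for $i=2,3$, and $\X(p_2)=2\X(p_3)$. This forces $\frac{\tilde p_c(p)-p_3}{\tilde p_c(p)-p_2}\ge 6/5$, i.e.\ $\tilde p_c(p)-p_2\le 5(p_2-p_3)$. The lower bound of Lemma~\ref{lem:X'}, which is where \eqref{eq:initialization} at $p_2$ enters, then gives $\tilde p_c(p)-p_2\le 5/(C_1\X(p_2))$.
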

\begin{proof}
Let $\ell(p)=\lceil K_1\log(\X(p)) \rceil$.
First, by Theorem \ref{thm:PointwiseError}, if $F$ is large enough, we have $\|\tau^\square_{p,p_2}\|_1<\infty$, so $p_2<\tilde p_c(p)$.
Let $p_3$ be such that $\X(p_3)=\X(p_2)/2$.
By \eqref{eq:Tau_X_Diff_prelim}, $\|\tau^\square_{p,p_2}\|_1$ equals
\[
\sum_{k <\ell(p) } \left (\frac{p_2-p}{\tilde p_c(p)-p } \right )^k E_{p,k}
+ \sum_{k\geq \ell(p)} \left (\frac{p_2-p}{\tilde p_c(p)-p}\right )^k (E_{p,k}-E_{p,\infty})
+ \left (\frac{p_2-p}{\tilde p_c(p)-p } \right )^{\ell(p)}
\frac{\tilde p_c(p)-p}{\tilde p_c(p)-p_2}E_{p,\infty},
\]
and the same equality holds at $p_3$. By Lemma \ref{eq:L1Diff_After_Mixing}, the second sum is bounded above and below by $\pm 1/\X(p)^4$. Then, by comparing this identity with the one obtained at $p_3<p_2$, we get
\begin{equation}
\|\tau^\square_{p,p_2}\|_1 \leq \frac{1}{\X(p)^4}+ \left (\frac{p_2-p}{p_3-p} \right )^{\ell(p)}  \frac{\tilde p_c(p)-p_3}{\tilde p_c(p)-p_2} \left (\|\tau_{p,p_3}^\square \|_1+\frac{1}{\X(p)^4} \right ) . \label{eq:low_k_do_not_increase}
\end{equation}
Then, if $\X(p)$ is large enough and $c$ is small enough, Theorem \ref{thm:PointwiseError} gives, for some constant $K'$,
\[
\|\tau_{p_2}-\tau_{p,p_2}^\square\|_1 \leq K' \frac{\X(p_2)^2\psi_{d}(\X(p))}{\X(p)^2} \leq \X(p_2)/9,
\]
and similarly at $p_3$. Hence, by \eqref{eq:low_k_do_not_increase}, we have
\begin{equation}
\X(p_2) \leq \frac{4}{3} \left (\frac{p_2-p}{p_3-p} \right )^{\ell(p)}  \frac{\tilde p_c(p)-p_3}{\tilde p_c(p)-p_2} \X(p_3). \label{eq:low_k_do_not_increase_bis}
\end{equation}

Next, using the general inequality $1+x\leq e^x$, then Lemma \ref{lem:X'} with $\X(p_3)=\X(p_2)/2$ and $\X(p_2)\geq F'\log(\X(p))\X(p)$, we get
\[
\left (\frac{p_2-p}{p_3-p} \right )^{\ell(p)} \leq e^{\ell(p) (p_2-p_3)/(p_3-p)} \leq (5/4),
\]
provided $F'$ is large enough. We thus deduce from \eqref{eq:low_k_do_not_increase_bis} and $\X(p_2)=2\X(p_3)$ that we must have
\[
\frac{\tilde p_c(p)-p_3}{\tilde p_c(p)-p_2} \geq 6/5,
\]
or equivalently,
\[
p_2-p_3 \geq (\tilde p_c(p)-p_2)/5,
\]
and the desired result follows from Lemma \ref{lem:X'}.
\end{proof}
\begin{proof}[Proof of Theorem \ref{thm:p_c-p_c}]
We keep the notation of the previous proposition.
Take $p_2=p_2(p)$ such that $\X(p_2)=c\X(p)^2/\psi_d(\X(p))$.
Proposition \ref{pro:p_c(p)-p_2} gives $|\tilde p_c(p)-p_2|\leq K_2/\X(p_2)$.
By Lemma \ref{lem:X'}, we have $|p_c-p_2|\leq 1/(C_1\X(p_2))$.
Thus,
\[
\tilde p_c(p)-p_c=O(1/\X(p_2))=O(\psi_d(\X(p))/\X(p)^2).\qedhere
\]
\end{proof}
We finish with some other useful consequences of Proposition \ref{pro:p_c(p)-p_2}.
\begin{lemma} \label{lem:Small_k_L1}
Let $C>0$.
Let $K_1>0$ as in Lemma \ref{eq:L1Diff_After_Mixing}.
There exist $F,F',K_3$ depending on $d,L,C$ such that for every $p<p_c$ such that $\X(p)\geq F$, if \eqref{eq:initialization} is satisfied at $p_2$ with $\X(p_2)= F' \log(\X(p)) \X(p)$, then
\[
\sum_{k\leq K_1 \log(\X(p))} E_{p,k} \leq K_3 \log(\X(p)) \X(p).
\]
\end{lemma}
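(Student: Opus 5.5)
Since every $E_{p,k}\geq 0$, I plan to compare the partial sum with $\|\tau^\square_{p,p_2}\|_1$, which is essentially $\X(p_2)$ by Theorem \ref{thm:PointwiseError}. By \eqref{eq:Tau_X_Diff_prelim},
\[
\|\tau^\square_{p,p_2}\|_1=\sum_{k\geq 0}\left(\frac{p_2-p}{\tilde p_c(p)-p}\right)^k E_{p,k}\geq \left(\frac{p_2-p}{\tilde p_c(p)-p}\right)^{\ell}\sum_{k\leq \ell}E_{p,k},
\]
where $\ell=\lfloor K_1\log(\X(p))\rfloor$. So two things are needed: an upper bound on $\|\tau^\square_{p,p_2}\|_1$ of order $\log(\X(p))\X(p)$, and a lower bound by a constant on $q^\ell$, where $q:=(p_2-p)/(\tilde p_c(p)-p)$.

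\textbf{Upper bound on $\|\tau^\square_{p,p_2}\|_1$.} Since $\X(p_2)=F'\log(\X(p))\X(p)\leq c\X(p)^2/\psi_d(\X(p))$ once $\X(p)$ is large, and \eqref{eq:initialization} holds at $p_2$, hypothesis (a) of Theorem \ref{thm:PointwiseError} is satisfied. Its $L_1$ conclusion then gives
\[
\|\tau^\square_{p,p_2}\|_1\leq \X(p_2)+K'\X(p_2)^2\psi_d(\X(p))/\X(p)^2\leq 2\X(p_2),
\]
which is at most $2F'\log(\X(p))\X(p)$.

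\textbf{Lower bound on $q^\ell$.} This is the main step. Proposition \ref{pro:p_c(p)-p_2} applies at $p_2$, since its lower range condition is exactly $\X(p_2)\geq F'\log(\X(p))\X(p)$; I take the same $F'$, enlarging it if needed. It gives $\tilde p_c(p)-p_2\leq K_2/\X(p_2)$. Writing $1-q=(\tilde p_c(p)-p_2)/(\tilde p_c(p)-p)$, I need a lower bound on $\tilde p_c(p)-p$. Since $p_2<\tilde p_c(p)$, Lemma \ref{lem:X'} gives
\[
\tilde p_c(p)-p\geq p_2-p\geq \frac{1}{C_2}\left(\frac1{\X(p)}-\frac1{\X(p_2)}\right)\geq \frac{1}{2C_2\X(p)}.
\]
Therefore
\[
1-q\leq \frac{2C_2K_2\X(p)}{\X(p_2)}=\frac{2C_2K_2}{F'\log(\X(p))}.
\]
It follows that
\[
q^\ell\geq \left(1-\frac{2C_2K_2}{F'\log\X(p)}\right)^{K_1\log\X(p)}\geq e^{-4C_2K_2K_1/F'}
\]
for large $\X(p)$, which is a constant.

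\textbf{Conclusion.} Combining the two bounds gives
\[
\sum_{k\leq \ell}E_{p,k}\leq e^{4C_2K_2K_1/F'}\cdot 2F'\log(\X(p))\X(p),
\]
so we can take $K_3=2F'e^{4C_2K_2K_1/F'}$. The only delicate point is ensuring that the constant $F'$ from Proposition \ref{pro:p_c(p)-p_2} is compatible with the hypotheses of the lemma, i.e.\ that $\X(p)$ is large enough that $F'\log(\X(p))\X(p)\leq c\X(p)^2/\psi_d(\X(p))$. This holds because $\psi_d(x)\leq \sqrt{x}$.
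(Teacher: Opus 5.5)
Your proof is correct and follows essentially the same route as the paper. You lower-bound $\|\tau^\square_{p,p_2}\|_1$ by $q^{\ell}\sum_{k\le\ell}E_{p,k}$ via \eqref{eq:Tau_X_Diff_prelim}, bound $q^{\ell}$ below by a constant using Proposition \ref{pro:p_c(p)-p_2} and Lemma \ref{lem:X'}, and bound $\|\tau^\square_{p,p_2}\|_1\le 2\X(p_2)$ by Theorem \ref{thm:PointwiseError}; the only cosmetic difference is that the paper uses $y/(x+y)\geq e^{-x/y}$ where you use $\log(1-u)\geq -2u$.
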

\begin{proof}
Let $\ell(p):=\lfloor K_1\log(\X(p))\rfloor+1$.
By \eqref{eq:Tau_X_Diff_prelim} and $p\leq p_2\leq \tilde p_c(p)$, we have
\[
\|\tau_{p,p_2}^\square\|_1\geq \left (\frac{p_2-p}{\tilde p_c(p)-p} \right)^{\ell(p)}  \sum_{k<\ell(p)} E_{p,k}.
\]
Using that $y/(x+y)\geq e^{-x/y}$ for $x,y\geq 0$, we deduce
\[
\|\tau_{p,p_2}^\square\|_1\geq e^{-\ell(p)(\tilde p_c(p)-p_2) /(p_2-p)} \sum_{k<\ell(p)} E_{p,k}.
\]
By Proposition \ref{pro:p_c(p)-p_2}, Lemma \ref{lem:X'}, and $\X(p_2)=F'\log(\X(p))\X(p)$, the exponential is lower-bounded by a constant $c>0$ depending on $d,L,C$.
By Theorem \ref{thm:PointwiseError}, we have $\|\tau_{p,p_2}^\square\|_1\leq 2 \X(p_2)$.
Thus,
\[
\sum_{k<\ell(p)} E_{p,k} \leq (2/c)\X(p_2),
\]
and the desired result follows.
\end{proof}
\begin{lemma}\label{lem:Large_k_L1}
In the setting of Lemma \ref{lem:Small_k_L1}, there exists $K_4$ such that if $F'$ is large enough, under the same assumptions, we have
\[
E_{p,\infty} \leq K_4 \X(p).
\]
\end{lemma}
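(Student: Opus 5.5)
We want $E_{p,\infty}\le K_4\X(p)$, and we will obtain it by the same comparison as in Lemma \ref{lem:Small_k_L1}, this time isolating the tail of the series \eqref{eq:Tau_X_Diff_prelim} instead of its head. Fix $p_2$ with $\X(p_2)=F'\log(\X(p))\X(p)$ and set $\ell(p)=\lfloor K_1\log(\X(p))\rfloor+1$. Since every $E_{p,k}$ is nonnegative, \eqref{eq:Tau_X_Diff_prelim} gives
\[
\|\tau^\square_{p,p_2}\|_1\ \ge\ \sum_{k\ge \ell(p)}\Big(\frac{p_2-p}{\tilde p_c(p)-p}\Big)^k E_{p,k}
\ \ge\ \sum_{k\ge \ell(p)}\Big(\frac{p_2-p}{\tilde p_c(p)-p}\Big)^k \big(E_{p,\infty}-|E_{p,k}-E_{p,\infty}|\big).
\]
Write $q=(p_2-p)/(\tilde p_c(p)-p)<1$. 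Lemma \ref{eq:L1Diff_After_Mixing} bounds the correction by $1/\X(p)^4$, so
\[
\|\tau^\square_{p,p_2}\|_1+\X(p)^{-4}\ \ge\ q^{\ell(p)}\,\frac{E_{p,\infty}}{1-q},
\qquad \frac{1}{1-q}=\frac{\tilde p_c(p)-p}{\tilde p_c(p)-p_2}.
\]

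Next we bound each of the three factors. For $q^{\ell(p)}$: as in the proof of Lemma \ref{lem:Small_k_L1}, Proposition \ref{pro:p_c(p)-p_2} and Lemma \ref{lem:X'} give $q^{\ell(p)}\ge c>0$; this needs $F'$ large enough that the hypotheses of Proposition \ref{pro:p_c(p)-p_2} hold at $p_2$. For $1/(1-q)$: we need a lower bound. Lemma \ref{lem:X'} gives $p_2-p\ge (1/\X(p)-1/\X(p_2))/C_2\ge 1/(2C_2\X(p))$, and Proposition \ref{pro:p_c(p)-p_2} gives $\tilde p_c(p)-p_2\le K_2/\X(p_2)$. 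Hence
\[
\frac{1}{1-q}\ \ge\ \frac{p_2-p}{\tilde p_c(p)-p_2}\ \ge\ \frac{\X(p_2)}{2C_2K_2\X(p)}.
\]
For the left-hand side, Theorem \ref{thm:PointwiseError} gives $\|\tau^\square_{p,p_2}\|_1\le 2\X(p_2)$.

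Combining the three bounds,
\[
2\X(p_2)+\X(p)^{-4}\ \ge\ c\,E_{p,\infty}\,\frac{\X(p_2)}{2C_2K_2\X(p)},
\]
so $E_{p,\infty}\le K_4\X(p)$ with $K_4$ depending only on $d,L,C$.

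The delicate step is checking that the ratio $(p_2-p)/(\tilde p_c(p)-p_2)$ really is of order $\X(p_2)/\X(p)$. That gain is exactly what cancels the factor $\X(p_2)$ coming from $\|\tau^\square_{p,p_2}\|_1$. It rests on Proposition \ref{pro:p_c(p)-p_2} applying at this particular $p_2$, which is why $F'$ must be taken large. The lemma uses \eqref{eq:initialization} at $p_2$; that condition yields \eqref{eq:initialization} at $p\le p_2$ by monotonicity, so Theorem \ref{thm:PointwiseError} and Lemma \ref{eq:L1Diff_After_Mixing} are both available.
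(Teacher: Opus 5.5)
Your proof is correct and follows the paper's argument essentially step for step. You lower-bound $\|\tau^\square_{p,p_2}\|_1$ by the post-mixing tail of \eqref{eq:Tau_X_Diff_prelim}, with the error controlled by Lemma \ref{eq:L1Diff_After_Mixing}, and then combine three bounds: $\|\tau^\square_{p,p_2}\|_1\le 2\X(p_2)$, $\bigl((p_2-p)/(\tilde p_c(p)-p)\bigr)^{\ell(p)}\ge c$, and $(p_2-p)/(\tilde p_c(p)-p_2)\ge \X(p_2)/(2C_2K_2\X(p))$ from Lemma \ref{lem:X'} and Proposition \ref{pro:p_c(p)-p_2}.
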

\begin{proof}
By \eqref{eq:Tau_X_Diff_prelim} and Lemma \ref{eq:L1Diff_After_Mixing}, we have
\[
\|\tau_{p,p_2}^\square\|_1 \geq \left (\frac{p_2-p}{\tilde p_c(p)-p} \right )^{\ell(p)} \frac{\tilde p_c(p)-p}{\tilde p_c(p)-p_2}E_{p,\infty}-1/\X(p)^4.
\]
As in the previous proof, we have $\|\tau_{p,p_2}^\square\|_1\leq 2 \X(p_2)$, and $((p_2-p)/(\tilde p_c(p)-p))^{\ell(p)}\geq c$ for some constant $c>0$ depending on $d,L,C$. By Lemma \ref{lem:X'} and Proposition \ref{pro:p_c(p)-p_2},
\[
\frac{\tilde p_c(p)-p}{\tilde p_c(p)-p_2} \geq \frac{p_2-p}{\tilde p_c(p)-p_2}\geq \frac{1/(2C_2\X(p))}{K_2/\X(p_2)}=\frac{1}{2K_2C_2}\frac{\X(p_2)}{\X(p)}.
\]
Hence,
\[
E_{p,\infty} \leq \frac{2\X(p_2)+1/\X(p)^4}{c/2} 2K_2 C_2 \frac{\X(p)}{\X(p_2)} \leq 9K_2 C_2 \X(p). \qedhere
\]
\end{proof}

\subsection{Asymptotic for $\X(p)$: proof of Theorem \ref{thm:susceptibility}} \label{sec:Chi}
We assume throughout the section that \eqref{eq:etaUpper} holds.
The main idea is to approximate $\X(p')$ by $\|\tau_{p,p'}^\square\|_1$ using Theorem \ref{thm:PointwiseError} and \eqref{eq:Tau_X_Diff}.
The main issue is that Theorem \ref{thm:PointwiseError} is more accurate when $\X(p')$ is close to $\X(p)$, while the smaller $\X(p')$ is, the less precisely we can estimate $\|\tau_{p,p'}^\square\|_1$.
This trade-off would usually call for taking an optimal $p'$, but we can actually be even more precise by performing some well-chosen operations on \eqref{eq:Tau_X_Diff}.
Our first step is to replace $\tilde p_c(p)$ by $p_c$ in \eqref{eq:Tau_X_Diff}.
Throughout the section we let $K_1$ be as in Lemma \ref{eq:L1Diff_After_Mixing}.
We also let $\ell(p):= K_1 \log(\X(p))$.
\begin{lemma} \label{lem:replace_pc}
As $p,p'\uparrow p_c$ with $\X(p)\log(\X(p))\ll \X(p') \ll \X(p)^2/\psi_d(\X(p))$, we have
\[
\|\tau_{p,p'}^\square \|_1 =\sum_{0\leq k<\ell(p)} \left (\frac{p'-p}{p_c-p } \right )^k (E_{p,k}-E_{p,\infty}) + \frac{p_c-p}{p_c-p'} E_{p,\infty} +O(\psi_d(\X(p))\X(p')^2/\X(p)^2).
\]
\end{lemma}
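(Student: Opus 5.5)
The starting point is the exact identity \eqref{eq:Tau_X_Diff}. I would split the series at $k=\ell(p)$ and control three things: the tail $k\geq \ell(p)$ of the series, the effect of replacing $\tilde p_c(p)$ by $p_c$ in the ratios $(p'-p)/(\tilde p_c(p)-p)$ for $k<\ell(p)$, and the same replacement in the main term $\frac{\tilde p_c(p)-p}{\tilde p_c(p)-p'}E_{p,\infty}$. Throughout I use that \eqref{eq:etaUpper} gives \eqref{eq:initialization} at every $p\le p_c$ with $C=B$, since $\tau_p\le\tau_{p_c}$. So Theorem \ref{thm:p_c-p_c}, Lemma \ref{lem:X'}, Lemmas \ref{eq:L1Diff_After_Mixing}, \ref{lem:Small_k_L1} and \ref{lem:Large_k_L1}, and Proposition \ref{pro:p_c(p)-p_2} are all available. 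Write $\delta:=|\tilde p_c(p)-p_c|=O(\psi_d(\X(p))/\X(p)^2)$. By Lemma \ref{lem:X'}, $p_c-p\asymp 1/\X(p)$ and $p_c-p'\asymp 1/\X(p')$. The hypothesis $\X(p')\ll \X(p)^2/\psi_d(\X(p))$ then gives $\delta\ll p_c-p'$.

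\textbf{Tail.} The terms with $k\geq\ell(p)$ have ratio at most $1$, so by Lemma \ref{eq:L1Diff_After_Mixing} they contribute at most $1/\X(p)^4$, which is negligible.

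\textbf{Small $k$.} For $k<\ell(p)$, I write $a=\frac{p'-p}{\tilde p_c(p)-p}$ and $b=\frac{p'-p}{p_c-p}$. Then $|a-b|\leq O(\delta\X(p))=O(\psi_d(\X(p))/\X(p))$. By the mean value theorem, $|a^k-b^k|\leq k|a-b|$. Multiplying by $|E_{p,k}-E_{p,\infty}|\leq E_{p,k}+E_{p,\infty}$ and summing over $k<\ell(p)$, Lemmas \ref{lem:Small_k_L1} and \ref{lem:Large_k_L1} give an error of order
\[
\ell(p)\,\frac{\psi_d(\X(p))}{\X(p)}\,\X(p)\log\X(p)=O\bigl(\psi_d(\X(p))\log^2\X(p)\bigr).
\]
This is $O(\psi_d(\X(p))\X(p')^2/\X(p)^2)$ because $\X(p')\gg\X(p)\log\X(p)$. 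To apply Lemma \ref{lem:Small_k_L1} I need a $p_2$ with $\X(p_2)=F'\log(\X(p))\X(p)$. Such a $p_2$ exists since $\X$ is continuous and diverges at $p_c$, and \eqref{eq:initialization} holds there.

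\textbf{Main term.} The difference between the two main terms is
\[
E_{p,\infty}\left(\frac{\tilde p_c(p)-p}{\tilde p_c(p)-p'}-\frac{p_c-p}{p_c-p'}\right)
=E_{p,\infty}\,\frac{(\tilde p_c(p)-p_c)(p-p')}{(\tilde p_c(p)-p')(p_c-p')}.
\]
Since $\delta\ll p_c-p'$, the denominator is $\asymp(p_c-p')^2\asymp\X(p')^{-2}$, and $|p'-p|\leq p_c-p=O(1/\X(p))$. Using $E_{p,\infty}=O(\X(p))$ from Lemma \ref{lem:Large_k_L1}, this difference is $O(\delta\,\X(p')^2)=O(\psi_d(\X(p))\X(p')^2/\X(p)^2)$, which is exactly the claimed error term.

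I expect the main obstacle to be this last step. The error bound is tight here, so it is essential to know $\tilde p_c(p)$ to precision $\psi_d/\X(p)^2$ (Theorem \ref{thm:p_c-p_c}). One also has to check that $\tilde p_c(p)-p'$ stays comparable to $p_c-p'$ and does not become small or negative. That follows from $\delta\ll 1/\X(p')$, together with Proposition \ref{pro:p_c(p)-p_2}, which gives $p'<\tilde p_c(p)$.
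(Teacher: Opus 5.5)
Your proposal is correct and follows essentially the same route as the paper. You use the same three-way split: the tail $k\ge\ell(p)$ via Lemma \ref{eq:L1Diff_After_Mixing}; the replacement of $\tilde p_c(p)$ by $p_c$ for $k<\ell(p)$ via $|a^k-b^k|\le k|a-b|$ together with Lemmas \ref{lem:Small_k_L1} and \ref{lem:Large_k_L1}; and the exact algebraic identity for the main term combined with Theorem \ref{thm:p_c-p_c}. Your observation that $\delta\ll p_c-p'$ already forces $\tilde p_c(p)-p'\asymp p_c-p'>0$ is a slightly cleaner way to get both $p'<\tilde p_c(p)$ and the lower bound on the denominator than the paper's appeal to Theorem \ref{thm:PointwiseError} and Proposition \ref{pro:p_c(p)-p_2}.
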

\begin{proof}
Let $S$ denote the above sum.
First, by Theorem \ref{thm:PointwiseError}, if $\X(p)$ is large enough, we have $\|\tau_{p,p'}^\square\|_1<\infty$, so $p'<\tilde p_c(p)$. Recall that, by \eqref{eq:Tau_X_Diff}, we have
\[ \|\tau_{p,p'}^\square\|_1= \sum_{k=0}^\infty \left (\frac{p'-p}{\tilde p_c(p)-p } \right )^k(E_{p,k}-E_{p,\infty}) +\frac{\tilde p_c(p)-p}{\tilde p_c(p)-p'}E_{p,\infty}. \]
So $|S-\|\tau_{p,p'}^\square\|_1|\leq S_1+S_2+S_3$, where $S_1, S_2, S_3$ are defined below. First, using Lemma \ref{eq:L1Diff_After_Mixing} and $p<p'<\tilde p_c(p)$, we obtain
\[ S_1 := \sum_{k\geq \ell(p)} \left (\frac{p'-p}{p_c-p } \right )^k |E_{p,k}-E_{p,\infty}|\leq 1/\X(p)^4. \]
Then, using the fact that $|(a/b)^k-(a/c)^k|\leq k|b-c|/b$ for every $0<a<b,c$ and $k\geq 0$, we get
\[ S_2:= \sum_{0\leq k<\ell(p)} \left |\left (\frac{p'-p}{p_c-p } \right )^k-\left (\frac{p'-p}{\tilde p_c(p)-p}  \right )^k \right | |E_{p,k}-E_{p,\infty}|\leq \ell(p)\frac{|p_c-\tilde p_c(p)|}{p_c-p} \sum_{0\leq k <\ell(p)}|E_{p,k}-E_{p,\infty}|.\]
Thus, using Theorem \ref{thm:p_c-p_c}, Lemmas \ref{lem:X'}, \ref{lem:Small_k_L1}, \ref{lem:Large_k_L1}, and $\X(p)\log(\X(p))\ll \X(p')$, we get
\[ S_2=O(\ell(p)^2\psi_d(\X(p)))=O(\psi_d(\X(p))\X(p')^2/\X(p)^2).\]
Finally, by Theorem \ref{thm:p_c-p_c}, Lemma \ref{lem:X'}, Lemma \ref{lem:Large_k_L1}, and Proposition \ref{pro:p_c(p)-p_2}, we get
\[ S_3:= E_{p,\infty}\left |\frac{p_c-p}{p_c-p'}-\frac{\tilde p_c(p)-p}{\tilde p_c(p)-p'} \right |=O(\X(p))\frac{(p'-p)|p_c-\tilde p_c(p)|}{(\tilde p_c(p)-p')(p_c-p')} =O(\psi_d(\X(p))\X(p')^2/\X(p)^2).\]
The desired result follows by summing the bounds on $S_1,S_2,S_3$.
\end{proof}
From now on, we fix $0<\eps <1/4$ and take $\X(p')$ such that $\X(p)^{1+\eps/2} \leq \X(p')\leq \X(p)^{1+\eps}$. Fix $N$ large enough depending on $\eps$. By Taylor's theorem, as $p\uparrow p_c$, uniformly over $k\leq \ell(p)$, we have
\[
\left (\frac{p'-p}{p_c-p } \right )^k= \left (1- \frac{p_c-p'}{p_c-p } \right )^k=\sum_{i=0}^N \binom{k}{i} \left ( -\frac{p_c-p'}{p_c-p} \right )^i +O(\X(p)^{-2}).
\]
Thus, summing and using Lemmas \ref{lem:Small_k_L1}, \ref{lem:Large_k_L1}, and \ref{lem:replace_pc}, we have as $p\uparrow p_c$,
\begin{equation}
\|\tau_{p,p'}^\square \|_1= \sum_{0\leq k <\ell(p)}\sum_{i=0}^N \binom{k}{i} \left (-\frac{p_c-p'}{p_c-p} \right )^i (E_{p,k}-E_{p,\infty}) + \frac{p_c-p}{p_c-p'} E_{p,\infty} +O(\psi_d(\X(p))\X(p)^{2\eps}). \label{eq:Approx_Poly_X}
\end{equation}
By Theorem \ref{thm:PointwiseError}, we also have $\X(p')= \|\tau_{p,p'}^\square \|_1+O(\psi_d(\X(p))\X(p)^{2\eps})$.

The main idea is then to use the following operators to cancel every term in \eqref{eq:Approx_Poly_X}.
First, let $\Phi :x\in [0,p_c]\mapsto \X(p_c-x)$.
Fix an arbitrary $\alpha>1$.
For every $\lambda \in \R$, let $P_\lambda$ be the operator such that for every $f:[0,p_c]\mapsto \R$, we have
\[
P_\lambda(f):x\mapsto f(x)-\alpha^\lambda f(x/\alpha).
\]
Note that if $f$ is of the form $f:p\mapsto a x^{\lambda'}$, then $P_\lambda(f)$ is also of the form $P_\lambda (f)\mapsto a' x^{\lambda'}$; moreover, if $\lambda=\lambda'$, then $P_\lambda(f)=0$.
Thus, \eqref{eq:Approx_Poly_X} yields the following lemma.
\begin{lemma} \label{lem:Filled_Operator_X}
For every $\eps>0$, if $N$ is large enough, we have as $x\to 0$,
\[
[P_{-1}P_0P_1\dots P_N]\Phi(x)=O(\psi_d(1/x)x^{-2\eps}).
\]
\end{lemma}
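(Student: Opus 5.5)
The plan is to apply the composite operator $P:=P_{-1}P_0P_1\cdots P_N$ to both sides of \eqref{eq:Approx_Poly_X}, rewritten as an identity in the variable $x=p_c-p'$, with $p$ fixed in terms of $x$. Concretely, given small $x$, I choose $p=p(x)$ with $\X(p)$ chosen so that for every $j\in\{0,1,\dots,N+2\}$ the point $p'_j:=p_c-x\alpha^{-j}$ satisfies $\X(p)^{1+\eps/2}\leq \X(p'_j)\leq \X(p)^{1+\eps}$. By Lemma \ref{lem:X'}, $\X(p'_j)\asymp \alpha^j/x$, so this is possible once $x$ is small, because $\alpha^{N+2}$ is a constant.

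Expanding $P\Phi(x)$ gives a fixed finite linear combination, with coefficients depending only on $\alpha,N$, of the values $\Phi(x\alpha^{-j})=\X(p'_j)$, $0\leq j\leq N+2$. For each $j$, Theorem \ref{thm:PointwiseError} together with \eqref{eq:Approx_Poly_X} gives
\[
\X(p'_j)=\sum_{i=0}^N a_i(p)\,(x\alpha^{-j})^i+\frac{E_{p,\infty}(p_c-p)}{x\alpha^{-j}}+O(\psi_d(\X(p))\X(p)^{2\eps}),
\]
where $a_i(p):=(-(p_c-p))^{-i}\sum_{k<\ell(p)}\binom{k}{i}(E_{p,k}-E_{p,\infty})$. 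These coefficients depend on $p$ only, not on $x$ or $j$. Hence, as a function of the variable $y\in\{x\alpha^{-j}\}$, the main term is exactly a linear combination of the monomials $y^{-1},y^0,\dots,y^N$, with coefficients constant in $y$.

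The key observation is that $P_\lambda$ annihilates $y\mapsto y^\lambda$ and maps other monomials to scalar multiples of themselves. Moreover, the $P_\lambda$ commute, since each is a polynomial in the dilation operator $f\mapsto f(\cdot/\alpha)$. Therefore $P$ kills every monomial $y^{-1},\dots,y^N$, and only the error terms survive. Those come with bounded coefficients, since $P$ has norm at most $\prod(1+\alpha^{\lambda})$ on sup norms over the finite set of sample points. We conclude $P\Phi(x)=O(\psi_d(\X(p))\X(p)^{2\eps})$.

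Finally, $\X(p)\leq \X(p'_0)^{1/(1+\eps/2)}\ll 1/x$ by Lemma \ref{lem:X'}. Since $\psi_d$ is nondecreasing, $\psi_d(\X(p))\X(p)^{2\eps}\le \psi_d(1/x)\,x^{-2\eps}$ up to constants, which is the claim.

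The main obstacle is making sure the error in \eqref{eq:Approx_Poly_X} is uniform across the $N+3$ sample points for a single fixed $p$. This is the step I would check first. It requires that the $O(\cdot)$ terms from the Taylor truncation (degree $N$ chosen so that $(\X(p)/\X(p'))^{N+1}\ell(p)^{N+1}$ times $\sum_k|E_{p,k}|$ is $O(\X(p)^{-2})$), from Lemma \ref{lem:replace_pc}, and from Theorem \ref{thm:PointwiseError} hold uniformly for $\X(p')$ ranging in $[\X(p)^{1+\eps/2},\X(p)^{1+\eps}]$. All the cited bounds depend only on this range, so I expect this to go through, with $N$ taken large depending on $\eps$.
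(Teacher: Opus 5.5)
Your proposal is correct and takes essentially the paper's route: you choose $p=p(x)$ so that all sample points $x\alpha^{-j}$, $0\le j\le N+2$, lie in the window $\X(p)^{1+\eps/2}\le\X(p')\le\X(p)^{1+\eps}$, where \eqref{eq:Approx_Poly_X} and Theorem~\ref{thm:PointwiseError} make $\Phi$ equal to a fixed combination of $y^{-1},y^0,\dots,y^N$ up to $O(\psi_d(1/x)x^{-2\eps})$, and then use that $P_{-1}P_0\cdots P_N$ annihilates each of these monomials. The one point to add is that you should first reduce to small $\eps$ (the paper takes $\eps<1/4$); this costs nothing since the bound weakens as $\eps$ grows, and it is needed so that the window stays inside the range $\X(p')\ll\X(p)^2/\psi_d(\X(p))$ required by Lemma~\ref{lem:replace_pc} and Theorem~\ref{thm:PointwiseError}.
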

\begin{proof}
For $p<p_c$, let
\[
\Phi_p: x\in (0,p_c]\mapsto \sum_{i=0}^N (-x)^i \sum_{0\leq k <\ell(p)} \binom{k}{i}\frac{E_{p,k}-E_{p,\infty}}{(p_c-p)^i} +\frac{p_c-p}{x}E_{p,\infty}.
\]
Recall that $\X(p)\leq \X(p')$, and by Lemma \ref{lem:X'}, $\X(p')=\Theta(1/(p_c-p'))$. Thus, by \eqref{eq:Approx_Poly_X} and Theorem \ref{thm:PointwiseError}, writing $x=p_c-p'$, we have
\begin{equation}
\Phi(x)= \Phi_p(x) + O(\psi_d(1/x)x^{-2\eps}). \label{eq:Gamma-->Gamma_p}
\end{equation}
The formula holds as long as $\X(p)^{1+\eps/2}\leq \X(p')\leq \X(p)^{1+\eps}$.
In other words, by Lemma \ref{lem:X'}, the formula holds as long as $1/(\X(p)^{1+\eps}C_1) \leq x\leq 1/(\X(p)^{1+\eps/2}C_2)$.
Thus, for every $x$ small enough, there exists $p=p(x)<p_c$ such that \eqref{eq:Gamma-->Gamma_p} holds for every $y\in [x/\alpha^N,\alpha x]$.
Hence,
\[
[P_{-1}P_0P_1\dots P_N]\Phi(x)= [P_{-1}P_0P_1\dots P_N]\Phi_{p(x)}(x)+O(\psi_d(1/x)x^{-2\eps})=O(\psi_d(1/x)x^{-2\eps}),
\]
using for the last equality the fact that $\Phi_{p(x)}$ is of the form $y\mapsto \sum_{i=-1}^N a_i y^i$.
\end{proof}
The rest of the proof is basic analysis to invert the operator $P_{-1}P_0\dots P_N$.
\begin{lemma}\label{lem:downward_PolyPerateur1}
Let $\theta<\lambda$. If $P_\lambda f(x)=O(x^{\theta})$ as $x\to 0$, and $f$ is bounded on $(p_c/\alpha,p_c]$, then $f=O(x^{\theta})$ as $x\to 0$.
\end{lemma}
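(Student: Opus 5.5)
We argue by iterating the defining relation of $P_\lambda$ downward in scale. Since $P_\lambda f(x)=O(x^\theta)$ as $x\to 0$, there exist $x_0>0$ and $M>0$ such that $|f(x)-\alpha^\lambda f(x/\alpha)|\leq Mx^\theta$ for every $x\in(0,x_0]$. Rewriting this as
\[
f(x/\alpha)=\alpha^{-\lambda}f(x)-\alpha^{-\lambda}P_\lambda f(x),
\]
we can propagate bounds from a fixed interval down to $0$. For every $n\geq 1$ and $y\in(x_0/\alpha^{n+1},x_0/\alpha^{n}]$, write $y=x/\alpha^n$ with $x\in(x_0/\alpha,x_0]$. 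Iterating $n$ times gives
\[
f(y)=\alpha^{-n\lambda}f(x)-\sum_{j=0}^{n-1}\alpha^{-(j+1)\lambda}P_\lambda f(x/\alpha^{j}).
\]

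\textbf{Boundedness on the base interval.} Before using this identity we need $f$ bounded on $(x_0/\alpha,x_0]$, which is exactly where the hypothesis that $f$ is bounded on $(p_c/\alpha,p_c]$ enters. If $x_0<p_c/\alpha$, we propagate boundedness downward from $(p_c/\alpha,p_c]$ in finitely many steps. For this we also need $P_\lambda f$ to be bounded on the compact range $[x_0/\alpha,p_c]$, i.e. away from $0$. In the application $f$ is built from $\Phi$ composed with the $P_\mu$'s, hence continuous and therefore bounded on compacts of $(0,p_c]$, so this is harmless. We state this implicitly, as the lemma intends: $P_\lambda f$ is bounded on every $[\delta,p_c]$. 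Alternatively one may shrink nothing and simply take $x_0$ to be the scale where the $O$-bound starts, using boundedness of $f$ on $(x_0/\alpha,p_c]$, which follows from the same local boundedness. This is the one point requiring care, and we handle it by this mild regularity remark.

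\textbf{The bound.} With $|f|\leq B_0$ on $(x_0/\alpha,x_0]$, the identity gives
\[
|f(y)|\leq \alpha^{-n\lambda}B_0+M\sum_{j=0}^{n-1}\alpha^{-(j+1)\lambda}(x/\alpha^j)^\theta
\leq \alpha^{-n\lambda}B_0+Mx^\theta\alpha^{-\lambda}\sum_{j=0}^{n-1}\alpha^{-j(\lambda+\theta)}.
\]
This inequality is where $\theta<\lambda$ must be compared against $y^\theta=x^\theta\alpha^{-n\theta}$. Multiplying the sum term by $\alpha^{n\theta}$ turns it into $\sum_j\alpha^{(n-j)\theta-(j+1)\lambda}$. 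Reindexing properly, each term is $(x/\alpha^j)^\theta\alpha^{-(j+1)\lambda}$, and relative to $y^\theta$ this equals $\alpha^{(n-j)\theta}\alpha^{-(j+1)\lambda}$. This does not obviously sum well, so we reorganise: set $g(y)=f(y)/y^\lambda$, so that the relation becomes $g(x)-g(x/\alpha)=P_\lambda f(x)/x^\lambda=O(x^{\theta-\lambda})$. Telescoping then gives
\[
|g(y)|\leq |g(x)|+\sum_{j<n}M\,(x/\alpha^j)^{\theta-\lambda},
\]
a geometric series with ratio $\alpha^{\lambda-\theta}>1$, dominated by its last term, which is $O(y^{\theta-\lambda})$. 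Multiplying back by $y^\lambda$ gives
\[
|f(y)|\leq y^\lambda\sup|g|+O(y^\theta)=O(y^\theta),
\]
since $y^\lambda\leq y^\theta$ for small $y$ because $\theta<\lambda$. This proves the lemma.
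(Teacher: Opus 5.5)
Your final argument is correct and is the paper's proof in normalized form. The paper telescopes $\alpha^{-i\lambda}f(x\alpha^i)$ from $x$ up to the base interval $(p_c/\alpha,p_c]$; this is exactly your telescoping of $g(y)=f(y)/y^\lambda$ multiplied by $x^\lambda$, with the same geometric summation driven by $\theta<\lambda$. The paper also tacitly uses the bound on $P_\lambda f$ on all of $(0,p_c]$. That is the same unstated local regularity you make explicit, and it holds in the application, where every function involved is continuous on $(0,p_c]$.

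Your first displayed iteration has the wrong weights: the coefficient of $P_\lambda f(x/\alpha^j)$ should be $\alpha^{-(n-j)\lambda}$, not $\alpha^{-(j+1)\lambda}$. That is why it appeared not to sum, so drop that false start.
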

\begin{proof}
By taking $n$ such that $\alpha^n x\in (p_c/\alpha,p_c]$, we have
\begin{align*}
f(x) & =\sum_{i=0}^{n-1} \left ( \alpha^{-i\lambda}f(x\alpha^i)- \alpha^{-(i+1)\lambda} f(x\alpha^{i+1}) \right ) + \alpha^{-n\lambda}f(x\alpha^n)
\\ & =\sum_{i=0}^{n-1} \alpha^{-(i+1)\lambda}P_\lambda f(x\alpha^{i+1})+\alpha^{-n\lambda}f(x\alpha^n).
\end{align*}
Since $f$ is bounded on $(p_c/\alpha,p_c]$, we have $ \alpha^{-n\lambda}f(x\alpha^n)=O(x^{\lambda})$. So
\[
f(x) =O\left ( \sum_{i=0}^\infty  \alpha^{-i\lambda} (x\alpha^i)^{\theta} \right ) +O(x^{\lambda})=O(x^{\theta}). \qedhere
\]
\end{proof}
\begin{lemma} \label{lem:downward_PolyPerateur2}
Let $\lambda<\theta$. Assume that $f$ is monotone, and that for every $\alpha>1$, we have $P_\lambda f(x)=O(x^{\theta})$ as $x\to 0$. Then there exists $l\in \R$ such that $f(x)=l x^\lambda +O(x^{\theta})$ as $x\to 0$.
\end{lemma}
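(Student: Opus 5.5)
Fix $\alpha>1$ and set $g(x):=x^{-\lambda}f(x)$. By definition of $P_\lambda$,
\[
g(x)-g(x/\alpha)=x^{-\lambda}\bigl(f(x)-\alpha^{\lambda}f(x/\alpha)\bigr)=x^{-\lambda}P_\lambda f(x)=O(x^{\theta-\lambda}).
\]
Since $\theta-\lambda>0$, the increments of $g$ along the geometric sequence $x,x/\alpha,x/\alpha^2,\dots$ are summable. Hence for each fixed $x$ the limit $\lim_{n\to\infty}g(x\alpha^{-n})=:l(x)$ exists. Telescoping, as in the proof of Lemma \ref{lem:downward_PolyPerateur1} but in the downward direction, gives
\[
|g(x)-l(x)|\leq\sum_{n\ge0}\bigl|g(x\alpha^{-n})-g(x\alpha^{-n-1})\bigr|=O\Bigl(\sum_n (x\alpha^{-n})^{\theta-\lambda}\Bigr)=O(x^{\theta-\lambda}).
\]
The $O$ constant here is uniform over small $x$, because the hypothesis is a uniform $O$ as $x\to0$. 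Also $l(x)=l(x/\alpha)$, so $l$ is ``$\alpha$-log-periodic''.

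The main obstacle is to show that $l$ is actually constant; the hypothesis for a single $\alpha$ only gives log-periodicity. There are two available tools: the hypothesis holds for every $\alpha>1$, and $f$ is monotone. I would use both. For any $\alpha'$, the limit along $x\alpha'^{-n}$ exists as well, and the two limits agree whenever the two sequences share infinitely many terms. In particular $l_\alpha(x)=l_{\alpha^q}(x)$ for every rational $q>0$, since $\alpha^{1/m}$ satisfies the hypothesis too. Hence $l$ is invariant under $x\mapsto x\alpha^{r}$ for a dense set of exponents $r$.

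It remains to pass from density to full constancy, which is where monotonicity enters. Suppose, say, $f$ is nondecreasing. Then for $y<x$ we have $g(y)y^{\lambda}\le g(x)x^{\lambda}$. Take $x<x'$ in a fixed interval $[a,\alpha a]$ and scale both down by the same factor $\alpha^{-n}$. Letting $n\to\infty$ gives $l(x)x^{\lambda}\le l(x')x'^{\lambda}$, so $x\mapsto l(x)x^\lambda$ is monotone. A log-periodic function $l$ for which $l(x)x^\lambda$ is monotone has one-sided limits everywhere, and invariance under a dense set of multiplicative shifts then forces $l$ to be constant: compare the values at $x$ and at $x\alpha^{r}$ with $r\to0$ from either side. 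I expect this step to need some care with signs of $\lambda$, which I would handle by replacing $f$ with $-f$ if needed.

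Finally, with $l$ constant, the estimate $|g(x)-l|=O(x^{\theta-\lambda})$ multiplied by $x^{\lambda}$ gives $f(x)=lx^\lambda+O(x^\theta)$. Alternatively, once constancy is in hand, one can apply Lemma \ref{lem:downward_PolyPerateur1}-style bookkeeping directly to $f-lx^\lambda$, using that $P_\lambda(lx^\lambda)=0$.
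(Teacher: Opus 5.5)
Your proof is correct and follows the same route as the paper: telescoping shows that $\alpha^{n\lambda}f(x/\alpha^n)=x^\lambda g(x\alpha^{-n})$ converges with an $O(x^\theta)$ tail, and then the arbitrariness of $\alpha$ together with monotonicity upgrades this to a genuine limit $l$. The paper asserts that last step in one line, and your argument fills it in correctly: rational powers of $\alpha$ share subsequences, so the limit function is invariant under a dense multiplicative group, and monotonicity of $l(x)x^\lambda$ then forces continuity and hence constancy of $l$ (the sign of $\lambda$ plays no role, since only $x^\lambda>0$ is used).
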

\begin{proof}
For every $n\in \N$, we have
\begin{equation}
f(x)=\sum_{i=0}^{n-1}
\left ( \alpha^{i\lambda}f\left (\frac{x}{\alpha^i} \right)
- \alpha^{(i+1)\lambda} f\left (\frac{x}{\alpha^{i+1}}\right ) \right )
+ \alpha^{n\lambda}f\left (\frac{x}{\alpha^n}\right)
=\sum_{i=0}^{n-1} \alpha^{i\lambda}P_\lambda f\left (\frac{x}{\alpha^i}\right )
+\alpha^{n\lambda}f\left (\frac{x}{\alpha^n}\right). \label{eq:Telescopage}
\end{equation}
Using $\lambda<\theta$ and $P_\lambda f(x)=O(x^{\theta})$, we get that the last sum converges as $n\to \infty$.
Thus, as $n\to \infty$, $\alpha^{n\lambda}f(x/\alpha^n)$ converges.
Since, by assumption, $\alpha>1$ is arbitrary, and $f$ is monotone, we deduce that $f(x)/x^\lambda$ converges toward some limit $l\in \R$.
By taking $n\to \infty$ in \eqref{eq:Telescopage}, we deduce
\[
f(x)=O\left ( \sum_{i=0}^\infty  \alpha^{i\lambda} \left (\frac{x}{\alpha^i}\right )^{\theta} \right ) +lx^\lambda =lx^\lambda+O(x^{\theta}). \qedhere
\]
\end{proof}
\begin{proof}[Proof of Theorem \ref{thm:susceptibility}]
Let $-1<\theta<-\1_{d=7}/2-2\eps$.
By Lemma \ref{lem:Filled_Operator_X} we have $P_{-1}\dots P_N \Phi(x)=O(x^{\theta})$.
A downward induction on $i\in [-1,N]$ shows that $P_{-1}\dots P_i \Phi(x)=O(x^{\theta})$ as $x\to 0$.
Indeed, if the result holds at $i\geq 0$, then by Lemma \ref{lem:downward_PolyPerateur1}, it holds at $i-1$.
We deduce that $P_{-1}\Phi(x)=O(x^{\theta})$.
Thus, by Lemma \ref{lem:downward_PolyPerateur2} we get that there exists $l\in \R$ such that $\Phi(x)=l/x+O(x^{\theta})$.
By Lemma \ref{lem:X'}, we must have $l>0$.
Since $0<\eps<1/3$ and $-1<\theta<-\1_{d=7}/2-2\eps$ are arbitrary, this concludes the proof.
\end{proof}
\subsection{Second moment computations} \label{sec:SecondMoment}
The goal of this subsection and the next one is to prove the following theorem, from which we will deduce Theorem \ref{thm:gyration}. Along the way, we will derive several second moment results that will be useful for the proof of the local central limit theorem in Section \ref{sec:LowFrequencies}.
\begin{theorem} \label{thm:Gyration_NotRenormalized}
Let $d>6$ and $L\geq 1$ be such that \eqref{eq:etaUpper} holds. There exists $C'_{\xi}$ such that for every $\eps>0$, as $p\uparrow p_c$, we have
\[
\|\tau_p\|_2^2 = C'_\xi(p_c-p)^{-2}+O((p_c-p)^{-1-\eps}\psi_d(\X(p))).
\]
\end{theorem}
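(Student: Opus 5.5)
The plan mirrors the proof of Theorem \ref{thm:susceptibility}: approximate $\|\tau_{p'}\|_2^2$ by $\|\tau^\square_{p,p'}\|_2^2$, expand the latter as an explicit function of $x=p_c-p'$ with finitely many power terms, and invert the operators $P_\lambda$.

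\textbf{Step 1: reduction to $\tau^\square_{p,p'}$.} Take $\X(p)^{1+\eps/2}\le \X(p')\le \X(p)^{1+\eps}$. Proposition \ref{pro:L2Error} then gives
\[
\|\tau_{p'}\|_2^2=\|\tau^\square_{p,p'}\|_2^2+O\big(\X(p')^{3+\eps}\psi_d(\X(p))/\X(p)^2\big),
\]
and the error is $O(x^{-1-O(\eps)}\psi_d(1/x))$, using $\X(p')=\Theta(1/x)$ from Lemma \ref{lem:X'}.

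\textbf{Step 2: second moment of $\tau^{\square,k}_p$.} By \eqref{eq:RewriteB_TauSquare},
\[
\|\tau^{\square,k}_p\|_2^2=(\Omega\X(p)\lambda_p)^{k}\,\E\Big[\big\|\SA_{k-1}+\Delta(\vec A_k)-\delta(\vec A_k)\big\|_2^2\cdot(\text{weights } h_p,\Gamma_p)\Big].
\]
Expanding the square gives three kinds of terms.
\begin{itemize}
\item Diagonal terms $\E[\|\Delta(\vA_i)\|^2]$.
\item Cross terms $V_{i,j,k}$.
\item Boundary terms involving the first and last clusters.
\end{itemize}
Theorem \ref{thm:MixingTime} gives exponential mixing, and the exponential tails of Theorem \ref{thm:ExponentialTail} together with Lemma \ref{lem:Tail_vpg} make every increment moment finite, with size $O(\X(p)^{1+\eps})$. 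From these I will show
\[
\|\tau_p^{\square,k}\|_2^2=(\Omega\X(p)\lambda_p)^k\big(kV(p)E_{p,\infty}+B_p+R_{p,k}\big),
\]
where $V(p)=\sum_i V_i(p)$ is the stationary covariance sum, $B_p$ is a boundary constant, and $\sum_k|R_{p,k}|$ is polynomial in $\X(p)$ with $R_{p,k}$ decaying exponentially beyond $\ell(p)=K_1\log\X(p)$. The symmetry in Lemma \ref{lem:Symmetric_hp_vpg} ensures there is no drift, so no $k^2$ term appears.

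\textbf{Step 3: explicit form in $x$.} Summing over $k$ with ratio $q=(p'-p)/(\tilde p_c(p)-p)$, the identity $\sum_k kq^k=q/(1-q)^2$ gives a leading term $V(p)E_{p,\infty}(\tilde p_c(p)-p)^2/(\tilde p_c(p)-p')^2$, plus a $(1-q)^{-1}$ term and a polynomial part from $k<\ell(p)$. As in Lemma \ref{lem:replace_pc}, I replace $\tilde p_c(p)$ by $p_c$ using Theorem \ref{thm:p_c-p_c}. This costs an error of relative size $x\,\psi_d(\X(p))$ times $x^{-2}$, which is acceptable. After a Taylor expansion as in \eqref{eq:Approx_Poly_X}, $\|\tau_{p'}\|_2^2$ equals $\sum_{i=-2}^N a_i(p)x^i$ up to the stated error.

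\textbf{Step 4: inversion.} Applying $P_{-2}P_{-1}P_0\cdots P_N$ to $\Phi_2(x)=\|\tau_{p_c-x}\|_2^2$ leaves only the error term. A downward induction with Lemma \ref{lem:downward_PolyPerateur1} reduces this to $P_{-2}P_{-1}\Phi_2=O(x^{\theta})$ for $\theta$ just below the error exponent. To strip $P_{-1}$, I note that it commutes with $P_{-2}$, and apply Lemma \ref{lem:downward_PolyPerateur1} to $P_{-1}(P_{-2}\Phi_2)$. This requires $\theta<-1$; when the error is larger than that, I instead apply Lemma \ref{lem:downward_PolyPerateur2} to $P_{-2}\Phi_2$ directly. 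Monotonicity in $p$ is available for $\Phi_2$ but not for $P_{-2}\Phi_2$, so the cleaner route is to reorder: first kill the $x^{-1}$ and higher terms, then apply Lemma \ref{lem:downward_PolyPerateur2} with $\lambda=-2$ to the monotone $\Phi_2$. This gives $\Phi_2=C'_\xi x^{-2}+O(x^{\theta})$ with $\theta>-2$ matching the claimed error.

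\textbf{Main obstacle.} I expect Step 2 to be the hardest: controlling the second moment under the $h$-transform with the non-$h$-transformed last step $\Gamma_p$, and showing that the pre-mixing corrections contribute only an affine-in-$k$ term. This is what lets Step 3 produce exactly one $x^{-2}$ term with a coefficient independent of the choice of $p'$.
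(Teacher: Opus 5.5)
Your outline has the same architecture as the paper's proof. The shared steps are: reduction via Proposition \ref{pro:L2Error}; the expansion \eqref{eq:DecomposeVariance} with mixing, giving an affine-in-$k$ structure; summation into a Laurent polynomial with a double and a simple pole; and inversion of $P_{-2}P_{-1}\cdots P_N$, with Lemma \ref{lem:downward_PolyPerateur2} applied last to the monotone $\Phi_2$. The hesitation in your Step 4 is unnecessary: the error exponent is $-1-O(\eps)$, or $-3/2-O(\eps)$ for $d=7$, so it is always strictly below $-1$ and Lemma \ref{lem:downward_PolyPerateur1} strips $P_{-1}$ directly.

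The one real difference is how you pass from $\tilde p_c(p)$ to $p_c$. The paper never perturbs the approximant. It evaluates its expansion in $\tilde p_c(p)-p'$ at the shifted parameter $p'+\tilde p_c(p)-p_c$, so the same coefficients multiply powers of $p_c-p'$. The only cost is $|\tilde p_c(p)-p_c|\sup\partial_p\|\tau_p\|_2^2=O(\psi_d(\X(p))\X(p)^{-2}\X(p')^{3+\eps})$, from the Russo-formula bound of Lemma \ref{lem:Derive_L2}. You instead substitute $p_c$ for $\tilde p_c(p)$ inside the approximant, as in Lemma \ref{lem:replace_pc}. This is legitimate, but it moves every term, not only the leading $(\cdot)^{-2}$ one you checked. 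So a remainder that is merely ``polynomial in $\X(p)$'' does not suffice.

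To see why, set $c_k:=((\tilde p_c(p)-p)/(1-p))^k\|\tau_p^{\square,k}\|_2^2$. The small-$k$ part then moves by about $\ell(p)\psi_d(\X(p))\X(p)^{-1}\sum_{k<\ell(p)}c_k$. The paper's only small-$k$ $L_2$ bound is $\sum_k c_k\le \X(p)^3$ from Lemma \ref{lem:First_Term_L2_BadBound}; the shift needs nothing sharper. With that bound, the shift is of order $\ell(p)\psi_d(\X(p))\X(p)^2$, far above the target $\X(p')^{1+O(\eps)}\psi_d(\X(p'))$. The simple-pole coefficient $B_p$ raises the same issue.

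You therefore need $\sum_{k<\ell(p)}c_k$ and $|B_p|$ to be $O(\X(p)^{2+O(\eps)})$, and this does hold:
\begin{itemize}
\item $|c_k|\lesssim E_{p,k}(k+1)^2\X(p)^{1+\eps}$, by \eqref{eq:DecomposeVariance} and Corollary \ref{lem:BoundVariance};
\item $\sum_{k\le \ell(p)}E_{p,k}=O(\ell(p)\X(p))$, by Lemma \ref{lem:Small_k_L1};
\item $|B_p|\lesssim E_{p,\infty}\,\ell(p)^2\X(p)^{1+\eps}$, with $E_{p,\infty}=O(\X(p))$ by Lemma \ref{lem:Large_k_L1}.
\end{itemize}
With these bounds made explicit your route closes; the paper's shift is simply cheaper, since it needs no control of the lower-order coefficients.
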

We proceed as for Theorem \ref{thm:susceptibility}. 
By Proposition \ref{pro:L2Error}, for $p\leq p'<\tilde p_c(p)$, $\|\tau_{p'}\|_2^2$ is close to
\[
\|\tau_{p,p'}^\square\|_2^2=\sum_{k=0}^\infty \left (\frac{p'-p}{1-p} \right )^k \|\tau_p^{\square,k} \|_2^2.
\]
By \eqref{eq:RewriteA_TauSquare}, we have 
\[ \|\tau_p^{\square,k}\|_2^2=\Omega^k \X(p)^{k+1} \E_p^\otimes \left [ \|\SA_k-\delta(\vec \A_k)\|_2^2 \1_{\Tcut>k} \right ]
 \]
Since $\SA_{k}=\sum_{i=0}^{k} \Delta(\vA_i)$, and since the $\Delta(\vA_i)$ have sub-exponential tails (see Lemma \ref{lem:TrashTailBelowMixing}), we may expand the sum $\SA_k$. For $0\leq i,j\leq k$ let 
\[ V_{i,j,k}(p) := \E_p^\otimes[\langle \Delta(\vA_i),\Delta(\vA_j)\rangle |\Tcut>k].\]
Note also that, under the conditioning on $\Tcut>k$, $\delta(\vec \A_k)$ is independent of $\SA_{k}-\delta(\vec \A_k)$ since it corresponds to an independent edge at the end of the chain. Thus, by expanding $\SA_k$ and using $\|\tau_p^{\square,k}\|_1=\Omega^k\X(p)^{k+1} \proba_p^\otimes(\Tcut>k)$ from \eqref{eq:RewriteA_TauSquare}, we get
\begin{equation}
\|\tau_p^{\square,k}\|_2^2 = \|\tau_p^{\square,k}\|_1\left (\sum_{0\leq i,j\leq k}V_{i,j,k}(p)-\|D\|_2^2/\Omega \right ).
\label{eq:DecomposeVariance}
\end{equation}
We split the sum according to whether $|i-j|$ is large or $\min(i,j)$ is large or $k-\max(i,j)$ is large.

Before doing so, let us first show some rough bounds on the $\Delta(\vA_i)$ and then on the $V_{i,j,k}$.
\begin{lemma}\label{lem:2stepPerturbation} Let $p<p_c$. Let $\vec A_0\in \Clive$ and $\vec A_2\in \vec \Comp(0)$ be such that there exists $\vec A\in \vec \Comp(0)$ with $(\vec A,\vec A_2)\notin \Ccut$. For every $x\in \Z^d$ we have 
\[ \proba^\otimes_p(\Delta(\vA_1)=x|\vA_0=\vec A_0, \vA_2=\vec A_2, \Tcut>2) \leq \frac{D\ast \tau_p(x)}{(1-p)^\Omega}.\]
\end{lemma}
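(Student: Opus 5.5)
We will write the conditional probability as a ratio and bound the numerator and denominator separately. Under $\proba_p^\otimes$ the $\vA_i$ are i.i.d., so conditionally on $\vA_0=\vec A_0$ and $\vA_2=\vec A_2$ the only random variable left is $\vA_1$. The event $\Tcut>2$ then reduces to $(\vec A_0,\vA_1)\notin\Ccut$ and $(\vA_1,\vec A_2)\notin\Ccut$. Hence the conditional probability equals
\[
\frac{\sum_{\vec A_1:\,\Delta(\vec A_1)=x}\mu_p(\vec A_1)\1_{(\vec A_0,\vec A_1)\notin\Ccut}\1_{(\vec A_1,\vec A_2)\notin\Ccut}}{\sum_{\vec A_1}\mu_p(\vec A_1)\1_{(\vec A_0,\vec A_1)\notin\Ccut}\1_{(\vec A_1,\vec A_2)\notin\Ccut}}.
\]

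For the numerator we drop both indicators. As in the computation \eqref{eq:naive} in the proof of Lemma \ref{lem:OneStepBias}, this gives
\[
\sum_{(A,\vec a):\,a^+=x}\proba_p(\C(0)=A)\1_{a^-\in A}=\sum_{a^-:\{a^-,x\}\in\edges}\tau_p(a^-)=D\ast\tau_p(x).
\]

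For the denominator it suffices to exhibit one admissible $\vec A_1$ of mass at least $(1-p)^\Omega$. We take $\vec A_1=\vec{\mathbf O}_b$ for a suitable $b\in\mathbb B$, whose mass is $\mu_p(\vec{\mathbf O}_b)=\proba_p(\C(0)=\{0\})=(1-p)^\Omega$.
\begin{itemize}
\item The first constraint holds for every $b\in\mathbb B$ by \eqref{eq:irreductible}, since $\vec A_0\in\Clive$.
\item For the second constraint we use the hypothesis: there is $\vec A$ with $(\vec A,\vec A_2)\notin\Ccut$. The argument below \eqref{eq:NulIsBetter} shows that if $(\vec A,\vec B)\notin\Ccut$, then $(\vec{\mathbf O}_{a^+-a^-},\vec B)\notin\Ccut$.
\end{itemize}
Removing the single vertex $\{-b\}$ from $B$ disconnects less than removing $A-a^+$, which contains $a^--a^+=-b$. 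So the choice $b=a^+-a^-$ satisfies both constraints, and the denominator is at least $(1-p)^\Omega$.

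Dividing the two bounds gives the claim. The only delicate point is the monotonicity of $\Ccut$ under shrinking the first cluster to a single vertex. This is the content of \eqref{eq:NulIsBetter}, which the hypothesis on $\vec A_2$ is designed to invoke.
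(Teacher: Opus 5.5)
Your proposal is correct and follows essentially the same route as the paper. You bound the numerator by dropping both cut constraints, which gives $\proba_p^\otimes(\Delta(\vA_1)=x)=D\ast\tau_p(x)/(\Omega\X(p))$. You then lower-bound $\proba_p^\otimes(\Tcut>2\mid \vA_0=\vec A_0,\vA_2=\vec A_2)$ by the mass $(1-p)^\Omega/(\Omega\X(p))$ of the single-vertex pointed cluster whose edge is parallel to $\vec a$; this cluster is admissible on both sides by \eqref{eq:irreductible} and by the monotonicity noted below \eqref{eq:NulIsBetter}. Your explicit check that $\{a^--a^+\}\subset A-a^+$ spells out a step the paper only asserts.
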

\begin{proof} Let $\vec A=(A,\vec a)$ such that $(\vec A,\vec A_2)\notin \Ccut$. 
Let $\vec A_1=((\{0\},\emptyset),(0,a^+-a^-))$ be the pointed cluster of $0$ consisting of a single vertex with edge parallel to $\vec a$. Note that $(\vec A_1,\vec A_2)\notin \Ccut$. 
Note also that $(\vec A_0,\vec A_1)\notin \Ccut$ since $\vec A_0\in \Clive$. Thus, 
\[ \proba^\otimes_p(\Tcut>2\mid\vA_0=\vec A_0, \vA_2=\vec A_2)\geq \proba_p^\otimes (\vA_1=\vec A_1)=(1-p)^\Omega/(\Omega \X(p)). \]
Moreover, for every $x\in \Z^d$ we have 
\[ \proba^\otimes_p(\Delta(\vA_1)=x)=D\ast \tau_p(x)/(\Omega \X(p)).\]
The desired result is obtained by division.
\end{proof}
\begin{lemma} \label{lem:GlobalPerturbation} Let $p<p_c$. For every $0\leq i \leq k$ and every $x\in \Z^d$, we have
  \[ \proba_p^\otimes(\Delta(\vA_i)=x|\Tcut>k)\leq \frac{D\ast \tau_p(x)}{(1-p)^\Omega}.\]
\end{lemma}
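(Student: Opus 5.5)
The plan is to reduce to Lemma \ref{lem:2stepPerturbation} by conditioning on the neighbours of $\vA_i$. Under $\proba_p^\otimes$ the $\vA_j$ are i.i.d., and the event $\{\Tcut>k\}$ is the intersection of the constraints $(\vA_{j-1},\vA_j)\notin\Ccut$ for $1\leq j\leq k$. For fixed $i$, only the two constraints involving $\vA_i$, namely $(\vA_{i-1},\vA_i)\notin\Ccut$ and $(\vA_i,\vA_{i+1})\notin\Ccut$, depend on $\vA_i$. Conditioning on all $\vA_j$ with $j\neq i$, on $\{\Tcut>k\}$, the conditional law of $\vA_i$ is therefore the law of $\vA_1$ given $\vA_0=\vec A_{i-1}$, $\vA_2=\vec A_{i+1}$ and $\Tcut>2$. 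It then suffices to bound this conditional probability uniformly over admissible neighbours and average.

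For interior $1\leq i\leq k-1$, I would check the hypotheses of Lemma \ref{lem:2stepPerturbation} on the event $\{\Tcut>k\}$. First, $\vA_{i-1}\in\Clive$, since otherwise $(\vA_{i-1},\vA_i)\in\Ccut$. Second, there is some $\vec A$ with $(\vec A,\vA_{i+1})\notin\Ccut$, and we may take $\vec A=\vA_i$ itself. The lemma then gives the conditional bound $D\ast\tau_p(x)/(1-p)^\Omega$, and integrating over the neighbours under $\proba_p^\otimes(\cdot\mid\Tcut>k)$ yields the claim.

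The boundary cases are handled separately. For $i=k\geq1$, only the constraint $(\vA_{k-1},\vA_k)\notin\Ccut$ involves $\vA_k$. I would repeat the proof of Lemma \ref{lem:2stepPerturbation} with no right neighbour, lower bounding the conditional probability of survival by $\proba_p^\otimes(\vA_k=\vec{\mathbf O}_b)$ for a single-vertex cluster. This is legitimate by \eqref{eq:irreductible}, using that $\vA_{k-1}\in\Clive$. For $i=0$, only $(\vA_0,\vA_1)\notin\Ccut$ matters, and again $\vec{\mathbf O}_{b}$ with $b$ parallel to the pointing edge of a valid $\vec A$ works by \eqref{eq:NulIsBetter}. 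The same division then gives the bound. The case $k=0$ is trivial, since the left-hand side is $D\ast\tau_p(x)/(\Omega\X(p))$.

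The main obstacle is the boundary bookkeeping, together with justifying that the conditional-independence reduction is exact rather than only an inequality. Neither step should require new ideas.
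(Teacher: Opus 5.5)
Your proposal is correct and follows the paper's argument: you condition on the neighbours of $\vA_i$ and apply Lemma \ref{lem:2stepPerturbation}, checking its hypotheses via $\vA_{i-1}\in\Clive$ and $(\vA_i,\vA_{i+1})\notin\Ccut$. Conditioning on all $\vA_j$ with $j\neq i$, rather than only on the two neighbours, gives the same conditional law under the product measure $\proba_p^\otimes$, and your treatment of the boundary cases $i=0$ and $i=k$ via the single-vertex clusters $\vec{\mathbf O}_b$ together with \eqref{eq:NulIsBetter} and \eqref{eq:irreductible} correctly supplies the details the paper omits.
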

\begin{proof} If $0<i<k$, then, by splitting according to the value of $\vA_{i-1}$ and $\vA_{i+1}$, the probability of the lemma is equal to
  \[ \sum_{\vec A_{i-1},\vec A_{i+1}} \proba \left (\substack{\vA_{i-1}=\vec A_{i-1}, \\ \vA_{i+1}=\vec A_{i+1}} \middle |\Tcut>k \right ) \proba \left (\Delta(\vA_i)=x, (\vA_{i-1},\vA_i)\notin \Ccut,(\vA_{i},\vA_{i+1})\notin \Ccut \middle |\substack{ \vA_{i-1}=\vec A_{i-1}, \\ \vA_{i+1}=\vec A_{i+1}} \right ). \]
Lemma \ref{lem:2stepPerturbation} bounds the second probability, and the desired result is obtained by summing. The cases $i=0$ and $i=k$ can be treated similarly. We omit the details.
\end{proof}
\begin{lemma} \label{lem:BoundedConditionalMoments} Let $C>0,\eps>0,n\in \N$. There exists $F$ depending only on $d,L,C,n,\eps$ such that for every $p<p_c$ satisfying \eqref{eq:initialization} and $\X(p)\geq F$, and every $0\leq i\leq k$, we have
  \[ \E_p^\otimes[\|\Delta(\vA_i)\|_2^n|\Tcut>k]\leq \X(p)^{n/2+\eps}.\]
\end{lemma}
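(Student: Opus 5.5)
The plan is to combine the pointwise conditional bound of Lemma \ref{lem:GlobalPerturbation} with the exponential tail of Theorem \ref{thm:ExponentialTail}. By Lemma \ref{lem:GlobalPerturbation}, for every $0\leq i\leq k$,
\[
\E_p^\otimes[\|\Delta(\vA_i)\|_2^n|\Tcut>k]\leq (1-p)^{-\Omega}\sum_{x\in \Z^d}\|x\|_2^n\, D\ast\tau_p(x).
\]
So it suffices to bound the $n$-th moment of $D\ast \tau_p$ by $\X(p)^{n/2+\eps}$, up to constants that can be absorbed.

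The prefactor is harmless. As observed in the proof of Proposition \ref{pro:RelateLocalLambda}, $p<1/2$ whenever $p<p_c$, so $(1-p)^{-\Omega}\leq 2^\Omega$.

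\textbf{Splitting the moment.} Since $\|x\|_2\leq \|x-y\|_2+L$ for $y$ a neighbour of $x$, we have
\[
\sum_x \|x\|_2^n\, D\ast\tau_p(x)\leq \Omega\sum_{z}(\|z\|_2+L)^n\tau_p(z).
\]
Fix $\eps'>0$ small, say $\eps'=\eps/(2n)$, and set $R:=\X(p)^{1/2+\eps'}$. I split the sum over $z$ at radius $R$.

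\textbf{Inner region $\|z\|_2\leq R$.} Bounding $(\|z\|_2+L)^n\leq (2R)^n$ and summing $\tau_p$ gives a contribution of at most $2^nR^n\X(p)=2^n\X(p)^{1+n/2+n\eps'}$.

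This contains an extra factor $\X(p)$ compared with the target exponent $n/2+\eps$, and removing it is the main obstacle. The inequality as stated (no $\X(p)$ prefactor) cannot follow from this crude bound. I suspect the intended statement is the normalized moment, i.e. with respect to the law of $\vA_i$, whose one-point marginal is of order $D\ast\tau_p/(\Omega\X(p))$ rather than $D\ast\tau_p$. Indeed, in the proof of Lemma \ref{lem:2stepPerturbation} the lower bound $(1-p)^\Omega/(\Omega\X(p))$ is divided into $D\ast\tau_p(x)/(\Omega\X(p))$, and the two factors $\Omega\X(p)$ cancel. That cancellation makes the conditional marginal genuinely of order $D\ast\tau_p$ pointwise, with total mass $\Omega\X(p)$ rather than $1$.

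To recover the true scale, I would use the decomposition \eqref{eq:RewriteA_TauSquare} and the $h$-transform instead. Conditioning on $\Tcut>k$, the law of $\vA_i$ for $i$ away from the ends is comparable, via Lemma \ref{lem:vpd} and Theorem \ref{thm:LowerLambda}, to the $\proba_p^h$ law of $\vA_i$ up to factors $\|h_p\|_\infty/\inf h_p\lesssim \X(p)$. Lemma \ref{lem:OneStepBias} and Lemma \ref{lem:Tail_vpg} then give one-step marginals $\lesssim D\ast\tau_p$. Here again there is no $1/\X(p)$, so the polynomial loss persists in the bulk and only the first step gains $1/\X(p)$.

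\textbf{Fallback via Hölder.} I would accept a loss of $\X(p)^{O(1)}$ multiplying $\X(p)^{n/2}$ and then remove it by Hölder's inequality. Apply the crude bound to the moment of order $nm$ with $m$ large, obtaining $\X(p)^{nm/2+nm\eps'+1}$. Then
\[
\E[\|\Delta\|^n]\leq \E[\|\Delta\|^{nm}]^{1/m}\leq \X(p)^{n/2+n\eps'+1/m},
\]
which is at most $\X(p)^{n/2+\eps}$ for $m$ large and $\eps'$ small, after absorbing the constants $2^\Omega\Omega 2^{nm}$ into a slightly larger power of $\X(p)$ once $\X(p)\geq F$.

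\textbf{Outer region $\|z\|_2>R$.} Theorem \ref{thm:ExponentialTail} (with parameter $\eps'/2$) gives $\tau_p(z)\leq \exp(1-\|z\|_2/\X(p)^{1/2+\eps'/2})$. The sum $\sum_{\|z\|_2>R}(2\|z\|_2)^{nm}\exp(-\|z\|_2\X(p)^{-1/2-\eps'/2})$ then tends to $0$, since $R\X(p)^{-1/2-\eps'/2}=\X(p)^{\eps'/2}\to\infty$ dominates every polynomial factor. This closes the argument with $F$ depending on $d,L,C,n,\eps$.
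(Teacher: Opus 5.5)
Your final argument is correct and proves the lemma exactly as stated. It combines the crude bound on the $nm$-th moment, Jensen's inequality $\E_p^\otimes[\|\Delta(\vA_i)\|_2^n\mid\Tcut>k]\le \E_p^\otimes[\|\Delta(\vA_i)\|_2^{nm}\mid\Tcut>k]^{1/m}$ with $m$ of order $1/\eps$, and the exponential-tail estimate on the outer region.

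The ``main obstacle'' you describe is an artifact, however, and your diagnosis of it is wrong. The conditional law of $\vA_i$ given $\{\Tcut>k\}$ is a probability measure, so the marginal of $\Delta(\vA_i)$ has total mass $1$, not $\Omega\X(p)$. Lemma~\ref{lem:GlobalPerturbation} is only a pointwise upper bound. Lemma~\ref{lem:2stepPerturbation} obtains it by bounding the conditioning probability from below by the probability of a single explicit single-vertex cluster. Such a bound can only be sharp near the origin, and it overshoots the total mass by a factor of order $\X(p)$. There is therefore no reason to suspect the statement is misstated; indeed, your own Jensen step uses exactly the fact that the conditional law has mass one.

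The paper uses this fact directly. It splits at $\|\Delta(\vA_i)\|_2=\X(p)^{1/2+\eps/n}/2$ and bounds the truncated part trivially by $(\X(p)^{1/2+\eps/n}/2)^n\le\X(p)^{n/2+\eps}/2$. It then invokes Lemma~\ref{lem:GlobalPerturbation} only on the tail, where Theorem~\ref{thm:ExponentialTail} makes the contribution negligible.

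Your route applies the pointwise bound in the bulk as well. It pays a factor $\X(p)$ on the $nm$-th moment and then dilutes it to $\X(p)^{1/m}$. This is longer, but it does show robustly that any polynomial-in-$\X(p)$ loss in a pointwise marginal bound is harmless once exponential tails are available. The $h$-transform digression is unnecessary and can be dropped.
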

\begin{proof} By Lemma \ref{lem:GlobalPerturbation}, the expectation in the lemma equals
 \begin{align*} & \E_p^\otimes\left [\|\Delta(\vA_i)\|_2^n \1_{\|\Delta(\vA_i)\|_2\leq \X(p)^{1/2+\eps/n}/2} \middle |\Tcut>k \right ]+ \E_p^\otimes \left[\|\Delta(\vA_i)\|_2^n \1_{\|\Delta(\vA_i)\|_2>\X(p)^{1/2+\eps/n}/2} \middle |\Tcut>k \right]
 \\ & \leq \frac{\X(p)^{n/2+\eps}}{2} + \sum_{x\in \Z^d, \|x\|_2>\X(p)^{1/2+\eps/n}/2} \|x\|_2^n \frac{D\ast \tau_p(x)}{(1-p)^\Omega}.
\end{align*}
 The desired upper bound is then a direct consequence of Theorem \ref{thm:ExponentialTail}.
\end{proof}
\begin{corollary} \label{lem:BoundVariance} Let $C>0,\eps>0$. There exists $F$ depending only on $d,L,C,\eps$ such that for every $p<p_c$ satisfying \eqref{eq:initialization} and $\X(p)\geq F$, and every $0\leq i,j\leq k$, we have $|V_{i,j,k}(p)|\leq \X(p)^{1+\eps}$.
\end{corollary}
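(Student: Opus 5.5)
This is a direct consequence of Lemma \ref{lem:BoundedConditionalMoments} with $n=2$ and the Cauchy--Schwarz inequality. For every $0\leq i,j\leq k$ we bound $|\langle \Delta(\vA_i),\Delta(\vA_j)\rangle|\leq \|\Delta(\vA_i)\|_2\|\Delta(\vA_j)\|_2$ pointwise. Taking the conditional expectation given $\Tcut>k$ and applying the Cauchy--Schwarz inequality under $\proba_p^\otimes(\cdot\mid\Tcut>k)$ then gives
\[
|V_{i,j,k}(p)|\leq \E_p^\otimes[\|\Delta(\vA_i)\|_2^2\mid \Tcut>k]^{1/2}\,\E_p^\otimes[\|\Delta(\vA_j)\|_2^2\mid \Tcut>k]^{1/2}.
\]

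We now invoke Lemma \ref{lem:BoundedConditionalMoments} with $n=2$ and the same $\eps$. This yields $F$, depending only on $d,L,C,\eps$, such that both conditional second moments are at most $\X(p)^{1+\eps}$ whenever $\X(p)\geq F$. The geometric mean of the two bounds is again $\X(p)^{1+\eps}$, which is the claim.

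The conditioning event $\{\Tcut>k\}$ has positive probability by \eqref{eq:AsympBootstrap}, so the conditional expectations are well defined. There is no real obstacle here: all the work, namely the one-site perturbation bound of Lemma \ref{lem:GlobalPerturbation} combined with the exponential tail of Theorem \ref{thm:ExponentialTail}, is already contained in Lemma \ref{lem:BoundedConditionalMoments}.
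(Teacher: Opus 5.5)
Your proposal is correct and is essentially the paper's proof: both reduce to Lemma \ref{lem:BoundedConditionalMoments} with $n=2$. The only difference is that the paper uses $2|\langle x,y\rangle|\leq \|x\|_2^2+\|y\|_2^2$ to get $|V_{i,j,k}(p)|\leq (V_{i,i,k}(p)+V_{j,j,k}(p))/2$, whereas you use Cauchy--Schwarz twice; both give the same bound $\X(p)^{1+\eps}$.
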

\begin{proof} The case $i=j$ is already treated by the previous lemma applied to $n=2$. For the general case, simply note that for every $x,y\in \Z^d$ we have $2|\langle x,y\rangle| \leq \|x\|_2^2+\|y\|_2^2$, so $|V_{i,j,k}(p)|\leq (V_{i,i,k}(p)+V_{j,j,k}(p))/2$.
\end{proof}
By a direct adaptation of the proof of Lemma \ref{lem:BoundedConditionalMoments}, we also have the following variation:
\begin{lemma} \label{lem:BoundedConditionalMoments2} Let $C>0,\eps>0,n\in \N$. There exists $F$ depending only on $d,L,C,n,\eps$ such that for every $p<p_c$ satisfying \eqref{eq:initialization} and $\X(p)\geq F$, every $0<i\leq k$, and every $\vec A_0\in \Clive$, we have
  \[ \E_p^\otimes[\|\Delta(\vA_i)\|_2^n|\vA_0=\vec A_0, \Tcut>k]\leq \X(p)^{n/2+\eps}.\]
\end{lemma}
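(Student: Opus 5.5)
The plan is to copy the proof of Lemma \ref{lem:BoundedConditionalMoments}, after first establishing the analogue of Lemma \ref{lem:GlobalPerturbation} when $\vA_0=\vec A_0$ is also conditioned on. So the first step is to show that for every $0<i\leq k$, every $\vec A_0\in \Clive$ and every $x\in \Z^d$,
\[
\proba_p^\otimes(\Delta(\vA_i)=x\mid \vA_0=\vec A_0,\Tcut>k)\leq \frac{D\ast \tau_p(x)}{(1-p)^\Omega}.
\]
Conditioning on $\vA_0$ only fixes the left neighbour of $\vA_1$, which is already the situation in Lemma \ref{lem:2stepPerturbation}.

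For $0<i<k$, I would split according to the values of $\vA_{i-1}$ and $\vA_{i+1}$ (with $\vA_{i-1}=\vec A_0$ when $i=1$). By the Markov and product structure of $\proba_p^\otimes$, conditionally on $\vA_0=\vec A_0$, $\vA_{i-1}=\vec A_{i-1}$, $\vA_{i+1}=\vec A_{i+1}$ and $\Tcut>k$, the law of $\vA_i$ is that of a $\proba_p^\otimes$-sample conditioned on $(\vec A_{i-1},\vA_i)\notin\Ccut$ and $(\vA_i,\vec A_{i+1})\notin\Ccut$. On the event $\Tcut>k$, the cluster $\vec A_{i-1}$ is in $\Clive$ (because $i-1<k$), and $\vec A_{i+1}$ admits a predecessor that does not cut it, namely $\vA_i$ itself. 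So Lemma \ref{lem:2stepPerturbation} applies and gives the bound for each value of the conditioning, and averaging gives the claim.

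The case $i=k$ is easier, since only the left constraint is present. The single-vertex cluster $((\{0\},\emptyset),(0,b))$ with an arbitrary $b\in\mathbb B$ is never cut by a predecessor in $\Clive$, by \eqref{eq:irreductible}. Hence the conditioning event has probability at least $(1-p)^\Omega/(\Omega\X(p))$, while $\proba_p^\otimes(\Delta(\vA_k)=x)=D\ast\tau_p(x)/(\Omega\X(p))$. Dividing gives the same bound.

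With this in hand, I would split the moment according to whether $\|\Delta(\vA_i)\|_2\leq \X(p)^{1/2+\eps/n}/2$. The small part is at most $\X(p)^{n/2+\eps}/2$. The large part is at most
\[
(1-p)^{-\Omega}\sum_{\|x\|_2>\X(p)^{1/2+\eps/n}/2}\|x\|_2^n\, D\ast\tau_p(x).
\]
Since $p<1/2$, the factor $(1-p)^{-\Omega}$ is bounded. Theorem \ref{thm:ExponentialTail}, applied with exponent $\eps/(2n)$, makes this sum $o(1)$ once $\X(p)$ is large. Summing the two parts gives the bound.

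The only delicate step is the case analysis at the endpoints, in particular checking that conditioning on $\vA_0$ preserves the hypotheses of Lemma \ref{lem:2stepPerturbation}. Everything else is routine.
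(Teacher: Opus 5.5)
Your proposal is correct and follows the route the paper intends. The paper gives no separate proof: it only says the statement follows by a direct adaptation of the proof of Lemma~\ref{lem:BoundedConditionalMoments}. That adaptation is what you carry out: you prove the $\vA_0$-conditioned analogue of Lemma~\ref{lem:GlobalPerturbation} via Lemma~\ref{lem:2stepPerturbation} (using \eqref{eq:irreductible} at the endpoint $i=k$), then truncate with Theorem~\ref{thm:ExponentialTail}. Your checks that $\vA_{i-1}\in\Clive$ on $\{\Tcut>k\}$, and that $\vA_i$ itself witnesses the hypothesis on $\vA_{i+1}$, are exactly what the adaptation requires.
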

We now show a technical corollary of Theorem \ref{thm:MixingTime} that we will repeatedly use to estimate $V_{i,j,k}$.
For every $f$ from $\vec \Comp(0)$ to $\R$ or $\R^d$, we write $\|f\|_{2,\infty}=\sup_{\vec A}\|f(\vec A)\|_2$ and let
\[ \mathfrak{S}_p(f):=\sum_{\vec B\in \Clive} \frac{\vpg_p(\vec B)f(\vec B)}{h_p(\vec B)}. \]
\begin{proposition} \label{pro:MixingTimeFunction} Let $p<p_c$ and $C>0$ be such that \eqref{eq:initialization} holds.
There exist $0<c<1$ and $K>0$, depending on $d,L,C$, such that if $\X(p)\geq K$, then for every $\vec A_0 \in \Clive$,
every $k\geq 0$, and every $f$ from $\vec \Comp(0)$ to $\R$ or $\R^d$ with $f=0$ on $\Ckill$, we have 
\[
\left \|  \E_p^\otimes[f(\vA_k)\1_{\Tcut>k}|\vA_0=\vec A_0]-\lambda_p^k h_p(\vec A_0)\mathfrak{S}_p(f)\right \|_2 \leq \lambda_p^k c^k \X(p)^3 \|f\|_{2,\infty}.
\]
\end{proposition}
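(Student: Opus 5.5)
The plan is to rewrite the killed expectation through the Doob $h$-transform \eqref{eq:DoobTransform} and then invoke the mixing bound of Theorem \ref{thm:MixingTime}. Fix $\vec A_0\in\Clive$. Since $f=0$ on $\Ckill$, only trajectories with $\vA_k\in\Clive$ contribute. For such trajectories the definition of $\proba^h_p$, conditioned on the starting point, gives
\[
\E_p^\otimes[f(\vA_k)\1_{\Tcut>k}\mid\vA_0=\vec A_0]=\lambda_p^k h_p(\vec A_0)\,\E_p^h\left[\frac{f(\vA_k)}{h_p(\vA_k)}\middle|\vA_0=\vec A_0\right]
=\lambda_p^k h_p(\vec A_0)\sum_{\vec B\in\Clive}\P_p^k(\vec B|\vec A_0)\frac{f(\vec B)}{h_p(\vec B)} .
\]
This identity follows because the transition kernel \eqref{eq:MarkovTransition} telescopes the factors $h_p(\vA_{i+1})/h_p(\vA_i)$ to $h_p(\vA_k)/h_p(\vA_0)$. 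On the event $\vA_k\in\Clive$, the kill constraint $\Tcut>k$ is exactly the restriction to $\Clive$ imposed by $h_p$, which vanishes on $\Ckill$. Subtracting $\lambda_p^k h_p(\vec A_0)\mathfrak S_p(f)$ then leaves
\[
\lambda_p^k h_p(\vec A_0)\sum_{\vec B\in\Clive}\big(\P_p^k(\vec B|\vec A_0)-\vpg_p(\vec B)\big)\frac{f(\vec B)}{h_p(\vec B)} .
\]

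Next I bound this difference, taking the norm inside the sum. The sum is at most
\[
\frac{\|f\|_{2,\infty}}{\inf_{\Clive}h_p}\sum_{\vec B}\big|\P_p^k(\vec B|\vec A_0)-\vpg_p(\vec B)\big| .
\]
Multiplying by $h_p(\vec A_0)\le\|h_p\|_\infty$ and using Lemma \ref{lem:vpd} gives the ratio bound $\|h_p\|_\infty/\inf_{\Clive}h_p\le \Omega\X(p)(1-p)^{-\Omega}$. Theorem \ref{thm:MixingTime} bounds the total variation sum by $c_0^k\X(p)$. Together these give
\[
\lambda_p^k c_0^k\,\Omega(1-p)^{-\Omega}\X(p)^2\|f\|_{2,\infty}.
\]
The proof of Proposition \ref{pro:RelateLocalLambda} shows $p<1/2$, so $(1-p)^{-\Omega}\le 2^{\Omega}$. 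Hence for $\X(p)$ large the constant $\Omega 2^\Omega$ is absorbed into one extra factor $\X(p)$, which yields the claimed bound with $c=c_0$ and $\X(p)^3$.

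The main point of care is the first identity, namely checking that \eqref{eq:DoobTransform}, which is stated with conditioning on $\vA_0\in\Clive$, applies to a deterministic start $\vec A_0\in\Clive$. It does, since conditioning further on $\vA_0=\vec A_0$ is harmless. A second check is that trajectories ending in $\Ckill$ are correctly dropped, and this is exactly what the hypothesis $f=0$ on $\Ckill$ ensures. For $\R^d$-valued $f$ the argument is identical, applied coordinatewise or directly with the Euclidean norm through the triangle inequality. For $k=0$ the statement is trivial from the same bounds.
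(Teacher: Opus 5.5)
Your proposal is correct and follows the paper's proof essentially line by line. You use the Doob transform to rewrite the killed expectation as $\lambda_p^k h_p(\vec A_0)\sum_{\vec B\in\Clive}\P_p^k(\vec B|\vec A_0)f(\vec B)/h_p(\vec B)$, then apply the triangle inequality with $\|h_p\|_\infty/\inf_{\Clive}h_p\le\Omega\X(p)(1-p)^{-\Omega}$ from Lemma \ref{lem:vpd} and the total-variation bound of Theorem \ref{thm:MixingTime}; the paper's only addition is an explicit note that exchanging expectation and sum (Fubini) is justified by the boundedness of $f$ and the lower bound on $h_p$, which you use implicitly.
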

\begin{remark} By Lemma \ref{lem:Symmetric_hp_vpg}, $\vpg_p$ and $h_p$ are symmetric, so
if $f$ is antisymmetric, $\mathfrak{S}_p(f)=0.$
\end{remark}
\begin{proof} By the Doob transform \eqref{eq:DoobTransform}, since $f$ is supported on $\Clive$, we have
\begin{align*}  \E_p^\otimes[f(\vA_k)\1_{\Tcut>k}|\vA_0=\vec A_0] 
  & =\lambda_p^k \E^h_p \left [\frac{h_p(\vec \A_0)}{h_p(\vec \A_k)} f(\vec \A_k)\middle | \vA_0=\vec A_0 \right ] 
\\ & = \lambda_p^k \sum_{\vec B\in \Clive}\P^k_p(\vec B|\vec A_0)\frac{h_p(\vec A_0)}{h_p(\vec B)}f(\vec B), \end{align*}
using Fubini's theorem for the last equality, which is justified by Lemma \ref{lem:vpd} and the boundedness of $f$. It follows from the triangle inequality that the quantity in the lemma is bounded by
\[ \lambda_p^k \frac{h_p(\vec A_0)}{\inf_{\vec B'\in \Clive} h_p(\vec B')} \|f\|_{2,\infty}\sum_{\vec B\in \Clive} \left |  \P_p^k(\vec B|\vec A_0)-\vpg_p(\vec B)\right |.\]
The desired inequality follows from Lemma \ref{lem:vpd} and Theorem \ref{thm:MixingTime} when $\X(p)$ is large enough.
\end{proof}
We then deduce from Proposition \ref{pro:MixingTimeFunction} a general result that we will apply many times in this subsection and in Section \ref{sec:LowFrequencies}. For $F$ from $(\vec\Comp(0))^{n+1}$ to either $\R$ or $\R^d$, we write 
\[ \mathfrak{S}^-_p(F):=\E_p^\otimes[F(\vA_0,\dots,\vA_n)h_p(\vA_n)\1_{\Tcut>n}], \]
and 
\[ \mathfrak{S}^+_p(F):=\sum_{\vec A\in \Clive} \frac{\vpg_p(\vec A)}{h_p(\vec A)} \E_p^\otimes[F(\vA_0,\dots,\vA_n)\1_{\Tcut>n}|\vA_0=\vec A].\] 
We also let 
\[ \|F\|^-:=\E_p^\otimes[\|F(\vA_0,\dots,\vA_n)\|_2\1_{\Tcut>n}], \]
and 
\[ \|F\|^+:=\sup_{\vec A\in \Clive}\E_p^\otimes[\|F(\vA_0,\dots,\vA_n)\|_2\1_{\Tcut>n}|\vA_0=\vec A].\]
We also naturally extend the notation $\langle x,y\rangle$ to the case where $x$ or $y$ are in $\R$.
\begin{proposition} \label{pro:MixingTimeFunc} Let $p<p_c$ and $C>0$ be such that \eqref{eq:initialization} holds. 
There exist $0<c<1$ and $K>0$, depending on $d,L,C$, such that if $\X(p)\geq K$, then for every $0\leq i \leq j\leq k$ and every $F_1$ and $F_2$ from $(\vec \Comp(0))^{i+1}$ and $(\vec \Comp(0))^{k-j+1}$, to either $\R$ or $\R^d$, if either $j<k$ or $F_2=0$ on $\Ckill$, we have 
\[
\left \|  \E_p^\otimes[\langle F_1(\vA_0,\dots,\vA_i),F_2(\vA_j,\dots, \vA_k)\rangle\1_{\Tcut>k}]-\lambda_p^{j-i} \langle\mathfrak{S}^-_p(F_1),\mathfrak{S}^+_p(F_2)\rangle\right \|_2 \leq (\lambda_pc)^{j-i}\X(p)^3 \|F_1\|^-\|F_2\|^+.
\]
\end{proposition}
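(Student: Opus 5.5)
The plan is to condition on the pair of states $(\vA_i,\vA_j)$ and to treat the middle segment of the chain, between times $i$ and $j$, as the part that decorrelates. First I split the expectation using the Markov structure of i.i.d.\ clusters killed at $\Tcut$. Writing $\1_{\Tcut>k}$ as a product of the transition indicators, the event factorises into three pieces: the transitions up to time $i$ (depending on $\vA_0,\dots,\vA_i$), the transitions from $i$ to $j$, and the transitions from $j$ to $k$ (depending on $\vA_j,\dots,\vA_k$). Conditioning on $\vA_i=\vec A$ and $\vA_j=\vec B$, the expectation becomes
\[
\sum_{\vec A,\vec B}\E_p^\otimes[F_1\1_{\Tcut>i}\1_{\vA_i=\vec A}]\;
\proba_p^\otimes(\text{no cut on }[i,j]\mid \vA_i=\vec A,\vA_j=\vec B)\;\cdots
\]
Equivalently, by the Markov property it equals $\E_p^\otimes[F_1\1_{\Tcut>i}\, G(\vA_i)]$. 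Here $G(\vec A)=\E_p^\otimes[\1_{\text{no cut on }[i,j]}\,\Psi(\vA_j)\mid \vA_i=\vec A]$ and $\Psi(\vec B)=\E_p^\otimes[F_2\1_{\Tcut>k-j}\mid\vA_0=\vec B]$, after a time shift of the i.i.d.\ sequence. The first index $\vA_0$ of $F_2$ corresponds to $\vA_j$.

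Second, I apply Proposition \ref{pro:MixingTimeFunction} with $k$ replaced by $j-i$ to the function $\Psi$. This gives
\[
G(\vec A)=\lambda_p^{j-i}h_p(\vec A)\mathfrak{S}_p(\Psi)+R(\vec A),\qquad \|R(\vec A)\|_2\le \lambda_p^{j-i}c^{j-i}\X(p)^3\|\Psi\|_{2,\infty}.
\]
Two things need checking here. The proposition requires $\Psi=0$ on $\Ckill$: if $j<k$, the indicator $\1_{\Tcut>k-j}$ with $k-j\ge1$ kills every $\vec B\in\Ckill$; if $j=k$, the hypothesis $F_2=0$ on $\Ckill$ gives it directly. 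When $i=j$ the identity is trivial, since $h_p(\vec A)$ and $\mathfrak S_p$ reduce correctly only up to the $\Clive$ restriction. To handle that case I would note $\vA_i\in\Clive$ is forced whenever the later transitions survive, or simply absorb it into the bound with $c^0=1$ after checking the constant. In either case $\|\Psi\|_{2,\infty}\le\|F_2\|^+$, using the norm inside the expectation, and $\mathfrak{S}_p(\Psi)=\mathfrak{S}^+_p(F_2)$ by definition.

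Third, I plug this back in. The main term is $\lambda_p^{j-i}\langle \E_p^\otimes[F_1h_p(\vA_i)\1_{\Tcut>i}],\mathfrak{S}^+_p(F_2)\rangle=\lambda_p^{j-i}\langle\mathfrak S^-_p(F_1),\mathfrak S^+_p(F_2)\rangle$. The error term is bounded by $\E_p^\otimes[\|F_1\|_2\1_{\Tcut>i}]\sup_{\vec A}\|R(\vec A)\|_2\le(\lambda_pc)^{j-i}\X(p)^3\|F_1\|^-\|F_2\|^+$, with the inner product handled by Cauchy--Schwarz in $\R^d$, or trivially when one factor is scalar.

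The main obstacle I expect is bookkeeping rather than analysis. I must verify that the conditional independence really holds, because the cut indicators link neighbouring states only, so conditioning on $\vA_i$ and $\vA_j$ separates the three blocks. I must also verify that the $\Ckill$ and $\Clive$ restrictions are compatible with the Doob transform used inside Proposition \ref{pro:MixingTimeFunction}, where $\vA_0$ must be in $\Clive$. For $\vA_i\in\Ckill$ with $j>i$, the middle indicator vanishes, so $G=0$ there, consistent with $h_p=0$ on $\Ckill$.
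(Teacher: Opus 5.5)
Your proposal is correct and follows the paper's proof essentially verbatim. Your $\Psi$ is the paper's $f$, and the paper likewise conditions on $\vA_i$, applies Proposition \ref{pro:MixingTimeFunction} over the $j-i$ middle steps, and uses $\mathfrak S_p(\Psi)=\mathfrak S^+_p(F_2)$ and $\|\Psi\|_{2,\infty}\le\|F_2\|^+$. The case $i=j$ needs no separate treatment: Proposition \ref{pro:MixingTimeFunction} holds for $k=0$, and for $\vA_i\in\Ckill$ both $\Psi(\vA_i)$ and $h_p(\vA_i)$ vanish.
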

\begin{remark} By Lemma \ref{lem:Symmetric_hp_vpg}, if $F_2$ is antisymmetric, i.e., $F_2(-\vec A_j,\dots, -\vec A_k)=-F_2(\vec A_j,\dots, \vec A_k)$, then $\mathfrak{S}^+_p(F_2)=0$. Similarly, if $F_1$ is antisymmetric, then $\mathfrak{S}^-_p(F_1)=0$.
\end{remark}
\begin{proof}
For $\vec A\in \vec\Comp(0)$, let
\[
f(\vec A):=\E_p^\otimes[F_2(\vA_0,\dots,\vA_{k-j})\1_{\Tcut>k-j}|\vA_0=\vec A].
\]
Then $f=0$ on $\Ckill$, and $\|f\|_{2,\infty}\leq \|F_2\|^+$.
By conditioning on $\vA_i$, we have
\begin{align*}
&\E_p^\otimes[\langle F_1(\vA_0,\dots,\vA_i),F_2(\vA_j,\dots, \vA_k)\rangle\1_{\Tcut>k}]\\
&\qquad =
\E_p^\otimes\left[
\left\langle F_1(\vA_0,\dots,\vA_i),
\E_p^\otimes[f(\vA_{j-i})\1_{\Tcut>j-i}|\vA_0=\vA_i]
\right\rangle
\1_{\Tcut>i}
\right],
\end{align*}
where the use of Fubini's theorem is justified whenever $\|F_1\|^-<\infty$ and $\|F_2\|^+<\infty$.
By applying Proposition \ref{pro:MixingTimeFunction} to $f$ between times $i$ and $j$ and using the triangle inequality, we may approximate the last term by
\[
\lambda_p^{j-i}
\left\langle
\E_p^\otimes[F_1(\vA_0,\dots,\vA_i)h_p(\vA_i)\1_{\Tcut>i}],
\mathfrak{S}_p(f)
\right\rangle
=\lambda_p^{j-i}\langle\mathfrak{S}^-_p(F_1),\mathfrak{S}^+_p(F_2)\rangle,
\]
by the definitions of $\mathfrak{S}^-_p$ and $\mathfrak{S}^+_p$. The error term is bounded by
\[
(\lambda_p c)^{j-i}\X(p)^3 \|f\|_{2,\infty}
\E_p^\otimes[\|F_1(\vA_0,\dots,\vA_i)\|_2\1_{\Tcut>i}]
\leq
(\lambda_p c)^{j-i}\X(p)^3\|F_1\|^-\|F_2\|^+. \qedhere
\]
\end{proof}
We then prove a rough bound on $\proba_p^\otimes(\Tcut>k)$.
\begin{lemma} \label{lem:BadLowerTcut} Let $C>0$. There exists $F>0$ depending only on $d,L,C$ such that for every $p<p_c$ satisfying \eqref{eq:initialization} and $\X(p)\geq F$, for every $\vec A_0\in \vec \Comp(0)$, we have 
$ \proba_p^\otimes(\Tcut>k|\vA_0=\vec A_0) \leq \lambda_p^{k+1} \X(p)^2.$
\end{lemma}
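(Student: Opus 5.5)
The plan is to pass to the Doob $h_p$-transform, exactly as in \eqref{eq:RewriteB_TauSquare} and Proposition \ref{pro:MixingTimeFunction}, and to control the initial and final values of $h_p$ with Lemma \ref{lem:vpd}.

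If $\vec A_0\in\Ckill$, then $\Tcut\geq 1$ forces the transition $(\vec A_0,\vA_1)\notin\Ccut$, which never holds. So the probability vanishes for $k\geq1$. For $k=0$ the bound reads $1\leq \lambda_p\X(p)^2$, which holds once $\X(p)$ is large, since $\lambda_p$ is bounded below by Theorem \ref{thm:LowerLambda}. Hence we may assume $\vec A_0\in\Clive$.

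For $k\geq 1$, the event $\Tcut>k$ implies $\vA_{k-1}\in\Clive$, although $\vA_k$ itself may lie in $\Ckill$. I would therefore condition on $\vA_{k-1}$ and apply the Doob transform only up to time $k-1$:
\[
\proba_p^\otimes(\Tcut>k|\vA_0=\vec A_0)=\lambda_p^{k-1}h_p(\vec A_0)\,\E_p^h\!\left[\frac{\proba_p^\otimes((\vA_{k-1},\vA_k)\notin\Ccut\,|\,\vA_{k-1})}{h_p(\vA_{k-1})}\middle|\vA_0=\vec A_0\right].
\]
Note that $\proba_p^\otimes((\vec A,\vA_k)\notin\Ccut)=\Gamma_p(\vec A)h_p(\vec A)/(\Omega\X(p))$, so the integrand equals $\Gamma_p(\vA_{k-1})/(\Omega\X(p))$. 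Bounding this by its supremum, Lemma \ref{lem:Gamma} and $h_p(\vec A_0)\leq\|h_p\|_\infty$ give
\[
\proba_p^\otimes(\Tcut>k|\vA_0=\vec A_0)\leq \lambda_p^{k-1}\,\Omega\X(p)(1-p)^{-\Omega}.
\]

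It remains to convert the factor $\lambda_p^{-2}\Omega(1-p)^{-\Omega}\X(p)$ into $\X(p)^2$. Since $p\in(1/(9\Omega),1/2)$ by the argument in Proposition \ref{pro:RelateLocalLambda}, the factor $(1-p)^{-\Omega}\leq 2^\Omega$ is a constant. By Theorem \ref{thm:LowerLambda}, $\lambda_p\geq c(d,L,C)$ once $\X(p)$ is large. Hence the whole prefactor is $O(\X(p))\leq\X(p)^2$ for $\X(p)\geq F$.

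I do not expect a real obstacle. The only care needed is the last step, where $\vA_k$ may be in $\Ckill$ and $h_p(\vA_k)=0$, so the $h$-transform cannot be carried to time $k$. This is handled by stopping the transform at time $k-1$, as in \eqref{eq:RewriteB_TauSquare}. Checking that the resulting constants are uniform in $p$ uses only Theorem \ref{thm:LowerLambda}.
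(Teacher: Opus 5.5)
Your proposal is correct and follows the paper's proof essentially step for step. The paper likewise treats $k=0$ and $\vec A_0\in\Ckill$ separately, then applies the Doob transform only up to time $k-1$ to get $\proba_p^\otimes(\Tcut>k|\vA_0=\vec A_0)=\frac{\lambda_p^{k-1}}{\Omega\X(p)}\E_p^h[h_p(\vec A_0)\Gamma_p(\vA_{k-1})|\vA_0=\vec A_0]$, and concludes with Lemma \ref{lem:Gamma} and Theorem \ref{thm:LowerLambda}; you simply make explicit the absorption of $\lambda_p^{-2}\Omega(1-p)^{-\Omega}$ into one factor of $\X(p)$ that the paper leaves implicit.
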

\begin{proof} Recall the definition of $\Gamma_p$ from Section \ref{sec:p_c-p_c}. If $k=0$, the result follows from Theorem \ref{thm:LowerLambda}. If $\vec A_0\notin \Clive$ and $k>0$ then the probability is $0$. If $\vec A_0\in \Clive$ and $k>0$, by \eqref{eq:DoobTransform}, we have
  \[ \proba_p^\otimes(\Tcut>k|\vA_0=\vec A_0)= \frac{1}{\Omega\X(p)}\lambda_p^{k-1} \E_p^h[h_p(\vec A_0)\Gamma_p(\vA_{k-1})|\vA_0=\vec A_0]. \]
The desired result then follows directly from Lemma \ref{lem:Gamma} and Theorem \ref{thm:LowerLambda}.
\end{proof}
\begin{lemma} \label{lem:CovarianceDecay} Let $C>0$. There exist $F>0,c<1$ depending only on $d,L,C$ such that for every $p<p_c$ satisfying \eqref{eq:initialization} and $\X(p)\geq F$, for every $0\leq i<j\leq k$, we have
  \[ |V_{i,j,k}(p)|\leq \X(p)^{10} c^{j-i-1}\]
\end{lemma}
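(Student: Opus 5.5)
The plan is to reduce $V_{i,j,k}(p)$ to an unconditioned expectation and then apply Proposition \ref{pro:MixingTimeFunc} with antisymmetric functions, so that the main term vanishes. By definition,
\[
V_{i,j,k}(p)=\frac{\E_p^\otimes[\langle \Delta(\vA_i),\Delta(\vA_j)\rangle \1_{\Tcut>k}]}{\proba_p^\otimes(\Tcut>k)},
\]
and the denominator is at least $\lambda_p^{k+1}$ by Lemma \ref{lem:LambdaLowMemory}. It therefore suffices to prove that the numerator is at most $\lambda_p^{k+1}\X(p)^{10}c^{j-i-1}$. Since $\lambda_p$ is bounded below by Theorem \ref{thm:LowerLambda}, powers of $\lambda_p^{-1}$ can be absorbed into the constants.

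I will first isolate the two relevant steps. Conditioning on the chain up to time $i$ and after time $j$ would force me to carry the killing before $i$ and after $j$. Instead, I apply Proposition \ref{pro:MixingTimeFunc} directly with $F_1(\vA_0,\dots,\vA_i)=\Delta(\vA_i)$ and $F_2(\vA_j,\dots,\vA_k)=\Delta(\vA_j)$.

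\begin{itemize}
\item \emph{Hypothesis on $F_2$.} When $j=k$ the proposition requires $F_2=0$ on $\Ckill$. I handle this either by replacing $\Delta$ with $\Delta\1_{\Clive}$ (on $\Tcut>k$ the last cluster may lie in $\Ckill$, so I must check whether that contribution matters), or by treating $j=k$ separately with the trivial bound of Corollary \ref{lem:BoundVariance}, since $c^{j-i-1}$ is only claimed up to $\X(p)^{10}$.
\item \emph{The main term vanishes.} Both $F_1$ and $F_2$ are antisymmetric under reflection $\vec A\mapsto-\vec A$, because $\Delta(-\vec A)=-\Delta(\vec A)$. By the remark after Proposition \ref{pro:MixingTimeFunc}, $\mathfrak{S}^+_p(F_2)=0$.
\item \emph{Remaining error.} The numerator is then bounded by $(\lambda_p c)^{j-i}\X(p)^3\|F_1\|^-\|F_2\|^+$.
\end{itemize}

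The main work is bounding the norms, which carry the survival factors. Here the killing before step $i$ and after step $j$ must be recovered.

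\begin{itemize}
\item \emph{Bound on $\|F_1\|^-$.} We have $\|F_1\|^-=\E_p^\otimes[\|\Delta(\vA_i)\|_2\1_{\Tcut>i}]\le \proba_p^\otimes(\Tcut>i)^{1/2}\,\E_p^\otimes[\|\Delta(\vA_i)\|_2^2\1_{\Tcut>i}]^{1/2}$. Lemma \ref{lem:BoundedConditionalMoments} gives the conditional second moment as $\X(p)^{1+\eps}$, and Lemma \ref{lem:BadLowerTcut} gives $\proba_p^\otimes(\Tcut>i)\le\lambda_p^{i+1}\X(p)^2$. Together, $\|F_1\|^-\lesssim \lambda_p^{i+1}\X(p)^{3}$.
\item \emph{Bound on $\|F_2\|^+$.} Using Lemma \ref{lem:BoundedConditionalMoments2} with the start at $\vA_0=\vec A$ (time $j$), I get $\|F_2\|^+\lesssim \lambda_p^{k-j+1}\X(p)^{3}$ uniformly in $\vec A\in\Clive$.
\end{itemize}

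Multiplying the pieces gives a numerator bound of order $\lambda_p^{k+2}c^{j-i}\X(p)^{9+\eps}$. This is comfortably at most $\lambda_p^{k+1}\X(p)^{10}c^{j-i-1}$ once $\X(p)$ is large, which proves the lemma.

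\textbf{Main obstacle: the factorisation in Proposition \ref{pro:MixingTimeFunc}.} As stated, the proposition's error bound splits the factor $\lambda_p^{k}$ between $\|F_1\|^-$, $\lambda_p^{j-i}$ and $\|F_2\|^+$. I need to check that this split is exact, so that no stray factor $\lambda_p^{-(k-j)}$ or $\lambda_p^{-i}$ appears; such a factor would destroy the bound for large $k$. In particular, $\|F_2\|^+$ must genuinely include the survival indicator over the last $k-j$ steps, which by Lemma \ref{lem:BadLowerTcut} is of order $\lambda_p^{k-j}\X(p)^2$. I expect this bookkeeping, rather than any new probabilistic estimate, to be where the proof needs care.
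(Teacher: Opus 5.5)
Your overall strategy (antisymmetry kills the main term of Proposition \ref{pro:MixingTimeFunc}, then divide by $\proba_p^\otimes(\Tcut>k)\geq\lambda_p^{k+1}$) is the paper's. However, your choice of block for $F_2$ creates a genuine gap. With $F_2(\vA_j,\dots,\vA_k)=\Delta(\vA_j)$, the norm $\|F_2\|^+$ is a supremum over the \emph{conditioning} state:
\[
\|F_2\|^+=\sup_{\vec A\in\Clive}\|\Delta(\vec A)\|_2\,\proba_p^\otimes(\Tcut>k-j\mid \vA_0=\vec A).
\]
The survival probability is bounded below uniformly in $\vec A\in\Clive$: force the next $k-j$ clusters to be isolated vertices, cf.\ \eqref{eq:irreductible}. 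Meanwhile $\|\Delta(\vec A)\|_2=\|a^+\|_2$ is unbounded on $\Clive$. So $\|F_2\|^+=\infty$, and the proposition yields nothing.

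Lemma \ref{lem:BoundedConditionalMoments2} cannot give your claimed bound. It controls $\Delta(\vA_i)$ only for $i>0$, i.e.\ strictly after the conditioning time, whereas you use it at time $0$ of the shifted chain. Your bound on $\|F_1\|^-$ is fine, since that norm is an average rather than a supremum. Your worry about stray powers of $\lambda_p^{-1}$ is also unfounded: both norms in the proposition already carry the survival indicators.

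The case $j=k$ is also left open. The fallback via Corollary \ref{lem:BoundVariance} only gives $|V_{i,k,k}(p)|\leq \X(p)^{1+\eps}$. That is not $\leq \X(p)^{10}c^{k-i-1}$ once $k-i\gg\log\X(p)$. The $\Delta\1_{\Clive}$ variant is never carried out.

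A single shift removes both problems, and this is what the paper does. Define $F_2$ on $(\vA_{j-1},\vA_j,\dots,\vA_k)$ by $F_2=\Delta(\vA_j)$, i.e.\ apply Proposition \ref{pro:MixingTimeFunc} at the times $i\leq j-1<k$. Then:
\begin{itemize}
\item the $\Ckill$ hypothesis holds automatically because $j-1<k$;
\item $F_2$ is still antisymmetric, so $\mathfrak{S}^+_p(F_2)=0$;
\item $\Delta(\vA_j)$ now sits one step after the conditioning state, so Lemma \ref{lem:BoundedConditionalMoments2} applies.
\end{itemize}
The last point gives $\|F_2\|^+\leq \X(p)\sup_{\vec A\in\Clive}\proba_p^\otimes(\Tcut>k-j+1\mid\vA_0=\vec A)$, which Lemma \ref{lem:BadLowerTcut} bounds by $\lambda_p^{k-j+2}\X(p)^3$. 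The cost is one factor of $\lambda_p c$, which is exactly why the statement has $c^{j-i-1}$ rather than $c^{j-i}$. The rest of your bookkeeping then goes through unchanged.
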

\begin{proof} Let $F_1:(\vec A_0,\dots \vec A_i)\mapsto \Delta(\vec A_i)$ and $F_2:(\vec A_{j-1},\vec A_j, \dots \vec A_k)\mapsto \Delta(\vec A_j)$. Note that $F_1$ and $F_2$ are antisymmetric. 
  Also, by Lemma \ref{lem:BoundedConditionalMoments2}, if $\X(p)$ is large enough, we have
  \[ \|F_1\|^-\leq \X(p)\proba_p^\otimes(\Tcut>i) \quad \text{and} \quad \|F_2\|^+\leq \X(p)\max_{\vec A_0}\proba_p^\otimes(\Tcut>k-j|\vA_0=\vec A_0).\]
  Thus, by Proposition \ref{pro:MixingTimeFunc}, we have
\[ |V_{i,j,k}(p)|\proba_p^\otimes(\Tcut>k)\leq (\lambda_p c)^{j-i-1}\X(p)^5\proba_p^\otimes(\Tcut>i)\max_{\vec A_0}\proba_p^\otimes(\Tcut>k-j|\vA_0=\vec A_0).\]
The final bound is obtained by applying Lemmas \ref{lem:LambdaLowMemory} and \ref{lem:BadLowerTcut}.
\end{proof}
We now use the same method to estimate $V_{i,j,k}$ in the cases where either $\min(i,j)$ or $k-\max(i,j)$ is large. We focus on the case where both are large, which is the most technical, and omit the proof for the others.
For $i\leq j$, let
\[
V_{j-i}(p)=V_{i-j}(p):=\E_p^\vpg[\langle \Delta(\vA_i),\Delta(\vA_j)\rangle].
\]
Note that $V_{j-i}(p)$ depends only on $j-i$ because $(\vA_i)_{i\geq 0}$ is a stationary Markov chain under $\proba_p^\vpg$.
We start by proving the following intermediate result:
\begin{lemma} \label{lem:ForgetPosForTcut} Let $C>0$. There exist $F>0,c<1$ depending only on $d,L,C$ such that for every $p<p_c$ satisfying \eqref{eq:initialization} and $\X(p)\geq F$, for every $\vec A_0\in \Clive$, we have 
\[  \left \| \proba_p^\otimes(\Tcut>k|\vA_0=\vec A_0)-\frac{\lambda_p^{k-1}}{\Omega \X(p)} h_p(\vec A_0)\E_p^\vpg[\Gamma_p(\vA_0)] \right \|_2  \leq \lambda_p^{k-1} c^{k-1}\X(p)^3 . \]
\end{lemma}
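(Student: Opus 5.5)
We start from the Doob-transform identity already used in the proof of Lemma \ref{lem:BadLowerTcut}: for $\vec A_0\in \Clive$ and $k\geq 1$, by \eqref{eq:DoobTransform} and the definition \eqref{eq:def_Gamma_p} of $\Gamma_p$,
\[
\proba_p^\otimes(\Tcut>k|\vA_0=\vec A_0)= \frac{\lambda_p^{k-1}}{\Omega\X(p)}\, h_p(\vec A_0)\,\E_p^h[\Gamma_p(\vA_{k-1})|\vA_0=\vec A_0].
\]
This identity should be rederived carefully. The event $\Tcut>k$ requires the chain to survive up to time $k-1$, which is handled by the $h$-transform with weight $\lambda_p^{-(k-1)}h_p(\vA_{k-1})/h_p(\vA_0)$. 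The last transition into $\vA_k$ is then summed explicitly, giving $\sum_{\vec B}\mu_p(\vec B)\1_{(\vA_{k-1},\vec B)\notin\Ccut}/(\Omega\X(p))=h_p(\vA_{k-1})\Gamma_p(\vA_{k-1})/(\Omega\X(p))$. In the product, the factors $h_p(\vA_{k-1})$ cancel.

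It then remains to replace $\E_p^h[\Gamma_p(\vA_{k-1})|\vA_0=\vec A_0]=\sum_{\vec B}\P_p^{k-1}(\vec B|\vec A_0)\Gamma_p(\vec B)$ by its stationary value $\E_p^\vpg[\Gamma_p(\vA_0)]=\sum_{\vec B}\vpg_p(\vec B)\Gamma_p(\vec B)$. The error is at most
\[
\frac{\lambda_p^{k-1}}{\Omega\X(p)}\, h_p(\vec A_0)\,\|\Gamma_p\|_\infty\sum_{\vec B\in\Clive}\left|\P_p^{k-1}(\vec B|\vec A_0)-\vpg_p(\vec B)\right|.
\]
We bound each factor in turn:
\begin{itemize}
\item By Lemma \ref{lem:Gamma}, $h_p(\vec A_0)\|\Gamma_p\|_\infty\leq \|h_p\|_\infty\|\Gamma_p\|_\infty\leq \Omega^2\X(p)^2(1-p)^{-\Omega}$, and $(1-p)^{-\Omega}\leq 2^\Omega$ since $p<1/2$.
\item By Theorem \ref{thm:MixingTime}, the total-variation sum is at most $c^{k-1}\X(p)$.
\end{itemize}
Altogether the error is at most $\lambda_p^{k-1}c^{k-1}\,\Omega\,2^{\Omega}\X(p)^2$, which is at most $\lambda_p^{k-1}c^{k-1}\X(p)^3$ once $\X(p)$ is large enough.

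The case $k=0$ has to be treated separately. Then the probability is $1$, and the claimed main term is $\lambda_p^{-1}h_p(\vec A_0)\E_p^\vpg[\Gamma_p]/(\Omega\X(p))\leq \lambda_p^{-1}\Omega\,2^\Omega\X(p)$. Since $\lambda_p$ is bounded below by Theorem \ref{thm:LowerLambda}, the difference is at most $\lambda_p^{-1}c^{-1}\X(p)^3$ for $\X(p)$ large. For $k=1$ the argument above applies directly, with $\P_p^0(\cdot|\vec A_0)=\1_{\vec A_0}$ and the total-variation sum bounded by $2\leq \X(p)$.

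I expect no real obstacle beyond the bookkeeping in the last-step identity. The point requiring care is that $h_p$ vanishes on $\Ckill$, while $\vA_k$ itself may lie in $\Ckill$, so the final step cannot be absorbed into the $h$-transform and must be summed by hand as above. This is exactly why the $\Gamma_p$ term appears in the statement.
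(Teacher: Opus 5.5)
Your proof is correct and takes the paper's route. The paper writes $\proba_p^\otimes(\Tcut>k|\vA_0=\vec A_0)=\E_p^\otimes[f(\vA_{k-1})\1_{\Tcut>k-1}|\vA_0=\vec A_0]$ with $f(\vec A)=\proba_p^\otimes((\vec A,\vA_1)\notin\Ccut)$, notes that $\mathfrak{S}_p(f)=\E_p^\vpg[\Gamma_p(\vA_0)]/(\Omega\X(p))$, and then invokes Proposition \ref{pro:MixingTimeFunction}, whose proof is exactly the Doob-transform plus total-variation estimate you carry out by hand (with Lemma \ref{lem:Gamma}, $p<1/2$, and Theorem \ref{thm:MixingTime}).
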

\begin{proof} For $\vec A\in \vec \Comp(0)$, let $f(\vec A)=\proba_p^\otimes((\vec A,\vA_1)\notin \Ccut)$. Clearly, $\|f\|_\infty \leq 1$. Moreover, we have
  \[ \proba_p^\otimes(\Tcut>k|\vA_0=\vec A_0)=\E_p^\otimes[f(\vA_{k-1})\1_{\Tcut>k-1}|\vA_0=\vec A_0].\]
Also $\mathfrak{S}_p(f)=\E_p^\vpg[\Gamma_p(\vA_0)]/(\Omega\X(p))$.
The result then directly follows from Proposition \ref{pro:MixingTimeFunction}.
\end{proof}
\begin{lemma} \label{lem:Bulk_L2}
Let $C>0$. There exist $F>0,c<1$ depending only on $d,L,C$ such that for every $p<p_c$ satisfying \eqref{eq:initialization} and $\X(p)\geq F$, 
for every $1\leq i\leq j< k$, for every $\vec A_0\in \Clive$, we have 
\[
|V_{i,j,k}(p)-V_{j-i}(p)|\leq \X(p)^{11}(c^i+c^{k-j-1})
\]
\end{lemma}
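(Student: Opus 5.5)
The idea is to write $V_{i,j,k}(p)\proba_p^\otimes(\Tcut>k)$ as an expectation of a product of a ``past'' functional, a ``middle'' functional and a ``future'' functional, and then apply Proposition \ref{pro:MixingTimeFunc} twice (or equivalently Proposition \ref{pro:MixingTimeFunction} at both ends). The middle block runs from time $i-1$ to $j+1$ and contains the increments $\Delta(\vA_i),\Delta(\vA_j)$. The mixing gaps are the first $i-1$ steps before the block and the $k-j-1$ steps after it. Concretely, condition on $\vA_{i-1}$ and on $\vA_{j+1}$. The future factor is $g(\vec A):=\proba_p^\otimes(\Tcut>k-j-1|\vA_0=\vec A)$. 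By Lemma \ref{lem:ForgetPosForTcut}, $g(\vec A)$ equals $\frac{\lambda_p^{k-j-2}}{\Omega\X(p)}h_p(\vec A)\E_p^\vpg[\Gamma_p(\vA_0)]$ up to an error $\lambda_p^{k-j-2}c^{k-j-2}\X(p)^3$.

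The steps are as follows.

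\textbf{Step 1.} Substitute the approximation for $g$ into $\E_p^\otimes[\langle\Delta(\vA_i),\Delta(\vA_j)\rangle g(\vA_{j+1})\1_{\Tcut>j+1}]$. The error term is controlled by the moment bound of Lemma \ref{lem:BoundedConditionalMoments} (with $\X(p)^{1+\eps}$), together with $\proba_p^\otimes(\Tcut>j+1)$. After this substitution the factor $h_p(\vA_{j+1})$ appears.

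\textbf{Step 2.} Apply the Doob transform \eqref{eq:DoobTransform} from time $0$ to $j+1$. The factor $h_p(\vA_{j+1})$ cancels, leaving $\lambda_p^{j+1}\Plive_p\E_p^h[h_p(\vA_0)\langle\Delta(\vA_i),\Delta(\vA_j)\rangle]$.

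\textbf{Step 3.} Use Theorem \ref{thm:MixingTime} over the first $i-1$ steps to replace the law of $\vA_{i-1}$ under $\proba_p^h$ by $\vpg_p$. Weighting by $h_p(\vA_0)$ factorizes into $\E_p^h[h_p(\vA_0)]$ times $\E_p^\vpg[\langle\Delta(\vA_i),\Delta(\vA_j)\rangle]=V_{j-i}(p)$. The error is $c^{i-1}\X(p)$ times a sup over starting states of the conditional second moment, which Lemma \ref{lem:BoundedConditionalMoments2} bounds by $\X(p)^{1+\eps}$.

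\textbf{Step 4.} Do the same computation with $\Delta$ replaced by $1$. This yields the same constant $\lambda_p^{k-1}\Plive_p\E_p^h[h_p(\vA_0)]\E_p^\vpg[\Gamma_p]/(\Omega\X(p))$ for $\proba_p^\otimes(\Tcut>k)$, with errors of the same form. Dividing, the common prefactor cancels and we obtain $V_{i,j,k}=V_{j-i}+\text{errors}$. The denominator is bounded below by $\lambda_p^{k+1}$ via Lemma \ref{lem:LambdaLowMemory}, and $\lambda_p$ is bounded below by Theorem \ref{thm:LowerLambda}. With $h_p$ and $\Gamma_p$ controlled by Lemmas \ref{lem:vpd} and \ref{lem:Gamma}, all polynomial losses fit within $\X(p)^{11}$, with the exponential factors $c^i$ and $c^{k-j-1}$ after adjusting $c$.

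The main obstacle is bookkeeping the normalizations so that the ratio really produces $V_{j-i}(p)$ rather than a biased version. The relevant points are: the bias from $h_p(\vA_0)$ must decouple only after mixing; the last step before $\Tcut$, which is not covered by the $h$-transform, must be handled through $\Gamma_p$; and the $\vpg_p$-expectation must be the stationary one for the $h$-chain. In particular, the lower bound on $\E_p^\vpg[\Gamma_p]\E_p^h[h_p(\vA_0)]$ relative to $\|h_p\|_\infty$, obtained from Lemmas \ref{eq:SumForCoupling} and \ref{lem:LambdaLowMemory}, must be verified so that dividing by it costs only a polynomial factor in $\X(p)$. A minor point is the case $j=i$ or small gaps. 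These are handled the same way, since the middle block is just a bounded-moment functional.
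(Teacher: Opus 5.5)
Your proposal is correct and is essentially the paper's argument. Both use mixing from the end of the block to the terminal time (you via Lemma~\ref{lem:ForgetPosForTcut}, the paper via Proposition~\ref{pro:MixingTimeFunc} with $F_2(\vec A)=\proba_p^\otimes((\vec A,\vA_1)\notin\Ccut)$), then mixing from time $0$ to the block under the $h_p(\vA_0)$-weighted Doob transform to extract $\E_p^\otimes[h_p(\vA_0)]V_{j-i}(p)$, and finally division by the identical expansion of $\proba_p^\otimes(\Tcut>k)$.

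Starting the middle block at $\vA_{i-1}$, as you do, is the right indexing. It keeps $\Delta(\vA_i)$ a genuine step after the conditioning state, so the supremum over starting states of the conditional moment is finite. The paper's application with the block starting at $\vA_i$ would need to control $\Delta$ of the conditioned state itself, which is unbounded. One small correction: that conditional moment is taken under $\proba_p^h$, so it is controlled by Lemma~\ref{lem:OneStepBias} and Theorem~\ref{thm:ExponentialTail} (as in Lemma~\ref{lem:ScalarMoment}), not by Lemma~\ref{lem:BoundedConditionalMoments2}, which concerns $\proba_p^\otimes$.
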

\begin{proof} We first apply Proposition \ref{pro:MixingTimeFunc} between the block $(\vA_i,\dots,\vA_j)$ and the terminal time $k$, with $F_1:(\vec A_0,\dots,\vec A_j)\mapsto \langle\Delta(\vec A_i),\Delta(\vec A_j) \rangle $ and $F_2:\vec A\mapsto \proba_p^\otimes((\vec A,\vA_1)\notin \Ccut)$.
By Lemma \ref{lem:BoundedConditionalMoments2} and Lemma \ref{lem:BadLowerTcut}, we have
\[ \|F_1\|^-\leq \X(p)^2\proba_p^\otimes(\Tcut>j)\leq \lambda_p^{j+1}\X(p)^5,\]
 whenever $\X(p)$ is large enough. Clearly, $\|F_2\|^+\leq 1$.
Also note that $\mathfrak{S}_p^+(F_2)=\E_p^\vpg[\Gamma_p(\vA_0)]/(\Omega\X(p))$. 
Thus, 
\begin{equation}
\left | \E_p^\otimes[\langle \Delta(\vA_i),\Delta(\vA_j)\rangle\1_{\Tcut>k}]-
\lambda_p^{k-j-1}
\mathfrak{S}^-_p(F_1)\mathfrak{S}_p^+(F_2) \right | \leq c^{k-j-1}\lambda_p^k\X(p)^8.
\label{eq:LongRangeVariancePart1}
\end{equation}
Moreover, we have 
\[ \mathfrak{S}^-_p(F_1)= \E_p^\otimes[\langle \Delta(\vA_i),\Delta(\vA_j)\rangle h_p(\vA_j)\1_{\Tcut>j}],\]
so that we may reuse Proposition \ref{pro:MixingTimeFunc} with $F'_1(\vec A_0)=1$ and $F'_2(\vec A_i,\dots, \vec A_j)=\langle \Delta(\vec A_i),\Delta(\vec A_j)\rangle h_p(\vec A_j)$.
Note that $\mathfrak{S}_p^{-}(F'_1)=\E_p^\otimes[h_p(\vA_0)]$. By a Doob transform, we have 
\[ \mathfrak{S}_p^+(F'_2)=\lambda_p^{j-i}V_{j-i}(p).  \]
Also, $\|F'_1\|^-=1$. By Lemma \ref{lem:BoundedConditionalMoments2} and Lemma \ref{lem:BadLowerTcut}, we have
\[ \|F'_2\|^+\leq \lambda_p^{j-i}\X(p)^2\|h_p\|_\infty. \]
Thus, by Proposition \ref{pro:MixingTimeFunc}, we get
\[\left |\mathfrak{S}^-_p(F_1) -\lambda_p^j\E_p^\otimes[h_p(\vA_0)]V_{j-i}(p) \right |\leq c^i\lambda_p^j\X(p)^5\|h_p\|_\infty.\]
Furthermore, by Lemma \ref{lem:Gamma}, we have $\mathfrak{S}_p^+(F_2)\leq \X(p)^3/\|h_p\|_\infty$ whenever $\X(p)$ is large enough; thus,
\[\left |\lambda_p^{k-j-1}
\mathfrak{S}^-_p(F_1)\mathfrak{S}_p^+(F_2) -\lambda_p^{k-1}\E_p^\otimes[h_p(\vA_0)]\mathfrak{S}_p^+(F_2)V_{j-i}(p) \right |\leq c^i\lambda_p^{k-1}\X(p)^8 .\]
Combining this with \eqref{eq:LongRangeVariancePart1}, we get
\[ \left |\E_p^\otimes[\langle \Delta(\vA_i),\Delta(\vA_j)\rangle\1_{\Tcut>k}]-\lambda_p^{k-1}\E_p^\otimes[h_p(\vA_0)]\mathfrak{S}_p^+(F_2)V_{j-i}(p) \right |\leq \lambda_p^{k-1}\X(p)^8(c^i+c^{k-j-1}).\]
Using Lemma \ref{lem:ForgetPosForTcut} and Corollary \ref{lem:BoundVariance}, we deduce
\[ \lambda_p^{k-1}\E_p^\otimes[h_p(\vA_0)]\mathfrak{S}_p^+(F_2)\left |V_{i,j,k}- V_{j-i}(p) \right |\leq \lambda_p^{k-1}\X(p)^8(c^i+c^{k-j-1}).\]
The desired result follows by division. Indeed, since $\mathfrak{S}_p^+(F_2)=\E_p^\vpg[\Gamma_p(\vA_0)]/(\Omega\X(p))$, we have
\[ \E_p^\otimes[h_p(\vA_0)]\mathfrak{S}_p^+(F_2)\geq \frac{\Plive_p \min_{\vec A\in \Clive} (h_p(\vec A))}{\|h_p\|_\infty}\sum_{\vec A,\vec B\in \Clive }\vpg_p(\vec A)\lambda_p \P_p(\vec B|\vec A)\geq \frac{(1-p)^\Omega}{\Omega\X(p)}\lambda_p^2,  \]
using Lemmas \ref{lem:vpd}, \ref{lem:vpg} and \ref{lem:LambdaLowMemory} for the last inequality. Moreover, $\lambda_p$ is bounded below by a constant by Theorem \ref{thm:LowerLambda}.
\end{proof}
We deal with the other two cases in exactly the same way:
\begin{lemma} \label{lem:Start_L2}
Let $C>0$. There exist $F>0,c<1$ depending only on $d,L,C$ such that for every $p<p_c$ satisfying \eqref{eq:initialization} and $\X(p)\geq F$, 
for every $0\leq i\leq j$, there exists $V_{i,j,\infty}(p)=V_{j,i,\infty}(p)$ such that for every $k\geq j+1$, we have 
\[
|V_{i,j,k}(p)-V_{i,j,\infty}(p)|\leq \X(p)^{11}c^{k-j-1}
\]
\end{lemma}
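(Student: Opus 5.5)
The idea is to treat the terminal time $k$ exactly as in the proof of Lemma \ref{lem:Bulk_L2}, but without using mixing between times $0$ and $i$, since $i$ need not be large. First we express the numerator of $V_{i,j,k}(p)$ through Proposition \ref{pro:MixingTimeFunc}. We split at the block $(\vA_0,\dots,\vA_j)$ and the terminal step, with
\[
F_1:(\vec A_0,\dots,\vec A_j)\mapsto \langle \Delta(\vec A_i),\Delta(\vec A_j)\rangle
\quad\text{and}\quad
F_2:\vec A\mapsto \proba_p^\otimes((\vec A,\vA_1)\notin\Ccut).
\]
Here $F_2$ is evaluated at $\vA_{k-1}$, so the gap $k-1-j$ is nonnegative because $k\geq j+1$.

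As in \eqref{eq:LongRangeVariancePart1}, Lemmas \ref{lem:BoundedConditionalMoments2} and \ref{lem:BadLowerTcut} give $\|F_1\|^-\leq \lambda_p^{j+1}\X(p)^5$ and $\|F_2\|^+\leq 1$, together with $\mathfrak S^+_p(F_2)=\E_p^\vpg[\Gamma_p(\vA_0)]/(\Omega\X(p))$. This yields
\[
\Bigl|\E_p^\otimes[\langle\Delta(\vA_i),\Delta(\vA_j)\rangle\1_{\Tcut>k}]-\lambda_p^{k-j-1}\mathfrak S^-_p(F_1)\mathfrak S^+_p(F_2)\Bigr|\leq c^{k-j-1}\lambda_p^k\X(p)^8 .
\]
Applying the same splitting with $F_1\equiv 1$ on $(\vA_0,\dots,\vA_j)$, or directly through Lemma \ref{lem:ForgetPosForTcut}, gives
\[
\Bigl|\proba_p^\otimes(\Tcut>k)-\lambda_p^{k-j-1}\proba_p^\otimes\text{-}\mathfrak S^-_p(1)\,\mathfrak S^+_p(F_2)\Bigr|\leq c^{k-j-1}\lambda_p^k\X(p)^{5},
\]
where $\mathfrak S^-_p(1)=\E_p^\otimes[h_p(\vA_j)\1_{\Tcut>j}]$.

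Next we define
\[
V_{i,j,\infty}(p):=\frac{\mathfrak S^-_p(F_1)}{\E_p^\otimes[h_p(\vA_j)\1_{\Tcut>j}]}
=\frac{\E_p^\otimes[\langle\Delta(\vA_i),\Delta(\vA_j)\rangle h_p(\vA_j)\1_{\Tcut>j}]}{\E_p^\otimes[h_p(\vA_j)\1_{\Tcut>j}]}.
\]
This depends only on $i,j$, and it is symmetric in $(i,j)$ because the inner product is symmetric. Up to the factor $\lambda_p^{-j}$, it is the expectation under the Doob-transformed chain started from the $\Plive_p$-weighted law and observed up to time $j$. The quotient $V_{i,j,k}=N_k/D_k$ is then compared with $N_\infty/D_\infty$, where $N_\infty=\lambda_p^{k-j-1}\mathfrak S^-_p(F_1)\mathfrak S^+_p(F_2)$ and $D_\infty=\lambda_p^{k-j-1}\mathfrak S^-_p(1)\mathfrak S^+_p(F_2)$. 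We use the identity
\[
\frac{N_k}{D_k}-\frac{N_\infty}{D_\infty}=\frac{N_k-N_\infty}{D_k}-\frac{N_\infty}{D_\infty}\cdot\frac{D_k-D_\infty}{D_k}.
\]
The ratio $N_\infty/D_\infty$ equals $V_{i,j,\infty}(p)$, which is at most $\X(p)^{2}$ by Lemma \ref{lem:BoundedConditionalMoments2} (a Corollary \ref{lem:BoundVariance}-type bound). For the denominators, $D_k=\proba_p^\otimes(\Tcut>k)\geq\lambda_p^{k+1}$ by Lemma \ref{lem:LambdaLowMemory}.

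The main obstacle is keeping the error at $\X(p)^{11}$, because the bounds above are stated with $\lambda_p^k$ while the denominator only gives $\lambda_p^{k+1}$. Since $\lambda_p$ is bounded below by Theorem \ref{thm:LowerLambda}, a constant factor $\lambda_p^{-1}$ (or $\lambda_p^{-2}$) is harmless and can be absorbed by slightly enlarging $F$, in the same way the final division in Lemma \ref{lem:Bulk_L2} is handled. Combining the two terms then gives the bound $\X(p)^{11}c^{k-j-1}$ for $\X(p)$ large. For $i=j=0$ or $j=0$, no separate treatment is needed, since $F_1$ is then just a function of $\vA_0$.
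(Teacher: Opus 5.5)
Your proposal is correct and is essentially the paper's argument: the paper proves this lemma exactly as Lemma \ref{lem:Bulk_L2} but skips the second application of Proposition \ref{pro:MixingTimeFunc}. Its limit is therefore your ratio $\mathfrak S^-_p(F_1)/(\lambda_p^j\E_p^\otimes[h_p(\vA_0)])$, which equals yours via $\E_p^\otimes[h_p(\vA_j)\1_{\Tcut>j}]=\lambda_p^j\E_p^\otimes[h_p(\vA_0)]$ (with $j$ read as $\max(i,j)$ for the symmetry). The only difference is the final division, where the paper bounds $V_{i,j,k}$ by Corollary \ref{lem:BoundVariance} and divides by the limiting denominator, while you need a bound on $V_{i,j,\infty}$; that bound does not follow directly from Lemma \ref{lem:BoundedConditionalMoments2} because of the $h_p$-weight, which costs $\|h_p\|_\infty/\inf_{\Clive}h_p=O(\X(p))$ and $\proba_p^\otimes(\Tcut>j)/\proba_p^\otimes(\Tcut>j,\vA_j\in\Clive)=O(\X(p)^2)$, but any polynomial bound suffices below $\X(p)^{11}$.
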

\begin{proof}
The proof is essentially the same as for Lemma \ref{lem:Bulk_L2}, with the difference that we skip the second application of Proposition \ref{pro:MixingTimeFunc}. We omit the details.
\end{proof}
\begin{lemma} \label{lem:End_L2}
Let $C>0$. There exist $F>0,c<1$ depending only on $d,L,C$ such that for every $p<p_c$ satisfying \eqref{eq:initialization} and $\X(p)\geq F$, 
for every $0\leq i\leq j$, there exists $V_{\infty,i,j}(p)=V_{\infty,j,i}(p)$ with $|V_{\infty,i,j}(p)|\leq \X(p)^3$ such that for every $k>j$, we have
\[
|V_{k-j,k-i,k}(p)-V_{\infty,i,j}(p)|\leq \X(p)^{11}c^{i}
\]
\end{lemma}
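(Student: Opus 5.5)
The plan is to reverse time and repeat the proof of Lemma \ref{lem:Bulk_L2} from the terminal side, while skipping the mixing step at the start of the chain (the mirror of Lemma \ref{lem:Start_L2}). Write $i'=k-j$ and $j'=k-i$, so $i'\leq j'<k$ when $k>j$. Then $V_{i',j',k}(p)$ is the conditional covariance of increments at distances $j$ and $i$ from the end of the chain. We condition on $\Tcut>k$ and apply Proposition \ref{pro:MixingTimeFunc} with:
\begin{itemize}
\item[] $F_1\equiv 1$ on the single variable $\vA_0$, and
\item[] $F_2(\vec A_{i'},\dots,\vec A_k)=\langle\Delta(\vec A_{i'}),\Delta(\vec A_{j'})\rangle$, a function of the last $j+1$ clusters.
\end{itemize}
The gap between the two blocks is $i'-0=k-j$. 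This already gives exponential convergence in $k-j$, which is not what we want; we want convergence in $i$, uniformly in $k$.

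So the better approach is to first rewrite the quantity by stationarity from the start. Since $\mathfrak{S}_p^+(F_2)$ does not depend on $k$, the output of Proposition \ref{pro:MixingTimeFunc} is
\[
\E_p^\otimes[\langle\Delta(\vA_{k-j}),\Delta(\vA_{k-i})\rangle\1_{\Tcut>k}]
=\lambda_p^{k-j}\,\E_p^\otimes[h_p(\vA_0)]\,\mathfrak{S}_p^+(F_2)+O\big((\lambda_pc)^{k-j}\X(p)^{3}\|F_2\|^+\big).
\]
This error is small only when $k-j$ is large. The remaining regime is small $k-j$ (the chain is short before the block), and that case must be handled separately.

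For the $i$-dependence, apply Proposition \ref{pro:MixingTimeFunc} again inside $\mathfrak{S}_p^+(F_2)$, now splitting between the block up to time $k-i$ and the final $i$ steps. Take $F_1'(\vec A_{k-j},\dots,\vec A_{k-i})=\langle\Delta(\vec A_{k-j}),\Delta(\vec A_{k-i})\rangle$, with the chain started from $\vpg_p/h_p$, and take $F_2'$ equal to the indicator of survival for the remaining $i$ steps. The mixing gap is now $i$. Because the final step involves $\Gamma_p$ and is not Doob-transformed, the limiting terminal factor is $\mathfrak{S}_p^+$ of the survival functional, computed as in Lemma \ref{lem:ForgetPosForTcut}. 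This produces a candidate
\[
V_{\infty,i,j}(p)=\lim_{k\to\infty}V_{k-j,k-i,k}(p),
\]
expressed as a ratio of an $h$-weighted stationary expectation of $\langle\Delta(\vA_0),\Delta(\vA_{j-i})\rangle$ times the end correction, divided by the analogous normalization. Both the numerator and the denominator converge at rate $c^i$, and the error terms are bounded as in Lemma \ref{lem:Bulk_L2} using Lemmas \ref{lem:BoundedConditionalMoments2}, \ref{lem:BadLowerTcut} and \ref{lem:LambdaLowMemory}, which give the factor $\X(p)^{11}$. We then divide by $\proba_p^\otimes(\Tcut>k)$, which is bounded below by Lemmas \ref{lem:ForgetPosForTcut} and \ref{lem:vpd}, and by the lower bound on $\E_p^\otimes[h_p(\vA_0)]\mathfrak{S}_p^+(\cdot)$ from the end of the proof of Lemma \ref{lem:Bulk_L2}. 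The symmetry $V_{\infty,i,j}=V_{\infty,j,i}$ is immediate from the symmetry of the inner product. The bound $|V_{\infty,i,j}|\leq\X(p)^3$ follows by passing to the limit in Corollary \ref{lem:BoundVariance}, taking $\eps$ small so that $\X(p)^{1+\eps}\leq\X(p)^3$.

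The main obstacle is uniformity when $k-j$ is small, i.e. when the block sits near the start of the chain and the initial law has not mixed. We cannot remove the starting dependence there. We would try to avoid it by never mixing from time $0$. Instead, first condition on the whole past up to time $k-j$ and apply Proposition \ref{pro:MixingTimeFunc} only across the last $i$ steps, uniformly in the state $\vA_{k-j}$. The ratio structure then cancels the past-dependent factor $\proba(\cdot\text{ up to }k-j)$ in both numerator and denominator, up to an error of order $c^i$. Checking this cancellation, in particular that the $h_p$-weights match, is the delicate point.
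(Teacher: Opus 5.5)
You took the rate $c^{i}$ in the display literally, and that led you to discard the right argument. The paper proves this lemma as in Lemma~\ref{lem:Bulk_L2}, skipping the first application of Proposition~\ref{pro:MixingTimeFunc} (block versus terminal time). That is exactly your first attempt: a single application of Proposition~\ref{pro:MixingTimeFunc} with $F_1\equiv 1$ on $\vA_0$, and $F_2=\langle\Delta(\vA_{k-j}),\Delta(\vA_{k-i})\rangle$ on the final window, with survival up to time $k$ built into $\mathfrak{S}^+_p$ and $\|\cdot\|^+$. Since $\mathfrak{S}^+_p(F_2)$ does not depend on $k$, you then divide by $\proba_p^\otimes(\Tcut>k)$, using Lemma~\ref{lem:ForgetPosForTcut} and the lower bound at the end of the proof of Lemma~\ref{lem:Bulk_L2}. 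This defines $V_{\infty,i,j}(p)$ and gives an error exponentially small in the gap $k-j$ between time $0$ and the block, not in $i$.

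So $c^{i}$ should be read as $c^{k-j}$, up to a shift by one. That is what the argument produces, and it is the only form used later. In the fourth case of $E_{i,j,k}(p)$ in the proof of Proposition~\ref{pro:PolynomialApproximation_For_Xi2}, the lemma's $i$ can be $0$, whereas the block starts at time at least $k-2\ell(p)>\ell(p)$. No separate treatment of small $k-j$ is needed; the bound is simply uninformative there. One technical adjustment: start the window of $F_2$ at time $k-j-1\geq 0$, as in Lemma~\ref{lem:CovarianceDecay}. Otherwise $\|F_2\|^+$ involves $\|\Delta\|_2$ of the conditioning state, which is unbounded on $\Clive$. After the shift, Lemmas~\ref{lem:BoundedConditionalMoments2} and~\ref{lem:BadLowerTcut} control it.

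Your replacement route cannot work. Mixing across the last $i$ steps only compares $V_{k-j,k-i,k}(p)$ with $V_{k-j,k-i,\infty}(p)$, which is Lemma~\ref{lem:Start_L2}. That limit genuinely depends on $k-j$ when $k-j$ is small. For large $i$, conditionally on $\Tcut>k$, the clusters $\vA_0,\dots,\vA_{k-i}$ are close to the Doob-transformed chain started from the law proportional to $\mu_p h_p$ on $\Clive$, which is not the stationary law $\vpg_p$. This start bias acts through the law of the integrand $\langle\Delta(\vA_{k-j}),\Delta(\vA_{k-i})\rangle$, not as a factor common to numerator and denominator. So no ratio cancellation removes it, and your step ``both numerator and denominator converge at rate $c^{i}$'' fails for the numerator. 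In fact, suppose a bound held uniformly over $k>j$ with rate $c^{i}$. Combining it with Lemmas~\ref{lem:Start_L2} and~\ref{lem:Bulk_L2}, taking $j=i\to\infty$ and $k=i+m$, would force $V_{m,m,\infty}(p)=V_0(p)$ for every $m\geq 1$, an identity there is no reason to expect.
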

\begin{proof}
The proof is essentially the same as for Lemma \ref{lem:Bulk_L2}, with the difference that we skip the first application of Proposition \ref{pro:MixingTimeFunc}. We omit the details.
\end{proof}
When $k$ is small, we also use the following bound:
\begin{lemma} \label{lem:First_Term_L2_BadBound}
Let $C>0$. There exists $F>0$ depending only on $d,L,C$ such that for every $p<p_c$ with $\X(p)\geq F$, and such that \eqref{eq:initialization} is satisfied at $p'$ with $\X(p')=\log(\X(p))^3\X(p)$, we have
\[
\sum_{k=0}^{3\log(\X(p))^2} \left (\frac{\tilde p_c(p)-p}{1-p} \right )^k  \|\tau_p^{\square,k}\|_2^2 \leq \X(p)^3.
\]
\end{lemma}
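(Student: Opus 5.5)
Write $N:=3\log(\X(p))^2$ and $\rho_c:=(\tilde p_c(p)-p)/(1-p)=1/(\Omega\X(p)\lambda_p)$. The plan is to bound each term through the decomposition \eqref{eq:DecomposeVariance}:
\[
\|\tau_p^{\square,k}\|_2^2\leq \|\tau_p^{\square,k}\|_1\sum_{0\leq i,j\leq k}|V_{i,j,k}(p)|.
\]
Corollary \ref{lem:BoundVariance} (applied with a small $\eps$) gives $|V_{i,j,k}(p)|\leq \X(p)^{1+\eps}$. Hence each term is at most $(k+1)^2\X(p)^{1+\eps}\|\tau_p^{\square,k}\|_1$, and for $k\leq N$ this is at most $10\log(\X(p))^4\X(p)^{1+\eps}\|\tau_p^{\square,k}\|_1$. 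By \eqref{eq:tau_E_p,infty}, $\rho_c^k\|\tau_p^{\square,k}\|_1=E_{p,k}$ (with $E_{p,0}=\X(p)$). Therefore
\[
\sum_{k\leq N}\rho_c^k\|\tau_p^{\square,k}\|_2^2\leq 10\log(\X(p))^4\X(p)^{1+\eps}\sum_{k\leq N}E_{p,k}.
\]

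It then suffices to show $\sum_{k\leq N}E_{p,k}\leq \X(p)^{1+\eps}$. Split the sum at $K_1\log(\X(p))$. For small $k$, Lemma \ref{lem:Small_k_L1} gives $\sum_{k\leq K_1\log\X(p)}E_{p,k}\leq K_3\log(\X(p))\X(p)$, provided \eqref{eq:initialization} holds at a $p_2$ with $\X(p_2)=F'\log(\X(p))\X(p)$. This holds because $\tau$ is monotone in $p$: $p_2\leq p'$ once $\X(p)$ is large, since $\X(p')=\log(\X(p))^3\X(p)$, so \eqref{eq:initialization} at $p'$ implies it at $p_2$. For large $k$, Lemma \ref{eq:L1Diff_After_Mixing} gives $E_{p,k}\leq E_{p,\infty}+1/\X(p)^4$, and Lemma \ref{lem:Large_k_L1} gives $E_{p,\infty}\leq K_4\X(p)$, again under the same initialization at $p_2$. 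So these terms contribute at most $N(K_4\X(p)+1)=O(\log(\X(p))^2\X(p))$.

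Combining the two ranges, the sum is $O(\log(\X(p))^6\X(p)^{2+2\eps})$, which is at most $\X(p)^3$ for $\X(p)$ large after choosing $\eps<1/4$. The only delicate point is checking that the hypotheses of Lemmas \ref{lem:Small_k_L1} and \ref{lem:Large_k_L1} are satisfied, namely that $F'$ can be taken large and that \eqref{eq:initialization} transfers from $p'$ down to $p_2$. Monotonicity of $\tau_p$ in $p$ and $F'\log\X(p)\leq\log(\X(p))^3$ for large $\X(p)$ handle this. Everything else is direct substitution.
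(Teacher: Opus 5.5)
Your proof is correct, but it takes a different route from the paper's. The paper never expands $\|\tau_p^{\square,k}\|_2^2$ and instead compares with the true two-point function at $p'$:
\begin{itemize}
\item Proposition \ref{pro:L2Error} gives $\|\tau^\square_{p,p'}\|_2^2\leq\|\tau_{p'}\|_2^2+o(\X(p)^3)$, and the exponential tail (Theorem \ref{thm:ExponentialTail}) gives $\|\tau_{p'}\|_2^2=o(\X(p)^3)$. Together these control the truncated sum with weights $((p'-p)/(1-p))^k$.
\item It then switches to the weights $((\tilde p_c(p)-p)/(1-p))^k$ at a cost $1+o(1)$ for $k\leq 3\log(\X(p))^2$, using $\tilde p_c(p)-p'\leq K/\X(p')$ (Proposition \ref{pro:p_c(p)-p_2}) and $p'-p\geq K'/\X(p)$ (Lemma \ref{lem:X'}).
\end{itemize}
The choice $\X(p')=\log(\X(p))^3\X(p)$ is tuned so that this reweighting is harmless.

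You instead stay inside the Markov chain. The identity \eqref{eq:DecomposeVariance} and the crude covariance bound of Corollary \ref{lem:BoundVariance} reduce each term to its $L^1$ mass. The exact identity $\rho_c^k\|\tau_p^{\square,k}\|_1=E_{p,k}$ then absorbs the critical weight with no reweighting at all. Finally, the bounds on $E_{p,k}$ from Section \ref{sec:p_c-p_c} finish the argument.

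What each route buys:
\begin{itemize}
\item Your version needs only the $L^1$ comparison, through Lemmas \ref{lem:Small_k_L1} and \ref{lem:Large_k_L1}, which themselves rest on Theorem \ref{thm:PointwiseError} and Proposition \ref{pro:p_c(p)-p_2}. It does not need the $L^2$ comparison, and it uses the hypothesis at $p'$ only through monotonicity.
\item The paper's version is shorter once Proposition \ref{pro:L2Error} is available, and it avoids the conditional-moment and mixing machinery at this step.
\item Both routes actually give a bound of order $\X(p)^{2+\eps}$ up to logarithmic factors, well below the stated $\X(p)^3$.
\end{itemize}

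One small point to make explicit: Corollary \ref{lem:BoundVariance} and Lemma \ref{eq:L1Diff_After_Mixing} require \eqref{eq:initialization} at $p$ itself. This follows from the same monotonicity argument you use for $p_2$, since $p<p'$.
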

\begin{proof}
By Proposition \ref{pro:L2Error}, we have $\|\tau^\square_{p,p'}\|_2^2\leq \|\tau_{p'}\|_2^2+o(\X(p)^3)$. It follows directly from, e.g., Theorem \ref{thm:ExponentialTail} that $\|\tau_{p'}\|_2^2=o(\X(p)^3)$. Thus
\begin{equation}
\sum_{k=0}^{3\log(\X(p))^2} \left (\frac{p'-p}{1-p} \right )^k \|\tau_p^{\square,k} \|_2^2 \leq  \|\tau_{p,p'}^\square\|_2^2=o(\X(p)^3). \label{eq:Naive_Truncated_L2_Bound}
\end{equation}
Proposition \ref{pro:p_c(p)-p_2} then gives $0\leq \tilde p_c(p)-p'\leq K/\X(p')$ for some constant $K>0$ depending on $d,L,C$.
By Lemma \ref{lem:X'}, we have $p'-p\geq K'/\X(p)$ for some constant $K'>0$.
It follows, using $x/(x+y)\geq e^{-y/x}$ for $x>0,y\geq 0$,
\[
\left (\frac{p'-p}{\tilde p_c(p)-p} \right )^{3\log(\X(p))^2}\geq \exp\left (-\frac{3K}{K'}\frac{\log(\X(p))^2\X(p)}{\X(p')}\right )=1-o(1).
\]
The result thus follows from \eqref{eq:Naive_Truncated_L2_Bound} and $p\leq p'\leq \tilde p_c(p)$.
\end{proof}
\subsection{Asymptotic for $\xi_2(p)$: proof of Theorem \ref{thm:gyration}} \label{sec:Gyr}
We assume \eqref{eq:etaUpper} throughout the section. As in Section \ref{sec:Chi}, we first approximate
$\|\tau_{p,p'}^\square\|_2^2$ by a polynomial, then use a deconvolution estimate.
\begin{proposition} \label{pro:PolynomialApproximation_For_Xi2}
For every $p<p_c$ with $\X(p)$ large enough,
and $\eps>0$ small enough,
there exist $N(\eps)\in\N$
and coefficients $A_{-2}(p,\eps),A_{-1}(p,\eps),\dots, A_{N}(p,\eps)$
such that uniformly over all $p'$ satisfying
$\X(p)^{1+\eps/2} \leq\X(p')\leq \X(p)^{1+2\eps}$,
we have
\[
\|\tau_{p,p'}^\square\|_2^2 = \sum_{k=-2}^{N(\eps)} A_{k}(p,\eps)(\tilde p_c(p)-p')^k+O(1/\X(p)^2).
\]
\end{proposition}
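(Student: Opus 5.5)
Write $q:=(p'-p)/(\tilde p_c(p)-p)$, so that $(p'-p)/(1-p)=q/(\Omega\X(p)\lambda_p)$ by \eqref{eq:def_pcp}, and set $u:=\tilde p_c(p)-p'$. The plan is to redo the argument behind \eqref{eq:Tau_X_Diff}, now at the level of second moments. Combining \eqref{eq:DecomposeVariance} with \eqref{eq:tau_E_p,infty} gives
\[
\|\tau_{p,p'}^\square\|_2^2=\|\tau_p\|_2^2+\sum_{k\ge1}q^kE_{p,k}\Big(\sum_{0\le i,j\le k}V_{i,j,k}(p)-\|D\|_2^2/\Omega\Big).
\]
For large $k$, the inner sum $W_k:=\sum_{i,j}V_{i,j,k}$ should behave like $k\bar V+B+(\text{exponentially small})$. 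Here $\bar V=\sum_{m\in\Z}V_m(p)$, and $B$ collects the boundary corrections coming from $V_{i,j,\infty}$, $V_{\infty,i,j}$ and from the defect $-\sum_{|m|}|m|V_m$. I would establish this by splitting the index pairs $(i,j)$ into three regions: $|i-j|$ large, handled by Lemma \ref{lem:CovarianceDecay}; $\min(i,j)$ and $k-\max(i,j)$ both large, handled by Lemma \ref{lem:Bulk_L2}; and $i,j$ near $0$ or near $k$, handled by Lemmas \ref{lem:Start_L2} and \ref{lem:End_L2}. Bounding each error by $\X(p)^{11}c^{m}$ should give $|W_k-k\bar V-B|\le \X(p)^{12}c'^{k}$ for some $c'<1$. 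Together with $|E_{p,k}-E_{p,\infty}|\le \X(p)^4c^k$ from Lemma \ref{eq:L1Diff_After_Mixing}, this shows that the $k$-th coefficient equals $E_{p,\infty}(k\bar V+B')+r_k$, where $|r_k|\le \X(p)^{20}c''^k$.

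Next comes the summation. The main part gives $E_{p,\infty}\big(\bar V\,q/(1-q)^2+B'/(1-q)\big)$. Since $1-q=u/(\tilde p_c(p)-p)$, this is a combination of $u^{-2}$ and $u^{-1}$ with coefficients depending only on $p$, and these supply $A_{-2}$ and $A_{-1}$. The remainder $\sum_k q^k r_k$ I would truncate at $k\le \ell:=K\log\X(p)$, chosen so that the tail is at most $\X(p)^{20}c''^{\ell}\le \X(p)^{-2}$ uniformly in $q\le1$. On the truncated range, I would expand $q^k=(1-u/(\tilde p_c(p)-p))^k$ by Taylor's formula in $u$ up to order $N$, exactly as was done before Lemma \ref{lem:Filled_Operator_X}. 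Because $u/(\tilde p_c(p)-p)=O(\X(p)/\X(p'))\le \X(p)^{-\eps/2}$ (Proposition \ref{pro:p_c(p)-p_2} and Lemma \ref{lem:X'}) and $k\le\ell$, the Taylor remainder is bounded by $\ell^{N+1}\X(p)^{-(N+1)\eps/2}\sum_{k\le\ell}|r_k|$. This becomes $O(\X(p)^{-2})$ once $N(\eps)$ is taken large. The resulting polynomial coefficients in $u^0,\dots,u^N$ depend only on $p$ and $\eps$, and this gives the claimed form.

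The main obstacle is the first step: getting a uniform exponential bound on $W_k-k\bar V-B$ that holds for all $k$, including small $k$ where the bulk, start and end regimes overlap. For each pair I would choose whichever of the three lemmas gives the smallest error, and use Corollary \ref{lem:BoundVariance} to bound the $O(1)$ overlapping terms. A polynomial prefactor such as $\X(p)^{12}$ is harmless, since it is absorbed by taking $K$ large in $\ell$. A secondary point is the $k=0$ term and the constant $-\|D\|_2^2/\Omega$. These contribute fixed numbers times $E_{p,k}$ and are absorbed in the same way, the constant term going into the $u^{-1}$ coefficient and into the $r_k$.
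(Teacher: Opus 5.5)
Your proposal is correct and uses the same ingredients as the paper: Lemmas \ref{lem:CovarianceDecay}, \ref{lem:Bulk_L2}, \ref{lem:Start_L2}, \ref{lem:End_L2} and \ref{eq:L1Diff_After_Mixing}, the closed forms of $\sum_k x^k$ and $\sum_k kx^k$, and a Taylor expansion in $\tilde p_c(p)-p'$. What differs is how the bookkeeping is organized.

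The paper cuts at $k=3\ell(p)$ with $\ell(p)=\lfloor\log(\X(p))^2\rfloor$. Below the cut it Taylor-expands the raw terms $((p'-p)/(1-p))^k\|\tau_p^{\square,k}\|_2^2$, which it controls by Lemma \ref{lem:First_Term_L2_BadBound}. Above the cut it replaces $E_{p,k}V_{i,j,k}(p)$ by a truncated surrogate $E_{i,j,k}(p)$: zero when $|i-j|>\ell(p)$, and otherwise $E_{p,\infty}$ times the bulk, start or end limit. The double sum of this surrogate is exactly affine in $k$. The paper then writes $q^k=q^{3\ell(p)}q^{k-3\ell(p)}$, Taylor-expands only $q^{3\ell(p)}$, and sums the geometric series. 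The surrogate errors are superpolynomially small but do not decay in $k$, so the paper must sum them against $q^k(k+1)^2$. Controlling the resulting $(1-q)^{-3}$ requires the lower bound $\tilde p_c(p)-p'\geq K/\X(p')$, which comes from Theorem \ref{thm:p_c-p_c}.

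You instead subtract the exact asymptotic profile $E_{p,\infty}(k\bar V+B')$ for every $k$, with $\bar V$ and $B$ defined as full infinite sums. This leaves a remainder $r_k$ that decays exponentially in $k$, uniformly in $q$. As a result you need neither that lower bound, nor the factorization of $q^k$, nor Lemma \ref{lem:First_Term_L2_BadBound}; Corollary \ref{lem:BoundVariance} and Lemma \ref{lem:BadLowerTcut} already give polynomial bounds on $r_k$. As a bonus, since $\bar V=V(p)$, you obtain $A_{-2}=E_{p,\infty}V(p)(\tilde p_c(p)-p)^2$ explicitly, which is exactly the form used later in Lemma \ref{lem:V_p,infty_accurate}.

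The price is the step you flag yourself. You must check that $\bar V$, $B_{\text{start}}=\sum_{i,j\geq 0}(V_{i,j,\infty}-V_{j-i})$ and its end analogue converge absolutely. This holds because each summand is at most a multiple of $\X(p)^{11}\min(c^{\min(i,j)},c^{|i-j|-1})$. You must also show that $|W_k-k\bar V-B|$ is at most a polynomial in $\X(p)$ times $c'^k$ for all $k$, including small $k$ where the three regimes overlap. The paper's truncation at $\ell(p)$ avoids both verifications by working only with finite sums.
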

\begin{proof} Let $\ell(p):=\lfloor\log(\X(p))^2\rfloor$.
First, by \eqref{eq:tau_E_p,infty} and \eqref{eq:DecomposeVariance}, we have
\begin{equation}
\|\tau_p^{\square,k}\|_2^2
=\left(\frac{1-p}{\tilde p_c(p)-p}\right)^k E_{p,k}
\left(\sum_{0\leq i,j\leq k}V_{i,j,k}(p)-\frac{\|D\|_2^2}{\Omega}\right).
\label{eq:def_SE}
\end{equation}
Thus, by summing,
\[
\|\tau_{p,p'}^\square\|_2^2
=\sum_{k\leq 3 \ell(p)}
\left ( \frac{p'-p}{1-p} \right )^k\|\tau^{\square,k}_{p}\|_2^2
+\sum_{k>3\ell(p)}
\left ( \frac{p'-p}{\tilde p_c(p)-p} \right )^k E_{p,k}
\left(\sum_{0\leq i,j\leq k}V_{i,j,k}(p)-\frac{\|D\|_2^2}{\Omega}\right).
\]
Since $p'<\tilde p_c(p)$ by Proposition \ref{pro:p_c(p)-p_2}, the sum under consideration converges absolutely by Proposition \ref{pro:PrelimBootstrap}, Lemma \ref{lem:BoundedConditionalMoments}, and the Cauchy--Schwarz inequality.
For $k>3\ell(p)$ and $0\leq i,j \leq k$, we now define
\[
E_{i,j,k}(p):=\begin{cases} 0 & \text{ if } |i-j|>\ell(p)
\\ E_{p,\infty} V_{j-i}(p) & \text{ if } |i-j|\leq \ell(p) \text{ and } i,j\in (\ell(p),k-\ell(p))
\\ E_{p,\infty} V_{i,j,\infty}(p) & \text{ if } |i-j|\leq \ell(p) \text{ and } \min(i,j)\leq \ell(p)
\\ E_{p,\infty} (V_{\infty,k-i,k-j}(p)-\1_{i=j=k}\frac{\|D\|_2^2}{\Omega}) & \text{ if } |i-j|\leq \ell(p) \text{ and } \max(i,j)\geq k- \ell(p)
\end{cases}
\]
We approximate $\|\tau_{p,p'}^\square\|_2^2$ by 
\[ S_E:=  \sum_{k\leq 3 \ell(p)}
\left ( \frac{p'-p}{1-p} \right )^k\|\tau^{\square,k}_{p}\|_2^2
+\sum_{k>3\ell(p)}
\left ( \frac{p'-p}{\tilde p_c(p)-p} \right )^k \sum_{0\leq i ,j \leq k}E_{i,j,k}(p). \]
Lemmas \ref{eq:L1Diff_After_Mixing} and \ref{lem:Large_k_L1} give, uniformly for $k>3\ell(p)$,
$|E_{p,k}-E_{p,\infty}|\leq \X(p)^{-200}$ and $E_{p,\infty}\leq \X(p)^2$. Also, by 
Lemma \ref{lem:CovarianceDecay}, we have $|V_{i,j,k}|\leq \X(p)^{-200}$ whenever $|i-j|>\ell(p)$. Similarly, the second, third, and fourth cases in the definition of $E_{i,j,k}(p)$ are dealt with using, respectively,
 Lemmas \ref{lem:Bulk_L2}, \ref{lem:Start_L2}, and \ref{lem:End_L2}. Hence,
\[
\left |\|\tau_{p,p'}^\square\|_2^2-S_E\right | =O(\X(p)^{-100}) \sum_{k> 3\ell(p)} \left (\frac{p'-p}{\tilde p_c(p)-p} \right)^k (k+1)^2.
\]
Since we have $\sum_{k\geq 0} x^k (k+1)^2=O(1+(1-x)^{-3})$ uniformly over $0\leq x\leq 1$, we deduce
\[
\left |\|\tau_{p,p'}^\square\|_2^2-S_E\right | =O(\X(p)^{-100})\left (1+\left (\frac{\tilde p_c(p)-p}{\tilde p_c(p)-p'}\right)^3 \right ).
\]
By Lemma \ref{lem:X'} and Theorem \ref{thm:p_c-p_c}, we have
\begin{equation}
\tilde p_c(p)-p'\geq p_c-p'-O(\psi_d(\X(p))/\X(p)^2)\geq K /\X(p') \label{eq:p_c(p)-p'}
\end{equation}
for some constant $K>0$ depending only on $d,L,C$. Thus,
\begin{equation}
\left |\|\tau_{p,p'}^\square\|_2^2-S_E\right |=o(1/\X(p)^{2}). \label{eq:tau-SE}
\end{equation}

It is thus enough to estimate $S_E$.
We start with the case $k\leq 3\ell(p)$.
Lemma \ref{lem:X'} and Theorem \ref{thm:p_c-p_c} give $\tilde p_c(p)-p\geq K/\X(p)$ for some $K>0$.
Proposition \ref{pro:p_c(p)-p_2} gives $\tilde p_c(p)-p'\leq K_2/\X(p')$ for some constant $K_2>0$.
By a Taylor expansion, we thus get that there exists $N(\eps)\in \N$ such that uniformly over all $k\leq 3\ell(p)$, and $p'$ satisfying $\X(p)^{1+\eps/2} \leq\X(p')\leq \X(p)^{1+2\eps}$, we have
\begin{equation}
\left (\frac{p'-p}{\tilde p_c(p)-p} \right )^k=\left (1-\frac{\tilde p_c(p)-p'}{\tilde p_c(p)-p} \right )^k =\sum_{i=0}^{N(\eps)} \binom{k}{i} \left (-\frac{\tilde p_c(p)-p'}{\tilde p_c(p)-p} \right )^i +O(\X(p)^{-100}). \label{eq:Taylor2}
\end{equation}
Write $P_{\eps,k}$ for the above sum. By summing over all $k\leq 3\ell(p)$, we have
\begin{align*}
\sum_{k\leq 3 \ell(p)} \left ( \frac{p'-p}{1-p} \right )^k\|\tau^{\square,k}_{p}\|_2^2
= & \sum_{k\leq 3 \ell(p)} \left ( \frac{\tilde p_c(p)-p}{1-p} \right )^k
\|\tau^{\square,k}_{p}\|_2^2 P_{\eps,k} \\
& +O(\X(p)^{-100}) \sum_{k\leq 3\ell(p)}
\left (\frac{\tilde p_c(p)-p}{1-p} \right )^k
\|\tau_p^{\square,k} \|_2^2.
\end{align*}
The first sum has the polynomial form desired in the proposition. By Lemma \ref{lem:First_Term_L2_BadBound}, the second sum is $O(\X(p)^3)$. This proves the case $k\leq 3\ell(p)$.

We now deal with the case $k>3\ell(p)$. Recall that $\Plive_p\leq 1$, and by Theorem \ref{thm:LowerLambda}, we have $\lambda_p\geq c>0$ for some constant depending on $d,L,C$, so it suffices to estimate
\[
S_E^+ := \sum_{k>3\ell(p)}  \left ( \frac{p'-p}{\tilde p_c(p)-p} \right )^k \sum_{0\leq i,j\leq k} E_{i,j,k}(p).
\]
By \eqref{eq:Taylor2}, and since $\sup_{i,j,k} |E_{i,j,k}|\leq \X(p)^6$, we have
\[
\left | S_E^+ -P_{\eps,3\ell(p)}\sum_{k>3\ell(p)}  \left ( \frac{p'-p}{\tilde p_c(p)-p} \right )^{k-3\ell(p)} \sum_{0\leq i,j\leq k} E_{i,j,k}(p)\right | =O(\X(p)^{-94}) \sum_{k>3\ell(p)} (k+1)\left (\frac{p'-p}{\tilde p_c(p)-p} \right)^k.
\]
Proceeding exactly as in the argument preceding \eqref{eq:tau-SE}, we get that the last sum is $O(\X(p)^{90})$.
It remains to compute the sum over $k$ on the left-hand side, which we denote by $S_E'$.
To this end, counting the terms in $\{E_{i,j,k}(p)\}_{0\leq i,j\leq k}$ shows that there exist $A(p), B(p)$ such that for every $k>3\ell(p)$ we have
\[
\sum_{0\leq i,j\leq k} E_{i,j,k}(p)=A(p)(k-3\ell(p))+B(p).
\]
Thus, with the change of variable $k\gets k-3\ell(p)$, and using for $x>0$ the identities $\sum_{k\geq 1} x^k=x/(1-x)$ and $\sum_{k\geq 1} kx^k=x/(1-x)^2$, we get that $S_E'$ equals
\[
\sum_{k>0} \left ( \frac{p'-p}{\tilde p_c(p)-p} \right )^k (A(p)k+B(p))=A_{-2}(p)\left (\frac{\tilde p_c(p)-p'}{\tilde p_c(p)-p} \right)^{-2}+A_{-1}(p)\left (\frac{\tilde p_c(p)-p'}{\tilde p_c(p)-p} \right)^{-1}+A_0(p),
\]
for some functions $A_{-2},A_{-1},A_0$ depending only on $p$. In other words, $S_E^+$ is well approximated by $P_{\eps,3\ell(p)} S_E'$, which has the desired polynomial form. This proves the case $k>3\ell(p)$.
\end{proof}
Next, to use the method in Section \ref{sec:Chi}, we want a polynomial approximation in $p_c-p'$.
But we have a polynomial approximation in $\tilde p_c(p)-p'$.
To avoid this issue, we evaluate Proposition \ref{pro:PolynomialApproximation_For_Xi2} not at $p'$ but at $p'+\tilde p_c(p)-p_c$.
This requires us to bound $\|\tau_{p'}\|_2^2-\|\tau_{p'+\tilde p_c(p)-p_c}\|_2^2$, for which we use the following lemma.
\begin{lemma} \label{lem:Derive_L2}
For every $\eps>0$, as $p\uparrow p_c$, we have
\[
0\leq  \frac{\partial \|\tau_{p}\|_2^2}{\partial p} =O(\X(p)^{3+\eps}).
\]
\end{lemma}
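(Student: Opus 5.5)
We will use Russo's formula to differentiate $\|\tau_p\|_2^2=\sum_x \|x\|_2^2\tau_p(x)$ term by term. Nonnegativity is immediate, since $p\mapsto\tau_p(x)$ is nondecreasing for every $x$. For the upper bound, recall the standard consequence of Russo's formula and the BK inequality:
\[
\frac{\partial \tau_p(x)}{\partial p}\leq \frac{1}{p}\sum_{\vec e\in\vedges}\tau_p(e^-)\tau_p(x-e^+)\leq \frac{1}{p}[\tau_p\ast D\ast\tau_p](x).
\]
Indeed, an edge is pivotal for $\{0\slr x\}$ only if $\{0\slr e^-\}\circ\{e^+\slr x\}$ holds, and the BK inequality then factorises the probability. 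Summing over $x$ with weight $\|x\|_2^2$ reduces the problem to bounding $\|\tau_p\ast D\ast\tau_p\|_2^2$. The interchange of derivative and sum is justified by the exponential tails of Lemma~\ref{lem:NaiveExpTail}, which are locally uniform in $p<p_c$.

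Next we use the symmetric convolution identity already exploited in the proof of Proposition~\ref{pro:L2Error}: for even $f,g\geq 0$ one has $\|f\ast g\|_2^2=\|f\|_2^2\|g\|_1+\|f\|_1\|g\|_2^2$. Applying it twice, with $D$ and $\tau_p$ both even, gives
\[
\|\tau_p\ast D\ast\tau_p\|_2^2 = 2\Omega\X(p)\|\tau_p\|_2^2+\X(p)^2\|D\|_2^2 .
\]
The quantity $\|D\|_2^2$ is a constant depending only on $d$ and $L$. Moreover $p$ is bounded below (we showed earlier that $p>1/(9\Omega)$ once $\X(p)\geq 4$), so the factor $1/p$ is harmless.

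It remains to bound $\|\tau_p\|_2^2$. Under \eqref{eq:etaUpper}, condition \eqref{eq:initialization} holds at every $p<p_c$ with constant $B$, because $\tau_p\leq\tau_{p_c}$. Lemma~\ref{lem:tail2ndMoment} then gives $\|\tau_p\|_2^2\leq\X(p)^{2+\eps}$ as $p\uparrow p_c$. Combining the three bounds yields
\[
\frac{\partial\|\tau_p\|_2^2}{\partial p}=O\big(\X(p)^{3+\eps}+\X(p)^2\big)=O(\X(p)^{3+\eps}),
\]
which is the claim. The only delicate point is justifying the termwise differentiation and the use of Russo's formula on the infinite lattice. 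The standard remedy is to use the finite-volume version of Russo's formula together with monotone convergence. Alternatively, one can work with the integrated form $\tau_{p_2}(x)-\tau_{p_1}(x)\leq\int_{p_1}^{p_2}\frac1q[\tau_q\ast D\ast\tau_q](x)\,dq$, which gives the same bound.
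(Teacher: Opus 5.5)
Your proposal is correct and follows the paper's proof essentially verbatim. Both use Russo's formula plus BK to bound $\partial_p\tau_p$ by $\tau_p\ast D\ast\tau_p$, then the even-function identity $\|f\ast g\|_2^2=\|f\|_2^2\|g\|_1+\|f\|_1\|g\|_2^2$, then the second-moment bound $\|\tau_p\|_2^2\leq\X(p)^{2+\eps}$ from the exponential tail. Your extra factor $1/p$ is unnecessary, since pivotality of $e$ does not depend on the state of $e$ and BK applies directly, but it is harmless because $p$ is bounded below.
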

\begin{proof}
For the first inequality, simply note that $\tau_p$ is positive and increasing in $p$. For the second, we use the following well-known consequence of the Margulis--Russo formula and the BK inequality: for every $x\in \Z^d$,
\[
\frac{\partial \tau_p}{\partial p}(x) \leq \tau_p\ast D\ast \tau_p(x).
\]
Thus,
\[
\frac{\partial \|\tau_{p}\|_2^2}{\partial p} =\sum_{x\in \Z^d} \|x\|_2^2  \frac{\partial \tau_p}{\partial p}(x)\leq \|\tau_p\ast D\ast \tau_p\|_2^2 = 2\|\tau_p\|^2_2 \|D\|_1\|\tau_p\|_1+ \|\tau_p\|_1^2 \|D\|_2^2=O(\X(p)^{3+\eps}),
\]
using, e.g., Theorem \ref{thm:ExponentialTail} for the last inequality.
\end{proof}
We finish as in Section \ref{sec:Chi}.
First, let $\Phi_2:p\in [0,p_c]\mapsto \|\tau_{p_c-p}\|_2^2$.
Fix $\alpha>1$.
For every $\lambda\in \R$, let $P_\lambda$ be the operator such that for every $f:[0,p_c]\mapsto \R$ we have $P_\lambda(f):x\mapsto f(x)-\alpha^\lambda f(x/\alpha)$.
\begin{lemma} \label{lem:OpOpOp2}
For every $\eps>0$, if $N$ is large enough, we have as $x\to 0$,
\[
[P_{-2}P_{-1}\dots P_N]\Phi_2(x)=O(\psi_d(1/x)x^{-1-9\eps}).
\]
\end{lemma}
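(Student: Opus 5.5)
We follow the proof of Lemma \ref{lem:Filled_Operator_X}, replacing $\X$ by $\|\cdot\|_2^2$. Fix $\eps>0$ small and, for $x$ small, choose $p=p(x)<p_c$ such that every $y\in[x/\alpha^N,\alpha x]$ gives a $p'=p_c-y$ with $\X(p)^{1+\eps/2}\leq \X(p')\leq \X(p)^{1+2\eps}$. By Theorem \ref{thm:susceptibility} (or Lemma \ref{lem:X'}), $\X(p')=\Theta(1/y)$, and this only requires $N$ fixed and $x$ small. Set $\delta_p:=\tilde p_c(p)-p_c$, so that $|\delta_p|=O(\psi_d(\X(p))/\X(p)^2)$ by Theorem \ref{thm:p_c-p_c}.

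\textbf{Step 1 (approximation by a Laurent polynomial).} Put $p''=p'+\delta_p$, so that $\tilde p_c(p)-p''=p_c-p'=y$. By Proposition \ref{pro:PolynomialApproximation_For_Xi2} applied at $p''$ (still in the allowed window up to a harmless change of $\eps$), we have
\[
\|\tau^\square_{p,p''}\|_2^2=\sum_{k=-2}^{N}A_k(p,\eps)\,y^k+O(\X(p)^{-2}).
\]
Call the sum $\Phi_{2,p}(y)$. Next, Proposition \ref{pro:L2Error} at $p''$ (hypothesis (a) holds there under \eqref{eq:etaUpper}) gives
\[
\|\tau_{p''}\|_2^2-\|\tau^\square_{p,p''}\|_2^2=O\big(\X(p'')^{3+\eps}\psi_d(\X(p))/\X(p)^2\big).
\]
Finally, Lemma \ref{lem:Derive_L2} together with the mean value theorem gives
\[
\big|\|\tau_{p'}\|_2^2-\|\tau_{p''}\|_2^2\big|\leq|\delta_p|\,O(\X(p')^{3+\eps})=O\big(\X(p')^{3+\eps}\psi_d(\X(p))/\X(p)^2\big).
\]
Combining the three estimates yields $\Phi_2(y)=\Phi_{2,p}(y)+\mathrm{Err}$ uniformly on $[x/\alpha^N,\alpha x]$.

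\textbf{Step 2 (size of the error).} The main term in $\mathrm{Err}$ is of order $\X(p')^{3+\eps}\psi_d(\X(p))/\X(p)^2$. Using $\X(p')\leq \X(p)^{1+2\eps}$ we get $\X(p)^{-2}\leq \X(p')^{-2/(1+2\eps)}\leq \X(p')^{-2+4\eps}$. Since $\psi_d$ is nondecreasing, $\psi_d(\X(p))\leq\psi_d(\X(p'))=O(\psi_d(1/y))$. Hence $\mathrm{Err}=O(\psi_d(1/y)\,y^{-1-5\eps})$, which fits within the claimed $y^{-1-9\eps}$ after absorbing constants. Because the ratio $y/x$ is bounded above and below, this is $O(\psi_d(1/x)x^{-1-9\eps})$.

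\textbf{Step 3 (annihilation).} Each $P_\lambda$ sends $y^\lambda$ to $0$ and $y^{\lambda'}$ to a multiple of $y^{\lambda'}$. Therefore $P_{-2}P_{-1}\cdots P_N$ annihilates $\Phi_{2,p(x)}$. This operator involves only the values at $x/\alpha^j$ for $0\leq j\leq N+3$, all of which lie in the window once $N$ is adjusted. Applying the operator to $\Phi_2=\Phi_{2,p(x)}+\mathrm{Err}$ then gives a bounded linear combination of $\mathrm{Err}$ evaluated at those points, which is $O(\psi_d(1/x)x^{-1-9\eps})$.

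\textbf{Main obstacle.} The delicate point is the shift $p'\mapsto p'+\delta_p$ in Step 1. We must check that $p''$ still satisfies the window hypothesis of Proposition \ref{pro:PolynomialApproximation_For_Xi2}, and that \eqref{eq:initialization} holds at $p''$ so that Proposition \ref{pro:L2Error} applies. For both, we first try the estimate $|\delta_p|\ll 1/\X(p')$, which follows from Theorem \ref{thm:p_c-p_c} since $\X(p')\ll\X(p)^2/\psi_d(\X(p))$. Together with Lemma \ref{lem:X'}, this shows $\X(p'')=\Theta(\X(p'))$, and \eqref{eq:etaUpper} then gives \eqref{eq:initialization} with $C=B$.
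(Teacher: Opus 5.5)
Your proposal is correct and follows essentially the same route as the paper's proof: you apply Proposition \ref{pro:PolynomialApproximation_For_Xi2} and Proposition \ref{pro:L2Error} at the shifted parameter $p'+\tilde p_c(p)-p_c$, use Theorem \ref{thm:p_c-p_c} with Lemma \ref{lem:Derive_L2} to move back to $p'$, and then let $P_{-2}\cdots P_N$ annihilate the Laurent polynomial. Your bookkeeping is, if anything, slightly tighter than the paper's, both for the exponent (you get $y^{-1-5\eps}$, where the paper states $y^{-1-9\eps}$) and for the evaluation points, which are $x/\alpha^j$ with $0\le j\le N+3$.
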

\begin{proof}
Let $\eps>0$ be small enough. For $p<p_c$, let
\[
\Phi_p: x\in (0,p_c]\mapsto \sum_{k=-2}^{N(\eps)} x^k A_k(p,\eps),
\]
where the $N(\eps), A_k$ are those of Proposition \ref{pro:PolynomialApproximation_For_Xi2}.
Let $p<p_c$ with $\X(p)$ large enough.
By Proposition \ref{pro:PolynomialApproximation_For_Xi2}, uniformly for every $p'$ with $\X(p)^{1+\eps/2}\leq \X(p')\leq \X(p)^{1+2\eps}$, we have
\[
\|\tau_{p,p'}^\square\|_2^2= \Phi_p(\tilde p_c(p)-p')+O(1/\X(p)^2).
\]
Proposition \ref{pro:L2Error} then gives
\[
\|\tau_{p'}\|_2^2= \Phi_p(\tilde p_c(p)-p')+O(\X(p')^{3+\eps}\psi_d(\X(p))/\X(p)^2).
\]
Theorem \ref{thm:p_c-p_c} also gives $\tilde p_c(p)-p_c=O(\psi_d(\X(p))/\X(p)^2)$.
Moreover, Lemma \ref{lem:X'} yield $\X(\tilde p)=\Theta((p_c-\tilde p)^{-1})$ as $\tilde p\uparrow p_c$.
Thus, for every $\tilde p$ between $p'$ and $p'+\tilde p_c(p)-p_c$, we have $\X(p')/2\leq \X(\tilde p)\leq 2\X(p')$.
Hence, by Lemma \ref{lem:Derive_L2}, 
\[
\|\tau_{p'+\tilde p_c(p)-p_c}\|_2^2= \Phi_p(p_c-p')+O \left(\X(p'+\tilde p_c(p)-p_c)^{3+\eps}\psi_d(\X(p))/\X(p)^2 \right).
\]
In other words, for every $p'$ satisfying $2\X(p)^{1+\eps/2}\leq \X(p')\leq \X(p)^{1+2\eps}/2$,
\[
\|\tau_{p'}\|_2^2= \Phi_p(p_c-p')+O(\X(p')^{3+\eps}\psi_d(\X(p))/\X(p)^2).
\]
Together with Lemma \ref{lem:X'}, this implies that for every $x$ satisfying $K\X(p)^{-1-2\eps}\leq x\leq K'\X(p)^{-1-\eps/2}$ for some constant $K,K'>0$ depending on $d,L,C$, we have
\[
\Phi_2(x)=\Phi_p(x)+O(x^{-1-9\eps}\psi_d(1/x)).
\]
Finally, for every $x$ small enough, we may choose $p$ such that the preceding estimate is valid for every $x'\in \{\alpha^{-i} x\}_{-2\leq i\leq N(\eps)}$. Thus, by applying $P_{-2}P_{-1}\dots P_N$, we get
\[
[P_{-2}P_{-1}\dots P_N]\Phi_2(x)= [P_{-2}P_{-1}\dots P_N]\Phi_p(x)+O(x^{-1-9\eps}\psi_d(1/x)).
\]
Moreover, $[P_{-2}P_{-1}\dots P_N]\Phi_p=0$ since each of the terms $x\mapsto x^i$ is canceled by $P_i$.
\end{proof}
\begin{lemma} \label{lem:Gyr}
There exists a constant $C'_\xi\geq 0$ such that for every $\theta>1+\1_{d=7}/2$, as $p\uparrow p_c$,
 \[
 \|\tau_p\|_2^2=C'_\xi (p_c-p)^{-2}+O((p_c-p)^{-\theta})
 \]
\end{lemma}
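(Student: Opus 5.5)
We follow the deconvolution scheme used in the proof of Theorem \ref{thm:susceptibility}, starting from Lemma \ref{lem:OpOpOp2}. Fix $\theta>1+\1_{d=7}/2$ and choose $\eps>0$ small enough that $\theta' :=1+9\eps+\1_{d=7}/2<\theta$. Since $\psi_d(1/x)\leq x^{-\1_{d=7}/2}\log(1/x)$, Lemma \ref{lem:OpOpOp2} gives, for $N$ large enough,
\[
[P_{-2}P_{-1}\dots P_N]\Phi_2(x)=O(x^{-\theta''})
\]
as $x\to 0$, for any fixed $\theta''\in(\theta',\theta)$. The goal is to peel off the operators $P_N,P_{N-1},\dots,P_{-1}$ one at a time, using that the $P_\lambda$ commute.

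\textbf{Peeling $P_N,\dots,P_{-1}$.} Set $g_i:=P_{-2}\dots P_i\Phi_2$. We show by downward induction on $i\in[-1,N]$ that $g_i(x)=O(x^{-\theta''})$. If this holds for $g_{i}$, then $g_i=P_i g_{i-1}$ with $i\geq -1>-\theta''$, so Lemma \ref{lem:downward_PolyPerateur1}, applied with $\lambda=i$ and exponent $-\theta''<\lambda$, yields $g_{i-1}(x)=O(x^{-\theta''})$. The boundedness hypothesis of that lemma holds because $\Phi_2(x)=\|\tau_{p_c-x}\|_2^2$ is finite and monotone on $(p_c/\alpha,p_c]$. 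The only subtle point is $x=p_c$, where $\|\tau_0\|_2^2=0$; and for $x<p_c$ we have $p_c-x>0$, so $\Phi_2$ stays bounded on compact subintervals away from $x=0$. Finite combinations $g_i$ of dilations of $\Phi_2$ are then bounded there as well. At the end of the induction we obtain $P_{-2}\Phi_2(x)=O(x^{-\theta''})$.

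\textbf{Last step via Lemma \ref{lem:downward_PolyPerateur2}.} Now $\lambda=-2<-\theta''$ and $\Phi_2$ is monotone, since $\tau_p$ increases in $p$ and $x\mapsto p_c-x$ is decreasing. The bound $P_{-2}\Phi_2=O(x^{-\theta''})$ is needed for every $\alpha>1$, and this holds because Lemma \ref{lem:OpOpOp2} was proved for arbitrary fixed $\alpha$. Lemma \ref{lem:downward_PolyPerateur2} then gives $l\in\R$ with
\[
\Phi_2(x)=l x^{-2}+O(x^{-\theta''}),
\]
which is the claim with $C'_\xi=l$ and $x=p_c-p$, after weakening $\theta''$ to $\theta$. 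Positivity of $\Phi_2$ forces $l\geq 0$.

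\textbf{Main obstacle.} The main point to check is that the implied constants in Lemma \ref{lem:OpOpOp2} are uniform enough to apply Lemma \ref{lem:downward_PolyPerateur2} with every $\alpha$ simultaneously. For each fixed $\alpha$ the $O(\cdot)$ constant may depend on $\alpha$. This is harmless: Lemma \ref{lem:downward_PolyPerateur2} only uses each $\alpha$ separately to get convergence of $\alpha^{-2n}\Phi_2(x/\alpha^n)$, and a single $\alpha$ gives the final error bound. A secondary check is absorbing the $\log$ factor in $\psi_d$ when $d=8$, which is why we need the strict inequality $\theta>1$.
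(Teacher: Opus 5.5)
Your proposal is correct and follows the paper's proof essentially step for step: absorb $\psi_d$ into a power of $x$ in Lemma \ref{lem:OpOpOp2}, peel off $P_N,\dots,P_{-1}$ with Lemma \ref{lem:downward_PolyPerateur1}, and conclude with Lemma \ref{lem:downward_PolyPerateur2} at $\lambda=-2$. The one detail to state explicitly is that you must choose $\theta''<2$, since Lemma \ref{lem:downward_PolyPerateur2} requires $-2<-\theta''$; the paper does this by restricting to $\theta\in(1+\1_{d=7}/2,2)$, and larger $\theta$ then follow trivially.
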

\begin{proof}
Let $\theta\in(1+\1_{d=7}/2,2)$.
By Lemma \ref{lem:OpOpOp2}, we have $[P_{-2}P_{-1}\dots P_N]\Phi_2(x)=O(x^{-\theta})$.
Using Lemma \ref{lem:downward_PolyPerateur1} inductively, we deduce that for every $-2\leq i \leq N$, $[P_{-2}P_{-1}\dots P_i]\Phi_2(x)=O(x^{-\theta})$.
So by Lemma \ref{lem:downward_PolyPerateur2}, for some $l\in \R$, we have $\Phi_2(x)=l x^{-2}+O(x^{-\theta})$ as $x\to 0$.
The desired asymptotic follows by taking $p=p_c-x$.
Moreover, $C'_\xi\geq 0$ since $\|\tau_p\|_2^2$ is positive.
\end{proof}
\begin{lemma}
We have $C'_\xi>0$.
\end{lemma}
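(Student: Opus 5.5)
We need a lower bound $\|\tau_p\|_2^2\geq c(p_c-p)^{-2}$ for $p$ close to $p_c$. The plan is to lower-bound $\|\tau_{p,p'}^\square\|_2^2$ through the Markov chain representation and transfer the bound to $\tau_{p'}$ using Proposition \ref{pro:L2Error}. Comparing with Lemma \ref{lem:Gyr} then forces $C'_\xi>0$.

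\textbf{Step 1: a diffusive lower bound on the displacement.} For $k$ of order $\X(p)$, I will bound from below
\[
\E_p^\otimes\bigl[\|\SA_k-\delta(\vA_k)\|_2^2\bigm|\Tcut>k\bigr]=\sum_{0\le i,j\le k}V_{i,j,k}(p)-\|D\|_2^2/\Omega .
\]
The most direct route is coordinatewise. By symmetry under reflection of one coordinate axis, the increments conditioned on the past behave like a walk that is genuinely spread in each direction.

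Concretely, Proposition \ref{pro:RotateVsRotate} (with Proposition \ref{pro:SmallRotation}) gives a uniform characteristic-function contraction. It yields $|\Four[\tau_p^{\square,k}](\kappa)|\le C'(1-c\|\kappa\|_\infty^2)^k\|\tau_p^{\square,k}\|_1$, which is an upper bound. What I need instead is a lower bound on spread.

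For that I will use the local modification $\vec A\mapsto f^\downarrow_r(\vec A)-\delta_i$ from the proof of Proposition \ref{pro:RotateVsRotate}. This modification shifts $\Delta$ by $\pm\delta_i$ at a bounded multiplicative cost, on an event of probability bounded below. As in that proof, it shows that the conditional law of $\Delta(\vA_1)+\Delta(\vA_2)$, given $\vA_0,\vA_3$ and $\Tcut>3$, is not concentrated on a single point. It therefore has conditional variance at least $c>0$ in each direction, on a set of $(\vA_0,\vA_3)$ of probability bounded below.

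Conditioning on $\vA_{3j}$, $j\le k/3$, decouples these blocks, and variance adds over them. Since conditional variances of martingale-like block sums add, I will obtain $\E[\|\SA_k\|_2^2\mid \Tcut>k]\ge c k$. This step is the main obstacle: I must justify the conditional-variance decomposition under the killed measure, using a law-of-total-variance argument over the skeleton $(\vA_{3j})_j$ in place of martingale orthogonality.

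\textbf{Step 2: summing over $k$.} From \eqref{eq:DecomposeVariance}, $\|\tau_p^{\square,k}\|_2^2\ge ck\|\tau_p^{\square,k}\|_1$. Combining this with \eqref{eq:tau_E_p,infty} and $E_{p,k}\ge E_{p,\infty}/2$ for large $k$ (Lemma \ref{eq:L1Diff_After_Mixing}) gives
\[
\|\tau^\square_{p,p'}\|_2^2\ \ge\ c\sum_k k\,x^k E_{p,\infty}\ \ge\ c\,E_{p,\infty}(1-x)^{-2},
\qquad x=\frac{p'-p}{\tilde p_c(p)-p}.
\]
By Proposition \ref{pro:p_c(p)-p_2} and Lemma \ref{lem:X'}, $(1-x)^{-1}\asymp \X(p')/\X(p)$. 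A lower bound $E_{p,\infty}\ge c\X(p)$ follows as in Lemma \ref{lem:Large_k_L1}, using $\|\tau^\square_{p,p'}\|_1\ge \X(p')/2$ from Theorem \ref{thm:PointwiseError}. Together these give $\|\tau^\square_{p,p'}\|_2^2\ge c\,\X(p')^2/\X(p)$.

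This lower bound is too weak. The issue is that the per-step variance is of order $\X(p)$, not $1$: each increment $\Delta$ is a cluster displacement. So I will strengthen Step 1 to
\[
\E\bigl[\|\Delta(\vA_j)\|_2^2\bigm|\text{neighbours}\bigr]\ \ge\ c\,\X(p).
\]
This is the real requirement. I will derive it from Lemma \ref{lem:2stepPerturbation}-type reasoning in reverse: conditionally on its neighbours, $\vA_j$ has law comparable to $\mu_p$ on a set of mass bounded below, by the coupling estimates of Lemma \ref{lem:PrelimCoupling}. Under $\mu_p$, $\E\|a^+\|^2\asymp\|\tau_p\|_2^2/\X(p)\ge c\X(p)$, because $\tau_p(x)\ge$ a constant multiple of the random walk Green function on a ball of radius $\sqrt{\X(p)}$. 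The strengthened bound gives $\|\tau^\square_{p,p'}\|_2^2\ge c\X(p')^2$.

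\textbf{Step 3: conclusion.} Proposition \ref{pro:L2Error} gives an error $O(\X(p')^{3+\eps}\psi_d(\X(p))/\X(p)^2)=o(\X(p')^2)$ when $\X(p')=\X(p)^{1+\eps}$ with $\eps$ small. Hence $\|\tau_{p'}\|_2^2\ge c\X(p')^2\asymp (p_c-p')^{-2}$, and Lemma \ref{lem:Gyr} forces $C'_\xi>0$.
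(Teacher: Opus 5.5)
There is a genuine gap, and it is essentially circular. Your whole argument rests on the strengthened per-step bound $\E[\|\Delta(\vA_j)\|_2^2\mid\text{neighbours}]\ge c\,\X(p)$. You justify its unconditional version through $\E\|a^+\|_2^2\asymp\|\tau_p\|_2^2/\X(p)\ge c\,\X(p)$, i.e.\ through $\|\tau_p\|_2^2\ge c\,\X(p)^2$. But that inequality, for all $p$ close to $p_c$, already \emph{is} the lemma. With Lemma \ref{lem:X'} it gives $\|\tau_p\|_2^2\ge c\,(p_c-p)^{-2}$, which is incompatible with $C'_\xi=0$ in Lemma \ref{lem:Gyr}. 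So the transfer from $p$ to $p'$ through $\tau^\square_{p,p'}$ and Proposition \ref{pro:L2Error} adds nothing.

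The reason you give for this input is also not available. You invoke a lower bound of $\tau_p$ by a Green function on the ball of radius $\sqrt{\X(p)}$, but this section assumes only the upper bounds \eqref{eq:etaUpper}/\eqref{eq:initialization}. The matching lower bound $\tau_p(x)=\Theta(\langle x\rangle^{2-d})$ is obtained only at the very end (proof of Proposition \ref{pro:Square}), from Theorem \ref{thm:tau_local_CLT}. That theorem's proof needs $C'_\xi>0$, via Lemmas \ref{lem:V_p,infty_accurate} and \ref{lem:Compare_Green}.

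The conditional version is unsupported too. Lemma \ref{lem:PrelimCoupling} bounds the three-step kernel below by a multiple of $\nu_p^\uparrow\otimes\nu_p^\downarrow$. It does not say that the law of a middle cluster, given its neighbours and survival, dominates a constant times $\mu_p$. Lemma \ref{lem:2stepPerturbation} gives only the upper comparison. Moreover, a law-of-total-variance argument over the skeleton needs conditional \emph{variances}, not conditional second moments, and the conditioned laws are not symmetric.

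The missing observation is that no lower bound on $\tau_p$ is needed: the upper bound alone forces the mass of $\tau_p$ to lie at distance $\gtrsim\sqrt{\X(p)}$. By \eqref{eq:initialization}, $\sum_{\|x\|_2<r}\tau_p(x)=O(r^2)$, hence
\[
\X(p)\le O(r^2)+r^{-2}\|\tau_p\|_2^2 .
\]
Choosing $r=\|\tau_p\|_2^{1/2}$ gives $\X(p)=O(\|\tau_p\|_2)$, and Lemmas \ref{lem:X'} and \ref{lem:Gyr} conclude. This is exactly the paper's proof.

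The same ``upper bound prevents concentration'' mechanism, as in Proposition \ref{pro:MiddleRotation} and Lemma \ref{lem:NotAligned}, is also what you would need to make your conditional per-cluster spread rigorous, not a reversed Lemma \ref{lem:2stepPerturbation}. Once you apply it directly to $\tau_p$, however, Steps 1--3 become unnecessary.
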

\begin{proof}
As $p\uparrow p_c$, for every $r>0$, we have
\[
\X(p)\leq \sum_{x,\|x\|_2\leq r} \tau_p(x)+\sum_{x,\|x\|_2\geq r} \tau_p(x)\|x\|_2^2/r^2=O(r^2)+\|\tau_p\|_2^2/r^2,
\]
using \eqref{eq:initialization} for the last inequality. Thus, by taking $r=\|\tau_p\|_2^{1/2}$, we get
\[
\X(p)=O(\|\tau_p\|_2),
\]
and the desired result follows from Lemma \ref{lem:X'}.
\end{proof}
\begin{proof}[Proof of Theorem \ref{thm:gyration}]
By Theorem \ref{thm:susceptibility} and Lemma \ref{lem:Gyr}, for every $\theta>\frac{\1_{d=7}}{2}$, we have, as $p\uparrow p_c$,
\[
\xi_2(p) := \sqrt{\frac{1}{\X(p)} \sum_{x\in \Z^d} \|x\|_2^2 \tau_p(x)} =\sqrt{\frac{C'_\xi(p_c-p)^{-2}+O((p_c-p)^{-1-\theta})}{C_\X(p_c-p)^{-1}+O((p_c-p)^{-\theta})}}
\]
Since $C'_\xi>0$ and $C_\X>0$, we deduce
\[
\xi_2(p)=(C'_\xi/C_\X)^{1/2}(p_c-p)^{-1/2}+O((p_c-p)^{1/2-\theta}). \qedhere
\]
\end{proof}

\section{Local central limit estimate for the two-point function} \label{sec:TCL}
In this section, we prove Theorems~\ref{thm:Main} and~\ref{thm:tau_local_CLT}. We first prove an LTCL for $\tau_p^{\square,k}$ in Subsection~\ref{seq:TCL_square}, using Subsections~\ref{Sec:HighFrequencies}, \ref{sec:MiddleFrequencies}, and~\ref{sec:LowFrequencies}, that estimate its Fourier transform in different frequency regimes. In Subsection~\ref{seq:TCL_square_2}, we deduce an LTCL for $\tau_{p,p'}^\square$ by summing over $k$.
In Subsection~\ref{sec:RenormalizationInduction}, we perform the renormalization induction to derive \eqref{eq:etaUpper} from this LTCL and \eqref{eq:initialization}. Finally, in Subsection~\ref{sec:FinalMainProof}, we prove the LTCL of Theorem~\ref{thm:tau_local_CLT} under condition \eqref{eq:etaUpper}, thus completing the proof of Theorem~\ref{thm:Main}.
\subsection{Intermediate frequencies for $\Four[\tau_{p}^{\square,k}]$} \label{sec:MiddleFrequencies}
The main goal of this subsection is to show the following rough upper bound on the Fourier transform of $\tau_{p}^{\square,k}$ at intermediate frequencies.
\begin{theorem} \label{thm:FT_Middle}
Let $C>0,\eps,\eps'\in(0,1/2)$. There exists $F,K>0$ depending only on $d,L,C,\eps,\eps'$ such that for every $p<p_c$ satisfying \eqref{eq:initialization} and $\X(p)\geq F$, for every $k\geq 3$, and for every $\kappa\in [-\pi,\pi]^{d}$ with $\|\kappa\|_\infty\leq \X(p)^{-\eps}$, we have
\[
|\Four[\tau_p^{\square,k}](\kappa)| \leq K \left (1-\X(p)^{-\eps'}\min(1,\X(p)\|\kappa\|_\infty^2) \right )^k\|\tau_p^{\square,k}\|_1.
\]
\end{theorem}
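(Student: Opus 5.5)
We follow the architecture of the proof of Theorem \ref{thm:FT_High}, but replace the three-step blocks by blocks of length $m:=\lceil \X(p)^{\eps''}\rceil$ for a small $\eps''>0$ chosen depending on $\eps,\eps'$. We condition on $\vA_0,\vA_m,\vA_{2m},\dots$ and define, as there, the amplitude $\Amp_{m,\kappa}(\vA_{jm},\vA_{(j+1)m})$ and the bridge probability $P^\otimes_m(\vA_{jm},\vA_{(j+1)m})$. Exactly as in \eqref{eq:Fourrier_Partial_Cond}, the triangle inequality bounds $|\Four[\tau_p^{\square,k}](\kappa)|$ by $\Omega^k\X(p)^{k+1}$ times an expectation of $\prod_j P_m^\otimes\Amp_{m,\kappa}$. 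It then suffices to show that, outside an event of small probability (controlled as for $E_2,E_3$ in the proof of Theorem \ref{thm:FT_High}, using Lemma \ref{lem:LambdaLowMemory} and Theorem \ref{thm:LowerLambda}), each block amplitude satisfies
\[
\Amp_{m,\kappa}\leq 1-c\,m\,\X(p)^{-\eps'/2}\min(1,\X(p)\|\kappa\|_\infty^2).
\]
Taking the product over the $k/m$ blocks then gives the factor $(1-\X(p)^{-\eps'}\min(1,\X(p)\|\kappa\|_\infty^2))^k$.

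The key estimate is a characteristic-function bound for a block. The displacement $\sum_{i<m}\Delta(\vA_i)$ of a bridge of length $m$ has conditional variance in direction $e_i$ of order at least $m\X(p)^{1-\eps'/4}$. To see this, we use Lemmas \ref{lem:Bulk_L2} and \ref{lem:CovarianceDecay}, which give decorrelation beyond $O(\log\X(p))$ steps, together with a lower bound $V(p)\gtrsim \X(p)^{1-o(1)}$ on a single increment. The latter follows from \eqref{eq:initialization}: $\Delta(\vA)$ has law comparable to $D\ast\tau_p/(\Omega\X(p))$ by Lemmas \ref{lem:OneStepBias} and \ref{lem:Tail_vpg}, and most of the mass of $\tau_p$ sits at distance $\gtrsim\X(p)^{1/2}$, since $\|\tau_p\|_{1,r}\leq C'r^2$ by Lemma \ref{lem:TruncatedNormTau}. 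Exponential moments are available from Lemma \ref{lem:TrashTailBelowMixing}. We then expand
\[
\E[e^{\i\langle\kappa,X\rangle}]=1-\tfrac12\E\langle\kappa,X\rangle^2+O(\E|\langle\kappa,X\rangle|^3).
\]
For $\|\kappa\|_\infty\leq \X(p)^{-1/2-\eps''}$ this gives directly $\Amp_{m,\kappa}\leq 1-c\,m\X(p)^{1-\eps'/4}\|\kappa\|_\infty^2$, because the cubic term is dominated by block-level tail bounds. For larger $\kappa$, with $\X(p)^{-1/2-\eps''}\leq\|\kappa\|_\infty\leq\X(p)^{-\eps}$, we argue per step instead. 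We split the block into $m/\ell$ sub-blocks of length $\ell=O(\log\X(p))$ and show that in each sub-block a single increment, conditioned on its neighbours, has non-degenerate phase: the probability that $\langle\kappa,\Delta(\vA_i)\rangle$ falls in an arc of length $1/2$ is bounded away from $1$. This uses the spread of $\tau_p$ on scale $\X(p)^{1/2}\gg\|\kappa\|^{-1}$, combined with the two-step smoothing of Lemma \ref{lem:2stepPerturbation}. The phase argument is that of Proposition \ref{pro:RotateVsRotate}, except that the shift $\delta_i$ is replaced by a macroscopic shift of $\Delta$.

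The main obstacle is that the expansion and the phase arguments need control conditional on both bridge endpoints with $\Tcut>m$. Only the unconditional, Doob-transformed versions are proved, via Theorem \ref{thm:MixingTime} and Proposition \ref{pro:MixingTimeFunc}. We would handle this by restricting to pairs $(\vA_{jm},\vA_{(j+1)m})$ for which $P^\otimes_m$ is not too small compared with $\nu^\uparrow_p\nu^\downarrow_p$, as in Corollaries \ref{cor:SizeCgood2} and \ref{cor:Bad_Case_SmallRotation}. On these pairs, the mixing after $O(\log\X(p))$ steps from each end lets us compare the bridge law with $\proba_p^\vpg$ up to a factor $\X(p)^{O(1)}c^{\ell}$, and the loss $\X(p)^{-\eps'/2}$ is used to absorb this.
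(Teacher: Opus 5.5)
Your outer architecture is the right one: condition on block endpoints, apply the triangle inequality, bound the amplitude on good endpoint pairs, and absorb bad pairs with Lemma \ref{lem:LambdaLowMemory}. But the anti-concentration input on which everything rests is not established.

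\textbf{The missing anti-concentration input.} Both your variance lower bound and your phase argument need the law of an interior increment to be dominated by $K\,\tau_p\ast D/\X(p)$ with $K=O(1)$. Only then does $\|\tau_p\|_{1,r}\leq C'r^2$ force most of its mass to lie at distance $\gtrsim\X(p)^{1/2}$. The results you cite do not give this. Lemma \ref{lem:2stepPerturbation}, the second bound of Lemma \ref{lem:OneStepBias}, and Lemma \ref{lem:Tail_vpg} all dominate by a constant times $D\ast\tau_p$. That is a measure of total mass of order $\X(p)$, so it allows the law to sit entirely on tiny clusters, e.g.\ the single-vertex cluster, whose increment has length at most $L$. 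Lemma \ref{lem:2stepPerturbation} in particular rests on the worst-case bound $P_2^\otimes\geq(1-p)^\Omega/(\Omega\X(p))$.

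The missing idea is to keep only endpoint pairs with $P_2^\otimes(\vec A_0,\vec A_2)\geq\delta$ for a fixed constant $\delta$. Since $\vA_1$ is independent of $(\vA_0,\vA_2)$ under $\proba_p^\otimes$, the sub-probability $\nu(x)=\proba_p^\otimes(\Delta(\vA_1)=x,\Tcut>2\mid\vA_0=\vec A_0,\vA_2=\vec A_2)$ then has mass at least $\delta$ and satisfies $\nu\leq\tau_p\ast D/(\Omega\X(p))$. Lemma \ref{lem:NotAligned} and Theorem \ref{thm:ExponentialTail} then show that $\nu$ gives mass at most $\delta/2$ to the set $\{x:\operatorname{dist}(\langle\kappa,x\rangle-\theta,2\pi\Z)\leq a\}$, where $a\asymp\X(p)^{-\eps'/2}\min(1,\X(p)^{1/2}\|\kappa\|_\infty)$. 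This gives Proposition \ref{pro:MiddleRotation} for all $\|\kappa\|_\infty\leq\X(p)^{-\eps}$ at once. Pairs with $P_2^\otimes<\delta$ cost at most $\delta^{I/2}\leq(1/2)^{I/2}\lambda_p^{2I}$ once $\delta\leq\lambda_p^4/2$.

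The arc length must scale like $\min(1,\X(p)^{1/2}\|\kappa\|_\infty)$. Your claim that the phase avoids a fixed arc of length $1/2$ with probability bounded below fails for $\|\kappa\|_\infty$ near $\X(p)^{-1/2-\eps''}$. There $|\langle\kappa,\Delta\rangle|\leq\X(p)^{-\eps''+o(1)}$ with high probability.

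\textbf{Problems created by the long blocks.}
\begin{itemize}
\item The small-$\kappa$ step needs a bridge variance of order $m\X(p)^{1-o(1)}$. But $V(p)\geq\X(p)^{1-\eps}$ is Lemma \ref{lem:LowerBound_V(p)}, which the paper derives from this very theorem, so invoking it is circular.
\item Decorrelation beyond $O(\log\X(p))$ lags (Lemmas \ref{lem:CovarianceDecay} and \ref{lem:Bulk_L2}, with errors $\X(p)^{11}c^i$) does not stop negative covariances at short lags from cancelling $V_0$.
\item With $m=\X(p)^{\eps''}\to\infty$ and the trivial bound $P_m^\otimes\leq 1$ on good blocks, as in the bound on $E_2$, a block can be declared bad only when $P_m^\otimes\lesssim\lambda_p^{2m}$. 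On good blocks the interior law is then dominated only by $\lambda_p^{-2m}$ times the i.i.d.\ law, which gives no spread.
\end{itemize}
The per-step conditioning on neighbours that you fall back on reintroduces exactly the two-step restriction above. Once that restriction is made, blocks of length $2$ suffice, and no mixing or bridge-versus-stationary comparison is needed. If you want a variance statement, it follows from the same two-step bound via the law of total variance conditional on the even-indexed clusters.
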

The proof follows essentially the same approach as that of Theorem \ref{thm:FT_High}. We first show an analogue of Proposition \ref{pro:SmallRotation}.
For $\vec A_0,\vec A_i\in \vec \Comp(0)$, recall the definitions of $P^\otimes_i(\vec A_0,\vec A_i)$, $\Amp_{i,\kappa}(\vec A_0,\vec A_i)$, and $\Rot_{i,\kappa}(\vec A_0,\vec A_i)$ from Section \ref{Sec:HighFrequencies}.
\begin{proposition} \label{pro:MiddleRotation}
Let $C>0$, $\eps,\eps'\in(0,1/2)$, and $\delta>0$.
There exists $F>0$, depending only on $d,L,C,\eps,\eps',\delta$, such that
for every $p<p_c$ satisfying \eqref{eq:initialization} and $\X(p)\geq F$,
every $\kappa\in[-\pi,\pi]^d$ with $\|\kappa\|_\infty\leq \X(p)^{-\eps}$ 
and every $\vec A_0,\vec A_2\in\vec\Comp(0)$ with
$
P_2^\otimes(\vec A_0,\vec A_2)\geq\delta,
$
we have
\[
\Amp_{2,\kappa}(\vec A_0,\vec A_2)
\leq 1-\X(p)^{-\eps'}\min(1,\X(p)\|\kappa\|_\infty^2).
\]
\end{proposition}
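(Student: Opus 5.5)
The plan is to reduce the amplitude bound to a variance lower bound for the single middle increment $\Delta(\vA_1)$ under the bridge law. Conditionally on $\vA_0=\vec A_0$, $\vA_2=\vec A_2$ and $\Tcut>2$, write $\nu$ for the law of $X:=\Delta(\vA_1)$, so that $\Amp_{2,\kappa}(\vec A_0,\vec A_2)=|\E_\nu[e^{\i\langle\kappa,X\rangle}]|$. For any $\zeta\in\R$,
\[
1-\Re\big(e^{-\i\zeta}\E_\nu[e^{\i\langle\kappa,X\rangle}]\big)=\E_\nu\big[1-\cos(\langle\kappa,X\rangle-\zeta)\big].
\]
It therefore suffices to find a set of increments $x$ of $\nu$-mass at least $\X(p)^{-\eps'/2}$ on which $\langle\kappa,x\rangle$ is spread over an interval of length $\gtrsim\min(1,\X(p)^{1/2}\|\kappa\|_\infty)$ modulo $2\pi$. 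Once we have this, $1-\cos$ is bounded below by $c\min(1,\X(p)\|\kappa\|_\infty^2)$ on a set of comparable mass, uniformly in $\zeta$.

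The key steps, in order, are as follows.

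First, we lower-bound the bridge density. By definition,
\[
\nu(\vec A_1)=\frac{\mu_p(\vec A_1)\,\1_{(\vec A_0,\vec A_1)\notin\Ccut}\,\1_{(\vec A_1,\vec A_2)\notin\Ccut}}{\Omega\X(p)\,P_2^\otimes(\vec A_0,\vec A_2)}.
\]
Since $P_2^\otimes\geq\delta$, the normalisation costs only $\delta^{-1}$, and $\nu\leq\delta^{-1}\mu_p/(\Omega\X(p))$. Moreover, $\nu$ has mass at least $1-o(1)$ on clusters avoiding both cuts, since the total mass of $\nu$ is $1$.

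Second, we show that $X$ is genuinely spread at scale $\X(p)^{1/2}$. Fix a direction $i$ with $|\kappa_i|=\|\kappa\|_\infty$ and set $R:=\min(\X(p)^{1/2},\|\kappa\|_\infty^{-1})/M$ for a large constant $M$. We need $\nu(|X_i-a|\leq R)\leq 1-\X(p)^{-\eps'/2}$ for every $a$ (taken modulo $2\pi/|\kappa_i|$). For this we use the upper bound $\nu\leq\delta^{-1}\mu_p/(\Omega\X(p))$ from the first step together with $\proba^\otimes_p(\Delta(\vA_1)=x)=D\ast\tau_p(x)/(\Omega\X(p))$. This gives
\[
\nu(|X_i-a|\leq R)\leq\delta^{-1}\X(p)^{-1}\sum_{|x_i-a|\leq R}D\ast\tau_p(x).
\]
Under \eqref{eq:initialization}, the slab sum $\sum_{|x_i-a|\leq R}\tau_p(x)$ can be compared with $\X(p)$, using Theorem \ref{thm:ExponentialTail} to cut at radius $\X(p)^{1/2+\eps''}$. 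A slab of width $R$ inside the ball of radius $r=\X(p)^{1/2+\eps''}$ carries at most $O(Rr)$ of the two-point mass. When $R\ll\X(p)^{1/2-2\eps''}$ this is $o(\X(p))$, which yields the required deficit. Periodic translates modulo $2\pi/|\kappa_i|$ matter only when $\|\kappa\|_\infty\gg\X(p)^{-1/2}$. In that case $\|\kappa\|_\infty\leq\X(p)^{-\eps}$ ensures that the period is $\geq\X(p)^{\eps}$, so the number of translates meeting the ball is bounded by $r\|\kappa\|_\infty$, and the slab sum is still $o(\X(p))$ up to a factor $\X(p)^{O(\eps'')}$. Choosing $\eps''$ small relative to $\eps'$ absorbs these polynomial losses into the $\X(p)^{-\eps'}$ slack.

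Third, we convert the spread into the bound. Mass $\geq\X(p)^{-\eps'/2}$ lies outside every window of half-width $R$ around the optimal phase $\zeta/\kappa_i$. This gives $1-\cos\geq c\min(1,R^2\kappa_i^2)\geq c'\min(1,\X(p)\|\kappa\|_\infty^2)$ there. The contribution of the other coordinates of $\kappa$ is not controlled pointwise, so I would instead apply the slab argument to the scalar $\langle\kappa,x\rangle/\|\kappa\|_2$, for which the same ball-versus-slab counting holds. Since $\|\kappa\|_2\geq\|\kappa\|_\infty$, the conclusion follows.

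The main obstacle is the second step: the bridge conditioning could concentrate $X$, since the cut constraints with $\vec A_0,\vec A_2$ favour specific clusters. The crude density bound $\delta^{-1}$ handles this only because we ask for a very weak deficit, $\X(p)^{-\eps'}$ rather than a constant. I expect the care to be in the slab counting near scale $\X(p)^{1/2}$, and in choosing the truncation exponents so that all losses fit inside $\X(p)^{-\eps'}$.
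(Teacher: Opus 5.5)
Your route is the paper's. You bound the bridge law $\nu$ of $\Delta(\vA_1)$ pointwise by $\delta^{-1}(D\ast\tau_p)/(\Omega\X(p))$; the paper instead keeps $\nu$ unnormalised and uses $\nu(\Z^d)=P_2^\otimes\ge\delta$. You then count the mass of the near-phase set $\{x:\operatorname{dist}(\langle\kappa,x\rangle-\theta,2\pi\Z)\le a\}$ inside a ball of radius $\X(p)^{1/2+o(1)}$ using \eqref{eq:initialization}. Your slab-versus-ball count along $\kappa/\|\kappa\|_2$, with the periodic translates and the lattice correction controlled by $\|\kappa\|_\infty\le\X(p)^{-\eps}$, is exactly Lemma \ref{lem:NotAligned}. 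Finally you discard the far region with Theorem \ref{thm:ExponentialTail} and conclude via $1-\cos$.

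The gap is in how you allocate the $\X(p)^{-\eps'}$ slack between your second and third steps. You fix the window half-width at $R=\min(\X(p)^{1/2},\|\kappa\|_\infty^{-1})/M$ with $M$ a constant, and you aim for outside mass $\X(p)^{-\eps'/2}$. Take $\|\kappa\|_\infty\lesssim\X(p)^{-1/2}$ and $r=\X(p)^{1/2+\eps''}$. The only bound your inputs give on the slab mass is then of order $\delta^{-1}Rr/\X(p)\asymp\delta^{-1}\X(p)^{\eps''}/M$, which exceeds $1$. You note this yourself: the count needs $R\ll\X(p)^{1/2-2\eps''}$. Since Theorem \ref{thm:ExponentialTail} forces $\eps''>0$, no constant $M$ yields any deficit at all, so asking for only $\X(p)^{-\eps'/2}$ of mass outside does not help. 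The polynomial loss cannot be absorbed into the mass. Conversely, if you shrink $R$ polynomially so the count works, your third-step inequality $\min(1,R^2\kappa_i^2)\ge c'\min(1,\X(p)\|\kappa\|_\infty^2)$ with constant $c'$ no longer holds.

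The fix is to put the loss into the window rather than the mass, which is what the paper does. Take the phase window $a=3\X(p)^{-\eps'/2}\min(1,\X(p)^{1/2}\|\kappa\|_\infty)$, equivalently $x$-space half-width $3\X(p)^{-\eps'/2}\min(\X(p)^{1/2},\|\kappa\|_\infty^{-1})$, together with ball radius $\X(p)^{1/2+\min(\eps,\eps')/8}$. The count then gives near-phase mass $o(1)$, in particular at most $P_2^\otimes(\vec A_0,\vec A_2)/2$. Hence $\Amp_{2,\kappa}\le 1-\tfrac12(1-\cos a)\le 1-a^2/8$, which is the claim. In your notation this amounts to $R=\X(p)^{-3\eps''}\min(\X(p)^{1/2},\|\kappa\|_\infty^{-1})$, with $\eps''$ small compared with $\eps$ and $\eps'$ (e.g.\ $6\eps''<\eps'$), and outside mass of order one. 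With that change your argument goes through.
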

The proof of Proposition \ref{pro:MiddleRotation} uses the following technical bound:
\begin{lemma} \label{lem:NotAligned} There exists $K>0$,
depending only on $d$, such that for every $a>0$, every
$\kappa\in[-\pi,\pi]^d$, every $\theta\in\R$, and
every $R\geq 1$, we have 
\[
\sum_{\substack{x\in\Z^d:\ \|x\|_\infty\leq R\\
\operatorname{dist}(\langle\kappa,x\rangle-\theta,2\pi\Z)\leq a}}
\langle x\rangle^{2-d}
\leq KR(1+\|\kappa\|_\infty R)\left(1+a\|\kappa\|_\infty^{-1}\right).
\]
\end{lemma}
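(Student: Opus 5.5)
We need to prove: sum over $x$ with $\|x\|_\infty\le R$ and $\langle\kappa,x\rangle$ within $a$ of $\theta$ mod $2\pi$ of $\langle x\rangle^{2-d}$ is at most $KR(1+\|\kappa\|_\infty R)(1+a/\|\kappa\|_\infty)$.

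The plan is to reduce to a one-dimensional fibration along the coordinate direction where $\kappa$ is largest. Let $i$ be such that $|\kappa_i|=\|\kappa\|_\infty=:s$; we may assume $s>0$, since otherwise the right-hand side is infinite. Write $x=(x_i,y)$ with $y\in\Z^{d-1}$ the remaining coordinates, and set $\theta_y:=\theta-\sum_{j\neq i}\kappa_jx_j$. For fixed $y$, the constraint becomes $\operatorname{dist}(\kappa_ix_i-\theta_y,2\pi\Z)\le a$. We use the bound $\langle x\rangle^{2-d}\le\langle y\rangle^{2-d}$, which is valid because $\|x\|_2\ge\|y\|_2$ and $2-d<0$. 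Hence the sum is at most
\[
\sum_{y\in\Z^{d-1},\ \|y\|_\infty\le R}\langle y\rangle^{2-d}\,N_y,
\]
where $N_y$ counts the integers $x_i\in[-R,R]$ satisfying the constraint.

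Next we bound $N_y$ uniformly in $y$. The set $\{t\in\R:\operatorname{dist}(st-\theta_y,2\pi\Z)\le a\}$ is a union of intervals of length $2a/s$ in $t$, spaced $2\pi/s$ apart. The window $[-R,R]$ meets at most $1+2Rs/(2\pi)\le 1+sR$ of them. Each contains at most $1+2a/s$ integers. So $N_y\le(1+sR)(1+2a/s)\le 2(1+sR)(1+a/s)$. This holds regardless of the sign of $\kappa_i$, and when $a\ge\pi$ the bound is still fine because it only becomes trivially large.

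Finally, the sum over $y$ is $\sum_{\|y\|_\infty\le R}\langle y\rangle^{2-d}$ in dimension $d-1$. Comparing to $\int_{\|y\|\le\sqrt{d}R}|y|^{2-d}\,dy$ in $\R^{d-1}$ gives $\int_0^{\sqrt dR}r^{2-d}r^{d-2}\,dr=O(R)$, i.e. at most $K_dR$ for $R\ge1$. The origin and small $y$ are handled by $\langle\cdot\rangle\ge1$. Multiplying the two bounds gives the claim with $K=2K_d$. The only mildly delicate step is the fibration bound $\langle x\rangle^{2-d}\le\langle y\rangle^{2-d}$: it discards the decay in the $x_i$ direction, and this loss is exactly why the final bound carries the linear factor $R$ rather than anything better.
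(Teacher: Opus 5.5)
Your proof is correct. It rests on the same one-dimensional count as the paper's: along lines in the direction $i$ with $|\kappa_i|=\|\kappa\|_\infty=s$, the constraint picks out windows of length $2a/s$ that repeat with period $2\pi/s$.

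Where you differ from the paper is in how you handle the weight $\langle x\rangle^{2-d}$. The paper decomposes into dyadic shells $r\le\|x\|_\infty\le 2r$, on which the weight is comparable to $r^{2-d}$. It covers each shell by $O(r^{d-1}(1+rs))$ period-intervals, each holding $O(1+a/s)$ admissible points, and then sums $r(1+rs)(1+a/s)$ over dyadic $r\le R$. You instead dominate the weight on each line by its transverse part $\langle y\rangle^{2-d}$. This factors the sum into a per-line count $N_y$, uniform in $y$, times the $(d-1)$-dimensional sum $\sum_{\|y\|_\infty\le R}\langle y\rangle^{2-d}=O(R)$, so no dyadic bookkeeping is needed. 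Nothing is lost: both routes give exactly $R(1+sR)(1+a/s)$.

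Two small remarks on your write-up.

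First, for $a<\pi$ the window $[-R,R]$ can meet up to $2+sR/\pi$ of the intervals, not $1+sR/\pi$, because an interval may protrude at each end. This only changes $K$. For $a\ge\pi$ the intervals merge, and the clean justification is $N_y\le 2R+1\le \pi R\le aR\le (1+sR)(1+a/s)$, using $R\ge 1$.

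Second, your closing comment is inaccurate: the linear factor $R$ is not an artifact of discarding decay in the $x_i$ direction. Take $\kappa_j=0$ for $j\neq i$, $\kappa_i=s$, $\theta=0$, $a<s$ and $sR\le 1$. Then the constrained set is exactly $\{\|x\|_\infty\le R,\ x_i=0\}$, whose contribution is already of order $R$. So the bound is sharp in that regime, and the fibration loses nothing.
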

\begin{proof} By a dyadic decomposition of the sum, it suffices to show that, for some constant $K'$ and every $r\geq 1$,
\[\#\left \{x: r\leq\|x\|_\infty\leq 2r,  \operatorname{dist}(\langle\kappa,x\rangle-\theta,2\pi\Z)\leq a \right \} \leq K'r^{d-1}(1+\|\kappa\|_\infty r)\left(1+a\|\kappa\|_\infty^{-1}\right).\]
To this end, choose $j$ such that $|\kappa_j|=\|\kappa\|_\infty$. Every interval in the $j$-th direction of length $2\pi/\|\kappa\|_\infty$ contains at most $O(a/\|\kappa\|_\infty+1)$ vertices counted in the above set. The ball $\{x:\|x\|_\infty\leq 2r\}$ can be covered by $O(r^{d-1}+r^d\|\kappa\|_\infty)$ such intervals. The desired result follows by multiplication.
\end{proof}
\begin{proof}[Proof of Proposition \ref{pro:MiddleRotation}]
Fix $\vec A_0,\vec A_2\in\vec\Comp(0)$ as in the statement and define the
finite measure $\nu$ on $\Z^d$ by
\[
\nu(x):=
\proba_p^\otimes\left(
\Delta(\vA_1)=x,\ \Tcut>2
\,\middle|\,
\vA_0=\vec A_0,\ \vA_2=\vec A_2
\right).
\]
Its total mass is
\[
\nu(\Z^d)=P_2^\otimes(\vec A_0,\vec A_2)\geq\delta.
\]
Moreover, by \eqref{eq:def-pointed-cluster-law}, for every $x\in\Z^d$,
\[
\nu(x)
\leq \proba_p^\otimes(\Delta(\vA_1)=x) =\frac{1}{\Omega \X(p)}
\sum_{\substack{\vec A=(A,\vec a)\in\vec\Comp(0)\\a^+=x}}
\mu_p(\vec A)
=\frac{1}{\Omega \X(p)}
\sum_{\substack{y\in\Z^d\\(y,x)\in\vedges}}
\tau_p(y)
=\frac{(\tau_p\ast D)(x)}{\Omega \X(p)}.
\]
Next, set $a:=3\X(p)^{-\eps'/2}\min(1,\X(p)^{1/2}\|\kappa\|_\infty)$ and let $\theta$ be such that $\Rot_{2,\kappa}(\vec A_0,\vec A_2)=e^{\i\theta}$. Then let
\[
\Hc_{\theta,a}:=
\left\{x\in\Z^d:
\operatorname{dist}(\langle\kappa,x\rangle-\theta,2\pi\Z)\leq a
\right\}.
\]

We first bound $\nu(\Hc_{\theta,a})$. 
To this end, let $R:=\X(p)^{1/2+\min(\eps,\eps')/8}$ and $B_R:=\{x\in\Z^d:\|x\|_\infty\leq R\}$.
Using \eqref{eq:initialization} and Lemma
\ref{lem:NotAligned}, we have
\begin{align*}
\nu(\Hc_{\theta,a}\cap B_R)
&\leq \frac{1}{\Omega\X(p)}
\sum_{\substack{x\in\Hc_{\theta,a}\\ \|x\|_\infty\leq R}}
(\tau_p\ast D)(x) \\
&\leq \frac{K}{\Omega\X(p)}
R(1+\|\kappa\|_\infty R)\left(1+a\|\kappa\|_\infty^{-1}\right) \\
&\leq \frac{K}{\Omega\X(p)}
\X(p)^{1/2+\min(\eps,\eps')/8}(1+\X(p)^{1/2+\eps/8-\eps}+3\X(p)^{1/2-\eps'/2}+3\X(p)^{1/2+\eps'/8-\eps'/2})
\\ & \leq \delta/4,
\end{align*}
as long as $\X(p)$ is large enough. Then, applying Theorem
\ref{thm:ExponentialTail} with parameter $\min(\eps,\eps')/9$, we get, as long as $\X(p)$ is large enough,
\[\nu(\Hc_{\theta,a}\cap B_R^c)\leq \delta/4. \]
Therefore, as long as $\X(p)$ is large enough,
\[ \nu(\Hc_{\theta,a})\leq \delta/2\leq P_2^\otimes(\vec A_0,\vec A_2)/2. \]

Next, note that 
\[
P_2^\otimes(\vec A_0,\vec A_2)\Amp_{2,\kappa}(\vec A_0,\vec A_2)e^{\i\theta} 
= \sum_{x\in\Z^d}e^{\i\langle\kappa,x\rangle}\nu(x).
\]
Thus, by multiplying by $e^{-\i\theta}$ and taking the real part, we obtain
\begin{align*}
P_2^\otimes(\vec A_0,\vec A_2)\Amp_{2,\kappa}(\vec A_0,\vec A_2)
&=\sum_{x\in\Z^d}
\cos(\langle\kappa,x\rangle-\theta)\nu(x)\\
&\leq\nu(\Hc_{\theta,a})
+\cos(a)\big(P_2^\otimes(\vec A_0,\vec A_2)-\nu(\Hc_{\theta,a})\big)\\
&\leq P_2^\otimes(\vec A_0,\vec A_2)-\frac{P_2^\otimes(\vec A_0,\vec A_2)}{2}(1-\cos(a)).
\end{align*}
Using $a\leq 1$, whenever $\X(p)$ is large enough, we have $1-\cos(a)\geq a^2/4$. Hence,
\[ P_2^\otimes(\vec A_0,\vec A_2)\Amp_{2,\kappa}(\vec A_0,\vec A_2) \leq P_2^\otimes(\vec A_0,\vec A_2)(1-\X(p)^{-\eps'}\min(1,\X(p)\|\kappa\|_\infty^2)).\]
The desired result follows by dividing by $P_2^\otimes(\vec A_0,\vec A_2)\neq 0$.
\end{proof}

\begin{proof}[Proof of Theorem \ref{thm:FT_Middle}]
First, by integrating \eqref{eq:RewriteA_TauSquare}, we have
\[
\Four[\tau_p^{\square,k}](\kappa)= \Omega^k \X(p)^{k+1} \E_p^\otimes[e^{\i\langle \SA_k-\delta(\vec \A_k),\kappa\rangle},\Tcut>k].
\]
We condition on $\vA_0,\vA_2,\vA_4,\dots, \vA_{2I}$ where $I$ is the largest integer such that $2I<k$, so that we may rewrite the last expectation as
\[
\E_p^\otimes  \left [
e^{-\i\langle \delta(\vA_k),\kappa\rangle }
\prod_{j=0}^{I} e^{\i \langle \Delta(\vA_{2j}),\kappa \rangle}
\prod_{j=0}^{I-1}[P_2^\otimes\cdot \Amp_{2,\kappa}\cdot  \Rot_{2,\kappa}](\vA_{2j},\vA_{2j+2})
\prod_{2I<j\leq k} e^{\i \langle \Delta(\vA_j),\kappa \rangle} \1_{(\vA_{j-1},\vA_j)\notin \Ccut}
\right ].
\]
Thus, by the triangle inequality,
\begin{equation}
|\Four[\tau_p^{\square,k}](\kappa)|
\leq \Omega^k \X(p)^{k+1}
\E_p^\otimes \left [
\prod_{j=0}^{I-1}[P_2^\otimes\cdot \Amp_{2,\kappa}](\vA_{2j},\vA_{2j+2})
\prod_{j=2I+1}^k \1_{(\vA_{j-1},\vA_j)\notin \Ccut}
\right ]. \label{eq:Fourrier_Partial_Cond_Middle}
\end{equation}

By Theorem \ref{thm:LowerLambda}, $\lambda_p$ is bounded below by a constant depending only on $d,L,C$. Thus, we may choose $\delta>0$ small enough that $\delta\leq \lambda_p^4/2$ whenever $\X(p)$ is large enough.
Let 
 \[ S:=\{(\vec A_0,\vec A_2)\in (\vec \Comp(0))^2,  P_2^\otimes(\vec A_0,\vec A_2) \geq \delta\}.\]
We write $N$ for the number of pairs $(\vA_{2j},\vA_{2j+2})$ with $0\leq j <I$ in $S$.
We split the expectation in \eqref{eq:Fourrier_Partial_Cond_Middle} according to the two cases $N\geq I/2$ and $N<I/2$, and we denote the corresponding expectations by $E_1$ and $E_2$.
For $E_1$, applying Proposition \ref{pro:MiddleRotation} at least $I/2$ times, we get
\begin{align*}
E_1 & \leq \E_p^\otimes \left [(1-\X(p)^{-\eps'}\min(1,\X(p)\|\kappa\|_\infty^{2}))^{I/2}\prod_{j=0}^{I-1}P_2^\otimes(\vA_{2j},\vA_{2j+2}) \prod_{j=2I+1}^k \1_{(\vA_{j-1},\vA_j)\notin \Ccut} \right ]
\\ & =(1-\X(p)^{-\eps'}\min(1,\X(p)\|\kappa\|_\infty^{2}))^{I/2}\proba_p^\otimes(\Tcut>k) .
\end{align*}

For $E_2$, using $0\leq P_2^\otimes\leq 1$ and $0\leq \Amp_{2,\kappa}\leq 1$ pointwise
and $P_2^\otimes(\vA_{2j},\vA_{2j+2})< \delta$ for at least $I/2$ values of $j$, we have $E_2\leq \delta^{I/2}.$
Lemma \ref{lem:LambdaLowMemory} and $\delta\leq \lambda_p^4/2$ then give
\[
E_2 \leq (1/2)^{I/2} \lambda_p^{2I} \leq (1/2)^{I/2}\proba_p^\otimes(\Tcut>k)/\lambda_p^4.
\]

By \eqref{eq:RewriteA_TauSquare}, $\|\tau_p^{\square,k}\|_1=\Omega^k\X(p)^{k+1}\proba_p^\otimes(\Tcut>k)$. Summing the bounds on $E_1$ and $E_2$ thus gives
\[
|\Four[\tau_p^{\square,k}](\kappa)|\leq  \left ((1-\X(p)^{-\eps'}\min(1,\X(p)\|\kappa\|_\infty^2))^{I/2}+(1/2)^{I/2}/\lambda_p^4 \right ) \|\tau_p^{\square,k}\|_1.
\]
By Theorem \ref{thm:LowerLambda}, $\lambda_p$ is bounded below by a constant, and the desired result follows by taking $\X(p)$ large enough.
 \end{proof}
\subsection{Low frequencies for $\Four[\tau_{p}^{\square,k}]$}\label{sec:LowFrequencies}
The objective of this subsection is to show the following low-frequency estimate for $\Four[\tau_{p}^{\square,k}]$.
For every $i\geq 0$, let $V_i(p)=\E_p^\vpg[\langle \Delta(\vA_0),\Delta(\vA_i)\rangle]$ and let $V(p)=V_0(p)+2\sum_{i\geq 1} V_i(p)$.
\begin{theorem} \label{thm:FT_Low}
Let $C>0,\eps,\eps'>0$. There exists $F>0$ depending only on $d,L,C,\eps,\eps'$ such that for every $p<p_c$ satisfying \eqref{eq:initialization} and $\X(p)\geq F$, for every $1\leq k \leq \X(p)^{1/\eps'}$,
 for every $\kappa\in [-\pi,\pi]^{d}$, 
\[
\left |\frac{\Four[\tau_p^{\square,k}](\kappa)}{\|\tau_p^{\square,k}\|_1}-e^{-k\|\kappa\|_2^2V(p)/(2d)} \right | \leq \X(p)^{-1/\eps'}+\X(p)^{1/2+\eps}k^{1/4}\|\kappa\|_2+\|\kappa\|_2^4k^{3/2}\X(p)^{2+\eps}.
\]
\end{theorem}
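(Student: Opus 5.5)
Starting from \eqref{eq:RewriteA_TauSquare}, the normalized transform is
\[
\frac{\Four[\tau_p^{\square,k}](\kappa)}{\|\tau_p^{\square,k}\|_1}=\E_p^\otimes\left[e^{\i\langle\kappa,\SA_k-\delta(\vA_k)\rangle}\,\middle|\,\Tcut>k\right].
\]
The plan is a second-order Taylor expansion with a cumulant-type control. First I would dispose of the trivial regime. When $\|\kappa\|_2\geq \X(p)^{-1/2-\eps}k^{-1/4}$, the right-hand side is at least $1$. The left-hand side is at most $2$, but one can do better: taking $\eps$ slightly smaller, the factor $\X(p)^{1/2+\eps}k^{1/4}\|\kappa\|_2$ is already $\ge \X(p)^{\eps/2}$-ish. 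So it suffices to treat $\|\kappa\|_2\le \X(p)^{-1/2-\eps}k^{-1/4}$, where $\|\kappa\|_2\|\Delta\|_2\ll1$ for typical steps.

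Second, I would split the chain into blocks of length $\ell=\lfloor K\log\X(p)\rfloor$, beyond the mixing time of Theorem~\ref{thm:MixingTime}. I would use the expansion $e^{\i y}=1+\i y-y^2/2+O(|y|^3)$ applied to $y=\langle\kappa,\SA_k-\delta(\vA_k)\rangle$. The first-order term vanishes by symmetry: the conditional law given $\Tcut>k$ is invariant under $\vec A\mapsto-\vec A$ by Lemma~\ref{lem:Symmetric_hp_vpg}. The second-order term is $\tfrac12\E[\langle\kappa,\SA_k\rangle^2\mid\Tcut>k]$. For a single step, isotropy under coordinate permutations and reflections of the lattice gives $\E[\langle\kappa,x\rangle^2]=\|\kappa\|_2^2\E\|x\|_2^2/d$. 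Expanding over $i,j$ gives $\|\kappa\|_2^2\sum_{i,j}V_{i,j,k}(p)/d$ plus the $\delta$ correction. By Lemmas~\ref{lem:CovarianceDecay}, \ref{lem:Bulk_L2}, \ref{lem:Start_L2} and \ref{lem:End_L2}, this equals $kV(p)+O(\X(p)^{1+\eps}\log\X(p)^{2})$. Boundary terms of size $O(\X(p)^{1+\eps}\ell^2)$ times $\|\kappa\|^2$ are absorbed into $\X(p)^{1/2+\eps}\|\kappa\|_2$, since $\|\kappa\|_2\X(p)^{1/2}\le1$.

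This alone yields $1-k\|\kappa\|^2V/(2d)$ only for small $k\|\kappa\|^2\X$. To get the exponential, I would instead argue multiplicatively. Write $k=mn+r$ with blocks of length $n\ge\ell$, and use Proposition~\ref{pro:MixingTimeFunc} to factor the characteristic function across the blocks. Successive blocks decorrelate up to $c^{\ell}\X(p)^3\le\X(p)^{-1/\eps'-2}$ per junction; summing over at most $k\le\X(p)^{1/\eps'}$ junctions gives the $\X(p)^{-1/\eps'}$ term. Each block factor is $1-n\|\kappa\|^2V/(2d)+O(\ell\X^{1+\eps}\|\kappa\|^2)+O(\|\kappa\|^4n^2\X^{2+\eps})$, where the fourth-moment bound uses Lemma~\ref{lem:BoundedConditionalMoments} together with the exponential decorrelation; the third-order term vanishes by symmetry. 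Taking the product and comparing with $e^{-k\|\kappa\|^2V/(2d)}$ gives error $m\bigl(\ell\X^{1+\eps}\|\kappa\|^2+\|\kappa\|^4n^2\X^{2+\eps}\bigr)$. I would choose $n\approx\sqrt k$ so that $m n^2\approx k^{3/2}$, producing the $\|\kappa\|_2^4k^{3/2}\X^{2+\eps}$ term. The block-boundary term, of order $\sqrt k\,\ell\X^{1+\eps}\|\kappa\|^2$, is bounded via $\min(1,\cdot)$ by $\X^{1/2+\eps}k^{1/4}\|\kappa\|_2$ (geometric mean of $1$ and the quantity).

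The main obstacle is making the block factorization rigorous under the conditioning on $\Tcut>k$, i.e.\ writing the conditioned chain's characteristic function as an approximate product of stationary block characteristic functions. I would do this through the Doob transform \eqref{eq:DoobTransform}, handling the last step with $\Gamma_p$ as in \eqref{eq:RewriteB_TauSquare}. Between blocks I would insert gaps of length $\ell$ whose contribution is bounded crudely, and replace the law at each block start by $\vpg_p$ using Theorem~\ref{thm:MixingTime}. Each replacement costs a total-variation error of $\X(p)^{-C}$. Controlling the $h_p$ ratios, which are bounded by $\X(p)$ via Lemma~\ref{lem:vpd}, is the delicate bookkeeping.
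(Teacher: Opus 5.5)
Your proposal is correct and follows essentially the same route as the paper. Both arguments go as follows:
\begin{itemize}
\item reduce to the regime where the right-hand side is small;
\item use blocks of length about $\sqrt k$, separated by mixing gaps whose indices are removed crudely (the paper does this with the Lipschitz/Cauchy--Schwarz bound of Lemma \ref{lem:SumBadIndicesBlock});
\item factor into stationary block characteristic functions via the Doob transform and Proposition \ref{pro:MixingTimeFunc} (Lemmas \ref{lem:EndingFourier} and \ref{lem:BlockDecomposition});
\item apply a fourth-order Taylor expansion per block, with second- and fourth-moment control;
\item compare the products by telescoping.
\end{itemize}
The differences are cosmetic. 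The paper takes gaps of length $\X(p)^{\eps}$ instead of $K\log\X(p)$; yours work once $K$ is large enough depending on $\eps'$, which also repairs your junction-error count. The moment bounds under $\proba_p^\vpg$ come from Lemma \ref{lem:ScalarMoment}, not Lemma \ref{lem:BoundedConditionalMoments}.
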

The proof of Theorem \ref{thm:FT_Low} is organized as follows.
We use a standard block-decomposition method for central limit estimates.
The main idea is to split the interval $[0,k]$ into well-spaced intervals, called blocks, and a set of bad indices.
This splits $\SA_k$ into a sum of essentially independent subsums, one for each block, plus the contribution of the bad indices.
We first upper-bound the contribution of the bad indices in Lemma \ref{lem:SumBadIndicesBlock}.
We then justify the approximation by independent blocks in Lemmas \ref{lem:EndingFourier} and \ref{lem:BlockDecomposition}.
After that, we prove a one-block central limit in Proposition \ref{pro:TCL_One_Block}. We do this by computing the second and fourth moments of each block in Lemmas \ref{lem:SecondMomentBlock} and \ref{lem:FourthMomentBlock}, respectively.
Finally, we combine these estimates to prove Theorem \ref{thm:FT_Low}.

We first remove the indices not belonging to the blocks.
For $S\subset [0,k]\cap \Z$, set
\[ \Four_{S,k}(\kappa):=\lambda_p^{-k}\E_p^\otimes[e^{\i\langle \SA_S,\kappa\rangle }\1_{\Tcut>k}], \]
where $\SA_S:=\sum_{s\in S}\Delta(\vec \A_s)$. Note that we omit the dependence on $p$ from the notation.
We also let $\N_0=\{0\}\cup \N$, and for $a<b\in \Z$, let  $\llbracket a, b\rrbracket:=[a,b]\cap \Z$. 
\begin{lemma} \label{lem:SumBadIndicesBlock} Let $C>0,\eps>0$. There exists $F$ depending on $d,L,C,\eps$ such that for every $p<p_c$ satisfying \eqref{eq:initialization} and $\X(p)\geq F$, for every $k\geq 0$, $S\subset \llbracket 0,k \rrbracket$, $\kappa\in [-\pi,\pi]^d$, we have
  \[ |\Four[\tau_{p}^{\square,k}](\kappa) -\Omega^k\lambda_p^k \X(p)^{k+1}\Four_{S,k}(\kappa)|\leq (L+\sqrt{k+1-|S|}\X(p)^{1/2+\eps})\|\kappa\|_2 \|\tau_{p}^{\square,k}\|_1. \]
\end{lemma}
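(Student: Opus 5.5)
Integrating \eqref{eq:RewriteA_TauSquare} against $e^{\i\langle\kappa,x\rangle}$ gives
\[
\Four[\tau_p^{\square,k}](\kappa)=\Omega^k\X(p)^{k+1}\E_p^\otimes\big[e^{\i\langle \SA_k-\delta(\vA_k),\kappa\rangle}\1_{\Tcut>k}\big],
\]
while by definition $\Omega^k\lambda_p^k\X(p)^{k+1}\Four_{S,k}(\kappa)=\Omega^k\X(p)^{k+1}\E_p^\otimes[e^{\i\langle\SA_S,\kappa\rangle}\1_{\Tcut>k}]$. The two integrands differ only through the phase $\langle \SA_k-\SA_S-\delta(\vA_k),\kappa\rangle$. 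We use the elementary bound $|e^{\i u}-e^{\i v}|\le |u-v|$, and recall from \eqref{eq:RewriteA_TauSquare} that $\Omega^k\X(p)^{k+1}\proba_p^\otimes(\Tcut>k)=\|\tau_p^{\square,k}\|_1$. Together these reduce the lemma to bounding
\[
\E_p^\otimes\big[|\langle \delta(\vA_k),\kappa\rangle|\,\big|\,\Tcut>k\big]+\E_p^\otimes\big[|\langle \SA_{\llbracket 0,k\rrbracket\setminus S},\kappa\rangle|\,\big|\,\Tcut>k\big].
\]

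The first term is immediate. Indeed $\|\delta(\vA_k)\|_2$ is the Euclidean length of an edge, so it is at most $\|\delta(\vA_k)\|_1\le L$, and the term is at most $L\|\kappa\|_2$.

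For the second term we apply Cauchy--Schwarz twice, writing $S^c:=\llbracket0,k\rrbracket\setminus S$ with $m:=|S^c|=k+1-|S|$. First, $|\langle \SA_{S^c},\kappa\rangle|\le\|\kappa\|_2\|\SA_{S^c}\|_2$. Second, by Jensen's inequality it suffices to bound $\E_p^\otimes[\|\SA_{S^c}\|_2^2\mid\Tcut>k]$, which expands as $\sum_{i,j\in S^c}V_{i,j,k}(p)$. We then split this double sum according to $|i-j|$.

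The diagonal and near-diagonal pairs, $|i-j|\le \ell:=\lceil K\log\X(p)\rceil$, number at most $m(2\ell+1)$. Corollary \ref{lem:BoundVariance}, applied with $\eps/2$, bounds each such term by $\X(p)^{1+\eps/2}$. For the far pairs, Lemma \ref{lem:CovarianceDecay} gives $|V_{i,j,k}|\le \X(p)^{10}c^{|i-j|-1}$. Once $K$ is chosen large, summing this over $j$ for fixed $i$ contributes at most $\X(p)^{-10}$, hence at most $m\X(p)^{-10}$ in total. Altogether
\[
\E_p^\otimes[\|\SA_{S^c}\|_2^2\mid \Tcut>k]\le m\,(2\ell+2)\,\X(p)^{1+\eps/2}\le m\,\X(p)^{1+2\eps}
\]
for $\X(p)$ large. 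Taking square roots gives the second term $\sqrt{m}\,\X(p)^{1/2+\eps}\|\kappa\|_2$, and adding the first term yields the claim.

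The only real obstacle is to prevent the cross terms from producing a factor of order $m^2$ instead of $m$. The exponential covariance decay of Lemma \ref{lem:CovarianceDecay} handles this, because it holds uniformly in $i,j,k$ under the conditioning $\Tcut>k$. This in turn rests on the mixing estimate Theorem \ref{thm:MixingTime}. The logarithmic factor $\ell$ is absorbed into $\X(p)^{\eps}$.
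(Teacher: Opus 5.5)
Your proposal is correct and follows the paper's proof essentially step for step. Both use the Lipschitz bound on the phase, the $L\|\kappa\|_2$ term from $\delta(\vA_k)$, and Cauchy--Schwarz, then split $\sum_{i,j\in S^c}V_{i,j,k}(p)$ into near pairs (Corollary~\ref{lem:BoundVariance}) and far pairs (Lemma~\ref{lem:CovarianceDecay}). The only difference is cosmetic: your cutoff $\ell=\lceil K\log\X(p)\rceil$ replaces the paper's $\X(p)^{\eps/3}$, which changes nothing.
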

\begin{proof} By \eqref{eq:RewriteA_TauSquare}, 
\[ \Four[\tau_{p}^{\square,k}](\kappa) -\Omega^k\lambda_p^k \X(p)^{k+1} \Four_{S,k}(\kappa)= \Omega^k \X(p)^{k+1}\E_p^\otimes[(e^{\i\langle \SA_S,\kappa\rangle }-e^{\i\langle \SA_k-\delta(\vA_k),\kappa\rangle })\1_{\Tcut>k}].\]
Thus, since $y\mapsto e^{\i\langle \kappa,y\rangle}$ is $\|\kappa\|_2$-Lipschitz, we have 
\begin{equation} \left |\Four[\tau_{p}^{\square,k}](\kappa) -\Omega^k\lambda_p^k \X(p)^{k+1}\Four_{S,k}(\kappa) \right | \leq  \Omega^k \X(p)^{k+1} \|\kappa\|_2 \E_p^\otimes[(L+\|\SA_S-\SA_k\|_2)\1_{\Tcut>k}].\label{eq:RemovePartFourier}\end{equation}
The term $L$ simply corresponds, by \eqref{eq:RewriteA_TauSquare}, to the term $L\|\kappa\|_2\|\tau_p^{\square,k}\|_1$ in the final bound. The Cauchy--Schwarz inequality yields a bound for the main term by 
\[ \Omega^k \X(p)^{k+1} \sqrt{\E_p^\otimes[\|\SA_S-\SA_k\|^2_2 \1_{\Tcut>k}]\proba_p^\otimes(\Tcut>k)}.\]
Fubini's theorem, justified by Lemma \ref{lem:BoundedConditionalMoments}, then gives
\[ \E_p^\otimes[\|\SA_S-\SA_k\|^2_2\1_{\Tcut>k}]= \sum_{s\in \llbracket 0,k\rrbracket\backslash S}\sum_{s'\in \llbracket 0,k\rrbracket\backslash S}\E_p^\otimes[\langle \Delta(\vA_s),\Delta(\vA_{s'})\rangle\1_{\Tcut>k}].\]
For each fixed $s$, we upper-bound the sum over $s'$ as follows. When $|s-s'|\leq \X(p)^{\eps/3}$, Corollary \ref{lem:BoundVariance} gives
\[ \E_p^\otimes[\langle \Delta(\vA_s),\Delta(\vA_{s'})\rangle\1_{\Tcut>k}]\leq \X(p)^{1+\eps/3}\proba_p^\otimes(\Tcut>k). \] 
When $|s-s'|\geq \X(p)^{\eps/3}$, Lemma \ref{lem:CovarianceDecay} gives
\[\E_p^\otimes[\langle \Delta(\vA_s),\Delta(\vA_{s'})\rangle\1_{\Tcut>k}]\leq c^{|s'-s|-1}\X(p)^{10}\proba_p^\otimes(\Tcut>k)\leq \X(p)^{1+\eps/3}\proba_p^\otimes(\Tcut>k),\]
 for some constant $c<1$ depending only on $d,L,C$. Therefore, by summing, whenever $\X(p)$ is large enough, 
\[ \E_p^\otimes[\|\SA_S-\SA_k\|^2_2\1_{\Tcut>k}]\leq \sum_{s\in \llbracket 0,k\rrbracket\backslash S} \X(p)^{1+\eps}\proba_p^\otimes(\Tcut>k).\]
Hence, the main term in \eqref{eq:RemovePartFourier} is bounded by
\[ \Omega^k \X(p)^{k+1} \|\kappa\|_2\sqrt{(k+1-|S|)\X(p)^{1+\eps}}\proba_p^\otimes(\Tcut>k)=\sqrt{(k+1-|S|)}\|\kappa\|_2 \X(p)^{1/2+\eps/2}\|\tau_{p}^{\square,k}\|_1, \]
using again \eqref{eq:RewriteA_TauSquare} for the last equality. This concludes the proof.
\end{proof} 
We next replace the terminal cut conditioning by the endpoint weight that is adapted to the Doob transform.
For every set $S\subset \{0\}\cup\N$, let $S^+:=\max S$, $S^-:=\min S$, and 
\[ \Four^h_{S}(\kappa):=\lambda_p^{-S^+}\E_p^\otimes[e^{\i\langle \SA_S,\kappa\rangle }h_p(\vA_{S^+})\1_{\Tcut>S^+}]. \]
By convention, when $S=\emptyset$, we let $S^-=S^+=0$ and $\Four^h_\emptyset(\kappa):=\E_p^\otimes[h_p(\vA_{0})]$.
\begin{lemma} \label{lem:EndingFourier} Let $C>0$. There exists $F>0,c<1$ depending only on $d,L,C$ such that for every $p<p_c$ satisfying \eqref{eq:initialization} and $\X(p)\geq F$, for every $k\in \N$, $S\subset \llbracket 0,k-1 \rrbracket$, $\kappa\in [-\pi,\pi]^d$, we have 
\[ \left | \Four_{S,k}(\kappa)-\frac{\proba_p^\otimes(\Tcut>k)}{\lambda_p^{k}\E_p^\otimes[h_p(\vA_0)]}\Four^h_{S}(\kappa) \right | \leq c^{k-1-S^+}\X(p)^{8}. \]
\end{lemma}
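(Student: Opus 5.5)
The plan is to apply Proposition \ref{pro:MixingTimeFunc} with the split taken at the last index of $S$. Put $i=j=S^+$ and use the two functions
\[
F_1(\vec A_0,\dots,\vec A_{S^+}):=e^{\i\langle \SA_S,\kappa\rangle},
\qquad
F_2(\vec A_{S^+},\dots,\vec A_k):=\1,
\]
where $F_2$ is read as a function of the chain from time $S^+$ to $k$, so that $\1_{\Tcut>k}$ carries the survival of those $k-S^+$ steps. Since $S^+\le k-1<k$, the hypothesis ``$j<k$'' holds and no vanishing on $\Ckill$ is required. The catch is that with $i=j$ the error factor $(\lambda_pc)^{j-i}$ equals $1$, so it does not decay. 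I will therefore split at an intermediate time instead.

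First I rewrite $\E_p^\otimes[e^{\i\langle\SA_S,\kappa\rangle}\1_{\Tcut>k}]$ by conditioning on $\vA_{S^+}$. This gives
\[
\E_p^\otimes\big[e^{\i\langle\SA_S,\kappa\rangle}\1_{\Tcut>S^+}\, g(\vA_{S^+})\big],
\qquad
g(\vec A):=\proba_p^\otimes(\Tcut>k-S^+\mid\vA_0=\vec A).
\]
Lemma \ref{lem:ForgetPosForTcut}, applied with $k-S^+\ge1$ steps, shows that
\[
g(\vec A)=\frac{\lambda_p^{k-S^+-1}}{\Omega\X(p)}\,h_p(\vec A)\,\E_p^\vpg[\Gamma_p(\vA_0)]+O\big(\lambda_p^{k-S^+-1}c^{k-S^+-1}\X(p)^3\big).
\]
The indicator $\1_{\Tcut>S^+}$ forces $\vA_{S^+}\in\Clive$ whenever $S^+\ge1$. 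The case $S^+=0$ can be handled directly, since the lemma also holds trivially on $\Ckill$ because there $h_p=0$ and $g=0$ when $k\ge1$. Substituting, the main term becomes
\[
\frac{\lambda_p^{k-1}\E_p^\vpg[\Gamma_p]}{\Omega\X(p)}\,\Four^h_S(\kappa).
\]
The error is at most $\lambda_p^{k-1}c^{k-S^+-1}\X(p)^3\,\proba_p^\otimes(\Tcut>S^+)$, and this is $\le \lambda_p^{k}c^{k-S^+-1}\X(p)^{6}$ by Lemma \ref{lem:BadLowerTcut} together with the lower bound on $\lambda_p$ from Theorem \ref{thm:LowerLambda}.

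Next I identify the normalising constant. Taking $S=\emptyset$ in the same computation, with $\Four^h_\emptyset=\E_p^\otimes[h_p(\vA_0)]$, gives
\[
\proba_p^\otimes(\Tcut>k)=\frac{\lambda_p^{k-1}\E_p^\vpg[\Gamma_p]}{\Omega\X(p)}\,\E_p^\otimes[h_p(\vA_0)]+O\big(\lambda_p^kc^{k-1}\X(p)^6\big).
\]
So I replace $\lambda_p^{k-1}\E_p^\vpg[\Gamma_p]/(\Omega\X(p))$ by $\proba_p^\otimes(\Tcut>k)/\E_p^\otimes[h_p(\vA_0)]$. I expect this step to be the main obstacle: it costs an error of order $c^{k-1}\X(p)^6\,|\Four^h_S|/\E_p^\otimes[h_p(\vA_0)]$, which decays in $k$ rather than in $k-S^+$. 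To avoid it I will not pass through the $S=\emptyset$ case. Instead I will express $\proba_p^\otimes(\Tcut>k)$ exactly as $\E_p^\otimes[\1_{\Tcut>S^+}g(\vA_{S^+})]$ and apply the same approximation of $g$. This gives
\[
\proba_p^\otimes(\Tcut>k)=\frac{\lambda_p^{k-1}\E_p^\vpg[\Gamma_p]}{\Omega\X(p)}\,\E_p^\otimes[h_p(\vA_{S^+})\1_{\Tcut>S^+}]+O\big(\lambda_p^{k}c^{k-S^+-1}\X(p)^6\big),
\]
and $\E_p^\otimes[h_p(\vA_{S^+})\1_{\Tcut>S^+}]=\lambda_p^{S^+}\E_p^\otimes[h_p(\vA_0)]$ exactly, because $\T_ph_p=\lambda_ph_p$.

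Finally I divide by $\lambda_p^k$ and bound the remaining quantities. We have $|\Four^h_S|\le\|h_p\|_\infty\lambda_p^{-S^+}\proba_p^\otimes(\Tcut>S^+)\le\|h_p\|_\infty\X(p)^3$. By Lemma \ref{eq:SumForCoupling}, $\E_p^\otimes[h_p]\ge\lambda_p\|h_p\|_\infty$, so the ratio $|\Four^h_S|/\E_p^\otimes[h_p]$ is at most $O(\X(p)^3)$. The prefactor $\E_p^\vpg[\Gamma_p]/(\Omega\X(p))$ is at most $\X(p)^2/\|h_p\|_\infty$ by Lemma \ref{lem:Gamma}. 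Collecting these, both errors are $O(c^{k-S^+-1}\X(p)^{8})$ once $\X(p)$ is large, after slightly enlarging $c$ to absorb constants.
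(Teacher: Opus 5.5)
Your proof is correct, and its first half coincides with the paper's argument. Conditioning on $\vA_{S^+}$ and applying Lemma~\ref{lem:ForgetPosForTcut} to $g$ is exactly Proposition~\ref{pro:MixingTimeFunc} with the paper's choice $F_2(\vec A_{k-1})=\proba_p^\otimes((\vec A_{k-1},\vA_k)\notin\Ccut)$, i.e.\ a split between times $S^+$ and $k-1$.

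You diverge on the normalising constant, and there the obstacle you anticipated is not real. The route you set aside (Lemma~\ref{lem:ForgetPosForTcut} integrated over $\vA_0$, which is your $S=\emptyset$ computation) costs $c^{k-1}$ times a power of $\X(p)$. Since $c<1$ and $k-1\geq k-1-S^+$, we have $c^{k-1}\leq c^{k-1-S^+}$, so decay in $k$ is stronger, not weaker. That is precisely the paper's second step. Your substitute expands $\proba_p^\otimes(\Tcut>k)=\E_p^\otimes[\1_{\Tcut>S^+}g(\vA_{S^+})]$ and uses the exact identity $\E_p^\otimes[h_p(\vA_{S^+})\1_{\Tcut>S^+}]=\lambda_p^{S^+}\E_p^\otimes[h_p(\vA_0)]$. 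This is also valid, and both main terms then come from the same approximation and cancel identically. It only yields the rate $c^{k-1-S^+}$, but that is all the lemma asks for.

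Some bookkeeping needs repair.
\begin{itemize}
\item In the two places where $\lambda_p^{k-1}$ multiplies a quantity evaluated at time $S^+$, the power should be $\lambda_p^{k-S^+-1}$. These are the error bound $\lambda_p^{k-1}c^{k-S^+-1}\X(p)^3\proba_p^\otimes(\Tcut>S^+)$ and the prefactor of $\E_p^\otimes[h_p(\vA_{S^+})\1_{\Tcut>S^+}]$. As written, combining the latter with $\lambda_p^{S^+}\E_p^\otimes[h_p(\vA_0)]$ gives $\lambda_p^{k-1+S^+}$, and the two main terms would not match.
\item The indicator $\1_{\Tcut>S^+}$ forces $\vA_{S^+-1}\in\Clive$, not $\vA_{S^+}$. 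This is harmless: $g$ and $h_p$ both vanish on $\Ckill$ once $k-S^+\geq 1$, so your ``trivial on $\Ckill$'' remark covers every $S^+$.
\item With your stated exponents ($\X(p)^6$ errors and a ratio $|\Four^h_S|/\E_p^\otimes[h_p(\vA_0)]=O(\X(p)^3)$) you would land at $\X(p)^9$, not $\X(p)^8$. Use $\proba_p^\otimes(\Tcut>S^+)\leq\lambda_p^{S^++1}\X(p)^2$ from Lemma~\ref{lem:BadLowerTcut}. This gives errors $\lambda_p^k c^{k-S^+-1}\X(p)^5$ and $|\Four^h_S|\leq\|h_p\|_\infty\X(p)^2$, hence a total of $O(c^{k-S^+-1}\X(p)^7)$.
\item Absorb constants into the spare power of $\X(p)$ rather than by enlarging $c$. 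Enlarging $c$ cannot help when $S^+=k-1$.
\end{itemize}
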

\begin{proof} We first apply Proposition \ref{pro:MixingTimeFunc} with $F_1:(\vec A_0,\dots, \vec A_{S^+})\mapsto e^{\i\langle \sum_{s\in S}\Delta(\vec A_s),\kappa\rangle }$, and $F_2:\vec A_{k-1}\mapsto \proba_p^\otimes((\vec A_{k-1},\vA_k)\notin \Ccut)$.
These functions satisfy $\mathfrak{S}_p^-(F_1)=\lambda_p^{S^+}\Four_S^h(\kappa)$, and $\mathfrak{S}_p^+(F_2)=\E_p^\vpg[\Gamma_p(\vA_0)]/(\Omega\X(p))$, and $\|F_1\|^-\leq \proba_p^\otimes(\Tcut>S^+)$, and $\|F_2\|^+\leq 1$. Thus, Proposition \ref{pro:MixingTimeFunc} yields
\[ \left | \Four_{S,k}(\kappa)-\frac{\E_p^\vpg[\Gamma_p(\vA_0)]}{\Omega \lambda_p\X(p)}\Four^h_{S}(\kappa) \right | \leq c^{k-1-S^+}\X(p)^5. \]
Furthermore, Lemma \ref{lem:BadLowerTcut} gives
$|\Four^h_S(\kappa)|\leq \lambda_p^{-S^+} \|h_p\|_\infty\proba_p^\otimes(\Tcut>S^+)\leq \|h_p\|_\infty \X(p)^2.$
So, by Lemma \ref{lem:ForgetPosForTcut}, we have
\[ \left | \frac{\proba_p^\otimes(\Tcut>k)}{\lambda_p^{k}\E_p^\otimes[h_p(\vA_0)]}\Four^h_{S}(\kappa)-\frac{\E_p^\vpg[\Gamma_p(\vA_0)]}{\Omega \lambda_p\X(p)}\Four^h_{S}(\kappa) \right | \leq c^{k-1}\X(p)^5\frac{\|h_p\|_\infty}{\E_p^\otimes[h_p(\vA_0)]} . \]
Lemma \ref{lem:vpd} bounds the last fraction by $\X(p)^2$ whenever $\X(p)$ is large enough. The desired inequality is obtained by combining the last two displays using the triangle inequality.
\end{proof}
We then separate the contribution of two well-spaced sets of indices.
For finite sets $S,I\subset \N_0$, we write $S\preceq I$ whenever $S^+< I^-$. Then let 
\[ \Four^v_{S}(\kappa):=\E_p^\vpg[e^{\i\langle \SA_S,\kappa\rangle }]. \]
\begin{lemma} \label{lem:BlockDecomposition} Let $C>0$. There exists $F>0,c<1$ depending only on $d,L,C$ such that for every $p<p_c$ satisfying \eqref{eq:initialization} and $\X(p)\geq F$, all finite subsets $S,I$ of $\N_0$ with $S\preceq I$, and every $\kappa\in [-\pi,\pi]^d$, we have
\[ \left |\Four^h_{S\cup I}(\kappa)-\Four^h_{S}(\kappa)\Four^v_{I}(\kappa) \right | \leq c^{I^- - S^+} \|h_p\|_\infty \X(p)^{10}.\]
\end{lemma}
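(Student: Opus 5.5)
The plan is to apply Proposition \ref{pro:MixingTimeFunc} once, across the gap between $S^+$ and $I^-$, in the same way Lemma \ref{lem:EndingFourier} was obtained. Take
\[
F_1(\vec A_0,\dots,\vec A_{S^+}):=e^{\i\langle \sum_{s\in S}\Delta(\vec A_s),\kappa\rangle}
\]
on the first block, and on the second block
\[
F_2(\vec A_{I^-},\dots,\vec A_{I^+}):=\lambda_p^{-(I^+-I^-)}e^{\i\langle \sum_{s\in I}\Delta(\vec A_s),\kappa\rangle}h_p(\vec A_{I^+}),
\]
viewed as a function of the chain indices from $I^-$ to $I^+$. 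Apply the proposition with $i=S^+$, $j=I^-$, $k=I^+$. Since $h_p=0$ on $\Ckill$, we have $F_2=0$ on $\Ckill$ even when $j=k$, so the hypothesis holds. Then
\[
\E_p^\otimes[F_1F_2\1_{\Tcut>I^+}]=\lambda_p^{I^+}\Four^h_{S\cup I}(\kappa)\cdot\lambda_p^{-I^+}\lambda_p^{\,I^+-I^-}\cdots,
\]
so it suffices to track the normalisation carefully. Precisely, multiply the identity defining $\Four^h_{S\cup I}$ by $\lambda_p^{I^+}$ and split the power as $\lambda_p^{S^+}\cdot\lambda_p^{I^--S^+}\cdot\lambda_p^{I^+-I^-}$. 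The proposition treats complex-valued functions through real and imaginary parts (or as $\R^2$-valued), with the bilinear pairing replaced by ordinary multiplication; this costs only a constant factor.

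Next identify the main term. We have $\mathfrak{S}^-_p(F_1)=\E_p^\otimes[e^{\i\langle\SA_S,\kappa\rangle}h_p(\vA_{S^+})\1_{\Tcut>S^+}]=\lambda_p^{S^+}\Four^h_S(\kappa)$. For $\mathfrak{S}^+_p(F_2)$, shift indices so the block starts at time $0$ and use the Doob transform \eqref{eq:DoobTransform}. Conditionally on $\vA_0=\vec A$ this gives
\[
\E_p^\otimes\bigl[e^{\i\langle\cdot,\kappa\rangle}h_p(\vA_m)\1_{\Tcut>m}\mid\vA_0=\vec A\bigr]=\lambda_p^m h_p(\vec A)\,\E_p^h\bigl[e^{\i\langle\cdot,\kappa\rangle}\mid\vA_0=\vec A\bigr],
\]
with $m=I^+-I^-$. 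Weighting by $\vpg_p(\vec A)/h_p(\vec A)$ cancels the $h_p(\vec A)$ and produces $\E^\vpg_p$, so $\mathfrak{S}^+_p(F_2)=\Four^v_{I-I^-}(\kappa)=\Four^v_I(\kappa)$ by stationarity. Collecting powers of $\lambda_p$, the main term is exactly $\lambda_p^{I^+}\Four^h_S\Four^v_I$. The convention $S=\emptyset$ is handled by taking $F_1\equiv1$ with $i=0$, which gives $\mathfrak{S}^-_p=\E_p^\otimes[h_p(\vA_0)]$, matching the convention for $\Four^h_\emptyset$.

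Finally bound the error norms. We have $\|F_1\|^-\le\proba_p^\otimes(\Tcut>S^+)\le\lambda_p^{S^++1}\X(p)^2$ by Lemma \ref{lem:BadLowerTcut}. Similarly,
\[
\|F_2\|^+\le\lambda_p^{-m}\|h_p\|_\infty\sup_{\vec A}\proba_p^\otimes(\Tcut>m\mid\vA_0=\vec A)\le\lambda_p\|h_p\|_\infty\X(p)^2.
\]
The error from Proposition \ref{pro:MixingTimeFunc} is therefore at most $(\lambda_pc)^{I^--S^+}\X(p)^{7}\lambda_p^{S^++2}\|h_p\|_\infty$. Dividing by $\lambda_p^{I^+}$ leaves
\[
c^{I^--S^+}\X(p)^7\|h_p\|_\infty\,\lambda_p^{2-(I^+-I^-)}\cdots
\]
and here is the main obstacle. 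The $\lambda_p^{-m}$ in $F_2$ must be absorbed using the tail bound at that scale rather than crudely. Done correctly, the $\lambda_p^m$ from $\proba(\Tcut>m)$ cancels $\lambda_p^{-m}$, and the remaining constant powers of $\lambda_p$ are bounded by Theorem \ref{thm:LowerLambda}. This yields $c^{I^--S^+}\|h_p\|_\infty\X(p)^{10}$ once $\X(p)$ is large.
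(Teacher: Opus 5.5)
Your strategy is exactly the paper's. You apply Proposition~\ref{pro:MixingTimeFunc} once, across the gap between $S^+$ and $I^-$, with $F_1=e^{\i\langle\SA_S,\kappa\rangle}$ and $F_2$ the $I$-block phase times $h_p(\vA_{I^+})$. You identify $\mathfrak{S}^+_p(F_2)$ through the Doob transform and stationarity, and you use Lemma~\ref{lem:BadLowerTcut} for $\|F_1\|^-$ and $\|F_2\|^+$. Your remark that the factor $h_p$ makes $F_2=0$ on $\Ckill$, which covers the case $I^-=I^+$, is a detail the paper leaves implicit.

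However, your last paragraph does not close as written. The ``main obstacle'' you describe comes from inconsistent normalization, not from the mathematics. Since you put $\lambda_p^{-(I^+-I^-)}$ into $F_2$, the quantity controlled by the proposition is
\[
\E_p^\otimes[F_1F_2\1_{\Tcut>I^+}]=\lambda_p^{-(I^+-I^-)}\lambda_p^{I^+}\Four^h_{S\cup I}(\kappa)=\lambda_p^{I^-}\Four^h_{S\cup I}(\kappa).
\]
The main term is $\lambda_p^{I^--S^+}\mathfrak{S}^-_p(F_1)\mathfrak{S}^+_p(F_2)=\lambda_p^{I^-}\Four^h_S(\kappa)\Four^v_I(\kappa)$, not $\lambda_p^{I^+}\Four^h_S(\kappa)\Four^v_I(\kappa)$ as you claim. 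So you must divide by $\lambda_p^{I^-}$, not by $\lambda_p^{I^+}$.

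With that correction, your error bound $c^{I^--S^+}\lambda_p^{I^-+2}\X(p)^7\|h_p\|_\infty$, which is itself correct, becomes $c^{I^--S^+}\lambda_p^{2}\X(p)^7\|h_p\|_\infty\le c^{I^--S^+}\|h_p\|_\infty\X(p)^{10}$. This uses only $\lambda_p\le 1$, so Theorem~\ref{thm:LowerLambda} is not needed at this step.

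Your proposed remedy, that ``the $\lambda_p^m$ from $\proba_p^\otimes(\Tcut>m)$ cancels $\lambda_p^{-m}$'', uses that cancellation a second time. You already used it to obtain $\|F_2\|^+\le\lambda_p\|h_p\|_\infty\X(p)^2$. Had a factor $\lambda_p^{2-(I^+-I^-)}$ genuinely remained, Theorem~\ref{thm:LowerLambda} could not absorb it. That theorem only gives $\lambda_p\ge c'>0$, so the factor is not bounded uniformly in $I^+-I^-$, and it cannot be traded against $c^{I^--S^+}$.

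The paper avoids the confusion by leaving $F_2$ unnormalized. Then both sides carry $\lambda_p^{I^+}$, and $\mathfrak{S}^+_p(F_2)=\lambda_p^{I^+-I^-}\Four^v_I(\kappa)$. The powers of $\lambda_p$ in the error multiply out to $\lambda_p^{-I^+}\lambda_p^{I^--S^+}\lambda_p^{S^++1}\lambda_p^{I^+-I^-+1}=\lambda_p^2$.
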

\begin{proof}
We apply Proposition \ref{pro:MixingTimeFunc} with
\[
\begin{aligned}
F_1:(\vec A_0,\dots, \vec A_{S^+})
&\mapsto e^{\i\langle \sum_{s\in S}\Delta(\vec A_s),\kappa\rangle },\\
F_2:(\vec A_{I^-},\dots, \vec A_{I^+})
&\mapsto e^{\i\langle \sum_{s\in I}\Delta(\vec A_s),\kappa\rangle }h_p(\vec A_{I^+}).
\end{aligned}
\]
Note that
\[
\mathfrak{S}_p^-(F_1)=\lambda_p^{S^+}\Four_S^h(\kappa),
\qquad
\|F_1\|^-\leq \proba_p^\otimes(\Tcut>S^+).
\]
A bit more care is required for $F_2$.
We have
\[
\mathfrak{S}_p^+(F_2)
=
\sum_{\vec A\in \Clive}
\frac{\vpg_p(\vec A)}{h_p(\vec A)}
\E_p^\otimes[
e^{\i\langle \SA_{I-I^-},\kappa\rangle }
h_p(\vec A_{I^+-I^-})
\1_{\Tcut>I^+-I^-}
|\vA_0=\vec A],
\]
where $I-I^-$ denotes the natural translate of $I$ by $I^-$.
Using the Doob transform, this becomes
\[
\mathfrak{S}_p^+(F_2)
=
\lambda_p^{I^+-I^-}
\E_p^\vpg[
e^{\i\langle \SA_{I-I^-},\kappa\rangle }
].
\]
Since $(\vA_i)_{i\geq 0}$ is a Markov chain under $\proba_p^h$, we recognize the last expectation as $\Four_{I}^v(\kappa)$.
Also,
\[
\|F_2\|^+
=
\sup_{\vec A\in \Clive}
\E_p^\otimes[
| e^{\i\langle \SA_{I-I^-},\kappa\rangle }
h_p(\vec A_{I^+-I^-})|
\1_{\Tcut>I^+-I^-}
|\vA_0=\vec A]
\leq
\proba_p^\otimes(\Tcut>I^+-I^-)\|h_p\|_\infty .
\]
Therefore, by Proposition \ref{pro:MixingTimeFunc},
\[
\left |\Four^h_{S\cup I}(\kappa)
-\Four^h_{S}(\kappa)\Four^v_{I}(\kappa) \right |\\
\leq
\lambda_p^{-I^+}
(c\lambda_p)^{I^- - S^+}
\proba_p^\otimes(\Tcut>S^+)
\proba_p^\otimes(\Tcut>I^+-I^-)
\|h_p\|_\infty\X(p)^3.
\]
Lemma \ref{lem:BadLowerTcut} bounds the last two probabilities by $\lambda_p^{S^++1}\X(p)^2$ and $\lambda_p^{I^+-I^-+1}\X(p)^2$.
Computing the last product concludes the proof.
\end{proof}
It remains to estimate the contribution of one large block.
\begin{proposition} \label{pro:TCL_One_Block}Let $C>0,\eps,\eps'>0$. There exists $F>0$ depending only on $d,L,C,\eps,\eps'$ such that for every $p<p_c$ satisfying \eqref{eq:initialization} and $\X(p)\geq F$, for every interval $I\subset \N_0$ of length $|I|\leq \X(p)^{1/\eps'}$,
 for every $\kappa\in [-\pi,\pi]^{d}$, we have 
\[
\left |\Four^v_{I}(\kappa)-e^{-\|\kappa\|_2^2|I|V(p)/(2d)} \right | \leq \|\kappa\|_2^2\X(p)^{1+\eps}+\|\kappa\|_2^4|I|^2\X(p)^{2+\eps}.
\]
\end{proposition}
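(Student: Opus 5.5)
Write $X:=\langle \SA_{I-I^-},\kappa\rangle$, a real random variable under $\proba_p^\vpg$. By stationarity of the chain under $\proba_p^\vpg$, $\Four^v_I(\kappa)=\E_p^\vpg[e^{\i X}]$ depends only on $|I|$. The approach is a fourth-order Taylor expansion of the characteristic function, compared with the same expansion of the Gaussian term.

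\textbf{Expansion.} Using
\[
|e^{\i x}-1-\i x+x^2/2|\leq x^4/24+|x|^3/6
\]
would leave a third-order error. Instead exploit symmetry. By Lemma \ref{lem:Symmetric_hp_vpg}, $\vpg_p$ and the kernel $\P_p$ are invariant under $\vec A\mapsto -\vec A$, which maps $\Delta\mapsto -\Delta$. Hence $X$ is symmetric in law, so $\E_p^\vpg[X]=\E_p^\vpg[X^3]=0$ and $\Four^v_I(\kappa)=\E_p^\vpg[\cos X]$. Then
\[
|\cos x-1+x^2/2|\leq x^4/24
\]
gives
\[
\Four^v_I(\kappa)=1-\tfrac12\E_p^\vpg[X^2]+O(\E_p^\vpg[X^4]).
\]
Similarly,
\[
e^{-\|\kappa\|_2^2|I|V(p)/(2d)}=1-\|\kappa\|_2^2|I|V(p)/(2d)+O(\|\kappa\|_2^4|I|^2V(p)^2).
\]
We will show $|V(p)|\leq \X(p)^{1+\eps/2}$; this follows from Corollary \ref{lem:BoundVariance}-type bounds on $V_0$ and the exponential decay of $V_i$ from Theorem \ref{thm:MixingTime}, as in Lemma \ref{lem:CovarianceDecay}. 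With it, the Gaussian remainder fits into the $\|\kappa\|_2^4|I|^2\X(p)^{2+\eps}$ error term. It then remains to prove two bounds: a second-moment (Lemma \ref{lem:SecondMomentBlock}) and a fourth-moment (Lemma \ref{lem:FourthMomentBlock}) estimate.

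\textbf{Second moment.} Expand
\[
\E_p^\vpg[X^2]=\sum_{i,j\in\llbracket 0,|I|-1\rrbracket}\E_p^\vpg[\langle\Delta(\vA_i),\kappa\rangle\langle\Delta(\vA_j),\kappa\rangle].
\]
Lattice symmetry (invariance of $\edges$ and $\proba_p$ under coordinate permutations and reflections, which preserves $\Ccut$, $h_p$ and $\vpg_p$ by uniqueness as in Lemma \ref{lem:Symmetric_hp_vpg}) gives that the covariance matrix of $(\Delta(\vA_i),\Delta(\vA_j))$ is a multiple of the identity. Each term therefore equals $\|\kappa\|_2^2V_{|i-j|}(p)/d$. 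Summing,
\[
\sum_{i,j}V_{|i-j|}=|I|V_0+2\sum_{m\geq1}(|I|-m)V_m=|I|V(p)-2\sum_{m\geq1}mV_m-2\sum_{m\geq|I|}(|I|-m)V_m .
\]
The exponential decay of the $V_m$ (obtained from Theorem \ref{thm:MixingTime} and Lemma \ref{lem:TrashTailBelowMixing}, with the cut-off $m\approx C\log\X(p)$ where $V_m\leq \X(p)^{-10}$ beyond it) bounds the correction by $\X(p)^{1+\eps}$. This yields
\[
\bigl|\tfrac12\E_p^\vpg[X^2]-\|\kappa\|_2^2|I|V(p)/(2d)\bigr|\leq \|\kappa\|_2^2\X(p)^{1+\eps},
\]
which is the first error term.

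\textbf{Fourth moment.} We need $\E_p^\vpg[X^4]\leq |I|^2\X(p)^{2+\eps}\|\kappa\|_2^4$; this is the main obstacle. The plan is a blocking argument. Cut the interval into blocks of length $\ell=\lceil\X(p)^{\eps/10}\rceil$. Write $X=\sum_b Y_b$, and separate odd and even blocks so that consecutive retained blocks are $\ell$ apart. For each block, $\E[Y_b^4]\leq \ell^4\|\kappa\|_2^4\X(p)^{2+\eps/10}$ by Hölder and the moment bounds of Lemma \ref{lem:BoundedConditionalMoments2} (stationary version, via Lemma \ref{lem:Tail_vpg} and Theorem \ref{thm:ExponentialTail}).

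Expand $\E[(\sum_b Y_b)^4]$ over ordered quadruples $b_1\leq b_2\leq b_3\leq b_4$. Repeatedly apply the mixing decoupling of Theorem \ref{thm:MixingTime}, as in Proposition \ref{pro:MixingTimeFunc} but for $\proba_p^\vpg$, across the largest gap. Whenever some block is isolated (gap $\geq\ell$ on both sides, or at an end), its conditional mean is $0$ by symmetry up to an error $c^{\ell}\X(p)^{O(1)}$. Truncation at $|Y_b|\leq \X(p)^{1/2+\eps}\ell\|\kappa\|_2$ controls the heavy tail via Theorem \ref{thm:ExponentialTail}.

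The surviving configurations are the paired ones, of which there are $O((|I|/\ell)^2)$ pairs, contributing $(\E Y_b^2)^2$-type terms of size $\ell^4\|\kappa\|_2^4\X(p)^2$. This gives the total $|I|^2\|\kappa\|_2^4\X(p)^{2+\eps}$. The remaining errors are $|I|^4c^{\ell}\X(p)^{O(1)}$, which are negligible because $|I|\leq\X(p)^{1/\eps'}$ while $c^{\ell}$ decays faster than any power of $\X(p)$.

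Combining the three displays concludes the proof, after adjusting $\eps$.
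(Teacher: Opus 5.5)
Your proposal is correct and follows the paper's proof. It uses the same steps: reflection symmetry to reduce to $\E_p^\vpg[\cos X]$ and a Taylor expansion, the second moment from the stationary covariances $V_m$ and their exponential decay, and a fourth-moment bound in which mixing plus antisymmetry make negligible every configuration whose extreme index is separated from the others by a large gap. The only difference is bookkeeping: the paper works directly at the index level, using the gap criterion $\max(i_2-i_1,i_4-i_3)>\X(p)^{\eps/10}$, and starts the second function one step early to get uniform conditional moment bounds, so your odd/even blocking and truncation are not needed.
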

To show Proposition \ref{pro:TCL_One_Block}, we approximate $\Four^v_{I}(\kappa)$ by a Taylor expansion.
By symmetry, we have 
\[ \Four^v_{I}(\kappa)=\E_p^\vpg[\cos(\langle \SA_I,\kappa\rangle)].\] 
Taylor's formula applied to $\cos$ gives
\[ \left | \Four^v_{I}(\kappa) -\left (1- \frac{\E_p^\vpg[\langle \kappa,\SA_I\rangle^2]}{2} \right ) \right |\leq \frac{\E_p^\vpg[\langle \kappa,\SA_I\rangle^4]}{24}.\]
Since for every $x\in \R_+$ we have $|e^{-x}-(1-x)|\leq x^2/2$, we get, by the Cauchy--Schwarz inequality,
\begin{equation} \left | \Four^v_{I}(\kappa) -e^{- \E_p^\vpg[\langle \kappa,\SA_I\rangle^2]/2} \right |\leq \frac{\E_p^\vpg[\langle \kappa,\SA_I\rangle^4]}{24}+\frac{\E_p^\vpg[\langle \kappa,\SA_I\rangle^2]^2}{8}\leq \frac{\E_p^\vpg[\langle \kappa,\SA_I\rangle^4]}{6}. \label{eq:Taylor1block}\end{equation}
We first identify the second moment.
By the $\Z^d$ symmetry, we have $\E_p^\vpg[\langle \kappa,\SA_I\rangle^2]=\|\kappa\|_2^2 \E_p^\vpg[\|\SA_I\|^2_2]/d$. Hence, to show Proposition \ref{pro:TCL_One_Block}, 
it is enough to prove $|\E_p^\vpg[\|\SA_I\|^2_2]-|I| V(p)| \leq \X(p)^{1+\eps}$ and $\E_p^\vpg[\langle \kappa,\SA_I\rangle^4] \leq \|\kappa\|_2^4|I|^2\X(p)^{2+\eps}$, which are the contents of Lemmas \ref{lem:SecondMomentBlock} and \ref{lem:FourthMomentBlock}, respectively.
\begin{lemma} \label{lem:V(p)}\label{lem:SecondMomentBlock} Let $C>0,\eps>0$. There exists $F>0$ depending on $d,L,C,\eps$ such that for every $p$ satisfying \eqref{eq:initialization} and $\X(p)\geq F$, we have $V(p)\leq \X(p)^{1+\eps}$. Moreover, for every interval
   $I$, we have
   \[ \left |\E_p^\vpg[\|\SA_I\|^2_2]-|I| V(p) \right | \leq \X(p)^{1+\eps}.\]
\end{lemma}
\begin{proof} Since $\vA_i$ is a stationary Markov chain under $\proba_p^\vpg$, we have
  \[ \left |\E_p^\vpg[\|\SA_I\|^2_2]-|I|V(p) \right |= \left | \sum_{i,j\in I} V_{|j-i|}(p)-|I| \left ( V_0+2\sum_{i\geq 1} V_i(p) \right ) \right | \leq \sum_{i\geq 0} 9(i+1) |V_i(p)|. \]
  Lemma \ref{lem:Bulk_L2} and Corollary \ref{lem:BoundVariance} give, when $\X(p)$ is large enough, that $|V_i(p)|\leq \X(p)^{1+\eps/2}$ for every $i\geq0$.
  So the contribution of $i\leq \X(p)^{\eps/3}$ to the last sum is $o(\X(p)^{1+\eps})$.
  Also, by Lemmas \ref{lem:BadLowerTcut} and \ref{lem:Bulk_L2}, there exists $c<1$ depending only on $d,L,C$ such that, if $\X(p)$ is large enough, we have $|V_i(p)|\leq c^i \X(p)^{10}$.
   Thus, the contribution of $i> \X(p)^{\eps/3}$ to the last sum is also $o(\X(p)^{1+\eps})$. The same argument also upper-bounds $V(p)$.
\end{proof}

We then bound the fourth moment.
To this end, we use a stationary analogue of Proposition \ref{pro:MixingTimeFunc}.
For $F$ from $(\vec\Comp(0))^{n+1}$ to either $\R$ or $\R^d$, we write 
\[ \mathfrak{S}^\vpg_p(F):=\E_p^\vpg[F(\vA_0,\dots,\vA_n)]. \]
We also let 
\[ \|F\|_\vpg^-:=\E_p^\vpg[\|F(\vA_0,\dots,\vA_n)\|_2], \]
and 
\[ \|F\|^+_\vpg:=\sup_{\vec A\in \Clive}\E_p^\vpg[\|F(\vA_0,\dots,\vA_n)\|_2|\vA_0=\vec A].\]
\begin{proposition} \label{pro:MixingTimeFuncVpg} Let $p<p_c$ and $C>0$ be such that \eqref{eq:initialization} holds. 
There exist $0<c<1$ and $K>0$, depending on $d,L,C$, such that if $\X(p)\geq K$, then for every $0\leq i \leq j\leq k$, every $F_1$ from $(\vec \Comp(0))^{i+1}$, and every $F_2$ from $(\vec \Comp(0))^{k-j+1}$ to either $\R$ or $\R^d$, we have
\[
\left \|  \E_p^\vpg[\langle F_1(\vA_0,\dots,\vA_i),F_2(\vA_j,\dots, \vA_k)\rangle]-\langle\mathfrak{S}^\vpg_p(F_1),\mathfrak{S}^\vpg_p(F_2)\rangle\right \|_2 \leq c^{j-i}\X(p) \|F_1\|^-_\vpg\|F_2\|^+_\vpg.
\]
\end{proposition}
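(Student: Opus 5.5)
The plan is to follow the proof of Proposition \ref{pro:MixingTimeFunc}, with the stationary chain replacing the killed chain. This is simpler, since no Doob weights $h_p$ or factors $\lambda_p$ appear.

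First, set
\[
f(\vec A):=\E_p^h[F_2(\vA_0,\dots,\vA_{k-j})\mid \vA_0=\vec A],
\]
for $\vec A\in\Clive$. Then $\|f\|_{2,\infty}\le \|F_2\|^+_\vpg$; here we read the conditional expectation under $\proba_p^\vpg$ given $\vA_0$ as the one under $\proba_p^h$, which is the same Markov kernel $\P_p$. Stationarity of $\vpg_p$ gives
\[
\mathfrak{S}^\vpg_p(F_2)=\sum_{\vec B}\vpg_p(\vec B)f(\vec B).
\]

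Next, condition on $(\vA_0,\dots,\vA_i)$ and use the Markov property under $\proba_p^\vpg$:
\[
\E_p^\vpg[\langle F_1,F_2\rangle]=\E_p^\vpg\Big[\Big\langle F_1(\vA_0,\dots,\vA_i),\sum_{\vec B}\P_p^{j-i}(\vec B\mid\vA_i)f(\vec B)\Big\rangle\Big].
\]
Fubini is justified by the finiteness of $\|F_1\|^-_\vpg$ and $\|F_2\|^+_\vpg$. Subtract $\langle \mathfrak S^\vpg_p(F_1),\mathfrak S^\vpg_p(F_2)\rangle$, which equals the same expression with $\P_p^{j-i}(\cdot\mid\vA_i)$ replaced by $\vpg_p$. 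By the triangle inequality the difference is at most
\[
\|f\|_{2,\infty}\,\E_p^\vpg\big[\|F_1\|_2\big]\,\sup_{\vec A\in\Clive}\sum_{\vec B}\big|\P_p^{j-i}(\vec B\mid\vec A)-\vpg_p(\vec B)\big|.
\]

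Theorem \ref{thm:MixingTime} bounds the last supremum by $c^{j-i}\X(p)$ once $\X(p)\ge K$. Combining this with the bounds on $f$ and on $\E_p^\vpg[\|F_1\|_2]=\|F_1\|^-_\vpg$ gives the claim.

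The only point needing care is the identification of conditional expectations under $\proba^\vpg_p$ with the kernel $\P_p$ on $\Clive$. It holds because $\vpg_p$ is supported on $\Clive$ (Lemma \ref{lem:StationaryMeasure}), so the chain never visits $\Ckill$. For the same reason, no condition of the form ``$F_2=0$ on $\Ckill$'' is needed here. Beyond this there is no real obstacle: the uniform total-variation mixing bound does all the work, and the constant $\X(p)$ is inherited directly from Theorem \ref{thm:MixingTime}.
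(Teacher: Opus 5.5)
Your proposal is correct and is essentially the paper's own proof. Like the paper, you set $f(\vec A)$ to be the conditional expectation of $F_2$ started from $\vec A$ and condition on the first block via the Markov property. The difference is then bounded by $\|F_1\|^-_\vpg\,\|F_2\|^+_\vpg\,\sup_{\vec A}\sum_{\vec B}|\P_p^{j-i}(\vec B|\vec A)-\vpg_p(\vec B)|$, and Theorem \ref{thm:MixingTime} controls this last factor.
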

\begin{remark} By Lemma \ref{lem:Symmetric_hp_vpg}, if $F$ is antisymmetric, then $\mathfrak{S}^\vpg_p(F)=0$.
\end{remark}
\begin{proof} 
For $\vec A\in \vec\Comp(0)$, let $f(\vec A):=\E_p^\vpg[F_2(\vA_0,\dots,\vA_{k-j})|\vA_0=\vec A].$
Then $\|f\|_{2,\infty}\leq \|F_2\|^+_\vpg$.
By conditioning on $\vA_i$, we have 
\[
\E_p^\vpg[\langle F_1(\vA_0,\dots,\vA_i),F_2(\vA_j,\dots, \vA_k)\rangle]\\
=
\E_p^\vpg\left[
\left\langle F_1(\vA_0,\dots,\vA_i),
\E_p^\vpg[f(\vA_j)|\vA_i]
\right\rangle
\right].
\]
The use of Fubini's theorem is justified whenever $\|F_1\|_\vpg^-<\infty$ and $\|F_2\|_\vpg^+<\infty$.
We also have 
\[ \langle\mathfrak{S}^\vpg_p(F_1),\mathfrak{S}^\vpg_p(F_2)\rangle= \E_p^\vpg\left[
\left\langle F_1(\vA_0,\dots,\vA_i),\mathfrak{S}^\vpg_p(F_2)
\right\rangle
\right].\]
Thus, taking the difference between the two previous right-hand sides and applying first the triangle inequality and then the Cauchy--Schwarz inequality, the difference is bounded by
\[ \E_p^\vpg[\|F_1(\vA_0,\dots,\vA_i)\|_2]\max_{\vec A_i\in \Clive} \left \|\E_p^\vpg[f(\vA_j)|\vA_i=\vec A_i ]-\mathfrak{S}^\vpg_p(F_2) \right \|_2.  \]
We recognize the first expectation as $\|F_1\|_\vpg^-$. As for the second, by conditioning on $\vA_j$, we have for every $\vec A_i$, 
\[ \E_p^\vpg[f(\vA_j)|\vA_i=\vec A_i ]-\mathfrak{S}^\vpg_p(F_2)=\sum_{\vec A_j\in \Clive}(\P_p^{j-i}(\vec A_j|\vec A_i)-\vpg_p(\vec A_j))\E_p^\vpg[F_2(\vA_j,\dots,\vA_k)|\vA_j=\vec A_j].  \]
The use of Fubini's theorem to get the previous equality is justified as long as $\|F_2\|^+_\vpg<\infty$. Moreover, using the triangle inequality, we deduce 
\[ \| \E_p^\vpg[f(\vA_j)|\vA_i=\vec A_i ]-\mathfrak{S}^\vpg_p(F_2)\|_2\leq \sum_{\vec A_j\in \Clive}|\P_p^{j-i}(\vec A_j|\vec A_i)-\vpg_p(\vec A_j)| \|F_2\|^+_\vpg\leq c^{j-i}\X(p)\|F_2\|^+_\vpg,  \]
using Theorem \ref{thm:MixingTime} for the last inequality. Taking the maximum over $\vec A_i$ concludes the proof.
\end{proof}
We apply this mixing estimate to the following products.
For $i_1,i_2,\dots,i_n\in \N_0$, let us write $$W_{\kappa,p}(i_1,i_2,\dots,i_n):=\E_p^\vpg\left[\prod_{j=1}^n\langle \kappa,\Delta(\vA_{i_j})\rangle \right ].$$
\begin{lemma} \label{lem:ScalarMoment} Let $C>0,\eps>0,n\in \N$. There exists $F>0$ depending on $d,L,C,n,\eps$ such that for every $p$ satisfying \eqref{eq:initialization} and $\X(p)\geq F$, for every $i_1,i_2,\dots,i_n\in \N_0$, for every $\kappa\in [-\pi,\pi]^d$ we have 
  \[ |W_{\kappa,p}(i_1,i_2,\dots,i_n)|\leq \E_p^\vpg\left [\prod_{j=1}^n|\langle \kappa,\Delta(\vA_{i_j}) \rangle| \right ] \leq \|\kappa\|_2^n\X(p)^{n/2+\eps}, \]
  and, if $i_1,\dots,i_n\geq 1$, for every $\vec A_0\in \Clive$, we have
\[ \E_p^\vpg\left [\prod_{j=1}^n|\langle \kappa,\Delta(\vA_{i_j})\rangle| \mid \vA_0=\vec A_0 \right ]\leq \|\kappa\|_2^n\X(p)^{n/2+\eps}. \]
\end{lemma}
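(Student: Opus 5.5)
The first inequality is the triangle inequality for expectations, so the work is in bounding the absolute moment. I will first use $|\langle \kappa,x\rangle|\leq \|\kappa\|_2\|x\|_2$ to reduce everything to $\|\kappa\|_2^n\,\E_p^\vpg[\prod_{j=1}^n\|\Delta(\vA_{i_j})\|_2]$, and similarly for the conditional version. Then I apply the generalized H\"older inequality with $n$ factors, each taken in $L^n$. This gives
\[
\E_p^\vpg\Big[\prod_{j=1}^n\|\Delta(\vA_{i_j})\|_2\Big]\leq \prod_{j=1}^n \E_p^\vpg\big[\|\Delta(\vA_{i_j})\|_2^n\big]^{1/n},
\]
and the same holds under $\vA_0=\vec A_0$. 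So it suffices to show the one-index moment bound $\E_p^\vpg[\|\Delta(\vA_i)\|_2^n]\leq \X(p)^{n/2+\eps}$ for every $i\geq 0$, and its conditional analogue for $i\geq 1$.

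For the unconditional bound, stationarity of $\vpg_p$ lets me assume $i=0$. Lemma \ref{lem:Tail_vpg} gives $\vpg_p(\Delta(\vA_0)=x)\leq K' D\ast\tau_p(x)$. For the conditional bound with $i\geq1$, I condition on $\vA_{i-1}=\vec A$ and use the second part of Lemma \ref{lem:OneStepBias}, which gives $\proba_p^h(\Delta(\vA_i)=x\mid \vA_{i-1}=\vec A)\leq K' D\ast\tau_p(x)$ uniformly in $\vec A\in\Clive$. Both cases therefore reduce to the single estimate
\[
K'\sum_{x\in\Z^d}\|x\|_2^n\,D\ast\tau_p(x)\leq \X(p)^{n/2+\eps}.
\]

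To prove this estimate I split at radius $r=\X(p)^{1/2+\eps/(2n)}$, as in the proof of Lemma \ref{lem:BoundedConditionalMoments}. The part with $\|x\|_2\leq r$ is at most $K'r^n\|D\ast\tau_p\|_1=K'\Omega\,\X(p)^{n/2+\eps/2+1}$. This is where the argument fails as written: the extra factor $\X(p)$ is fatal, because Lemma \ref{lem:Tail_vpg} only gives the stationary law up to the unnormalised weight $D\ast\tau_p$, and not $D\ast\tau_p/(\Omega\X(p))$. I expect this to be the main obstacle, and the fix is to use a finer bound on $\proba_p^h(\Delta(\vA_i)=x\mid\vA_{i-1}=\vec A)$. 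By \eqref{eq:MarkovTransition} the transition kernel equals $\frac{h_p(\vec B)}{\lambda_p h_p(\vec A)}\frac{\mu_p(\vec B)}{\Omega\X(p)}$. The ratio $h_p(\vec B)/h_p(\vec A)$ is unbounded only through the $\Omega\X(p)$ factor in the lower bound of Lemma \ref{lem:vpd}. I would therefore compare instead with the one-step-back argument of Lemma \ref{lem:2stepPerturbation}: condition on both neighbours $\vA_{i-1}$ and $\vA_{i+1}$ and use $\P_p$ at stationarity, which gives a density bound $K\,D\ast\tau_p(x)/\X(p)$ with $K=(1-p)^{-\Omega}/\lambda_p^{2}$, bounded by Theorem \ref{thm:LowerLambda}. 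With this normalised bound, the part with $\|x\|_2\leq r$ is at most $Kr^n=K\X(p)^{n/2+\eps/2}$, which is below $\X(p)^{n/2+\eps}/2$ once $\X(p)$ is large. For the conditional version at $i=1$ there is no left neighbour to use this way. There I use $\vA_0=\vec A_0$ directly together with $\vA_2$ as the right neighbour and sum over $\vA_2$ with its kernel, exactly as in Lemma \ref{lem:BoundedConditionalMoments2}.

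The part with $\|x\|_2>r$ is handled with Theorem \ref{thm:ExponentialTail} applied with parameter $\eps/(4n)$. This gives $D\ast\tau_p(x)\leq\Omega\exp(2-\|x\|_2/\X(p)^{1/2+\eps/(4n)})$, so $\sum_{\|x\|_2>r}\|x\|_2^nD\ast\tau_p(x)$ is a stretched-exponential tail sum. This sum tends to $0$ as $\X(p)\to\infty$, and even the crude density bound $K'D\ast\tau_p$ is enough here. Summing the two parts and choosing $F$ large, depending on $d,L,C,n,\eps$, concludes the proof.
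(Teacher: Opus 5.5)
Your reduction matches the paper's proof. You bound $|\langle\kappa,x\rangle|\le\|\kappa\|_2\|x\|_2$, reduce to a single moment (you use H\"older, the paper uses convexity of $x\mapsto x^n$), control one increment by Lemma~\ref{lem:OneStepBias} or Lemma~\ref{lem:Tail_vpg}, and handle the far region with Theorem~\ref{thm:ExponentialTail}.

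The obstacle you identify in the near region, however, is self-inflicted. You lose the factor $\X(p)$ only because you replaced the law of $\Delta(\vA_i)$ by its majorant $K'D\ast\tau_p$ on $\{\|x\|_2\le r\}$ as well, and that majorant has total mass $K'\Omega\X(p)$. On the near region no density bound is needed. You are integrating against a probability measure, so
\[
\E_p^\vpg\bigl[\|\Delta(\vA_i)\|_2^n\1_{\|\Delta(\vA_i)\|_2\le r}\bigm| \vA_0=\vec A_0\bigr]\le r^n=\X(p)^{n/2+\eps/2},
\]
and the same holds without conditioning. The crude majorant is only used for $\|x\|_2>r$, where the stretched exponential absorbs the extra $\X(p)$. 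This is exactly the paper's proof. It is also how the proof of Lemma~\ref{lem:BoundedConditionalMoments}, which you cite for the split, treats its near part.

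The normalised bound $K\,D\ast\tau_p(x)/\X(p)$ that you propose as a fix does not follow from the two-neighbour argument, and uniformly in the conditioning it is false. Conditioning on both neighbours $\vec A,\vec C$ cancels the $h_p$-ratio and leaves the law $\propto\mu_p(\vec B)\1_{(\vec A,\vec B)\notin\Ccut}\1_{(\vec B,\vec C)\notin\Ccut}$. The only uniform lower bound on its mass is the single-vertex contribution $(1-p)^\Omega/(\Omega\X(p))$. This is why Lemma~\ref{lem:2stepPerturbation} only yields $D\ast\tau_p(x)/(1-p)^\Omega$, with no factor $1/\X(p)$.

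Here is a concrete counterexample. Take $\vec A_0=(A_0,\vec a_0)\in\Clive$ with $A_0$ containing every neighbour of $a_0^+$. Then $A_0-a_0^+$ contains all neighbours of $0$, so $(\vec A_0,\vec B)\notin\Ccut$ forces $b^-=0$. Hence, given $\vA_0=\vec A_0$, $\Delta(\vA_1)$ is almost surely a neighbour of $0$, and some neighbour has probability at least $1/\Omega$. Meanwhile $K\,D\ast\tau_p(x)/\X(p)\le K\Omega/\X(p)$, which is smaller for large $\X(p)$.

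So drop the fix. With the trivial bound on the near region, your argument is complete and coincides with the paper's.
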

\begin{proof}  
 Since $(\vA_i)_{i\geq 0}$ is a stationary Markov chain under $\proba_p^\vpg$, we have $W_{\kappa,p}(i_1,i_2,\dots,i_n)=W_{\kappa,p}(i_1+1,i_2+1,\dots,i_n+1)$, so the first inequality follows by integrating the second over $\vec A_0$. 
  For the second, by convexity of $x\mapsto x^n$ and the Cauchy--Schwarz inequality, we have
\begin{align*} \E_p^\vpg [\prod_{j=1}^n|\langle \kappa,\Delta(\vA_{i_j})\rangle | |\vA_0=\vec A_0 ] 
  & \leq \max_{i\geq 1}\E_p^\vpg[|\langle \kappa, \Delta(\vA_i)\rangle|^n | \vA_0=\vec A_0 ]
  \\ & \leq \|\kappa\|_2^n \max_{i\geq 1}\E_p^\vpg[\|\Delta(\vA_i)\|_2^n | \vA_0=\vec A_0 ]. 
\end{align*}
Then, by Lemma \ref{lem:OneStepBias} and Theorem \ref{thm:ExponentialTail}, the contribution of the case $\|\Delta(\vA_i)\|_2\geq \X(p)^{1/2+\eps/(2n)}$
to the last expectation is at most
\[ K'\sum_{x\in \Z^d}\tau_p(x)\langle x\rangle^n\1 \left (\langle x\rangle\geq \X(p)^{1/2+\eps/(2n)}-L\right )\leq 1, \]
for some constant $K'>0$ depending only on $d,L,C$. The rest of the expectation is bounded by $\X(p)^{n/2+\eps/2}$. This concludes the proof.
\end{proof}
The next estimate gives the gap decay needed for the fourth moment.
\begin{lemma} \label{lem:FastDecayFourthMoment} Let $C>0$. There exists $F>0,c<1$ depending only on $d,L,C$ such that for every $p<p_c$ satisfying \eqref{eq:initialization} and $\X(p)\geq F$, for every $i_1\leq i_2\leq i_3 \leq i_4$, for every $\kappa\in [-\pi,\pi]^d$, we have 
  \[ |W_{\kappa,p}(i_1,i_2,i_3,i_4)|\leq \|\kappa\|_2^4 \X(p)^5 \min(c^{i_2-i_1},c^{i_4-i_3}).\]
\end{lemma}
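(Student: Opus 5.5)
We apply Proposition \ref{pro:MixingTimeFuncVpg} across the larger gap, using antisymmetry to kill the product of the stationary means. By the symmetry $i\mapsto -i$ (time reversal) is not directly available for a non-reversible chain, so we treat the two gaps separately. Both cases use the same splitting; only the placement of the cut differs.

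\textbf{Left gap $i_2-i_1$.} Take
\[
F_1:\vec A_{i_1}\mapsto \langle\kappa,\Delta(\vec A_{i_1})\rangle,
\qquad
F_2:(\vec A_{i_2},\dots,\vec A_{i_4})\mapsto \prod_{j=2}^4\langle\kappa,\Delta(\vec A_{i_j})\rangle .
\]
By stationarity we may shift so that $i_1=0$. The function $F_1$ is antisymmetric, and the reflection $\vec A\mapsto-\vec A$ preserves $\vpg_p$ (Lemma \ref{lem:Symmetric_hp_vpg}). Hence $\mathfrak S^\vpg_p(F_1)=0$, and Proposition \ref{pro:MixingTimeFuncVpg} gives
\[
|W_{\kappa,p}(i_1,\dots,i_4)|\le c^{i_2-i_1}\X(p)\,\|F_1\|^-_\vpg\|F_2\|^+_\vpg .
\]
Lemma \ref{lem:ScalarMoment} bounds $\|F_1\|^-_\vpg\le\|\kappa\|_2\X(p)^{1/2+\eps}$.

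For $\|F_2\|^+_\vpg$ we need a conditional bound started from an arbitrary $\vec A_0$ at time $i_2$, which includes the index $i_2$ itself. The second part of Lemma \ref{lem:ScalarMoment} requires indices $\ge1$, so the factor at time $0$ must be handled separately. We bound $|\langle\kappa,\Delta(\vec A_0)\rangle|$ crudely and use Hölder on the remaining factors. This is the main obstacle: $\Delta(\vec A_0)$ is deterministic and could be huge.

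To fix this, we instead put the cut one step later. We apply the proposition with $F_2$ starting at time $i_2-1$ and $F_1$ ending at $i_1$, assuming $i_2-i_1\ge 1$. We let $F_2(\vec A_{i_2-1},\dots)$ be the product of the three factors at times $i_2,i_3,i_4$, which are all strictly after its starting time. Lemma \ref{lem:ScalarMoment} (conditional version, with Hölder) then gives
\[
\|F_2\|^+_\vpg\le \|\kappa\|_2^3\X(p)^{3/2+\eps}.
\]
This costs only one power of $c$, which is absorbed into the constant. The case $i_2=i_1$ follows trivially from Lemma \ref{lem:ScalarMoment}, which gives $\|\kappa\|_2^4\X(p)^{2+\eps}$. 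Altogether,
\[
|W_{\kappa,p}|\le c^{i_2-i_1-1}\|\kappa\|_2^4\X(p)^{3+2\eps}\le \|\kappa\|_2^4\X(p)^5 c'^{\,i_2-i_1}.
\]

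\textbf{Right gap $i_4-i_3$.} Take
\[
F_1=\prod_{j\le3}\langle\kappa,\Delta(\vec A_{i_j})\rangle
\ \text{(ending at } i_3\text{)},
\qquad
F_2:(\vec A_{i_4-1},\vec A_{i_4})\mapsto\langle\kappa,\Delta(\vec A_{i_4})\rangle .
\]
Here $F_2$ is antisymmetric, so $\mathfrak S^\vpg_p(F_2)=0$. We bound $\|F_1\|^-_\vpg\le\|\kappa\|_2^3\X(p)^{3/2+\eps}$ using the first part of Lemma \ref{lem:ScalarMoment} with absolute values. We bound $\|F_2\|^+_\vpg\le\|\kappa\|_2\X(p)^{1/2+\eps}$ using the conditional part, since the index is $1$ relative to the start. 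This yields the bound with $c^{i_4-i_3-1}$; the case $i_4=i_3$ is again trivial.

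Taking the minimum of the two bounds and choosing $\eps$ small so that $\X(p)^{3+2\eps}\cdot c^{-1}\le\X(p)^5$ concludes the proof.
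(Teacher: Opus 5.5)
Your proposal is correct and is essentially the paper's proof. In both, Proposition~\ref{pro:MixingTimeFuncVpg} is applied with the cut placed at $i_2-1$ (resp.\ $i_4-1$), so that the conditional part of Lemma~\ref{lem:ScalarMoment} applies to indices $\geq 1$; the main term vanishes by antisymmetry, the lost factor $c^{-1}$ is absorbed into the spare power of $\X(p)$, and the degenerate small-gap case is handled directly by Lemma~\ref{lem:ScalarMoment}. The only difference is cosmetic: the paper treats $i_2-i_1\le 2$ directly, while you only need to treat $i_2=i_1$ separately, which is equally valid.
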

\begin{proof} When $i_1<i_2-2$, we first apply Proposition \ref{pro:MixingTimeFuncVpg} with
  \[ F_1:(\vec A_0,\dots, \vec A_{i_1})\mapsto \langle \kappa, \Delta(\vec A_{i_1})\rangle  \quad \text{and} \quad F_2:(\vec A_{i_2-1},\dots, \vec A_{i_4})\mapsto \prod_{2\leq j \leq 4}\langle \kappa, \Delta(\vec A_{i_j})\rangle. \]
By antisymmetry, we have $\mathfrak{S}^\vpg_p(F_1)=\mathfrak{S}^\vpg_p(F_2)=0$. By Lemma \ref{lem:ScalarMoment} we have $\|F_1\|_\vpg^-\leq \|\kappa\|_2\X(p)$ and $\|F_2\|_\vpg^+\leq \|\kappa\|_2^3\X(p)^2$ as long as $\X(p)$ is large enough. Thus,
\[ |W_{\kappa,p}(i_1,i_2,i_3,i_4)|\leq \|\kappa\|_2^4\X(p)^5 c^{i_2-i_1}, \]
whenever $i_1<i_2-2$. The inequality still holds when $i_2\in \{i_1,i_1+1,i_1+2\}$ by Lemma \ref{lem:ScalarMoment}. Similarly, by applying Proposition \ref{pro:MixingTimeFuncVpg} with
  \[ F'_1:(\vec A_0,\dots, \vec A_{i_3})\mapsto \prod_{1\leq j \leq 3}\langle \kappa, \Delta(\vec A_{i_j})\rangle  \quad \text{and} \quad F'_2:(\vec A_{i_4-1},\dots, \vec A_{i_4})\mapsto \langle \kappa, \Delta(\vec A_{i_4})\rangle, \]
  we get 
\[ |W_{\kappa,p}(i_1,i_2,i_3,i_4)|\leq \|\kappa\|_2^4 \X(p)^5 c^{i_4-i_3}. \qedhere\]
\end{proof}
\begin{lemma} \label{lem:FourthMomentBlock} Let $C>0,\eps,\eps'>0$. There exists $F>0$ depending on $d,L,C,\eps,\eps'$ such that for every $p$ satisfying \eqref{eq:initialization} and $\X(p)\geq F$, for every interval
   $I$ satisfying $|I|\leq \X(p)^{1/\eps'}$, for every $\kappa\in [-\pi,\pi]^d$, we have 
   \[ \E_p^\vpg[\langle \kappa,\SA_I\rangle^4] \leq \|\kappa\|_2^4|I|^2\X(p)^{2+\eps}.\]
\end{lemma}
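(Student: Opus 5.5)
We expand the fourth power by multilinearity and stationarity:
\[
\E_p^\vpg[\langle \kappa,\SA_I\rangle^4]=\sum_{j_1,j_2,j_3,j_4\in I} W_{\kappa,p}(j_1,j_2,j_3,j_4)\leq 24\sum_{\substack{i_1\leq i_2\leq i_3\leq i_4\\ i_1,\dots,i_4\in I}} |W_{\kappa,p}(i_1,i_2,i_3,i_4)|,
\]
since each unordered quadruple corresponds to at most $24$ ordered ones and $W_{\kappa,p}$ is symmetric in its arguments. Interchanging the sum and expectation is harmless: $I$ is finite and each term has finite moment by Lemma \ref{lem:ScalarMoment}. We then bound each ordered term by the better of the two available estimates. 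Lemma \ref{lem:ScalarMoment} with $n=4$ gives the uniform bound $\|\kappa\|_2^4\X(p)^{2+\eps/2}$. Lemma \ref{lem:FastDecayFourthMoment} gives the gap bound $\|\kappa\|_2^4\X(p)^5\min(c^{i_2-i_1},c^{i_4-i_3})$.

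Let $g:=\lceil K\log \X(p)\rceil$, with $K$ large enough (depending on $d,L,C$) that $c^{g}\X(p)^5\leq \X(p)^{-3/\eps'-1}$. We split the ordered quadruples into two classes.

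\emph{Both gaps small}, meaning $i_2-i_1\leq g$ and $i_4-i_3\leq g$. Choosing $i_1$ and $i_3$ gives at most $|I|^2$ options, and each gap gives at most $g+1$ options. So this class has at most $|I|^2(g+1)^2$ quadruples. With the uniform bound, its contribution is at most $|I|^2(g+1)^2\|\kappa\|_2^4\X(p)^{2+\eps/2}$. Since $(g+1)^2=O(\log^2\X(p))\leq \X(p)^{\eps/2}/48$ for $\X(p)$ large, this is at most $\|\kappa\|_2^4|I|^2\X(p)^{2+\eps}/48$.

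\emph{At least one gap exceeds $g$}. There are at most $|I|^4\leq \X(p)^{4/\eps'}$ quadruples, and each contributes at most $\|\kappa\|_2^4\X(p)^{5}c^{g}$. The total is at most $\|\kappa\|_2^4\X(p)^{4/\eps'-3/\eps'-1}=\|\kappa\|_2^4\X(p)^{1/\eps'-1}$. This is the step I expect to need care: the crude count of all quadruples must be beaten by the exponential decay, so $g$ has to grow with $1/\eps'$. The resulting bound is not yet of the form $|I|^2$. To fix this, I would instead make $g$ large enough that $c^g\X(p)^{5+4/\eps'}\leq 1$. The contribution of this class is then at most $\|\kappa\|_2^4$, which is at most $\|\kappa\|_2^4|I|^2\X(p)^{2+\eps}/48$ because $|I|\geq1$ (for $I=\emptyset$ the claim is trivial). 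Since $g$ is still $O(\log \X(p))$, with a constant now depending on $\eps'$, the first case is unaffected.

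Summing the two classes and multiplying by $24$ gives $\E_p^\vpg[\langle\kappa,\SA_I\rangle^4]\leq \|\kappa\|_2^4|I|^2\X(p)^{2+\eps}$ once $\X(p)\geq F(d,L,C,\eps,\eps')$.
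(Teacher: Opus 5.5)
Your proof is correct and is essentially the paper's argument: the same reduction to sorted quadruples with the factor $24$, and the same split according to whether $\max(i_2-i_1,i_4-i_3)$ is below a threshold. Lemma~\ref{lem:ScalarMoment} handles the near-diagonal quadruples and Lemma~\ref{lem:FastDecayFourthMoment} beats the crude count of order $|I|^4\leq \X(p)^{4/\eps'}$ on the rest; the only difference is that you use a threshold $g=O(\log\X(p))$ (with constant depending on $\eps'$, as in your corrected choice of $g$) instead of the paper's $\X(p)^{\eps/10}$, which changes nothing of substance.
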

\begin{proof} We have 
  \[ \E_p^\vpg[\langle \kappa,\SA_I\rangle^4]=\sum_{i_1,i_2,i_3,i_4\in I} W_{\kappa,p}(i_1,i_2,i_3,i_4)\leq 24 \sum_{\substack{i_1\leq i_2\leq i_3\leq i_4\\i_1,i_2,i_3,i_4\in I}} |W_{\kappa,p}(i_1,i_2,i_3,i_4)|.\]
When $\max(i_2-i_1,i_4-i_3)\leq \X(p)^{\eps/10}$, we use $|W_{\kappa,p}(i_1,i_2,i_3,i_4)|\leq \|\kappa\|_2^4 \X(p)^{2+\eps/2}$ from Lemma \ref{lem:ScalarMoment}. There are at most $|I|^2(1+\X(p)^{\eps/10})^2$ such tuples. When $\max(i_2-i_1,i_4-i_3)> \X(p)^{\eps/10}$, we instead use Lemma \ref{lem:FastDecayFourthMoment} to get
$|W_{\kappa,p}(i_1,i_2,i_3,i_4)|\leq \|\kappa\|_2^4 \X(p)^{-100/\eps'}$, whenever $\X(p)$ is large enough. There are at most $|I|^4$ such tuples. In total, whenever $\X(p)$ is large enough, we get
  \[ \E_p^\vpg[\langle \kappa,\SA_I\rangle^4]\leq 24\|\kappa\|_2^4(2\X(p)^{2+\eps/2+2\eps/10}|I|^2 +\X(p)^{-100/\eps'}|I|^4)\leq\|\kappa\|_2^4|I|^2\X(p)^{2+\eps}. \qedhere \]
\end{proof}
We now have all the elements to conclude the subsection.
\begin{proof}[Proof of Theorem \ref{thm:FT_Low}]
Without loss of generality, we may prove the main result with $\eps$ replaced by $10\eps$.
We may also assume that
\begin{equation}
\X(p)^{1/2+10\eps}k^{1/4}\|\kappa\|_2\leq 2
\quad \text{and} \quad
\|\kappa\|_2^4k^{3/2}\X(p)^{2+10\eps}\leq 2, \label{eq:HypoSimplifyThmLowFrequencies}
\end{equation}
since otherwise we can trivially bound the left-hand side of Theorem \ref{thm:FT_Low} by $2$.
Let $\lmix:=\lfloor\X(p)^\eps\rfloor$ and $\lblock:=\lfloor \sqrt{k}\rfloor -1$.
For $i\geq 1$, let
\[
a_i=i\lmix+(i-1)\lblock,
\qquad
b_i=i\lmix+i\lblock,
\qquad
I_i=[a_i,b_i].
\]
Let $q$ be the largest integer such that $a_{q+1}\leq k$.
We may assume $q\geq 1$, since otherwise $k\leq 4\X(p)^{2\eps}$ and the conclusion follows directly from Lemma \ref{lem:SumBadIndicesBlock} with $S=\emptyset$.
Let $S=\bigcup_{i=1}^q I_i$.
Then
\begin{equation}
k+1-|S|\leq 3\sqrt{k}\X(p)^\eps,
\qquad
q\leq 2\sqrt{k}. \label{eq:NumberBadIndices}
\end{equation}

The first step is to remove the indices outside the blocks.
By Lemmas \ref{lem:SumBadIndicesBlock} and \ref{lem:EndingFourier}, and using
$\|\tau_p^{\square,k}\|_1=\Omega^k\X(p)^{k+1}\proba_p^\otimes(\Tcut>k)$, we have
\[
\left |
\frac{\Four[\tau_p^{\square,k}](\kappa)}{\|\tau_p^{\square,k}\|_1}
-\frac{\Four^h_S(\kappa)}{\E_p^\otimes[h_p(\vA_0)]}
\right |
\leq
(L+\sqrt{k+1-|S|}\X(p)^{1/2+\eps})\|\kappa\|_2
+\frac{\lambda_p^k}{\proba_p^\otimes(\Tcut>k)}c^{\lmix-1}\X(p)^8 .
\]
Since $\lmix=\X(p)^\eps$, the last term is at most $\X(p)^{-1/\eps'-10}$ whenever $\X(p)$ is large enough.
Using the bound on $k+1-|S|$, we get
\begin{equation}
\left |
\frac{\Four[\tau_p^{\square,k}](\kappa)}{\|\tau_p^{\square,k}\|_1}
-\frac{\Four^h_S(\kappa)}{\E_p^\otimes[h_p(\vA_0)]}
\right |
\leq
\X(p)^{-1/\eps'-10}
+\X(p)^{1/2+2\eps}k^{1/4}\|\kappa\|_2 .
\label{eq:LowFrequencyRemoveBadIndices}
\end{equation}

The second step is to replace $\Four_S^h$ by a product of independent block contributions.
For $0\leq r\leq q$, set $S_{-r}:=\bigcup_{1\leq i\leq q-r} I_i$, with the convention $S_{-q}=\emptyset$.
By telescoping,
\[
\Four^h_{S}(\kappa)-\E_p^\otimes[h_p(\vA_{0})]\prod_{i=1}^q \Four^v_{I_i}(\kappa)
=
\sum_{j=0}^{q-1}
\prod_{0\leq i<j}\Four^v_{I_{q-i}}(\kappa)
\left (
\Four^h_{S_{-j}}(\kappa)
-\Four^h_{S_{-j-1}}(\kappa)\Four^v_{I_{q-j}}(\kappa)
\right ).
\]
Thus, by Lemma \ref{lem:BlockDecomposition} and since $|\Four^v_{I_i}(\kappa)|\leq 1$, we get
\begin{equation}
\left |
\frac{\Four^h_{S}(\kappa)}{\E_p^\otimes[h_p(\vA_{0})]}
-\prod_{i=1}^q \Four^v_{I_i}(\kappa)
\right |
\leq
q c^{\lmix}\frac{\|h_p\|_\infty}{\E_p^\otimes[h_p(\vA_{0})]}\X(p)^{10}
\leq
\X(p)^{-1/\eps'-10},
\label{eq:LowFrequencyIndependentBlocks}
\end{equation}
where the last inequality follows from Lemma \ref{lem:vpd} and $\lmix=\X(p)^\eps$, provided $\X(p)$ is large enough.

The third step is to approximate each block by its Gaussian factor.
Another telescoping gives
\[
\prod_{i=1}^q \Four^v_{I_i}(\kappa)
-e^{-\|\kappa\|_2^2|S| \frac{V(p)}{2d}}
=
\sum_{j=1}^q
e^{-(q-j)\|\kappa\|_2^2(\lblock+1)\frac{V(p)}{2d}}
\prod_{i=1}^{j-1} \Four^v_{I_i}(\kappa)
\left(
\Four^v_{I_j}(\kappa)
-e^{-\|\kappa\|_2^2(\lblock+1)\frac{V(p)}{2d}}
\right).
\]
Thus, using $|\Four^v_{I_i}(\kappa)|\leq 1$ and Proposition \ref{pro:TCL_One_Block}, we get
\begin{align}
\left |
\prod_{i=1}^q \Four^v_{I_i}(\kappa)
-e^{-\|\kappa\|_2^2|S| V(p)/(2d)}
\right |
&\leq
q\left(
\|\kappa\|_2^2\X(p)^{1+\eps}
+\|\kappa\|_2^4(\lblock+1)^2\X(p)^{2+\eps}
\right) \notag\\
&\leq
\X(p)^{1/2+\eps}k^{1/4}\|\kappa\|_2
+\|\kappa\|_2^4k^{3/2}\X(p)^{2+\eps},
\label{eq:LowFrequencyBlockGaussianApproximation}
\end{align}
using $\X(p)^{1/2}k^{1/4} \|\kappa\|_2\leq 1$, which follows from \eqref{eq:HypoSimplifyThmLowFrequencies}, for the last inequality.
It remains only to replace $|S|$ by $k$ in the Gaussian factor.
By Lemma \ref{lem:SecondMomentBlock} applied to an interval of length $1$, we have $V(p)\leq \X(p)^{1+\eps}$ whenever $\X(p)$ is large enough.
Since $x\mapsto e^{-x}$ is $1$-Lipschitz on $\R_+$, \eqref{eq:NumberBadIndices} gives
\begin{equation}
\left |
e^{-\|\kappa\|_2^2|S| V(p)/(2d)}
-e^{-k\|\kappa\|_2^2 V(p)/(2d)}
\right |
\leq
3\|\kappa\|_2^2 k^{1/2}\X(p)^{1+2\eps}
\leq
3\X(p)^{1/2+\eps}k^{1/4}\|\kappa\|_2,
\label{eq:LowFrequencyRestoreFullTime}
\end{equation}
again using $\X(p)^{1/2}k^{1/4} \|\kappa\|_2\leq 1$, which follows from \eqref{eq:HypoSimplifyThmLowFrequencies}, for the last inequality.
Combining \eqref{eq:LowFrequencyRemoveBadIndices}, \eqref{eq:LowFrequencyIndependentBlocks}, \eqref{eq:LowFrequencyBlockGaussianApproximation}, and \eqref{eq:LowFrequencyRestoreFullTime} with the triangle inequality yields the desired estimate.
\end{proof}
\subsection{Local central limit estimates for $\tau_p^{\square,k}$} \label{seq:TCL_square}
We first show the following result.
\begin{lemma} \label{lem:LowerBound_V(p)} Let $C>0,\eps,\eps'>0$. There exists $F>0$ depending on $d,L,C,\eps,\eps'$ such that for every $p$ satisfying \eqref{eq:initialization} and $\X(p)\geq F$, we have $V(p)\geq \X(p)^{1-\eps}$. 
\end{lemma}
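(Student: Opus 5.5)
We need a lower bound $V(p)\geq \X(p)^{1-\eps}$. The plan is to compare with $\|\tau_{p,p'}^\square\|_2^2$ for a suitable $p'$ and use the second moment identities from Section \ref{sec:SecondMoment}.

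\textbf{Step 1: identify $V(p)$ with a second moment.} By \eqref{eq:DecomposeVariance} and Lemmas \ref{lem:Bulk_L2}, \ref{lem:Start_L2}, \ref{lem:End_L2}, \ref{lem:CovarianceDecay}, we have for $k\gg \log(\X(p))^2$
\[
\|\tau_p^{\square,k}\|_2^2=\|\tau_p^{\square,k}\|_1\big(kV(p)+O(\log(\X(p))^2\X(p)^{1+\eps/4})\big).
\]
This is the same bookkeeping as in the proof of Proposition \ref{pro:PolynomialApproximation_For_Xi2}, using $|V_{i,j,k}|\le \X(p)^{1+\eps/4}$ from Corollary \ref{lem:BoundVariance}. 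The boundary terms contribute only $O(\ell(p)^2\X(p)^{1+\eps/4})$ per $k$.

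\textbf{Step 2: sum over $k$ and compare with $\tau_{p'}$.} Choose $p'$ with $\X(p')=\X(p)^{1+\eps/4}$. Provided the initialisation at $p'$ is available, Theorem \ref{thm:PointwiseError} and Proposition \ref{pro:L2Error} give $\|\tau_{p,p'}^\square\|_1\asymp\X(p')$ and $\|\tau_{p'}-\tau^\square_{p,p'}\|_2^2\ll \X(p')^2\X(p)^{1-\eps}$. This requires $\X(p')^{1+\eps}\psi_d(\X(p))\ll \X(p)^{3-\eps}$, which holds for small $\eps$.

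Summing Step 1 with weights $\rho^k$, and using \eqref{eq:tau_E_p,infty} together with Lemma \ref{eq:L1Diff_After_Mixing} (so that $E_{p,k}\approx E_{p,\infty}$), gives
\[
\|\tau^\square_{p,p'}\|_2^2\lesssim V(p)\sum_k k\rho^k\|\tau_p^{\square,k}\|_1+\X(p')\X(p)^{1+\eps/2},
\]
with $\sum_k k\rho^k\|\tau^{\square,k}_p\|_1\asymp \X(p')^2/\X(p)$. The small $k$ are handled by Lemma \ref{lem:First_Term_L2_BadBound}.

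\textbf{Step 3: a lower bound on $\|\tau_{p'}\|_2^2$.} Use the argument from the proof that $C'_\xi>0$: by \eqref{eq:initialization}, $\sum_{\|x\|\le r}\tau_{p'}(x)\le C'r^2$. Taking $r=c\X(p')^{1/2}$ gives $\|\tau_{p'}\|_2^2\ge r^2\X(p')/2\gtrsim \X(p')^2$.

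Combining the three steps:
\[
\X(p')^2\lesssim V(p)\,\X(p')^2/\X(p)+\X(p')\X(p)^{1+\eps/2}.
\]
Since $\X(p')=\X(p)^{1+\eps/4}$, the second term is $o(\X(p')^2)$ once $\eps/2<\eps/4+\dots$. If this fails, choose $\X(p')=\X(p)^{1+\eps}$ instead and adjust the Step 2 constraint accordingly. The conclusion is $V(p)\gtrsim\X(p)$, which is more than the required $\X(p)^{1-\eps}$.

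\textbf{Main obstacle: the initialisation at $p'$.} The hypothesis only gives \eqref{eq:initialization} at $p$, not at $p'$. I would try to work with the alternative hypothesis (b) of Theorem \ref{thm:PointwiseError} instead. A pointwise bound $\tau^\square_{p,p'}(x)\le C\langle x\rangle^{2-d}$ is not directly available either. As a fallback, avoid pointwise bounds altogether: the lower bound in Step 3 only needs the truncated sum bound $\sum_{\|x\|\le r}\tau^\square_{p,p'}(x)\lesssim r^2$ applied to $\tau^\square_{p,p'}$ itself. Obtaining that bound would mean iterating Lemma \ref{lem:ExpandSquare} and exploiting the exponential tails of Theorem \ref{thm:ExponentialTail}. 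This is the step I expect to be hardest and possibly to require a more direct Fourier argument via Theorem \ref{thm:FT_Low}.
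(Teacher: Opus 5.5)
\textbf{Gap: the argument needs information at a parameter $p'>p$, which the lemma does not assume.} The lemma assumes \eqref{eq:initialization} only at $p$. Steps 2 and 3 need \eqref{eq:initialization} at some $p'$ with $\X(p')\gg\X(p)$. That hypothesis is what feeds Theorem \ref{thm:PointwiseError}(a), Proposition \ref{pro:L2Error}, and the bound $\|\tau_{p'}\|_2^2\gtrsim\X(p')^2$.

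The first-moment inputs you cite have the same problem. Lemma \ref{lem:Large_k_L1}, Proposition \ref{pro:p_c(p)-p_2} and Lemma \ref{lem:First_Term_L2_BadBound} all assume \eqref{eq:initialization} at a larger parameter.

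There is also an exponent slip. With $\X(p')=\X(p)^{1+\eps/4}$, your error term $\X(p')\X(p)^{1+\eps/2}$ dominates $\X(p')^2$; you need $\X(p')\geq\X(p)^{1+\eps/2+\delta}$. If you add the hypothesis that \eqref{eq:initialization} holds at, say, $\X(p')=\X(p)^{1.1}$, your three steps do go through and even give $V(p)\gtrsim\X(p)$. But that is a different lemma.

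\textbf{None of your proposed repairs closes the gap.}
\begin{itemize}
\item Hypothesis (b) of Theorem \ref{thm:PointwiseError} is a pointwise bound on $\tau^\square_{p,p'}$. It is just as unavailable as \eqref{eq:initialization} at $p'$.
\item Iterating Lemma \ref{lem:ExpandSquare} bounds $\tau_p^{\square,k}$ by $(\tau_p\ast D)^{\ast k}\ast\tau_p$. This has mass $(\Omega\X(p))^k\X(p)$, whereas $\|\tau_p^{\square,k}\|_1=(\Omega\X(p)\lambda_p)^kE_{p,k}$, so you lose a factor $\lambda_p^{-k}$. The series $\sum_k\rho^k(\Omega\X(p))^k$ then diverges as soon as $(p'-p)/(\tilde p_c(p)-p)>\lambda_p$, which is your regime unless $\lambda_p\approx1$.
\item Theorem \ref{thm:ExponentialTail} controls large $\|x\|_2$, not the mass near the origin.
\item More fundamentally, a bound $\sum_{\|x\|_2\le r}\tau^\square_{p,p'}(x)\lesssim r^2$ at $r^2\asymp\|\tau^\square_{p,p'}\|_1$ is an anti-concentration statement for the chain after about $\X(p')/\X(p)$ steps. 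That is essentially the claim that $V(p)$ is not small. In the paper, such bounds at scales $\X(p')\gg\X(p)$ come only from the local CLT (Lemma \ref{lem:LTCL_partial_tau_pp'^k}, Proposition \ref{pro:InductionScheme}), which itself uses this lemma.
\item Theorem \ref{thm:FT_Low} alone cannot give a lower bound on $V(p)$. It only says $\Four[\tau_p^{\square,k}](\kappa)/\|\tau_p^{\square,k}\|_1\approx e^{-k\|\kappa\|_2^2V(p)/(2d)}$, which is consistent with small $V(p)$.
\end{itemize}

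\textbf{The missing idea is an upper bound on the Fourier transform at intermediate frequencies.} This is Theorem \ref{thm:FT_Middle}. It encodes anti-concentration of single increments (Proposition \ref{pro:MiddleRotation}) and uses only \eqref{eq:initialization} at $p$. The paper's proof runs as follows.
\begin{enumerate}
\item Take $k=\lfloor\X(p)^{5\eps}\rfloor$ and $\|\kappa\|_2=\X(p)^{-1/2-2\eps}$.
\item Apply Theorem \ref{thm:FT_Middle} with $\eps'=\eps/10$. Since $k\,\X(p)^{-\eps/10}\min(1,\X(p)\|\kappa\|_\infty^2)\gtrsim\X(p)^{0.9\eps}$, this gives $|\Four[\tau_p^{\square,k}](\kappa)|\le\X(p)^{-100}\|\tau_p^{\square,k}\|_1$.
\item Theorem \ref{thm:FT_Low} puts the same ratio within $\X(p)^{-\eps/3}$ of $e^{-k\|\kappa\|_2^2V(p)/(2d)}$.
\item Hence $e^{-k\|\kappa\|_2^2V(p)/(2d)}\le2\X(p)^{-\eps/3}<e^{-1}$, so $V(p)\ge 2d/(k\|\kappa\|_2^2)=2d\X(p)^{1-\eps}$.
\end{enumerate}
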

\begin{proof} Let $k=\lfloor \X(p)^{5\eps}\rfloor$. Let $\kappa\in [-\pi,\pi]^d$ such that $\|\kappa\|_2=\X(p)^{-1/2-2\eps}$. By Theorem \ref{thm:FT_Middle} with $\eps'=\eps/10$, we obtain
\[
|\Four[\tau_p^{\square,k}](\kappa)| 
\leq K \left(1-\X(p)^{-\eps'}\min(1,\X(p)^{1-1-4\eps}) \right )^{k}\|\tau_p^{\square,k}\|_1 
\leq \X(p)^{-100}\|\tau_p^{\square,k}\|_1,
\]
as long as $\X(p)$ is large enough. On the other hand, by applying Theorem \ref{thm:FT_Low} with $\eps/9$ taking the role of $\eps$, and $\eps'>0$ small enough, we have 
\[
\left |\frac{\Four[\tau_p^{\square,k}](\kappa)}{\|\tau_p^{\square,k}\|_1}-e^{-k\|\kappa\|_2^2V(p)/(2d)} \right | \leq \X(p)^{-100}+\X(p)^{1/2+\eps/9+(5/4)\eps-1/2-2\eps}+\X(p)^{-2-8\eps+15\eps/2+2+\eps/9},
\]
which is smaller than $\X(p)^{-\eps/3}$ whenever $\X(p)$ is large enough. Combining the last two inequalities, we get 
\[ e^{-k\|\kappa\|_2^2V(p)/(2d)} \leq 2\X(p)^{-\eps/3}. \] 
Therefore, $k\|\kappa\|_2^2V(p)/(2d)\geq 1$, which implies $ V(p)\geq 2d\X(p)^{1-\eps}$. Since $\eps>0$ is arbitrary, this concludes the proof.
\end{proof}
To simplify notation, let $\Gauss_0: x \mapsto \1_{x=0}$, and for $x\in \Z^d$ and $v>0$, write
\[ \Gauss_v(x):= (2\pi v/d)^{-d/2}\exp(-(\|x\|^2_2 d)/(2v)).\]
\begin{theorem} \label{thm:LTCL_tau_p^square,k}
Let $C>0,\eps,\eps'>0$. There exists $F>0$ depending only on $d,L,C,\eps,\eps'$ such that for every $p<p_c$ satisfying \eqref{eq:initialization} and $\X(p)\geq F$, for every $\X(p)^{\eps'}\leq k\leq \X(p)^{1/\eps'}$, we have
\[
\left \|\tau_p^{\square,k}/\|\tau_p^{\square,k}\|_1-\Gauss_{kV(p)} \right \|_\infty \leq \X(p)^{\eps}k^{-1/4}(\X(p)k)^{-d/2}.
\]
\end{theorem}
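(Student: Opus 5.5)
The plan is to use Fourier inversion and split the frequency torus into three regimes, using Theorems \ref{thm:FT_Low}, \ref{thm:FT_Middle} and \ref{thm:FT_High} in turn. Write $f_k:=\tau_p^{\square,k}/\|\tau_p^{\square,k}\|_1$. For every $x\in\Z^d$,
\[
f_k(x)-\Gauss_{kV(p)}(x)=(2\pi)^{-d}\int_{[-\pi,\pi]^d}\Big(\Four[f_k](\kappa)-e^{-k\|\kappa\|_2^2V(p)/(2d)}\Big)e^{-\i\langle\kappa,x\rangle}d\kappa+\mathrm{Err}(x),
\]
where $\mathrm{Err}(x)$ is the discrepancy between the continuous Gaussian density $\Gauss_{kV(p)}$ and the inverse transform on the torus of its truncated characteristic function. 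This discrepancy is exponentially small in $1/\sqrt{kV(p)}$ by Poisson summation, with the variance pushed far below $1$ by Lemma \ref{lem:LowerBound_V(p)}. The target is then an $L^1(d\kappa)$ bound on the bracket of order $\X(p)^\eps k^{-1/4}(\X(p)k)^{-d/2}$. The natural scale is $\|\kappa\|_2\approx(kV(p))^{-1/2}$, and $V(p)=\X(p)^{1\pm\eps}$ by Lemmas \ref{lem:SecondMomentBlock} and \ref{lem:LowerBound_V(p)}.

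\textbf{Low frequencies.} Take $\|\kappa\|_2\le r_0:=\X(p)^{\eps_1}(k\X(p))^{-1/2}$ for a small auxiliary $\eps_1$. Here Theorem \ref{thm:FT_Low} bounds the integrand by $\X(p)^{-1/\eps'}+\X(p)^{1/2+\eps}k^{1/4}\|\kappa\|_2+\|\kappa\|_2^4k^{3/2}\X(p)^{2+\eps}$. On this ball the second term is at most $\X(p)^{\eps+\eps_1}k^{-1/4}$, and the third is also $O(\X(p)^{O(\eps)}k^{-1/2})$. The ball has volume $r_0^d=\X(p)^{d\eps_1}(k\X(p))^{-d/2}$. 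Multiplying gives the claimed bound after renaming $\eps$. The term $\X(p)^{-1/\eps'}r_0^d$ is negligible once the constant in the exponent of the hypothesis of Theorem \ref{thm:FT_Low} is chosen large relative to $\eps'$; I would apply that theorem with a smaller parameter, which the hypothesis $k\le\X(p)^{1/\eps'}$ permits. The Gaussian tail $\int_{\|\kappa\|>r_0}e^{-k\|\kappa\|^2V/(2d)}$ is superpolynomially small because $kr_0^2V\ge\X(p)^{2\eps_1-\eps}$.

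\textbf{Middle and high frequencies.} For $r_0\le\|\kappa\|_2$ and $\|\kappa\|_\infty\le\X(p)^{-\eps_2}$, Theorem \ref{thm:FT_Middle} with $\eps'$ replaced by some $\eps_3\ll\eps_1$ gives
\[
|\Four[f_k]|\le K\big(1-\X(p)^{-\eps_3}\min(1,\X(p)\|\kappa\|_\infty^2)\big)^k.
\]
Since $k\X(p)\|\kappa\|_\infty^2\ge k\X(p)r_0^2/d=\X(p)^{2\eps_1}/d$, this is at most $\exp(-c\X(p)^{2\eps_1-\eps_3})$ when the minimum is attained by the second argument. When the minimum equals $1$, the bound is $\exp(-k\X(p)^{-\eps_3})$, which is tiny because $k\ge\X(p)^{\eps'}$ and we choose $\eps_3<\eps'$. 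Either way the contribution is superpolynomially small. For $\|\kappa\|_\infty\ge\X(p)^{-\eps_2}$, Theorem \ref{thm:FT_High} gives $C'(1-c\X(p)^{-2\eps_2})^k\le C'\exp(-ck\X(p)^{-2\eps_2})$, which is again negligible if $2\eps_2<\eps'$.

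The main obstacle is the bookkeeping of exponents. The middle regime needs $k\ge\X(p)^{\eps'}$ to beat $\X(p)^{-\eps_3}$, which forces $\eps_2,\eps_3\ll\eps'$. Meanwhile the low-frequency window must be wide enough to overlap the middle regime; since $\X(p)^{-\eps_2}\gg r_0$, this holds. Finally, the loss $\X(p)^{d\eps_1}$ from the volume of the window must be absorbed into $\X(p)^\eps$. I would first fix $\eps_1=\eps/(2d)$ and then choose $\eps_2,\eps_3$ much smaller than $\min(\eps',\eps_1)$. The second delicate point is the Poisson-summation comparison between the lattice Gaussian and $\Gauss_{kV(p)}$. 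This is standard, because the variance $kV(p)\ge\X(p)^{1-\eps}\gg1$, so the periodisation error decays like $e^{-ckV}$.
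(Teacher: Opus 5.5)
Your proposal is correct and follows essentially the paper's proof. Both use Fourier inversion, with the frequency domain split into a low region handled by Theorem \ref{thm:FT_Low}, a middle region handled by Theorem \ref{thm:FT_Middle}, a high region handled by Theorem \ref{thm:FT_High}, and Gaussian tails controlled via Lemma \ref{lem:LowerBound_V(p)}. Your term $\mathrm{Err}(x)$ is just the paper's integral of the Gaussian over $\|\kappa\|_\infty\geq\pi$, and no Poisson summation is needed, since $\Gauss_{kV(p)}$ is exactly the inverse Fourier transform over $\R^d$ of $e^{-k\|\kappa\|_2^2V(p)/(2d)}$. Your middle/high cutoff $\X(p)^{-\eps_2}$ in place of the paper's $k^{-1/3}$ makes no difference.

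One bookkeeping fix: with $\eps_1=\eps/(2d)$, the bound $kr_0^2V(p)\geq\X(p)^{2\eps_1-\eps}$ gives nothing, because $2\eps_1-\eps<0$. You should apply Lemma \ref{lem:LowerBound_V(p)} with an exponent smaller than $2\eps_1$, which it allows, just as the paper uses $V(p)\geq\X(p)^{1-\eps/10}$ for its Gaussian tails.
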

\begin{proof}
By Fourier inversion, the difference in Theorem \ref{thm:LTCL_tau_p^square,k} is bounded by
\[ \frac{1}{(2\pi)^d} \int_{\kappa\in \R^d} \left |\Four[\tau_p^{\square,k}](\kappa)/\|\tau_p^{\square,k}\|_1\1_{\|\kappa\|_\infty\leq \pi }-e^{-k\|\kappa\|_2^2V(p)/(2d)} \right |\,d\kappa. \]
We split the integral above into the four regions where $\|\kappa\|_\infty$ belongs to $[0, \X(p)^{-1/2+\eps}k^{-1/2})$, $[\X(p)^{-1/2+\eps}k^{-1/2},k^{-1/3})$, $[k^{-1/3},\pi)$, and $[\pi,+\infty)$. We write $\gamma_1,\gamma_2,\gamma_3,\gamma_4$ for the respective integrals.
We further divide $\gamma_2$ into $\gamma^\Four_2:= \int |\Four[\tau_p^{\square,k}](\kappa)/\|\tau_p^{\square,k}\|_1|$ and $\gamma^{\Gauss}_2:=\int e^{-k\|\kappa\|_2^2V(p)/(2d)}$. We treat $\gamma_3\leq \gamma_3^\Four+\gamma_3^{\Gauss}$ similarly.
First, by Theorem \ref{thm:FT_Low}, we have 
\begin{align*} \gamma_1 & \leq \int_{\kappa:\|\kappa\|_\infty\leq \X(p)^{-1/2+\eps}k^{-1/2}} \left(\X(p)^{-2}+ \X(p)^{1/2+\eps}k^{1/4}\|\kappa\|_2+\|\kappa\|_2^4k^{3/2}\X(p)^{2+\eps}\right)\,d\kappa 
  \\ & \leq K(\X(p)k)^{-d/2}\X(p)^{d\eps}\left (\X(p)^{-2}+ \frac{\X(p)^{1/2+\eps}k^{1/4}}{\X(p)^{1/2-\eps}k^{1/2}}+\frac{\X(p)^{2+\eps}k^{3/2}}{ (\X(p)^{1/2-\eps}k^{1/2})^4} \right ),
\end{align*}
for some constant $K$ depending only on $d,L$, using $\int_{\kappa:\|\kappa\|_\infty\leq r} \|\kappa\|_2^n=O(r^{d+n})$. Thus, if $3\eps\leq \eps'$, which we may assume without loss of generality,
\[ \gamma_1\leq 3K (\X(p)k)^{-d/2}\X(p)^{\eps(d+5)} k^{-1/4}.\]
Next, for the part where $\X(p)^{-1/2+\eps}k^{-1/2}\leq \|\kappa\|_\infty\leq k^{-1/3}$, Theorem \ref{thm:FT_Middle} gives, for some constant $K$ depending only on $d,L,C,\eps$ and whenever $\X(p)$ is large enough,
\begin{align*} \gamma_2^\Four & \leq K(1-\X(p)^{-\eps}\min(1,\X(p)\X(p)^{-1+2\eps}k^{-1}) )^k
  \\ & \leq K(1-\X(p)^{\eps}k^{-1})^k
  \\ & \leq K e^{-\X(p)^{\eps}}\leq \X(p)^{-100d/\eps}.
\end{align*}
Next, for the part where $\|\kappa\|_\infty\in[k^{-1/3},\pi]$, Theorem \ref{thm:FT_High} gives, for some constants $K,c>0$ depending only on $d,L,C,\eps$ and whenever $\X(p)$ is large enough,
\[
\gamma_3^\Four \leq K (1-c/k^{2/3})^k\leq \X(p)^{-100d/\eps}. \]
Finally, by elementary integration, using $V(p)\geq \X(p)^{1-\eps/10}$ from Lemma \ref{lem:LowerBound_V(p)}, we have
\[ \gamma^{\Gauss}_2+\gamma^{\Gauss}_3+\gamma_4\leq \X(p)^{-100d/\eps}. \] 
The desired result follows by combining the previous inequalities, since $\eps>0$ is arbitrary.
\end{proof}
We conclude this subsection by showing a variant of Theorem \ref{thm:LTCL_tau_p^square,k}. Recall $E_{p,k}$ and $E_{p,\infty}$ from Section \ref{sec:p_c-p_c}. (The only important fact for later is $E_{p,\infty}=O(\X(p))$ from Lemma \ref{lem:Large_k_L1}.)
\begin{theorem} \label{thm:LTCL_tau_p^square,k_non_renormalized}
Let $C>0,\eps>0$. There exist $F,F'>0$ depending only on $d,L,C,\eps$ such that for every $p<p_c$ with $\X(p)\geq F$, for every $\X(p)^{2\eps}\leq k \leq \X(p)^{2/\eps}$, if \eqref{eq:initialization} is satisfied at $p_2$ with $\X(p_2)= F' \log(\X(p)) \X(p)$, then for every $x\in \Z^d$, we have 
\[
\left \|\tau_p^{\square,k}\left ((\tilde p_c(p)-p)/(1-p)\right)^k-E_{p,\infty}\Gauss_{kV(p)} \right \|_\infty \leq \X(p)^{1+\eps}k^{-1/4}(\X(p)k)^{-d/2}.
\]
\end{theorem}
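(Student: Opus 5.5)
The plan is to combine Theorem \ref{thm:LTCL_tau_p^square,k} with the identity \eqref{eq:tau_E_p,infty}, which gives
\[
\tau_p^{\square,k}\left(\frac{\tilde p_c(p)-p}{1-p}\right)^k=\frac{\tau_p^{\square,k}}{\|\tau_p^{\square,k}\|_1}\,E_{p,k}.
\]
Writing $\tau_p^{\square,k}/\|\tau_p^{\square,k}\|_1=\Gauss_{kV(p)}+R_k$, the quantity to bound splits as
\[
E_{p,k}R_k+(E_{p,k}-E_{p,\infty})\Gauss_{kV(p)}.
\]
The assumption $\X(p)^{2\eps}\leq k\leq \X(p)^{2/\eps}$ is exactly what is needed to apply Theorem \ref{thm:LTCL_tau_p^square,k} with $\eps'=\eps/2$, or a slightly smaller value. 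For any $\eps''>0$, that theorem gives $\|R_k\|_\infty\leq \X(p)^{\eps''}k^{-1/4}(\X(p)k)^{-d/2}$.

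The first step is to bound $E_{p,k}$ uniformly by $O(\X(p))$. By the hypothesis on $p_2$, Lemma \ref{lem:Large_k_L1} gives $E_{p,\infty}\leq K_4\X(p)$. Lemma \ref{eq:L1Diff_After_Mixing} gives $|E_{p,k}-E_{p,\infty}|\leq \X(p)^4c^k$. Since $k\geq \X(p)^{2\eps}$, this difference is at most $\X(p)^{-M}$ for any fixed $M$ once $\X(p)$ is large, because $c^{\X(p)^{2\eps}}$ decays faster than any power. Hence $E_{p,k}\leq 2K_4\X(p)$. The first term is therefore at most $2K_4\X(p)^{1+\eps''}k^{-1/4}(\X(p)k)^{-d/2}$. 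Taking $\eps''<\eps$ and $\X(p)$ large absorbs the constant.

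The second step handles the term $(E_{p,k}-E_{p,\infty})\Gauss_{kV(p)}$. We have $\|\Gauss_{kV(p)}\|_\infty\leq (2\pi kV(p)/d)^{-d/2}$. By Lemma \ref{lem:LowerBound_V(p)}, $V(p)\geq \X(p)^{1-\eps''}$, so $\|\Gauss_{kV(p)}\|_\infty\leq \X(p)^{d\eps''/2}(\X(p)k)^{-d/2}$. Multiplying by $\X(p)^{-M}$ with $M$ large makes this term far smaller than the target, since $k^{-1/4}\geq \X(p)^{-1/(2\eps)}$ on the allowed range.

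The only delicate point is bookkeeping of exponents. The superpolynomial decay from mixing has to beat the polynomial factors $k^{1/4}\leq\X(p)^{1/(2\eps)}$ and $\X(p)^{d\eps''/2}$. That is guaranteed by $k\geq\X(p)^{2\eps}$ together with exponential mixing, so I expect no real obstacle. The conclusion holds for every $x$ because all the bounds are sup-norm bounds.
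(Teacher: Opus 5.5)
Your proposal is correct and follows the paper's proof essentially verbatim: the paper also rewrites $\tau_p^{\square,k}((\tilde p_c(p)-p)/(1-p))^k$ via \eqref{eq:tau_E_p,infty}, multiplies the estimate of Theorem \ref{thm:LTCL_tau_p^square,k} by $E_{p,k}$, and handles the remaining terms with Lemma \ref{eq:L1Diff_After_Mixing}, the bound $E_{p,\infty}\leq K_4\X(p)$ from Lemma \ref{lem:Large_k_L1}, and the lower bound on $V(p)$ from Lemma \ref{lem:LowerBound_V(p)}. The one point you leave implicit, as the paper also does, is that \eqref{eq:initialization} at $p$, which these results require, follows from \eqref{eq:initialization} at $p_2>p$ because $\tau_p\leq\tau_{p_2}$ pointwise.
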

\begin{proof} Recall from \eqref{eq:tau_E_p,infty} that $\|\tau_p^{\square,k}\|_1=((1-p)/(\tilde p_c(p)-p))^kE_{p,k}$. Thus, since $|E_{p,k}-E_{p,\infty}|\leq \X(p)^4c^k$ by Lemma \ref{eq:L1Diff_After_Mixing}, since $E_{p,\infty}\leq K_4\X(p)$ by Lemma \ref{lem:Large_k_L1}, and since $V(p)\geq \X(p)^{1/2-\eps/d}$ by Lemma \ref{lem:LowerBound_V(p)}, it suffices to multiply the inequality in Theorem \ref{thm:LTCL_tau_p^square,k} by $E_{p,k}$.
\end{proof}
We end the section by proving a convenient sub-gamma tail for $\tau_{p}^{\square,k}$. 
\begin{lemma} \label{lem:gamma_tail_tau_pk} Let $C>0,\eps>0$. There exists $F>0$ depending on $d,L,C,\eps$ such that for every $p$ satisfying \eqref{eq:initialization} and $\X(p)\geq F$, for every $k\geq 0$, for every $x\in \Z^d$, we have
\[ \left (\frac{\tilde p_c(p)-p}{1-p} \right )^k \tau_p^{\square,k}(x) \leq \X(p)^9 \exp\left ( -\frac{\| x\|_2}{\X(p)^{1/2+\eps}} \min \left (1, \frac{\| x\|_2}{k\X(p)^{1/2}}\right )+k\X(p)^{-4}\right ), \]
with the convention $\langle x\rangle/(k\X(p)^{1/2})=\infty$ when $k=0$.
\end{lemma}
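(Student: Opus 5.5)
We prove the bound by an exponential Chebyshev (Laplace transform) argument, reusing the machinery of Subsection~\ref{sec:ExpTail}, now with the optimal choice of $\kappa$ depending on both $\|x\|_2$ and $k$. Fix $x$ and take $\kappa$ collinear to $x$. By Markov's inequality,
\[
\Big(\tfrac{\tilde p_c(p)-p}{1-p}\Big)^k\tau_p^{\square,k}(x)\le \Big(\tfrac{\tilde p_c(p)-p}{1-p}\Big)^k e^{-\langle\kappa,x\rangle}\Lap_{\kappa}(\tau_p^{\square,k}).
\]
By \eqref{eq:def_pcp}, $\big(\frac{\tilde p_c(p)-p}{1-p}\big)^k=(\Omega\X(p)\lambda_p)^{-k}$. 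Lemma~\ref{lem:AllStepExpTail} (for $k\ge1$) then bounds the right-hand side by
\[
\X(p)^7\exp\big(-\langle\kappa,x\rangle+k\|\kappa\|_2^2 l(p)^3\X(p)^{1+2\eps_0}\big),
\]
where we apply that lemma with $l(p)\kappa$ in place of $\kappa$, and $\eps_0$ is a small multiple of $\eps$. This application is valid as long as $l(p)\kappa$ satisfies \eqref{eq:Def_Kappa}. The hypothesis \eqref{eq:ExpDecay} required there is supplied by Theorem~\ref{thm:ExponentialTail} once $\X(p)$ is large.

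We now optimise over $\|\kappa\|_2=s$, subject to $s\le s_{\max}:=\X(p)^{-1/2-\eps}$. We absorb $l(p)=O(\log\X(p))$ and the factor $4d\log\X(p)$ of \eqref{eq:Def_Kappa} into the spare power $\X(p)^{\eps-\eps_0}$. There are two regimes.
\begin{itemize}
\item If $\|x\|_2\le k\X(p)^{1/2}$, take $s=\|x\|_2/(k\X(p)^{1+\eps})$, which is at most $s_{\max}$. The exponent is then at most $-\|x\|_2^2/(k\X(p)^{1+\eps})+\|x\|_2^2/(2k\X(p)^{1+\eps})$ up to the absorbed logs. This gives the Gaussian branch $\frac{\|x\|_2}{\X(p)^{1/2+\eps}}\cdot\frac{\|x\|_2}{k\X(p)^{1/2}}$, after adjusting constants by shrinking $\eps_0$.
\item If $\|x\|_2> k\X(p)^{1/2}$, take $s=s_{\max}$. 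The quadratic term is at most $k\X(p)^{-2\eps+2\eps_0}\,\mathrm{polylog}\le \|x\|_2\X(p)^{-1/2-\eps}/2$. This yields the linear branch $\|x\|_2/\X(p)^{1/2+\eps}$.
\end{itemize}

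The lower constraint $\|l(p)\kappa\|_2\ge\X(p)^{-3}$ in \eqref{eq:Def_Kappa} fails when $\|x\|_2$ is small relative to $k\X(p)$. In that case we instead take $\kappa=0$, which is always permitted: $\|\tau_p^{\square,k}\|_1=(\Omega\X(p)\lambda_p)^kE_{p,k}$ by \eqref{eq:tau_E_p,infty}. Also $E_{p,k}\le \X(p)^3$ by Lemma~\ref{lem:BadLowerTcut} (or via Lemmas~\ref{lem:Gamma} and~\ref{lem:vpd}). The claimed exponent is then at most $1+k\X(p)^{-4}$, since $\|x\|_2^2/(k\X(p)^{1+\eps})\le 1$ up to constants. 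Any small lost factor is absorbed either by $e^{k\X(p)^{-4}}$, when $k$ is huge, or by the room between $\X(p)^7$ and $\X(p)^9$. The case $k=0$ is \eqref{eq:ExpDecay} directly, since $\tau_p^{\square,0}=\tau_p$.

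The main obstacle is reconciling the constants. The exponent carries $l(p)^3$ and $\X(p)^{2\eps_0}$ factors, and the statement has no explicit constant in front of $\frac{\|x\|_2}{\X(p)^{1/2+\eps}}$. We handle this by running the argument with $\eps_0=\eps/4$ and using $\X(p)^{\eps/2}\gg \mathrm{polylog}$ to turn the logarithmic losses into the clean factor $\X(p)^{-\eps}$. The term $k\X(p)^{-4}$ serves as a safety margin: it absorbs the residual $k s^2\X(p)^{1+2\eps_0}l(p)^3$ whenever that residual is not dominated by the main term, which happens when $s$ is forced to the endpoints of \eqref{eq:Def_Kappa}.
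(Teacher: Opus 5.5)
Your argument is essentially the paper's: an exponential Chebyshev bound through Lemma~\ref{lem:AllStepExpTail}, with $\kappa$ collinear to $x$ and $\|\kappa\|_2\asymp \X(p)^{-1/2-\eps}\min(1,\|x\|_2/(k\X(p)^{1/2}))$. The one structural difference concerns the lower constraint in \eqref{eq:Def_Kappa}. The paper enforces it by setting $\|\kappa\|_2=\max(\cdot,\X(p)^{-3})$ and absorbing the resulting quadratic term into $e^{k\X(p)^{-4}}$. Your fallback to $\kappa=0$, combined with $E_{p,k}\le \X(p)^3$ from Lemma~\ref{lem:BadLowerTcut}, works just as well.

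There is one slip in the constants that you should fix. With your choices $s=\|x\|_2/(k\X(p)^{1+\eps})$ and $s=\X(p)^{-1/2-\eps}$, the exponent you actually obtain is only a fraction of the target. As written it is $1/2$ of the target, and even computed sharply it is only $1-o(1)$. Shrinking $\eps_0$ cannot close this, because the shortfall is unbounded in $\|x\|_2$ and so cannot be absorbed into the polynomial prefactor. The fix is to double $s$, which is exactly the factor $2$ in the paper's $\|\kappa\|_2$. Then the quadratic term $ks^2l(p)^3\X(p)^{1+2\eps_0}$ is at most $s\|x\|_2/2$ in both regimes, and you recover the exponent $-\frac{\|x\|_2}{\X(p)^{1/2+\eps}}\min(1,\cdot)$ exactly. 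Alternatively, run the whole argument with $\eps/2$ in place of $\eps$ and use the spare factor $\X(p)^{\eps/2}$.
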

\begin{proof} The case $k=0$ holds by Theorem \ref{thm:ExponentialTail}. 
  Let $k\geq 1$. Let $\kappa\in \R^d$ be collinear to $x$ such that
  \begin{equation}
  \|\kappa\|_2:=\max(2\X(p)^{-1/2-\eps}\min(1,
  \| x\|_2/(k\X(p)^{1/2})),\X(p)^{-3}). \label{eq:kappa_for_gamma}
  \end{equation}
By using Lemma \ref{lem:AllStepExpTail} with $\eps'=\eps/100$, we have
\[
\sum_{y\in \Z^d} \tau_{p}^{\square,k}(y)e^{\langle \kappa, y\rangle} \leq \X(p)^7 (\Omega\X(p)\lambda_p)^{k-1} (1+\|\kappa\|_2^2\X(p)^{1+\eps/10})^{k-1}.
\]
By keeping only the term $y=x$ in the above sum, and using $\Omega \X(p)\lambda_p=((1-p)/(\tilde p_c(p)-p))$, and $1+z\leq e^z$ for $z\in \R_+$, we get 
\[ ((\tilde p_c(p)-p)/(1-p))^k \tau_p^{\square,k}(x) e^{\langle\kappa,x\rangle} \leq \Omega \lambda_p \X(p)^8 e^{k\|\kappa\|_2^2 \X(p)^{1+\eps/10}}.\]  
Since $\lambda_p\leq 1$, it remains to upper bound $e^{k\|\kappa\|^2_2\X(p)^{1+\eps/10}-\langle \kappa,x\rangle}$. The factor $e^{k\X(p)^{-4}}$ handles the case where $\|\kappa\|_2=\X(p)^{-3}$. For the other case, we have, if $\X(p)$ is large enough,
\[ 2k\|\kappa\|^2_2\X(p)^{1+\eps/10} \leq \|\kappa\|_2\|x\|_2=\langle \kappa,x\rangle. \]
Thus $e^{k\|\kappa\|^2_2\X(p)^{1+\eps/10}-\langle \kappa,x\rangle}\leq e^{-\|\kappa\|_2\|x\|_2/2}$, and the desired bound follows from \eqref{eq:kappa_for_gamma}.
\end{proof}
\subsection{Local central limit estimates for $\tau_{p,p'}^\square$} \label{seq:TCL_square_2}

We first show a local central limit estimate for $\tau_{p,p'}^\square=\sum_{k\geq 0}((p'-p)/(1-p))^k\tau_p^{\square,k}$. Naturally, we cannot have a full LTCL, as we cannot have a TCL for $\tau_p^{\square,k}$ when $k$ is small. So we focus on the case $\|x\|_2\gg \X(p)^{1/2}$. 
We approximate $\tau_{p,p'}^\square$ by the Green function of a Brownian motion killed at an exponential time, given for $v>0, \beta\in [0,1]$ by
\[ \Gauss_{v,\beta}(x):= \int_0^\infty \beta^t \Gauss_{vt}(x) dt. \]
We also write $C^\Gauss_v>0$ for the constant such that, whenever $x\neq 0$, we have $\Gauss_{v,1}(x)=C^\Gauss_v\langle x\rangle^{2-d}$.
\begin{lemma} \label{lem:LTCL_partial_tau_pp'^k} Let $C>0,\eps>0$. There exist $F,F'>0$ depending only on $d,L,C,\eps$ such that for every $p<p_c$ with $\X(p)\geq F$, if \eqref{eq:initialization} is satisfied at $p_2$ with $\X(p_2)= F' \log(\X(p)) \X(p)$, then for every $p< p'<\tilde p_c(p)$ such that $(p'-p)/(\tilde p_c(p)-p)<1-1/\X(p)^3$, and every  $x\in \Z^d$ with $\langle x\rangle\geq \X(p)^{1/2+\eps}$, we have
\[
\left |\tau_{p,p'}^\square(x) - E_{p,\infty}\Gauss_{V(p),(p'-p)/(\tilde p_c(p)-p)}(x) \right | \leq \X(p)^\eps \left (\langle x\rangle^2/\X(p)\right )^{-1/4}\langle x\rangle^{2-d} .
\]
\end{lemma}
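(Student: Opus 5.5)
Write $\beta:=(p'-p)/(\tilde p_c(p)-p)\in[0,1-\X(p)^{-3})$. By definition of $\tilde p_c(p)$ in \eqref{eq:def_pcp} we have $((p'-p)/(1-p))^k=\beta^k((\tilde p_c(p)-p)/(1-p))^k$, so
\[
\tau_{p,p'}^\square(x)=\sum_{k\geq 0}\beta^k\left(\frac{\tilde p_c(p)-p}{1-p}\right)^k\tau_p^{\square,k}(x).
\]
I would compare this term by term with the Riemann sum $E_{p,\infty}\sum_{k\geq 0}\beta^k\Gauss_{kV(p)}(x)$, and then compare that sum with the integral $E_{p,\infty}\Gauss_{V(p),\beta}(x)=E_{p,\infty}\int_0^\infty\beta^t\Gauss_{V(p)t}(x)\,dt$. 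Set $r:=\langle x\rangle\geq\X(p)^{1/2+\eps}$. The natural time scale is $k_x:=r^2/V(p)$, which lies between $r^2\X(p)^{-1-\eps/10}$ and $r^2\X(p)^{-1+\eps/10}$ by Lemmas \ref{lem:SecondMomentBlock} and \ref{lem:LowerBound_V(p)}. Throughout, the target error is $\X(p)^{\eps}(r^2/\X(p))^{-1/4}r^{2-d}$.

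I split the range of $k$ into three parts.
\begin{itemize}
\item[(i)] \emph{Small $k$, say $k\leq \X(p)^{-\eps/2}r^2/\X(p)$.} Lemma \ref{lem:gamma_tail_tau_pk} applies with $\eps/4$. In this range $\|x\|_2/(k\X(p)^{1/2})\geq \X(p)^{\eps/2}r/\X(p)^{1/2}$, so each term is at most $\X(p)^9\exp(-c\X(p)^{\eps/4}\min(r/\X(p)^{1/2},\,\dots))$. Summed over polynomially many $k$, this is negligible, and the Gaussian side is likewise negligible.
\item[(ii)] \emph{Moderate $k$, between $\X(p)^{-\eps/2}r^2/\X(p)$ and $\X(p)^{2/\eps}$.} Since $r\geq\X(p)^{1/2+\eps}$, we have $k\geq\X(p)^{2\eps}$ for $\eps$ small (adjusting constants), so Theorem \ref{thm:LTCL_tau_p^square,k_non_renormalized} applies and gives the pointwise error $\X(p)^{1+\eps}k^{-1/4}(\X(p)k)^{-d/2}$. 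Summing over $k\geq \X(p)^{-\eps/2}r^2/\X(p)$ with weight $\beta^k\leq 1$ gives
\[
\X(p)^{1-d/2+\eps}\sum_{k\geq k_0}k^{-1/4-d/2}\approx \X(p)^{1-d/2+\eps}k_0^{3/4-d/2}.
\]
With $k_0\asymp r^2\X(p)^{-1-\eps/2}$ this is of order $\X(p)^{O(\eps)}r^{2-d}(r^2/\X(p))^{-1/4}$, which is the claimed bound after renaming $\eps$.
\item[(iii)] \emph{Large $k>\X(p)^{2/\eps}$.} Here the bound $\beta^k\leq e^{-k\X(p)^{-3}}$ does not help on its own. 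Instead I bound both sides pointwise: $\Gauss_{kV}(x)\leq (kV)^{-d/2}$, and $(\tilde p_c-p)^k(1-p)^{-k}\tau_p^{\square,k}(x)$ is handled by the LTCL $L^\infty$ bound where available. The simplest route is the crude estimate $\|\tau_p^{\square,k}\|_\infty$ times its normalisation, bounded via the $k=\X(p)^{2/\eps}$ block convolved with the rest, using Lemma \ref{lem:ExpandSquare}-type splitting $\tau_p^{\square,k}\leq\tau_p^{\square,j}\ast D\ast\tau_p^{\square,k-j-1}$ together with the $L^\infty$ bound at $j\approx\X(p)^{2/\eps}$. Both tails give $\sum_{k>\X(p)^{2/\eps}}k^{-d/2}\X(p)^{O(1)}$, which is tiny because $d/2>3$.
\end{itemize}

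Finally I compare the Riemann sum with the integral. On $t\geq k_0$, the map $t\mapsto\beta^t\Gauss_{V(p)t}(x)$ has derivative bounded by $\beta^t\Gauss_{Vt}(x)(|\log\beta|+t^{-1}+r^2/(Vt^2))$. Summing this over unit intervals gives an error of relative size $\max(1/k_0,|\log \beta|)$ compared with $\Gauss_{V,\beta}$. The $t^{-1}$ and $r^2/(Vt^2)$ contributions are of order $\X(p)^{O(\eps)}r^{-d}\X(p)$, which is acceptable since $r^{-2}\X(p)\leq (r^2/\X(p))^{-1/4}$ for $r^2\geq\X(p)$. The $|\log\beta|$ contribution is harmless via $\sum_k|\beta^{k+1}-\beta^k|\Gauss\leq \sup\Gauss$ by monotonicity. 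The region $t<k_0$ of the integral is negligible, as in (i).

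I expect step (iii) to be the main obstacle. For huge $k$ the block LTCL is unavailable, so a uniform $\|\cdot\|_\infty$ bound on $\tau_p^{\square,k}$ decaying like $(\X(p)k)^{-d/2}$ has to be extracted. I would obtain it from the $k'=\X(p)^{2/\eps}$ LTCL applied to the last block and Markov-chain mixing (Proposition \ref{pro:MixingTimeFunc}) to decouple it from the preceding steps.
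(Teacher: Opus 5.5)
There is a genuine gap. You never use the hypothesis $\beta:=(p'-p)/(\tilde p_c(p)-p)<1-\X(p)^{-3}$: in (ii) you take $\beta^k\le 1$, and in (iii) you explicitly discard $\beta^k\le e^{-k\X(p)^{-3}}$. Without this hypothesis your bounds are not uniform over all $x$ with $\langle x\rangle\ge\X(p)^{1/2+\eps}$, which the statement requires.

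\textbf{Step (iii).} Once $r^2/V(p)>\X(p)^{2/\eps}$, the bulk of both $\tau^\square_{p,p'}(x)$ and $E_{p,\infty}\sum_k\beta^k\Gauss_{kV(p)}(x)$ sits at $k>\X(p)^{2/\eps}$. If you bound $\beta^k\le 1$ there, each side is of order $r^{2-d}$, far above the allowed error $\X(p)^{\eps}(r^2/\X(p))^{-1/4}r^{2-d}$, so these terms cannot be treated as errors. Your substitute bound $\X(p)^{O(1)}\sum_{k>\X(p)^{2/\eps}}k^{-d/2}$ does not depend on $x$, and it exceeds the target once $r\ge\X(p)^{1/\eps}$, say. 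Moreover, the claimed $(\X(p)k)^{-d/2}$ decay for arbitrary $k$ is not what the splitting $\tau_p^{\square,k}\le\tau_p^{\square,j}\ast D\ast\tau_p^{\square,k-j-1}$ delivers. With $j=\X(p)^{2/\eps}$ it gives a bound of order $\X(p)^{O(1)}(\X(p)j)^{-d/2}$, independent of $k$, and its sum over $k$ diverges without $\beta$. Mixing does not change this, since the LTCL is only available for the block of length $j$.

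\textbf{Step (i).} The same problem appears. Lemma \ref{lem:gamma_tail_tau_pk} carries a factor $e^{+k\X(p)^{-4}}$, which you dropped. For $k\approx k_0:=\X(p)^{-\eps/2}r^2/\X(p)$ this factor overwhelms $e^{-\X(p)^{\eps/4}}$ as soon as $r^2\ge\X(p)^{5+\eps}$. Even without that factor, on $k\in[k_0/2,k_0]$ the exponent is only between $\X(p)^{\eps/4}$ and $2\X(p)^{\eps/4}$. The bound you obtain is then of order $k_0\X(p)^9e^{-2\X(p)^{\eps/4}}$, which grows like $r^2$. (Also, the inequality you wrote there is inverted. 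The correct one is $\|x\|_2/(k\X(p)^{1/2})\ge\X(p)^{1/2+\eps/2}/r$, which still gives exponent at least $\X(p)^{\eps/4}$.)

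\textbf{The fix.} Keep $\beta^k\le e^{-k\X(p)^{-3}}$. It absorbs the $e^{k\X(p)^{-4}}$ in Lemma \ref{lem:gamma_tail_tau_pk}, and for $k>\X(p)^{100}$ it is at most $e^{-\X(p)^{97}}$; your cutoff $\X(p)^{2/\eps}$ also works when $\eps<1/50$. Combine it with the spatial factor of Lemma \ref{lem:gamma_tail_tau_pk} on the $\tau$ side, and with $e^{-d\|x\|_2^2/(2kV(p))}$ on the Gaussian side. Then the small-$k$ and large-$k$ ranges both contribute $O(\X(p)^{-100}\langle x\rangle^{3/2-d})$, uniformly in $x$, and no uniform $L^\infty$ bound for huge $k$ is needed.

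This is exactly the paper's proof. It splits at $\langle x\rangle^2\X(p)^{-1-\eps}$ and at $\X(p)^{100}$, and uses Theorem \ref{thm:LTCL_tau_p^square,k_non_renormalized} in the middle range just as in your (ii). It handles the sum--integral comparison more simply than your derivative estimate: since $t\mapsto\beta^t\Gauss_{V(p)t}(x)$ is unimodal, the error is at most $2E_{p,\infty}\max_t\beta^t\Gauss_{V(p)t}(x)=O(\X(p)\langle x\rangle^{-d})$.
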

\begin{proof} For $v>0,\beta>0$, let $S_{v,\beta}(x)=\sum_{k=0}^\infty \beta^k \Gauss_{kv}(x)$. By a sum-integral comparison, for every $v,\beta$, we have
  \[ |\Gauss_{v,\beta}(x)-S_{v,\beta}(x)|\leq 2\max_{t\in \R_+} \beta^t \Gauss_{vt}(x)=O(\langle x\rangle^{-d}). \]
  Thus, since $E_{p,\infty}\leq K_4\X(p)$ for some $K_4>0$ depending only on $d,L,C$ by Lemma \ref{lem:Large_k_L1}, it suffices to show the lemma with $\Gauss$ replaced by $S$.
  
  Fix $\beta=(p'-p)/(\tilde p_c(p)-p)$. By definition of $\tau_{p,p'}^\square$, we have 
  \[ |\tau_{p,p'}^\square(x)- E_{p,\infty}S_{V(p),\beta}(x)|\leq \sum_{k=0}^\infty \beta^k \left |\tau_p^{\square,k}(x)\left ((\tilde p_c(p)-p)/(1-p)\right)^k-E_{p,\infty}\Gauss_{kV(p)}(x) \right |. \]
We split the above sum into three parts according to whether $k\leq \langle x\rangle^2 \X(p)^{-1-\eps}$, or $k> \langle x\rangle^2 \X(p)^{-1-\eps}$ and $k\leq \X(p)^{100}$, or $k> \langle x\rangle^2 \X(p)^{-1-\eps}$ and $k> \X(p)^{100}$.
We write $S_1, S_2, S_3$ for the corresponding sums. We start with $S_2$, which gives the main error term. By Theorem \ref{thm:LTCL_tau_p^square,k_non_renormalized},
\begin{align*} S_2 & \leq \sum_{k\geq \langle x\rangle^2 \X(p)^{-1-\eps}} \X(p)^{1+\eps} k^{-1/4} (\X(p)k )^{-d/2}
\\  & \leq K \X(p)^{1+\eps-d/2}(\langle x\rangle^2 \X(p)^{-1-\eps})^{3/4-d/2}
\\  & = K \X(p)^{\eps(1/4+d/2)}(\langle x\rangle^2/\X(p))^{-1/4}\langle x\rangle^{2-d}
\end{align*}
  where the second line uses $\sum_{k\geq r} k^{-1/4-d/2} \leq Kr^{1-1/4-d/2}$ for some constant $K$ depending on $d$.

By Lemma \ref{lem:Large_k_L1}, we have $E_{p,\infty}\leq K_4\X(p)$ for some $K_4>0$ depending only on $d,L,C$. Then, Lemma \ref{lem:V(p)} gives $V(p)\leq \X(p)^{1+\eps/10}$. Thus, using a standard Gaussian tail bound,
    \[
  E_{p,\infty}\sum_{0\leq k\leq \langle x\rangle^2\X(p)^{-1-\eps}}
  \beta^k\Gauss_{kV(p)}(x)
  \leq \X(p)^{-100}\langle x\rangle^{3/2-d}.
  \]
  Moreover, by Lemma \ref{lem:gamma_tail_tau_pk} and $\beta=(p'-p)/(\tilde p_c(p)-p)\leq 1-1/\X(p)^3$, we have for every $0\leq k\leq \langle x\rangle^2\X(p)^{-1-\eps}$, whenever $\X(p)$ is large enough,
  \[   \beta^k\left(\frac{\tilde p_c(p)-p}{1-p}\right)^k\tau_p^{\square,k}(x) \leq e^{-\X(p)^{\eps/10}-\langle x\rangle/((k+1)\X(p)^2)} e^{-k\X(p)^{-4}}.\]
Summing this inequality over $k$, we again obtain a global bound smaller than $\X(p)^{-100}\langle x\rangle^{3/2-d}$.
Thus,
  \[
  S_1 \leq 2 \X(p)^{-100}\langle x\rangle^{3/2-d} .
  \]
Similarly, with the only difference that now $\beta$ gives the main exponential decay, we have
  \[
  S_3 \leq 2 \X(p)^{-100}\langle x\rangle^{3/2-d}.
  \]
Summing the bounds on $S_1,S_2,S_3$ concludes the proof, since $\eps>0$ is arbitrary.
\end{proof}
The following lemma will allow us to upper-bound $\tau_{p,p'}^\square(x)$ when $\langle x\rangle$ is small. This will be relevant in the proof of Theorem \ref{thm:Main} when applying the pointwise estimates in Theorem \ref{thm:PointwiseError}.
\begin{lemma} \label{lem:Future_diff_trick}Let $C>0,\eps,\eps'>0$. Let $F,F'>0$ be large enough depending only on $d,L,C,\eps,\eps'$. Let $p<p_c$ with $\X(p)\geq F$. Assume that \eqref{eq:initialization} is satisfied at $p_2$ with $\X(p_2)= F' \log(\X(p)) \X(p)$. Let $p< p'<p''<\tilde p_c(p)$ with $(p''-p)/(\tilde p_c(p)-p)<1-1/\X(p)^3$. Let 
  $\gamma=(p'-p)/(p''-p')$. If $\gamma\geq \X(p)^{\eps'}$, then for every $x\in \Z^d$ with $\langle x\rangle^2\leq \X(p) \gamma^{(d-4)/(2d-4)}$
  we have 
\[ \tau_{p,p''}^\square(x)-\tau_{p,p'}^\square(x) \leq 2\gamma^{-1/2}\tau_{p,p'}^\square(x)+(2\gamma^{-1/2}\X(p)^\eps+\X(p)^{-100})\langle x\rangle^{2-d}. \]
\end{lemma}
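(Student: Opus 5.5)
Write $\rho'=(p'-p)/(1-p)$, $\rho''=(p''-p)/(1-p)$, $\beta'=(p'-p)/(\tilde p_c(p)-p)$, $\beta''=(p''-p)/(\tilde p_c(p)-p)$, and $T_k(x):=((\tilde p_c(p)-p)/(1-p))^k\tau_p^{\square,k}(x)\ge 0$. The plan is to work term by term in $k$:
\[
\tau^\square_{p,p''}(x)-\tau^\square_{p,p'}(x)=\sum_{k\ge 1}\big((\beta'')^k-(\beta')^k\big)T_k(x),
\qquad
(\beta''/\beta')^k=\Big(1+\tfrac1\gamma\Big)^k .
\]
The ratio $(\beta''/\beta')^k$ is at most $1+2\gamma^{-1/2}$ as long as $k\le k_0:=\lfloor\gamma^{1/2}\rfloor$, because $(1+1/\gamma)^{k}\le e^{k/\gamma}\le e^{\gamma^{-1/2}}$. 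The small-$k$ range therefore contributes at most $2\gamma^{-1/2}\sum_{k\le k_0}(\beta')^kT_k(x)\le 2\gamma^{-1/2}\tau^\square_{p,p'}(x)$. This is exactly the first term of the claimed bound.

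For $k>k_0$ we only use $\big((\beta'')^k-(\beta')^k\big)T_k(x)\le(\beta'')^kT_k(x)$. What remains is to show
\[
\sum_{k>k_0}(\beta'')^kT_k(x)\le\big(2\gamma^{-1/2}\X(p)^\eps+\X(p)^{-100}\big)\langle x\rangle^{2-d}
\]
whenever $\langle x\rangle^2\le\X(p)\gamma^{(d-4)/(2d-4)}$. We split this tail into two pieces.

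\emph{Range $k_0<k\le\X(p)^{2/\eps}$.} Since $k_0\ge\X(p)^{\eps'/2}-1$, Theorem \ref{thm:LTCL_tau_p^square,k_non_renormalized} applies once $\eps$ is small compared with $\eps'$. It gives
\[
T_k(x)\le E_{p,\infty}\Gauss_{kV(p)}(x)+\X(p)^{1+\eps}k^{-1/4}(\X(p)k)^{-d/2}.
\]
Using $E_{p,\infty}\le K_4\X(p)$ (Lemma \ref{lem:Large_k_L1}), $V(p)\ge\X(p)^{1-\eps}$ (Lemma \ref{lem:LowerBound_V(p)}) and $\Gauss_v\le Kv^{-d/2}$, both terms are bounded by $\X(p)^{1+2\eps}(\X(p)k)^{-d/2}$. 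Summing over $k>k_0$ then gives
\[
\X(p)^{1+2\eps-d/2}\,k_0^{1-d/2}\approx \X(p)^{1+2\eps-d/2}\,\gamma^{(2-d)/4}.
\]
We compare this with $\gamma^{-1/2}\X(p)^{\eps}\langle x\rangle^{2-d}$. Since $\langle x\rangle^{d-2}\le\X(p)^{(d-2)/2}\gamma^{(d-4)/4}$, we get
\[
\X(p)^{1-d/2}\gamma^{(2-d)/4}\langle x\rangle^{d-2}\le\gamma^{(2-d)/4+(d-4)/4}=\gamma^{-1/2}.
\]
This is exactly why the exponent $(d-4)/(2d-4)$ appears in the hypothesis, and it yields a bound of the form $\gamma^{-1/2}\X(p)^{O(\eps)}\langle x\rangle^{2-d}$. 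We then rename $\eps$.

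\emph{Range $k>\X(p)^{2/\eps}$.} Here we use Lemma \ref{lem:gamma_tail_tau_pk}, which gives $T_k(x)\le\X(p)^9e^{k\X(p)^{-4}}$, together with the assumption $\beta''\le1-\X(p)^{-3}$. The product decays like $e^{-k\X(p)^{-3}/2}$, so this tail is at most $\X(p)^{-200}$. Since $\langle x\rangle\le\X(p)^{1/2}\gamma^{1/4}$ and $\gamma\le\X(p)$ is polynomially bounded (because $p'-p\le C'/\X(p)$ while $p''-p'$ cannot be too small under $\beta''\le 1-\X(p)^{-3}$; if this fails, we handle it separately), this is at most $\X(p)^{-100}\langle x\rangle^{2-d}$. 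Alternatively, we can simply note that the $\gamma$-restriction only bounds $\langle x\rangle$ polynomially in $\X(p)$.

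The main obstacle is the middle range. There, the local CLT error and the Gaussian upper bound must be summed carefully, and the exponent bookkeeping in the hypothesis on $\langle x\rangle$ must match so as to produce exactly $\gamma^{-1/2}$. A secondary issue is making sure $k_0$ lies in the range where Theorem \ref{thm:LTCL_tau_p^square,k_non_renormalized} applies. We handle this by choosing the auxiliary $\eps$ there at most $\eps'/4$, so that $\X(p)^{2\eps}\le k_0$.
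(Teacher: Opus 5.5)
Your first two ranges are the paper's argument. The paper also cuts at $k\approx\sqrt{\gamma}$, compares that part with $\tau^\square_{p,p'}(x)$ via $(1+1/\gamma)^{\sqrt{\gamma}}\le 1+2\gamma^{-1/2}$, and applies Theorem \ref{thm:LTCL_tau_p^square,k_non_renormalized} on $\sqrt{\gamma}\le k\le \X(p)^{100}$. This gives $\X(p)^{1-d/2+O(\eps)}\gamma^{1/2-d/4}$, which is converted by the same exponent identity you found. The gap is in your last range.

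\textbf{The gap.} For $k>\X(p)^{2/\eps}$ you drop the spatial factor of Lemma \ref{lem:gamma_tail_tau_pk} and get an $x$-independent bound $\X(p)^{-200}$. Turning this into $\X(p)^{-100}\langle x\rangle^{2-d}$ requires $\langle x\rangle$ to be polynomially bounded in $\X(p)$, which you justify by saying $\gamma$ is. That claim is false. The condition $\beta''\le 1-\X(p)^{-3}$ bounds $p''$ from above but gives no lower bound on $p''-p'$. So $\gamma=(p'-p)/(p''-p')$ can be arbitrarily large, and then so can the admissible $\langle x\rangle\le \X(p)^{1/2}\gamma^{(d-4)/(4d-8)}$. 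This is also precisely the regime where your middle range may be empty ($k_0>\X(p)^{2/\eps}$), so the entire tail $k>k_0$ rests on this step. ``Handle it separately'' therefore leaves the lemma unproved for large $\gamma$. In the paper's only use of the lemma, Proposition \ref{pro:InductionScheme}, $\gamma$ is polynomial in $\X(p)$, but the lemma is stated without that restriction.

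\textbf{The fix.} Keep the spatial factor and trade it against the $\beta''$-decay, as in the tail estimates of Lemma \ref{lem:LTCL_partial_tau_pp'^k} that the paper invokes. Since $(\beta'')^k e^{k\X(p)^{-4}}\le e^{-k\X(p)^{-3}/2}$, there are two cases.
\begin{itemize}
\item If $k\le \|x\|_2\X(p)^{-1/2}$, the minimum in Lemma \ref{lem:gamma_tail_tau_pk} equals $1$ and gives the factor $e^{-\|x\|_2\X(p)^{-1/2-\eps}}$.
\item If $k>\|x\|_2\X(p)^{-1/2}$, then $k\X(p)^{-3}/4\ge \|x\|_2\X(p)^{-7/2}/4$.
\end{itemize}
In both cases $(\beta'')^kT_k(x)\le \X(p)^{9}\exp\bigl(-\|x\|_2\X(p)^{-7/2}/4-k\X(p)^{-3}/4\bigr)$. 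Summing over $k$, the whole tail is at most $5\X(p)^{12}e^{-\|x\|_2\X(p)^{-7/2}/4}$, which is at most $\X(p)^{-100}\langle x\rangle^{2-d}$ once $\|x\|_2\ge \X(p)^{4}$ and $\X(p)$ is large. For $\|x\|_2<\X(p)^4$, your super-polynomially small bound from the range $k>\X(p)^{2/\eps}$ suffices. With this patch the proof is complete.
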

\begin{proof} Recall that $\tau_{p,p''}^\square(x)=\sum_{k=0}^\infty ((p''-p)/(1-p))^k \tau_{p}^{\square,k}$. Using $p'<p''$, we bound the contribution of the case $k\leq \sqrt{\gamma}$ by
\begin{equation} \left (\frac{p''-p}{p'-p}\right )^{\sqrt{\gamma}}\sum_{0\leq k \leq \sqrt{\gamma}} \left (\frac{p'-p}{1-p}\right )^k \tau_{p}^{\square,k}(x) \leq (1+1/\gamma)^{\sqrt{\gamma}} \tau_{p,p'}^\square(x) \leq (1+2\gamma^{-1/2}) \tau_{p,p'}^\square(x),
\label{eq:compare_tau_ppp_low}\end{equation}
using $\gamma\geq 1$ for the last inequality. The case $k>\X(p)^{100}$ is treated exactly as $S_2$ and $S_3$ in the proof of Lemma \ref{lem:LTCL_partial_tau_pp'^k}, giving the bound
\begin{equation} \sum_{k\geq \X(p)^{100}}((p''-p)/(1-p))^k \tau_{p}^{\square,k}(x) \leq \X(p)^{-100} \langle x\rangle^{2-d}.
\label{eq:compare_tau_ppp_high}\end{equation} 
Lastly, for the case $\sqrt{\gamma}\leq k \leq \X(p)^{100}$, we use Theorem \ref{thm:LTCL_tau_p^square,k_non_renormalized} to get
\[ \sum_{\sqrt{\gamma}\leq k \leq \X(p)^{100}} \left (\frac{p''-p}{1-p}\right )^k \tau_{p}^{\square,k}(x)\leq \sum_{\sqrt{\gamma}\leq k} \left (E_{p,\infty}\| \Gauss_{k V(p)}\|_\infty+\X(p)^{1+\eps}k^{-1/4}(\X(p)k)^{-d/2} \right )  .\]
Then, by Lemmas \ref{lem:Large_k_L1} and \ref{lem:LowerBound_V(p)}, whenever $\X(p)$ is large enough, the last sum is bounded by
\[ \sum_{\sqrt{\gamma}\leq k} \X(p)^\eps \left (\X(p) (k\X(p))^{-d/2}+\X(p) k^{-1/4}(\X(p)k)^{-d/2} \right )\leq \X(p)^{2\eps} \X(p)^{1-d/2}\gamma^{1/2-d/4}, \]
using $\sum_{k\geq r} k^{-d/2}+k^{-d/2-1/4} \leq K r^{1-d/2}$ for every $r\geq 1$ for some $K>0$ depending only on $d$. Using $\langle x\rangle^2\leq \X(p) \gamma^{(d-4)/(2d-4)}$ and $\gamma\geq 1$, we deduce
\[ \sum_{\sqrt{\gamma}\leq k \leq \X(p)^{100}}((p''-p)/(1-p))^k \tau_p^{\square,k}(x)\leq \X(p)^{2\eps}\gamma^{-1/2}\langle x\rangle^{2-d} . \]
Summing the last inequality with \eqref{eq:compare_tau_ppp_low} and \eqref{eq:compare_tau_ppp_high} yields the desired result.
\end{proof}
\subsection{Renormalization induction: Proof of \eqref{eq:etaUpper} given \eqref{eq:initialization}} \label{sec:RenormalizationInduction}
The goal of this subsection is to show the following weaker version of Theorem \ref{thm:Main}.
\begin{theorem} \label{thm:MainWeaker} Let $C>0$. There exists $F>0$ depending only on $d,L,C$ such that if there exists $p_0<p_c$ satisfying \eqref{eq:initialization} 
and $\X(p_0)\geq F$, then for every $x\in \Z^d$, we have
\begin{equation}
\tau_{p_c}(x)\leq 2C  \langle x\rangle^{2-d}.\label{eq:pc_initialization}
\end{equation}
\end{theorem}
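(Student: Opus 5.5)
We propose to run a renormalization induction along a sequence $p_0<p_1<p_2<\dots\uparrow p_c$ with $\X(p_{n+1})=\X(p_n)^{1+\eta}$ for a small fixed $\eta\in(0,1/8)$, so that $\X(p_{n+1})\ll \X(p_n)^2/\psi_d(\X(p_n))$. The statement we propagate is sharper than \eqref{eq:initialization}. It says that for every $n$, $\tau_{p_n}$ is close to a Green function $A_n\Gauss_{v_n,\beta_n}$ of a killed Brownian motion at scales $\langle x\rangle\geq \X(p_{n-1})^{1/2+\eps}$, and it keeps the bound $\tau_{p_n}(x)\leq C_n\langle x\rangle^{2-d}$ with $C_n\le 2C$. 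Here $C_{n+1}\le C_n(1+\delta_n)$ with $\sum_n\delta_n<\infty$ and $\prod(1+\delta_n)\le 2$ once $\X(p_0)\ge F$. Since $\tau_p(x)\to\tau_{p_c}(x)$ pointwise by monotone continuity (the left-continuity of $p\mapsto\tau_p(x)$ at $p_c$), the uniform bound $\tau_{p_n}(x)\le 2C\langle x\rangle^{2-d}$ passes to the limit and gives \eqref{eq:pc_initialization}.

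\textbf{Inductive step.} Suppose $(p_n,2C)$ satisfies \eqref{eq:initialization}. We go from $p_n$ to $p_{n+1}$ by using an intermediate parameter $p=p_{n-1}$ (or $p_n$ itself) as the base of the Markov chain, and we apply Theorem \ref{thm:PointwiseError} in form (b) with base $p$ and target $p'=p_{n+1}$. This requires a pointwise bound on $\tau^\square_{p,p'}$, which we obtain from Lemma \ref{lem:LTCL_partial_tau_pp'^k} for $\langle x\rangle\ge\X(p)^{1/2+\eps}$. Near the origin we use Lemma \ref{lem:Future_diff_trick}, comparing $\tau^\square_{p,p'}$ with $\tau^\square_{p,p_n}$; the latter is controlled by $\tau_{p_n}$ through Theorem \ref{thm:PointwiseError}(a), with error $O(\X(p_n)\psi_d/\X(p)^2)$ times $\langle x\rangle^{2-d}$. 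The hypothesis on $p_2$ in those lemmas, namely \eqref{eq:initialization} at $\X(p_2)=F'\log\X(p)\,\X(p)$, is supplied by the induction itself, since $p_2\le p_n$. Theorem \ref{thm:PointwiseError} then gives $|\tau_{p'}-\tau^\square_{p,p'}|\le K\langle x\rangle^{2-d}\X(p')\psi_d(\X(p))/\X(p)^2$, with $\Xi\le1$ by Lemma \ref{lem:AnnoyingXi}. This error is a summable power $\X(p)^{-c\eta'}$ of the scale.

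\textbf{Calibrating the Gaussian amplitude (main obstacle).} The crux is that Lemma \ref{lem:LTCL_partial_tau_pp'^k} gives $\tau^\square_{p,p'}\approx E_{p,\infty}\Gauss_{V(p),\beta'}$. The amplitude $E_{p,\infty}C^\Gauss_{V(p)}$ is not a priori bounded by the previous constant, because we only know $E_{p,\infty}=O(\X(p))$ and $V(p)=\X(p)^{1\pm\eps}$. We would calibrate it against $p_n$. The same lemma applied at $p''=p_n$ gives $\tau_{p_n}(x)\approx E_{p,\infty}\Gauss_{V(p),\beta_n}(x)$ at scales $\X(p)^{1/2+\eps}\ll\langle x\rangle\ll \X(p_n)^{1/2}$. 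In that window the killing is negligible ($\beta^t\approx1$), so $\Gauss_{V,\beta_n}(x)\approx C^\Gauss_V\langle x\rangle^{2-d}$. Hence the inductive bound at $p_n$ forces $E_{p,\infty}C^\Gauss_{V(p)}\le C_n(1+o(1))$, with a quantitative $o(1)=\X(p)^{-c}$. Since $\Gauss_{V,\beta'}\le\Gauss_{V,1}=C^\Gauss_V\langle x\rangle^{2-d}$ for $\beta'\le1$, we get $\tau^\square_{p,p'}(x)\le C_n(1+\X(p)^{-c})\langle x\rangle^{2-d}$ at all large scales. Combining with the previous paragraph yields $C_{n+1}\le C_n(1+\X(p_{n-1})^{-c})$, and the product is at most $2$ when $F$ is large, since $\X(p_n)$ grows doubly exponentially. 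The delicate points are twofold. First, the window $[\X(p)^{1/2+\eps},\X(p_n)^{1/2-\eps}]$ must be nonempty and the LCLT error $(\langle x\rangle^2/\X(p))^{-1/4}$ must be small there, which dictates the choice of $\eta$. Second, the two bootstrap conditions of Theorem \ref{thm:PointwiseError} must be checked simultaneously, exactly as in its proof; we expect this to be routine.
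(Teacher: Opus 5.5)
Your scheme matches the paper's proof of Proposition \ref{pro:InductionScheme}. It too uses a base $p$ one step behind the parameter where \eqref{eq:initialization} is known. It calibrates $E_{p,\infty}C^\Gauss_{V(p)}$ against $\tau^\square_{p,p_n}$ at a scale $\X(p)^{1/2+\eps}\ll\|y\|_2\ll\X(p_n)^{1/2}$ where killing is negligible. It handles small $x$ with Lemma \ref{lem:Future_diff_trick}, and it passes between $\tau$ and $\tau^\square$ with Theorem \ref{thm:PointwiseError}. Two side remarks. Only $\Mc(\tau_{p_n})\le C_n$ needs to be propagated; the Green-function part of your hypothesis is never used. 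Your parenthetical alternative $p=p_n$ would not work, because the calibration needs a parameter strictly between base and target at which the bound is already known.

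The step that fails as written is taking the target to be $p'=p_{n+1}$ itself. Lemmas \ref{lem:LTCL_partial_tau_pp'^k} and \ref{lem:Future_diff_trick} need $p'<\tilde p_c(p)$ with $(p'-p)/(\tilde p_c(p)-p)<1-\X(p)^{-3}$. Theorem \ref{thm:PointwiseError}(b) needs $\|\tau^\square_{p,p'}\|_1\le c\X(p)^2/\psi_d(\X(p))$. Nothing you invoke provides either at $p_{n+1}$:
\begin{itemize}
\item Proposition \ref{pro:p_c(p)-p_2} only places $\tilde p_c(p)$ in $[p_n,p_n+K_2/\X(p_n)]$.
\item The unconditional half of Lemma \ref{lem:X'} bounds $p_{n+1}-p_n$ only from below. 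The other half needs \eqref{eq:initialization} at $p_{n+1}$, which is what you are proving.
\item Using $\tilde p_c(p)\approx p_c$ (Theorem \ref{thm:p_c-p_c}) presupposes \eqref{eq:etaUpper}.
\end{itemize}
This is not the bootstrap internal to Theorem \ref{thm:PointwiseError}, which needs only one of (a), (b). It is the prior question of whether $\tau^\square_{p,p_{n+1}}$ is finite at all and lies in the regime of the lemmas.

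The paper avoids this by parametrizing the target through the chain. It defines $p'_2$ by $\frac{\tilde p_c(p)-p}{\tilde p_c(p)-p'_2}E_{p,\infty}=2\X(p_{n+1})$. This lies below $\tilde p_c(p)$ with the required margin by construction, e.g.\ because $E_{p,\infty}\ge\lambda_p\X(p)$ and $\lambda_p$ is bounded below. It gets $\|\tau^\square_{p,p'_2}\|_1=2\X(p_{n+1})+O(\X(p)\log\X(p))$ from \eqref{eq:Tau_X_Diff} and runs exactly your argument at $p'_2$. It then reads off $\X(p'_2)>\X(p_{n+1})$ from Theorem \ref{thm:PointwiseError}, so $\tau_{p_{n+1}}\le\tau_{p'_2}$ by monotonicity.

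Alternatively, take $\eta$ small enough that $\X(p_{n+1})\le c\X(p)\mathfrak{s}(p)$. For $d=7$ this needs $(1+\eta)^2<7/6$, so not all of your range $\eta<1/8$. Then Proposition \ref{pro:AfterBootstrap} gives $p_{n+1}<\tilde p_c(p)$ with $\|\tau^\square_{p,p_{n+1}}\|_1\le c\X(p)\mathfrak{s}(p)$. The margin follows from $\|\tau^\square_{p,p'}\|_1\ge\lambda_p\X(p)\frac{\tilde p_c(p)-p}{\tilde p_c(p)-p'}$, a consequence of \eqref{eq:Tau_X_Diff_prelim} and Lemma \ref{lem:LambdaLowMemory}.
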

We prove Theorem \ref{thm:MainWeaker} first to have access to all our results under \eqref{eq:initialization} or \eqref{eq:etaUpper} sufficiently close to $p_c$, which will greatly simplify the analysis.
The main tool to show Theorem \ref{thm:MainWeaker} is the following induction property. 
For every $f:\Z^d\mapsto \R$, let $\Mc(f):=\max_{x\in \Z^d}f(x)\langle x\rangle ^{d-2}$.
\begin{proposition} \label{pro:InductionScheme} There exists $\eta>0$ such that the following result holds. For every $C>0$, there exists $F(C)>0$ such that for every $p_1<p_c$ satisfying \eqref{eq:initialization} and $\X(p_1)\geq F(C)$, if $p_2$ is such that $\X(p_2)=\X(p_1)^{1.1}$, then
\[ \Mc(\tau_{p_2})\leq \Mc(\tau_{p_1})+\X(p_1)^{-\eta}.\]
\end{proposition}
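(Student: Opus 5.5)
The plan is to pass from $p_1$ to $p_2$ through an intermediate base point $p<p_1$ and to compare $\tau_{p_2}$ and $\tau_{p_1}$ by writing each as $\tau_{p,\cdot}^\square$ plus a diagrammatic error. I take $p$ with $\X(p)=\X(p_1)^{1-\delta}$ for a small fixed $\delta>0$. Since $\tau_p\leq\tau_{p_1}$, \eqref{eq:initialization} holds at $p$ with the same $C$, and $\X(p_2)=\X(p)^{1.1/(1-\delta)}\ll \X(p)^2/\psi_d(\X(p))$. I also need \eqref{eq:initialization} at the auxiliary point with susceptibility $F'\log(\X(p))\X(p)$; this follows from monotonicity, since that point lies below $p_1$.

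By Theorem \ref{thm:PointwiseError}, applied in case (a) at $p_1$ and in case (b) at $p_2$ through a bootstrap, and combined with Lemma \ref{lem:AnnoyingXi} to control $\Xi$, I get
\[
|\tau_{p'}(x)-\tau^\square_{p,p'}(x)|\leq K\langle x\rangle^{2-d}\X(p')\psi_d(\X(p))/\X(p)^2
\]
for $p'\in\{p_1,p_2\}$. This error is at most $\X(p)^{-\eta_0}\langle x\rangle^{2-d}$ for some $\eta_0>0$, because $\X(p_2)\leq\X(p)^{1.1/(1-\delta)}$ and $\psi_d\leq \X^{1/2}$ for $d\geq7$; in $d=7$ this forces $\delta$ small.

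The remaining task is to show $\Mc(\tau^\square_{p,p_2})\leq \Mc(\tau^\square_{p,p_1})+\X(p)^{-\eta_1}$. I split according to the size of $\langle x\rangle$.

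For $\langle x\rangle\geq \X(p)^{1/2+\eps}$, I use Lemma \ref{lem:LTCL_partial_tau_pp'^k}. It gives $\tau^\square_{p,p'}(x)=E_{p,\infty}\Gauss_{V(p),\beta'}(x)+O(\X(p)^{-\eps}\langle x\rangle^{2-d})$ with $\beta'=(p'-p)/(\tilde p_c(p)-p)$. The key monotonicity is that $\beta\mapsto\Gauss_{v,\beta}(x)\langle x\rangle^{d-2}$ is increasing and bounded by $C^\Gauss_v$, with $\Gauss_{v,1}(x)=C^\Gauss_v\langle x\rangle^{2-d}$. Hence on this range
\[
\tau^\square_{p,p_2}(x)\langle x\rangle^{d-2}\leq E_{p,\infty}C^\Gauss_{V(p)}+o(1).
\]
I then need $\Mc(\tau^\square_{p,p_1})\geq E_{p,\infty}C^\Gauss_{V(p)}-\X(p)^{-\eta}$. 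For this I evaluate at some $x$ with $\X(p)^{1/2+\eps}\ll\langle x\rangle\ll \X(p_1)^{1/2-\eps}$. There $\beta_1^t$ is essentially $1$ over the relevant times $t\approx\langle x\rangle^2/V(p)$, because $1-\beta_1\asymp \X(p)/\X(p_1)$ by Proposition \ref{pro:p_c(p)-p_2} and Lemma \ref{lem:X'}. So $\Gauss_{V(p),\beta_1}(x)\langle x\rangle^{d-2}=C^\Gauss_{V(p)}(1-o(1))$, with a polynomial rate. This is why $p$ must be chosen so that $\X(p_1)/\X(p)$ is a positive power.

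For small $x$, I use Lemma \ref{lem:Future_diff_trick} with $p'=p_1$ and $p''=p_2$, where $\gamma=(p_1-p)/(p_2-p_1)$. The difficulty is that $p_2-p_1\approx 1/\X(p_1)$ is comparable to $p_1-p$ up to constants, so $\gamma$ is not large. To fix this I insert a chain of intermediate points $p_1=q_0<q_1<\dots<q_m=p_2$ whose susceptibilities grow by factors $1+\X(p)^{-\eps'}$ in the small-$x$ regime. Alternatively, I redo the argument of Lemma \ref{lem:Future_diff_trick} directly. That argument only uses that terms with $k\leq\sqrt\gamma$ are comparable, together with the LTCL sum bound for large $k$. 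Applied with $p''=p_2$, it bounds the large-$k$ contribution by $\X(p)^{1-d/2+2\eps}k_0^{1-d/2}$ once $k_0\gg \X(p)^{c}$. For $\langle x\rangle^2\leq \X(p)^{1+2\eps}$, the small-$k$ part satisfies $\sum_{k\leq k_0}\beta_2^k\leq\sum_{k\leq k_0}\beta_1^k(1+O(k_0/\X(p_1)^{1-\delta}))$. Choosing $k_0=\X(p)^{c}$ with $c$ small makes both errors $\X(p)^{-\eta}\langle x\rangle^{2-d}$.

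I expect this small-$x$ regime to be the main obstacle. It is the step that genuinely needs $\X(p_1)/\X(p)$ to be polynomially large, and there the exponents $\delta,\eps,c$ have to be balanced against $\psi_d$ in $d=7$. Combining the two regimes with the error from Theorem \ref{thm:PointwiseError} then gives the proposition.
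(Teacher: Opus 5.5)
\textbf{Where your plan matches the paper.} The architecture is the paper's:
\begin{itemize}
\item a base point $p$ with $\X(p_1)$ a fixed power of $\X(p)$;
\item transfer between $\tau$ and $\tau^\square_{p,\cdot}$ via Theorem \ref{thm:PointwiseError} and Lemma \ref{lem:AnnoyingXi};
\item a split of $\Mc$ at $\langle x\rangle\approx\X(p)^{1/2+}$;
\item for large $x$, Lemma \ref{lem:LTCL_partial_tau_pp'^k} together with $\Gauss_{v,\beta}\le C^\Gauss_v\langle x\rangle^{2-d}$, plus a lower bound on $\Mc(\tau^\square_{p,p_1})$ at an intermediate point;
\item for small $x$, Lemma \ref{lem:Future_diff_trick}.
\end{itemize}

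\textbf{The small-$x$ ``obstacle'' is a miscalculation.} By the assumption-free half of Lemma \ref{lem:X'}, $p_1-p\gtrsim 1/\X(p)$. By Proposition \ref{pro:p_c(p)-p_2} at $p_1$, $p_2-p_1\le\tilde p_c(p)-p_1\le K_2/\X(p_1)$. Hence $\gamma=(p_1-p)/(p_2-p_1)\gtrsim\X(p_1)/\X(p)=\X(p_1)^{\delta}$ is polynomially large, not $O(1)$. So Lemma \ref{lem:Future_diff_trick} applies directly for $\langle x\rangle^2\le\X(p)\gamma^{(d-4)/(2d-4)}$, a range that overlaps the LTCL range. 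No chain of intermediate points is needed. Your ``redo'' is that lemma again, and it works only because $\gamma$ is large: the small-$k$ factor $((p_2-p)/(p_1-p))^{k_0}=(1+1/\gamma)^{k_0}$ is close to $1$ only in that case.

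\textbf{The genuine gap: you have no a priori control of $p_2$.} Every tool you invoke at $p_2$ presupposes $p_2<\tilde p_c(p)$, indeed with $(p_2-p)/(\tilde p_c(p)-p)\le 1-\X(p)^{-3}$. This includes Lemmas \ref{lem:LTCL_partial_tau_pp'^k} and \ref{lem:Future_diff_trick}, the bound $p_2-p_1\lesssim 1/\X(p_1)$ used above, and Theorem \ref{thm:PointwiseError}(b). This does not come for free. Proposition \ref{pro:p_c(p)-p_2} and the lower half of Lemma \ref{lem:X'} at $p_2$ both need \eqref{eq:initialization} at $p_2$, which is essentially what you are proving. ``Through a bootstrap'' does not say how it is obtained.

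\textbf{How the paper avoids it.} The paper never works at $p_2$.
\begin{itemize}
\item It defines $p'_2$ by \eqref{eq:ratio2renormalisation}, $\frac{\tilde p_c(p)-p}{\tilde p_c(p)-p'_2}E_{p,\infty}=2\X(p_2)$, which involves only data at $p$.
\item Hence $p'_2<\tilde p_c(p)$ with the margin \eqref{eq:check_hypo_lem_TCL}, and \eqref{eq:Tau_X_Diff} gives $\|\tau^\square_{p,p'_2}\|_1=2\X(p_2)+O(\X(p)\log\X(p))$.
\item The comparison of $\Mc(\tau^\square_{p,p'_2})$ with $\Mc(\tau^\square_{p,p_1})$ is run at $p'_2$.
\item Theorem \ref{thm:PointwiseError}(b) is then applied at $p'_2$, with its pointwise hypothesis supplied by that comparison. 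This yields $\X(p'_2)\approx 2\X(p_2)$, so $p_2<p'_2$, and monotonicity gives $\Mc(\tau_{p_2})\le\Mc(\tau_{p'_2})$.
\end{itemize}

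\textbf{A repair within your setup.} Proposition \ref{pro:AfterBootstrap} gives $\|\tau^\square_{p,p_2}\|_1<\infty$ provided $\X(p_2)\le c\X(p)\mathfrak{s}(p)$. In $d=7$ this requires $1.1/(1-\delta)<7/6$. Then $E_{p,k}\ge\lambda_p\X(p)$ (Lemma \ref{lem:LambdaLowMemory}) gives $\|\tau^\square_{p,p_2}\|_1\ge\lambda_p\X(p)/(1-\beta_2)$, which yields the margin. Some such argument has to be supplied before the comparison at $p_2$ makes sense.
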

\begin{proof}[Proof of Theorem \ref{thm:MainWeaker} given Proposition \ref{pro:InductionScheme}]
Let $C>0$. Let $p_0$ be such that $\X(p_0)\geq F(2C)$, where $F$ is given in Proposition \ref{pro:InductionScheme},
 and such that $\sum_{i\geq 0} \X(p_0)^{-\eta 1.1^i}\leq C$.
 For every $i\geq 1$, define $p_i<p_c$ by $\X(p_i)=\X(p_{i-1})^{1.1}$. Assume that \eqref{eq:initialization} holds at $p_0$ with $C$.
 Let $i\geq 0$. If $\Mc(\tau_{p_i})\leq C+\sum_{j=0}^{i-1} \X(p_0)^{-\eta 1.1^j}$
 then by Proposition \ref{pro:InductionScheme} we also have $\Mc(\tau_{p_{i+1}})\leq C+\sum_{j=0}^{i} \X(p_0)^{-\eta 1.1^j}$.
 Thus, by induction, the property holds for every $i\geq 0$. Taking $i\to \infty$ gives \eqref{eq:pc_initialization}.
\end{proof}
\begin{proof}[Proof of Proposition \ref{pro:InductionScheme}] Let $p<p_1$ be such that $\X(p_1)=\X(p)^{1.1}$. First, by Theorem \ref{thm:PointwiseError} and Lemma \ref{lem:AnnoyingXi}, we have, for some constant $K>0$ depending only on $d,L,C$,
\begin{equation} \left |\Mc(\tau_{p_1})-\Mc(\tau_{p,p_1}^\square) \right |\leq K \frac{\X(p_1)\psi_d(\X(p))}{\X(p)^2}\leq K \X(p)^{-0.4}.\label{eq:MC_MC_a}\end{equation}
Also, by Theorem \ref{thm:PointwiseError}, we have
\[ |\|\tau_{p,p_1}^\square\|_1-\X(p_1)|\leq K \X(p)^{0.7}.\]
In particular, $p_1<\tilde p_c(p)$. Equation \eqref{eq:Tau_X_Diff} then gives
\[ \|\tau_{p,p_1}^\square\|_1= \sum_{k=0}^\infty \left (\frac{p_1-p}{\tilde p_c(p)-p } \right )^k(E_{p,k}-E_{p,\infty}) +\frac{\tilde p_c(p)-p}{\tilde p_c(p)-p_1}E_{p,\infty}.\]
By Lemmas \ref{eq:L1Diff_After_Mixing}, \ref{lem:Small_k_L1}, and \ref{lem:Large_k_L1}, the first sum is bounded by $K \log(\X(p))\X(p)$ for some $K>0$ depending only on $d,L,C$, as long as $\X(p)$ is large enough. Thus,
\begin{equation} \left | \frac{\tilde p_c(p)-p}{\tilde p_c(p)-p_1}E_{p,\infty}-\X(p_1)\right |\leq 2K\log(\X(p))\X(p).\label{eq:ratio1renormalisation}\end{equation}

Then let $p'_2$ be the parameter such that
\begin{equation} \frac{\tilde p_c(p)-p}{\tilde p_c(p)-p'_2}E_{p,\infty}=2\X(p_2). \label{eq:ratio2renormalisation}\end{equation}
Similarly, reusing \eqref{eq:Tau_X_Diff} yields
\begin{equation} \left |2\X(p_2)- \|\tau_{p,p'_2}^\square\|_1 \right | \leq K\log(\X(p))\X(p) . \label{eq:X_of_p'_2}\end{equation}
Furthermore, taking the ratio between \eqref{eq:ratio1renormalisation} and \eqref{eq:ratio2renormalisation} gives, whenever $\X(p)$ is large enough, 
\begin{equation} \left |\frac{\tilde p_c(p)-p_1}{\tilde p_c(p)-p'_2}- \frac{2\X(p_2)}{\X(p_1)} \right | \leq 1.\label{eq:ratio_diff_pc}\end{equation}

We then estimate $\Mc(\tau_{p,p'_2}^\square)$ by splitting it into two parts. For every $f:\Z^d\mapsto \R$, and $r>0$, let
\[ \Mc_{\leq r}(f):=\max_{x\in \Z^d:\|x\|_2\leq r}f(x)\langle x\rangle ^{d-2},\] 
and define $\Mc_{>r}(f)$ similarly so that $\Mc(f)=\max(\Mc_{\leq r}(f),\Mc_{>r}(f))$. Let $r=\X(p)^{0.51}$. We first deal with the case $\|x\|_2\leq r$. 
By Lemma \ref{lem:X'} we have for some constants $K_1,K_2>0$, depending only on $d,L,C$, that $p_1-p\geq K_1/\X(p)$. 
Proposition \ref{pro:p_c(p)-p_2} also gives, for some constant $K_2>0$, that $\tilde p_c(p)-p_1\leq K_2/\X(p_1)$. Thus,
\[ \gamma:=\frac{p_1-p}{p_2'-p_1} \geq \frac{K_1}{K_2}\X(p)^{0.1}. \]
Note that $r^2\leq \X(p)\gamma^{(d-4)/(2d-4)}$. Also, by \eqref{eq:ratio_diff_pc} and the bounds $p_1-p\geq K_1/\X(p)$ and $\tilde p_c(p)-p_1\leq K_2/\X(p_1)$, we have
$(\tilde p_c(p)-p)/(\tilde p_c(p)-p'_2)\leq \X(p)^3$. In other words,
\begin{equation} \frac{p'_2-p}{\tilde p_c(p)-p}\leq 1-\X(p)^{-3}. \label{eq:check_hypo_lem_TCL}\end{equation}
 Thus, we may apply Lemma \ref{lem:Future_diff_trick} to get
\[ \Mc_{\leq r}(\tau_{p,p'_2}^\square) \leq (1+2\gamma^{-1/2})\Mc(\tau_{p,p_1}^\square)+(2\gamma^{-1/2}\X(p)^\eps+\X(p)^{-100}).\]
So, for some constant $K_3>0$ depending only on $d,L,C$,
\begin{equation} \Mc_{\leq r}(\tau_{p,p'_2}^\square) \leq \Mc(\tau_{p,p_1}^\square)+K_3\X(p)^{-0.05}.\label{eq:MC_leq_r}\end{equation}

On the other hand, when $\X(p)$ is large enough, for every $\eps>0$, Lemma \ref{lem:LTCL_partial_tau_pp'^k} gives
\begin{equation} \Mc_{>r}(\tau_{p,p'_2}^\square)\leq E_{p,\infty}C_{V(p)}^\Gauss+\X(p)^\eps (r^2/\X(p))^{-1/4}.\label{eq:norm_compare_cvg}\end{equation}
We then have to bound $E_{p,\infty}C_{V(p)}^\Gauss$ using $\Mc(\tau_{p,p_1}^\square)$. To this end, by \eqref{eq:check_hypo_lem_TCL} and $p_1<p'_2$, we may apply Lemma \ref{lem:LTCL_partial_tau_pp'^k} with $p,p_1$ and some $y\in \Z^d$ such that $\|y\|_2=\lfloor \X(p)^{0.53}\rfloor$. We get
\[
\left |\tau_{p,p_1}^\square(y) - E_{p,\infty}\Gauss_{V(p),(p_1-p)/(\tilde p_c(p)-p)}(y) \right | \leq \X(p)^\eps \left (\langle y\rangle^2/\X(p)\right )^{-1/4}\langle y\rangle^{2-d}.
\]
Thus 
\[ E_{p,\infty}\Gauss_{V(p),(p_1-p)/(\tilde p_c(p)-p)}(y) \leq (\Mc(\tau_{p,p_1}^\square)+\X(p)^{-0.01})\langle y\rangle^{2-d}.\]
Reusing $p_1-p\geq K_1/\X(p)$ and $\tilde p_c(p)-p_1\leq K_2/\X(p_1)$, we get, for some constant $K_3>0$,
\begin{equation} E_{p,\infty}\Gauss_{V(p),1-K_3\X(p)/\X(p_1)}(y) \leq (\Mc(\tau_{p,p_1}^\square)+\X(p)^{-0.01})\langle y\rangle^{2-d}. \label{eq:rough_minor_Cvg}\end{equation}
Moreover, writing $\beta=1-K_3\X(p)/\X(p_1)$, we have
\begin{equation} C_{V(p)}^\Gauss\langle y\rangle^{2-d}-\Gauss_{V(p),1-K_3\X(p)/\X(p_1)}(y)=(2\pi V(p)/d)^{-d/2}\int_0^\infty(1-\beta^t)t^{-d/2} \exp \left (-\frac{\|y\|^2_2 d}{2t V(p)} \right ) dt. \label{eq:integral_minor_Cvg} \end{equation}
Let $\eps=0.001$. Let $T=\X(p)^\eps \|y\|_2^2/V(p)$. The contribution from the case $t\leq T$ to the right-hand side is at most $(1-\beta^T)\Gauss_{V(p),1}(y)$. By Lemma \ref{lem:LowerBound_V(p)},
\[ (1-\beta^T)\leq T(1-\beta)\leq (\X(p)^\eps \|y\|_2^2/V(p)) K_3 \X(p)/\X(p_1)\leq \X(p)^{2\eps}\|y\|_2^2/\X(p_1)\leq \X(p)^{-0.03},\]
provided $\eps>0$ is small enough and $\X(p)$ is large enough. Also, the contribution of the case $t>T$ to the right-hand side of \eqref{eq:integral_minor_Cvg} is at most
\begin{align*} &(2\pi V(p)/d)^{-d/2}\int_T^\infty(1-\beta^t)t^{-d/2} \exp \left (-\frac{\|y\|^2_2 d}{2t V(p)} \right ) dt 
  \\ & \leq (2\pi V(p)/d)^{-d/2}\int_T^\infty t^{-d/2} dt \leq dT (2\pi T V(p)/d)^{-d/2}\leq  \langle y\rangle^{2-d}\X(p)^{-1.002},\end{align*}
using Lemma \ref{lem:LowerBound_V(p)} and the definition of $T$ for the last inequality.
Returning to \eqref{eq:rough_minor_Cvg} and using $E_{p,\infty}\leq K_4\X(p)$ from Lemma \ref{lem:Large_k_L1}, we deduce
\[ (1-\X(p)^{-0.03})E_{p,\infty}C_{V(p)}^\Gauss \leq \Mc(\tau_{p,p_1}^\square)+\X(p)^{-0.01}+\X(p)^{-0.002}.\]
Returning further to \eqref{eq:norm_compare_cvg}, we get
\[ \Mc_{>r}(\tau_{p,p'_2}^\square)\leq \Mc(\tau_{p,p_1}^\square)+\X(p)^{-0.001}. \]
Combining with \eqref{eq:MC_leq_r}, we finally get 
\begin{equation} \Mc(\tau_{p,p'_2}^\square)\leq \Mc(\tau_{p,p_1}^\square)+\X(p)^{-0.001}. \label{eq:MC_MC_b} \end{equation}

By \eqref{eq:initialization} at $p_1$, \eqref{eq:MC_MC_a}, \eqref{eq:MC_MC_b}, and \eqref{eq:X_of_p'_2}, we can now use Theorem \ref{thm:PointwiseError} and Lemma \ref{lem:AnnoyingXi} at $p'_2$ to get 
\begin{equation} \left |\Mc(\tau_{p'_2})-\Mc(\tau_{p,p'_2}^\square) \right |\leq K \frac{\|\tau_{p,p'_2}^\square\|_1\psi_d(\X(p))}{\X(p)^2}\leq 3K \X(p)^{-0.29}. \label{eq:MC_MC_c} \end{equation}
Combining \eqref{eq:MC_MC_a}, \eqref{eq:MC_MC_b}, and \eqref{eq:MC_MC_c} thus yields
\[ \Mc(\tau_{p'_2})\leq \Mc(\tau_{p_1})+3\X(p)^{-0.001}. \]
Finally, Theorem \ref{thm:PointwiseError} and \eqref{eq:X_of_p'_2} also give $p_2<p'_2$ and hence $\Mc(\tau_{p_2})\leq \Mc(\tau_{p'_2})$.
\end{proof}
\subsection{Proof of Theorem \ref{thm:Main} and \ref{thm:tau_local_CLT}} \label{sec:FinalMainProof}
By Theorem \ref{thm:MainWeaker}, to show Theorem \ref{thm:Main}, it suffices to prove that \eqref{eq:etaUpper} implies \eqref{eq:eta}. To this end, it is enough to prove that Theorem \ref{thm:tau_local_CLT} holds under the weaker condition \eqref{eq:etaUpper}. Indeed, by taking $p\to p_c$ much faster than $x\to \infty$ in Theorem \ref{thm:tau_local_CLT}, we get as $x\to \infty$,
\[ \tau_{p_c}(x)=C_\Psi \int_0^\infty e^{-C_\Psi'\|x\|^2_2/t}t^{-d/2}dt+o(\|x\|_2^{2-d-\eps})=A \|x\|_2^{2-d}+o(\|x\|_2^{2-d-\eps}).\]
We thus assume that \eqref{eq:etaUpper} holds and aim to show the LTCL in Theorem \ref{thm:tau_local_CLT}. Our proof is based on the LTCL in Lemma \ref{lem:LTCL_partial_tau_pp'^k}, which first requires precise estimates of $E_{p,\infty}$ and $V(p)$ as $p\to p_c$.
\begin{lemma} \label{lem:E_p,infty_accurate}
Assume \eqref{eq:etaUpper} holds. Let $C_\X$ be the constant of Theorem \ref{thm:susceptibility}. For every $\eps>0$, we have as $p\uparrow p_c$,
\[
E_{p,\infty}=C_\X(p_c-p)^{-1}+O((p_c-p)^{-1/2-\1_{d=7}/4-\eps}).
\]
\end{lemma}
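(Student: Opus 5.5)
Under \eqref{eq:etaUpper}, Theorem \ref{thm:MainWeaker} shows that \eqref{eq:initialization} holds with a fixed constant for every $p$ close enough to $p_c$. So all the machinery of Sections \ref{sec:FirstApplications}--\ref{Sec:SharpAsymptotics} is available with uniform constants. I plan to compare two expressions for $\|\tau_{p,p'}^\square\|_1$ at a well-chosen $p'$. The first comes from \eqref{eq:Tau_X_Diff}, through its $p_c$-version, Lemma \ref{lem:replace_pc}. The second comes from $\X(p')$ through Theorem \ref{thm:PointwiseError}, combined with the sharp asymptotic of Theorem \ref{thm:susceptibility}.

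\textbf{Step 1.} Take $p'$ with $\X(p)\log(\X(p))\ll \X(p')\ll \X(p)^2/\psi_d(\X(p))$. Lemma \ref{lem:replace_pc} gives
\[
\|\tau_{p,p'}^\square\|_1=\frac{p_c-p}{p_c-p'}E_{p,\infty}+\sum_{0\le k<\ell(p)}\Big(\frac{p'-p}{p_c-p}\Big)^k(E_{p,k}-E_{p,\infty})+O\big(\psi_d(\X(p))\X(p')^2/\X(p)^2\big).
\]
By Lemmas \ref{lem:Small_k_L1} and \ref{lem:Large_k_L1}, the finite sum is $O(\log(\X(p))\X(p))$.

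\textbf{Step 2.} Theorem \ref{thm:PointwiseError}(a) gives $\|\tau_{p,p'}^\square\|_1=\X(p')+O(\X(p')^2\psi_d(\X(p))/\X(p)^2)$. Theorem \ref{thm:susceptibility} gives $\X(p')=C_\X(p_c-p')^{-1}+O((p_c-p')^{-\eps}\psi_d(1/(p_c-p')))$. Multiplying the identity from Step 1 by $(p_c-p')/(p_c-p)$ then yields
\[
E_{p,\infty}=\frac{C_\X}{p_c-p}+O\Big(\frac{p_c-p'}{p_c-p}\big[\log(\X(p))\X(p)+\X(p')^2\psi_d(\X(p))\X(p)^{-2}+(p_c-p')^{-\eps}\psi_d(\X(p'))\big]\Big),
\]
where Lemma \ref{lem:X'} is used to pass between $\X$ and $(p_c-\cdot)^{-1}$.

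\textbf{Step 3 (optimisation, main obstacle).} Write $\X(p)\asymp s^{-1}$ with $s=p_c-p$, and $\X(p')\asymp u^{-1}$ with $u=p_c-p'$. The error becomes
\[
\frac{u}{s}\Big(s^{-1}\log(1/s)+u^{-2}s^{2}\psi_d(1/s)+u^{-\eps}\psi_d(1/u)\Big).
\]
The middle term, $s\,\psi_d(1/s)/u$, prefers $u$ large, while the first term, $u\log(1/s)/s^2$, prefers $u$ small. Balancing them gives $u^2\approx s^3\psi_d(1/s)$. The error is then of order $s^{-1/2}\psi_d(1/s)^{1/2}$ up to $s^{-\eps}$ factors. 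For $d\ge 8$ this is $s^{-1/2-\eps}$. For $d=7$, where $\psi_7(1/s)=s^{-1/2}$, it is $s^{-3/4-\eps}$, matching the claimed exponent $-1/2-\1_{d=7}/4-\eps$.

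I still have to check two things. First, this choice of $u$ must respect the window $\X(p)\log\X(p)\ll\X(p')\ll\X(p)^2/\psi_d$. We have $\X(p')\approx s^{-3/2}\psi_d^{-1/2}$, which lies in the window for every $d>6$. Second, the third error term, $u^{1-\eps}\psi_d(1/u)/s$, must be subdominant; this holds since $u\ll s$ makes it at most $s^{-\eps}\psi_d(1/u)$. The main bookkeeping is to confirm that the $\log(\X(p))\X(p)$ bound on the pre-mixing sum, and the error term of Lemma \ref{lem:replace_pc}, both stay in the regime where the constants are uniform. I would absorb all logarithms into $(p_c-p)^{-\eps}$.
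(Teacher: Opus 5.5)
Your proposal is correct and follows essentially the paper's route. Like the paper, you compare the $p_c$-version of \eqref{eq:Tau_X_Diff} (your use of Lemma \ref{lem:replace_pc} packages the paper's inline use of Theorem \ref{thm:p_c-p_c} and Lemmas \ref{lem:X'}, \ref{lem:Large_k_L1}) with $\X(p')$ obtained from Theorems \ref{thm:PointwiseError} and \ref{thm:susceptibility}, and your balancing step recovers exactly the paper's choice $\X(p')=\X(p)^{3/2-\1_{d=7}/4}$.

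One small correction: \eqref{eq:initialization} near $p_c$ follows directly from \eqref{eq:etaUpper} by monotonicity $\tau_p\le\tau_{p_c}$. It does not come from Theorem \ref{thm:MainWeaker}, which goes in the opposite direction.
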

\begin{proof} Let $p'=p'(p)<p_c$ be such that $\X(p')=\X(p)^{3/2-\1_{d=7}/4}$. We estimate $\|\tau_{p,p'}^\square\|_1$ in two different ways. First, using Theorems \ref{thm:susceptibility} (which we proved under \eqref{eq:etaUpper}) and \ref{thm:PointwiseError}, we get
\[ \|\tau_{p,p'}^\square\|_1=C_\X(p_c-p')^{-1}+O((\X(p')/\X(p))(p_c-p)^{-1/2-\1_{d=7}/4-\eps}).\]

Second, using \eqref{eq:Tau_X_Diff}, we have
\[ \|\tau_{p,p'}^\square\|_1=\sum_{k=0}^\infty \left (\frac{p'-p}{\tilde p_c(p)-p } \right )^k(E_{p,k}-E_{p,\infty}) +\frac{\tilde p_c(p)-p}{\tilde p_c(p)-p'}E_{p,\infty}. \]
The above sum over $k$ is $O(\X(p)\log(\X(p)))$ by Lemmas \ref{eq:L1Diff_After_Mixing}, \ref{lem:Small_k_L1}, and \ref{lem:Large_k_L1}. Then, using Theorem \ref{thm:p_c-p_c} and Lemmas \ref{lem:X'} and \ref{lem:Large_k_L1}, we get
\[ \|\tau_{p,p'}^\square\|_1=O(\X(p)\log(\X(p)))+\frac{p_c-p}{p_c-p'}E_{p,\infty}+O \left ( \frac{\X(p')}{\X(p)}(p_c-p)^{-1/2-\1_{d=7}/4-\eps} \right ). \]

Comparing the two estimates on $\|\tau_{p,p'}^\square\|_1$ and then using Lemmas \ref{lem:X'} and \ref{lem:Large_k_L1}, we get
\begin{align*} C_\X(p_c-p')^{-1}& =\frac{p_c-p}{p_c-p'}E_{p,\infty}+O \left ( \frac{\X(p')}{\X(p)}(p_c-p)^{-1/2-\1_{d=7}/4-\eps} \right )
  \\ & =\frac{p_c-p}{p_c-p'}E_{p,\infty}+O\left (\frac{p_c-p}{p_c-p'} (p_c-p)^{-1/2-\1_{d=7}/4-\eps}\right ).
\end{align*}
The desired result follows by dividing by $(p_c-p)/(p_c-p')$.
\end{proof}
\begin{lemma} \label{lem:V_p,infty_accurate}
Assume \eqref{eq:etaUpper} holds. Let $C_\X$ and $C'_\xi$ be the constants of Theorems \ref{thm:susceptibility} and \ref{thm:Gyration_NotRenormalized}. For every $\eps>0$, we have as $p\uparrow p_c$,
\[
V(p)=(C'_\xi/C_\X)(p_c-p)^{-1}+O((p_c-p)^{-1/2-\1_{d=7}/4-\eps}).
\]
\end{lemma}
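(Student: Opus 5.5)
The plan is to mimic the proof of Lemma \ref{lem:E_p,infty_accurate}, with the $L_1$ norm replaced by the weighted second moment $\|\cdot\|_2^2$, and to compare two estimates of $\|\tau_{p,p'}^\square\|_2^2$. Take $p'=p'(p)$ with $\X(p')=\X(p)^{3/2-\1_{d=7}/4}$ as before.

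The first estimate comes from the exact asymptotics. By Theorem \ref{thm:Gyration_NotRenormalized} together with Proposition \ref{pro:L2Error}, we have
\[
\|\tau_{p,p'}^\square\|_2^2=C'_\xi(p_c-p')^{-2}+O\left(\frac{\X(p')^{3+\eps}\psi_d(\X(p))}{\X(p)^2}\right).
\]
With this choice of $\X(p')$, the error is $O\bigl(\X(p')^2(p_c-p)^{-1/2-\1_{d=7}/4-\eps'}\bigr)$ up to $\X(p)^{O(\eps)}$ factors.

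The second estimate comes from the Markov chain representation, as in the proof of Proposition \ref{pro:PolynomialApproximation_For_Xi2}. Using \eqref{eq:def_SE} and the approximants $E_{i,j,k}(p)$, we write the contribution of large $k$ as
\[
\sum_{k>3\ell(p)}\beta^k\bigl(A(p)(k-3\ell(p))+B(p)\bigr),\qquad \beta=\frac{p'-p}{\tilde p_c(p)-p}.
\]
In this sum the bulk blocks contribute the slope $A(p)=E_{p,\infty}\sum_{|i|\le\ell(p)}V_i(p)$. By Lemma \ref{lem:CovarianceDecay}, $A(p)=E_{p,\infty}V(p)+O(\X(p)^{-100})$. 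The boundary terms go into $B(p)$, and by Corollary \ref{lem:BoundVariance} and Lemmas \ref{lem:Start_L2}, \ref{lem:End_L2}, $|B(p)|=O(\X(p)^{2+\eps}\ell(p)^2)$. The small-$k$ part is $O(\X(p)^{3+\eps})$ by Lemma \ref{lem:First_Term_L2_BadBound}. Summing the series, the leading term is
\[
A(p)\left(\frac{\tilde p_c(p)-p}{\tilde p_c(p)-p'}\right)^2,
\]
and the remaining terms are $O\bigl(\X(p)^{2+\eps}\ell(p)^2\,\X(p')/\X(p)\bigr)+O(\X(p)^{3+\eps})$. We then replace $\tilde p_c(p)$ by $p_c$ using Theorem \ref{thm:p_c-p_c}, exactly as for $S_3$ in Lemma \ref{lem:replace_pc}. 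The relative error of this replacement is $O(\psi_d(\X(p))\X(p')/\X(p)^2)$, and it multiplies a quantity of size $\X(p')^2$.

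Comparing the two estimates gives
\[
C'_\xi(p_c-p')^{-2}=E_{p,\infty}V(p)\left(\frac{p_c-p}{p_c-p'}\right)^2+O\bigl(\X(p')^2(p_c-p)^{-1/2-\1_{d=7}/4-\eps}\bigr).
\]
Dividing by $((p_c-p)/(p_c-p'))^2$, we obtain $E_{p,\infty}V(p)=C'_\xi(p_c-p)^{-2}+O\bigl((p_c-p)^{-3/2-\1_{d=7}/4-\eps}\bigr)$. Lemma \ref{lem:E_p,infty_accurate} then gives the claim upon division, since $E_{p,\infty}\asymp(p_c-p)^{-1}$.

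The main obstacle is the error bookkeeping. The $\X(p')^{3+\eps}$ term in Proposition \ref{pro:L2Error} together with the cross terms from $B(p)$ must both stay below $\X(p')^2(p_c-p)^{-1/2-\1_{d=7}/4}$. If the $\X(p')^{\eps}$ losses are too large, I would first rerun Proposition \ref{pro:L2Error} under \eqref{eq:etaUpper}, using \eqref{eq:Plateau}-type tails as in its remark so that the $\X(p')^{\eps}$ loss disappears. If that is still not enough, I would re-choose the exponent of $\X(p')$ to rebalance the two errors.
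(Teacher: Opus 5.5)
Your strategy is the paper's: the same choice $\X(p')=\X(p)^{3/2-\1_{d=7}/4}$, the same two estimates of $\|\tau^\square_{p,p'}\|_2^2$ (Theorem~\ref{thm:Gyration_NotRenormalized} with Proposition~\ref{pro:L2Error}, versus the chain expansion of Proposition~\ref{pro:PolynomialApproximation_For_Xi2}), the same replacement of $\tilde p_c(p)$ by $p_c$, and the same final division using Lemma~\ref{lem:E_p,infty_accurate}. However, one step of your bookkeeping fails: the small-$k$ part.

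Write $X=\X(p)$ and $X'=\X(p')$. The main term is of order $X'^2$, that is $X^3$ for $d\geq 8$ and $X^{5/2}$ for $d=7$. The admissible error is $X'^2(p_c-p)^{1/2-\1_{d=7}/4-\eps}$, roughly $X^{5/2}$, resp.\ $X^{9/4}$.

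You wrote the threshold as $X'^2(p_c-p)^{-1/2-\1_{d=7}/4-\eps}$. That has the wrong sign: it would exceed the main term, and it is inconsistent with your own final display. This slip probably hid the problem below.

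The bound $O(X^{3})$ that Lemma~\ref{lem:First_Term_L2_BadBound} gives for $\sum_{k\le 3\ell(p)}$ is as large as the main term when $d\geq 8$, and larger when $d=7$. So comparing your two estimates would not even recover the leading constant.

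Re-choosing the exponent does not help. To beat $X^3$ you need $X'\gg X^{3/2}$. But the error of Proposition~\ref{pro:L2Error} is $o(X'^2)$ only if $X'\ll X^2/\psi_d(X)$, which for $d=7$ means $X'\ll X^{3/2}$. For $d\geq 9$ the best balance, $X'=X^{5/3}$, only yields an error $(p_c-p)^{-5/3}$.

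The missing step is a sharp bound on the small-$k$ part, which the paper gets from Corollary~\ref{lem:BoundVariance} and Lemma~\ref{lem:Small_k_L1}. By \eqref{eq:def_SE}, for $k\le 3\ell(p)$,
\[
\bigl(\tfrac{p'-p}{1-p}\bigr)^k\|\tau_p^{\square,k}\|_2^2\le E_{p,k}\bigl((k+1)^2X^{1+\eps}+\|D\|_2^2/\Omega\bigr).
\]
Moreover, $\sum_{k\le 3\ell(p)}E_{p,k}=O(X\log(X)^2)$. This follows from Lemma~\ref{lem:Small_k_L1} for $k\le K_1\log X$, and from Lemmas~\ref{eq:L1Diff_After_Mixing} and \ref{lem:Large_k_L1} for $K_1\log X<k\le 3\ell(p)$.

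Together these give $O(X^{2+2\eps})$ for the small-$k$ part. That sits comfortably inside the paper's error term $O\bigl(X^{2+\eps}\frac{\tilde p_c(p)-p}{\tilde p_c(p)-p'}\bigr)$.

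With this correction, the rest of your bookkeeping meets the target: the $B(p)$ term, the $\tilde p_c\to p_c$ replacement, and the $(p_c-p')^{-1-\eps}\psi_d(\X(p'))$ error of Theorem~\ref{thm:Gyration_NotRenormalized}, which you omitted but which is negligible. The $X^{\eps}$ losses you worried about are also harmless, since the statement already allows an $\eps$ in the exponent.
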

\begin{proof} We directly adapt the previous proof. Take $p'$ as before.
We estimate $\|\tau_{p,p'}^\square\|_2^2$ in two ways.
First, Theorem \ref{thm:Gyration_NotRenormalized} and Proposition \ref{pro:L2Error}, followed by Lemma \ref{lem:X'}, give
\begin{align*}
\|\tau_{p,p'}^\square\|_2^2
& =C'_\xi(p_c-p')^{-2}
+O\left(
\frac{\X(p')^{3+\eps/2}\psi_d(\X(p))}{\X(p)^2}
+(p_c-p')^{-1-\eps/2}\psi_d(\X(p'))
\right)
\\ & =C'_\xi(p_c-p')^{-2}+O((\X(p')/\X(p))^2 (p_c-p)^{-3/2-\1_{d=7}/4-\eps}).
\end{align*}

On the other hand, proceeding as in the proof of Proposition \ref{pro:PolynomialApproximation_For_Xi2}, using Corollary \ref{lem:BoundVariance} and Lemmas \ref{eq:L1Diff_After_Mixing}, \ref{lem:Small_k_L1}, \ref{lem:CovarianceDecay}, \ref{lem:Bulk_L2}, \ref{lem:Start_L2}, and \ref{lem:End_L2}, we get
\[
\|\tau_{p,p'}^\square\|_2^2
=E_{p,\infty}V(p)
\frac{(\tilde p_c(p)-p)^2}{(\tilde p_c(p)-p')^2}
+O\left(\X(p)^{2+\eps}
\frac{\tilde p_c(p)-p}{\tilde p_c(p)-p'}\right).
\]
We omit the details as the argument does not fundamentally change. 

Theorem \ref{thm:p_c-p_c}, Lemma \ref{lem:X'}, and the choice of $p'$ show, by comparing the two estimates, that
\[
E_{p,\infty}V(p)
=C'_\xi(p_c-p)^{-2}
+O((p_c-p)^{-3/2-\1_{d=7}/4-\eps}).
\]
The result follows by dividing by $E_{p,\infty}$, which we estimated in Lemma \ref{lem:E_p,infty_accurate}.
\end{proof}
Before proving Theorem \ref{thm:tau_local_CLT}, let us first show the following technical lemma.
\begin{lemma} \label{lem:Compare_Green} Assume \eqref{eq:etaUpper} holds. For every $\eps>0$, uniformly as $p\to p_c$ over  $p<p'<p_c$ with $\X(p)^{1.1}\leq \X(p')\leq \X(p)^{17/12}$ and $x\neq 0\in \Z^d$, we have
\[ \left |E_{p,\infty}\Gauss_{V(p),(p'-p)/(\tilde p_c(p)-p)}(x)-\Gauss_{(C'_\xi/C_\X^2),e^{-1/\X(p')}}(x) \right |=o(\X(p)^{-1/12+\eps}\langle x\rangle^{2-d}).\]
\end{lemma}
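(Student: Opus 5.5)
We compare the two killed Gaussian Green functions by writing both as integrals over time and matching their three parameters: the amplitude, the diffusivity, and the killing rate. By definition,
\[
E_{p,\infty}\Gauss_{V(p),\beta}(x)=E_{p,\infty}\int_0^\infty \beta^t\Gauss_{V(p)t}(x)\,dt,\qquad \beta:=\frac{p'-p}{\tilde p_c(p)-p}.
\]
Substituting $s=V(p)t/(C'_\xi/C_\X^2)$ turns this into
\[
\frac{E_{p,\infty}(C'_\xi/C_\X^2)}{V(p)}\int_0^\infty \beta^{s (C'_\xi/C_\X^2)/V(p)}\,\Gauss_{(C'_\xi/C_\X^2)s}(x)\,ds .
\]
So it suffices to prove two things. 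First, the prefactor satisfies $E_{p,\infty}(C'_\xi/C_\X^2)/V(p)=1+O(\X(p)^{-1/2+\1_{d=7}/4+\eps})$. Second, the effective killing rate $-\log(\beta)(C'_\xi/C_\X^2)/V(p)$ equals $1/\X(p')$ up to a controlled error.

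For the prefactor we use Lemmas \ref{lem:E_p,infty_accurate} and \ref{lem:V_p,infty_accurate}. They give $E_{p,\infty}=C_\X(p_c-p)^{-1}(1+O((p_c-p)^{1/2-\1_{d=7}/4-\eps}))$ and $V(p)=(C'_\xi/C_\X)(p_c-p)^{-1}(1+O(\cdot))$. The ratio is then $1+O(\X(p)^{-1/4+\eps})$ in the worst case $d=7$, by Lemma \ref{lem:X'}. Since $\Gauss_{v,\beta}(x)\le C^\Gauss_v\langle x\rangle^{2-d}$, this contributes an error $O(\X(p)^{-1/4+\eps}\langle x\rangle^{2-d})$, which is admissible.

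For the killing rate, Theorem \ref{thm:p_c-p_c} gives $\tilde p_c(p)=p_c+O(\psi_d(\X(p))/\X(p)^2)$. Hence
\[
-\log\beta=\frac{\tilde p_c(p)-p'}{\tilde p_c(p)-p}\,(1+O(\X(p)/\X(p')))=\frac{p_c-p'}{p_c-p}\bigl(1+O(\X(p)/\X(p'))+O(\X(p')\psi_d(\X(p))/\X(p)^2)\bigr).
\]
Theorem \ref{thm:susceptibility} then gives $p_c-p'=C_\X/\X(p')\,(1+O(\X(p')^{-1/2+\eps}))$ and $(p_c-p)^{-1}=\X(p)/C_\X\,(1+\dots)$. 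Combined with $V(p)^{-1}=(C_\X/C'_\xi)(p_c-p)(1+\dots)$, the effective rate becomes $\frac{1}{\X(p')}(1+\delta)$, where
\[
\delta=O\bigl(\X(p)/\X(p')+\X(p')\X(p)^{-3/2+\eps}+\X(p)^{-1/4+\eps}\bigr).
\]
The range $\X(p)^{1.1}\le\X(p')\le\X(p)^{17/12}$ is exactly what makes $\delta=O(\X(p)^{-1/12+\eps})$: the middle term is at most $\X(p)^{17/12-3/2+\eps}=\X(p)^{-1/12+\eps}$, and the first is at most $\X(p)^{-0.1}$.

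It remains to control $\int_0^\infty |e^{-s(1+\delta)/\X(p')}-e^{-s/\X(p')}|\,\Gauss_{cs}(x)\,ds$. This step needs care, since a naive bound using $|\delta| s/\X(p')$ loses a factor at large $s$. Instead we use the elementary inequality $|e^{-a(1+\delta)}-e^{-a}|\le |\delta|\,a e^{-a/2}\le 2|\delta|$ for $|\delta|\le 1/2$. This bounds the integral by $2|\delta|\,\Gauss_{c,1}(x)=2|\delta|C^\Gauss_c\langle x\rangle^{2-d}$, uniformly in $x\neq0$. The $o$ instead of $O$ comes from slightly increasing $\eps$.

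The main obstacle is the bookkeeping of the killing rate. The relative error of $p_c-p'$ must be controlled against the very small ratio $(p_c-p')/(p_c-p)$, and the absolute error $|\tilde p_c(p)-p_c|$ from Theorem \ref{thm:p_c-p_c} is divided by $p_c-p'\asymp 1/\X(p')$. It is this term that forces the upper constraint $\X(p')\le\X(p)^{17/12}$ and yields the exponent $-1/12$.
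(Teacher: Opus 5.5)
Your proof is correct and follows essentially the paper's argument: the same inputs (Lemmas \ref{lem:E_p,infty_accurate}, \ref{lem:V_p,infty_accurate}, \ref{lem:X'} and Theorems \ref{thm:p_c-p_c}, \ref{thm:susceptibility}) match amplitude, diffusivity and killing rate, and you correctly identify $\X(p')\psi_d(\X(p))/\X(p)^2$, coming from $\tilde p_c(p)-p_c$, as the source of the exponent $-1/12$. The only difference is the final comparison. The paper rescales time by $E_{p,\infty}$, so the amplitude is $1$, and then needs the two-parameter Lipschitz bound \eqref{eq:Compare_Green}; you instead rescale time so the diffusivity is exactly $C'_\xi/C_\X^2$, and control the leftover amplitude and killing-rate factors by trivial bounds and $|e^{-a(1+\delta)}-e^{-a}|\le 2|\delta|$, which is slightly more elementary.
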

\begin{proof} First, the change of variables $t\mapsto t/E_{p,\infty}$ gives
\[
E_{p,\infty}\Gauss_{V(p),(p'-p)/(\tilde p_c(p)-p)}(x)
=\Gauss_{V(p)/E_{p,\infty},
((p'-p)/(\tilde p_c(p)-p))^{1/E_{p,\infty}}}(x).
\]  
Lemmas \ref{lem:E_p,infty_accurate} and \ref{lem:V_p,infty_accurate} give
\[ V(p)/E_{p,\infty}
=C'_\xi/C_\X^2
+o(\X(p)^{-1/12+\eps}).
\]
By Theorem \ref{thm:p_c-p_c} and Lemma \ref{lem:X'}, we have $p'<\tilde p_c(p)$ and $(\tilde p_c(p)-p')/(\tilde p_c(p)-p)=O(\X(p)^{-1/10})$. Thus, $-\log(1-u)=u+O(u^2)$ together with Lemma \ref{lem:E_p,infty_accurate} and Theorems \ref{thm:p_c-p_c} and \ref{thm:susceptibility} yields
\[
-\frac{1}{E_{p,\infty}}
\log\left(\frac{p'-p}{\tilde p_c(p)-p}\right)
=\frac{1}{\X(p')}
\left(1+o(\X(p)^{-1/12+\eps})\right).
\]

It remains to compare the Green functions of two killed Brownian motions whose parameters are close.
To this end, we claim that for  every compact interval $I$ in $(0,\infty)$, there exists $C_I<\infty$ such that for every $v,w\in I$ and every $a,b>0$ with $|a/b-1|\leq 1/100$, we have
\begin{equation} |\Gauss_{v,e^{-a}}(x)-\Gauss_{w,e^{-b}}(x)|\leq C_I\left(|v-w|+|a/b-1|\right)\langle x\rangle^{2-d}. \label{eq:Compare_Green}\end{equation}
If the claim is true, then the desired result follows from the previous displays.  

To show \eqref{eq:Compare_Green}, note that the change of variables $t\mapsto \|x\|_2^2t$ gives
\[
\Gauss_{v,e^{-a}}(x)
=\langle x\rangle^{2-d}
\left(\frac{2\pi v}{d}\right)^{-d/2}
\int_0^\infty t^{-d/2}
\exp\left(-a\|x\|_2^2t-\frac{d}{2vt}\right)dt.
\]
Uniformly for $v\in I$, the integral on the right-hand side and its derivative with respect to $v$ are bounded.
Writing $z=a\|x\|_2^2$, differentiation with respect to $\log z$ produces, up to a bounded factor,
\[
z\int_0^\infty t^{1-d/2}
\exp\left(-zt-\frac{d}{2vt}\right)dt,
\]
which is bounded uniformly over $z>0$ and $v\in I$.
The claim \eqref{eq:Compare_Green} then follows from the mean value theorem.
\end{proof}
\begin{proof}[Proof of Theorem \ref{thm:tau_local_CLT} under \eqref{eq:etaUpper}]
Rewriting the estimate in Theorem \ref{thm:tau_local_CLT}, it suffices to prove that for every $\eps>0$, as $p'\to p_c$ and $x\to \infty$, we have
\begin{equation} \tau_{p'}(x)=\Gauss_{(C'_\xi/C_\X^2),e^{-1/\X(p')}}(x)+o(\|x\|_2^{2-d-1/8+\eps}). \label{eq:thm_final_rewritten}\end{equation}
To this end, for every $x\in \Z^d$, let $p_x$ be the probability such that $\X(p_x)=\|x\|_2^{3/2}$. We distinguish three cases according to the value of $p'$.
If $\X(p')\leq \X(p_x)^{1.1}$, then standard Gaussian deviation estimates and Theorem \ref{thm:ExponentialTail} give $\tau_{p'}(x)=o(\|x\|_2^{2-d-1/8+\eps})$ and $\Gauss_{(C'_\xi/C_\X^2),e^{-1/\X(p')}}(x)=o(\|x\|_2^{2-d-1/8+\eps})$. So \eqref{eq:thm_final_rewritten} follows in this case.

If $\X(p_x)^{1.1}\leq \X(p')\leq \X(p_x)^{17/12}$, then Lemmas \ref{lem:LTCL_partial_tau_pp'^k} and \ref{lem:Compare_Green} yield
\[ \tau_{p_x,p'}^\square(x)=\Gauss_{(C'_\xi/C_\X^2),e^{-1/\X(p')}}(x)+ o \left (\X(p_x)^{-1/12}+(\langle x\rangle^2/\X(p_x) )^{-1/4} \right)\X(p_x)^{\eps/2}\langle x\rangle^{2-d}).\]
We chosed $\X(p_x)$ such that the above error term is $\|x\|_2^{2-d-1/8+\eps}$. Also, by Theorem \ref{thm:PointwiseError} and Lemma \ref{lem:AnnoyingXi}, we have, for some constant $K>0$,
\[\left |\tau_{p_x,p'}^\square(x)-\tau_{p'}(x)\right |=O(\frac{\X(p')\psi_d(\X(p_x))}{\X(p_x)^2})\|x\|_2^{2-d}=O(\|x\|_2^{2-d-1/8}),  \]
since $\psi_d(\X(p_x))\leq \sqrt{\X(p_x)}$ and $\X(p')\leq \X(p_x)^{17/12}$.
Combining the last two displays gives \eqref{eq:thm_final_rewritten}.

Finally, when $\X(p')>\X(p_x)^{17/12}$, we let $p'_x$ be such that $\X(p'_x)=\X(p_x)^{17/12}$. The BK inequality then gives
\[ |\tau_{p'}(x)-\tau_{p'_x}(x)|\leq (p'-p'_x)\tau_{p'}\ast D\ast \tau_{p'_x}(x)=O((p'-p'_x)\langle x\rangle^{4-d}).\]
Indeed, if there exists a $p'$-open path between $0$ and $x$ that is not $p'_x$-open, then there exists an oriented edge $\vec e$ such that the following occur disjointly: $0\slr e^-$ at $p'$, the edge $\vec e$ is $p'$-open but $p'_x$-closed, and $e^+\slr x$ at $p'_x$. The last inequality follows from \eqref{eq:etaUpper} and Lemma \ref{lem:TruncatedPoly}. Hence, using Lemma \ref{lem:X'} and $\X(p'_x)=\langle x\rangle^{2+1/8}$, we deduce
\[ \tau_{p'}(x)=\tau_{p'_x}(x)+O(\langle x\rangle^{2-d-1/8}).\]
Then, using that \eqref{eq:thm_final_rewritten} holds at $p'_x$ from the previous case, we get
\[ \tau_{p'}(x)=\Gauss_{(C'_\xi/C_\X^2),e^{-1/\X(p'_x)}}(x)+O(\langle x\rangle^{2-d-1/8})=\Gauss_{(C'_\xi/C_\X^2),e^{-1/\X(p')}}(x)+O(\langle x\rangle^{2-d-1/8}),\]
where the last estimate follows from elementary integration and Lemma \ref{lem:X'}. Hence, \eqref{eq:thm_final_rewritten} also holds in this last case, which concludes the proof.
\end{proof}

For completeness, let us quickly show Proposition \ref{pro:Square} from Theorem \ref{thm:tau_local_CLT}.
\begin{proof}[Proof of Proposition \ref{pro:Square}]
We already derived the upper bound on $\square(p)$ in Lemma \ref{lem:TwoTriangleSquare} (c). For the lower bound, Theorem \ref{thm:tau_local_CLT} and a basic integration argument directly imply
that as $p\to p_c$ and $x\to \infty$ with $\langle x\rangle \leq \X(p)^{1/2}$, we have $\tau_p(x)=\Theta(\langle x\rangle^{2-d})$.
Consequently, by restricting the convolution to points $z$ for which both $\langle z\rangle$ and $\langle y-z\rangle$ are comparable to $\langle y\rangle$, we obtain
$(\tau_p\ast\tau_p)(y)\geq c\langle y\rangle^{4-d}$ whenever $\langle y\rangle\leq \X(p)^{1/2}/2$.
Using the symmetry of $\tau_p$, we conclude that
\[
\square(p)
\geq (\tau_p\ast\tau_p\ast\tau_p\ast\tau_p)(0)
=\sum_{y\in\Z^d}(\tau_p\ast\tau_p)(y)^2
\geq c\sum_{\langle y\rangle\leq \X(p)^{1/2}/2}\langle y\rangle^{8-2d}
=\Theta(\psi_d(\X(p))).
\]
where the last equality follows by summing over dyadic annuli.
\end{proof}
 \footnotesize{
  \bibliographystyle{abbrv}
  \bibliography{unimodularthesis.bib}
  }
 \end{document}